\theoremstyle{plain}
\newtheorem{thm}{Theorem}[section]
\newtheorem{cor}[thm]{Corollary}
\newtheorem{lem}[thm]{Lemma}
\newtheorem{prop}[thm]{Proposition}
\newtheorem{notation}[thm]{Notation}
\def\@rst #1 #2other{#1}
\newcommand\MR[1]{\relax\ifhmode\unskip\spacefactor3000 \space\fi
  \MRhref{\expandafter\@rst #1 other}{#1}}
\newcommand{\MRhref}[2]{\href{http://www.ams.org/mathscinet-getitem?mr=#1}{MR#2}}
\theoremstyle{definition}
\newtheorem{defn}[thm]{Definition}
\newtheorem{remark}[thm]{Remark}
\numberwithin{equation}{section}
\newcommand{\dsb}{\begin{adjustwidth}{2.5em}{0pt}
\begin{footnotesize}}
\newcommand{\dse}{\end{footnotesize}
\end{adjustwidth}}
\newcommand{\ssb}{\begin{adjustwidth}{2.5em}{0pt}}
\newcommand{\sse}{\end{adjustwidth}}
\newcommand{\aryb}{\begin{eqnarray*}}
\newcommand{\arye}{\end{eqnarray*}}
\def\alb#1\ale{\begin{align*}#1\end{align*}}
\def\allb#1\alle{\begin{align}#1\end{align}}
\newcommand{\eqb}{\begin{equation}}
\newcommand{\eqe}{\end{equation}}
\newcommand{\eqbn}{\begin{equation*}}
\newcommand{\eqen}{\end{equation*}}
\newcommand{\BB}{\mathbbm}
\newcommand{\ol}{\overline}
\newcommand{\ul}{\underline}
\newcommand{\op}{\operatorname}
\newcommand{\frk}{\mathfrak}
\newcommand{\eqD}{\overset{d}{=}}
\newcommand{\ep}{\varepsilon}
\newcommand{\rta}{\rightarrow}
\newcommand{\wt}{\widetilde}
\newcommand{\wh}{\widehat}
\newcommand{\mcl}{\mathcal}
\newcommand{\bdy}{\partial}
\newcommand{\srta}{\shortrightarrow}
\newcommand{\eps}{\varepsilon}
\let\originalleft\left
\let\originalright\right
\renewcommand{\left}{\mathopen{}\mathclose\bgroup\originalleft}
\renewcommand{\right}{\aftergroup\egroup\originalright}
\title{Random walks on mated-CRT planar maps\\ and Liouville Brownian motion}
 \date{ }
 \author{
\begin{tabular}{c} Nathana\"el Berestycki\\[-5pt]\small University of Vienna \end{tabular}
\begin{tabular}{c} Ewain Gwynne\\[-5pt]\small University of Chicago \end{tabular}
}
\begin{document}

\maketitle

\begin{abstract}
We prove a scaling limit result for random walk on certain random planar maps with its natural time parametrization.
In particular, we show that for $\gamma \in (0,2)$, the random walk on the \emph{mated-CRT map} with parameter $\gamma$ converges to \textit{$\gamma$-Liouville Brownian motion}, the natural quantum time parametrization of Brownian motion on a $\gamma$-Liouville quantum gravity (LQG) surface.
Our result applies if the mated-CRT map is embedded into the plane via the embedding which comes from SLE / LQG theory or via the Tutte embedding (a.k.a.\ the harmonic or barycentric embedding). In both cases, the convergence is with respect to the local uniform topology on curves and it holds in the quenched sense, i.e., the conditional law of the walk given the map converges.

Previous work by Gwynne, Miller, and Sheffield (2017) showed that the random walk on the mated-CRT map converges to Brownian motion modulo time parametrization.
This is the first work to show the convergence of the parametrized walk.
As an intermediate result of independent interest, we derive an axiomatic characterisation of Liouville Brownian motion, for which the notion of  Revuz measure of a Markov process plays a crucial role.
\end{abstract}


\tableofcontents

\section{Introduction}
\label{sec-intro}

\subsection{Background}

The mathematical study of random walks on random planar maps goes back to the seminal paper of Benjamini and Schramm \cite{benjamini-schramm-topology}.
This paper was motivated by contemporary works by Ambj{\o}rn et al. \cite{Ambjorn_diffusion,Ambjorn_spectraldimension} which introduced and studied (in a nonrigorous way) a notion of diffusion in the geometry of Liouville quantum gravity. As we will discuss in more detail below (and partly make rigourous in this paper), this diffusion can heuristically be thought of as a continuum limit of random walk on random planar maps.
Benjamini and Schramm proved recurrence of \emph{any} local limit of a sequence of randomly rooted planar graphs, subject to a bounded degree assumption, and initiated a remarkably fruitful program of research aiming to describe properties of random walks on random planar maps, especially random walks on the so-called Uniform Infinite Planar Triangulation (UIPT) as well as other models of random planar maps in the same universality class.

In the years since then, this program of research has blossomed into a very rich and quickly expanding area of probability theory.
We will not attempt to give a complete review of this literature here. Instead, we mention a few highlights and refer to~\cite{curien-peeling-notes,ghs-mating-survey} as well as the excellent lecture notes by Nachmias \cite{Nachmias} for relevant expository articles.
The UIPT was constructed by Angel and Schramm in \cite{angel-schramm-uipt}. Gurel-Gurevich and Nachmias \cite{gn-recurrence} removed the bounded degree assumption of \cite{benjamini-schramm-topology} and replaced it by an exponential tail on the degree of the root vertex of the map (in particular applying to the UIPT). The random walk {on the UIPT} was proved to be subdiffusive by Benjamini and Curien in \cite{benjamini-curien-uipq-walk} through the consideration of the so-called pioneer points which were analyzed in great detail. More recently, the exact value of the diffusivity exponent on the UIPT (equal to $1/4$) was obtained through the combination of two works: a paper by Gwynne and Miller \cite{gm-spec-dim} which proves the lower bound for the diffusivity exponent and also computes the spectral dimension, and a paper by Gwynne and Hutchcroft \cite{gh-displacement} which proves the upper bound for the diffusivity exponent.

 Naturally, these developments cannot be dissociated from the remarkable progress in the understanding of the purely geometric features of these random maps, culminating for instance in the convergence of random planar maps (viewed as random metric spaces) to the Brownian map which was proven by Le Gall \cite{legall-uniqueness} and Miermont \cite{miermont-brownian-map}, see for instance the survey~\cite{legall-sphere-survey}.

On the continuum side, the paper by Duplantier and Sheffield \cite{shef-kpz} provided a framework (corresponding to the \emph{DDK ansatz}~\cite{david-conformal-gauge,dk-qg}) for building a continuum theory of Liouville quantum gravity (LQG) based on a rigorous construction of the volume form associated with the formal metric tensor
\eqb \label{eqn-lqg-tensor}
e^{\gamma h} \, (dx^2 + dy^2)
\eqe
where $h$ is a variant of the Gaussian free field on a planar domain, $dx^2 + dy^2$ is the Euclidean metric tensor on that domain, and $\gamma \in (0,2)$ is a constant (which is related to the type of random planar map being considered). In~\cite{shef-kpz}, the volume for associated with \eqref{eqn-lqg-tensor} is understood as a random measure (in fact, this random measure is a special case of the theory of \emph{Gaussian multiplicative chaos}, which was initiated in~\cite{kahane}; see~\cite{rhodes-vargas-log-kpz} and \cite{bp-lqg-notes} for additional context and a modern approach).
A rigorous construction of a continuum diffusion in the geometry associated with~\eqref{eqn-lqg-tensor} was proposed in \cite{grv-lbm, berestycki-lbm}, for every parameter $\gamma \in (0,2)$. This diffusion, called \textbf{Liouville Brownian motion} (LBM) and described in more details below, is widely conjectured to be the scaling limit of simple random walks on many models of random planar maps. Much is now known about the behaviour of LBM, including a rigorous verification in \cite{rhodes-vargas-spec-dim} of the predictions made in \cite{Ambjorn_spectraldimension} about the spectral dimension; as well as several works considerably refining our understanding of its heat kernel: see,  e.g.,~\cite{grv-heat-kernel,dzz-heat-kernel,andres-heat-kernel,jackson-lbm-thick-pts,mrvz-heat-kernel,grv-kpz}.
There are also known scaling limit results for discrete approximations of (analogues of) Liouville Brownian motion on tree-like spaces and low-dimensional fractals: see, e.g.~\cite{chk-time-changes}, which is in the spirit of earlier works on the Bouchaud trap model and its scaling limit to the so-called FIN diffusion in dimension one~\cite{FIN} and to fractional kinetics in higher dimensions~\cite{BC07}.

However, prior to the present paper there were no rigorous results relating the behaviour of simple random walk on natural models of random planar maps to Liouville Brownian motion. The main purpose of this paper is to establish the first such result, in the case of random walk on the so-called \textbf{mated-CRT random planar maps}. These random planar maps are in some sense more directly connected to Liouville quantum gravity than other random planar map models (thanks to the results of~\cite{wedges}) and will be described in more detail below.  Interestingly, mated-CRT maps also provide a coarse-grained approximation to many other models of random planar maps (see Remark~\ref{remark-bijections}). The main result of this paper is that for any parameter $\gamma \in (0,2)$, the scaling limit of simple random walk on the mated-CRT planar map with parameter $\gamma$ is Liouville Brownian motion with the same parameter. See Theorems~\ref{thm-lbm-conv} and~\ref{thm-lbm-conv_disk} for precise statements, including the topology of convergence. Our results build on the earlier work~\cite{gms-tutte}, which shows that the random walk on the mated-CRT map converges to Brownian motion modulo time parametrization.

\subsection{Setup}
\label{sec-setup}

\paragraph{Infinite-volume mated-CRT maps.}

Mated-CRT maps are a one-parameter family of random planar maps, indexed by $\gamma \in (0,2)$, which were first used implicitly in~\cite{wedges} and studied more explicitly in~\cite{ghs-dist-exponent,gms-tutte}.

We start with a brief description of infinite-volume mated-CRT maps, with the topology of the plane.
In this case, the basic data is provided by a pair of correlated real-valued two-sided Brownian motions $(L_t, R_t)_{t \in \BB{R}}$ such that $L_0 = R_0 = 0$ and with correlation coefficient given by $ - \cos ( \pi \gamma^2/ 4)$, i.e.,
\eqbn
\op{cov} (L_t, R_t) = - \cos \left( \frac{\pi \gamma^2}4 \right) |t|.
\eqen

The mated-CRT map is the map obtained by gluing discretized versions of the the (non compact) Continuum Random Trees defined by $L$ and $R$. More precisely, for a given $\eps  > 0$, the mated-CRT map with {scale} $\eps$ is the random graph $\mcl G^\ep$ whose vertex set is $\mcl{VG}^\ep = \eps \BB{Z} $ and where there is an edge between two vertices $x, y \in \eps \BB{Z} $ (with $x <y$) if and only if:
\begin{equation} \label{edgemap}
\left( \inf_{t \in [x - \eps,x]} X_t \right) \vee \left( \inf_{t \in [y - \eps, y]} X_t \right)\le \inf_{t \in [x, y - \eps]} X_t
\end{equation}
where $X$ can be either $L$ or $R$. Note that in this definition $x$ is always connected to $x+\eps$ via both $L$ and $R$, but we only include one such edge in this case (let us call such an edge \emph{trivial}).
If $y> x+\eps$ it is possible that \eqref{edgemap} is satisfied for {neither, one, or both of} $L$ and $R$: in the case when~\eqref{edgemap} is satisfied for both $L$ and $R$ there are two edges joining $x$ and $y$.
When $y>x+\eps$ and \eqref{edgemap} is satisfied we call the corresponding edge nontrivial.
A nontrivial edge can be of two types: type $L$ or type $R$ depending on whether \eqref{edgemap} is satisfied with $X = L$ or $X = R$.
{By Brownian scaling, it is clear that the law of $\mcl G^\ep$ viewed as a graph does not depend on $\ep$, but for reasons we will explain just below it is convenient to consider the whole family of graphs $\{\mcl G^\ep\}_{\ep > 0}$ constructed from the same pair $(L,R)$. }

Note that the condition \eqref{edgemap} says that there are times $s \in [x- \eps, x]$ and $t \in [y- \eps, y]$ such that $s$ and $t$ are identified {in the equivalence relation used to construct} the CRT associated with $L$ or $R$. The {definition of the mated-CRT map} can therefore indeed be thought of as a gluing of discretized versions of the CRT's associated to $L$ and $R$.

\begin{figure}[ht!]\begin{center}
\includegraphics[scale=.8]{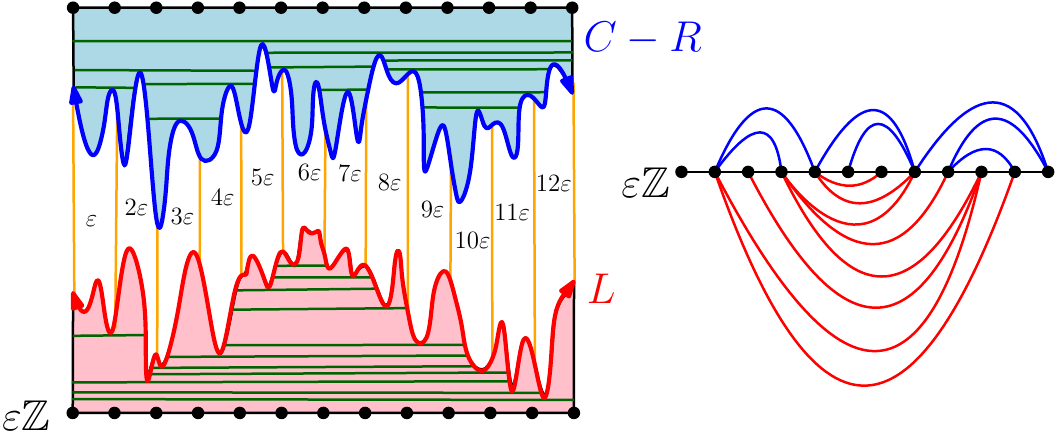}
\caption{{\textbf{Left:} A geometric description of the definition of the mated-CRT map. We consider the restrictions of $L$ and $R$ to some interval (in this case, $[0,12\ep]$). We then draw the graphs of the restrictions of $L$ and $C-R$ to this interval in the same rectangle in the plane, where $C$ is a large constant chosen so that the graphs do not intersect. The adjacency condition~\eqref{edgemap} for $L$ (resp.\ $R$) is equivalent to the condition that there is a horizontal line segment under the graph of $L$ (resp.\ above the graph of $C-R$) which intersects the graph only in the vertical strips $[x-\ep, x] \times [0,C]$ and $[y-\ep,y]\times [0,C]$. For each pair $(x,y)$ for which this adjacency condition holds, we have drawn the lowest (resp.\ highest) such horizontal line segment in green.
\textbf{Right:} Illustration of a proper planar embedding of the given portion of the mated-CRT map which realizes its planar map structure. Trivial edges (resp.\ $L$-edges, $R$-edges) are shown in black (resp.\ red, blue). A similar illustration appeared as~\cite[Figure 1]{gms-harmonic}. }
}
\label{F:map}
\end{center}
\end{figure}

In order to turn the graph $\mcl G^\ep$ into a (rooted) planar map we {first} specify the root vertex to be the trivial edge from $0$ to $\ep$. In a planar map there is also a notion of counterclockwise cyclical order on the edges surrounding a given vertex. For the mated-CRT map, this ordering is obtained as follows. We order the $L$-edges emanating from a given vertex $x \in \eps \BB{Z}$ by giving them the order they inherit from their endpoints on $\eps\BB{Z}$. We order the $R$-edges in the reverse way, and declare that the $L$-edges come before the $R$-edges. The $L$-edges and the $R$-edges are separated by the two trivial edges emanating from $x$. With this order, it is easy to see why a proper planar embedding of this graph exists (i.e., an embedding in which the edges do not overlap). Simply draw the $R$-edges on one side of $\eps\BB{Z}$ in the plane and the $L$-edges on the other, and observe that the $R$-edges (and equivalently the $L$-edges) can be drawn without overlapping: if $s_1<s_2$ and $t_1< t_2$ are identified in $R$, then either $s_1<s_2 < t_1< t_2$ or $s_1< t_1< t_2 < t_2$. See Figure \ref{F:map} for an illustration. {We note that with this planar map structure, the mated-CRT map is in fact a triangulation; see the caption of~\cite[Figure 1]{gms-harmonic} for an explanation. }

\begin{remark}[Connection to other random planar map models] \label{remark-bijections}
The above construction of the mated-CRT map is motivated by a class of combinatorial bijections called \emph{mating of trees bijections}. Basically, such bijections tell us that certain natural random planar maps decorated by statistical mechanics models can be constructed via discrete analogs of the above construction of the mated-CRT map, with the two coordinates of a random walk on $\BB Z^2$ (with an increment distribution depending on the map) used in place of $(L,R)$.
Examples of planar maps which can be encoded in this way include uniform triangulations decorated by site percolation configurations~\cite{bernardi-dfs-bijection,bhs-site-perc} as well as planar maps decorated by spanning trees~\cite{mullin-maps,bernardi-maps}, the critical Fortuin-Kasteleyn cluster model~\cite{shef-kpz,bernardi-maps}, or bipolar orientations~\cite{kmsw-bipolar}.
Due to the convergence of random walk to Brownian motion, the mated-CRT map can be viewed as a coarse-grained approximation of these other random planar maps.
This coarse-graining can sometimes be used to transfer results from the mated-CRT map to other random planar maps, as is done, e.g., in~\cite{ghs-map-dist,gm-spec-dim,gh-displacement}. Currently, however, the estimates comparing mated-CRT maps to other maps are not sufficiently precise to transfer scaling limit results like the one proven in this paper.
\end{remark}

\paragraph{Finite volume mated-CRT maps.} Mated-CRT planar maps can also be defined in finite volume with other topologies such as the disk.
In this version, $L$ and $R$ are two correlated Brownian motions over the time-interval $[0,1]$ (instead of the entire real line $\BB{R}$), start from $L_0 = R_0 = 0$ and
are conditioned so that $(L_t,R_t)_{0\le t \le 1}$ remains in the positive quadrant $[0,\infty)^2$ until time 1 and ends up in the position $(1,0)$ at time one. (This is a degenerate conditioning; see~\cite{sphere-constructions} for a construction, building on the cone excursions studied by Shimura in~\cite{shimura-cone}.) We call such a pair a Brownian excursion in the quadrant. The vertex set of the mated-CRT map with the disk topology is then taken to be $ \mcl{VG}^\eps = \eps \BB{Z} \cap [0,1]$ instead of $\eps \BB{Z} $, but apart from that the definitions above stay the same.

Note that the tree encoded by $R$ is a standard (compact) Continuum Random Tree, whereas the one encoded by $L$ also comes with a natural boundary. More precisely, we define boundary vertices $\partial \mcl G^\eps$ as the set of vertices $x \in \eps \BB{Z} \cap [0,1]$ such that
\begin{equation}\label{boundary}
\inf_{t \in [x- \eps, x]} L_t \le \inf_{t \in [x, 1]} L_t.
\end{equation}
See Figure~\ref{fig-mated-crt-map-disk} for an illustration.

\begin{figure}[ht!]\begin{center}
\includegraphics[scale=.8]{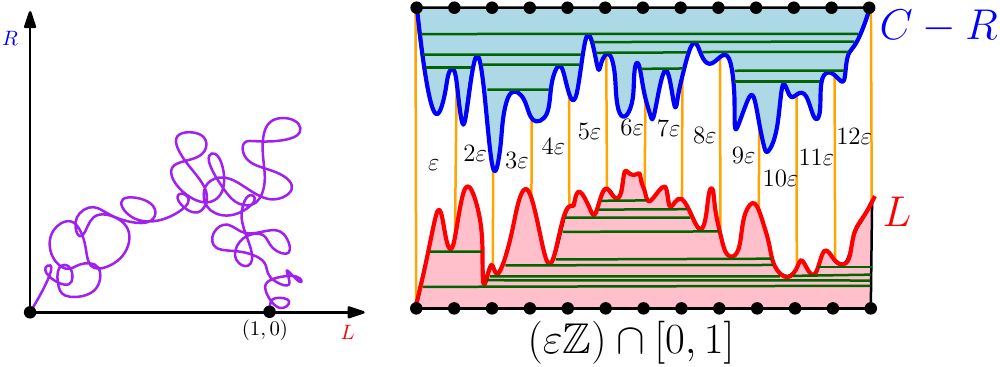}
\caption{{\textbf{Left:} The conditioned correlated Brownian motion $(L,R)$ used to construct the mated-CRT map with the disk topology.
\textbf{Right:} The analog of Figure~\ref{F:map}, left, for the disk topology, with $\ep = 1/12$. We have $x \in \bdy\mcl G^\ep$ if and only if there is a horizontal line segment under the graph of $L$ which intersects the graph of $L$ only in $[x-\ep , x] \cap [0,C]$ and also intersects $\{1\} \times [0,C]$. Here the boundary vertices are $\ep , 2\ep , 10\ep, 11\ep,$ and $12\ep$.}  }
\label{fig-mated-crt-map-disk}
\end{center}
\end{figure}

\paragraph{Tutte embedding.} The advantage of working in the finite volume setup discussed above (as opposed to say the earlier whole plane setup) is that the Tutte embedding can be defined in a straightforward way. The Tutte embedding of a graph is a planar embedding which has the property that each interior vertex (i.e., a vertex not on the boundary) is equal to the average of its neighbours. Equivalently, the simple random walk on the graph is a martingale. A concrete construction in the case of a finite volume mated-CRT map is as follows. {We first choose a marked root vertex of the mated-CRT map by sampling $\BB t$ uniformly from $[0,1]$ and letting $\BB x^\ep \in (\ep\BB Z)\cap [0,1]$ be chosen so that $\BB t \in [\BB x^\ep - \ep , \BB x^\ep]$.}
Let $x_1 < \ldots < x_k$ denote the vertices of the boundary $\partial \mcl{G}^\eps$, in numerical order.
 Let $\mathfrak{p}(x_k)$ denote the probability that simple random walk on $\mcl{G}^\eps$, {started from $\BB x^\ep$}, first hits $\partial \mcl{G}^\eps$ {in the arc $\{x_1,\dots,x_k\}$.} Then the boundary vertices $x_1,\ldots, x_k$ are mapped (counterclockwise) respectively to $z_1 = e^{2i \pi \mathfrak{p}(x_1)}, \ldots, z_k = e^{2i \pi \mathfrak{p} (x_k)}$. {This makes it so that the harmonic measure from $\BB x^\ep$ approximates Lebesgue measure on $\bdy\BB D$.} As for the interior vertices, if $x \in \mcl{VG}^\eps$, let $\psi^\ep : \mcl{VG}^\eps \to \BB{C}$ denote the unique function which is discrete harmonic on $\mcl{VG}^\eps \setminus \partial \mcl{G}^\eps$ and whose boundary values are given by $\psi^\ep(x_i) = z_i$. This gives an embedding of the vertices of the graph (into the unit disk $\BB{D}$, by an argument similar to the maximum principle). A theorem of Tutte \cite{tutte-embedding} then guarantees that if the edges of the graph are drawn as straight lines then the edges can overlap but not cross.

\paragraph{SLE/LQG embedding.} One of the reasons the mated-CRT maps (either in finite or infinite volume) are convenient is that, due to the main result of~\cite{wedges}, they admit an elegant alternative description given by a certain type of $\gamma$-LQG surface (represented by a random distribution $h$ in a portion $D \subset \BB{C}$ of the complex plane), decorated by an independent space-filling version of a Schramm-Loewner evolution (SLE) curve $\eta$ with parameter $\kappa = 16/\gamma^2 \in (4, \infty)$. The notion of LQG surfaces will be described in greater details in Section \ref{sec-sle-lqg}, but let us nevertheless describe the embedding as succinctly as possibly, deferring the definitions to that section. {See Figure~\ref{fig-sg-def-disk} for an illustration. }

\begin{figure}[ht!]
\begin{center}
\includegraphics[scale=.6]{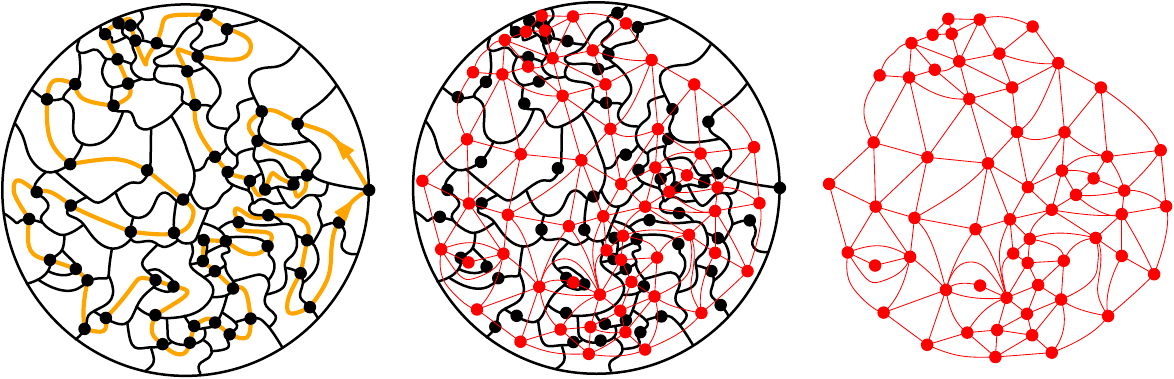}
\caption{\label{fig-sg-def-disk} {Illustration of the LQG / SLE embeddding of the mated-CRT map in the disk case (a similar figure appears in~\cite{gms-tutte}). \textbf{Left:} A segment of a space-filling curve $\eta : [0,1] \rta \ol{\BB D}$, divided into cells $  \eta([x-\ep ,x])$ for $x\in (\ep \BB Z) \cap (0,1]$. The order in which the cells are hit is shown by the orange path. This figure looks like what we would expect to see for $\kappa \geq 8$ ($\gamma \leq \sqrt 2$), since the cells are simply connected. \textbf{Middle:} To get an embedding of the mated-CRT map, we draw a red point in each cell and connect the points whose cells share a corresponding boundary arc. \textbf{Right:} Same as the middle picture but without the original cells, so that only the embedded mated-CRT map is visible. }
}
\end{center}
\end{figure}

We start with the infinite volume case, which is slightly easier to explain. In that case, $D=\BB C$ and $h$ is the random distribution corresponding to a so-called \textbf{$\gamma$-quantum cone}.
Since the law of $h$ is locally absolutely continuous with respect to {that of a} Gaussian free field one can define a volume measure $\mu_h$ which is a version of the LQG measure (or Gaussian multiplicative chaos) associated to $h$.
{One way to define this measure is as follows.
For $z\in\BB C$ and $\ep > 0$, let $h_\ep(z)$ be the average of $h$ over the circle of radius $\ep$ centered at $z$.}
Then for any open set $A\subset \BB C$,
\begin{equation}\label{E:gmc}
\mu_h(A) = \lim_{\eps \to 0} \int_A \eps^{\gamma^2/2} e^{\gamma h_\eps (z)} dz
\end{equation}
in probability (see, e.g., \cite{shef-kpz,berestycki-gmt-elementary}). In fact the convergence was shown to be a.s.\ in~\cite{shef-wang-lqg-coord}.

The curve $\eta$ mentioned above is in the whole-plane case a \textbf{whole-plane space-filling SLE$_\kappa$} from $\infty$ to $\infty$, sampled independently from $h$. When $\kappa \ge 8$, i.e. when $\gamma \le \sqrt{2}$, whole-plane SLE$_{\kappa}$ is already space-filling and $\eta$ is then nothing but an ordinary whole-plane SLE {from $\infty$ to $\infty$}. However, when $\kappa \in (4,8)$ ordinary SLE is not space-filling and the construction is more complicated, see~\cite[Section 1.2.3]{ig4} and Section~\ref{sec-wpsf-prelim}. This choice of $(h, \eta)$ corresponds to the setting of the {whole-plane} \textbf{mating of trees theorem}~\cite[Theorem 1.9]{wedges}.

Let us now describe the disk case, which is slightly more involved. In this case, $D=\BB D$ is the unit disk and $h$ is the random distribution corresponding to a type of quantum surface known as a \textbf{quantum disk} with unit area and unit boundary length. Such a surface does not normally come with a specified marked point either on the boundary or in the bulk, but we add a marked point on the boundary by sampling from the boundary length measure (the appropriate analogue of \eqref{E:gmc} restricted to the boundary).

The curve $\eta$ is a \textbf{space-filling SLE$_\kappa$ loop} from the marked boundary point to itself (with loops being filled in clockwise). Such a curve is obtained as the limit of a chordal space-filling SLE$_\kappa$ between two boundary points $x$ and $y$, as $y \to x$; see Section~\ref{sec-wpsf-prelim} for more details.
This setup corresponds essentially to the (finite volume) mating of trees theorem proved by Ang and Gwynne~\cite[Theorem 1.1]{ag-disk}, with two unessential differences: one is that the loops of $\eta$ are filled counterclockwise in~\cite{ag-disk} (corresponding to the Brownian pair $(L,R)$ ending on the $R$ axis instead of the $L$ axis as is the case here). The second is
that the paper \cite{ag-disk} describes the situation corresponding to unit boundary length (but random area). The corresponding mating of trees theorem for the case we need here is naturally obtained by conditioning on the total area, whose law is described in~\cite[Theorem 1.2]{ag-disk}.

In both setups, since the curve $\eta$ is space-filling, it can be reparametrized by $\mu_h$-area: in any time interval of length $t$, the curve covers an area of mass $t$. The \textbf{LQG embedding of mated-CRT planar maps} is then as follows. Each vertex $x \in \mcl{VG}^\eps$ {corresponds to} a \emph{cell} $\eta([x- \eps, x])$ with appropriate modifications for the last cell if $\ep$ is not of the form $1/n$ for some $n \ge 1$. Two cells are declared \emph{adjacent} if their intersection is nonempty and contains a nontrivial curve\footnote{When $\kappa \ge 8$, this second condition is in fact not necessary.
On the other hand, when $\kappa \in (4,8)$, an SLE$_\kappa$ curve is self-touching, resulting in pinch points for the cells formed by the space-filling curve.
On either side of such a pinch point could be two different cells, but our definition ensures that these are nevertheless not declared adjacent, even though they intersect at the pinch point.}. It is not hard to see via the mating of trees theorems mentioned earlier (Theorem 1.9 in \cite{wedges} and Theorem 1.1 in \cite{ag-disk}) that this gives an alternative and equivalent construction of the mated-CRT planar maps in the respective setups. Concretely, this gives an \emph{a priori} embedding of the mated-CRT planar map into the domain $D$ (either the whole plane or the disk) obtained by sending each vertex $x$ to a point of the corresponding cell. 

This embedding is extremely useful to carry out concrete computations on the mated-CRT planar maps. Furthermore, it is closely related to the Tutte embedding: indeed, one of the main results of \cite{gms-tutte} implies that the Tutte embedding converges uniformly to the SLE embedding. More precisely, if $  \psi^\ep$ denotes the Tutte embedding of the mated-CRT map with the disk topology, as above, then
\begin{equation}\label{E:TutteLQG}
\max_{x \in \mcl{VG}^\eps} | \psi^\eps(x) - \eta(x)| \to 0
\end{equation}
in {probability}; see (3.3) in Section 3.4 of \cite{gms-tutte}.

\paragraph{Convergence of the random walk modulo time parametrization.}
Most of the work in~\cite{gms-tutte} goes into proving that that the random walk on the mated-CRT map converges to Brownian motion modulo time parametrization under the LQG embedding discussed above. 
More precisely, it is shown in~\cite[Theorem 3.4]{gms-tutte} that for each compact set $K\subset \BB C$, the Prokhorov distance between the following two probability measures converges to zero in probability as $\ep\rta 0$, uniformly over all $z\in K$:
\begin{itemize}
\item The law of standard planar Brownian motion started from $z$, viewed modulo time parametrization;
\item The conditional law given $(h,\eta)$ of the random walk on the mated-CRT map under the LQG embedding with cell size $\ep$, started from the (a.s.\ unique) vertex $x\in\mcl V\mcl G^\ep$ such that $z \in \eta([x-\ep,x])$, extended to be a continuous curve by piecewise linear interpolation, and viewed modulo time parametrization. 
\end{itemize}
See~\cite[Section 2.1.2]{gms-tutte} for a discussion of the topology on curves modulo time parametrization. The proof of the above convergence result is based on a general scaling limit result for random walks in planar random environments which are in some sense ``translation invariant modulo scaling"~\cite[Theorem 3.10]{gms-random-walk}. 

Due to the relationship between random walk and discrete harmonic functions, the convergence of random walk on the mated-CRT map to Brownian motion implies that, under the LQG embedding, discrete harmonic functions on the mated-CRT map approximate their continuum counterparts when $\ep$ is small. This, in turn, allows one to show that the Tutte embedding is close to the LQG embedding, in the sense of~\eqref{E:TutteLQG}. 

This paper will build on the result of~\cite{gms-tutte} to get convergence of the \emph{parametrized} walk on the mated-CRT map. Let us now discuss the limiting object.
 
\paragraph{Liouville Brownian motion.} Let $h$ be {either field considered} in the preceding section, {i.e., a $\gamma$-quantum cone with circle average embedding or a unit quantum disk.} Along with an area measure $\mu_h$ on $D$, it is possible to associate to the field $h$ a diffusion $X$, called Liouville Brownian motion. We give a few more details now. Let $z \in D$, let $B^z$ denote a standard planar Brownian motion in $D$ started from $z$ and let $\tau$ denote its exit time from $D$. By definition\footnote{In fact, the works \cite{berestycki-lbm,grv-lbm} focus on slightly different versions of the field $h$ where instead of a $\gamma$-quantum cone or quantum disk, $h$ is a Dirichlet GFF. Extending this definition to the case of a $\gamma$-quantum cone or to a quantum disk is relatively straightforward: in particular, in the case of the quantum cone one needs to check that the process does not stay stuck at zero (due to the $\gamma$-log singularity). This is easily confirmed since the expected time for the LBM started from 0 to reach the boundary of the unit disk, say, given the quantum cone $h$, is given by $\int_{\BB D} \op{Gr}_{\BB D}(0,y)\mu_h (dy)< \infty$ a.s. (where $\op{Gr}_{\BB D}$ is the Green's function on the disk). The finiteness of this last integral can be checked by splitting the disk into dyadic annuli and considering the contribution of each annulus.} (see, e.g., \cite{berestycki-lbm,grv-lbm}), Liouville Brownian motion is defined as the time-change $B^z_{\phi^{-1}(t)}$, where the Liouville clock $\phi$  satisfies
\eqbn
\phi(t) = \lim_{\eps \to 0}\int_0^{t \wedge \tau} \eps^{\gamma^2/2} e^{\gamma h_\eps (B^z_s)}ds.
\eqen
In fact, for technical reasons our results will require applying a fixed global rescaling of time, chosen so that the median of the time needed by Liouville Brownian motion on a $\gamma$-quantum cone to leave a fixed Euclidean ball, say the ball $B_{1/2}$ of radius $1/2$ centered at zero, is one. In other words, if $h$ is the circle average embedding of a $\gamma$-quantum cone,
\eqbn
\tau_{1/2} = \inf \{ t>0: |B_{\phi^{-1}(t)}| \ge 1/2 \},
\eqen
and if $m_0$ is its (annealed) median, then our scaled Liouville Brownian motion is by definition
\begin{equation}\label{D:LBMmed}
X^z_t = B^z_{\phi^{-1} (tm_0)}.
\end{equation}
Note that $m_0$ is defined w.r.t.\ the circle average embedding of a $\gamma$-quantum cone, regardless of what field $h$ we are using to define the Liouville Brownian motion.

\subsection{Main results}
\label{sec-main-results}

Our main result will come in two versions, corresponding respectively to the whole plane and disk setups. We start with the whole-plane case.
In this case, we work only with the SLE/LQG embedding since the Tutte embedding is harder to define if we do not have a boundary.
 Let $(h, \eta)$ denote the $\gamma$-quantum cone decorated by an independent space-filling SLE describing the SLE/LQG embedding of the whole plane mated-CRT planar maps  $(\mcl G^\ep)_{\ep>0}$ with parameter $\gamma$, as described above. {We assume that $h$ is the circle-average embedding of the $\gamma$-quantum cone.}

For $z\in \BB C$ and $\ep > 0$, let $X^{z,\ep} : \BB N_0 \rta \mcl V\mcl G^\ep$ be the simple random walk on $\mcl G^\ep$ started from $x$, where $x\in\mcl V\mcl G^\ep = \ep\BB Z$ is chosen so that $z$ belongs to the cell $\eta([x-\ep, x])$ {(there is a.s.\ a unique such $x$ for each fixed $z\in\BB C$; for the atypical points for which there are multiple possibilities, we arbitrarily choose the smallest possible value of $x$)}.
We extend the domain of definition of the embedded walk $\BB N_0 \ni j \mapsto \eta(X^{z,\ep}_j)$ from $\BB N_0$ to $[0,\infty)$ by piecewise linear interpolation.
Also let
\eqb \label{eqn-median-def}
m_\ep := \left( \text{median exit time of $\eta(X^{0,\ep})$ from $B_{1/2}$}\right)
\eqe
where $B_{1/2}$ is the Euclidean ball of radius $1/2$ centered at 0.
Note that this is an \emph{annealed} median, i.e., we are not conditioning on $(h,\eta)$ and so $m_\ep$ is deterministic.
In this whole plane setup, our main result is the following theorem.

\begin{thm} \label{thm-lbm-conv}
For each $z\in\BB C$, the conditional law of the embedded, linearly interpolated walk $(\eta(X^{z,\ep}_{m_\ep t}))_{t\geq 0}$ given $(L_t,R_t)_{t \in \BB R}$ (equivalently, given $(h,\eta)$) converges in probability to the rescaled law of $\gamma$-Liouville Brownian motion started from $z$ associated with\ $h$, defined in \eqref{D:LBMmed}, with respect to the Prokhorov topology induced by the local uniform metric on curves $[0,\infty)\rta \BB C$.
In fact, the convergence occurs uniformly over all points $z$ in any compact subset of $\BB C$.
\end{thm}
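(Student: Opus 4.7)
The plan is to combine three ingredients: (i) the convergence of $\eta(X^{z,\ep})$ to Brownian motion modulo time parametrization from \cite{gms-tutte}, (ii) an axiomatic characterization of Liouville Brownian motion via Revuz measure theory, and (iii) a quantitative identification of the discrete occupation measure of the walk with $\mu_h$ in the limit. The argument proceeds by showing tightness, then characterizing every subsequential limit as the LBM started from $z$.

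First I would establish tightness of the conditional laws of $(\eta(X^{z,\ep}_{m_\ep t}))_{t\geq 0}$ in the local uniform topology, given $(h,\eta)$. Tightness modulo time parametrization is already supplied by \cite{gms-tutte}, so what remains is to rule out pathological time behavior: one must show that the embedded walk neither escapes any fixed compact region too quickly nor gets trapped near any point. For the latter, a polynomial moment bound on the exit time of the walk from small Euclidean balls, together with a union bound over a countable mesh of such balls, gives modulus-of-continuity estimates which combine with the spatial tightness to yield tightness in the parametrized topology. Heat-kernel/Green's function estimates on $\mcl G^\ep$ of the type developed in the lead-up to \cite{gms-tutte} should suffice here.

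Second, I would prove an axiomatic characterization of LBM: the $\gamma$-LBM started from $z$ is the unique continuous strong Markov process $Y$ with $Y_0 = z$ such that (a) $Y$ is a time change of a Brownian motion in $D$ (up to the exit time from $D$), and (b) the PCAF inverting this time change has Revuz measure equal to $\mu_h$ (normalized by the factor $m_0$ arising in \eqref{D:LBMmed}). This is a statement within Dirichlet form / Markov process theory, and it is the new intermediate result announced in the abstract; the uniqueness follows since a PCAF is determined by its Revuz measure.

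Third, and this will be the main obstacle, one identifies every subsequential limit $Y$ of $(\eta(X^{z,\ep}_{m_\ep t}))_{t\geq 0}$ with LBM by verifying (a) and (b). Property (a) is immediate from \cite{gms-tutte} combined with the tightness step. For property (b), the key observation is that a cell $\eta([x-\ep,x])$ has $\mu_h$-area exactly $\ep$, so for any open $U \subset D$ the number of vertices $x$ with $\eta(x) \in U$ is comparable to $\mu_h(U)/\ep$. Using reversibility of simple random walk on $\mcl G^\ep$ with stationary measure the degree, the expected fraction of steps spent in $U$ up to a fixed time horizon is (in a Kesten-style ergodic sense) proportional to $\sum_{x : \eta(x) \in U} \deg(x)$, and the degrees on $\mcl G^\ep$ have the right annealed law for this sum to concentrate on a deterministic multiple of $\mu_h(U)/\ep$. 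Matching this with the definition of $m_\ep$ and passing to the limit shows that the occupation measure of $Y$ equals a deterministic multiple of $\mu_h$. This identifies the Revuz measure of the inverse time change and, together with (a), the characterization from the second step forces $Y$ to be LBM with the normalization of \eqref{D:LBMmed}.

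The hardest step is the Revuz measure identification in the third step, because we need quantitative control on time, not just on spatial trajectories, and we must convert a statement about counting measure on vertices weighted by discrete degrees into a statement about the continuum measure $\mu_h$ in a manner that is robust enough to pass through the subsequential weak limit. The uniformity in $z$ over compact sets should follow from the argument by a diagonal / compactness procedure together with continuity of LBM in its starting point (which in turn comes from the continuity of Brownian motion in its starting point and the continuity of the Liouville clock).
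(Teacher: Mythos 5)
Your overall strategy — tightness, an axiomatic characterization of LBM via the Revuz measure of the PCAF, and identification of the subsequential limit by passing the reversibility of the degree-weighted vertex measure to the limit — is exactly the route the paper takes. But there are two genuine gaps in your third step that the paper must and does address, and without them the argument does not close.

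First, and most seriously: you treat a subsequential limit $Y$ of the walk as though it were a fixed stochastic process, but what actually converges (subsequentially) is the joint law of $(h,\eta,P^\ep)$, where $P^\ep$ is the \emph{conditional law} of the walk given $(h,\eta)$. A subsequential limit $(h,\eta,P)$ has $P$ a priori \emph{not} measurable with respect to $(h,\eta)$, even though each $P^\ep$ is. (Measurability of one coordinate with respect to another is not preserved under convergence in law.) Consequently, when you invoke the PCAF/Revuz uniqueness to conclude that $P_z$ is LBM pre-composed with $t\mapsto ct$, the constant $c$ is a priori a \emph{random variable}, not a deterministic number. Your occupation-measure argument only identifies the Revuz measure up to a multiplicative constant, and that constant is random unless you prove otherwise. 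The paper resolves this by showing (Lemma~\ref{lem-deterministic-constant}) that $c$ is determined by each of two \emph{independent} curve-decorated quantum surfaces carved out by $\eta$ over disjoint time intervals, using the independence of past and future in the mating-of-trees theorem; hence $c$ is independent of itself and thus deterministic. This is an essential step your outline does not contain.

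Second, even once $c$ is deterministic, it may depend on the choice of subsequence. Your proposal says "matching this with the definition of $m_\ep$ and passing to the limit" pins down the normalization, but it does not explain the mechanism. The paper works with the scaling $t/\ep$ throughout the tightness and identification steps, obtains a subsequence-dependent constant $c$, and only at the very end observes that $\ep m_\ep$ converges along the subsequence to some $\frk m>0$; the product $c\frk m$ is then pinned by the requirement that the annealed median exit time from $B_{1/2}$ equal one, and this is the same for every subsequence. Your phrasing suggests $m_\ep$ enters the Revuz identification itself, which is not how it is used: $m_\ep$ is a final global rescaling to remove the residual subsequence dependence, and it can only do so once $c$ has already been shown deterministic.

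A smaller point: stating the characterization for a single starting point $z$ is not quite the right formulation. The PCAF/Revuz machinery characterizes the entire family $\{P_z\}_{z\in\BB C}$ simultaneously (the process for all starting points), and you need the Markov property of the full family plus continuity of $z\mapsto P_z$ (the strong Feller property, Lemma~\ref{lem-resolvent}) to invoke Harris recurrence and uniqueness of the invariant measure. This is why the paper's Proposition~\ref{prop-bm-unique} is stated in terms of the continuous map $P:\BB C\to\op{Prob}(C([0,\infty),\BB C))$ rather than a single law.
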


\begin{remark} \label{remark-scaling-constants}
Our proof shows that there is a constant $C>1$ depending only on $\gamma$ such that the time scaling constants $m_\ep$ of~\eqref{eqn-median-def} satisfy
\eqb \label{eqn-scaling-constants}
C^{-1} \ep^{-1} \leq m_\ep \leq C \ep^{-1} ,\quad \text{for all sufficiently small $\ep > 0$.}
\eqe
We do not know that $\ep m_\ep$ converges, so we do not get convergence in Theorems~\ref{thm-lbm-conv} and \ref{thm-lbm-conv_disk} when we scale time by $1/\ep$ instead of by $ m_\ep$.
See, however, Section~\ref{sec-outline}.
\end{remark}

We now give our theorem statement in the disk case.
The theorem is similar to the whole-plane case except that we state it for the {more intrinsic} Tutte embedding (rather than the a priori LQG embedding, although the result is also valid for this latter embedding).
Let $(h, \eta)$ denote the quantum disk decorated by an independent space-filling SLE describing the SLE/LQG embedding of the mated-CRT maps $(\mcl G^\ep)_{\ep>0}$ with the disk topology and parameter $\gamma$, as described above.
We assume that the marked point for $h$ (equivalently, the starting point for $\eta$) is 1.
Let $\psi^\ep :\mcl{VG}^\ep \to \BB{C}$ be the Tutte embedding of $\mcl G^\ep$, as defined in Section~\ref{sec-setup}.
For $z \in \BB{D}$ and $\ep > 0$, let $X^{z,\ep} : \BB N_0 \rta \mcl V\mcl G^\ep$ be the simple random walk on $\mcl G^\ep$ started from $x$, where $x = x(z,\ep)\in\mcl V\mcl G^\ep = \ep\BB Z \cap [0,1]$ is chosen to be a vertex such that $z$ is closest to $\psi^\ep(x)$. In case of ties, pick $x$ arbitrarily among the possible choices.
We extend the domain of definition of the embedded walk $\BB N_0 \ni j \mapsto \psi(X^{z,\ep}_j)$ from $\BB N_0$ to $[0,\infty)$ by piecewise linear interpolation.
{Let $m_\ep$ be as in~\eqref{eqn-median-def} (note that we define $m_\ep$ in terms of the whole-plane mated-CRT map in both the whole-plane and disk settings). }

\begin{thm} \label{thm-lbm-conv_disk}
For each $z\in\BB D$, consider the conditional law of the embedded, linearly interpolated walk $(\psi^\ep(X^{z,\ep}_{m_\ep t}))_{t\geq 0}$, given $(L_t,R_t)_{t \in [0,1]}$ (equivalently, given $(h,\eta)$). As $\ep\rta 0$, these laws converge in probability to the rescaled law of $\gamma$-Liouville Brownian motion started from $z$ {associated with }\ $h$, defined in \eqref{D:LBMmed}, stopped upon leaving the unit disk $\BB{D}$, with respect to the Prokhorov topology induced by the local uniform metric on curves $[0,\infty)\rta \BB C$.
In fact, the convergence occurs uniformly over all points $z$ in any compact subset of $\ol{\BB D}\setminus \{1\}$.
\end{thm}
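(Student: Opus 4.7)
The plan is to deduce Theorem~\ref{thm-lbm-conv_disk} in three steps: first prove the disk analog for the a priori SLE/LQG embedding (which parallels Theorem~\ref{thm-lbm-conv}), then transfer to the Tutte embedding via \eqref{E:TutteLQG}, and finally address the boundary behavior.

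For the first step, the disk analog with the SLE/LQG embedding should follow essentially as in the whole-plane Theorem~\ref{thm-lbm-conv}, since the quantum cone and quantum disk fields are locally absolutely continuous with respect to a GFF on compact subsets bounded away from their respective singular points, so local estimates for the walk transfer between the two settings. The heart of the argument is the \emph{axiomatic characterization of Liouville Brownian motion} advertised in the abstract: a continuous-time Markov process whose trajectories coincide in law with planar Brownian motion modulo time change, and whose Revuz measure (with respect to its infinitesimal generator) is the LQG area measure $\mu_h$, must coincide with LBM. Granted this characterization, I would (i) establish tightness of the embedded, time-rescaled walks $(\eta(X^{z,\ep}_{m_\ep t}))_{t \geq 0}$, using the scaling constants from Remark~\ref{remark-scaling-constants}; (ii) invoke the main result of \cite{gms-tutte} to identify every subsequential limit as a time-change of planar Brownian motion; and (iii) identify the Revuz measure of any subsequential limit as $\mu_h$ by a law of large numbers for the occupation measure of the walk in small Euclidean balls, weighted by the stationary degree measure (which is approximated by $\mu_h$ via \cite{gms-harmonic,gms-tutte}).

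For the second step, I would use the uniform closeness \eqref{E:TutteLQG}: since $\max_{x\in\mcl{VG}^\ep}|\psi^\ep(x) - \eta(x)| \to 0$ in probability, the two curves $(\psi^\ep(X^{z,\ep}_{m_\ep t}))_{t\geq 0}$ and $(\eta(X^{z,\ep}_{m_\ep t}))_{t\geq 0}$ are uniformly close in the local uniform topology, so convergence of one implies the same limit for the other. A minor subtlety is that the starting vertex is chosen differently under the two embeddings (nearest to $z$ under $\psi^\ep$ vs.\ the cell containing $z$ under $\eta$), but \eqref{E:TutteLQG} together with continuity of LBM in its starting point shows that both conventions lead to the same limit. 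For the boundary, since the Tutte embedding places $\bdy\mcl G^\ep$ on $\bdy \BB D$ by construction, the walk's exit from $\BB D$ is precisely its first hit of $\bdy\mcl G^\ep$, and convergence of this exit time to the LBM exit time follows from the identification of the Revuz measure in the first step. Uniformity in $z$ over compact subsets of $\ol{\BB D} \setminus \{1\}$ would follow from an Arzel\`a--Ascoli-type argument combined with the continuity of LBM in its starting point, away from the marked boundary point $1$ where the field has a log singularity.

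The main obstacle I expect is step (iii) of the first part, i.e., identifying the Revuz measure of subsequential limits. This requires quantitative control of the occupation time of the walk in small Euclidean balls, with sufficient uniformity to pass to the scaling limit; the necessary Green's function and effective resistance estimates on the mated-CRT map are delicate but should be obtainable by extending the techniques of \cite{gms-harmonic,gms-tutte,gm-spec-dim,gh-displacement}. A secondary difficulty, emphasized in Remark~\ref{remark-scaling-constants}, is that we do not know whether $\ep m_\ep$ converges; this rules out any direct matching of time scales and is precisely why the axiomatic characterization in terms of the Revuz measure — an intrinsic, parametrization-free object — is the appropriate tool rather than a direct ergodic comparison.
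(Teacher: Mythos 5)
Your proposal diverges from the paper in a way that creates a genuine gap. You propose to \emph{re-prove} the tightness and characterization machinery directly in the disk setting, citing local absolute continuity of the quantum disk field with a GFF. The paper does something structurally different: it proves the whole-plane Theorem~\ref{thm-lbm-conv} in full and then \emph{deduces} Theorem~\ref{thm-lbm-conv_disk} from it. This is not merely a stylistic choice. Several ingredients of the whole-plane argument have no direct disk analog: (a) the scale invariance of the $\gamma$-quantum cone (Lemma~\ref{lem-cone-scale}) is used to propagate estimates from $B_\rho$ to all of $\BB C$ and to pass from Proposition~\ref{prop-tight-rho} to Proposition~\ref{prop-tight}; (b) the Revuz/PCAF characterization (Proposition~\ref{prop-bm-unique}) is formulated for Markovian time changes of Brownian motion on all of $\BB C$, and its proof leans on recurrence of planar Brownian motion and Harris recurrence of the time-changed process on the whole plane — these arguments would need to be reworked for a killed process on a bounded domain; (c) the proof that the constant $c$ in Lemma~\ref{lem-random-constant} is deterministic (Lemma~\ref{lem-deterministic-constant}) crucially uses the independence of past/future curve-decorated quantum surfaces from the whole-plane mating of trees theorem; and (d) the time-normalization $m_\ep$ in the statement of Theorem~\ref{thm-lbm-conv_disk} is by definition computed from the \emph{whole-plane} walk, so a purely intrinsic disk proof would not even produce a theorem with the correct normalizing constants without a separate comparison to the whole-plane walk.

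There is a second gap: your absolute-continuity step focuses exclusively on the field $h$ and does not address the curve $\eta$. In the disk setting $\eta$ is a space-filling SLE$_\kappa$ \emph{loop}, whereas the whole-plane result is for the whole-plane space-filling SLE, and the mated-CRT map structure (hence the walk) depends on the curve, not just the field. The paper's transfer argument (Section~\ref{sec-plane-to-disk}) handles this by (i) identifying via the mating of trees theorem the quantum surface $\eta([0,\infty))$ with a weight-$(2-\gamma^2/2)$ quantum wedge (thick for $\gamma\le\sqrt 2$, a bead chain for $\gamma\in(\sqrt 2,2)$), (ii) comparing this wedge (or a single bead) to the quantum disk by local absolute continuity of fields away from the marked points, and (iii) establishing in Proposition~\ref{prop-sle-loop} the total variation convergence of the space-filling SLE ordering on a fixed ball as the chordal start point is pushed to the marked point, which is exactly what is needed to compare the cell structures of the two mated-CRT maps. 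Without the SLE-level comparison, the absolute continuity of the fields alone is not sufficient. Your second and third steps (transfer to the Tutte embedding via~\eqref{E:TutteLQG}, and using continuity of LBM in its starting point for uniformity on compacts of $\ol{\BB D}\setminus\{1\}$) match the paper's treatment and are fine.
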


Due to~\eqref{E:TutteLQG}, Theorem~\ref{thm-lbm-conv_disk} is equivalent to the analogous statement with the SLE/LQG embedding in place of the Tutte embedding.
Note that we only claim uniform convergence on compact subsets of $\ol{\BB D}\setminus \{1\}$ in Theorem~\ref{thm-lbm-conv_disk}, not uniform convergence on all of $\ol{\BB D}$. We know that the random walk on $\mcl G^\ep$ converges to Brownian motion \emph{modulo time parametrization} uniformly over all starting points in $\ol{\BB D}$~\cite[Theorem 3.4]{gms-tutte} (in the quenched sense). We expect that the result of Theorem~\ref{thm-lbm-conv_disk} also holds uniformly over all $z\in\ol{\BB D}$, but in order to prove this one would need some arguments to prevent the walk started near the marked point $1$ from getting ``stuck" and staying in a small neighborhood of $1$ for an unusually long time without hitting $\bdy\BB D$.

As we explain below, our proof of Theorem~\ref{thm-lbm-conv} relies on the one hand on proving a tightness result for the conditional law of $\{\eta(X^{z,\ep}_{m_\ep t})\}_{t\geq 0}$ given $(h,\eta)$, and on the other hand on a characterization statement for the subsequential limits, which is stated below as Theorem \ref{T:LBMchar_intro}.  
Our proof of convergence to Liouville Brownian motion is restricted to mated-CRT maps and does not apply in full to other natural models of random planar maps such as the UIPT.
Nevertheless, several aspects of our proof might be useful in this more general setting. 
In particular, the following characterization of Liouville Brownian motion is of independent interest and may be useful for identifying (subsequential) limits of random walk on other models of conformally embedded random planar maps. 
We state this result somewhat informally, deferring the actual necessary definitions and full statement to Section \ref{sec-characterization} and in particular Proposition \ref{prop-bm-unique}.

\begin{thm}
  \label{T:LBMchar_intro}
Let $h$ be the random distribution associated with the circle average embedding of a $\gamma$-quantum cone.
Suppose that we are given a coupling of $h$ and a random continuous function $P  = \{P_z : z\in\BB C\}$ which takes each $z\in\BB C$ to a (random) element $P_z$ of the space $\op{Prob}(C([0,\infty),\BB C))$ of probability measures on random continuous paths in $\BB C$ started from $z$.
Suppose that the following conditions hold:
\begin{itemize}
\item Conditional on $(h,P)$, a.s.\ for each $z\in\BB C$ the process with the law $P_z$ is Markovian;
\item For each $z \in \BB C$, $P_z$ has the law of a (random) time-change of a standard Brownian motion starting from $z$;
\item $P$ leaves the Liouville measure $\mu = \mu_h$ associated to $h$ invariant.
\end{itemize}
Then there is a (possibly random) constant $c$ such that for each $z \in\BB C$, $P_z$ coincides with the law of $(X^z_{ct}, t \ge 0)$, where $X^z$ is a Liouville Brownian motion {associated with $h$}, starting from $z$.
\end{thm}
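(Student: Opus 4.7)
The plan is to exploit the bijection between time-changes of Brownian motion and positive continuous additive functionals (PCAFs) of BM, mediated by the notion of Revuz measure. I work throughout conditionally on $(h,P)$, so every statement below is a quenched (almost sure) statement. Since for each $z$ the measure $P_z$ is a Markovian time-change of a planar Brownian motion $B^z$, classical time-change theory for (symmetric) Markov processes (as in Fukushima--Oshima--Takeda) produces a PCAF $A = (A_t)_{t\ge 0}$ of planar BM such that under $P_z$ the process is distributed as $(B^z_{A^{-1}(t)})_{t\ge 0}$ for every $z\in\BB C$ simultaneously. This $A$ depends measurably on $h$ but not on the starting point, and is unique up to equivalence. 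Let $\nu$ denote its Revuz measure, computed relative to Lebesgue measure (the invariant measure of planar BM). A standard change of variables $u = A^{-1}(s)$ in the occupation-time integral $\int_0^t \mathbf{1}_{X_s \in U}\, ds$ identifies $\nu$ as the invariant measure of the time-changed process.

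Now invoke the third hypothesis: $\mu_h$ is invariant for $P$. Planar Brownian motion is recurrent in $\BB C$, and hence so is any time-change of it by a PCAF whose Revuz measure charges every open set. Since $\mu_h$ has full topological support on $\BB C$ almost surely, the process with law $P_z$ is a recurrent Markov process, and for such a process the invariant Radon measure is unique up to a positive multiplicative constant. Combined with invariance of $\mu_h$, this yields $\nu = c^{-1}\mu_h$ for some (possibly random) $c>0$. The Revuz measure of the Liouville clock $\phi$ from~\eqref{D:LBMmed} is precisely $\mu_h$, essentially by the very construction of LBM in~\cite{berestycki-lbm,grv-lbm}; since the correspondence between PCAFs and Revuz measures is a bijection, the equality of Revuz measures lifts to the equality of functionals $A_t = c^{-1}\phi_t$. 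Therefore under $P_z$ the process is $B^z_{A^{-1}(t)} = B^z_{\phi^{-1}(ct)}$, which, after absorbing the deterministic rescaling by $m_0$ from~\eqref{D:LBMmed} into $c$, is exactly the law of $(X^z_{ct})_{t\ge 0}$, as desired.

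The main obstacle is the first step: extracting a single PCAF $A$, jointly measurable in the random environment $h$, which simultaneously generates every $P_z$ from the abstract hypotheses that each $P_z$ is Markovian and a time-change of BM. Uniqueness of the PCAF given the law of the time-changed process is standard, but one must verify the additivity and regularity required to apply the classical time-change theory (symmetry with respect to $\mu_h$ is effectively built into the invariance hypothesis, which is why the third condition is what ties the abstract time-change back to the Dirichlet-form framework). The remaining steps are largely mechanical applications of PCAF / Revuz calculus together with the full support of $\mu_h$, which underwrites both the recurrence and the uniqueness of the invariant measure needed at the key juncture.
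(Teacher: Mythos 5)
Your approach is essentially the same as the paper's: pass from each Markovian time-change of Brownian motion to a PCAF of planar BM, use the PCAF--Revuz bijection to identify the time change (up to a constant) from the invariant measure, then compare with the Liouville clock. The paper's Proposition~\ref{prop-bm-unique} is formulated symmetrically for two abstract Markovian time-changed BMs sharing an invariant measure, whereas you compare directly against LBM; this is a cosmetic difference. However, your argument has two genuine gaps, only one of which you acknowledge.

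The first gap you flag yourself: extracting a single PCAF $A$ that simultaneously generates every $P_z$. This is not a routine application of Fukushima--Oshima--Takeda time-change theory, because that theory runs in the forward direction (from a PCAF to a time-changed process); going backwards requires showing that the random time-change function, which a priori is defined pathwise and depends on the starting point, is in fact additive, adapted to the completed Brownian filtration, and given by a single functional of the path independent of $z$. This is exactly the content of the paper's Lemma~\ref{lem-time-change-pcaf}, whose proof uses the Markov property of the family $P$ (not just of each $P_z$) together with a nontrivial argument showing that the inverse time-change is a.s.\ determined by the Brownian path. You are right that this is where the real work lies, but you cannot simply cite ``classical time-change theory" for it.

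The second gap you do not acknowledge: you assert that because the time-changed process is recurrent and $\mu_h$ has full support, ``the invariant Radon measure is unique up to a positive multiplicative constant." Plain (topological) recurrence of a Markov process does not by itself give uniqueness of the invariant measure; one needs Harris recurrence, and the standard route to Harris recurrence from topological recurrence passes through the strong Feller property. The paper proves strong Feller for any continuous Markovian time-changed Brownian motion in Lemma~\ref{lem-resolvent} (by a coupling argument at the exit time of a small ball, exploiting total-variation continuity of Brownian harmonic measure in the starting point), and only then invokes the Revuz--Yor machinery for Harris recurrent processes to conclude uniqueness. Without this step your identification $\nu = c^{-1}\mu_h$ is unjustified. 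Aside from filling these two gaps, the remaining PCAF/Revuz bookkeeping in your sketch matches the paper's argument.
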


\subsection{Outline}
\label{sec-outline}

Most of the paper is devoted to the proof of Theorem~\ref{thm-lbm-conv}. See Section~\ref{sec-plane-to-disk} for an explanation of how one deduces Theorem~\ref{thm-lbm-conv_disk} from Theorem~\ref{thm-lbm-conv} using~\eqref{E:TutteLQG} and a local absolute continuity argument.
It is shown in~\cite[Theorem 3.4]{gms-tutte} that, in the setting of Theorem~\ref{thm-lbm-conv}, the conditional law of $\{\eta(X^{z,\ep}_{m_\ep t} )\}_{t\geq 0}$ given $(h,\eta)$ converges in probability to the law of Brownian motion started from $z$ with respect to the Prokhorov topology induced by the metric on curves in $\BB C$ modulo time parametrization. It is also shown there that the convergence is uniform over $z$ in any compact subset of $\BB C$.
We need to show that the walk in fact converges uniformly to $\gamma$-LBM.

Most of the work in the proof goes into proving an appropriate tightness result, which says that the conditional law of $\{\eta(X^{z,\ep}_{m_\ep t})\}_{t\geq 0}$ given $(h,\eta)$ admits subsequential limits as $\ep\rta 0$ (Proposition~\ref{prop-tight}).
This is done in Sections~\ref{sec-harnack} through~\ref{sec-tight} and requires us to establish many new estimates for the random walk on $\mcl G^\ep$, building on the results of~\cite{gms-tutte,gms-harmonic}.
The identification of the subsequential limit, {summarized in Theorem \ref{T:LBMchar_intro}}, is carried out in Section~\ref{sec-characterization}. {A crucial role is played in particular by the notion of Revuz measure associated to a Positive Additive Continuous Functional (PCAF) for general Markov processes.}

Before outlining the proof, we make some general comments about our arguments.
\begin{itemize}
\item Only a few main results from each section (usually stated at the beginning of the section) are used in subsequent sections. So, the different sections can to a great extent be read independently of each other.
\item Our proofs give quantitative estimates for random walk on the mated-CRT map which are stronger than what is strictly necessary to prove convergence to Liouville Brownian motion. Such estimates include up-to-constants bounds for exit times from Euclidean balls (Propositions~\ref{prop-exit-lower} and~\ref{prop-exit-moment}), bounds for the Green's function (Lemmas~\ref{lem-gr-bound-lower}, \ref{lem-gr-bound}, and \ref{lem-eff-res-diag}), and modulus of continuity estimates for random walk paths (Proposition~\ref{prop-walk-cont}) and for the law of the walk as a function of its starting point (Proposition~\ref{prop-walk-law-cont}).
\item For most of the proof of Theorem~\ref{thm-lbm-conv}, we will scale time by $1/\ep$ instead of by $m_\ep$, i.e., we will work with $(\eta(X^{z,\ep}_{t/\ep }))_{t\geq 0}$.
We switch to scaling time by $m_\ep$ at the very end of the proof, in Section~\ref{sec-lbm-conv}, for reasons which will be explained at the end of this outline.
\end{itemize}

In order to establish tightness of  $(\eta(X^{z,\ep}_{t/\ep }))_{t\geq 0}$, we need up-to-constants estimates for the conditional law given $(h,\eta)$ of the exit time of the (embedded) random walk on $\mcl G^\ep$ from a Euclidean ball.
Moreover, these estimates need to hold uniformly over all Euclidean balls contained in any given compact subset of $\BB C$.
Moments of exit times from Euclidean balls can be expressed as sums involving the discrete Green's function for random walk on $\mcl G^\ep$ stopped upon exiting the ball.
Hence we need to prove up-to-constants estimates for the discrete Green's function.

The results of~\cite{gms-harmonic} allow us to bound the discrete Dirichlet energies of discrete harmonic functions on $\mcl G^\ep$, which leads to bounds for effective resistances (equivalently, for the values of the discrete Green's function on the diagonal).
The main task in the proof of tightness is to transfer from these estimates to bounds for the behavior of the Green's function \emph{off} the diagonal.
\textbf{Section~\ref{sec-harnack}} contains the two key estimates which allow us to do this.
\begin{itemize}
\item Lemma~\ref{lem-eff-res-green} says that for \emph{any} finite graph $G$ and any vertex sets $A\subset B\subset\mcl V(G)$, the following is true.
The maximum (resp.\ minimum) value on the boundary $\bdy A$ of the Green's function for random walk killed upon exiting $B$ is bounded below (resp.\ above) by the effective resistance from $A$ to $B$.
\item Proposition~\ref{prop-harnack} is a Harnack-type inequality which says that the maximum and minimum values of the discrete Green's function on $\mcl G^\ep$ on the boundary of a Euclidean ball differ by at most a constant factor.
\end{itemize}
Combined, these two estimates reduce the problem of estimating the Green's function to the problem of estimating effective resistances, which we can do (with a non-trivial amount of work) using tools from~\cite{gms-harmonic}.

In \textbf{Sections~\ref{sec-exit-lower} and~\ref{sec-exit-upper}}, we use the ideas discussed above to establish a lower (resp.\ upper) bound for the Green's function, which in turn leads lower (resp.\ upper) bounds for the exit times of the random walk on $\mcl G^\ep$ from Euclidean balls.
In \textbf{Section~\ref{sec-tight}}, we use these estimates to establish the tightness of the conditional law of $\{\eta(X^{z,\ep}_{t/\ep})\}_{t\geq 0}$ given $(h,\eta)$.
The arguments involved in these steps are non-trivial, but we do not outline them here; see the beginnings of the individual sections and subsections for outlines.

In \textbf{Section~\ref{sec-characterization}}, we show that the subsequential limit must be LBM as follows.
Since we already know the convergence of the random walk on $\mcl G^\ep$ to Brownian motion modulo time parametrization, every subsequential limit of the conditional law of $\{\eta(X^{z,\ep}_{t/\ep})\}_{t\geq 0}$ given $(h,\eta)$ is a probability measure on curves $\wh X^z$ in $\BB C$ which are time-changed Brownian motions, with a continuous time change function.
Our estimates show that the time change function is in fact strictly increasing.
Furthermore, it can be checked using the Markov property for random walk on $\mcl G^\ep$ that $\wh X^z$ is Markovian (Lemma~\ref{lem-ssl-markov}).
Finally, since the counting measure on vertices of $\mcl G^\ep$ weighted by their degree is reversible for the random walk on $\mcl G^\ep$, it is easily seen that the $\gamma$-LQG measure $\mu_h$ is invariant (in fact, reversible) for $\wh X^z$.

The properties of the subsequential limit process described in the preceding paragraph are all also known to hold for Liouville Brownian motion~\cite{grv-lbm,grv-heat-kernel,berestycki-lbm}.
It turns out that these properties are in fact sufficient to uniquely characterize LBM up to a time change of the form $t\mapsto c t$ for a deterministic $c>0$ which does not depend on the starting point.
This is a consequence of a general proposition (Proposition~\ref{prop-bm-unique}, which is a generalization of Theorem \ref{T:LBMchar_intro}), which says that two Markovian time changed Brownian motions with a common invariant measure agree in law modulo such a time change. In our setting, the two Markovian time changed Brownian motions are the subsequential limit process and the LBM and the common invariant measure is the LQG measure.

As explained in Section~\ref{sec-bm-unique}, Proposition~\ref{prop-bm-unique} is a {(in our view surprisingly simple)} consequence of known results in general Markov process theory.
In particular, the time change function for a time changed Brownian motion is a positive continuous additive functionals (PCAF) of the Brownian motion with the standard parametrization. The Revuz measure  associated with the PCAF is an invariant measure for the time changed Brownian motion, and is in fact the unique such invariant measure up to multiplication by a deterministic constant.
Since the Revuz measure uniquely determines the PCAF, this shows that two different time changed Brownian motions with the same invariant measure must agree in law modulo a linear time change, as required.

The above argument shows that for any sequence of positive $\ep$'s tending to zero, there is a subsequence $\mcl E$ and a deterministic\footnote{We are glossing over a technicality here --- as explained in Section~\ref{sec-lbm-conv}, some argument is needed to show that the constant $c$ is deterministic rather than just a random variable which is determined by $(h,\eta)$ and the limiting law on random paths.} constant $c> 0$ \emph{which may depend on $\mcl E$} along which $\{\eta(X^{z,\ep}_{t/\ep})\}_{t\geq 0}$ converges in law to LBM started from $z$ pre-composed with the linear time change $t\mapsto c t$ (in the quenched sense).
This does not yet give Theorem~\ref{thm-lbm-conv} since $c$ can depend on the subsequence.
To get around this, we need to scale time by $m_\ep$ instead of by $1/\ep$.
Indeed, by the definition~\eqref{eqn-median-def}, the median exit time of the process  $\{\eta(X^{z,\ep}_{m_\ep t})\}_{t\geq 0}$ from $B_{1/2}$ is equal to 1, so all of the possible subsequential limits of the laws of this process must be LBM with the \emph{same} linear time change.

\textbf{Appendix~\ref{sec-appendix}} contains some basic estimates for the $\gamma$-LQG measure and for space-filling SLE cells which are used in our proofs.
The proofs in this appendix are routine and do not use any of the other results in the paper, so are collected here to avoid distracting from the main ideas of the argument.

\subsubsection*{Acknowledgments}
We thank an anonymous referee for helpful comments on an earlier version of the paper. 
We thank Sebastian Andres, Zhen-Qing Chen, Takashi Kumagai, Jason Miller, and Scott Sheffield for helpful discussions.
N.B.'s work was partly supported by EPSRC grant EP/L018896/1 and FWF grant P 33083 on ``Scaling limits in random conformal geometry". E.G.\ was partially supported by a Trinity College junior research fellowship, a Herchel Smith fellowship, and a Clay Research Fellowship.
Part of this work was conducted during a visit by E.G.\ to the University of Vienna in March 2019 and a visit by N.B.\ to the University of Cambridge in May 2019.
We thank the two institutions for their hospitality.

\section{Preliminaries}
\label{sec-prelim}

\subsection{Basic notation}
\label{sec-basic}

\noindent
We write $\BB N = \{1,2,3,\dots\}$ and $\BB N_0 = \BB N \cup \{0\}$.
\medskip

\noindent
For $a < b$, we define the discrete interval $[a,b]_{\BB Z}:= [a,b]\cap\BB Z$.
\medskip

\noindent
For a graph $G$, we write $\mcl V(G)$ and $\mcl E(G)$ for its vertex and edge sets, respectively.
\medskip

\noindent
For $z\in\BB C$ and $r>0$, we write $B_r(z)$ for the Euclidean ball of radius $r$ centered at $z$.
We abbreviate $B_r = B_r(0)$.
\medskip

\noindent
If $f  :(0,\infty) \rta \BB R$ and $g : (0,\infty) \rta (0,\infty)$, we say that $f(\ep) = O_\ep(g(\ep))$ (resp.\ $f(\ep) = o_\ep(g(\ep))$) as $\ep\rta 0$ if $f(\ep)/g(\ep)$ remains bounded (resp.\ tends to zero) as $\ep\rta 0$. We similarly define $O(\cdot)$ and $o(\cdot)$ errors as a parameter goes to infinity.
\medskip

\noindent
If $f,g : (0,\infty) \rta [0,\infty)$, we say that $f(\ep) \preceq g(\ep)$ if there is a constant $C>0$ (independent from $\ep$ and possibly from other parameters of interest) such that $f(\ep) \leq  C g(\ep)$. We write $f(\ep) \asymp g(\ep)$ if $f(\ep) \preceq g(\ep)$ and $g(\ep) \preceq f(\ep)$.
\medskip

\noindent
Let $\{E^\ep\}_{\ep>0}$ be a one-parameter family of events. We say that $E^\ep$ occurs with
\begin{itemize}
\item \emph{polynomially high probability} as $\ep\rta 0$ if there is a $p > 0$ (independent from $\ep$ and possibly from other parameters of interest) such that  $\BB P[E^\ep] \geq 1 - O_\ep(\ep^p)$.
\item \emph{superpolynomially high probability} as $\ep\rta 0$ if $\BB P[E^\ep] \geq 1 - O_\ep(\ep^p)$ for every $p>0$.
\end{itemize}
We similarly define events which occur with polynomially, superpolynomially, and exponentially high probability as a parameter tends to $\infty$.
\medskip

\noindent
We will often specify any requirements on the dependencies on rates of convergence in $O(\cdot)$ and $o(\cdot)$ errors, implicit constants in $\preceq$, etc., in the statements of lemmas/propositions/theorems, in which case we implicitly require that errors, implicit constants, etc., appearing in the proof satisfy the same dependencies.

\subsection{Background on Liouville quantum gravity and SLE}
\label{sec-sle-lqg}

Throughout this paper we fix the LQG parameter $\gamma \in (0,2)$ and the corresponding SLE parameter $\kappa = 16/\gamma^2  > 4$.

\subsubsection{Liouville quantum gravity}
\label{sec-lqg-prelim}

We will give some relatively brief background on LQG. We will in particular focus on quantum cones (which are the main types of quantum surfaces considered in this paper) and also briefly discuss quantum disks.
The following definition is taken from~\cite{shef-kpz,shef-zipper,wedges}.

\begin{defn} \label{def-lqg-surface}
A \emph{$\gamma$-Liouville quantum gravity (LQG) surface} is an equivalence class of pairs $(D,h)$, where $D\subset \BB C$ is an open set and $h$ is a distribution on $D$ (which will always be taken to be a realization of a random distribution which locally looks like the Gaussian free field), with two such pairs $(D,h)$ and $(\wt D , \wt h)$ declared to be equivalent if there is a conformal map $f : \wt D \rta D$ such that
\eqb \label{eqn-lqg-coord}
\wt h = h\circ f + Q\log |f'| \quad \text{for} \quad Q = \frac{2}{\gamma}  + \frac{\gamma}{2} .
\eqe
More generally, for $k\in \BB N$ a \emph{$\gamma$-LQG surface with $k$ marked points} is an equivalence class of $k+2$-tuples $(D,h,x_1,\dots,x_k)$ where $x_1,\dots,x_k \in D\cup \bdy D$, with the equivalence relation defined as in~\eqref{eqn-lqg-coord} except that the map $f$ is required to map the marked points of one surface to the corresponding marked points of the other.
\end{defn}

We think of different elements of the same equivalence class in Definition~\ref{def-lqg-surface} as representing different parametrizations of the same surface.
If $(D,h,x_1,\dots,x_k)$ is a particular equivalence class representative of a quantum surface, we call $h$ an \emph{embedding} of the surface into $(D,x_1,\dots,x_k)$.

Suppose now that $h$ is a random distribution on $D$ which can be coupled with a GFF on $D$ in such a way that their difference is a.s.\ a continuous function.
Following~\cite[Section 3.1]{shef-kpz}, we can then define for each $z\in\BB C$ and $\ep > 0$ the \emph{circle average} $h_\ep(z)$ of $h$ over $\bdy B_\ep(z)$.
It is shown in~\cite[Proposition 3.1]{shef-kpz} that $(z,\ep) \mapsto h_\ep(z)$ a.s.\ admits a continuous modification.
We will always assume that this process has been replaced by such a modification.

One can define the \emph{$\gamma$-LQG area measure} $\mu_h$ on $D$, which is defined to be the a.s.\ limit
\eqbn
\mu_h = \lim_{\ep \rta 0} \ep^{\gamma^2/2} e^{\gamma h_\ep(z)} \, dz
\eqen
with respect to the local Prokhorov distance on $D$ as $\ep\rta 0$ along powers of 2~\cite{shef-kpz}.
One can similarly define a boundary length measure $\nu_h$ on certain curves in $D$, including $\bdy D$~\cite{shef-kpz} and SLE$_{\ul\kappa}$-type curves for $\ul\kappa =\gamma^2$ which are independent from $h$~\cite{shef-zipper}. If $h$ and $\wt h$ are related by a conformal map as in~\eqref{eqn-lqg-coord}, then $f_* \mu_{\wt h} = \mu_h$ and $f_* \nu_{\wt h} = \nu_h$.
Hence $\mu_h$ and $\nu_h$ are intrinsic to the LQG surface --- they do not depend on the choice of parametrization.
The measures $\mu_h$ and $\nu_h$ are a special case of a more general theory of regularized random measures called \emph{Gaussian multiplicative chaos} which originates in work of Kahane~\cite{kahane}. See~\cite{berestycki-gmt-elementary} for an elementary proof of convergence (and independence of the limit with respect to the regularization procedure) and \cite{rhodes-vargas-review} for a survey of this theory.

The main type of LQG surface which we will be interested in in this paper is the \emph{$\gamma$-quantum cone}, which is the surface appearing in the SLE/LQG embedding of the mated-CRT map.
The $\gamma$-quantum cone is a doubly marked LQG surface $(\BB C ,h , 0, \infty)$ introduced in~\cite[Definition~4.10]{wedges}.
Roughly speaking, the $\gamma$-quantum cone is the surface obtained by starting with a general $\gamma$-LQG surface, sampling a point $z$ uniformly from the $\gamma$-LQG area measure, then ``zooming in" near the marked point and re-scaling so that the $\mu_h$-mass of the unit disk remains of constant order~\cite[Proposition~4.13(ii) and Lemma~A.10]{wedges}.
This surface can be described explicitly as follows.

\begin{defn}[Quantum cone] \label{def-quantum-cone}
The \emph{$\gamma$-quantum cone} is the LQG surface $(\BB C , h , 0 , \infty)$ with the distribution $h$ defined as follows.
Let $B$ be a standard linear Brownian motion and let $\wh B$ be a standard linear Brownian motion conditioned so that $\wh B_t + (Q-\gamma) t > 0$ for all $t >0$ (here $Q = 2/\gamma + \gamma/2$, as in~\eqref{eqn-lqg-coord}).
Let $A_t = B_t - \alpha t$ for $t\geq 0$ and let $A_t = \wh B_{-t} + \gamma t$ for $t < 0$.
The circle average process of $h$ centered at the origin satisfies $h_{e^{-t}}(0) = A_t$ for each $t\in\BB R$.
The ``lateral part" $h - h_{|\cdot|}(0)$ is independent from $\{h_{e^{-t}}(0)\}_{t\in\BB R}$ and has the same law as $\wt h - \wt h_{|\cdot|}(0)$, where $\wt h$ is a whole-plane GFF.
\end{defn}

By Definition~\ref{def-lqg-surface}, one can get another embedding of the $\gamma$-quantum cone by replacing $h$ by $h(a\cdot) + Q\log |a|$ for any $a\in\BB C\setminus \{0\}$.
The particular embedding $h$ appearing in Definition~\ref{def-quantum-cone} is called the \emph{circle average embedding} and is characterized by the condition that $\sup\{r > 0 : h_r(0) +Q \log r = 0\} = 1$.
The circle average embedding is especially convenient to work with since for this embedding, $h|_{\BB D}$ agrees in law with the corresponding restriction of a whole-plane GFF plus $-\gamma \log |\cdot|$, normalized so that its circle average over $\bdy \BB D$ is 0.
Indeed, this is essentially immediate from Definition~\ref{def-quantum-cone} since the process $A_t$ has no conditioning for $t > 0$.

Since we can only compare $h$ to the whole-plane GFF  on the unit disk $\BB D$, for many of our estimates we will restrict attention to a Euclidean ball of the form $B_\rho$ for $\rho \in (0,1)$.
It is possible to transfer these estimates to larger Euclidean balls using the scale invariance property of the $\gamma$-quantum cone, which is proven in~\cite[Proposition 4.13(i)]{wedges}.

\begin{lem} \label{lem-cone-scale}
Let $h$ be the circle average embedding of a $\gamma$-quantum cone.
For $b >0$, define
\eqb \label{eqn-cone-scale-radius}
R_b := \sup\left\{ r > 0  : h_r(0) + Q\log r =  \frac{1}{\gamma} \log b \right\} .
\eqe
Then $h^b := h(R_b\cdot) + Q \log R_b -\frac{1}{\gamma} \log b \eqD h$.
\end{lem}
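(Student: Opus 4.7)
The plan is to verify by direct computation that $h^b$ is the circle-average embedding of a $\gamma$-quantum cone in the sense of Definition~\ref{def-quantum-cone}, which then forces $h^b \eqD h$. First I would check the normalization condition $\sup\{r > 0 : h^b_r(0) + Q\log r = 0\} = 1$: via the substitution $s = R_b r$ one has $h^b_r(0) + Q\log r = h_s(0) + Q\log s - \frac{1}{\gamma}\log b$, which vanishes precisely when $h_s(0) + Q\log s = \frac{1}{\gamma}\log b$, and the supremum of such $s$ is $R_b$ by definition, corresponding to $r = 1$.

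Next I would decompose $h^b$ into its circle-average process $A^b_t := h^b_{e^{-t}}(0)$ and its lateral part $h^b - h^b_{|\cdot|}(0)$, and check each half against Definition~\ref{def-quantum-cone}. For the lateral part, the identity $(h^b - h^b_{|\cdot|}(0))(z) = (h - h_{|\cdot|}(0))(R_b z)$ is immediate from the formulas. The desired law and independence then follow from the scale invariance of the centered whole-plane GFF under deterministic spatial rescaling, combined with the independence of the lateral part of $h$ from the circle-average process of $h$ (which in turn determines $R_b$).

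For the circle-average process, let $T_b := -\log R_b$ and $U_t := A_t - Qt$. The defining relation $U_{T_b} = \frac{1}{\gamma}\log b$ yields the identity $A^b_t = A_{T_b + t} - A_{T_b}$, so it suffices to show the renewal property that $(U_{T_b + t} - U_{T_b})_{t \in \BB R}$ has the same law as $U$. The forward direction $t \geq 0$ follows from the strong Markov property of the drifting Brownian motion describing $U$ on $[0,\infty)$ applied at the first-passage time $T_b$ (with an analogous treatment of the conditioned piece when $T_b < 0$). The main obstacle is the backward direction $t < 0$: here one needs a Williams-type time-reversal identity for Brownian motion with drift conditioned to stay positive, showing that after shifting time by $T_b$ and recentering by $U_{T_b}$, the past of $U$ is again distributed as a conditioned drifting Brownian motion of the prescribed law. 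This scale-invariance identity is essentially Proposition~4.13(i) of \cite{wedges}, which I would cite after matching parametrizations.
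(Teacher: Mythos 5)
The paper gives no proof of this lemma: it simply states it and attributes it to \cite[Proposition~4.13(i)]{wedges}, which is precisely the scale-invariance statement for the $\gamma$-quantum cone. Your proposal is a correct unpacking of what such a proof entails: the normalization check via the substitution $s = R_b r$ is right, the identity $(h^b - h^b_{|\cdot|}(0))(\cdot) = (h - h_{|\cdot|}(0))(R_b\,\cdot)$ is right, and the step of conditioning on the radial part to deal with the fact that $R_b$ is random (so that the lateral part is rescaled by a deterministic factor in the conditional picture) is the correct way to handle the independence. The reduction $A^b_t = A_{T_b + t} - A_{T_b}$ and the reformulation as a renewal property of $U_t = A_t - Qt$ is also right.

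Two remarks. First, the citation at the end is circular as phrased: \cite[Proposition~4.13(i)]{wedges} is the full-field statement of Lemma~\ref{lem-cone-scale}, not a one-dimensional identity for drifting Brownian motion. You should either cite it for the entire conclusion (treating the decomposition you carried out as exposition of why it holds, which is in fact how \cite{wedges} proves it), or, if you want a self-contained proof, actually carry out the Williams-type time-reversal for $U$ rather than quoting it. Second, the split into an ``easy forward'' direction by strong Markov and a ``hard backward'' direction is accurate only when $T_b > 0$ (i.e.\ $b < 1$). When $b > 1$, $T_b < 0$ and the forward increments $(U_{T_b+t} - U_{T_b})_{t \ge 0}$ first traverse the conditioned piece of $U$, so the strong Markov property of the free drifting Brownian motion is not directly available; that case too reduces to the same time-reversal identity, and it is not merely an ``analogous treatment.'' Neither of these affects the overall correctness, since the nontrivial content is in any case exactly what \cite[Proposition~4.13(i)]{wedges} supplies.
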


Lemma~\ref{lem-cone-scale} says that the law of $h$ is invariant under the operation of scaling areas by a constant (i.e., adding $\frac{1}{\gamma} \log b$ to $h$), then re-scaling space and applying the LQG coordinate change formula~\eqref{eqn-lqg-coord} so that the new field is embedded in the same way as $h$.

In addition to the $\gamma$-quantum wedge we will also have occasion to consider the quantum disk, which appears in Theorem~\ref{thm-lbm-conv_disk}.
To define the quantum disk, one first defines an infinite measure $\mcl M^{\op{disk}}$ on doubly quantum surfaces $(\BB D , h , -1,1)$ using the Bessel excursion measure.
We will not need the precise definition here, so we refer to~\cite[Section 4.5]{wedges} for details.
The infinite measure $\mcl M^{\op{disk}}$ assigns finite mass to quantum surfaces with boundary length $\nu_h(\bdy\BB D) \geq L$ for any $L > 0$.
By considering the regular conditional law of $\mcl M^{\op{disk}}$ given $\{\nu_h(\bdy \BB D) = L , \mu_h(\BB D ) = A\}$ for any $L , A  > 0$, one defines the \emph{doubly marked quantum disk with boundary length $L$ and area $A$}. We define the \emph{singly marked quantum disk} or the \emph{unmarked quantum disk} by forgetting one or both of the marked boundary points. By Proposition~\cite[Proposition A.8]{wedges}, the marked points of a quantum disk are uniform samples from the LQG boundary length measure. That is, if $(\BB D , h)$ is an unmarked quantum disk (with some specified area and boundary length) and conditional on $h$ we sample $x,y$ independently from the probability measure $\nu_h / L$, then $(\BB D , h , x)$ is a single marked quantum disk and $(\BB D , h , x,y)$ is a doubly marked quantum disk.

\subsubsection{Space-filling SLE$_{\kappa}$}
\label{sec-wpsf-prelim}

The Schramm-Loewner evolution (SLE$_\kappa$) for $\kappa > 0$ is a one-parameter family of random fractal curves originally defined by Schramm in~\cite{schramm0}.
SLE$_\kappa$ curves are simple for $\kappa \in (0,4]$, self-touching, but not space-filling or self-crossing, for $\kappa \in (4,8)$, and space-filling but still not self-crossing for $\kappa \geq 8$~\cite{schramm-sle}.
One can consider SLE$_\kappa$  curves between two marked boundary points of a simply connected domain (chordal), from a boundary point to an interior point (radial), or between two points in $\BB C\cup\{\infty\}$ (whole-plane). We refer to~\cite{lawler-book} or~\cite{werner-notes} for an introduction to SLE.

We first consider the \textbf{whole-plane space-filling SLE$_\kappa$ from $\infty$ to $\infty$}, which was introduced in~\cite[Sections 1.2.3 and 4.3]{ig4}.
This is a random space-filling curve $\eta$ in $\BB C$ which travels from $\infty$ to $\infty$ which fills all of $\BB C$ and never enters the interior of its past.
Moreover, its law is invariant under spatial scaling: for any $r>0$, $r\eta$ agrees in law with $ \eta$ viewed as curves modulo time parametrization.
This is essentially the only information about space-filling SLE$_\kappa$ which is needed to understand this paper --- we do not use any detailed information about the geometry of the curve.
However, we briefly describe how space-filling SLE$_\kappa$ is defined (without details) to give the reader some more intuition about what this curve is.
See~\cite[Section 3.6]{ghs-mating-survey} for a detailed review of space-filling SLE$_\kappa$.

When $\kappa \geq 8$, in which case ordinary SLE$_\kappa$ is already space-filling, whole-plane SLE$_\kappa$ from $\infty$ to $\infty$ describes the local behavior of an ordinary chordal SLE$_\kappa$ curve near a typical interior point.
For any $a < b$, the set $\eta([a,b])$ has the topology of a closed disk and its boundary is the union of four SLE$_{\ul\kappa}$-type curves for $\ul\kappa = 16/\kappa$ which intersect only at their endpoints.

When $\kappa \in (4,8)$, the definition of space-filling SLE$_\kappa$ is more involved.
Roughly speaking, chordal space-filling SLE$_\kappa$ can be obtained by starting with an ordinary chordal SLE$_\kappa$ curve and iteratively ``filling in" the bubbles which it disconnects from its target point by SLE$_\kappa$-type curves.
It is shown in~\cite{ig4} that the curve one obtains after countably many iterations is space-filling and continuous when parameterized so that it traverses one unit of Lebesgue measure in one unit of time.
The whole-plane space-filling SLE$_\kappa$ from $\infty$ to $\infty$ for $\kappa \in (4,8)$ describes the local behavior of the chordal version near a typical interior point, as in the case $\kappa \geq 8$.

The topology of the curve is much more complicated for $\kappa \in (4,8)$ than for $\kappa \geq 8$.
For $a < b$, neither the interior of the set $\eta([a,b])$ nor its complement $\BB C\setminus \eta([a,b])$ is connected. Rather, each of these sets consists of a countable union of domains with the topology of the open disk (or the punctured plane in the case of the unbounded connected component of $\BB C\setminus \eta([a,b])$.
The boundary of $\eta([a,b])$ is the union of four SLE$_{\ul\kappa}$-type curves for $\ul\kappa = 16/\kappa$, but these curves typically intersect each other.

If $D\subset \BB C$  is a simply connected domain and $a\in\bdy D$, we can similarly define the clockwise \textbf{space-filling SLE$_\kappa$ loop in $D$ based at $a$}. This is a random curve in $D$ from $a$ to $a$ which is the limit of chordal space-filling SLE from $a$ to $b$ in $D$ as $a\rta b$ from the counterclockwise direction. See Section \ref{sec-space-filling-sle} for details.

\subsection{Mated-CRT map setup}
\label{sec-mated-crt-setup}

Throughout most of this paper we will work with the following setup.
Let $(\BB C , h , 0 , \infty)$ be a $\gamma$-quantum cone with the circle average embedding.
Let $\eta$ be a whole-plane space-filling SLE$_\kappa$ with $\kappa = 16/\gamma^2$ sampled independently from $h$ and then parametrized so that $\eta(0) = 0$ and $\mu_h(\eta([a,b])) = b-a$ for every $a < b$. We define the \emph{cells}
\eqb \label{eqn-cell-def}
H_x^\ep := \eta([x-\ep, x])  ,\quad \forall\ep> 0,\quad \forall x\in\ep\BB Z.
\eqe

For $\ep > 0$, we let $\mcl G^\ep$ be the mated-CRT map associated with $(h,\eta)$, i.e., $\mcl V\mcl G^\ep = \ep\BB Z$ and two distinct vertices $x,y\in \mcl V\mcl G^\ep$ are connected by one (resp.\ two) edges if and only if $H_x^\ep \cap H_y^\ep$ has one (resp.\ two) connected component which are not singletons.
For $z\in\BB C$, we define
\eqb \label{eqn-pt-def}
x_z^\ep := \left(\text{smallest $x\in \ep\BB Z$ such that $z\in H_x^\ep$}\right)
\eqe
and we note that for a fixed $z\in\BB C\setminus \{0\}$, $x_z^\ep$ is in fact the only $x\in\ep\BB Z$ for which $z\in H_x^\ep$ (this is related to the fact that the boundaries of the cells of $\mcl G^\ep$ have zero Lebesgue measure).
For a set $D\subset\BB C$, we define
\eqb \label{eqn-subgraph}
\mcl G^\ep(D) :=\left( \text{subgraph of $\mcl G^\ep$ induced by $\{x \in \ep \BB Z: H_x^\ep \cap D\not=\emptyset\}$}\right) .
\eqe

We write $X^\ep$ for the random walk on $\mcl G^\ep$.
For $x\in\mcl V\mcl G^\ep$, we define
\eqb \label{eqn-cond-law}
\ol{\BB P}_x^\ep := \left(\text{conditional law given $(h,\eta)$ of $X^\ep$ started from $X_0^\ep = x$} \right)
\eqe
and we write $\ol{\BB E}_x^\ep$ for the corresponding expectation.
For $D\subset \BB C$, we define
\eqb \label{eqn-exit-time}
\tau_D^\ep := \left(\text{first exit time of $X^\ep$ from $\mcl G^\ep(D)$} \right).
\eqe

Since $h|_{B_1 }$ agrees in law with the corresponding restriction of a whole-plane GFF plus $\gamma\log|\cdot|^{-1}$, it will be convenient in most of our arguments to restrict attention to a proper subdomain of $B_1$.
Consequently, throughout the paper we fix $\rho \in (0,1)$ and work primarily in $B_\rho$.

We will frequently need the following estimate for the Euclidean diameters of space-filling SLE cells.

\begin{lem}[Cell diameter estimates] \label{lem-cell-diam}
Fix a small parameter $\zeta\in (0,1)$. With polynomially high probability as $\ep\rta 0$, the Euclidean diameters of the cells of $\mcl G^\ep$ which intersect $B_\rho$ satisfy
\eqb \label{eqn-cell-diam}
\ep^{\frac{2}{(2-\gamma)^2} + \zeta} \leq  \op{diam} H_x^\ep \leq \ep^{\frac{2}{(2+\gamma)^2} - \zeta} ,\quad \forall x \in \mcl V\mcl G^\ep(B_\rho) .
\eqe
\end{lem}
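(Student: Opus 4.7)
The plan is to deduce both bounds from two standard auxiliary estimates that I would collect in Appendix~\ref{sec-appendix}: a moment/GMC bound on the $\mu_h$-mass of small Euclidean balls, and a Hölder-regularity bound for the curve $\eta$ when parametrized by $\mu_h$-area. The key inputs use only the circle-average embedding of the $\gamma$-quantum cone and general GMC theory, so they don't rely on any of the subsequent material in the paper.

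For the \emph{lower bound} on $\op{diam} H_x^\ep$, the main ingredient is:
\[
\BB P\!\left[\sup_{z \in B_\rho} \mu_h(B_r(z)) \leq r^{(2-\gamma)^2/2 - \zeta'} \text{ for all } r \in [\ep^{N},1]\right] \geq 1 - O_\ep(\ep^{p})
\]
for some $\zeta', p > 0$ and a large fixed $N$. This is a standard consequence of moment bounds for $\mu_h$ combined with the comparison between $h|_{B_1}$ and the whole-plane GFF (plus a logarithmic singularity) that follows from Definition~\ref{def-quantum-cone}; uniformity in $z$ is obtained by a union bound over a polynomial-size $r$-net, using monotonicity of $\mu_h$ to pass from net points to arbitrary $z$. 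Given this estimate, if $H_x^\ep$ intersects $B_\rho$ and has diameter $d$, then (once I check that the cell stays close to $B_\rho$, which follows from a crude a priori diameter bound) $H_x^\ep$ is contained in a single ball of radius $d$ centered at a point of $B_\rho$, so $\mu_h(H_x^\ep) = \ep$ forces $d^{(2-\gamma)^2/2 - \zeta'} \geq \ep$, hence $d \geq \ep^{2/(2-\gamma)^2 + \zeta}$ for $\zeta = O(\zeta')$.

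For the \emph{upper bound}, I would invoke Hölder regularity of $\eta$ w.r.t.\ its quantum parametrization: with polynomially high probability there is a random constant $C>0$ such that
\[
|\eta(s) - \eta(t)| \leq C |s-t|^{2/(2+\gamma)^2 - \zeta'}
\]
for all $s,t$ in a fixed bounded time interval containing the indices of all cells meeting $B_\rho$. The exponent $2/(2+\gamma)^2$ is the KPZ-type dual of the matching GMC lower bound $\mu_h(B_r(z)) \geq r^{(2+\gamma)^2/2 + o(1)}$, which one proves along the same lines as the upper mass bound above. Once this Hölder estimate is in hand, applying it to $s,t \in [x-\ep, x]$ gives $\op{diam} H_x^\ep \leq C \ep^{2/(2+\gamma)^2 - \zeta'}$, which can be absorbed into $\ep^{2/(2+\gamma)^2 - \zeta}$ for all sufficiently small $\ep$.

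The main technical obstacle is the Hölder estimate for $\eta$, since it requires a uniform-in-scale lower bound on $\mu_h$-masses of small balls around all points of $\eta$ in the relevant time window. Concretely, for a dyadic grid of times and scales one bounds the maximum Euclidean displacement of $\eta$ over an interval of length $\sim 2^{-k}$ by the smallest radius $r$ such that every ball of radius $r$ centred at a point of $\eta$ in the relevant spatial region has $\mu_h$-mass $\geq 2^{-k}$; chaining dyadic scales (Kolmogorov-style) and summing the polynomial tails from the GMC lower bound then produces the stated Hölder exponent. The lower-bound argument, by contrast, is essentially deterministic once the upper mass bound is established. Since neither input uses material from later sections of the paper, both are naturally deferred to the appendix.
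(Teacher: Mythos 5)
Your lower-bound argument matches the paper's exactly: the paper applies Lemma~\ref{lem-ball-mass} (the upper bound for the $\mu_h$-mass of Euclidean balls, proven in the appendix) with $\delta = \ep^{2/(2-\gamma)^2+\zeta}$ to conclude that no ball of that radius centered in $\BB D$ has mass $\geq \ep$, and since every cell has $\mu_h$-mass exactly $\ep$, no cell can fit inside such a ball. This is precisely the deterministic pigeonhole you describe.

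For the upper bound, the paper simply cites~\cite[Lemma 2.7]{gms-harmonic}, whereas you attempt a self-contained chaining argument; and that argument has a genuine gap. The key step you propose is: ``if every ball of radius $r$ centered at a point of $\eta$ in the relevant region has $\mu_h$-mass at least $2^{-k}$, then $\eta$ cannot have Euclidean displacement more than $r$ over any time interval of length $2^{-k}$.'' This implication does not follow. Suppose $\eta$ exits $B_r(\eta(s))$ at some time $u\in(s,s+2^{-k}]$. Then $\eta([s,u]) \subset \overline{B_r(\eta(s))}$ has $\mu_h$-mass $u-s \leq 2^{-k}$, which is perfectly consistent with $\mu_h(B_r(\eta(s))) \geq 2^{-k}$: the set $\eta([s,u])$ could a priori be a thin sliver occupying a tiny fraction of the ball's mass, with the remainder of the ball covered by $\eta$ at other, non-adjacent times. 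To rule out such thin slivers one needs a nontrivial geometric input about space-filling SLE, namely that the cells $\eta([a,b])$ contain Euclidean balls of radius comparable (up to polynomial-in-$\ep$ factors) to their diameter. This is not a GMC fact but an SLE fact, and the paper invokes exactly this later (in the proof of Lemma~\ref{lem-cell-comparable}, via~\cite[Proposition~3.4 and Remark~3.9]{ghm-kpz}). Without such an input the mass lower bound on balls gives no control on how far $\eta$ can travel in a short quantum time, and your Kolmogorov-style chaining cannot get off the ground. The exponent you identify, $2/(2+\gamma)^2$, is indeed the correct one and does come out of combining the GMC mass lower bound with this SLE geometry, but that second ingredient must be named and supplied.
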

\begin{proof}
The upper bound for cell diameters is proven in~\cite[Lemma 2.7]{gms-harmonic}.
To get the lower bound, use Lemma~\ref{lem-ball-mass} with $\delta = \ep^{\frac{2}{(2-\gamma)^2} + \zeta}$, which implies that with polynomially high probability as $\ep\rta 0$, each Euclidean ball of radius $\ep^{\frac{2}{(2-\gamma)^2} + \zeta}$ contained in $\BB D$ has $\mu_h$-mass at most $\ep$.
If this is the case then no such Euclidean ball can contain one of the cells $H_x^\ep$ (since each such cell has $\mu_h$-mass $\ep$).
\end{proof}

\section{Green's function, effective resistance, and Harnack inequalities}
\label{sec-harnack}

\subsection{Definitions of Dirichlet energy, Green's function, and effective resistance}

In this subsection we review some mostly standard notions in the theory of electrical networks and fix some notation.

\begin{defn} \label{def-discrete-dirichlet}
For a graph $G$ and a function $f : \mcl V(G) \to \BB R$, we define its \emph{Dirichlet energy} to be the sum over unoriented edges
\eqbn
\op{Energy}(f; G) := \sum_{\{x,y\} \in \mcl E(G)} (f(x) - f(y))^2 ,
\eqen
with edges of multiplicity $m$ counted $m$ times.
\end{defn}

\begin{defn} \label{def-green}
For a graph $G$, a set of vertices $V\subset \mcl V(G)$, and two vertices $x,y\in G$, we define the \emph{Green's function} for random walk on $G$ killed upon exiting $V$ by
\eqb \label{eqn-green}
\op{Gr}_V(x,y) := \BB E_x\left[ \text{number of times that $X$ hits $y$ before leaving $V$} \right]
\eqe
where $\BB E_x$ denotes the expectation for random walk on $G$ started from $x$.
Note that $\op{Gr}_V(x,y) = 0$ if either $x$ or $y$ is not in $V$.
We also define the \emph{normalized Green's function}
\eqb \label{eqn-green-deg}
\op{gr}_V (x,y) := \frac{\op{Gr}_V^G(x,y)}{\op{deg}(y)} .
\eqe
\end{defn}

By, e.g.,~\cite[Proposition 2.1]{lyons-peres}, the function $\op{gr}_V(x,\cdot)$ is the voltage function when a unit of current flows from $x$ to $V$. In particular, $\op{gr}_V(x,\cdot)$ is discrete harmonic on $\mcl V(G) \setminus (V\cup \{x\})$.

To define effective resistance, we view a graph $G$ as an electrical network where each edge has unit resistance.
For a vertex $x \in \mcl V(G)$ and a set $Z \subset \mcl V(G)$ with $x\notin Z$, the \emph{effective resistance} from $x$ to $Z$ in $G$ is defined (using Definition~\ref{def-green}) by
\allb \label{eqn-eff-res-def}
\mcl R \left( x \leftrightarrow Z \right) := \op{gr}_{\mcl V(G)\setminus Z}(x,x) ,
\alle
i.e., $\mcl R \left( x \leftrightarrow Z \right)$ is the expected number of times that random walk started at $x$ returns to $x$ before hitting $Z$, divided by the degree of $x$.
For two disjoint vertex sets $W,Z\subset \mcl V(G)$, we define $\mcl R(W\leftrightarrow Z)$ to be the effective resistance from $\ol w$ to $Z$ in the graph $\ol G$ obtained from $G$ by identifying all of the vertices of $W$ to a single vertex $\ol w$. 

There are several equivalent definitions of effective resistance which will be useful for our purposes.
\begin{enumerate}
\item \textbf{Dirichlet's principle.} Let $\frk f = \frk f_{Z,x} : \mcl V(G) \rta [0,1]$ be the function such that $\frk f (x ) = 1$, $\frk f |_Z \equiv 0$, and $\frk f$ is discrete harmonic on $\mcl V(G) \setminus (Z\cup \{x\})$. Then (see, e.g.,~\cite[Exercise 2.13]{lyons-peres})
\eqb \label{eqn-eff-res-dirichlet}
\mcl R^G\left( x \leftrightarrow Z \right) = \frac{1}{\op{Energy}(\frk f ; G) } .
\eqe
\item \textbf{Thomson's principle.} A \emph{unit flow} from $x$ to $Z$ in $G$ is a function $\theta$ from oriented edges $e = (y,z)$ of $G$ to $\BB R$ such that $\theta(y,z) = -\theta(z,y)$ for each oriented edge $(y,z)$ of $G$ and
\eqbn
\sum_{\substack{z \in \mcl V(G) \\ z\sim y}} \theta(y,z) = 0 \quad \forall y\in \mcl V(G)\setminus  ( \{x\} \cup Z )   \quad\op{and} \quad
\sum_{\substack{z \in \mcl V(G) \\ z\sim x}} \theta(x,z) =1 .
\eqen
One has (see, e.g.,~\cite[Theorem 9.10]{markov-mixing} or \cite[Page 35]{lyons-peres})
\eqb \label{eqn-thomson}
\mcl R^G(x\leftrightarrow V) = \inf\left\{ \sum_{e\in\mcl E(G)} [\theta(e)]^2 : \text{$\theta$ is a unit flow from $x$ to $Z$} \right\} .
\eqe
\end{enumerate}

\begin{notation} \label{notation-eff-res-mated-crt}
In the case when $G = \mcl G^\ep$ is the mated-CRT map, we will typically include a superscript $\ep$ in the above notations, so, e.g., $\mcl R^\ep$ denotes effective resistance on $\mcl G^\ep$.
We will commonly apply the above definitions in the case when the vertex sets $V,Z,W$ are of the form $\mcl V\mcl G^\ep(A)$ for some set $A\subset \BB C$.
To lighten notation we will simply write $A$ instead of $\mcl V\mcl G^\ep(A)$.
So, for example, $\op{Gr}_A^\ep$ denotes the Green's function for the random walk on $\mcl G^\ep$ killed upon exiting $\mcl G^\ep(A)$.
\end{notation}

\subsection{Comparison of Green's function and effective resistance}
\label{sec-eff-res-green}

The results of~\cite{gms-harmonic,gm-spec-dim} give us good control on effective resistances in the mated-CRT map.
In this subsection we will prove a general lemma which allows us to convert estimates for effective resistance to estimates for the Green's function.

To state the lemma, we first introduce some notation.
For a graph $G$ and a set $A\subset\mcl V(G)$, we write
\eqb \label{eqn-graph-bdy}
\bdy A := \left\{ x \in A  : \text{$\exists y\in \mcl V(G)\setminus A$ with $x\sim y$}\right\} .
\eqe
We also write
\eqb
\ol A := \left(\text{subgraph of $G$ induced by $A\cup \bdy(\mcl V(G)\setminus A)$}\right) .
\eqe

The following lemma bounds the maximum and minimum of the Green's function on $\bdy A$ in terms of effective resistances.
It will be used in conjunction with Lemma~\ref{lem-disconnect-coupling} to get a uniform bound for the Green's function on $\bdy A$.

\begin{lem} \label{lem-eff-res-green}
Let $G$ be a finite graph and let $A\subset B\subset\mcl V(G)$ be a finite sets of vertices.
Let $x\in A$ and define (using the notation of Definition~\ref{def-green})
\eqb
a := \min_{y\in \bdy A} \op{gr}_B(x,y), \quad
b := \max_{y\in \bdy A} \op{gr}_B(x,y) ,\quad
\delta := \max\left\{ |\op{gr}_B(x,u) - \op{gr}_B(x,v)|  : u,v \in B , u\sim v \right\} .
\eqe
Then for any $x\in A$,
\eqb \label{eqn-eff-res-green}
\frac{a^2}{ a+\delta } \leq \mcl R\left( A\leftrightarrow \mcl V(G)\setminus B \right) \leq b+\delta .
\eqe
\end{lem}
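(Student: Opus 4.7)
My approach is to work with the function $f := \op{gr}_B(x, \cdot)$ and its super-level sets. This function is nonnegative, harmonic on $B \setminus \{x\}$, vanishes on $\mcl V(G)\setminus B$, attains its global maximum $\op{gr}_B(x,x)$ at $x$, and satisfies $|f(u) - f(v)| \leq \delta$ for all adjacent $u, v \in B$. Applying the discrete minimum principle to $f$ on the interior of $A$ (whose boundary is $\bdy A \cup \{x\}$, on which $f \geq a$), I obtain $f \geq a$ throughout $A$; applying the maximum principle to the harmonic restriction of $f$ to $B \setminus A$ (whose boundary is $\bdy A \cup (\mcl V(G) \setminus B)$, on which $f \leq b$), I obtain $f \leq b$ on $\mcl V(G) \setminus A$. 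Consequently the super-level sets $C_a := \{f \geq a\}$ and $C_b := \{f > b\}$ satisfy $A \subset C_a$ and $C_b \subset A$. By Rayleigh monotonicity it therefore suffices to prove
\[
\mcl R(C_a \leftrightarrow \mcl V(G)\setminus B) \geq \frac{a^2}{a + \delta} \qquad \text{and} \qquad \mcl R(C_b \leftrightarrow \mcl V(G)\setminus B) \leq b + \delta,
\]
with the degenerate case $C_b = \emptyset$ (which forces $f(x) \leq b$) handled directly via $\mcl R(A\leftrightarrow\mcl V\setminus B) \leq \op{gr}_B(x,x) \leq b + \delta$.

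\textbf{Lower bound via Dirichlet's principle.} I will use the test function $\psi := a^{-1}\min(f, a)$, which equals $1$ on $C_a$ and $0$ on $\mcl V(G)\setminus B$. A summation by parts, in which the harmonicity of $f$ on $B \setminus \{x\}$ and the constancy of $\min(f,a)$ on $C_a$ eliminate all interior contributions, reduces the Dirichlet energy of $\min(f, a)$ to a sum over boundary edges $\{y,z\}$ with $y \in C_a$, $z \notin C_a$ of the quantity
\[
(a - f(z))^2 + f(z)(f(y) - f(z)) \;\leq\; \delta\,(a - f(z)) + a\,(f(y) - f(z)) ,
\]
where the inequality uses $0 \leq a - f(z) \leq f(y) - f(z) \leq \delta$ together with $0 \leq f(z) < a$. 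Summing over boundary edges, observing that $\sum (f(y) - f(z))$ is the total current leaving $C_a$ for the unit flow from $x$ associated with $f$, hence equals $1$ by conservation, and that $\sum(a - f(z)) \leq \sum(f(y) - f(z)) = 1$, I obtain $\op{Energy}(\min(f,a); G) \leq a + \delta$, and therefore $\op{Energy}(\psi; G) \leq (a + \delta)/a^2$. Dirichlet's principle~\eqref{eqn-eff-res-dirichlet} then delivers the desired lower bound.

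\textbf{Upper bound via Thomson's principle.} Assume $C_b \neq \emptyset$, so that $x \in C_b \subset A$. Let $i_x(y,z) := f(y) - f(z)$ be the unit current flow from $x$ to $\mcl V(G)\setminus B$ associated with the voltage $f$. Restricting $i_x$ to the edges of $G$ having at least one endpoint in $\mcl V(G)\setminus C_b$ yields a unit flow from $C_b$ (contracted) to $\mcl V(G)\setminus B$. An analogous summation by parts identifies its energy as $\sum_{y \in \bdy C_b} f(y) I_y$, where $I_y := \sum_{z \sim y,\, z \notin C_b}(f(y) - f(z))$. The key point is that since $f > b$ on $C_b$ and $f \leq b$ outside, each summand defining $I_y$ is strictly positive, so $I_y > 0$ and $\sum_y I_y = 1$ by conservation at the source $x$. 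The one-step gradient bound then forces $f(y) \leq f(z) + \delta \leq b + \delta$ at every $y \in \bdy C_b$ with non-$C_b$ neighbor $z$. Hence the restricted flow has energy at most $(b + \delta)\cdot 1$, and Thomson's principle~\eqref{eqn-thomson} combined with the monotonicity $C_b \subset A$ completes the proof.

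The main technical point is the calibration of the super-level sets $C_a$ and $C_b$ so that three conditions hold simultaneously: (i) the required monotonicity containment with $A$; (ii) the values of $f$ on the internal boundary lie within a $\delta$-window of the threshold; and (iii) in the upper-bound case, the current flow of $f$ has a definite sign across $\bdy C_b$, which is what makes Thomson's estimate tight. Once these super-level sets are identified, the remaining work is short and rests on two parallel summation-by-parts identities.
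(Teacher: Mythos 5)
Your proof is correct and follows essentially the same route as the paper: sandwich $A$ between super-level sets of $\op{gr}_B(x,\cdot)$ via the maximum/minimum principle, apply Rayleigh monotonicity, and then bound the resistances of these level sets using Dirichlet's principle (with the truncated Green's function $a^{-1}\min(f,a)$ as test function) and Thomson's principle (with the gradient unit flow restricted across the level set), in each case via a discrete Green's-identity computation together with the one-step gradient bound $\delta$. The only superficial differences are your use of the strict super-level set $\{f>b\}$ in place of the paper's limit $b'\downarrow b$, and your carrying out two separate energy computations where the paper bounds the single quantity $\op{Energy}(\frk f;\overline{\mcl V(G)\setminus F_c})$ once and feeds it to both Dirichlet and Thomson.
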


Lemma~\ref{lem-eff-res-green} is a discrete version of~\cite[Proposition 4.1]{grigoryan-background}, and is proven in a similar manner.
It can be checked that the error term $\delta$ in Lemma~\ref{lem-eff-res-green} is at most 1; see Lemma~\ref{lem-eff-res-grad}.
This is the only bound for $\delta$ which is needed for our purposes.
To prove Lemma~\ref{lem-eff-res-green}, we need some preparatory lemmas.

\begin{lem} \label{lem-unit-flow}
Let $G$ be a graph, let $x\in G$, and let $A\subset\mcl V(G)$ be a finite set of vertices with $x\in A$.
If $\theta$ is a unit flow from $x$ to $Z\subset\mcl V(G)\setminus A$, then
\eqbn
\sum\left\{\theta(u,v) : u\in A, v\in \mcl V(G)\setminus A , u\sim v \right\}
= 1 .
\eqen
\end{lem}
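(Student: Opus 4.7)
The plan is to prove Lemma~\ref{lem-unit-flow} by a standard divergence/telescoping argument exploiting the antisymmetry of $\theta$ and the divergence-free condition at every vertex of $A \setminus \{x\}$. Since $Z \subset \mathcal{V}(G) \setminus A$, every vertex $y \in A$ other than $x$ lies in $\mathcal{V}(G) \setminus (\{x\} \cup Z)$, so the unit flow condition gives $\sum_{z \sim y} \theta(y,z) = 0$ there, while at $x$ itself the same sum equals $1$.

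First I would sum the divergence of $\theta$ over all $y \in A$:
\[
\sum_{y \in A}\sum_{\substack{z \in \mathcal V(G)\\ z \sim y}} \theta(y,z) = 1,
\]
where the right-hand side $1$ comes entirely from the term $y = x$. Next I would split the inner sum according to whether the neighbor $z$ lies in $A$ or in $\mathcal V(G) \setminus A$:
\[
\sum_{y \in A}\sum_{\substack{z \in A\\ z\sim y}} \theta(y,z) \;+\; \sum_{\substack{u \in A,\ v \in \mathcal V(G)\setminus A\\ u \sim v}} \theta(u,v) \;=\; 1.
\]
The first double sum vanishes: each unoriented edge $\{u,v\}$ with both endpoints in $A$ is counted once as $(u,v)$ and once as $(v,u)$, contributing $\theta(u,v) + \theta(v,u) = 0$ by the antisymmetry built into the definition of a unit flow. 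Thus the second sum equals $1$, which is precisely the claim.

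The only thing to be slightly careful about is the bookkeeping when $G$ has multiple edges between the same pair of vertices: one should interpret $\theta$ as a function on oriented edges (not ordered vertex pairs), and the cancellation and counting arguments above work verbatim edge-by-edge. There is no serious obstacle here — the lemma is a clean conservation-of-current statement, and the proof is essentially one line of discrete integration by parts.
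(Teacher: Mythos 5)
Your proof is correct and follows the same argument as the paper: sum the divergence of $\theta$ over all vertices of $A$ to get $1$ (from $x$ alone), then split the double sum into internal $A$-to-$A$ terms, which cancel by antisymmetry, and boundary $A$-to-complement terms, which therefore sum to $1$. Your added remark on treating $\theta$ as a function on oriented edges in the presence of multi-edges is sound and consistent with the paper's conventions.
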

\begin{proof}
Since $\theta$ is a unit flow, we have $\sum_{v\sim x} \theta(x,v) = 1$ and for any $y\in A\setminus \{x\}$, $\sum_{v\sim y} \theta(y,v) = 0$.
Therefore,
\eqbn
  \sum_{u\in A} \sum_{\substack{v\in \mcl V(G) \\ v\sim u} } \theta(u,v) = 1.
\eqen
We can break up the double sum on the left into two parts: the sum over the pairs $(u,v) \in A\times A$ with $u\sim v$ and the sum over the pairs $(u,v) \in A\times (\mcl V(G)\setminus A)$ with $u\sim v$. The first sum is zero since $\theta(u,v) = -\theta(v,u)$ for every $u,v\in A$.
Therefore the second sum is equal to 1.
\end{proof}

\begin{lem}[Discrete Green's identity] \label{lem-green-identity}
Let $G$ be a graph, let $A\subset\mcl V(G)$ be a finite set of vertices, let $ f , \frk g : \mcl V(G) \rta \BB R$ and suppose that $\frk g$ is discrete harmonic on $\mcl V(G)\setminus A$. Then
\eqb
\sum_{\{x,y\} \in \mcl E(G)} (f(x) - f(y)) (\frk g(x) -\frk g(y))
=  -  \sum_{x\in A} f(x) \Delta \frk g(x) ,
\eqe
where $\Delta$ is the discrete Laplacian and the sum is over all unoriented edges of $G$.
\end{lem}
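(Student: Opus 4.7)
The plan is to prove the identity by a direct summation-by-parts computation, which is the discrete analogue of integration by parts used in the classical Green's identity. Throughout, I will implicitly assume that all sums converge (which is automatic if $G$ is finite, the setting in which the lemma is applied in Section~\ref{sec-eff-res-green}; alternatively one can assume $f$ is supported on a finite set, which also causes the rearrangements below to be legitimate).

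First, I would rewrite the sum on the left-hand side as a sum over ordered pairs. Since each unoriented edge $\{x,y\}$ of multiplicity $m$ corresponds to two ordered pairs $(x,y)$ and $(y,x)$ each of multiplicity $m$, we have
\eqbn
\sum_{\{x,y\}\in\mcl E(G)} (f(x)-f(y))(\frk g(x)-\frk g(y)) = \frac{1}{2}\sum_{(x,y): x\sim y} (f(x)-f(y))(\frk g(x)-\frk g(y)),
\eqen
where the right-hand sum is over ordered pairs with edge multiplicities counted. Now expand the product and use the symmetry $(x,y)\leftrightarrow (y,x)$: the two cross-terms $-f(x)\frk g(y)$ and $-f(y)\frk g(x)$ yield equal contributions upon summation, as do $f(x)\frk g(x)$ and $f(y)\frk g(y)$. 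The expression therefore collapses to
\eqbn
\sum_{(x,y): x\sim y} f(x)\bigl(\frk g(x) - \frk g(y)\bigr).
\eqen

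Next I would group terms by the first coordinate and recognize the discrete Laplacian:
\eqbn
\sum_{(x,y): x\sim y} f(x)(\frk g(x) - \frk g(y))
= \sum_{x\in \mcl V(G)} f(x) \sum_{\substack{y \in \mcl V(G) \\ y \sim x}} (\frk g(x) - \frk g(y))
= -\sum_{x\in \mcl V(G)} f(x)\,\Delta \frk g(x),
\eqen
where $\Delta \frk g(x) = \sum_{y\sim x}(\frk g(y)-\frk g(x))$ is the standard discrete Laplacian (with edges counted according to multiplicity). Finally, the harmonicity hypothesis $\Delta \frk g(x) = 0$ for $x\in \mcl V(G)\setminus A$ eliminates all terms outside $A$, yielding the claimed identity $-\sum_{x\in A} f(x)\,\Delta \frk g(x)$.

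There is no real obstacle here: the argument is a routine summation-by-parts that amounts to bookkeeping of oriented versus unoriented edges and edge multiplicities. The only point requiring the slightest care is ensuring the rearrangement of sums is legitimate, which is automatic in the finite-graph setting where the lemma will be applied (namely, to prove Lemma~\ref{lem-eff-res-green} for the stopped random walk on a finite subgraph of $\mcl G^\ep$).
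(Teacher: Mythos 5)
Your proof is correct and follows essentially the same summation-by-parts argument as the paper: pass to a sum over oriented edges, use the symmetry under swapping endpoints to collapse to $\sum_{(x,y)} f(x)(\frk g(x)-\frk g(y))$, group by $x$ to identify $-\Delta\frk g(x)$, and invoke harmonicity to restrict to $A$. No substantive difference.
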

\begin{proof}
Write $\mcl E^\srta$ for the set of oriented edges of $G$. We have
\alb
&\sum_{\{x,y\} \in \mcl E (G)} (  f(x) - f(y)) (\frk g(x) -\frk g(y))  \notag\\
&\qquad = \frac12 \sum_{(x,y) \in \mcl E^\srta(G)} (  f(x) - f(y)) (\frk g(x) -\frk g(y)) \notag\\
&\qquad = \frac12\sum_{(x,y) \in \mcl E^\srta(G)}  f(x)   (\frk g(x) -\frk g(y))  +  \frac12\sum_{(x,y) \in \mcl E^\srta(G)}   f(y)   (\frk g(y) -\frk g(x)) \notag \\
&\qquad=  \sum_{x \in \mcl V(G)} f(x) \sum_{y \sim x}     (\frk g(x) -\frk g(y)) \notag \\
&\qquad = -  \sum_{x\in \mcl V(G)} f(x) \Delta \frk g(x) = -  \sum_{x\in A} f(x) \Delta \frk g(x) ,
\ale
where in the last inequality we used that $\Delta\frk g = 0$ on $\mcl V(G)\setminus A$.
\end{proof}

\begin{proof}[Proof of Lemma~\ref{lem-eff-res-green}]
For $c > 0$, let
\eqb \label{eqn-gr-level-set}
F_c := \left\{ y\in B : \op{gr}_B(x,y) \geq c\right\} , \quad\forall c \in \left[ 0 ,  \op{gr}_B(x,x)  \right] .
\eqe
Note that each $F_c$ contains $x$, $F_c$ is non-increasing in $c$, and $F_0 = B$.

We claim that with $a,b$ as in the lemma statement, we have $F_{b'} \subset A \subset F_a$ for each $b' > b$.
Indeed, since $\op{gr}_B(x,\cdot)$ is discrete harmonic on $B\setminus A$ and vanishes outside of $B$, its maximum value on $B\setminus A$ is by the maximum principle at most its maximum value on $\bdy A$, which is $b$. Hence $F_{b'} \cap (B\setminus A) = \emptyset$ for $b' > b$, i.e., $F_{b'} \subset A$.
Similarly, since $\op{gr}_B(x,\cdot)$ is discrete harmonic on $A \setminus \{x\}$ and attains its maximum value at $x$ (again by the maximum principle, on all of $B$), its minimum value on $A$ is the same as its minimum value on $\bdy A$, which is $a$. Hence $A\subset F_a$.
In particular, by Rayleigh's monotonicity principle (see Chapter 2.4 in \cite{lyons-peres}),
\eqb \label{eqn-eff-res-compare}
 \mcl R\left(F_a \leftrightarrow \mcl V(G)\setminus B \right)  \leq  \mcl R\left(A\leftrightarrow \mcl V(G)\setminus B \right) \leq  \mcl R\left(F_{b'} \leftrightarrow \mcl V(G)\setminus B \right) .
\eqe

We claim that
\eqb \label{eqn-eff-res-bound}
\frac{c^2}{ c+\delta } \leq \mcl R\left(F_c \leftrightarrow \mcl V(G)\setminus B \right) \leq c+\delta   ,\quad\forall c \in \left( 0 , \op{gr}_B(x,x) \right] .
\eqe
Once~\eqref{eqn-eff-res-bound} is established, \eqref{eqn-eff-res-green} follows from~\eqref{eqn-eff-res-compare} upon letting $b'$ decrease to $b$.

It remains to prove~\eqref{eqn-eff-res-bound}.
To this end, consider the function $\frk f(y) := c^{-1} \op{gr}_B(x,y)$. Then $\frk f$ is discrete harmonic on $B\setminus \{x\}$, it vanishes outside of $B$, its values on $F_c$ are at least 1, and its values on $\bdy F_c$ are at most $(c+\delta)/c$.
We first argue that
\eqb \label{eqn-f-energy}
\op{Energy}\left( \frk f ; \ol{\mcl V(G) \setminus F_c} \right)^{-1}
\leq \mcl R\left(F_c \leftrightarrow \mcl V(G)\setminus B \right)
\leq c^2 \op{Energy}\left( \frk f ; \ol{\mcl V(G) \setminus F_c} \right) .
\eqe

Indeed, the function $\frk f \wedge 1$ vanishes outside of $B$ and is identically equal to 1 on $F_c$.
By Dirichlet's principle~\eqref{eqn-eff-res-dirichlet},
\eqb \label{eqn-use-dirichlet}
\op{Energy}\left( \frk f ; \ol{\mcl V(G) \setminus F_c} \right)
\geq \op{Energy}\left( \frk f \wedge 1 ; \ol{\mcl V(G)\setminus F_c} \right)
\geq \mcl R\left( F_c \leftrightarrow \mcl V(G)\setminus B \right)^{-1} .
\eqe

Recalling that $\frk f(y) := c^{-1} \op{gr}_B(x,y)$, we have that $\theta(u,v) := c(\frk f(u) - \frk f(v))$ is a unit flow from $x$ to $B\setminus\mcl V(G)$ (here we use that the gradient of $\op{gr}(x,\cdot)$ is a unit flow~\cite[Proposition 2.1]{lyons-peres}).
By Lemma~\ref{lem-unit-flow},
\eqb \label{eqn-green-unit-flow}
\sum \left\{ \theta(u,v) : u\in F_c , v\in B\setminus F_c, u\sim v \right\} = 1.
\eqe
Hence $\theta|_{\mcl E(\ol{\mcl V(G) \setminus F_c})}$ is a unit flow from $\bdy F_c$ to $\mcl V(G)\setminus B$.
By Thomson's principle~\eqref{eqn-thomson},
\eqb \label{eqn-use-thomson}
c^2 \op{Energy}\left(\frk f ; \ol{\mcl V(G)\setminus F_c} \right)
= \sum_{\{u,v\} \in \mcl E\left(  \ol{\mcl V(G)\setminus F_c} \right)} [\theta(u,v)]^2
\geq \mcl R\left( F_c \leftrightarrow \mcl V(G)\setminus B \right) .
\eqe

From~\eqref{eqn-use-dirichlet} and~\eqref{eqn-use-thomson}, we obtain~\eqref{eqn-f-energy}.
We now need to bound $\op{Energy}\left( \frk f ; \ol{\mcl V(G) \setminus F_c} \right) $.
By Lemma~\ref{lem-green-identity} applied to the graph $\ol{\mcl V(G)\setminus F_c} $, the vertex set $  \bdy F_c$, and the functions $f = \frk g = \frk f$,
\allb \label{eqn-energy-to-flow}
\op{Energy}\left( \frk f ; \ol{\mcl V(G)\setminus F_c}  \right)
&=   \sum_{u\in \bdy F_c} \frk f(u) \sum_{v\in \mcl V(G) \setminus F_c , u\sim v}  (  \frk f(u) - \frk f(v) ) \notag \\
&\leq   \frac{c+\delta}{c} \sum \left\{\frk f(u) - \frk f(v) : u\in F_c , v\in \mcl V(G)\setminus F_c, u\sim v \right\} \quad \text{(recall $\frk f \leq \frac{c+\delta}{c}$ on $\bdy F_c$)} \notag\\
&\leq \frac{c+\delta}{c^2} ,\quad\text{by~\eqref{eqn-green-unit-flow}}.
\alle
Plugging~\eqref{eqn-energy-to-flow} into~\eqref{eqn-f-energy} gives~\eqref{eqn-eff-res-bound}.
\end{proof}

The following crude estimate is useful when applying Lemma~\ref{lem-eff-res-green}.

\begin{lem} \label{lem-eff-res-grad}
In the setting of Lemma~\ref{lem-eff-res-grad}, we have $\delta \leq 1$.
\end{lem}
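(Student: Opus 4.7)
The plan is to identify $\op{gr}_B(x,u) - \op{gr}_B(x,v)$ across an edge $\{u,v\}$ with an electrical current and then invoke the standard fact that currents in a unit current flow have absolute value at most $1$. As observed immediately after Definition~\ref{def-green}, $\op{gr}_B(x,\cdot)$ is the voltage function of the unit current flow from $x$ to $\mcl V(G) \setminus B$. Since every edge of $G$ has unit resistance, the current along the oriented edge $(u,v)$ equals the voltage drop $\theta(u,v) := \op{gr}_B(x,u) - \op{gr}_B(x,v)$, so $\delta \leq 1$ is equivalent to the statement that every edge carries current of magnitude at most $1$ in the unit current flow.

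To establish this, I would orient each edge of $G$ in the direction of its (nonnegative) current, breaking ties arbitrarily on edges carrying zero current. The resulting oriented flow $\vec\theta$ is nonnegative, has net outflow $1$ at $x$ and net outflow $0$ at every other vertex of $B$, and is acyclic: if a directed cycle $y_0 \to y_1 \to \cdots \to y_k = y_0$ consisted entirely of edges with strictly positive current, then $\op{gr}_B(x,\cdot)$ would have to strictly decrease at every step, contradicting $y_k = y_0$. I would then appeal to the standard flow-decomposition theorem to write $\vec\theta = \sum_P \alpha_P \vec\theta_P$, where $P$ ranges over simple directed paths from $x$ to $\mcl V(G) \setminus B$, $\alpha_P \geq 0$, $\sum_P \alpha_P = 1$ (matching the unit outflow at $x$), and $\vec\theta_P$ puts current $1$ on each edge of $P$ and $0$ elsewhere. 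For any oriented edge $e$ we then have $\vec\theta(e) = \sum_{P \ni e} \alpha_P \leq 1$, and translating back to the unoriented current gives $|\theta(u,v)| \leq 1$ for every adjacent pair $u,v \in B$, which is the claimed bound on $\delta$.

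I do not anticipate any real obstacle: the argument is essentially a one-line appeal to a classical result in electrical network theory (e.g., Exercise~2.68 of Lyons--Peres), with the only items requiring any care being the acyclicity verification (handled in a single sentence by the potential-decrease argument above) and the routine bookkeeping in the flow decomposition. Because the lemma is used only as a crude ``$\delta$ is not too large'' input in Lemma~\ref{lem-eff-res-green}, the bound $\delta \leq 1$ is all we need and no sharper quantitative statement will be pursued.
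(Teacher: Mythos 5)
Your proof is correct, but it takes a genuinely different route from the paper's. The paper argues via a level-set cut: letting $c = \op{gr}_B(x,u_0)$ and $F_c = \{y\in B : \op{gr}_B(x,y)\geq c\}$, it applies Lemma~\ref{lem-unit-flow} to the unit flow $\theta(u,v) = \op{gr}_B(x,u)-\op{gr}_B(x,v)$ and the set $F_c$ (which contains $x$), giving $\sum\{\theta(u,v) : u\in F_c, v\notin F_c, u\sim v\} = 1$; since by definition of $F_c$ each of these summands is nonnegative, each is at most $1$, and in particular the one across $\{u_0,v_0\}$ is. You instead orient the flow, verify acyclicity via potential decrease, and appeal to the path-decomposition theorem for acyclic unit flows to conclude every edge carries current at most $1$. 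Both are classical and both close the gap; the main difference is that the paper's argument is self-contained (it reuses Lemma~\ref{lem-unit-flow}, already proved), whereas you invoke the flow-decomposition theorem as a black box. Your acyclicity check is necessary — without it, cycle components in the decomposition could in principle inflate the current on an edge above $1$ — and you handle it correctly with the potential-decrease observation. One minor point: the path decomposition should be understood as terminating at the sink set $\mcl V(G)\setminus B$ (equivalently, where the Green's function vanishes); this is consistent with what you wrote, so there is no gap, but it is worth making explicit that the flow is conserved on all of $B\setminus\{x\}$, which is what guarantees the decomposition consists only of $x$-to-sink paths.
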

\begin{proof}
Consider an edge $\{u_0 ,v_0\} \in \mcl E(G)$ with $\op{gr}_B(x,u_0) > \op{gr}_B(x,v_0)$.
We need to show that $\op{gr}_B(x,u_0) - \op{gr}_B(x,v_0) \leq 1$.
To this end, let $c = \op{gr}_B(x,u_0)$ and let $F_c = \left\{ y\in B : \op{gr}_B(x,y) \geq c\right\} $, as in~\eqref{eqn-gr-level-set}.
Then $u_0\in F_c$ and $v_0\notin F_c$.
Since $\theta(u,v) := \op{gr}_B(x,u) - \op{gr}_B(x,v)$ is a unit flow from $x$ to $\mcl V(G)\setminus B$ and $x\in F_c$, we can apply Lemma~\ref{lem-unit-flow} to get that
\eqb \label{eqn-eff-res-grad-sum}
\sum\left\{\op{gr}_B(x,u) - \op{gr}_B(x,v)  : u\in F_c, v\in \mcl V(G)\setminus F_c , u\sim v \right\}
= 1 .
\eqe
For each $u\in F_c$ and each $v\in \mcl V(G)\setminus F_c$, we have $\op{gr}_B(x,u) - \op{gr}_B(x,v) \geq 0$.
Therefore, each term in the sum~\eqref{eqn-eff-res-grad-sum} is at most 1.
In particular, $\op{gr}_B(x,u_0) - \op{gr}_B(x,v_0) \leq 1$.
\end{proof}

\subsection{Harnack inequality for the mated-CRT map}
\label{sec-harnack-inequality}

Lemma~\ref{lem-eff-res-green} gives an upper (resp.\ lower) bound for the minimum (resp.\ maximum) value of the Green's function on $\bdy A$ in terms of effective resistances.
It is of course much more useful to have an upper (resp.\ lower) bound for the maximum (resp.\ minimum) value of the Green's function on $\bdy A$.
Hence we need a Harnack-type estimate which allows us to compare the maximum and minimum values of the Green's function on $\bdy A$.
In this subsection we will prove such an estimate for the mated-CRT map $\mcl G^\ep$ in the case when $A$ is the set of vertices which intersect a Euclidean ball.

Throughout this subsection we assume the setup and notation of Section~\ref{sec-mated-crt-setup}.
In particular we recall that for $z\in\BB C$, $x_z^\ep \in \mcl V\mcl G^\ep$ is chosen so that $z$ is contained in the cell $H_{x_z^\ep}^\ep$ and for $D\subset\BB C$, $\mcl G^\ep(D)$ is the subgraph of $\mcl G^\ep$ induced by the vertex set $\{x_z^\ep : z\in D\}$.
We also recall that $\rho \in (0,1)$ is fixed.

\begin{prop}[Harnack inequality on a circle]  \label{prop-harnack}
There exists $\beta = \beta(\gamma ) > 0$ and $C = C(\rho , \gamma) > 0$ such that the following holds with polynomially high probability as $\ep\rta 0$.
Simultaneously for every $z \in B_\rho $, every $s \in \left[  \ep^\beta, \frac13 \op{dist}(z ,\bdy B_\rho) \right]$, and every function $\frk f : \mcl V\mcl G^\ep \rta [0,\infty)$ which is discrete harmonic on $\mcl V\mcl G^\ep(B_{3s}(z) ) \setminus \{x_z^\ep\}$,
\eqb \label{eqn-harnack}
\max_{y \in \mcl V\mcl G^\ep(\bdy B_s(z))} \frk f(y) \leq C \min_{y\in \mcl V\mcl G^\ep(\bdy B_s(z))} \frk f(y) .
\eqe
\end{prop}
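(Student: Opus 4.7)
My plan is to use the optional stopping theorem to represent $\frk f(y)$ via the hitting distribution at the exit time from $B_{3s}(z)\setminus\{x_z^\ep\}$, and then to compare these hitting distributions from different starting points on $\bdy B_s(z)$ by transferring the explicit continuum Harnack-type comparisons for planar Brownian motion on an annulus to the discrete setting via the walk-to-BM convergence of \cite[Theorem 3.4]{gms-tutte}. Concretely, letting $\tau^\ep$ be the first time $X^\ep$ hits $\{x_z^\ep\}$ or leaves $\mcl V\mcl G^\ep(B_{3s}(z))$, the fact that $\frk f\geq 0$ is discrete harmonic on $\mcl V\mcl G^\ep(B_{3s}(z))\setminus\{x_z^\ep\}$ and the optional stopping theorem yield $\frk f(y) = \ol{\BB E}_y^\ep[\frk f(X^\ep_{\tau^\ep})]$, so \eqref{eqn-harnack} reduces to showing that the measures $\nu_y := \ol{\BB P}_y^\ep\circ (X^\ep_{\tau^\ep})^{-1}$ for $y\in\mcl V\mcl G^\ep(\bdy B_s(z))$ satisfy $\nu_y \leq C\nu_{y'}$ for a deterministic $C=C(\rho,\gamma)$.

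To produce this comparison I would split $\nu_y = \alpha_y\delta_{x_z^\ep} + \mu_y$, where $\alpha_y$ is the probability of hitting the singularity $x_z^\ep$ before exiting the ball and $\mu_y$ is the exit measure on the outer boundary. For $\mu_y$, the Poisson kernel $P_{B_{3s}(z)}(y,\cdot)$ on $\bdy B_{3s}(z)$ from $y\in\bdy B_s(z)$ is uniformly comparable in $y$ to $1/s$, so the BM convergence of \cite[Theorem 3.4]{gms-tutte} should yield the discrete bound $\mu_y(W)\asymp \mu_{y'}(W)$ for any $W$ in the discrete outer boundary. For the singular mass $\alpha_y$, Lemma \ref{lem-cell-diam} places the cell $H_{x_z^\ep}^\ep$ inside a Euclidean disk $B_\delta(z)$ with $\delta$ polynomially small in $\ep$; for BM on the round annulus $B_{3s}(z)\setminus\ol B_\delta(z)$, radial symmetry gives that the hitting probability of the inner boundary from any $y\in\bdy B_s(z)$ equals $\log 3/\log(3s/\delta)$, independent of the starting angle. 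Provided $\beta$ is chosen small enough (concretely $\beta<2/(2+\gamma)^2$) so that $s\geq\ep^\beta$ always dominates the cell diameter, combining this with BM convergence should give $\alpha_y\asymp\alpha_{y'}$.

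To obtain the simultaneous statement over all admissible $(z,s,\frk f)$, I would discretize $(z,s)$ over a polynomial (in $\ep^{-1}$) mesh inside $B_\rho\times[\ep^\beta,1]$, apply the one-point one-scale comparison with polynomial-tail bounds at each pair, and take a union bound; a short continuity argument then handles arbitrary $(z,s)$ in the admissible range, while the reduction above produces the statement for arbitrary admissible $\frk f$. The main obstacle I expect is the control of $\alpha_y$: unlike $\mu_y$, which compares via a standard harmonic-measure argument on a fixed Jordan domain with smooth boundary, $\alpha_y$ is the hitting probability of a small random set of complicated shape, and making the $y$-uniform comparison quantitative and uniform down to the scale $s\sim \ep^\beta$ will require combining BM convergence with the cell-size control from Lemma \ref{lem-cell-diam} in a careful way.
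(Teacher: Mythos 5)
Your reduction via the maximum principle to comparing the exit measures $\nu_y$ is the right first step, but the mechanism you propose for the comparison is entirely different from the paper's, and it has a real gap.

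The paper's proof is built on a coupling argument coming from Wilson's algorithm (Lemma~\ref{lem-disconnect-coupling}): if $X^x$ disconnects $y$ from the target set with probability $\geq p$ before hitting it, then $X^x$ and $X^y$ can be coupled so that, conditionally on the entire path $X^y|_{[0,\tau^y]}$, the two exit points coincide with probability $\geq p$. Since $\frk f \geq 0$, one immediately gets $\frk f(x) = \BB E[\frk f(X^x_{\tau^x})] \geq p\,\frk f(y)$; no comparison of the full hitting measures is ever required beyond this single one-sided lower bound, and any mismatch when the coupling fails is simply absorbed by nonnegativity. The disconnection probability $p$ is obtained with polynomially high probability from \cite{gms-harmonic} (Lemmas~\ref{lem-walk-dc} and~\ref{lem-walk-dc-uniform}), which is what makes the union bound over a polynomial mesh of $(z,s)$ feasible.

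The gap in your approach is its reliance on \cite[Theorem 3.4]{gms-tutte}. That result gives convergence in probability of the quenched walk law to the BM law modulo time parametrization, but with no quantitative rate. Your union bound over a polynomial mesh therefore cannot go through as stated: you would need a polynomially-high-probability version of the walk-to-BM approximation, which the cited theorem does not supply. Even granted such a rate, Prokhorov closeness of curve laws modulo time parametrization does not automatically give multiplicative comparability of hitting probabilities of small sets. In particular $\alpha_y$ is itself of order $(\log\ep^{-1})^{-1}$ (since $\op{diam} H_{x_z^\ep}^\ep$ is a positive power of $\ep$), so the comparison $\alpha_y \asymp \alpha_{y'}$ would require the walk-to-BM error, measured in the sense of hitting probabilities rather than curve distance, to be $o((\log\ep^{-1})^{-1})$ uniformly in $(z,s)$ — a much finer statement than what is cited. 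The outer part $\mu_y$ has a parallel difficulty: the discrete outer boundary is a collection of cells of highly variable size, and the BM Poisson kernel on the smooth circle $\bdy B_{3s}(z)$ does not directly control walk hitting probabilities on this irregular cell boundary. The coupling argument is used precisely because it sidesteps all of these quantitative approximations.
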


The reason why we do not require that $\frk f$ is discrete harmonic at the center point $x_z^\ep$ is that we eventually want to apply Proposition~\ref{prop-harnack} with $\frk f$ equal to the normalized Green's function $\op{gr}_U^\ep(x_z^\ep , \cdot)$ for $B_{3s}(z) \subset U \subset B_\rho$ (recall Notation~\ref{notation-eff-res-mated-crt}).
The key input in the proof of Proposition~\ref{prop-harnack} is the following coupling lemma for random walks on a graph with different starting points, which follows from Wilson's algorithm and is a variant of~\cite[Lemma 3.12]{gms-harmonic}.

\begin{lem} \label{lem-disconnect-coupling}
Let $G$ be a connected graph and let $A\subset \mcl V(G)$ be a set such that the simple random walk started from any vertex of $G$ a.s.\ hits $A$ in finite time.
For $x\in \mcl V(G)$, let $X^x$ be the simple random walk started from $x$ and let $\tau^x$ be the first time $X^x$ hits $A$.
For $x,y\in \mcl V(G) \setminus A$, there is a coupling of $X^x$ and $X^y$ such that a.s.\
\eqb \label{eqn-disconnect-coupling}
\BB P\left[ X^x_{\tau^x} = X^y_{\tau^y} \,\Big|\, X^y|_{[0,\tau^y]} \right] \geq  \BB P\left[ \text{$X^x$ disconnects $y$ from $A$ before time $\tau^x$} \right]  .
\eqe
\end{lem}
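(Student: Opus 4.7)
The plan is to construct the coupling via Wilson's algorithm rooted at $A$, adding the vertices $y$ and $x$ in that order. Concretely, I would let $X^y$ be simple random walk from $y$ run until its first hit $\tau^y$ of $A$, and set $\gamma_y := \op{LE}(X^y[0,\tau^y])$; this is a self-avoiding path from $y$ to $a_y := X^y_{\tau^y}$ with $\gamma_y \cap A = \{a_y\}$. Next, I would sample an independent simple random walk $\wt W$ from $x$, let $\sigma := \inf\{t\ge 0 : \wt W_t \in A \cup \gamma_y\}$, and following Wilson's rule declare $a_x := a_y$ if $\wt W_\sigma \in \gamma_y$, and $a_x := \wt W_\sigma \in A$ otherwise. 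By the order-invariance of Wilson's algorithm, the marginal law of $a_x$ coincides with the law of the first hit of $A$ by simple random walk started from $x$. I can therefore define $X^x$ by sampling from the conditional law of simple random walk from $x$ given $X^x_{\tau^x} = a_x$, independently of $(\wt W, X^y)$ given $a_x$. Then $X^x$ has the correct SRW marginal and $X^x_{\tau^x} = a_x$ by construction, so the left-hand side of \eqref{eqn-disconnect-coupling} equals $\BB P\left[ a_x = a_y \,\big|\, X^y|_{[0,\tau^y]} \right]$.

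The core step is a topological implication: on the event $E := \{\wt W[0,\tau^{\wt W}] \text{ disconnects } y \text{ from } A \text{ in } G\}$, where $\tau^{\wt W}$ denotes the first hit of $A$ by $\wt W$, one has $a_x = a_y$. Indeed, on $E$ every path from $y$ to $A$ --- in particular $\gamma_y$ --- must meet the vertex set $\wt W[0,\tau^{\wt W}]$, so there is a first time $t \le \tau^{\wt W}$ with $\wt W_t \in \gamma_y$. If $t < \tau^{\wt W}$, then $\wt W_t \notin A$ and no earlier time lies in $A \cup \gamma_y$, so $\sigma = t$ and $\wt W_\sigma \in \gamma_y$. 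If $t = \tau^{\wt W}$, then $\wt W_t \in \gamma_y \cap A = \{a_y\}$, so $\sigma = \tau^{\wt W}$ and again $\wt W_\sigma \in \gamma_y$. In both cases Wilson's rule yields $a_x = a_y$.

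Finally, since $\wt W$ is independent of $X^y|_{[0,\tau^y]}$, the event $E$ is independent of this conditioning, and since $\wt W$ is marginally simple random walk from $x$, $\BB P[E]$ coincides with the right-hand side of \eqref{eqn-disconnect-coupling}. Combining,
\eqbn
\BB P\left[ X^x_{\tau^x} = X^y_{\tau^y} \,\big|\, X^y|_{[0,\tau^y]} \right] = \BB P\left[ a_x = a_y \,\big|\, X^y|_{[0,\tau^y]} \right] \geq \BB P[E],
\eqen
which is the claimed inequality. The main delicate point is that in the final coupling $X^x$ is not equal to the auxiliary walk $\wt W$ appearing in Wilson's algorithm; it is the order-invariance of Wilson's algorithm that lets us conclude $a_x \eqD X^x_{\tau^x}$ marginally, which in turn allows the conditional resampling that keeps the SRW marginal of $X^x$ intact while achieving $X^x_{\tau^x} = a_x$.
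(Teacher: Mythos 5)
Your proof is correct and follows essentially the same route as the paper's: run Wilson's algorithm rooted at $A$, adding $y$ then $x$, and observe that if the auxiliary walk from $x$ disconnects $y$ from $A$ then it must cross the loop erasure $\gamma_y$ (which is a path from $y$ to $A$) before hitting $A$, forcing the two tree branches to land on $A$ at the same vertex. The only difference is presentational: you make explicit two points the paper leaves implicit, namely (i) the final coupled walk $X^x$ is not the auxiliary Wilson walk $\wt W$ but a fresh walk resampled from the conditional law given its exit point $a_x$, with order-invariance of Wilson's algorithm supplying the correct marginal for $a_x$; and (ii) the boundary case where $\wt W$ first meets $\gamma_y$ at the common endpoint $a_y\in A$, which you verify still yields $a_x=a_y$. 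Both are worth spelling out, but they are refinements of, not departures from, the paper's argument.
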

\begin{proof}
The lemma is a consequence of Wilson's algorithm.
Let $T$ be the uniform spanning tree of $G$ with all of the vertices of $A$ wired to a single point. For $x\in \mcl V(G)$, let $L^x$ be the unique path in $T$ from $x$ to $A$.
For a path $P$ in $G$, write $\op{LE}(P) $ for its chronological loop erasure.
By Wilson's algorithm~\cite{wilson-algorithm}, we can generate the union $L^x\cup L^y$ in the following manner.
\begin{enumerate}
\item Run $X^y$ until time $\tau^y$ and generate the loop erasure $ \op{LE}(X^y|_{[0,\tau^y]})$.
\item Conditional on $X^y|_{[0,\tau^y]}$, run $X^x$ until the first time $\wt\tau^x$ that it hits either $\op{LE}(X^y|_{[0,\tau^y]})$ or $A$.
\item Set $L^x \cup L^y = \op{LE}(X^y|_{[0,\tau^y]}) \cup \op{LE}(X^x|_{[0,\wt\tau^x]})$.
\end{enumerate}
Note that $L^y =  \op{LE}(X^y|_{[0,\tau^y]})$ in the above procedure.
Applying the above procedure with the roles of $x$ and $y$ interchanged shows that $L^x \eqD \op{LE}(X^x|_{[0,\tau^x]})$.
When we construct $L^x\cup L^y$ as above, the points where $L^x$ and $L^y$ hit $A$ coincide provided $X^x$ hits $ \op{LE}(X^y|_{[0,\tau^y]})$ before $A$.
The conditional probability given $X^y|_{[0,\tau^y]}$ that $X^x$ hits $ \op{LE}(X^y|_{[0,\tau^y]})$ before $A$ is at least the unconditional probability that $X^x$ disconnects $y$ from $A$ before time $\tau^x$. We thus obtain a coupling of $\op{LE}(X^x|_{[0,\tau^x]})$ and $\op{LE}(X^y|_{[0,\tau^y]})$ such that the conditional probability given $X^y|_{[0,\tau^y]}$ that these two loop erasures hit $A$ at the same point is at least $ \BB P\left[ \text{$X^x$ disconnects $y$ from $A$ before time $\tau^x$} \right]$. We now obtain~\eqref{eqn-disconnect-coupling} by observing that $X^x_{\tau^x}$ is the same as the point where $\op{LE}(X^x|_{[0,\tau^x]})$ first hits $A$, and similarly with $y$ in place of $x$.
\end{proof}

In light of Lemma~\ref{lem-disconnect-coupling}, to prove Proposition~\ref{prop-harnack} we need a lower bound for the probability that the random walk on $\mcl G^\ep$ disconnects a circle from $\bdy B_\rho$ before exiting $B_\rho$.

\begin{lem}  \label{lem-walk-dc}
There exists $\beta = \beta(\gamma ) > 0$ and $p = p(\rho , \gamma) \in (0,1)$ such that the following is true.
Let $z\in B_\rho$ and let $s\in \left[ \ep^\beta, \frac12 \op{dist}\left(z , \bdy B_\rho\right) \right] $.
With polynomially high probability as $\ep\rta 0$, uniformly in the choices of $z$ and $s$,
\allb \label{eqn-walk-dc}
&\min_{x\in  \mcl V\mcl G^\ep(B_s \setminus B_{s/2})}
\ol{\BB P}_x^\ep\bigg[ \text{$X $ disconnects $\mcl V\mcl G^\ep(\BB A_{s/2,s}(z))$ from $\mcl V\mcl G^\ep(\bdy \BB A_{s/4, 2s}(z)) $} \notag\\
&\qquad\qquad\qquad\qquad\qquad \text{before hitting $\mcl V\mcl G^\ep(\bdy \BB A_{s/4, 2s}(z)) $} \bigg] \geq p.
\alle
\end{lem}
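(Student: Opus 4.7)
The plan is to combine the convergence of random walk on $\mcl G^\ep$ to Brownian motion modulo time parametrization (\cite[Theorem 3.4]{gms-tutte}) with a classical disconnection estimate for planar Brownian motion, and to use the scale invariance of the $\gamma$-quantum cone (Lemma~\ref{lem-cone-scale}) to achieve uniformity in the scale $s$. The crucial observation is that the event appearing in~\eqref{eqn-walk-dc} is a \emph{topological} property of the trace of the walk, so it is invariant under time reparametrization and hence compatible with the mode of convergence available from~\cite{gms-tutte}.

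First I would recall the Brownian input: planar Brownian motion started anywhere in $\BB A_{s/2,s}(z)$ winds once around $z$ before exiting $\BB A_{s/4,2s}(z)$ with probability at least some $p_0>0$, and this $p_0$ depends only on the ratios of the radii by Brownian scaling. Any such winding path automatically disconnects $\BB A_{s/2,s}(z)$ from $\bdy\BB A_{s/4,2s}(z)$. Next, for a \emph{fixed} reference scale $s_0\in(0,\rho/2)$, \cite[Theorem 3.4]{gms-tutte} gives that, with polynomially high probability, the quenched law of the embedded walk $\eta(X^{x,\ep})$ is within an $o_\ep(1)$ Prokhorov distance (in the topology of curves modulo time parametrization) of Brownian motion started from $\eta(x)$, uniformly over all starting vertices $x$ with $\eta(x)$ in any compact set. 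Since the disconnection event is a continuity point of the limiting law under this topology, we obtain~\eqref{eqn-walk-dc} at the fixed scale $s_0$, uniformly over $z\in B_\rho$ (with the constant $p_0/2$, say).

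To promote this to \emph{all} scales $s\in[\ep^\beta,\tfrac12\op{dist}(z,\bdy B_\rho)]$, I would use Lemma~\ref{lem-cone-scale}. For $z=0$ the rescaling is exact: choosing $b$ so that $R_b$ is within a constant factor of $s$, which holds with polynomially high probability in $\ep$ by Gaussian tail bounds on the circle-average process of $h$, the rescaled field $h^b$ equals $h$ in law. The mated-CRT map at scale $\ep$ associated with $h^b$ restricted to $B_1$ is graph-isomorphic to the mated-CRT map at scale $\ep$ associated with $h$ restricted to $R_bB_1$, with embeddings differing by the dilation $z\mapsto R_bz$. Applying the fixed-scale result for $h^b$ therefore gives~\eqref{eqn-walk-dc} at scale $s$ around the origin. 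For general $z\in B_\rho$ I would use standard absolute continuity of the field $h$ near $z$ with respect to a whole-plane GFF plus a bounded smooth function (with controlled Radon-Nikodym derivative), applied to a box of size slightly larger than $2s$ around $z$, to transfer the zero-centered estimate to $z$. Finally, I would take a net of $(z,s)$ pairs of polynomial cardinality $\ep^{-C}$ in $B_\rho\times[\ep^\beta,\rho/2]$ and union bound; the small amount of slack needed between net points is handled by monotonicity of the event in $s$ and Lemma~\ref{lem-cell-diam} to control cell diameters. The requirement that $\beta$ be small enough so that cell diameters are much smaller than $\ep^\beta$ fixes the constraint $\beta<2/(2+\gamma)^2$ and determines the admissible range of $\beta(\gamma)$.

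The main obstacle is implementing Step~3 uniformly in the center $z$: Lemma~\ref{lem-cone-scale} is naturally centered at the origin, so the absolute continuity input used to move the center is where the argument becomes technical. A clean way to organize this is to first prove an ``annealed'' version of~\eqref{eqn-walk-dc} (expectation over $(h,\eta)$) by combining BM convergence with absolute continuity, then bootstrap to a quenched polynomial-probability statement via Markov's inequality applied to a well-chosen sub-Gaussian concentration of the disconnection probability over the scale parameter. All other steps reduce to routine applications of the cited results.
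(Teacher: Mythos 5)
The paper's proof of this lemma is a one-line citation to~\cite[Proposition 3.6]{gms-harmonic}, which is a direct quantitative disconnection estimate for the mated-CRT map proved in that reference via GFF/flow-line arguments; your proposal instead attempts to re-derive it from the Brownian-motion scaling limit of~\cite[Theorem 3.4]{gms-tutte}. This is a genuinely different route, but it has a gap that prevents it from delivering the stated conclusion.

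The critical issue is the \emph{rate}. The lemma requires the quenched disconnection probability to be at least $p$ with \emph{polynomially} high probability in $\ep$, with the polynomial rate uniform in $(z,s)$ -- and this polynomial rate is what makes the union bound in Lemma~\ref{lem-walk-dc-uniform} work. However, \cite[Theorem 3.4]{gms-tutte} gives only convergence in probability of the quenched law of the embedded walk to Brownian motion (modulo time parametrization); it does not come with a quantitative polynomial rate. Your step ``with polynomially high probability, the quenched law\ldots is within an $o_\ep(1)$ Prokhorov distance'' asserts more than the cited theorem gives, and is precisely the content that would need to be proved from scratch. Without it, even the fixed-scale version of~\eqref{eqn-walk-dc} is obtained only with probability $1-o_\ep(1)$, not $1-O_\ep(\ep^\alpha)$. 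The reason the paper can state the lemma with a polynomial rate is that~\cite[Proposition 3.6]{gms-harmonic} proves the disconnection estimate directly on the mated-CRT map (using GFF absolute continuity and flow-line arguments) rather than going through the non-quantitative scaling limit.

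A secondary issue is the scaling step: $R_b$ in Lemma~\ref{lem-cone-scale} is a \emph{random} radius determined by $h$, so one cannot simply ``choose $b$ so that $R_b$ is within a constant factor of $s$'' for a prescribed $s$. One would instead need to work with a deterministic polynomial grid of values $b$, show $R_b$ concentrates near the expected deterministic scale with polynomially high probability, and intersect with the fixed-scale event at the random radius $R_b$ -- but this again requires the fixed-scale statement to already hold with polynomial rate, returning to the main gap. (The union bound over a net of $(z,s)$ pairs at the end of your argument is also doing the work of the \emph{next} lemma, Lemma~\ref{lem-walk-dc-uniform}, not this one; the present lemma is a per-$(z,s)$ statement with a uniform rate.)
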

\begin{proof}
This follows from~\cite[Proposition 3.6]{gms-harmonic}.
\end{proof}

We now upgrade Lemma~\ref{lem-walk-dc} to a statement which holds \emph{simultaneously} for all choices of $z$ and $s$ via a union bound argument.

\begin{lem} \label{lem-walk-dc-uniform}
There exists $\beta = \beta(\gamma ) > 0$ and $p = p(\rho , \gamma) \in (0,1)$ such that the following is true.
With polynomially high probability as $\ep\rta 0$, it holds simultaneously for every $z\in B_\rho$ and every $s\in \left[ \ep^\beta, \frac13 \op{dist}\left(z , \bdy B_\rho\right) \right] $ that
\allb \label{eqn-walk-dc-uniform}
&\min_{x\in   \mcl V\mcl G^\ep(\bdy B_s(z))}
\ol{\BB P}_x^\ep\bigg[ \text{$X $ disconnects $\mcl V\mcl G^\ep(\bdy B_s(z))$ from $\left(\mcl V\mcl G^\ep \setminus \mcl V\mcl G^\ep(\bdy B_{3s}(z) ) \right) \cup \{x_z^\ep\}$} \notag\\
&\qquad\qquad\qquad\qquad\qquad \text{before hitting $\left(\mcl V\mcl G^\ep \setminus \mcl V\mcl G^\ep(\bdy B_{3s}(z) ) \right) \cup \{x_z^\ep\}$} \bigg] \geq p.
\alle
\end{lem}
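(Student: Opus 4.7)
The plan is to upgrade the pointwise bound in Lemma~\ref{lem-walk-dc} to a statement uniform in $(z,s)$ via a union bound over a polynomial-size grid of scales, combined with a perturbation argument relying on cell diameters being much smaller than the relevant Euclidean scales (Lemma~\ref{lem-cell-diam}). First, shrink $\beta$ if necessary so that $\beta$ is strictly less than $\frac{2}{(2+\gamma)^2}-\zeta$ (for some fixed small $\zeta > 0$) and also less than half of the polynomial probability exponent $q$ implicit in Lemma~\ref{lem-walk-dc}; $q$ can first be boosted by iterating the disconnection attempt via the strong Markov property if needed. By Lemma~\ref{lem-cell-diam}, with polynomially high probability every cell of $\mcl G^\ep$ intersecting $B_\rho$ has Euclidean diameter at most $\ep^\alpha$ with $\alpha > \beta$, and I would condition on this event throughout.

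Next, introduce a grid $\mathcal{D}_\ep := \{(z_0,s_0) : z_0 \in \ep^{2\beta}\BB Z^2 \cap B_\rho,\ s_0 \in \{(1+\delta)^k \ep^\beta\}_{k \in \BB N_0} \cap [\ep^\beta,\rho]\}$ for a small fixed $\delta > 0$, which has cardinality polynomial in $\ep^{-1}$. Applying Lemma~\ref{lem-walk-dc} at each grid point and taking a union bound yields, with polynomially high probability, that the bound~\eqref{eqn-walk-dc} holds simultaneously for every $(z_0,s_0) \in \mathcal{D}_\ep$. Given any target $(z,s)$ in the allowed range, pick $(z_0,s_0) \in \mathcal{D}_\ep$ with $|z-z_0| \leq \sqrt{2}\,\ep^{2\beta}$ and $s_0 \in [1.2\, s,\, 1.2(1+\delta) s]$. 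Since $\ep^{2\beta}$ and $\ep^\alpha$ are both much smaller than $s \geq \ep^\beta$, I obtain the geometric containments: every cell intersecting $\bdy B_s(z)$ lies in $\BB A_{s_0/2,s_0}(z_0)$; the central cell $H_{x_z^\ep}^\ep$ is contained in $B_{s_0/4}(z_0)$; and every cell disjoint from $B_{3s}(z)$ is disjoint from $\ol B_{2s_0}(z_0)$.

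Fix any $x \in \mcl V\mcl G^\ep(\bdy B_s(z))$; by the containments, $x \in \mcl V\mcl G^\ep(\BB A_{s_0/2,s_0}(z_0))$. Let $\tau'$ be the first time the walk $X$ started from $x$ hits $\mcl V\mcl G^\ep(\bdy\BB A_{s_0/4,2s_0}(z_0))$, and $\mathcal{T}_{\tau'}$ its trace up to $\tau'$. On the $(z_0,s_0)$ disconnection event (which holds with $\ol{\BB P}_x^\ep$-probability at least $p$), $\mathcal{T}_{\tau'}$ separates $\mcl V\mcl G^\ep(\BB A_{s_0/2,s_0}(z_0))$ from both boundary components of $\BB A_{s_0/4,2s_0}(z_0)$ in the graph $\mcl G^\ep$. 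Because $\mathcal{T}_{\tau'} \subset \mcl V\mcl G^\ep(\BB A_{s_0/4,2s_0}(z_0))$, cells strictly inside $B_{s_0/4}(z_0)$ (in particular $x_z^\ep$) remain connected to $\mcl V\mcl G^\ep(\bdy B_{s_0/4}(z_0))$ in $\mcl G^\ep\setminus \mathcal{T}_{\tau'}$, and cells strictly outside $\ol B_{2s_0}(z_0)$ (including those disjoint from $B_{3s}(z)$) remain connected to $\mcl V\mcl G^\ep(\bdy B_{2s_0}(z_0))$. Hence the separation forced by the $(z_0,s_0)$ event extends to the required separation of $\mcl V\mcl G^\ep(\bdy B_s(z))$ from $(\mcl V\mcl G^\ep \setminus \mcl V\mcl G^\ep(B_{3s}(z))) \cup \{x_z^\ep\}$ (interpreting the latter set as the natural absorbing set for the Harnack application, i.e., cells outside $B_{3s}(z)$ together with the center). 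Since the boundary cells of $\BB A_{s_0/4,2s_0}(z_0)$ lie outside this absorbing set, the walk continues past $\tau'$ until it finally hits the absorbing set, and the trace only grows, preserving the separation.

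The main obstacle, I expect, is the planar topology needed in the final step: verifying that cells inside $B_{s_0/4}(z_0)$ (respectively outside $\ol B_{2s_0}(z_0)$) are indeed connected to the corresponding boundary cells of $\BB A_{s_0/4,2s_0}(z_0)$ in $\mcl G^\ep \setminus \mathcal{T}_{\tau'}$, so that separating the middle annulus from the two boundary circles really does separate $\bdy B_s(z)$ from the center and the exterior. This should follow from the planar tiling structure of the cells in the embedding $\eta$, but the argument requires care (especially for $\kappa \in (4,8)$, where cells can have pinch points and the adjacency definition in Section~\ref{sec-setup} excludes some intersections). A secondary technical point is matching the probability exponent in Lemma~\ref{lem-walk-dc} to the grid size; this is handled by choosing $\beta$ small enough, or if the raw exponent is too small, by running several independent disconnection attempts in sequence via the strong Markov property before applying the union bound.
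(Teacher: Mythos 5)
Your proof is correct and follows essentially the same route as the paper's: a union bound over a grid of $O_\ep(\ep^{-4\beta})$ centers and a geometric family of scales, with $\beta$ chosen small relative to the polynomial-probability exponent from Lemma~\ref{lem-walk-dc}, together with geometric containments that reduce the uniform disconnection event for $(z,s)$ to the gridded event for a nearby $(z_0,s_0)$. Two small remarks on your closing worries. The planar-topology concern is not actually an obstacle: since adjacent cells of $\mcl G^\ep$ share a nontrivial boundary arc, any $\mcl G^\ep$-path from $\mcl V\mcl G^\ep(\bdy B_s(z))$ (whose cells are contained in $\BB A_{s_0/2,s_0}(z_0)$) to either a cell contained in $B_{s_0/4}(z_0)$ or a cell disjoint from $\ol{B_{2s_0}(z_0)}$ traces a connected subset of the plane which must cross one of the two boundary circles of $\BB A_{s_0/4,2s_0}(z_0)$, hence must contain a vertex of $\mcl V\mcl G^\ep(\bdy \BB A_{s_0/4,2s_0}(z_0))$; this elementary connectedness argument works for all $\kappa>4$, and the pinch points in the regime $\kappa\in(4,8)$ play no role. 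Also, the backup idea of boosting the exponent by iterating the disconnection attempt via the strong Markov property does not help: iterating improves the \emph{quenched} success probability but leaves unchanged the set of bad environments $(h,\eta)$ on which the quenched estimate fails, so it does not improve the $\ep$-exponent governing $\BB P[\text{estimate fails}]$. Fortunately it is not needed, since (as you note, and as the paper does) simply taking $\beta$ sufficiently small already provides all the room required for the union bound.
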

\begin{proof}
By Lemma~\ref{lem-walk-dc}, there exists $\beta_0 = \beta_0(\gamma)  >0$, $\alpha_0 = \alpha_0(\gamma) > 0$, and $p = p(\rho,\gamma) > 0$ such that for each $z\in B_\rho$ and each $s\in \left[ \ep^{\beta_0} , \frac12 \op{dist}\left(z , \bdy B_{\rho}\right) \right] $, the estimate~\eqref{eqn-walk-dc} holds with probability $1-O_\ep(\ep^{\alpha_0})$, uniformly in the choices of $z$ and $s$.
Let $\beta := \frac{1}{100} \min\{\beta_0 , \alpha_0\}$.
We can find a collection $\mcl Z$ of at most $O_\ep(\ep^{-4\beta})$ points $z\in B_\rho$ such that each point of $B_\rho$ lies within Euclidean distance $\ep^{2\beta}$ of some $z \in\mcl Z$.

Due to our choice of $\beta$, we can take a union bound over all $z\in\mcl Z$ and all $s\in \left[\ep^\beta , \frac12 \op{dist}(z, \bdy B_{\rho}) \right] \cap \{2^{-n/2}\}$ to find that with polynomially high probability as $\ep\rta 0$, \eqref{eqn-walk-dc} holds simultaneously for every such $z$ and $s$.
Due to our choice of $\mcl Z$, if $z\in B_\rho$ and $s\in [\ep^\beta,\rho/2]$ with $B_{3s}(z) \subset B_\rho$, then there exists $z'\in \mcl Z$ and $s' \in \left[\ep^\beta , \frac12 \op{dist}(z, \bdy B_\rho) \right] \cap \{2^{-n/2}\}$ such that
\eqbn
\bdy B_s(z) \subset \BB A_{s'/2,s'}(z') \quad\text{and} \quad  \BB A_{s'/4, 2s'}(z) \subset B_{3s}(z) \setminus \{z\} .
\eqen
The estimate~\eqref{eqn-walk-dc-uniform} for $(z,s)$ therefore follows from~\eqref{eqn-walk-dc} for $(z',s')$.
\end{proof}

\begin{proof}[Proof of Proposition~\ref{prop-harnack}]
By Lemma~\ref{lem-walk-dc-uniform}, it holds with polynomially high probability as $\ep\rta 0$ that~\eqref{eqn-walk-dc-uniform} holds simultaneously for every $z \in B_\rho$ and $ s\in \left[ \ep^\beta, \frac13\op{dist}\left(z ,\bdy B_\rho \right) \right] $.
We henceforth assume that this is the case and work under the conditional law given $\mcl G^\ep$.

Fix $z \in B_\rho$ and $ s\in \left[ \ep^\beta, \frac13\op{dist}\left(z ,\bdy B_{\rho} \right) \right] $.
For $x \in\mcl V\mcl G^\ep$, we let $X^x$ be the simple random walk on $\mcl G^\ep$ and we let $\tau^x$ be the first time that $X^x$ either exits $\mcl V\mcl G^\ep(B_{3s}(z))$ or hits $x_z^\ep$.
It follows from~\eqref{eqn-walk-dc-uniform} that the conditional probability given $\mcl G^\ep$ that $X^x$ disconnects $\mcl V\mcl G^\ep(\bdy B_s(z))$ from $\left(\mcl V\mcl G^\ep \setminus  \mcl V\mcl G^\ep(B_{3s}(z) ) \right) \cup \{x_z^\ep\}$ before time $\tau^x$ is at least $p$.
By this and Lemma~\ref{lem-disconnect-coupling}, if $x,y \in \mcl V\mcl G^\ep(\bdy B_s(z) )$ we can couple the random walks $X^x$ and $X^y$ in such a way that a.s.
\eqb \label{eqn-use-coupling}
\BB P\left[ X_{\tau^x}^x = X^y_{\tau^y} \,|\, X^y|_{[0,\tau^y]} , \mcl G^\ep \right] \geq p  .
\eqe

If $\frk f : \mcl V\mcl G^\ep \rta [0,\infty)$ is discrete harmonic on $\mcl V\mcl G^\ep(B_{3s}(z) ) \setminus \{x_z^\ep\}$, then
\allb
\frk f(x)
= \BB E\left[ \frk f(X_{\tau^x}) \,|\, \mcl G^\ep  \right]
= \BB E\left[ \BB E\left[ \frk f(X_{\tau^x}) \,|\, X^y|_{[0,\tau^y]} , \mcl G^\ep \right]   \,|\, \mcl G^\ep  \right]
\geq p \BB E\left[ \frk f(X_{\tau^y}) \,|\, \mcl G^\ep  \right]
= p\frk f(y) .
\alle
Choosing $x$ (resp.\ $y$) so that $\frk f$ attains its minimum (resp.\ maximum) value on $\mcl V\mcl G^\ep(\bdy B_s(z))$ at $x$ (resp.\ $y$) now gives~\eqref{eqn-harnack} with $C =1/p$.
\end{proof}

\section{Lower bound for exit times}
\label{sec-exit-lower}

In this subsection we prove a lower bound for the quenched expected exit time of the random walk on $\mcl G^\ep$ from a Euclidean ball.
We continue to work in the setting of Section~\ref{sec-mated-crt-setup}.
We recall in particular the notation $\tau_D^\ep$ from~\eqref{eqn-exit-time} for (roughly speaking) the exit time of random walk on $\mcl G^\ep$ from $D\subset\BB C$.

\begin{prop} \label{prop-exit-lower}
There exists $\omega=\omega(\gamma) > 0$ and $c = c(\rho,\gamma) > 0$ such that with polynomially high probability as $\ep\rta 0$ (at a rate depending on $\rho,\gamma$), the following is true.
Simultaneously for each $x\in\mcl V\mcl G^\ep(B_{\rho-\ep^\omega} )$ and each $r \in \left[\ep^\omega , \op{dist}\left( \eta(x) , \bdy B_\rho \right) \right]$,
\eqb \label{eqn-exit-lower}
\ol{\BB E}_x^\ep\left[  \tau_{B_r(\eta(x)) }^\ep \right] \geq    \ep^{-1} \mu_h\left( B_{c r} (\eta(x)) \right)  .
\eqe
\end{prop}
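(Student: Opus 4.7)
My plan is to express the expected exit time as a sum involving the discrete Green's function, and then obtain a pointwise lower bound for the Green's function on a smaller concentric ball by combining Lemma~\ref{lem-eff-res-green} (which converts effective-resistance bounds into Green's-function bounds on the boundary), Proposition~\ref{prop-harnack} (to transfer the bound from maximum to minimum on a circle), the discrete minimum principle (to propagate it from the boundary into the interior), and an effective-resistance lower bound coming from \cite{gms-harmonic}. Concretely, I would start from the identity
$$ \ol{\BB E}_x^\ep \left[ \tau_{B_r(\eta(x))}^\ep \right] = \sum_{y} \deg(y)\, \op{gr}_{B_r(\eta(x))}^\ep(x,y) \geq 2\, \Bigl( \min_{y \in A} \op{gr}_{B_r(\eta(x))}^\ep(x,y) \Bigr) |A|, $$
where $A = \mcl V\mcl G^\ep(B_{c_0 r}(\eta(x)))$ for some small constant $c_0 = c_0(\rho,\gamma) \in (0,1/3)$; the factor $2$ uses that every vertex has degree at least $2$ from its trivial edges. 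The count $|A|$ is immediate: since each cell of $\mcl G^\ep$ has $\mu_h$-mass exactly $\ep$ and the cells $\{H_y^\ep : y \in A\}$ cover $B_{c_0 r}(\eta(x))$, one has $|A| \geq \ep^{-1} \mu_h(B_{c_0 r}(\eta(x)))$.

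The heart of the argument is the Green's-function lower bound. Setting $B = \mcl V\mcl G^\ep(B_r(\eta(x)))$, Lemma~\ref{lem-eff-res-green} combined with Lemma~\ref{lem-eff-res-grad} (which gives $\delta \leq 1$) yields
$$ \max_{y \in \bdy A} \op{gr}_{B_r(\eta(x))}^\ep(x,y) \geq \mcl R^\ep\bigl(A \leftrightarrow \mcl V\mcl G^\ep \setminus B\bigr) - 1. $$
Since the mated-CRT map is ``two-dimensional'', I expect $\mcl R^\ep(A \leftrightarrow \mcl V\mcl G^\ep \setminus B) \gtrsim \log(1/c_0)$ uniformly with polynomially high probability, which should be accessible from the Dirichlet energy estimates of \cite{gms-harmonic} (e.g.\ by testing against a suitable radially symmetric discrete flow via Thomson's principle, or by comparison with the continuum Green's function). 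Shrinking $c_0$ then makes the right-hand side at least any prescribed constant. Next, the function $\op{gr}_{B_r(\eta(x))}^\ep(x,\cdot)$ is discrete harmonic on $\mcl V\mcl G^\ep(B_r(\eta(x))) \setminus \{x\}$, so Proposition~\ref{prop-harnack} at center $z = \eta(x)$ (note $x_z^\ep = x$) and radius $s = c_0 r$ shows that the maximum and minimum of this function on $\mcl V\mcl G^\ep(\bdy B_{c_0 r}(\eta(x)))$ differ by at most a factor $C = C(\rho,\gamma)$; by the cell-diameter estimate of Lemma~\ref{lem-cell-diam}, the graph boundary $\bdy A$ lies in a thin Euclidean annulus around $\bdy B_{c_0 r}(\eta(x))$ and one can transfer the comparability to $\bdy A$ itself by iterating Harnack over finitely many nearby concentric circles. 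Finally, since $\op{gr}_{B_r(\eta(x))}^\ep(x,\cdot)$ is harmonic on $A \setminus \{x\}$ and attains its global maximum at $x$, the discrete minimum principle gives $\min_{y \in A} \op{gr}_{B_r(\eta(x))}^\ep(x,y) = \min_{y \in \bdy A} \op{gr}_{B_r(\eta(x))}^\ep(x,y)$. Assembling these three ingredients yields a pointwise bound $\op{gr}_{B_r(\eta(x))}^\ep(x,y) \geq c_1$ for all $y \in A$, with $c_1 \asymp C^{-1} \log(1/c_0)$; in particular $c_1 \geq 1/2$ once $c_0$ is small enough.

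Substituting back into the initial display gives
$$ \ol{\BB E}_x^\ep \left[ \tau_{B_r(\eta(x))}^\ep \right] \geq 2 c_1\, \ep^{-1} \mu_h(B_{c_0 r}(\eta(x))) \geq \ep^{-1} \mu_h(B_{c_0 r}(\eta(x))), $$
so the proposition holds with $c = c_0$. Uniformity in $(x,r)$ will follow from a union bound over a sufficiently fine dyadic collection of centers in $B_\rho$ and radii in $[\delta,\rho]$, together with the cell-diameter bound to transfer between nearby pairs, mirroring the argument used in Lemma~\ref{lem-walk-dc-uniform}; the lower bound $r \geq \delta$ together with the choice of $c_0$ ensures that the radius $c_0 r$ satisfies the lower-bound hypothesis $c_0 r \geq \ep^\beta$ of Proposition~\ref{prop-harnack} for all sufficiently small $\ep$.

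The main obstacle I anticipate is proving the uniform effective-resistance lower bound $\mcl R^\ep(A \leftrightarrow \mcl V\mcl G^\ep \setminus B) \gtrsim \log(1/c_0)$ with polynomially high probability, valid simultaneously for all choices of $(x,r)$ with $r \geq \delta$. Although pointwise annular resistance bounds of this kind are essentially contained in the Dirichlet-energy machinery of \cite{gms-harmonic}, extracting them in a uniform form with the correct logarithmic dependence on $c_0$, and with enough room to overcome the Harnack constant $C$, will be where most of the technical work lies; the combinatorial identities and the Harnack step are comparatively routine once this estimate is in hand.
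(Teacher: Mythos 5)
Your proposal follows essentially the same route as the paper's proof, and the argument is sound. Both you and the paper write the expected exit time as a sum of Green's function values, restrict attention to vertices whose cells lie in $B_{cr}(\eta(x))$ so that the vertex count gives the $\ep^{-1}\mu_h(B_{cr}(\eta(x)))$ factor, and lower-bound the Green's function by chaining Lemma~\ref{lem-eff-res-green} together with the boundedness of its error term, the Harnack inequality Proposition~\ref{prop-harnack}, and an effective-resistance lower bound across a Euclidean annulus derived from the Dirichlet-energy machinery of~\cite{gms-harmonic}. The only structural difference is cosmetic: you fix the annulus radius at $c_0 r$ and then pass from $\bdy A$ to the interior by the minimum principle, whereas the paper (in Lemma~\ref{lem-gr-bound-lower}) lets the inner radius vary with the target vertex, i.e.\ $s=|\eta(x)-\eta(y)|$, producing the sharper pointwise bound $\op{gr}^\ep_{B_r(\eta(x))}(x,y)\geq C^{-1}\log(r/|\eta(x)-\eta(y)|)-1$ before specializing to $|\eta(x)-\eta(y)|\leq cr$. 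Your variant also works, and indeed your worry about transferring the Harnack bound from the Euclidean circle to the graph boundary $\bdy A$ can be handled more simply than by iterating Harnack over nearby circles: one has the inclusion $\bdy A\subset\mcl V\mcl G^\ep(\bdy B_{c_0 r}(\eta(x)))$, so the minimum over $\bdy A$ is at least the minimum over the circle and the maximum over the circle is at least the maximum over $\bdy A$, which closes the comparison with no iteration.

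One small error worth correcting: for the effective-resistance \emph{lower} bound you suggest ``testing against a suitable radially symmetric discrete flow via Thomson's principle.'' Thomson's principle is an infimum over flows, so exhibiting a flow only yields an \emph{upper} bound on resistance. The correct tool here is Dirichlet's principle, applied by exhibiting a test function (the discretization of the continuum log-harmonic profile) with small Dirichlet energy; this is precisely what the paper does via~\cite[Theorem~3.2]{gms-harmonic} in Lemmas~\ref{lem-annulus-energy} and~\ref{lem-eff-res-lower}. Your primary suggestion of ``Dirichlet energy estimates'' and ``comparison with the continuum Green's function'' is the right one, and the uniformity in $(x,r)$ is indeed obtained in the paper by the same kind of dyadic union-bound and monotonicity argument you describe.
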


Note that
\eqb
\ol{\BB E}_x^\ep\left[  \tau_{B_r(\eta(x)) }^\ep \right]
= \sum_{y\in \mcl V\mcl G^\ep(B_r(\eta(x))} \op{Gr}_{B_r(\eta(x))}^\ep(x,y)
\eqe
where $ \op{Gr}_{B_r(\eta(x))}^\ep$ is the Green's function as in Notation~\ref{notation-eff-res-mated-crt}.
Hence to prove Proposition~\ref{prop-exit-lower} we need a lower bound for this Green's function.
We will obtain such a lower bound using Lemma~\ref{lem-eff-res-green} and Proposition~\ref{prop-harnack}.

We first establish a lower bound for the effective resistance in $\mcl G^\ep$ across a Euclidean annulus in Section~\ref{sec-eff-res-lower} using Dirichlet's principle~\eqref{eqn-eff-res-dirichlet} and bounds for the Dirichlet energies of discrete harmonic functions on $\mcl G^\ep$ from~\cite{gms-harmonic}.
Due to Lemma~\ref{lem-eff-res-green}, this gives us a lower bound for the maximum value of $\op{Gr}_{B_r(\eta(x))}(x,\cdot)$ over all vertices $y\in\mcl V\mcl G^\ep$ whose corresponding cells intersects a specified Euclidean circle centered at $\eta(x)$.
We will then use Proposition~\ref{prop-harnack} to upgrade this to a uniform lower bound for $\op{Gr}_{B_r(\eta(x))}(x,\cdot)$.

\subsection{Lower bound for effective resistance}
\label{sec-eff-res-lower}

Let $\omega > 0$ be an exponent to be chosen later, in a manner depending only on $\gamma$. To lighten notation, for $\ep > 0$ we let $\mcl Z^\ep  = \mcl Z^\ep( \omega,\rho)$ be the set of triples $(z,r,s) \in\BB C\times [0,\infty)\times [0,\infty)$ such that
\eqb \label{eqn-eff-res-lower-param}
z\in B_{\rho-\ep^\omega} ,\quad r\in \left[\ep^\omega , \op{dist}(z,\bdy B_\rho)\right] ,\quad \text{and} \quad s\in [0,  r/3] .
\eqe
We also let $  \mcl R^\ep$ denote the effective resistance on $\mcl G^\ep$.

\begin{lem} \label{lem-eff-res-lower}
There exists $\omega = \omega(\gamma ) > 0$ and $C = C( \rho,\gamma) > 0$ such that with polynomially high probability as $\ep\rta 0$ (at a rate depending on $  \rho,\gamma $),
\eqb \label{eqn-eff-res-lower}
   \mcl R^\ep\left( B_{s}(z) \leftrightarrow  \bdy B_r(z)  \right)  \geq \frac{1}{C} \log\left(   (r/s) \wedge \ep^{-1} \right) ,
   \quad \forall (z,r,s) \in \mcl Z^\ep .
\eqe
\end{lem}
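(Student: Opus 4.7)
The plan is to apply Dirichlet's principle \eqref{eqn-eff-res-dirichlet} with a discrete analogue of the classical continuum-harmonic logarithmic profile on the annulus $\BB A_{s,r}(z)$. To accommodate the $\ep^{-1}$ cap in the statement, set $s^\star := \max(s, \ep r)$, so that $\log(r/s^\star) = \log((r/s) \wedge \ep^{-1})$ (recall $r \leq \rho \leq 1$). Define
\begin{equation*}
g(w) := \frac{(\log(r/|w-z|))_+ \wedge \log(r/s^\star)}{\log(r/s^\star)}, \qquad \frk f^\ep(x) := g(\eta(x)).
\end{equation*}
Then $\frk f^\ep \equiv 1$ on $\mcl V\mcl G^\ep(B_{s^\star}(z))$, $\frk f^\ep \equiv 0$ outside $\mcl V\mcl G^\ep(B_r(z))$, and the continuum $g$ has Dirichlet energy $2\pi/\log(r/s^\star)$ on $\BB C$. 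Since $B_{s^\star}(z) \supset B_s(z)$, Rayleigh monotonicity gives $\mcl R^\ep(B_s(z) \leftrightarrow \bdy B_r(z)) \geq \mcl R^\ep(B_{s^\star}(z) \leftrightarrow \bdy B_r(z))$, and Dirichlet's principle then reduces the lemma to establishing $\op{Energy}(\frk f^\ep; \mcl G^\ep) \leq C/\log(r/s^\star)$ with the required uniformity.

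For the energy bound, I would use that adjacent cells $H_x^\ep$ and $H_y^\ep$ share at least one point, so $|\eta(x) - \eta(y)| \leq \op{diam} H_x^\ep + \op{diam} H_y^\ep$, combined with the pointwise gradient estimate $|\nabla g(w)| \leq (|w-z| \log(r/s^\star))^{-1}$ on $\BB A_{s^\star, r}(z)$ (and $|\nabla g| = 0$ elsewhere). This yields a per-edge contribution of at most $C (\op{diam} H_x^\ep + \op{diam} H_y^\ep)^2 / (d_{xy}^2 (\log(r/s^\star))^2)$, where $d_{xy}$ denotes the distance from $z$ to $H_x^\ep \cup H_y^\ep$. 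Decomposing $\BB A_{s^\star, r}(z)$ into $O(\log(r/s^\star))$ dyadic sub-annuli $\BB A_k := \BB A_{2^{-k-1}r, 2^{-k}r}(z)$, each sub-annulus should contribute $O(1/(\log(r/s^\star))^2)$ to the total energy, and summing yields the desired $O(1/\log(r/s^\star))$ bound. Edges straddling dyadic boundaries, and in particular edges with exactly one endpoint inside the support of $\nabla g$, are absorbed into neighboring scales at the cost of constant factors, using that $g$ is uniformly Lipschitz in $\log |w-z|$.

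The main technical obstacle is the per-sub-annulus energy estimate, which reduces to showing $\sum_{x: H_x^\ep \cap \BB A_k \neq \emptyset} (\op{diam} H_x^\ep)^2$ is no larger (up to a constant) than the Euclidean area $(2^{-k} r)^2$ of $\BB A_k$. This should combine the cell-diameter upper bound of Lemma~\ref{lem-cell-diam} with the $\mu_h$-area estimates of Appendix~\ref{sec-appendix} (each cell has $\mu_h$-mass $\ep$, and the total number of cells in $\BB A_k$ can be controlled by $\mu_h(\BB A_k \cup \text{(thin neighborhood)})/\ep$), and is morally the same type of Dirichlet-energy input already developed in \cite{gms-harmonic}. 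Finally, the uniformity over all $(z, r, s) \in \mcl Z_\delta^\ep$ is obtained by taking a union bound over an $\ep$-polynomial net of parameter values and interpolating between net points using the monotonicity of both sides in $(s,r)$ (Rayleigh monotonicity on the left, explicit monotonicity of the logarithm on the right), exactly as in the proof of Lemma~\ref{lem-walk-dc-uniform}.
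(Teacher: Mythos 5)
Your overall route (explicit test function $g(\eta(\cdot))$ plus Dirichlet's principle, rather than the paper's discrete harmonic function plus~\cite[Theorem 3.2]{gms-harmonic}) is a legitimate alternative, but the key technical claim as you have stated it has a genuine gap.

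First, the reduction from an edge sum to a vertex sum drops the degree factor. When you expand $\sum_{\{x,y\}} (\op{diam} H_x^\ep + \op{diam} H_y^\ep)^2 \lesssim \sum_{\{x,y\}} (\op{diam} H_x^\ep)^2 + (\op{diam} H_y^\ep)^2$, each vertex $x$ appears in $\op{deg}^\ep(x)$ terms, so what you actually need to bound is $\sum_x \op{deg}^\ep(x)\,\op{diam}(H_x^\ep)^2$, not $\sum_x \op{diam}(H_x^\ep)^2$. Since degrees in $\mcl G^\ep$ can be as large as $\ep^{-\zeta}$ with non-negligible probability, the degree factor is not harmless.

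Second, and more seriously, the claim that $\sum_{x:\, H_x^\ep\cap\BB A_k\neq\emptyset}\op{diam}(H_x^\ep)^2$ (let alone the degree-weighted version) is bounded by the Euclidean area of $\BB A_k$ does \emph{not} follow from Lemma~\ref{lem-cell-diam} plus the $\mu_h$-mass estimates of Appendix~\ref{sec-appendix}. The cells only contain inscribed balls of radius $\ep^{\zeta/4}\op{diam}(H_x^\ep)$ (as used in Lemma~\ref{lem-cell-comparable}), so the packing argument loses a factor of $\ep^{-\zeta/2}$; and combining the worst-case diameter upper bound with the worst-case $\mu_h$-mass upper bound gives a quantity that blows up polynomially in $\ep$ rather than being bounded by the area. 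The correct input is precisely~\cite[Lemma 3.1]{gms-harmonic}, which gives a high-probability comparison of degree-weighted sums of squared cell diameters against an integral (it is the same lemma the paper invokes in the proof of Lemma~\ref{lem-eff-res-upper}, in the opposite direction). With that lemma in hand, your explicit-test-function argument does go through, and it is arguably slightly more elementary than the paper's invocation of~\cite[Theorem 3.2]{gms-harmonic} — but the key stochastic estimate cannot be replaced by the deterministic area/diameter bounds you cite. A smaller point: your cutoff $s^\star = \max(s,\ep r)$ should be $\max(s,\ep^\xi)$ for the $\xi = \xi(\gamma)$ under which the underlying cell estimates hold; when $r\asymp\delta$ the quantity $\ep r$ can be well below the range where those estimates apply, and you then recover the stated cap $(r/s)\wedge\ep^{-1}$ via monotonicity exactly as in Step~3 of the proof of Lemma~\ref{lem-annulus-energy}.
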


Due to Dirichlet's principle, Lemma~\ref{lem-eff-res-lower} will follow from the following upper bound for the Dirichlet energy of certain harmonic functions on $\mcl G^\ep$.

\begin{lem} \label{lem-annulus-energy}
There exists $\omega = \omega(\gamma) > 0$ and $C = C( \rho,\gamma) > 0$ such that it holds with polynomially high probability as $\ep\rta 0$ (at a rate depending on $ \rho,\gamma $) that the following is true.
For each $(z,r,s) \in \mcl Z^\ep $,
let $\frk f^\ep_{z,r,s} : \mcl V\mcl G^\ep  \rta [0,1]$ be the function which is equal to 1 on $\mcl V\mcl G^\ep(B_{s }(z))$, is equal to zero outside of $\mcl V\mcl G^\ep(B_r(z) )$, and is discrete harmonic on $\mcl V\mcl G^\ep(B_r(z) ) \setminus \mcl V\mcl G^\ep(B_{s }(z))$.
Then (in the notation of Definition~\ref{def-discrete-dirichlet})
\eqb \label{eqn-annulus-energy}
\op{Energy}\left(\frk f^\ep_{z,r,s} ; \mcl G^\ep \right) \leq \frac{C}{\log\left(   (r/s) \wedge \ep^{-1} \right)} ,
\quad \forall (z,r,s) \in \mcl Z^\ep  .
\eqe
\end{lem}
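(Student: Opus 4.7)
My plan is to apply Dirichlet's principle~\eqref{eqn-eff-res-dirichlet} and exhibit an admissible test function with the right Dirichlet energy. Since $\frk f^\ep_{z,r,s}$ is the minimizer of $\op{Energy}(\cdot;\mcl G^\ep)$ among functions which are $1$ on $\mcl V\mcl G^\ep(B_s(z))$ and vanish off $\mcl V\mcl G^\ep(B_r(z))$, it suffices to produce a single function $g^\ep$ with these boundary values for which $\op{Energy}(g^\ep;\mcl G^\ep) \leq C/\log((r/s)\wedge\ep^{-1})$. I would set the effective inner radius $s' := s \vee (\ep r)$, so that $\log(r/s') \asymp \log((r/s)\wedge\ep^{-1})$, and let $u=u_{z,r,s'}:\BB C \to [0,1]$ be the continuum logarithmic-harmonic interpolant: $u(w)=1$ on $B_{s'}(z)$, $u(w)=0$ off $B_r(z)$, and $u(w)=\log(r/|w-z|)/\log(r/s')$ on the annulus $B_r(z)\setminus B_{s'}(z)$. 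Then pull back to the vertex set via $g^\ep(x):=u(\eta(x))$. Using Lemma~\ref{lem-cell-diam} together with $s' \geq \ep r \geq \ep\delta$, I would first check that with polynomially high probability every cell $H_x^\ep$ meeting $B_s(z)$ is contained in $B_{s'}(z)$, so that $g^\ep \equiv 1$ on $\mcl V\mcl G^\ep(B_s(z))$ and the boundary conditions are genuinely satisfied.

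The core work is to bound $\op{Energy}(g^\ep;\mcl G^\ep)$. For each edge $\{x,y\}\in\mcl E(\mcl G^\ep)$ the cells $H_x^\ep$ and $H_y^\ep$ share a nontrivial boundary arc, so $|g^\ep(x)-g^\ep(y)|$ is at most the oscillation of $u$ on $H_x^\ep \cup H_y^\ep$; using the explicit gradient $|\nabla u(w)| = 1/(|w-z|\log(r/s'))$ on the annulus, this is bounded by $\op{diam}(H_x^\ep\cup H_y^\ep)/(|\eta(x)-z|\log(r/s'))$. To control the resulting sum I would decompose the annulus into $K\asymp \log_2(r/s')$ dyadic sub-annuli $\BB A_k := \BB A_{s'2^k,s'2^{k+1}}(z)$ and bound the edge-energy contribution from each. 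On $\BB A_k$, the oscillation of $u$ is $\asymp 1/K$ and the gradient factor gives a common prefactor $1/(s'2^k \log(r/s'))^2$; the remaining quantity to bound is $\sum_{\{x,y\}: H_x^\ep \cup H_y^\ep \subset \BB A_k} (\op{diam}(H_x^\ep \cup H_y^\ep))^2$, which should be $\lesssim (s'2^k)^2$ by the discrete-to-continuum Dirichlet energy comparison framework developed in \cite{gms-harmonic} (combined with the $\mu_h$-mass estimates in the appendix). Summing the contributions from the $K$ dyadic scales then yields $\op{Energy}(g^\ep;\mcl G^\ep) \lesssim 1/\log(r/s')$.

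The last step is to upgrade the pointwise estimate to the simultaneous-in-$(z,r,s) \in \mcl Z_\delta^\ep$ statement. I would take a polynomial-in-$\ep$ size net of triples $(z_i,r_j,s_k)$, apply the pointwise bound with a union bound over the net, and transfer to arbitrary $(z,r,s)$ using Rayleigh's monotonicity principle (enlarging $B_s(z)$ or shrinking $B_r(z)$ only increases the minimizer's Dirichlet energy, so slight perturbations are controlled). The hardest part of the argument is the inner-annulus estimate in the regime $s < \ep r$: the cutoff $s'\geq \ep r$ is essential to ensure that the innermost dyadic sub-annulus still contains enough cells of $\mcl G^\ep$ for the continuum picture to give a reasonable approximation, which is precisely what produces the $\log\ep^{-1}$ saturation in the denominator of the final bound rather than an uncontrolled $\log(r/s)$.
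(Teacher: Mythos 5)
Your plan — exhibit a test function $g^\ep = u\circ\eta$, apply Dirichlet's principle, and bound $\op{Energy}(g^\ep;\mcl G^\ep)$ by a dyadic decomposition of the annulus — is a reasonable alternative to what the paper does. The paper instead invokes~\cite[Theorem 3.2]{gms-harmonic} directly: that theorem already compares the discrete Dirichlet energy of the discrete harmonic function $\frk f_{z,r,s}^\ep$ to the continuum Dirichlet energy of the continuum harmonic function on the annulus, so the paper gets the bound $\op{Energy}(\frk f_{z,r,s}^\ep;\mcl G^\ep)\lesssim 1/\log(r/s)$ in one line, computes $\int|\nabla\frk g_{z,r,s}|^2=2\pi/\log(r/s)$, and then does exactly the net/union-bound/monotonicity step you describe (Steps 2 and 3). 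So you are essentially re-proving a special case of~\cite[Theorem 3.2]{gms-harmonic} rather than citing it. That is a legitimate route, but as written there are two concrete gaps.

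First, the boundary condition check is wrong when $s\geq\ep r$. You set $s'=s\vee(\ep r)$ and claim that, with polynomially high probability, every cell meeting $B_s(z)$ is contained in $B_{s'}(z)$. When $s\geq\ep r$ one has $s'=s$, and a cell that intersects $\bdy B_s(z)$ is of course not contained in $B_s(z)$ no matter how small it is; Lemma~\ref{lem-cell-diam} cannot rescue this. The easy fix is to take $u$ to be identically $1$ on a slightly enlarged ball $B_{s'+\ep^q}(z)$ (with $q$ chosen from Lemma~\ref{lem-cell-diam}) and harmonic on $B_r(z)\setminus B_{s'+\ep^q}(z)$; this costs only a harmless $1+o_\ep(1)$ factor in the logarithm, but it must be stated.

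Second, the core estimate $\sum_{\{x,y\}:H_x^\ep\cup H_y^\ep\subset\BB A_k}\op{diam}(H_x^\ep\cup H_y^\ep)^2\lesssim(s'2^k)^2$ is asserted without proof and is not elementary. Expanding the sum one is led to $\sum_{x:H_x^\ep\subset\BB A_k}\op{deg}^\ep(x)\,\op{diam}(H_x^\ep)^2$, and neither factor is benign: the cells are highly irregular so $\op{diam}(H_x^\ep)^2$ can far exceed $\op{area}(H_x^\ep)$, and degrees are unbounded. Controlling this weighted sum and comparing it to Euclidean area is precisely what the multifractal machinery in~\cite{gms-harmonic} is for — the paper uses exactly such a bound in the proof of Lemma~\ref{lem-eff-res-upper}, where~\cite[Lemma~3.1]{gms-harmonic} is invoked to bound $\sum_x|\eta(x)-z|^{-2}\op{diam}(H_x^\ep)^2\op{deg}^\ep(x)$. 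You should either cite that lemma concretely here (with a union bound over the polynomially many dyadic sub-annuli and net points, checking that the polynomial error exponent beats the size of the net) or just use~\cite[Theorem 3.2]{gms-harmonic} as the paper does, which packages all of this up.
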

\begin{proof}
The main input in the proof is~\cite[Proposition 4.4]{gms-tutte}, which allows us to upper-bound the discrete Dirichlet energy of a discrete harmonic function on $\mcl G^\ep$ in terms of the Dirichlet energy of the continuum harmonic function on $\BB C$ with the same boundary data.
This estimate is applied in Step 1. The rest of the proof consists of union bound arguments which are needed to make~\eqref{eqn-annulus-energy} hold for all $(z,r,s) \in \mcl Z^\ep$ simultaneously.
\medskip

\noindent\textit{Step 1: comparing discrete and continuum Dirichlet energy.}
Let $\wt{\mcl Z}^\ep$ be defined as in~\eqref{eqn-eff-res-lower-param} except with $\frac12\ep^\omega$ in place of $\ep^\omega$ and $r/2$ in place of $r/3$.
We first prove an estimate for a fixed choice of $(z,r,s) \in \wt{\mcl Z}^\ep $.
The reason for working with $\wt{\mcl Z}^\ep $ instead of $\mcl Z^\ep$ initially is that we will have to increase the parameters slightly in the union bound argument.

Let $  \frk g_{z,r,s} : \BB C\rta [0,1]$ be the function which is equal to 1 on $B_{ s}(z)$, is equal to 0 outside of $B_r(z)$, and is harmonic on the interior of $B_r(z) \setminus B_{s}(z)$.
That is,
\eqb
\frk g_{z,r,s}(w) = \max\left\{ \frac{\log ( |w-z|  / r) }{\log(s/r)} , 0 \right\} .
\eqe
A direct calculation shows that the continuum Dirichlet energy of $\frk g_{z,r,s}$
is
\eqbn
\op{Energy}\left(\frk g_{z,r,s} ; B_r(z) \setminus B_s(z) \right)  
=  \int_{B_r(z) \setminus B_s(z)} |\nabla  \frk g_{z,r,s} |^2
 =  \frac{4\pi}{\log(r/s)} .
\eqen
Furthermore, the modulus of each of $\frk g_{z,r,s}$ and each of its first and second order partial derivatives is bounded above by a universal constant times $s^{-2} (\log(r/s))^{-1} \leq s^{-2}(\log 2)^{-1}$ on $B_r(z) \setminus B_s(z)$.

Consequently, we can apply~\cite[Theorem 3.2]{gms-harmonic} to find that there exists $\alpha=\alpha(\gamma) > 0$, $\beta=\beta(\gamma)> 0$, and $C_1 = C_1( \rho,\gamma) > 0$ such that the following is true. If we take $\omega \leq \beta/2$, say, then for each \emph{fixed} choice of $(z,r,s) \in \wt{\mcl Z}^\ep$ such that $s\geq \ep^\beta $, it holds with probability at least $1-O_\ep(\ep^\alpha)$ that
\eqb \label{eqn-annulus-energy0}
\op{Energy}\left( \frk f_{z,r,s}^\ep ; \mcl G^\ep \right) \leq \frac{C_1}{\log\left( r/s \right)}   .
\eqe
Note that we absorbed the $\ep^\alpha$ error in~\cite[Theorem 3.2]{gms-harmonic} into the main term $C_1/\log(r/s)$, which we can do since $r/s \leq \ep^{-\beta}$ and hence $1/\log(r/s) \geq \beta/(\log\ep^{-1})$.
\medskip

\noindent\textit{Step 2: transferring to a bound for all choices of $z,r,s$ simultaneously.}
Let $\omega := \frac{1}{100} \min\{\alpha,\beta\}$. By a union bound over $O_\ep(\ep^{8\omega} (\log\ep^{-1})^2)$ choices of $(z,r,s)$, we get that with polynomially high probability as $\ep \rta 0$, the estimate~\eqref{eqn-annulus-energy0} holds simultaneously for each 
\eqb  \label{eqn-annulus-energy-union}
(z,r,s) \in \wt{\mcl Z}^\ep \quad \text{such that} \quad z \in B_\rho \cap (\ep^{4\omega}\BB Z^2) , \quad r,s \in \{2^{-n}\}_{n\in\BB N} ,\quad s \geq \ep^{2\omega}/2 .  
\eqe
Henceforth assume that this is the case.

Let $(z,r,s) \in \mcl Z^\ep$ with $s\geq \ep^{2\omega}$. Since $r\geq \ep^\omega$ by~\eqref{eqn-eff-res-lower-param}, if $\ep \in (0,1)$ is small enough (how small depends only on $  \rho,\gamma$) we can find $(z',r',s') $ satisfying the conditions in~\eqref{eqn-annulus-energy-union} such that
\eqb \label{eqn-annulus-energy-balls}
B_{r'}(z') \subset B_r(z) , \quad
B_{s }(z) \subset B_{s' }(z') ,
\quad r' \in [r/2,2r],
\quad \text{and} \quad
s' \in [s/2,2s] .
\eqe
By~\eqref{eqn-annulus-energy-balls}, the function $\frk f^\ep_{z',r',s'}$ is equal to zero on $\mcl V\mcl G^\ep( B_{s }(z) )$ and vanishes outside of $\mcl V\mcl G^\ep(B_r(z))$.
Since the discrete harmonic function $\frk f_{z,r,s}^\ep$ minimizes discrete Dirichlet energy subject to these conditions, it follows from~\eqref{eqn-annulus-energy0} for $z',r',s'$ that
\eqb \label{eqn-annulus-energy1}
\op{Energy}\left( \frk f_{z,r,s}^\ep ; \mcl G^\ep \right)
\leq \op{Energy}\left( \frk f^\ep_{z',r',s'} ; \mcl G^\ep \right)
\leq \frac{C_1}{\log\left( r'/s' \right)} 
\leq \frac{ C_2}{\log\left(r/s \right) } ,
\eqe
where $C_2 = 100 C_1$, say.
\medskip

\noindent\textit{Step 3: the case when $s$ is small.}
We have only proven~\eqref{eqn-annulus-energy1} in the case when $s\geq \ep^{2\omega}$. We now need to remove this constraint.
To this end, we observe that for each $(z,r,s ) \in\mcl Z^\ep $ with $s\leq \ep^{2\omega}$, the function $\frk f_{z,r,\ep^{2\omega}}^\ep$ is equal to 1 on $\mcl V\mcl G^\ep(B_s(z))$ and is equal to zero outside of $\mcl V\mcl G^\ep(B_r(z))$.
Since $\frk f_{z,r,s}^\ep$ minimizes discrete Dirichlet energy subject to these conditions, we infer from~\eqref{eqn-annulus-energy1} with $\ep^{2\omega}$ in place of $s$ that
\eqb \label{eqn-annulus-energy2}
\op{Energy}\left( \frk f_{z,r,s}^\ep ; \mcl G^\ep \right)
\leq \op{Energy}\left( \frk f_{z,r,\ep^{2\omega}} ; \mcl G^\ep \right)
\leq \frac{ (2\omega)^{-1} C_2}{\log\left( \ep^{-1} \right) } .
\eqe
Combining~\eqref{eqn-annulus-energy1} and~\eqref{eqn-annulus-energy2} gives~\eqref{eqn-annulus-energy} with $C = (2\omega)^{-1} C_2$.
\end{proof}

\begin{proof}[Proof of Lemma~\ref{lem-eff-res-lower}]
By Dirichlet's principle~\eqref{eqn-eff-res-dirichlet}, the discrete Dirichlet energy of the function $\frk f_{z,r,s}^\ep$ of Lemma~\ref{lem-annulus-energy} equals $\mcl R^\ep\left(B_s(z) \leftrightarrow  \bdy B_r(z) \right)^{-1}$. Hence Lemma~\ref{lem-eff-res-lower} follows from Lemma~\ref{lem-annulus-energy}.
\end{proof}

\subsection{Lower bound for the Green's function}
\label{sec-gr-bound-lower}

We now use Lemma~\ref{lem-annulus-energy} together with the results of Section~\ref{sec-harnack} to prove a lower bound for the Green's function on $\mcl G^\ep$.

\begin{lem} \label{lem-gr-bound-lower}
There exists $\omega=\omega(\gamma) >0$ and $C = C( \rho,\gamma ) > 0$ such that the following holds with polynomially high probability as $\ep\rta 0$ (at a rate depending on $\rho,\gamma$).
For each $x \in\mcl V\mcl G^\ep(B_{\rho-\ep^\omega})$ and each $r \in \left[\ep^\omega , \op{dist}\left( \eta(x) , \bdy B_\rho \right) \right]$,
\eqb \label{eqn-gr-bound0-lower}
\op{gr}_{B_r(\eta(x) )}^\ep(x , y) \geq C^{-1} \log\left( \frac{r}{|\eta(x) - \eta(y)|} \wedge \ep^{-1} \right) - 1  ,
\quad \forall y \in \mcl V \mcl G^\ep(B_{ r  /3}(\eta(x))   .
\eqe
\end{lem}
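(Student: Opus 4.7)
The plan is to chain together the three main tools we have developed: the lower bound on effective resistance from Lemma~\ref{lem-eff-res-lower}, the comparison between effective resistance and the Green's function from Lemma~\ref{lem-eff-res-green} (combined with the gradient bound of Lemma~\ref{lem-eff-res-grad}), and the Harnack inequality from Proposition~\ref{prop-harnack}. Throughout I fix $x \in \mcl V\mcl G^\ep(B_\rho)$ and $y \in \mcl V\mcl G^\ep(B_{r/3}(\eta(x)))$, and I let $\beta = \beta(\gamma)>0$ be the exponent from Proposition~\ref{prop-harnack}. I then set
\[
s := |\eta(x) - \eta(y)| \vee \ep^\beta,
\]
which for sufficiently small $\ep$ lies in $[\ep^\beta, r/3]$, since $r\geq \delta$ is fixed.

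First I apply Lemma~\ref{lem-eff-res-green} (together with Lemma~\ref{lem-eff-res-grad}, which gives $\delta \leq 1$) to the function $\op{gr}_{B_r(\eta(x))}^\ep(x, \cdot)$ with $A = \mcl V\mcl G^\ep(B_s(\eta(x)))$ and $B = \mcl V\mcl G^\ep(B_r(\eta(x)))$. This yields
\[
\max_{w\in \bdy A}\, \op{gr}_{B_r(\eta(x))}^\ep(x,w) \;\geq\; \mcl R^\ep\bigl(B_s(\eta(x))\leftrightarrow \mcl V\mcl G^\ep\setminus B_r(\eta(x))\bigr) - 1,
\]
and Lemma~\ref{lem-eff-res-lower} bounds the right-hand side from below by $C^{-1}\log((r/s)\wedge \ep^{-1}) - 1$. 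A short argument, using that cells in $\bdy A$ must intersect a neighborhood of $\bdy B_s(\eta(x))$, lets me replace the graph boundary $\bdy A$ with $\mcl V\mcl G^\ep(\bdy B_s(\eta(x)))$ up to adjusting $s$ by a constant factor (absorbed into $C$).

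Next I invoke Proposition~\ref{prop-harnack} with $z = \eta(x)$ applied to the nonnegative function $\frk f = \op{gr}_{B_r(\eta(x))}^\ep(x, \cdot)$, which is discrete harmonic on $\mcl V\mcl G^\ep(B_{3s}(\eta(x)))\setminus \{x_z^\ep\} = \mcl V\mcl G^\ep(B_{3s}(\eta(x)))\setminus \{x\}$ (noting $3s \leq r$). This converts the maximum bound from the previous step into a minimum bound on the same circle, up to a multiplicative constant, giving
\[
\min_{w\in \mcl V\mcl G^\ep(\bdy B_s(\eta(x)))}\, \op{gr}_{B_r(\eta(x))}^\ep(x,w) \;\geq\; C^{-1}\log\bigl((r/s)\wedge \ep^{-1}\bigr) - 1
\]
after rescaling $C$. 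All of the above uses polynomial-probability events which, by union bound over a polynomial net in the parameters $(x,r)$ and standard absolute-continuity arguments for the circle-average embedding, can be made to hold simultaneously for all $x$ and $r$ in the ranges specified by the lemma.

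To conclude, I split into cases. If $|\eta(x)-\eta(y)| \geq \ep^\beta$, then $s = |\eta(x)-\eta(y)|$, and since $\eta(y)\in H_y^\ep$ lies on $\bdy B_s(\eta(x))$, the vertex $y$ belongs to $\mcl V\mcl G^\ep(\bdy B_s(\eta(x)))$ and the displayed bound directly yields the claim. If instead $|\eta(x)-\eta(y)| < \ep^\beta$, so that $s = \ep^\beta$ and $y$ lies in $\mcl V\mcl G^\ep(B_s(\eta(x)))\setminus\{x\}$, I use the discrete minimum principle for the harmonic function $\op{gr}_{B_r(\eta(x))}^\ep(x,\cdot)$ on $\mcl V\mcl G^\ep(B_s(\eta(x)))\setminus \{x\}$ to lower bound $\op{gr}^\ep(x,y)$ by its minimum on $\mcl V\mcl G^\ep(\bdy B_s(\eta(x)))$, which we have just controlled; the fact that $\log(r/\ep^\beta)$ and $\log \ep^{-1}$ differ by only a constant factor ensures the bound takes the advertised form with a possibly larger $C$. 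The only real technical wrinkle --- and what I expect to be the main obstacle --- is reconciling the graph-theoretic notion of boundary $\bdy A$ used in Lemma~\ref{lem-eff-res-green} with the geometric $\mcl V\mcl G^\ep(\bdy B_s(\eta(x)))$ used in the Harnack inequality; this requires the cell-diameter bound of Lemma~\ref{lem-cell-diam} to ensure the two notions sit within Euclidean distance $\ep^{2/(2+\gamma)^2 - \zeta}$ of each other, which is negligible compared to $s \geq \ep^\beta$ for suitably chosen $\beta$.
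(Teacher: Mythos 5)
Your proposal is correct and follows essentially the same route as the paper's proof: combine Lemma~\ref{lem-eff-res-lower} (effective-resistance lower bound), Lemma~\ref{lem-eff-res-green} with Lemma~\ref{lem-eff-res-grad} (Green's function versus resistance), and Proposition~\ref{prop-harnack} (Harnack), then handle the regime $|\eta(x)-\eta(y)|<\ep^\beta$ by the minimum principle. One small remark: the ``technical wrinkle'' you flag about reconciling $\bdy A$ with $\mcl V\mcl G^\ep(\bdy B_s(\eta(x)))$ is actually a non-issue and needs no $s$-adjustment, since $\bdy\mcl V\mcl G^\ep(B_s(z))\subset\mcl V\mcl G^\ep(\bdy B_s(z))$ holds outright (a cell adjacent to a cell disjoint from $B_s(z)$, while itself meeting $B_s(z)$, must meet $\bdy B_s(z)$ by connectedness), and this containment already gives the needed inequality between the two maxima.
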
 
\begin{proof} 
Let $\omega_0 = \omega_0(\gamma ) > 0$ be the exponent $\omega$ from Lemma~\ref{lem-eff-res-lower}, let $\beta$ be as in Proposition~\ref{prop-harnack}, and let $\omega := \frac12\min\{\omega_0,\beta\}$. 
The function $y\mapsto \op{gr}_{B_r(z)}(x_z^\ep ,y)$ is discrete harmonic on $\mcl V\mcl G^\ep(B_r(z) ) \setminus \{x_z^\ep\}$.
We can therefore apply Proposition~\ref{prop-harnack} to find that there exists $C_1 = C_1(\rho,\gamma)>1$ such that with polynomially high probability as $\ep\rta 0$, it holds simultaneously for each $z \in B_{\rho-\ep^\omega}$, each $r \in \left[ \ep^\omega ,\op{dist}(z,\bdy B_\rho) \right]$, and each $s\in \left[  \ep^\beta , r/3 \right]$ that
\eqb \label{eqn-use-harnack-lower}
 \max_{y\in  \mcl V\mcl G^\ep(\bdy  B_s(z))} \op{gr}_{B_r(z) }^\ep(x_z^\ep ,y)  \leq C_1  \min_{y\in  \mcl V\mcl G^\ep(\bdy B_s(z))} \op{gr}_{B_r(z) }^\ep(x_z^\ep ,y)    .
\eqe

By Lemma~\ref{lem-eff-res-green} (applied with $A = \mcl V\mcl G^\ep(B_s(z))  $ and $B = \mcl V\mcl G^\ep(B_r(z) ) $) and Lemma~\ref{lem-eff-res-grad} (to say that the error term $\delta$ from Lemma~\ref{lem-eff-res-green} is at most 1), a.s.\ for every such $z$, $r$, and $s$,\footnote{We note that $\bdy\mcl V\mcl G^\ep(B_s(z))$, in the notation~\eqref{eqn-graph-bdy}, is contained in $\mcl V\mcl G^\ep(\bdy B_s(z))$ but the inclusion could be strict since there could be cells of $\mcl G^\ep$ which intersect $\bdy B_s(z)$ but which do not intersect any cells which are contained in $\BB C\setminus B_s(z)$. However, all we need is that the maximum of $\op{gr}_{B_r(z) }^\ep(x_z^\ep ,y) $ over $y\in \mcl V\mcl G^\ep(\bdy B_s(z))$ is bounded below by the maximum over $\bdy\mcl V\mcl G^\ep(B_s(z))$.}
\allb \label{eqn-use-eff-res-green-lower}
\max_{y\in   \mcl V\mcl G^\ep(\bdy B_s(z))} \op{gr}_{B_r(z) }^\ep(x_z^\ep ,y) \geq \mcl R^\ep\left( B_s(z) \leftrightarrow \bdy B_r(z)  \right) - 1  .
\alle

By Lemma~\ref{lem-eff-res-lower}, there exists $C_2 = C_2(\rho,\gamma) > 1$ such that with polynomially high probability as $\ep\rta 0$, it holds for each $z$, $r$, and $s$ as above that
\eqb \label{eqn-use-eff-res-lower}
 \mcl R^\ep\left( B_s(z) \leftrightarrow \bdy B_r(z)   \right)  \geq  C_2^{-1} \log\left( r/s \right)  .
\eqe
We now apply~\eqref{eqn-use-eff-res-lower} to lower-bound the right side of~\eqref{eqn-use-eff-res-green-lower}, then use this to lower-bound the left side of~\eqref{eqn-use-harnack-lower}. This shows that with polynomially high probability as $\ep\rta 0$, it holds for each $z \in B_{\rho-\ep^\omega}$, each $r \in \left[ \ep^\omega ,\op{dist}(z,\bdy B_\rho) \right]$, and each $s\in \left[  \ep^\beta , r/3 \right]$ that
\eqb \label{eqn-gr-bound-lower-z}
 \min_{y\in  \bdy \mcl V\mcl G^\ep(B_s(z))} \op{gr}_{B_r(z) }^\ep(x_z^\ep ,y)  \geq C_1^{-1} \left(  C_2^{-1}  \log(r/s) - 1 \right) \geq (C_1C_2)^{-1} \log(r/s) - 1 .
\eqe
Setting $z = \eta(x)$ and $s = |\eta(x) - \eta(y)|$ in~\eqref{eqn-gr-bound-lower-z} gives~\eqref{eqn-gr-bound0-lower} with $C =  C_1C_2 $ in the special case when $|\eta(x) - \eta(y)| \geq \ep^\beta$. 

In the case when $|\eta(x) - \eta(y)| < \ep^\beta$, we use the maximum principle for the discrete harmonic function $\op{gr}_{B_r(\eta(x))}^\ep(x,\cdot)$, followed by~\eqref{eqn-gr-bound-lower-z}, to get
\eqb \label{eqn-gr-bound-lower-small}
\op{gr}_{B_r(\eta(x) )}^\ep(x , y) \geq \min_{y'  : |\eta(x) - \eta(y')| = \ep^\beta} \op{gr}_{B_r(\eta(x) )}^\ep(x , y' ) \geq (C_1C_2)^{-1} \log \ep^{\omega-\beta} -1 .
\eqe 
We thus obtain~\eqref{eqn-gr-bound0-lower} with $C = (C_1 C_2)^{-1} (\beta - \omega )$. 
\end{proof}

\begin{proof}[Proof of Proposition~\ref{prop-exit-lower}]
By Lemma~\ref{lem-gr-bound-lower}, there exists $\omega=\omega(\gamma ) > 0$ and $C_0 = C_0(\rho,\gamma ) > 0$ such that with polynomially high probability as $\ep\rta 0$, it holds for each $x$ and $r$ as in the proposition statement that
\eqb \label{eqn-use-gr-bound0-lower}
\op{gr}_{B_r(\eta(x) )}^\ep(x , y) \geq C_0^{-1} \log\left( \frac{r}{|\eta(x) - \eta(y)|} \wedge \ep^{-1} \right) - 1  ,
\quad \forall y \in \mcl V \mcl G^\ep(B_{r/3}(\eta(x)))   .
\eqe
We now choose $c := \min\left\{   e^{-2 C_0 },   1/3  \right\}$.
Then if $\ep <e^{-2C_0}$, the first term on the right of the inequality in~\eqref{eqn-use-gr-bound0-lower} is at least 2 whenever $|\eta(x) - \eta(y)| \leq c r$.
Hence~\eqref{eqn-gr-bound0-lower} implies that for each $x$ and $r$ as in the lemma statement,
\eqb \label{eqn-gr-lower-const}
\op{gr}_{B_r(\eta(x) )}^\ep(x , y) \geq 1
\quad \forall y \in \mcl V \mcl G^\ep(B_{c r}(\eta(x))  .
\eqe

Recall from Definition~\ref{def-green} that
\eqbn
\op{Gr}_{B_r(\eta(x) )}^\ep(x,y) = \op{deg}^\ep(y) \op{gr}_{B_r(\eta(x) )}^\ep(x,y) \geq \op{gr}_{B_r(\eta(x) )}^\ep(x,y) .
\eqen
If~\eqref{eqn-gr-lower-const} holds, then for any $x$ and $r$ as in the lemma statement,
\eqb \label{eqn-gr-lower-sum}
\ol{\BB E}_x^\ep\left[  \tau_{B_r(\eta(x)) }^\ep \right]
= \sum_{y\in \mcl V\mcl G^\ep(B_r(\eta(x) )} \op{Gr}_{B_\rho}^\ep(x,y)
\geq  \# \mcl V \mcl G^\ep(B_{c r}(\eta(x)))  .
\eqe
Since each cell of $\mcl V\mcl G^\ep$ has $\mu_h$-mass $\ep$, we have $\# \mcl V \mcl G^\ep(B_{c r}(\eta(x)) \geq \ep^{-1} \mu_h(B_{c r}(\eta(x))$.
Plugging this into~\eqref{eqn-gr-lower-sum} gives~\eqref{eqn-exit-lower}.
\end{proof}

\section{Upper bound for exit times}
\label{sec-exit-upper}

In this section we will prove an upper bound for exit times from Euclidean balls for the random walk on $\mcl G^\ep$.
We will eventually use the $L^q$ version of the Payley-Zygmund inequality for $q  = q(\gamma) \rta \infty$ as $\gamma\rta 2$, so we need a bound for moments of all orders.

\begin{prop} \label{prop-exit-moment}
There exists $\alpha=\alpha(\gamma ) >0$ and $C = C(\rho,\gamma) > 0$ such that with polynomially high probability as $\ep\rta 0$, it holds simultaneously for every Borel set $D\subset B_\rho$ with $\mu_h(D) \geq \ep^{ \alpha}$ and every $x\in \mcl V\mcl G^\ep(D)$ that
\eqb \label{eqn-exit-moment}
 \ol{\BB E}_x^\ep \left[ (\tau_{D}^\ep)^N  \right] \leq  N! C^N \ep^{-N} \left( \sup_{z\in D} \int_{B_{\ep^\alpha}(D)} \log\left(\frac{1}{| z - w|} \right) + 1 \, d\mu_h(w) \right)^N , \quad\forall N\in\BB N.
\eqe
\end{prop}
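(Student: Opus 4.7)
The proof proceeds in two main stages: first, one reduces the $N$th moment bound to a first moment bound via a standard Markov chain iteration; second, one proves the first moment bound by establishing a sharp upper bound on the discrete Green's function on $\mcl G^\ep$. The reduction uses the classical inequality for exit times of Markov chains,
\[
\ol{\BB E}_x^\ep[(\tau_D^\ep)^N] \leq N! \Bigl(\sup_{x' \in \mcl V\mcl G^\ep(D)} \ol{\BB E}_{x'}^\ep[\tau_D^\ep]\Bigr)^N,
\]
which follows by iterating the strong Markov property together with Markov's inequality. This reduces the proposition to showing, with polynomially high probability,
\[
\sup_{x' \in \mcl V\mcl G^\ep(D)} \ol{\BB E}_{x'}^\ep[\tau_D^\ep] \;\leq\; C\ep^{-1} \sup_{z \in D} \int_{B_{\ep^\alpha}(D)} \Bigl( \log \frac{1}{|z-w|} + 1 \Bigr) d\mu_h(w).
\]

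For the first moment bound, write $\ol{\BB E}_{x'}^\ep[\tau_D^\ep] = \sum_{y} \op{deg}^\ep(y) \op{gr}_D^\ep(x', y)$ and use monotonicity $\op{gr}_D^\ep \leq \op{gr}_{B_\rho}^\ep$ (since $D \subset B_\rho$). The key estimate is the reverse-direction analogue of Lemma~\ref{lem-gr-bound-lower}: with polynomially high probability,
\[
\op{gr}_{B_\rho}^\ep(x', y) \;\leq\; C \log \frac{1}{|\eta(x') - \eta(y)| \vee \ep^\alpha} + C, \qquad \forall x', y \in \mcl V\mcl G^\ep(B_\rho).
\]
The derivation mirrors Section~\ref{sec-gr-bound-lower}. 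It combines: (i) an upper bound $\mcl R^\ep(B_s(z) \leftrightarrow \bdy B_r(z)) \leq C \log((r/s) \wedge \ep^{-1})$ on effective resistance across a Euclidean annulus, uniform in $(z,r,s) \in \mcl Z_\delta^\ep$; (ii) the bound $a \leq C \mcl R + O(1)$ on the minimum extracted from Lemma~\ref{lem-eff-res-green} (applied with $A = \mcl V\mcl G^\ep(B_s(\eta(x')))$ and $B = \mcl V\mcl G^\ep(B_\rho)$, solving $a^2/(a+1) \leq \mcl R$ combined with Lemma~\ref{lem-eff-res-grad}); (iii) the Harnack inequality of Proposition~\ref{prop-harnack}, which upgrades the minimum bound to the maximum over the same discrete circle; and (iv) the discrete maximum principle, which extends the bound to all $y$ with $|\eta(x') - \eta(y)| \geq s$, upon setting $s = |\eta(x') - \eta(y)|$.

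Given the pointwise Green's function bound, I would convert the sum $\sum_y \op{deg}^\ep(y) \op{gr}_D^\ep(x', y)$ into an integral against $\mu_h$ by a dyadic decomposition into Euclidean annuli $\BB A_k$ around $\eta(x')$. On each annulus the logarithmic factor is essentially constant, so the contribution is controlled by $\sum_{y \,:\, \eta(y) \in \BB A_k} \op{deg}^\ep(y)$, which is at most twice the number of edges of $\mcl G^\ep$ incident to the corresponding cells. Planarity of the mated-CRT map (which is a triangulation) bounds this in turn by $C\,\#\mcl V\mcl G^\ep(\BB A_k') \leq C\ep^{-1}\mu_h(\BB A_k')$ for a slight Euclidean enlargement $\BB A_k'$ of $\BB A_k$, where the enlargement accounts for cells straddling the annulus boundary and is controlled via the cell diameter bound of Lemma~\ref{lem-cell-diam} (provided $\alpha$ is chosen smaller than $2/(2+\gamma)^2 - \zeta$). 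Reassembling the dyadic pieces yields the integral $\int_{B_{\ep^\alpha}(D)} \log(1/|\eta(x')-w|)\, d\mu_h(w)$; the $+1$ in the integrand absorbs the constant term in the Green's function bound together with the slight enlargement. Taking the supremum over $x' \in \mcl V\mcl G^\ep(D)$ completes the proof of the first moment bound.

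The principal technical obstacle is the upper bound on effective resistance in (i). Lemma~\ref{lem-eff-res-lower} provides the matching lower bound via Dirichlet's principle, using the upper bound on discrete Dirichlet energy of Lemma~\ref{lem-annulus-energy} (which relies on~\cite[Theorem 3.2]{gms-harmonic}). For the reverse direction one uses Thomson's principle instead: one must exhibit a unit flow on $\mcl G^\ep$ from $B_s(z)$ to $\bdy B_r(z)$ with energy at most $C/\log((r/s) \wedge \ep^{-1})$. The natural candidate is a discretization of the continuum radial current $\nabla \frk g_{z,r,s}$ (whose continuum Dirichlet energy equals $2\pi/\log(r/s)$) transported through the SLE/LQG embedding; the work lies in a quantitative comparison between the discrete and continuum energies of this flow, uniformly in $(z,r,s)$. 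The degree weights in the summation are a secondary technical point, handled by the annular grouping described above, which avoids any pointwise degree bound and instead leverages only planarity combined with the fact that each cell has $\mu_h$-mass exactly $\ep$.
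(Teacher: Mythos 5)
Your high-level skeleton is correct and close to the paper's: reduce to the first moment via iterated strong Markov (your bound $\ol{\BB E}_x^\ep[(\tau_D^\ep)^N]\leq N!\bigl(\sup_{x'}\ol{\BB E}_{x'}^\ep[\tau_D^\ep]\bigr)^N$ is equivalent to the paper's Lemma~\ref{lem-second-moment}, which keeps the iterated Green's function sum explicit but collapses to the same thing after summing out $y_N,\dots,y_1$), then bound $\sum_y\op{Gr}_D^\ep(x,y)$ via a Green's function estimate obtained from Thomson's principle, Harnack, and a sum-to-integral conversion. You also correctly locate the main labour in the effective-resistance upper bound across an annulus (the paper's Lemma~\ref{lem-eff-res-upper}, via a radial random-path flow).

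The genuine gap is in your claimed Green's function bound
\[
\op{gr}_{B_\rho}^\ep(x',y)\;\leq\; C\log\frac{1}{|\eta(x')-\eta(y)|\vee\ep^\alpha}+C\qquad\text{for all }x',y\in\mcl V\mcl G^\ep(B_\rho).
\]
Specialised to $x'=y$, this asserts $\op{gr}_{B_\rho}^\ep(x,x)=\mcl R^\ep(x\lrta\bdy B_\rho)\lesssim\log\ep^{-1}$ uniformly in $x$, which the paper does not prove and which is almost certainly false. The Harnack/effective-resistance machinery (Proposition~\ref{prop-harnack} and Lemma~\ref{lem-eff-res-upper}) only operates at Euclidean scales $s\geq\ep^\beta$ and says nothing about the near-diagonal regime $|\eta(x')-\eta(y)|<\ep^\beta$. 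The geometric obstruction is Lemma~\ref{lem-cell-comparable}: the number of cells whose Euclidean distance to a fixed point is smaller than their own diameter can be as large as $\ep^{-\zeta}$, and each such cell contributes order-one energy to any unit flow out of a nearby vertex, so the effective resistance from a single vertex can only be shown to be $O(\ep^{-\zeta})$ (Lemma~\ref{lem-eff-res-diag}), not logarithmic. The paper compensates by splitting the sum $\sum_y\op{Gr}_D^\ep(x,y)$ (in Lemma~\ref{lem-gr-sum}) into vertices with $|\eta(x)-\eta(y)|<\ep^\beta$ — handled by the crude $\ep^{-\zeta}$ bound and a count $\lesssim\ep^{-1+\alpha_0}$ of such vertices, producing the additive $\ep^{-1+\alpha}$ error in the proposition — and the remaining vertices, where the logarithmic bound holds and your dyadic-annulus / sum-to-integral argument applies. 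Without that split, your first-moment bound does not follow. Your dyadic annulus decomposition is a reasonable substitute for Lemma~\ref{lem-cell-sum-deg} (and the planarity trick you use to avoid a pointwise degree bound is clean), but it cannot by itself rescue the missing on-diagonal estimate.
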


We note that Proposition~\ref{prop-exit-moment} together with a basic estimate for the $\gamma$-LQG measure (deferred to the appendix) implies the following simpler but less precise estimate.

\begin{cor} \label{cor-exit-moment-eucl}
For each $q\in \left(0 , (2-\gamma)^2/2 \right)$, it holds with probability tending to 1 as $\ep\rta 0$ and then $\delta \rta 0$ that for every $N\in\BB N$,
\eqb
\sup_{z\in B_\rho} \max_{x\in\mcl V\mcl G^\ep(B_\delta(z))} \ol{\BB E}_x^\ep \left[ (\tau_{B_\delta(z)}^\ep)^N  \right]
\leq N! \ep^{-N} \delta^{N q } .
\eqe
\end{cor}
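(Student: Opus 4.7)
The plan is to derive the Corollary directly from Proposition~\ref{prop-exit-moment} by bounding the logarithmic integral appearing in~\eqref{eqn-exit-moment} using standard $\gamma$-LQG measure estimates (the ball-mass bound, which corresponds to Lemma~\ref{lem-ball-mass} in the appendix).

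First, I would specialize Proposition~\ref{prop-exit-moment} to $D = B_\delta(z)$ for $z\in B_\rho$ and $\delta > 0$. Once $\ep$ is small enough that $\ep^\alpha < \delta$ and $\mu_h(B_\delta(z)) \geq \ep^\alpha$ (which holds simultaneously for all $z\in B_\rho$ with probability $\to 1$ as $\ep\to 0$, by a routine lower bound on $\mu_h$ of small balls), the quantity inside the $N$-th power on the right-hand side of~\eqref{eqn-exit-moment} is bounded by
\[
C \sup_{z'\in B_\delta(z)} \int_{B_{3\delta}(z')} \left( \log \frac{1}{|z'-w|} + 1 \right) d\mu_h(w) ,
\]
since $B_{\ep^\alpha}(B_\delta(z)) \subset B_{2\delta}(z) \subset B_{3\delta}(z')$ for any $z'\in B_\delta(z)$.

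Second, I would bound this logarithmic potential uniformly in $z'\in B_\rho$ by decomposing dyadically. Using the ball-mass bound for the $\gamma$-LQG measure (Lemma~\ref{lem-ball-mass}), for any $q'\in\bigl(q, (2-\gamma)^2/2\bigr)$ it holds with probability tending to $1$ as $\delta\to 0$ that $\mu_h(B_r(z')) \leq r^{q'}$ simultaneously for all $z'\in B_\rho$ and all $r\in (0, 4\delta]$. On the event where this bound holds, setting $A_k := B_{2^{-k+1}\cdot 3\delta}(z') \setminus B_{2^{-k}\cdot 3\delta}(z')$,
\begin{align*}
\int_{B_{3\delta}(z')} \log\frac{1}{|z'-w|}\, d\mu_h(w)
&\leq \sum_{k=0}^\infty \bigl(k\log 2 + \log(1/\delta) + O(1)\bigr)\, \mu_h\bigl(B_{2^{-k+1}\cdot 3\delta}(z')\bigr) \\
&\lesssim \delta^{q'} \sum_{k=0}^\infty (k + \log(1/\delta))\, 2^{-k q'}
\lesssim \delta^{q'} \log(1/\delta).
\end{align*}
For $\delta$ small enough, $\delta^{q'}\log(1/\delta) \leq \delta^q / C$, so the supremum over $z'$ of the potential is at most $\delta^q/C$.

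Third, plugging this into Proposition~\ref{prop-exit-moment} yields $\ol{\BB E}_x^\ep[(\tau_{B_\delta(z)}^\ep)^N] \leq N!\, \ep^{-N} \delta^{Nq}$ for every $N\in\BB N$, every $z\in B_\rho$, and every $x\in\mcl V\mcl G^\ep(B_\delta(z))$, on the intersection of the good events from Proposition~\ref{prop-exit-moment} and from the ball-mass bound. This intersection occurs with probability $\to 1$ as $\ep\to 0$ and then $\delta\to 0$, which gives the statement.

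The main steps are essentially bookkeeping; the one point that requires some care is making the logarithmic-potential bound hold \emph{simultaneously} in $z'\in B_\rho$. This is handled by applying the uniform ball-mass bound of Lemma~\ref{lem-ball-mass} (a standard Gaussian multiplicative chaos estimate) rather than trying to union-bound the potential itself. No genuinely new probabilistic input beyond Proposition~\ref{prop-exit-moment} and the appendix estimates is needed.
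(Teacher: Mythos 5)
Your proposal is correct and takes essentially the same approach as the paper: both proofs specialize Proposition~\ref{prop-exit-moment} to $D = B_\delta(z)$, reduce to bounding the logarithmic potential $\sup_{z'}\int \log(1/|z'-w|)\,d\mu_h(w)$, and control it via a dyadic-annulus decomposition together with the uniform ball-mass estimate of Lemma~\ref{lem-ball-mass}. The only organizational difference is that the paper factors out the logarithmic-potential bound as a separate statement (Lemma~\ref{lem-log-int}, which it reuses elsewhere) whereas you inline the dyadic argument directly; the underlying estimates are the same.
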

\begin{proof}
By Proposition~\ref{prop-exit-moment}, it holds with probability tending to 1 as $\ep\rta 0$, at a rate depending on $\delta$, that~\eqref{eqn-exit-moment} holds simultaneously with $D = B_\delta(z)$ for each $z\in B_\rho$ and each $x\in \mcl V\mcl G^\ep(B_\delta(z))$.  Note that we can apply Proposition~\ref{prop-exit-moment} with $\rho$ replaced by $\rho' \in (\rho,1)$ to deal with the possibility that $B_\delta(z) \not\subset B_\rho$.

For small enough $\ep$ (depending on $\delta$) we have $\delta + \ep^\alpha \leq 2\delta$, so we can bound the integral over $B_{\ep^\alpha}(B_\delta(z)) = B_{\delta+\ep^\alpha}(z)$ in~\eqref{eqn-exit-moment} by the integral over $B_{2\delta}(z)$.
Upon absorbing the factor of $C^N$ in~\eqref{eqn-exit-moment} into a small power of $\delta$, we have reduced our problem to showing that with polynomially high probability as $\delta\rta 0$,
\eqb \label{eqn-exit-eucl-show}
\sup_{z\in B_\rho} \sup_{u\in B_\delta(z) } \int_{B_{2\delta}(z) } \log\left(\frac{1}{| u - w|} \right) + 1 \, d\mu_h(w) \leq \delta^q .
\eqe
By Lemma~\ref{lem-log-int} (applied with $A  = (q/c_1) \log\delta^{-1}$) it holds with polynomially high probability as $\delta \rta 0$ that the left side of~\eqref{eqn-exit-eucl-show} is bounded above by a $q,\rho,\gamma$-dependent constant times $(\log\delta^{-1}) \sup_{z\in B_\rho} \mu_h(B_{2\delta}(z)) + \delta^q$. We now conclude~\eqref{eqn-exit-eucl-show} by applying Lemma~\ref{lem-ball-mass}.
\end{proof}

The proof of Proposition~\ref{prop-exit-moment} uses the same basic ideas as the proof of Proposition~\ref{prop-exit-lower} except that all of the bounds go in the opposite direction.
Since $\ol{\BB E}_x^\ep \left[ (\tau_{D}^\ep)^N  \right]$ can be expressed in terms of the Green's function $\op{Gr}_D^\ep $ (see Lemma~\ref{lem-second-moment}), we need to establish an upper bound for $\op{Gr}_D^\ep$. In fact, we have $\op{Gr}_D^\ep \leq \op{Gr}_{B_\rho}^\ep$ so we only need an upper bound for $\op{Gr}_{B_\rho}^\ep$.

To prove such an upper bound, we first establish an upper bound for the effective resistance in $\mcl G^\ep$ from a Euclidean ball $B_s(z) \subset B_\rho$ to $\bdy B_\rho$ in Section~\ref{sec-eff-res-upper}.
This is done by constructing a unit flow (via the method of random paths) and applying Thomson's principle~\eqref{eqn-thomson}.
As explained in Section~\ref{sec-off-diag}, this estimate together with Lemma~\ref{lem-eff-res-upper} and Proposition~\ref{prop-harnack} leads to an upper bound for $\op{Gr}_{B_\rho}^\ep(x,y)$ in the case when $|\eta(x) - \eta(y)|$ is not too small (at least some fixed small positive power of $\ep$).

In Section~\ref{sec-on-diag}, we will establish a crude upper bound for $\op{Gr}_{B_\rho}^\ep(x,y)$ which holds uniformly over all $x,y\in \mcl V\mcl G^\ep(B_\rho)$, including pairs for which $|\eta(x) - \eta(y)|$ is small or even $x=y$. This bound will be sufficient for our purposes since we will eventually sum over all $x,y$ so the pairs for which $|\eta(x) - \eta(y)|$ is small will not contribute significantly to the sum.
The proof is again based on Thomson's principle but different estimates are involved.
In Section~\ref{sec-exit-moment} we use our upper bounds for the Green's function to establish Proposition~\ref{prop-exit-moment}.

\subsection{Upper bound for effective resistance across an annulus}
\label{sec-eff-res-upper}

We have the following upper bound for the effective resistance from $\bdy B_s(z)$ to $\bdy B_\rho$.
Together with Lemma~\ref{lem-eff-res-green} and Proposition~\ref{prop-harnack} this will lead to an upper bound for the off-diagonal Green's function on $\mcl G^\ep$.

\begin{lem} \label{lem-eff-res-upper}
There exists $\beta = \beta(\gamma)$ and $C = C(\rho,\gamma) > 0$ such that with polynomially high probability as $\ep\rta 0$, it holds simultaneously for every $z\in B_\rho$ and every $s\in [\ep^\beta,\op{dist}(z,\bdy B_\rho)]$ that
\eqb \label{eqn-eff-res-upper}
\mcl R^\ep\left( B_s(z) \leftrightarrow \bdy B_\rho  \right) \leq C \log s^{-1} .
\eqe
\end{lem}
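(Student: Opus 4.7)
The plan is to apply Thomson's principle \eqref{eqn-thomson}: it suffices to produce, for each fixed $(z,s)$ in the allowed range, a unit flow from $\mcl V\mcl G^\ep(B_s(z))$ to $\mcl V\mcl G^\ep\setminus \mcl V\mcl G^\ep(B_\rho)$ whose energy is at most $C\log s^{-1}$. The simultaneous-in-$(z,s)$ statement then follows exactly as in Step 2 of the proof of Lemma~\ref{lem-annulus-energy}: apply the pointwise bound on a polynomially dense grid of centers and dyadic radii, union-bound, and use Rayleigh's monotonicity law to extend to arbitrary $(z',s')$ in the claimed range.

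To build the flow I would use the method of random paths, exploiting the Euclidean SLE/LQG embedding. Sample $\Theta$ uniformly on $[0, 2\pi)$, let $L_\Theta$ be the Euclidean radial segment starting at $z + s e^{i\Theta}$ and continuing outward until it first exits $B_\rho$, and let $P_\Theta$ be the sequence of $\mcl G^\ep$-cells that $L_\Theta$ passes through. For generic $\Theta$, consecutive cells in $P_\Theta$ share a boundary arc and are therefore adjacent in $\mcl G^\ep$, so $P_\Theta$ defines a random path in $\mcl G^\ep$ from $\mcl V\mcl G^\ep(\bdy B_s(z))$ to $\mcl V\mcl G^\ep(\bdy B_\rho)$. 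Let $\wt\theta := \BB E[\theta_{P_\Theta}]$ be the averaged signed edge flow associated with $P_\Theta$; then $\wt\theta$ is a unit flow, and the standard ``two iid paths'' computation (using that for simple paths the number of shared edges is at most the number of shared vertices) gives
\eqbn
\op{Energy}(\wt\theta) \le \BB E_{\Theta,\Theta'}\left[\#\{x : x \in P_\Theta \text{ and } x \in P_{\Theta'}\}\right] = \sum_{x} \BB P[x \in P_\Theta]^2 .
\eqen
For a cell $H_x^\ep \subset B_\rho\setminus B_s(z)$ at Euclidean distance $r_x$ from $z$ and diameter $d_x$, the probability $\BB P[x \in P_\Theta]$ equals $1/(2\pi)$ times the angular measure of the set of directions that hit $H_x^\ep$, which is at most $C d_x / r_x$.

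Decomposing $B_\rho\setminus B_s(z)$ into the $K \asymp \log(\rho/s)$ dyadic annuli $A_k := B_{2^{k+1}s}(z) \setminus B_{2^k s}(z)$, this yields
\eqbn
\op{Energy}(\wt\theta) \le \sum_{k=0}^{K} \frac{C}{(2^k s)^2} \sum_{x : H_x^\ep \cap A_k \neq \emptyset} d_x^2 .
\eqen
The main task --- and the main obstacle --- is to show that each inner sum is bounded by $C'(2^k s)^2$ with polynomially high probability in $\ep$, uniformly in $k$, so that the total is $O(K) = O(\log s^{-1})$. This step is delicate precisely because SLE cells can be quite anisotropic, and $d_x^2$ may substantially exceed the Euclidean area of $H_x^\ep$, ruling out a direct area-based bound $\sum_x d_x^2 \lesssim \op{Area}(A_k)$. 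The approach I would take is to split the inner sum by the size of $d_x$: the diameter upper bound $d_x \le \ep^{2/(2+\gamma)^2 - \zeta}$ from Lemma~\ref{lem-cell-diam}, combined with a count of the cells in $A_k$ via the LQG-area estimate of Lemma~\ref{lem-ball-mass} from the appendix, handles contributions from small cells; the few cells with $d_x$ comparable to $2^k s$ can be counted directly using the same LQG-area input, and persistent anisotropy is controlled by the SLE/LQG cell regularity estimates of \cite{gms-harmonic}. The lower bound $s \ge \ep^\beta$ is used here to guarantee that typical cell diameters in $A_k$ are much smaller than $2^k s$, so that the radial-line construction is well-defined and no single cell dominates the sum.
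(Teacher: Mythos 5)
Your construction of the flow is essentially the same as the paper's: both use Thomson's principle with a random radial Euclidean segment (your $L_\Theta$, the paper's segment $S$ through a uniform point on $\bdy B_1(z)$), both bound the per-edge flow by $\op{diam}(H_x^\ep)/\op{dist}(z, H_x^\ep)$, and both transfer a single-$(z,s)$ bound to a simultaneous one via a grid, union bound, and Rayleigh monotonicity. The difference is in how you bound the resulting sum over cells.

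This is where the gap is. You reduce the problem to showing, \emph{uniformly in $k$}, that $\sum_{x : H_x^\ep\cap A_k\not=\emptyset} d_x^2 \lesssim (2^ks)^2$ with polynomially high probability, and you propose to prove this via the cell diameter bound (Lemma~\ref{lem-cell-diam}), an LQG-area cell count (Lemma~\ref{lem-ball-mass}), and unspecified ``cell regularity'' inputs. This per-annulus bound is not attainable by that route. The sharpest thing the in-radius estimate (used in the proof of Lemma~\ref{lem-eff-res-diag}, see~\eqref{eqn-comp-inrad}) gives you is that each cell $H_x^\ep$ contains a disjoint Euclidean ball of radius $\geq \ep^{\zeta/4}\, d_x$; packing these into a ball of radius $\asymp 2^ks$ yields only $\sum_x d_x^2 \lesssim \ep^{-\zeta/2}(2^ks)^2$. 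The extra factor $\ep^{-\zeta/2}$ is fatal: summing over the $\asymp \log s^{-1}$ annuli gives $\ep^{-\zeta/2}\log s^{-1}$, not $C\log s^{-1}$. No choice of small $\zeta$ rescues this, since the loss diverges as $\ep\rta 0$.

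The paper avoids this by \emph{not} pursuing a per-annulus bound. Instead it bounds the whole sum at once:
\begin{equation*}
\sum_{x\in\mcl V\mcl G^\ep(B_\rho\setminus B_s(z))} |\eta(x)-z|^{-2}\,\op{diam}(H_x^\ep)^2\,\op{deg}^\ep(x)
\end{equation*}
by appealing to the quantitative comparison of such cell-sums with $\int_{B_\rho\setminus B_s(z)}|w-z|^{-2}\,dw \asymp \log(\rho/s)$ that is the content of \cite[Lemma~3.1]{gms-harmonic} (applied with $f(w)=|w-z|^{-2}$). That lemma delivers the clean additive error $\ep^\alpha$ with no multiplicative $\ep^{-\zeta}$ loss, because it exploits averaging over all the cells simultaneously rather than a worst-case packing argument inside each annulus. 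You allude to ``the SLE/LQG cell regularity estimates of \cite{gms-harmonic}'' but need to identify and invoke precisely this cell-sum-to-integral estimate; without it, the inner sum is not controllable at the required strength. Apart from this, your flow energy bound via two i.i.d.\ paths is a valid (and slightly cleaner) variant of~\eqref{eqn-flow-path-sum} --- it avoids the explicit $\op{deg}^\ep(x)$ weight --- though since $\op{deg}^\ep(x)\geq 1$ the degree-weighted estimate from \cite{gms-harmonic} still immediately suffices.
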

\begin{proof}
We will prove the lemma by constructing a unit flow $\theta^\ep$ from the vertex $x_z^\ep$ to $\mcl V\mcl G^\ep(\bdy B_\rho)$ and applying Thompson's principle to the restriction of $\theta^\ep$ to $\ol{\mcl V\mcl G^\ep \setminus \mcl V\mcl G^\ep(B_s(z))}$.
The proof is similar to arguments in~\cite[Section 3.3]{gm-spec-dim}.
We will first prove an estimate for a fixed choice of $z\in B_\rho$ and $s\in [0,\op{dist}(z,\bdy B_\rho)]$, then transfer to an estimate which holds for all such $(z,s)$ simultaneously via a union bound argument in Step 4 at the end of the proof.
\medskip

\noindent\textit{Step 1: defining a unit flow.}
We use the method of random paths; see, e.g.,~\cite[Section 2.5, page 42]{lyons-peres} for a general discussion of this method.
For a fixed choice of $z$, let $\BB u$ be sampled uniformly from Lebesgue measure on $\bdy B_1(z)$, independently from everything else.
Consider the infinite ray from $z$ which passes though $\BB u$ and let $S$ be the segment of this ray from $z$ to a point of $\bdy B_\rho$. For $\ep\in (0,1)$, choose (in some measurable manner) a simple path $P^\ep$ in $\mcl G^\ep(S)$ from $x_z^\ep$ to a vertex whose corresponding cell contains the single point in $S\cap \bdy B_\rho$. For an oriented edge $e  $ of $\mcl G^\ep(B_\rho)$, let $\theta^\ep(e)$ be the probability that the path $P^\ep$ traverses $e$ in the forward direction, minus the probability that $P^\ep$ traverses $e$ in the reverse direction.
It is easily seen that $\theta^\ep$ is a unit flow from $x_z^\ep$ to $\mcl V\mcl G^\ep(\bdy B_\rho)$ (this follows from general theory, and is checked carefully in our particular case in~\cite[Lemma 3.5]{gm-spec-dim}).

By Lemma~\ref{lem-unit-flow}, for each $s \in [0,\op{dist}(z,\bdy B_\rho)]$, the restriction of $\theta^\ep $ to the set of oriented edges of  $\mcl G^\ep$ with at least one endpoint in $  \mcl V\mcl G^\ep \setminus \mcl V\mcl G^\ep(B_s(z)) $ is a unit flow from $ \bdy \mcl V\mcl G^\ep( B_s(z)) $ to $\mcl V\mcl G^\ep(\bdy B_\rho)$ (here the boundary of a vertex set is defined as in~\eqref{eqn-graph-bdy}).
Note that each edge in this set also belongs to $\mcl E\mcl G^\ep(\BB C\setminus B_s(z))$.
By Thomson's principle~\eqref{eqn-thomson}, we therefore have
\eqb \label{eqn-eff-res-upper-thomson}
\mcl R^\ep\left( B_s(z) \leftrightarrow \bdy B_\rho  \right) \leq \sum_{e \in \mcl E\mcl G^\ep(B_\rho\setminus B_s(z) )}[ \theta^\ep(e)]^2 ,
\quad \forall s \in [0,\op{dist}(z,\bdy B_\rho)] .
\eqe
\medskip

\noindent\textit{Step 2: bounding the energy in terms of a sum over cells.}
We now bound the right side of~\eqref{eqn-eff-res-upper-thomson}.
The argument is essentially the same as that of~\cite[Lemma 3.6]{gm-spec-dim}, but we give the details for the sake of completeness.
We will bound the energy of $\theta^\ep$ in terms of a sum over cells of $\mcl G^\ep$ which intersect $B_\rho\setminus B_s(z)$, which can in turn be bounded using~\cite[Lemma~3.1]{gms-harmonic}.
Throughout the proof, we require all implicit constants in the symbol $\preceq$ and the rates of all ``polynomially high probability" statements to be deterministic and depend only on $\rho$ and $\gamma$ (not on $z$ or $s$).

By the definition of $S$, for each $x\in \mcl V\mcl G^\ep(B_\rho) \setminus \{x_z^\ep\}$, the conditional probability given $(h,\eta)$ that the segment $S$ intersects the cell $H_x^\ep$ is at most a universal constant times $\op{diam} \left(H_x^\ep \right) \times \op{dist}\left(z , H_x^\ep \right)^{-1}$, where here $\op{diam}$ and $\op{dist}$ denote Euclidean diameter and distance, respectively. By the definition of $\theta^\ep$, this implies that for any edge $\{x,y\} \in \mcl V\mcl G^\ep(B_\rho)$ with $x\not=x_z^\ep$,
\eqb \label{eqn-flow-prob}
|\theta^\ep(x,y)| \leq  \min\left\{ \frac{ \op{diam} \left(H_x^\ep \right) }{ \op{dist}\left(z , H_x^\ep \right) } ,   \frac{ \op{diam} \left(H_y^\ep \right) }{ \op{dist}\left(z , H_y^\ep \right) } \right\}.
\eqe

Now fix $q\in \left(0,\tfrac{2}{(2+\gamma)^2}\right)$, chosen in a manner depending only on $\gamma$.
By Lemma~\ref{lem-cell-diam}, it holds with polynomially high probability as $\ep\rta 0$ that each cell $H_x^\ep$ for $x\in\mcl V\mcl G^\ep(B_\rho)$ has diameter at most $\ep^q$, so
\eqbn
\op{dist}\left(H_x^\ep , z\right)^{-1} \leq \left( |\eta(x) - z| -  \ep^q \right)^{-1} \preceq    |\eta(x) - z|^{-1}  ,\quad \forall x \in \mcl V\mcl G^\ep\left(B_\rho\setminus B_{2\ep^q}(z) \right)  .
\eqen
Plugging this into~\eqref{eqn-flow-prob} shows that
\eqbn
|\theta^\ep(x,y)| \preceq |\eta(x) - z|^{-1} \op{diam} \left( H_x^\ep \right) , \quad \forall \{x,y\} \in \mcl E\mcl G^\ep\left(B_\rho\setminus B_{2\ep^q}(z) \right)
\eqen
By summing this last estimate, we get that if $\beta \in (0,q)$ and $s\in [\ep^\beta , \op{dist}(z, \bdy B_\rho)]$ then whenever~\eqref{eqn-flow-prob} holds and $\ep$ is sufficiently small (depending on $\beta$ and $q$),
\allb \label{eqn-flow-path-sum}
\sum_{e \in \mcl E\mcl G^\ep(B_\rho \setminus B_s(z) )}[ \theta^\ep(e)]^2
 \preceq \sum_{x \in \mcl V\mcl G^\ep(B_\rho \setminus B_s(z) )}  |\eta(x) - z|^{-2}   \op{diam} \left( H_x^\ep \right)^2 \op{deg}^\ep(x) .
\alle
\medskip

\noindent\textit{Step 3: bounding the sum over cells.}
By~\cite[Lemma~3.1]{gms-harmonic}, applied with $f(w) = |w-z|^{-2}$, there exists $\alpha = \alpha(\gamma) > 0$ and $C_0 = C_0(\rho,\gamma) > 0$ such that if $\beta \in (0,q)$ is chosen sufficiently small, in a manner depending only on $\gamma$, then the following is true. For each $s\in [\ep^\beta,\op{dist}(z,\bdy B_\rho)]$, it holds with polynomially high probability as $\ep\rta 0$ that the right side of~\eqref{eqn-flow-path-sum} is bounded above by
\eqb \label{eqn-flow-path-log}
C_0 \int_{B_\rho \setminus B_s(z)} |w-z|^{-2} \, dw    + \ep^\alpha \preceq  \log s^{-1}  .
\eqe
Since~\eqref{eqn-flow-path-sum} holds simultaneously for all $s\in [\ep^\beta, \op{dist}(z,\bdy B_\rho)]$ with polynomially high probability as $\ep\rta 0$, by plugging~\eqref{eqn-flow-path-log} into~\eqref{eqn-flow-path-sum} and then into~\eqref{eqn-eff-res-upper-thomson}, we get that the following is true. There exists $C = C(\rho,\gamma) > 0$ such that for each \emph{fixed} $z\in B_\rho$ and $s\in [\ep^\beta , \op{dist}(z,\bdy B_\rho)]$, the bound~\eqref{eqn-eff-res-upper} holds with polynomially high probability as $\ep\rta 0$.
\medskip

\noindent\textit{Step 4: transferring to a bound for all $(z,s)$ simultaneously.}
We observe that if $B_s(z) \subset B_{s'}(z') \subset B_\rho$, then
\eqb \label{eqn-eff-res-mono}
\mcl R^\ep\left( B_{s'}(z') \leftrightarrow \bdy B_\rho  \right)  \leq \mcl R^\ep\left( B_s(z) \leftrightarrow \bdy B_\rho  \right) .
\eqe
After possibly shrinking $\beta$ so that it is much smaller than the exponent in the ``polynomially high probability" statement at the end of the last step, we can take a union bound over all $z\in B_\rho \cap (\ep^{2\beta}\BB Z^2)$ and all $s\in [\ep^\beta  ,\op{dist}(z,B_\rho)] \cap \{2^{-n}\}_{n\in\BB Z}$ to get that the following is true. With polynomially high probability as $\ep\rta 0$, the bound~\eqref{eqn-eff-res-upper} holds simultaneously for each $z\in B_\rho \cap (\ep^{2\beta}\BB Z^2)$ and all $s\in [\ep^\beta  ,\op{dist}(z,B_\rho)] \cap \{2^{-n}\}_{n\in\BB Z}$.
Due to~\eqref{eqn-eff-res-mono}, this implies that~\eqref{eqn-eff-res-upper} holds simultaneously for all $z\in B_\rho$ and $s\in [0,\op{dist}(z,\bdy B_\rho)]$ with $2C$, say, in place of $C$.
\end{proof}

\subsection{Upper bound for off-diagonal Green's function}
\label{sec-off-diag}

We now use Lemma~\ref{lem-eff-res-green}, Proposition~\ref{prop-harnack}, and Lemma~\ref{lem-eff-res-upper} to prove the following upper bound for the Green's function.

\begin{lem} \label{lem-gr-bound}
There exists $\beta = \beta(\gamma) > 0$ and $C = C(\rho,\gamma ) > 0$ such that with polynomially high probability as $\ep\rta 0$,
\eqb \label{eqn-gr-bound-upper}
   \op{gr}_{B_\rho}^\ep(x,y) \leq C   \log \left( \frac{1}{|\eta(x) - \eta(y)|}  \right) + C
\eqe
simultaneously for all $x,y\in \mcl V\mcl G^\ep(B_\rho)$ with $|\eta(x) - \eta(y)| \geq \ep^\beta$.
\end{lem}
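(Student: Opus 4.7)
My plan would mirror the proof of Lemma~\ref{lem-gr-bound-lower}, using the same three ingredients---Lemma~\ref{lem-eff-res-green} (together with Lemma~\ref{lem-eff-res-grad}), Proposition~\ref{prop-harnack}, and an effective resistance estimate---but with all inequalities reversed: I will invoke the \emph{upper} bound of Lemma~\ref{lem-eff-res-upper} in place of the lower bound Lemma~\ref{lem-eff-res-lower}, extract an upper bound on the \emph{minimum} (rather than a lower bound on the maximum) of $\op{gr}_{B_\rho}^\ep(x,\cdot)$ on the graph boundary of $\mcl V\mcl G^\ep(B_s(z))$ via the left-hand inequality $a^2/(a+\delta)\le\mcl R$ of Lemma~\ref{lem-eff-res-green}, and then use Harnack to pass to the maximum over the Euclidean circle $\bdy B_s(z)$.

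To create enough room to apply Harnack, I would first replace $B_\rho$ by a slightly enlarged disk $B_{\rho'}$ with $\rho':=(1+\rho)/2$, using the monotonicity $\op{gr}_{B_\rho}^\ep(x,y)\le\op{gr}_{B_{\rho'}}^\ep(x,y)$ of the Green's function in the domain. Fix $x,y\in\mcl V\mcl G^\ep(B_\rho)$ with $s:=|\eta(x)-\eta(y)|\in[\ep^\beta,(\rho'-\rho)/3]$ and set $z:=\eta(x)$, $A:=\mcl V\mcl G^\ep(B_s(z))$, $B:=\mcl V\mcl G^\ep(B_{\rho'})$. Choosing $\rho''\in(\rho',1)$ and using Lemma~\ref{lem-cell-diam} to ensure cells are much smaller than $\rho''-\rho'$, Rayleigh monotonicity gives $\mcl R^\ep(A\leftrightarrow\mcl V\mcl G^\ep\setminus B)\le\mcl R^\ep(A\leftrightarrow\mcl V\mcl G^\ep(\bdy B_{\rho''}))$, and Lemma~\ref{lem-eff-res-upper} bounds this last quantity by $C\log s^{-1}$. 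Lemmas~\ref{lem-eff-res-green} and~\ref{lem-eff-res-grad} then force $a:=\min_{u\in\bdy A}\op{gr}_B^\ep(x,u)$ to satisfy $a^2/(a+1)\le C\log s^{-1}$, whence $a\le 2C\log s^{-1}+1$. Since $\bdy A\subset\mcl V\mcl G^\ep(\bdy B_s(z))$ (by exactly the same reasoning as in the footnote in the proof of Lemma~\ref{lem-gr-bound-lower}), the minimum of $\op{gr}_B^\ep(x,\cdot)$ over the larger set $\mcl V\mcl G^\ep(\bdy B_s(z))$ is again at most $a$. As $B_{3s}(z)\subset B_{\rho'}$, the function $\op{gr}_B^\ep(x,\cdot)$ is discrete harmonic on $\mcl V\mcl G^\ep(B_{3s}(z))\setminus\{x\}$, so Proposition~\ref{prop-harnack} converts this circle-minimum bound into $\max_{\mcl V\mcl G^\ep(\bdy B_s(z))}\op{gr}_B^\ep(x,\cdot)\le C_1(2C\log s^{-1}+1)$; and since $\eta(y)\in\bdy B_s(z)$ implies $y\in\mcl V\mcl G^\ep(\bdy B_s(z))$, this gives~\eqref{eqn-gr-bound-upper} in this regime.

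For larger $s$, the right-hand side of~\eqref{eqn-gr-bound-upper} is bounded below by a positive constant depending only on $\rho,\gamma$, so it suffices to bound $\op{gr}_{B_{\rho'}}^\ep(x,y)$ by a constant. I would obtain this by running the previous argument at the fixed radius $s_0:=(\rho'-\rho)/3$ to bound $\op{gr}_B^\ep(x,\cdot)$ by a constant on $\mcl V\mcl G^\ep(\bdy B_{s_0}(\eta(x)))$, and then applying the discrete maximum principle on $\mcl V\mcl G^\ep(B_{\rho'}\setminus B_{s_0}(\eta(x)))$, on which $\op{gr}_B^\ep(x,\cdot)$ is discrete harmonic and vanishes on the outer boundary. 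The only subtle point is that Lemma~\ref{lem-eff-res-green} constrains $a=\min_{\bdy A}\op{gr}_B^\ep(x,\cdot)$ rather than the minimum over $\mcl V\mcl G^\ep(\bdy B_s(z))$, but the inclusion $\bdy A\subset\mcl V\mcl G^\ep(\bdy B_s(z))$ goes in exactly the right direction to convert the graph-boundary minimum bound into a circle-minimum bound; beyond this bookkeeping, the proof is a routine adaptation of the arguments in Section~\ref{sec-gr-bound-lower}.
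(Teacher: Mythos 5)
Your proposal is correct and follows essentially the same route as the paper. The paper factors the argument into an intermediate lemma (Lemma~\ref{lem-gr-bound0}) proving the bound under the extra restriction $|\eta(x)-\eta(y)|\le\frac13\op{dist}(\eta(x),\bdy B_\rho)$, and then removes that restriction in a second step by passing from $\rho$ to $\rho'=(1+\rho)/2$ and invoking the maximum principle; you instead do the $\rho\to\rho'$ enlargement upfront and run the Harnack--effective-resistance argument directly inside $B_{\rho'}$, which is an equivalent reorganization. The ingredients (Lemma~\ref{lem-eff-res-green} with $\delta\le 1$ from Lemma~\ref{lem-eff-res-grad}, Lemma~\ref{lem-eff-res-upper}, Proposition~\ref{prop-harnack}, inclusion $\bdy\mcl V\mcl G^\ep(B_s(z))\subset\mcl V\mcl G^\ep(\bdy B_s(z))$, and the maximum principle for the large-$s$ regime) and their roles are identical; your algebraic route from $a^2/(a+1)\le\mcl R$ to $a\le 2\mcl R+1$ is a harmless variant of the paper's "either $\min\le 2C_2\log s^{-1}$ or contradiction" step, and your extra intermediate radius $\rho''$ with Rayleigh monotonicity is a slightly more explicit handling of the interface between $\mcl R(A\leftrightarrow\mcl V(G)\setminus B)$ and the quantity controlled by Lemma~\ref{lem-eff-res-upper}, which the paper's shorthand notation elides.
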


We first prove a variant of Lemma~\ref{lem-gr-bound} where we also impose an upper bound for $|\eta(x) - \eta(y)|$.
The proof is basically the same as that of Lemma~\ref{lem-gr-bound-lower}, but with the inequalities going in the opposite direction.

\begin{lem} \label{lem-gr-bound0}
There exists $\beta = \beta(\gamma) > 0$ and $C = C(\rho,\gamma ) > 0$ such that with polynomially high probability as $\ep\rta 0$,
\eqb \label{eqn-gr-bound0-upper}
 \op{gr}_{B_\rho}^\ep(x,y) \leq C \log \left( \frac{1}{|\eta(x) - \eta(y)|} \right) .
\eqe
simultaneously for all $x,y\in \mcl V\mcl G^\ep(B_\rho)$ with $|\eta(x) - \eta(y)| \in \left[  \ep^\beta , \frac13 \op{dist}(\eta(x) , \bdy B_\rho) \right]$.
\end{lem}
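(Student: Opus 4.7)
The plan is to mirror the proof of Lemma~\ref{lem-gr-bound-lower} with all inequalities reversed, replacing the lower resistance bound (Lemma~\ref{lem-eff-res-lower}) by the upper one (Lemma~\ref{lem-eff-res-upper}), and using the \emph{left} inequality $a^2/(a+\delta)\leq\mcl R$ in Lemma~\ref{lem-eff-res-green} in place of the right one. Fix $x,y\in\mcl V\mcl G^\ep(B_\rho)$ with $s:=|\eta(x)-\eta(y)|\in[\ep^\beta,\tfrac{1}{3}\op{dist}(\eta(x),\bdy B_\rho)]$, and set $z:=\eta(x)$, so that $x=x_z^\ep$. Applying Lemma~\ref{lem-eff-res-green} with $A=\mcl V\mcl G^\ep(B_s(z))$ and $B=\mcl V\mcl G^\ep(B_\rho)$, together with the crude bound $\delta\leq 1$ from Lemma~\ref{lem-eff-res-grad}, the quadratic inequality $a^2\leq \mcl R^\ep(B_s(z)\leftrightarrow \bdy B_\rho)(a+1)$ yields $a\leq \mcl R^\ep(B_s(z)\leftrightarrow \bdy B_\rho)+O(1)$, i.e.
\[
\min_{w\in\bdy\mcl V\mcl G^\ep(B_s(z))}\op{gr}_{B_\rho}^\ep(x,w)\;\leq\;\mcl R^\ep(B_s(z)\leftrightarrow\bdy B_\rho)+O(1).
\]

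By Lemma~\ref{lem-eff-res-upper}, with polynomially high probability uniformly in $(z,s)$, the effective resistance on the right is bounded above by $C\log s^{-1}$. The inclusion $\bdy\mcl V\mcl G^\ep(B_s(z))\subseteq\mcl V\mcl G^\ep(\bdy B_s(z))$ (recorded in the footnote of the proof of Lemma~\ref{lem-gr-bound-lower}) allows me to replace the minimum over the smaller set by the minimum over the larger set without weakening the inequality. Since $\op{gr}_{B_\rho}^\ep(x,\cdot)$ is nonnegative and discrete harmonic on $\mcl V\mcl G^\ep(B_{3s}(z))\setminus\{x_z^\ep\}$ (using that $B_{3s}(z)\subset B_\rho$), Proposition~\ref{prop-harnack} applies and gives
\[
\max_{w\in\mcl V\mcl G^\ep(\bdy B_s(z))}\op{gr}_{B_\rho}^\ep(x,w)\;\leq\;C_1\min_{w\in\mcl V\mcl G^\ep(\bdy B_s(z))}\op{gr}_{B_\rho}^\ep(x,w)\;\leq\;C_1 C\log s^{-1}+O(1).
\]
Since $\eta(y)\in H_y^\ep\cap\bdy B_s(z)$ forces $y\in\mcl V\mcl G^\ep(\bdy B_s(z))$, substituting $s=|\eta(x)-\eta(y)|$ gives the claimed bound after absorbing the additive $O(1)$ term using $\log s^{-1}\geq\log 3$ on the allowed range of $s$.

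Because Lemma~\ref{lem-eff-res-upper} and Proposition~\ref{prop-harnack} are already stated as holding simultaneously over all admissible $(z,s)$ with polynomially high probability, the bound on $\op{gr}_{B_\rho}^\ep(x,y)$ is automatically uniform over the qualifying pairs $(x,y)$. I do not anticipate any genuine obstacle beyond those present in the proof of Lemma~\ref{lem-gr-bound-lower}: the only nonobvious steps are the elementary algebraic manipulation extracting $a\leq\mcl R+O(1)$ from $a^2/(a+\delta)\leq\mcl R$, and the bookkeeping needed to match the Green's-function pole with the center required by the Harnack inequality. Both of these are routine, so I expect this lemma to be strictly easier than its lower-bound counterpart.
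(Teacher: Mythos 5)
Your proof is correct and follows essentially the same route as the paper's: apply Lemma~\ref{lem-eff-res-green} with $A=\mcl V\mcl G^\ep(B_s(z))$, $B=\mcl V\mcl G^\ep(B_\rho)$ to control the minimum of $\op{gr}^\ep_{B_\rho}(x,\cdot)$ on $\bdy \mcl V\mcl G^\ep(B_s(z))$ by the effective resistance (plus the $\delta\leq 1$ error from Lemma~\ref{lem-eff-res-grad}), bound that resistance via Lemma~\ref{lem-eff-res-upper}, and upgrade the minimum to the maximum using the Harnack inequality of Proposition~\ref{prop-harnack}. The only cosmetic difference is that you extract $a\leq \mcl R + 1$ by solving the quadratic $a^2\leq \mcl R(a+1)$ directly, whereas the paper reaches the same conclusion by a short contradiction argument (assuming $a > 2C_2\log s^{-1}$ and deriving $a\leq 2C_2\log s^{-1}$); both yield a bound of the same form, and the rest of the bookkeeping (verifying $B_{3s}(z)\subset B_\rho$ so Harnack applies, observing $y\in \mcl V\mcl G^\ep(\bdy B_s(z))$, absorbing the additive constant via $\log s^{-1}\geq\log 3$) matches the paper.
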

\begin{proof}
Since the function $y\mapsto \op{gr}_{B_\rho}^\ep(x,y)$ is discrete harmonic on $\mcl V\mcl G^\ep(B_\rho) \setminus (\mcl V\mcl G^\ep(\bdy B_\rho)\cup \{x\})$, we can apply Proposition~\ref{prop-harnack} to find that there exists $\beta = \beta(\gamma) > 0$ and $C_1 = C_1(\rho,\gamma) > 1$ such that with polynomially high probability as $\ep\rta 0$, it holds simultaneously for each $z\in B_\rho$ and each $s\in \left[  \ep^\beta , \frac13 \op{dist}(z , \bdy B_\rho) \right]$ that
\eqb \label{eqn-use-harnack}
 \max_{y\in  \bdy \mcl V\mcl G^\ep(B_s(z))} \op{gr}_{B_\rho}^\ep(x_z^\ep ,y)  \leq C_1 \min_{y\in  \bdy \mcl V\mcl G^\ep(B_s(z))} \op{gr}_{B_\rho}^\ep(x_z^\ep ,y)  .
\eqe
By Lemma~\ref{lem-eff-res-green} (applied with $A = \mcl V\mcl G^\ep(B_s(z))  $ and $B = \mcl V\mcl G^\ep(B_\rho) $) and Lemma~\ref{lem-eff-res-grad} (to say that the error term $\delta$ from Lemma~\ref{lem-eff-res-green} is at most 1), a.s.\ for every such $z$ and $s$,
\allb \label{eqn-use-eff-res-green-upper0}
\frac{a}{1 + 1/a} =  \frac{a^2}{a+1}    \leq   \mcl R^\ep\left( B_s(z) \leftrightarrow \bdy B_\rho   \right) ,\quad \text{for} \quad a =   \min_{y\in  \bdy \mcl V\mcl G^\ep(B_s(z))} \op{gr}_{B_\rho}^\ep(x_z^\ep ,y)  .
\alle
Re-arranging the inequality~\eqref{eqn-use-eff-res-green-upper0} gives
\allb \label{eqn-use-eff-res-green-upper}
  \min_{y\in  \bdy \mcl V\mcl G^\ep(B_s(z))} \op{gr}_{B_\rho}^\ep(x_z^\ep ,y)     \leq  \left( 1 +  \frac{1}{  \min_{y\in  \bdy \mcl V\mcl G^\ep(B_s(z))} \op{gr}_{B_\rho}^\ep(x_z^\ep ,y)  } \right)  \mcl R^\ep\left( B_s(z) \leftrightarrow \bdy B_\rho   \right) .
\alle
\medskip

By Lemma~\ref{lem-eff-res-upper}, after possibly shrinking $\beta$ we can find $C_2 = C_2(\rho,\gamma)> 1$ such that with polynomially high probability as $\ep\rta 0$, it holds simultaneously for each $z\in B_\rho$ and each $s\in \left[\ep^\beta , \frac13\op{dist}(\eta(x) , \bdy B_\rho)\right]$ that
\eqb \label{eqn-use-eff-res-upper}
 \mcl R^\ep\left( B_s(z) \leftrightarrow \bdy B_\rho  \right) \leq C_2 \log s^{-1}  .
\eqe
We now use~\eqref{eqn-use-eff-res-upper} to upper-bound the left side of~\eqref{eqn-use-eff-res-green-upper}, but some care is needed due to the term $ \min_{y\in  \bdy \mcl V\mcl G^\ep(B_s)} \op{gr}_{B_\rho}^\ep(x_z^\ep ,y)$ on the right side.
This is dealt with as follows.
If $\min_{y\in  \bdy \mcl V\mcl G^\ep(B_s(z))} \op{gr}_{B_\rho}^\ep(x_z^\ep ,y) $ were larger than $2C_2 \log s^{-1} \ge 1$, then the right side of~\eqref{eqn-use-eff-res-green-upper} would be at most $ C_2 \log s^{-1} +1 \le 2 C_2 \log (s^{-1})$, so either way we must in fact have
\eqb \label{eqn-green-function-upper}
 \min_{y\in  \mcl V\mcl G^\ep(B_s)} \op{gr}_{B_\rho}^\ep(x_z^\ep ,y)   \leq 2C_2   \log s^{-1} .
\eqe
Combining~\eqref{eqn-use-harnack} and~\eqref{eqn-green-function-upper}, each applied with $z =\eta(x)$ and $s = |\eta(x) -\eta(y)|$, now gives~\eqref{eqn-gr-bound0-upper} with $C = 2C_1C_2$.
\end{proof}

\begin{proof}[Proof of Lemma~\ref{lem-gr-bound}]
We need to remove the constraint that $|\eta(x) - \eta(y)| \leq \frac13 \op{dist}(\eta(x) , \bdy B_\rho)$.
Set $\rho' := (1+\rho)/2$.
Obviously, $\op{gr}_{B_{\rho'}}^\ep(x,y) \geq \op{gr}_{B_\rho}^\ep(x,y)$ for each $x,y\in \mcl V\mcl G^\ep(B_\rho)$, so it suffices to prove~\eqref{eqn-gr-bound-upper} with $\op{gr}_{B_{\rho'}}^\ep$ in place of $\op{gr}_{B_\rho}^\ep$.
This will allow us to avoid dealing with boundary effects (which are manifested in the requirement that $|\eta(x) - \eta(y)| \leq \frac13 \op{dist}(\eta(x) , \bdy B_\rho)$ in Lemma~\ref{lem-gr-bound0}).

By Lemma~\ref{lem-gr-bound0} with $\rho'$ in place of $\rho$, there exists $\beta$ and $C$ as in the lemma statement such that with polynomially high probability as $\ep\rta 0$,
\eqb \label{eqn-use-gr-bound0}
 \op{gr}_{B_{\rho'}}^\ep(x,y) \leq C \log \left( \frac{1}{|\eta(x) - \eta(y)|} \right)
\eqe
for all $x,y\in \mcl V\mcl G^\ep(B_{\rho'})$ with $|\eta(x) - \eta(y)| \in \left[  \ep^\beta , \frac13 \op{dist}(\eta(x) , \bdy B_{\rho'}) \right]$.
By Lemma~\ref{lem-cell-diam}, there exists $q = q(\gamma) > 0$ such that with polynomially high probability as $\ep\rta 0$, we have $\eta(x) \in B_{\rho + \ep^q}$ for each $x\in \mcl V\mcl G^\ep(B_{\rho })$.
If this is the case, then for small enough $\ep > 0$ (how small depends only on $\rho,\gamma$),
\eqb
 \frac13 \op{dist}(\eta(x) , \bdy B_{\rho'}) \geq \frac{\rho'  -\rho - \ep^q}{3}   \geq \frac{\rho'-\rho}{6} .
\eqe
Therefore, with polynomially high probability as $\ep\rta 0$, the bound~\eqref{eqn-use-gr-bound0} holds for all $x,y\in \mcl V\mcl G^\ep(B_{\rho'})$ with $|\eta(x) - \eta(y)| \in \left[  \ep^\beta , (\rho'-\rho)/6 \right]$.
Henceforth assume that this is the case.

Since $\op{gr}_{B_{\rho'}}^\ep(x,\cdot)$ is discrete harmonic on $\mcl V\mcl G^\ep(B_{\rho'}) \setminus \{x\}$ and vanishes on $\mcl V\mcl G^\ep\setminus \mcl V\mcl G^\ep( B_{\rho'})$, we can apply~\eqref{eqn-use-gr-bound0} and the maximum principle to get that for $x,y\in\mcl V\mcl G^\ep(B_\rho)$ with $|\eta(x) - \eta(y)|  > (\rho'-\rho)/6 $,
\eqb
\op{gr}_{B_\rho}^\ep(x,y)
\leq C \log \left( \frac{6}{\rho'-\rho} \right)   .
\eqe
Combining this with~\eqref{eqn-use-gr-bound0} gives~\eqref{eqn-gr-bound-upper} with $C \log \left( \frac{6}{\rho'-\rho} \right)$ in place of $C$.
\end{proof}

\subsection{Upper bound for on-diagonal Green's function}
\label{sec-on-diag}

Lemma~\ref{lem-gr-bound} only provides an upper bound for $\op{Gr}_{B_\rho}^\ep(x,y)$ in the case when $|\eta(x) - \eta(y)| \geq \ep^\beta$.
The purpose of this subsection is to prove a crude upper bound for $\op{Gr}_{B_\rho}^\ep(x,y)$ which will deal with the case when $|\eta(x) - \eta(y)| \leq \ep^\beta$.

\begin{lem} \label{lem-eff-res-diag}
For each $\zeta \in (0,1)$, it holds polynomially high probability as $\ep\rta 0$ that
\eqb
\max_{x,y\in \mcl V\mcl G^\ep(B_\rho)} \op{Gr}_{B_\rho}^\ep(x,y) \leq \ep^{-\zeta}   .
\eqe
\end{lem}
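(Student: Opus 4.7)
The plan is to reduce to the on-diagonal Green's function and then bound the effective resistance via Thomson's principle. Since $\op{gr}_{B_\rho}^\ep(y,\cdot)$ is discrete harmonic on $\mcl V\mcl G^\ep(B_\rho)\setminus\{y\}$ and vanishes outside $\mcl V\mcl G^\ep(B_\rho)$, the maximum principle gives $\op{gr}_{B_\rho}^\ep(y,x)\leq \op{gr}_{B_\rho}^\ep(y,y)$ for every $x$. Combining with the symmetry $\op{gr}_{B_\rho}^\ep(x,y)=\op{gr}_{B_\rho}^\ep(y,x)$ (reversibility of the walk with respect to the degree measure, since $\op{deg}^\ep(x)\op{Gr}_{B_\rho}^\ep(x,y) = \op{deg}^\ep(y)\op{Gr}_{B_\rho}^\ep(y,x)$) and multiplying by $\op{deg}^\ep(y)$ yields
\[
\op{Gr}_{B_\rho}^\ep(x,y) \;\leq\; \op{Gr}_{B_\rho}^\ep(y,y) \;=\; \op{deg}^\ep(y)\,\mcl R^\ep(\{y\}\leftrightarrow \bdy B_\rho).
\]
Using a max-degree estimate of the form $\max_{y\in \mcl V\mcl G^\ep(B_\rho)}\op{deg}^\ep(y)\leq \ep^{-\zeta/2}$ with polynomially high probability (which follows from the finite polynomial moments of mated-CRT vertex degrees together with a union bound over the $O(\ep^{-1})$ vertices in $\mcl V\mcl G^\ep(B_\rho)$), it then suffices to show $\mcl R^\ep(\{y\}\leftrightarrow \bdy B_\rho)\leq \ep^{-\zeta/2}$ uniformly in $y$.

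To bound the effective resistance, I apply Thomson's principle~\eqref{eqn-thomson} to the random-ray unit flow $\theta^\ep$ from $y$ to $\mcl V\mcl G^\ep(\bdy B_\rho)$ constructed exactly as in the proof of Lemma~\ref{lem-eff-res-upper}: sample a direction uniformly on $\bdy B_1(\eta(y))$, shoot a ray from $\eta(y)$ to $\bdy B_\rho$, and take the induced edge-traversal flow. From $|\theta^\ep(u,v)|\leq \min(p_u,p_v)$ with $p_u\leq \min(1,\op{diam}(H_u^\ep)/\op{dist}(\eta(y),H_u^\ep))$, the energy is at most $\sum_u p_u^2 \op{deg}^\ep(u)$, which I split at $\op{dist}(\eta(y),H_u^\ep)=\ep^q$ for $q=q(\zeta,\gamma)$ slightly larger than $1/2$. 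For the far piece, the cell-diameter bound of Lemma~\ref{lem-cell-diam} ensures $\op{dist}(\eta(y),H_u^\ep)\geq |\eta(u)-\eta(y)|/2$ for the relevant cells, so $p_u\leq 2\op{diam}(H_u^\ep)/|\eta(u)-\eta(y)|$, and applying~\cite[Lemma 3.1]{gms-harmonic} with $f(w)=|w-\eta(y)|^{-2}\mathbbm{1}_{|w-\eta(y)|\geq \ep^q}$ yields a bound $C\int_{B_\rho\setminus B_{\ep^q}(\eta(y))}|w-\eta(y)|^{-2}\,dw + \ep^\alpha = O(\log\ep^{-1})$. For the near piece, using $p_u\leq 1$ and counting cells crudely by $\ep^{-1}\mu_h(B_{2\ep^q}(\eta(y)))$, the contribution is at most $(\max\op{deg}^\ep)\cdot \ep^{-1}\mu_h(B_{2\ep^q}(\eta(y)))$; the uniform LQG small-ball bound $\mu_h(B_r(z))\leq r^{2-\delta}$ from Lemma~\ref{lem-ball-mass} in the appendix (for $\delta$ much smaller than $\zeta$) makes this at most $\ep^{-\zeta/2-1+q(2-\delta)}\leq \ep^{-\zeta/2}$ by the choice of $q$. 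Combining gives $\mcl R^\ep(\{y\}\leftrightarrow\bdy B_\rho)\leq C\ep^{-\zeta/2}$, and hence $\op{Gr}_{B_\rho}^\ep(y,y)\leq C\ep^{-\zeta}$.

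Each of the estimates invoked above (cell diameters via Lemma~\ref{lem-cell-diam}, max degree, small-ball LQG mass via Lemma~\ref{lem-ball-mass}, and the sum bound of~\cite[Lemma 3.1]{gms-harmonic}) holds with polynomially high probability uniformly in the relevant parameters, so a union bound over the polynomially many $y\in \mcl V\mcl G^\ep(B_\rho)$ together with a slight readjustment of $\zeta$ yields the simultaneous statement. The main obstacle is controlling the near-$\eta(y)$ contribution to the flow energy, where the flow-probability bound saturates at $1$ and individual vertex degrees can be large; the resolution requires combining the max-degree estimate with the sharp small-ball bound for $\mu_h$ and a careful choice of the cutoff radius $\ep^q$ slightly above $\ep^{1/2}$ to balance the near- and far-field contributions.
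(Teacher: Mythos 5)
Your reduction to the on-diagonal Green's function via the maximum principle and reversibility is correct and matches the spirit of the paper, as is the general strategy of bounding $\mcl R^\ep(\{y\}\leftrightarrow\bdy B_\rho)$ via Thomson's principle applied to the random-ray flow from Lemma~\ref{lem-eff-res-upper}. However, the near/far decomposition at a fixed radius $\ep^q$ does not close, for two separate reasons.

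First, the small-ball bound you invoke is misstated. Lemma~\ref{lem-ball-mass} gives $\mu_h(B_r(z))\leq r^{(2-\gamma)^2/2-\delta}$, not $r^{2-\delta}$. With the correct exponent, the near-field count is
\eqbn
\ep^{-1}\mu_h\bigl(B_{2\ep^q}(\eta(y))\bigr) \lesssim \ep^{-1+q[(2-\gamma)^2/2-\delta]},
\eqen
and for this to be $O(\ep^{-\zeta/2})$ one needs $q\gtrsim 2/(2-\gamma)^2$, which is strictly greater than $1/2$ and in fact blows up as $\gamma\to 2$. Second, and more fundamentally, the far-field argument requires $\op{dist}(\eta(y),H_u^\ep)\geq |\eta(u)-\eta(y)|/2$ for cells with $\op{dist}(\eta(y),H_u^\ep)\geq \ep^q$, which in turn needs $\op{diam}(H_u^\ep)\leq\op{dist}(\eta(y),H_u^\ep)$. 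But Lemma~\ref{lem-cell-diam} only gives $\op{diam}(H_u^\ep)\leq\ep^{2/(2+\gamma)^2-\zeta}$, and $2/(2+\gamma)^2<1/2$ for all $\gamma\in(0,2)$. So cells at distance of order $\ep^q$ with $q>1/2$ can have diameter polynomially larger than that distance, and the flow-weight bound $p_u\leq 2\op{diam}(H_u^\ep)/|\eta(u)-\eta(y)|$ is not available. Combining the two constraints, you would need $2/(2-\gamma)^2\lesssim q\lesssim 2/(2+\gamma)^2$, which is empty.

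The paper resolves exactly this obstruction by abandoning a radius cutoff entirely. Instead, one splits the vertices into those $u$ with $\op{dist}(z,H_u^\ep)>\op{diam}(H_u^\ep)$ (``good'') versus the rest (``bad''), where $z=\frk u^\ep(y)$ is chosen to be a point of $H_y^\ep$ at maximal distance from $\bdy H_y^\ep$. For the good cells the comparison $p_u\lesssim\op{diam}(H_u^\ep)/|w-z|$ ($w\in H_u^\ep$) is available by construction, and the inradius bound (each cell contains a ball of radius $\geq\ep^{\zeta/4}\op{diam}H_u^\ep$, from~\cite[Proposition~3.4]{ghm-kpz}) converts the cell sum into a Lebesgue integral of $|w-z|^{-2}$ over $\BB D\setminus B_{\ep^{p+\zeta/4}}(z)$, giving $O(\ep^{-3\zeta/2}\log\ep^{-1})$. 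For the bad cells, one does \emph{not} count by LQG mass: instead a packing argument using the same inradius lower bound (Lemma~\ref{lem-cell-comparable} in the paper) shows that for \emph{any} fixed point $z$ there are at most $\ep^{-\zeta}$ cells with $\op{dist}(z,H_u^\ep)\leq\op{diam}(H_u^\ep)$, regardless of scale. This is the step missing from your argument, and it is what allows the bound to be scale-free rather than tied to a cutoff radius that cannot simultaneously satisfy both constraints.
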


Since $ \op{Gr}_{B_\rho}^\ep(x,y) $ attains its maximum value when $x=y$, to prove Lemma~\ref{lem-eff-res-diag}, we only need to prove an upper bound for $\op{Gr}_{B_\rho}^\ep(x,x)$. Since degrees in $\mcl G^\ep$ are easy to control, this amounts to proving an upper bound for the effective resistance from $x\in\mcl V\mcl G^\ep(B_\rho)$ to $ \mcl V\mcl G^\ep(\bdy B_\rho)$.
The proof of this upper bound is based on Thomson's principle applied to the same unit flow used in the proof of Lemma~\ref{lem-eff-res-upper}.
However, the estimates involved are somewhat more delicate.
The following lemma is one of the key ingredients of the proof.

\begin{lem} \label{lem-cell-comparable}
For each $\zeta \in (0,1)$, it holds with polynomially high probability as $\ep\rta 0$ that the following is true.
Simultaneously for each $z\in B_\rho$, the number of vertices $x \in \mcl V\mcl G^\ep$ whose corresponding cell satisfies
\eqb \label{eqn-cell-comparable}
\op{dist}\left(z,H_x^\ep \right) \leq \op{diam}\left( H_x^\ep \right)
\eqe
is at most $\ep^{- \zeta} $.
\end{lem}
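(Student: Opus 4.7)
Plan: The idea is to decompose the count according to dyadic scales of the cell diameter, bound each scale's contribution using the LQG area measure, and then take a union bound over $z$. By Lemma~\ref{lem-cell-diam}, with polynomially high probability every cell $H_x^\ep$ with $x\in\mcl V\mcl G^\ep(B_\rho)$ has diameter in $[\ep^{2/(2-\gamma)^2+\zeta'}, \ep^{2/(2+\gamma)^2-\zeta'}]$ for a small auxiliary parameter $\zeta'>0$, so only $O(\log \ep^{-1})$ dyadic scales $r_k = 2^{-k}$ are relevant.

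Fix $z\in B_\rho$ and a dyadic scale $r$ in the relevant range. Any cell with diameter in $[r/2,r]$ satisfying~\eqref{eqn-cell-comparable} is contained in $B_{2r}(z)$; since distinct cells have interiors of disjoint $\mu_h$-measure and each has mass exactly $\ep$, the number of such cells is at most $\mu_h(B_{2r}(z))/\ep$. Applying the uniform LQG ball-mass estimate (Lemma~\ref{lem-ball-mass} in the appendix) gives, with polynomially high probability, $\mu_h(B_{2r}(z))\leq r^\xi$ for some $\xi=\xi(\gamma)>0$ uniformly in $z\in B_\rho$ and in $r$. Summing over the $O(\log\ep^{-1})$ dyadic scales yields an upper bound on the count at a fixed $z$. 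The uniform-in-$z$ conclusion then follows from a union bound over a polynomially-spaced lattice in $B_\rho$, with the error at non-lattice points handled by noting that shifting $z$ by $\delta$ only adds cells of diameter $\geq \delta$ to the count, and such cells are again controlled by the same ball-mass estimate.

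The main obstacle is achieving the $\ep^{-\zeta}$ bound for \emph{arbitrarily} small $\zeta>0$. The naive scale-by-scale sum gives a bound of order $\ep^{-c(\gamma)}$ for some fixed $c(\gamma)>0$, coming from the largest dyadic scale where $\mu_h(B_{2r}(z))/\ep$ is still a negative power of $\ep$. To reach a subpolynomial bound, one must combine the $\mu_h$-mass estimate with the fact that cells of near-maximal diameter are atypically rare --- for instance via a moment bound of the form $\sum_{x\in\mcl V\mcl G^\ep(B_\rho)} \op{diam}(H_x^\ep)^q \leq C$ in the spirit of~\cite[Lemma~3.1]{gms-harmonic}, which forces the large-scale dyadic contributions to be much smaller than the crude $\mu_h(B_{2r}(z))/\ep$ bound would suggest. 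Balancing this moment bound against the $\mu_h$-mass estimate at each scale and then optimizing the auxiliary parameter $\zeta'$ in Lemma~\ref{lem-cell-diam} against the target exponent $\zeta$ should produce the claimed $\ep^{-\zeta}$ bound.
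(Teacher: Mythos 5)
Your dyadic decomposition is the right organizing principle, and you have correctly identified the flaw in the naive execution: bounding the count at scale $r$ by $\mu_h(B_{2r}(z))/\ep$ and using $\mu_h(B_{2r}(z))\lesssim r^{(2-\gamma)^2/2-\zeta'}$ only pays off when $r$ is large, while the maximum cell diameter is only $\ep^{q}$ for $q<\frac{2}{(2+\gamma)^2}$. Since $q\cdot\frac{(2-\gamma)^2}{2}<1$ for every $\gamma\in(0,2)$, the contribution from the top scale is a genuinely negative power of $\ep$ and you cannot reach $\ep^{-\zeta}$ for arbitrarily small $\zeta$ this way. So far, so honest. The problem is that the repair you sketch — a moment bound of the form $\sum_x\op{diam}(H_x^\ep)^q\leq C$ — is not worked out, and it is not clear that it closes the gap: such a moment bound controls the \emph{total} number of large-diameter cells intersecting $B_\rho$, but the lemma needs a bound on the number near a \emph{single, arbitrary} point $z$, which is a different kind of statement. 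As written, this is a gap.

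The paper closes the argument with a different (and simpler) volumetric input: rather than measuring cells by their $\mu_h$-mass, it measures them by their \emph{Lebesgue} area, using the fact (\cite[Proposition~3.4 and Remark~3.9]{ghm-kpz}) that, with polynomially high probability, every cell $H_x^\ep$ intersecting $B_\rho$ contains a Euclidean ball of radius $\geq\ep^{\zeta/4}\op{diam}(H_x^\ep)$. For a fixed scale $2^{-k}$, every cell in your set $\mcl C_k$ sits inside $B_{2^{-k+1}}(z)$, and the contained Euclidean balls (radius $\geq\ep^{\zeta/4}2^{-k-1}$) are pairwise disjoint, so comparing Lebesgue areas gives $\#\mcl C_k\lesssim\ep^{-\zeta/2}$ \emph{uniformly in $k$ and $z$}. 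This scale-independence is precisely what your $\mu_h$-mass comparison lacks: $\mu_h$-mass per cell is pinned at $\ep$ while $\mu_h(B_{2r}(z))$ shrinks with $r$, whereas Lebesgue area per cell scales like $r^2$ in lockstep with the Lebesgue area of $B_{2r}(z)$. Once the per-scale count is $\ep^{-\zeta/2}$, summing over the $O(\log\ep^{-1})$ relevant scales gives $\ep^{-\zeta}$, and no union bound over $z$ is needed (the inradius estimate is a single regularity event holding for all cells in $B_\rho$ simultaneously).
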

\begin{proof}
By Lemma~\ref{lem-cell-diam}, there are constants $0 < q < p < \infty$ depending only on $\gamma$ such that with polynomially high probability as $\ep\rta 0$,
\eqb \label{eqn-comp-max-cell-diam}
\ep^p \leq \op{diam}(H_x^\ep) \leq \ep^q ,\quad\forall x\in\mcl V\mcl G^\ep(B_\rho) .
\eqe
By~\eqref{eqn-comp-max-cell-diam} together with~\cite[Proposition 3.4 and Remark 3.9]{ghm-kpz}, it holds with polynomially high probability as $\ep\rta 0$ that
\eqb  \label{eqn-comp-inrad}
\text{$H_x^\ep$ contains a Euclidean ball of radius at least $\ep^{\zeta/4} \op{diam}\left( H_x^\ep \right)$} ,
\quad \forall x\in \mcl V\mcl G^\ep(B_\rho) .
\eqe
Henceforth assume that the estimates~\eqref{eqn-comp-max-cell-diam} and~\eqref{eqn-comp-inrad} hold.
We will show that the event in the lemma statement occurs.

To this end, fix $z\in B_\rho$ and for $k\in\BB N$ let $\mcl C_k$ be the set of $x\in\mcl V\mcl G^\ep$ such that $\op{dist}\left( z , H_x^\ep \right) \leq \op{diam}\left( H_x^\ep \right)$ and $\op{diam}(H_x^\ep) \in [2^{-k - 1}, 2^{-k }]$.
By~\eqref{eqn-comp-max-cell-diam}, $\mcl C_k = \emptyset$ for $k\notin [\log_2 \ep^{-q} , \log_2 \ep^{-p}]_{\BB Z}$.
Therefore, we only need to show that
\eqb \label{eqn-cell-comparable-decomp}
\sum_{k= \lfloor \log_2 \ep^{-q} \rfloor  }^{ \lceil \log_2 \ep^{-p} \rceil } \#\mcl C_k \leq \ep^{-\zeta } .
\eqe

For each $x\in \mcl C_k$, the cell $H_x^\ep$ is contained in $B_{2^{-k+1}}(z)$.
Since the cells $x\in\mcl V\mcl G^\ep$ intersect only along their boundaries, the Euclidean balls of~\eqref{eqn-comp-inrad} for different choices of $x$ are disjoint.
Therefore,
\eqb \label{eqn-cell-comparable-sum}
\pi 2^{-2k+2}
= \op{area} B_{2^{-k+1}}(z)
\geq  \pi \sum_{x\in \mcl C_k} \left[ \ep^{\zeta/4} \op{diam}\left( H_x^\ep \right)\right]^2
\geq \pi \ep^{\zeta/2} 2^{-2k-2} \#\mcl C_k ,
\eqe
where in the last inequality we used that  $\op{diam}(H_x^\ep)^2  \geq 2^{-k-1}$ for each $x\in\mcl C_k$.
By re-arranging~\eqref{eqn-cell-comparable-sum} we get $\#\mcl C_k \leq 16 \ep^{- \zeta/2}$.
Summing this estimate over the logarithmically many values of $k\in [\log_2 \ep^{-q} , \log_2 \ep^{-p}]_{\BB Z}$ now shows that~\eqref{eqn-cell-comparable-decomp} holds provided $\ep$ is at most some $\zeta$-dependent constant.
\end{proof}

\begin{proof}[Proof of Lemma~\ref{lem-eff-res-diag}]
\noindent\textit{Step 1: regularity events.}
We first define a high-probability regularity event which we will truncate on throughout the proof.
After this step, all of the arguments are entirely deterministic.
As explained at the beginning of the proof of Lemma~\ref{lem-cell-comparable}, it holds with polynomially high probability as $\ep\rta 0$ that the bounds~\eqref{eqn-comp-max-cell-diam} and~\eqref{eqn-comp-inrad} both hold.
By~\cite[Lemma 2.6]{gms-harmonic}, it holds with superpolynomially high probability as $\ep\rta 0$ that
\eqb \label{eqn-diag-max-deg}
\max_{x\in\mcl V\mcl G^\ep(B_\rho)} \op{deg}^\ep(x) \leq \ep^{-\zeta} ,\quad\forall x \in \mcl V\mcl G^\ep(B_\rho) .
\eqe
By Lemma~\ref{lem-cell-comparable}, it holds with polynomially high probability as $\ep\rta 0$ that
\eqb \label{eqn-diag-comparable}
\# \left\{x\in\mcl V\mcl G^\ep(B_\rho) : \op{dist}\left(z,H_x^\ep \right) \leq \op{diam}\left( H_x^\ep \right) \right\} \leq \ep^{-\zeta} ,\quad\forall z\in B_\rho .
\eqe
Henceforth assume that~\eqref{eqn-comp-max-cell-diam}, \eqref{eqn-comp-inrad}, \eqref{eqn-diag-max-deg}, and~\eqref{eqn-diag-comparable} all hold, which happens with polynomially high probability as $\ep\rta 0$.
\medskip

\noindent\textit{Step 2: bounding the effective resistance in terms of a unit flow.}
For $x\in\mcl V\mcl G^\ep$, we let $\frk u^\ep(x)$ be a point of the cell $H_x^\ep$ chosen in such a way that $\frk u^\ep(x)$ lies at maximal Euclidean distance from $\bdy H_x^\ep$, so that by~\eqref{eqn-comp-inrad},
\eqb \label{eqn-inrad-pt}
\op{dist}\left( \frk u^\ep(x) , \bdy H_x^\ep \right)  \geq       \ep^{\zeta/4} \op{diam}\left( H_x^\ep \right)  .
\eqe
For $x\in\mcl V\mcl G^\ep(B_\rho)$, let $\theta_x^\ep$ be the flow defined in the proof of Lemma~\ref{lem-eff-res-upper} with $z = \frk u^\ep(x)$.
As explained in that lemma, $\theta_x^\ep$ is a unit flow from $x$ to $ \mcl V\mcl G^\ep(\bdy B_\rho)$ and hence by Thomson's principle~\eqref{eqn-thomson},
\eqb \label{eqn-diag-thomson}
\op{gr}_{B_\rho}^\ep(x,x)
=  \mcl R^\ep\left(x \leftrightarrow   \mcl V\mcl G^\ep(\bdy B_\rho) \right)
\leq \sum_{e\in \mcl E\mcl G^\ep(B_\rho)} [\theta^\ep(e)]^2 .
\eqe
Furthermore, by~\eqref{eqn-flow-prob}, for each oriented edge $(y,y')$ of $\mcl G^\ep(B_\rho)$ with $y\not=x$, we have
\eqb \label{eqn-diag-flow-prob}
|\theta_x^\ep(y,y')|
\preceq  \frac{ \op{diam} \left( H_y^\ep \right)}{ \op{dist}\left( \frk u^\ep(x) , H_y^\ep \right) },
\eqe
with a universal implicit constant.

We want to bound the sum on the right side of~\eqref{eqn-diag-thomson} using~\eqref{eqn-diag-flow-prob}.
We define the set of ``good" vertices $\mcl S_x^\ep$ to be the set of $y\in\mcl V\mcl G^\ep(B_\rho)$ such that $\op{dist}\left(   \frk u^\ep(x) , H_y^\ep \right) > \op{diam}\left( H_y^\ep \right)$.
We write $\mcl E\mcl S_x^\ep$ to be the set of edges $e\in\mcl E\mcl G^\ep(B_\rho)$ which have at least one endpoint in $\mcl S_x^\ep$.
We will bound the sum of $[\theta_x^\ep(e)]^2$ over the set of ``bad" edges $\mcl E\mcl G^\ep(B_\rho)\setminus \mcl E\mcl S_x^\ep$ and the set of ``good" edges $   \mcl E\mcl S_x^\ep$ separately.
The idea is that~\eqref{eqn-diag-flow-prob} gives a useful bound for the sum over good edges, whereas~\eqref{eqn-diag-comparable} tells us that there cannot be too many bad edges.
\medskip

\noindent\textit{Step 3: bounding the sum over the bad edges.}
By~\eqref{eqn-diag-comparable} with $z = \frk u^\ep(x)$, the total number of ``bad" vertices satisfies $\#(\mcl V\mcl G^\ep(B_\rho) \setminus \mcl S_x^\ep ) \leq \ep^{-\zeta}$.
By combining this with~\eqref{eqn-diag-max-deg}, we have $\#(\mcl E\mcl G^\ep(B_\rho)\setminus \mcl E\mcl S_x^\ep) \leq \ep^{-2\zeta}$.
By the definition of $\theta_x^\ep$, we have $|\theta_x^\ep(e)| \leq 1$ for each $e\in\mcl E\mcl G^\ep(B_\rho)$.
Therefore,
\eqb \label{eqn-diag-bad}
\sum_{e\in \mcl E\mcl G^\ep(B_\rho)\setminus \mcl E\mcl S_x^\ep} [\theta_x^\ep(e)]^2 \leq \ep^{-2\zeta} .
\eqe
\medskip

\noindent\textit{Step 4: bounding the sum over the good edges.}
We eventually want to compare $\sum_{e\in \mcl E\mcl S_x^\ep} [\theta_x^\ep(e)]^2$ to a deterministic integral using~\eqref{eqn-diag-flow-prob}.
We start by summing~\eqref{eqn-diag-flow-prob} over all edges in $\mcl E\mcl S_x^\ep$ to get
\allb \label{eqn-eff-res-diag-sum}
\sum_{e\in\mcl E\mcl S_x^\ep} [\theta_x^\ep(e)]^2
&\leq   \sum_{y \in \mcl S_x^\ep} \frac{ \op{diam} \left( H_y^\ep \right)^2 \op{deg}^\ep(y) }{ \op{dist}\left(  \frk u^\ep(x) , H_y^\ep\right)^2 }  \notag  \quad \text{by~\eqref{eqn-diag-flow-prob}} \\
&\leq  \ep^{-  3\zeta  /2 }  \sum_{y \in \mcl S_x^\ep} \frac{ \op{dist}\left( \frk u^\ep(y) , \bdy H_y^\ep) \right)^2 }{ \op{dist}\left(  \frk u^\ep(x) , H_y^\ep \right)^2   } \quad \text{by \eqref{eqn-comp-inrad} and~\eqref{eqn-diag-max-deg}} .
\alle

If $y\in \mcl S_x^\ep$, then by definition $\op{dist}\left(   \frk u^\ep(x) , H_y^\ep \right) > \op{diam}\left( H_y^\ep \right)$, so for each $w\in H_y^\ep$,
\eqb \label{eqn-center-to-generic}
|w - \frk u^\ep(x)|
\leq  \op{dist}\left(   \frk u^\ep(x) , H_y^\ep \right)  +  \op{diam}\left( H_y^\ep \right)
\leq 2 \op{dist}\left(   \frk u^\ep(x) , H_y^\ep \right) .
\eqe
By applying~\eqref{eqn-center-to-generic} to lower-bound the denominator, then averaging over all $w\in H_y^\ep$, we see that each term in the sum on the right side of~\eqref{eqn-eff-res-diag-sum} satisfies
\allb \label{eqn-eff-res-diag-term}
\frac{ \op{dist}\left( \frk u^\ep(y) , \bdy H_y^\ep) \right)^2 }{ \op{dist}\left(  \frk u^\ep(x) , H_y^\ep \right)^2   }
\leq 4 \int_{H_y^\ep} \frac{ \op{dist}\left( \frk u^\ep(y) , \bdy H_y^\ep) \right)^2 }{ \op{area}(H_y^\ep) |w - \frk u^\ep(x)|^2    } \,dw
\leq  \frac{4}{\pi} \int_{H_y^\ep} \frac{ 1 }{   |w - \frk u^\ep(x)|^2    } \,dw
\alle
where in the last inequality we used that $\op{area}(H_y^\ep)  \geq \pi \op{dist}\left( \frk u^\ep(y) , \bdy H_y^\ep) \right)^2 $.
By plugging the inequality~\eqref{eqn-eff-res-diag-term} into the right side of~\eqref{eqn-eff-res-diag-sum}, we get
\allb \label{eqn-eff-res-diag-int}
\sum_{e\in\mcl E\mcl S_x^\ep} [\theta_x^\ep(e)]^2
\preceq  \ep^{-  3\zeta  /2 } \int_{\BB D\setminus H_x^\ep} \frac{1}{ |w- \frk u^\ep(x)|^2    } \,dw   ,
\alle
with a universal implicit constant.

By the lower bound in~\eqref{eqn-comp-max-cell-diam} and the definition of $\frk u^\ep(x)$,
\eqb \label{eqn-min-inrad}
\op{dist}\left( \frk u^\ep(x) , \bdy H_x^\ep) \right) \geq \ep^{p+\zeta/4}
\quad \text{which implies} \quad
B_{\ep^{p+\zeta/4}}(\frk u^\ep(x)) \subset H_x^\ep
,\quad\forall x\in \mcl V\mcl G^\ep(B_\rho) .
\eqe
Combining~\eqref{eqn-eff-res-diag-int} and~\eqref{eqn-min-inrad} shows that
\eqb \label{eqn-diag-good}
\sum_{e\in\mcl E\mcl S_x^\ep} [\theta_x^\ep(e)]^2
\preceq  \ep^{-  3\zeta  /2 } \int_{\BB D\setminus B_{\ep^{p+\zeta/4}}(\frk u^\ep(x) ) } \frac{1}{ |w- \frk u^\ep(x)|^2    } \,dw
\preceq  \ep^{-  3\zeta  /2 }  \log \ep^{-1}    ,
\eqe
with the implicit constant depending only on $\rho , \gamma$.
\medskip

\noindent\textit{Step 5: conclusion.}
Plugging~\eqref{eqn-diag-bad} and~\eqref{eqn-diag-good} into~\eqref{eqn-diag-thomson} gives $\op{gr}_{B_\rho}^\ep(x,x) \preceq \ep^{-3\zeta/2}  \log \ep^{-1} $ with the implicit constant depending only on $\gamma$.
Since $\op{gr}_{B_\rho}^\ep(x,\cdot)$ is discrete harmonic on $\mcl V\mcl G^\ep(B_\rho) \setminus (\mcl V\mcl G^\ep(\bdy B_\rho) \cup \{x\})$ and vanishes on $\mcl V\mcl G^\ep(\bdy B_\rho)$, the maximum principle implies that for any $x,y\in \mcl V\mcl G^\ep(x)$, we have $\op{gr}_{B_\rho}^\ep(x,y) \leq \op{gr}_{B_\rho}^\ep(x,x) \preceq \ep^{-3\zeta/2}  \log \ep^{-1}   $.
Since $\op{Gr}_{B_\rho}^\ep(x,y) = \op{deg}^\ep(x) \op{gr}^\ep_{B_\rho}(x,y)$, we see from this and and~\eqref{eqn-diag-max-deg} that $\op{Gr}_{B_\rho}^\ep(x,y) \preceq \ep^{-5\zeta/2} \log\ep^{-1}$ for all $x,y\in\mcl V\mcl G^\ep(B_\rho)$.
Since $\zeta \in (0,1)$ is arbitrary, this concludes the proof.
\end{proof}

\subsection{Proof of Proposition~\ref{prop-exit-moment}}
\label{sec-exit-moment}

The following estimate is the main input in the proof of Proposition~\ref{prop-exit-moment}.

\begin{lem} \label{lem-gr-sum}
There are constants $\alpha=\alpha(\gamma) > 0$ and $C = C(\rho,\gamma)  >0$ such that with polynomially high probability as $\ep\rta 0$, it holds simultaneously for every Borel set $D\subset B_\rho$ and every $x\in \mcl V\mcl G^\ep(D)$ that
\eqb \label{eqn-gr-sum}
  \sum_{y \in \mcl V\mcl G^\ep(D)} \op{Gr}_D^\ep(x , y)
\leq C \ep^{-1} \int_{B_{\ep^\alpha}(D) } \log\left(\frac{1}{| \eta(x) - w|} \right) + 1 \, d\mu_h(w) + \ep^{-1+\alpha}  .
\eqe
\end{lem}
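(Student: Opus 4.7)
The plan is to split the sum by the Euclidean distance $|\eta(x)-\eta(y)|$, applying the crude bound from Lemma~\ref{lem-eff-res-diag} in the ``near'' range and the sharper bound from Lemma~\ref{lem-gr-bound} in the ``far'' range, and then converting the resulting discrete sum into the stated $\mu_h$-integral using $\mu_h(H_y^\ep)=\ep$.

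Since killing the walk earlier only decreases the Green's function, $\op{Gr}_D^\ep(x,y)\leq \op{Gr}_{B_\rho}^\ep(x,y)=\op{deg}^\ep(y)\,\op{gr}_{B_\rho}^\ep(x,y)$. Let $\beta=\beta(\gamma)>0$ denote the exponent of Lemma~\ref{lem-gr-bound}, chosen small enough that Lemma~\ref{lem-cell-diam} ensures $\op{diam}(H_y^\ep)\leq \ep^\beta/2$ for every $y\in\mcl V\mcl G^\ep(B_\rho)$ with polynomially high probability. Split the sum over $y\in \mcl V\mcl G^\ep(D)$ by whether $|\eta(x)-\eta(y)|<\ep^\beta$ (near) or $\geq\ep^\beta$ (far). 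For the near terms, combine the uniform bound $\op{Gr}_{B_\rho}^\ep(x,y)\leq\ep^{-\zeta}$ from Lemma~\ref{lem-eff-res-diag} with the elementary count $\#\{y\in\mcl V\mcl G^\ep(D):|\eta(x)-\eta(y)|<\ep^\beta\}\leq \ep^{-1}\mu_h(B_{2\ep^\beta}(\eta(x)))$; a standard $\gamma$-LQG ball-mass estimate (as in Lemma~\ref{lem-ball-mass}) gives $\mu_h(B_{2\ep^\beta}(\eta(x)))\leq \ep^{\beta q}$ for some $q=q(\gamma)>0$ with polynomially high probability, so the near contribution is at most $\ep^{-1+\beta q-\zeta}\leq \ep^{-1+\alpha}$ once $\zeta$ is chosen small enough in terms of $\beta$ and $q$.

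For the far terms, Lemma~\ref{lem-gr-bound} gives $\op{gr}_{B_\rho}^\ep(x,y)\leq C(\log(1/|\eta(x)-\eta(y)|)+1)$, and the condition $\op{diam}(H_y^\ep)\leq \ep^\beta/2\leq |\eta(x)-\eta(y)|/2$ forces $|\eta(x)-w|\asymp |\eta(x)-\eta(y)|$ for every $w\in H_y^\ep$, whence
\[
\log\frac{1}{|\eta(x)-\eta(y)|}+1 \;\leq\; C\,\ep^{-1}\!\int_{H_y^\ep}\!\Bigl(\log\frac{1}{|\eta(x)-w|}+1\Bigr)d\mu_h(w).
\]
Multiplying by $\op{deg}^\ep(y)$ and summing over the far $y$ yields a weighted integral over $\bigcup_y H_y^\ep\subset B_{\ep^\alpha}(D)$ of the form $\ep^{-1}\int(\log+1)\,d\nu^\ep$, where $\nu^\ep$ has density $\op{deg}^\ep(y)$ with respect to $\mu_h$ on each cell $H_y^\ep$. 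The principal obstacle is to absorb this degree weighting into a deterministic constant: pointwise $\op{deg}^\ep$ can be as large as $\ep^{-\zeta}$ (by \cite[Lemma~2.6]{gms-harmonic}), which is too lossy, but the triangulation property of $\mcl G^\ep$ yields the averaged bound $\sum_{y\in\mcl V\mcl G^\ep(A)}\op{deg}^\ep(y)\leq 6|\mcl V\mcl G^\ep(A)|+O(|\bdy\mcl V\mcl G^\ep(A)|)$ for any finite $A$. Combining this with a dyadic decomposition of vertices by degree scale and the LQG area estimates of \cite[Lemma~3.1]{gms-harmonic} allows one to bound $\nu^\ep$-integrals of slowly varying nonnegative functions by a deterministic constant times the corresponding $\mu_h$-integrals, yielding the stated bound with an $\ep^{-1+\alpha}$ error absorbing both the near contribution and the boundary correction.
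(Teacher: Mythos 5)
Your decomposition of the sum into a near range $|\eta(x)-\eta(y)|<\ep^\beta$ and a far range, with the crude bound of Lemma~\ref{lem-eff-res-diag} on the former and the logarithmic bound of Lemma~\ref{lem-gr-bound} on the latter, is exactly the structure of the paper's proof, and the near-term argument (count $\times$ uniform bound, with the count controlled by cell diameters and $\mu_h$-ball masses) is the same. The far-range argument has a genuine gap, however, in exactly the place you flag as ``the principal obstacle.''

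The problem is the conversion of $\sum_y \op{deg}^\ep(y)\,f_x(\eta(y))$ into a constant times $\ep^{-1}\int f_x\,d\mu_h$. Your proposed route --- an averaged Euler-formula degree bound plus ``a dyadic decomposition of vertices by degree scale'' --- does not obviously work. The averaged bound $\sum_{y\in\mcl V\mcl G^\ep(A)}\op{deg}^\ep(y)\leq 6\#\mcl V\mcl G^\ep(A)+O(\#\bdy\mcl V\mcl G^\ep(A))$ is only useful if one can choose the cells $A$ of the decomposition so that (i) $f_x$ is essentially constant on each $A$ and (ii) the boundary term $\#\bdy\mcl V\mcl G^\ep(A)$ is negligible relative to $\#\mcl V\mcl G^\ep(A)$. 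A decomposition by degree scale fails (ii) --- the set of vertices of degree in $[2^k,2^{k+1})$ has no a priori geometric regularity, so its boundary within $\mcl G^\ep$ could be of the same order as the set itself --- and it does not address the possibility that high-degree vertices cluster where $f_x$ is large. The paper resolves this in Lemma~\ref{lem-cell-sum-deg} by partitioning $\ep\BB Z$ into blocks of $n_\ep\approx\ep^{-1/2}$ consecutive vertices, i.e.\ into cells of the coarser map $\mcl G^{n_\ep\ep}$. Each block is a triangulation with boundary containing $n_\ep$ vertices; the perimeter of each block is shown to be $O(\ep^{-1/4}\,\mathrm{polylog})$ with superpolynomially high probability via a Brownian motion running-minimum estimate (Lemma~\ref{lem-cell-bdy-count}), so the boundary term is negligible; and the Lipschitz hypothesis on $f$ together with the cell-diameter bound makes $f_x$ approximately constant on each block. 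None of this appears in your sketch. To close the gap you should either invoke Lemma~\ref{lem-cell-sum-deg} directly (your $f_x$, capped at scale $\ep^\beta$ as in the paper, is bounded by $O(\log\ep^{-1})$ and Lipschitz with constant $O(\ep^{-\beta})$, so the hypotheses of that lemma are met for small enough $\beta$), or reproduce its block decomposition rather than the degree-scale one.

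A minor additional point: your per-cell estimate $\log(1/|\eta(x)-\eta(y)|)+1\leq C\ep^{-1}\int_{H_y^\ep}(\log(1/|\eta(x)-w|)+1)\,d\mu_h(w)$ is correct under the stated diameter condition, but it is not needed once Lemma~\ref{lem-cell-sum-deg} is in hand; the paper instead applies that lemma to the cell sum with $w_y^\ep=\eta(y)$ and uses the Lipschitz bound on $f_x$ directly.
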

\begin{proof}
Note that the Green's function is increasing in the domain, so for any $D\subset B_\rho$,
\eqb \label{eqn-green-mono}
\op{Gr}_D^\ep(x,y) \leq \op{Gr}_{B_\rho}^\ep(x,y),\quad\forall x,y\in \mcl V\mcl G^\ep(D) .
\eqe
Hence our upper bounds for $\op{Gr}_{B_\rho}^\ep$ automatically give upper bounds for $\op{Gr}_D^\ep$.

Fix $\beta   ,\zeta  > 0$, to be chosen later in a manner depending only on $\gamma$.
For a Borel set $D\subset B_\rho$ and $x\in\mcl V\mcl G^\ep(D)$, we break the sum on the left side of~\eqref{eqn-gr-sum} into two pieces:
\alb
Y_{D,x}^{\op{near}} &:= \left\{ y\in \mcl V\mcl G^\ep(D) : |\eta(x ) -\eta(y)| <  \ep^\beta \right\} \notag \\
Y_{D,x}^{\op{typical}} &:= \mcl V\mcl G^\ep(D) \setminus  Y_{D,x}^{\op{near}}  .
\ale

Let us first bound the sum over $Y_{D,x}^{\op{near}}$.
By Lemma~\ref{lem-eff-res-diag}, it holds with polynomially high probability as $\ep\rta 0$ that
\eqb \label{eqn-max-gr}
\max_{x,y\in \mcl V\mcl G^\ep(B_\rho)} \op{Gr}_{B_\rho}^\ep(x,y) \leq \ep^{-\zeta}   .
\eqe
Since the cells of $\mcl G^\ep$ have $\mu_h$-mass $\ep$, it is easily seen from Lemmas~\ref{lem-ball-mass} and~\ref{lem-cell-diam} that there exists $\alpha_0 = \alpha_0( \beta , \gamma) > 0$ such that with polynomially high probability as $\ep\rta 0$, we have $\sup_{z\in B_\rho} \#\mcl V\mcl G^\ep(B_{\ep^\beta}(z)) \leq \ep^{-1+\alpha_0}$, which implies that
\eqb \label{eqn-near-count}
\# Y_{D,x}^{\op{near}}  \leq \ep^{-1+\alpha_0} ,\quad \forall D \subset B_\rho , \quad \forall x \in \mcl V\mcl G^\ep(B_\rho) .
\eqe
By~\eqref{eqn-max-gr} and~\eqref{eqn-near-count} we get that with polynomially high probability as $\ep\rta 0$,
\eqb \label{eqn-gr-sum-near}
\sum_{y \in Y_{D,x}^{\op{near}} } \op{Gr}_{B_\rho}(x , y) \leq \ep^{- 1+\alpha_0-\zeta } ,  \quad\forall D \subset B_\rho , \quad \forall x\in\mcl V\mcl G^\ep(D) .
\eqe
Henceforth assume that $\zeta  < \alpha_0$.

We now turn our attention to $Y_{D,x}^{\op{typical}}$.
By Lemma~\ref{lem-gr-bound}, there exists $C_0 = C_0(\rho,\gamma) > 0$ such that if $\beta  = \beta(\gamma ) >0$ is chosen to be sufficiently small, then with polynomially high probability as $\ep\rta 0$,
\allb \label{eqn-gr-bound}
 \op{gr}_{B_\rho}^\ep(x,y)
 \leq C_0  \log \left( \frac{1}{|\eta(x) - \eta(y)|}  \right)  +C_0 ,
 \quad\forall x,y\in \mcl V\mcl G^\ep( B_\rho ) \quad \text{with} \quad |\eta(x) - \eta(y)| \geq \ep^\beta .
\alle
For $y\in Y_{D,x}^{\op{typical}}$, we have $|\eta(x) - \eta(y)| \geq \ep^\beta$ by definition, so~\eqref{eqn-gr-bound} implies that
\eqb \label{eqn-gr-upper-typical}
  \op{gr}_{B_\rho}^\ep(x,y)  \leq   f_x(\eta(y) ) ,\quad
  \forall y\in Y_{D,x}^{\op{typical}} ,
   \quad \text{where} \quad
  f_x(w) :=  \frac{C_0}{ \log\left( |\eta(x) - w|^{-1} \wedge \ep^{-\beta} \right) } + C_0 .
\eqe

The absolute value of $f_x$ is bounded above by $C\log(\ep^{-\beta})$ on $B_\rho$.
Furthermore, it is easily seen by differentiating that $f_x$ is Lipschitz continuous on $B_\rho$ with Lipschitz constant at most a $C$-dependent constant times $\ep^{-\beta}$.
Therefore, Lemma~\ref{lem-cell-sum-deg} implies that there exists $\alpha_1 = \alpha_1(\gamma)  >0$ such that if $\beta = \beta(\gamma)  >0$ is chosen to be sufficiently small, then with polynomially high probability as $\ep\rta 0$, it holds simultaneously for every Borel set $D\subset B_\rho$ and every $x\in \mcl V\mcl G^\ep(D)$ that
\allb \label{eqn-gr-sum-typical}
\sum_{y \in Y_{D,x}^{\op{typical}}} \op{Gr}_{B_\rho}^\ep(x , y)
&\leq \sum_{y\in \mcl V\mcl G^\ep(D)} \op{deg}^\ep(y) f_x(\eta(y)) \quad \text{by~\eqref{eqn-gr-upper-typical} and~\eqref{eqn-green-deg}} \notag \\
&\leq 6 \ep^{-1} \int_{B_{\ep^{\alpha_1}}(D) } f_x(w) \,d\mu_h( w) + \ep^{-1+\alpha_1} \quad \text{by Lemma~\ref{lem-cell-sum-deg}} \notag \\
&\leq 6 C_0 \ep^{-1} \int_{B_{\ep^{\alpha_1}}(D) } \log\left(\frac{1}{|\eta(x) - w|} \right) + 1 \,d\mu_h( w) + \ep^{-1+\alpha_1} \quad \text{by~\eqref{eqn-gr-upper-typical}}.
\alle

Combining~\eqref{eqn-gr-sum-near} and~\eqref{eqn-gr-sum-typical} and recalling~\eqref{eqn-green-mono} gives the statement of the lemma with $C = 6C_0$ and $\alpha := \min\{\alpha_0-\zeta,\alpha_1\}$.
\end{proof}

We now prove an elementary lemma for general graphs which will allow us to deduce moment bounds for exit times of the random walk on $\mcl G^\ep$ from Lemma~\ref{lem-gr-sum}.

\begin{lem} \label{lem-second-moment}
Let $G$ be a graph, let $B\subset\mcl V(G)$, and let $x\in B$.
Let $\tau_B$ be the exit time from $B$ of the simple random walk on $G$ started from $x$.
Then for each $N \in \BB N$,
\eqb \label{eqn-second-moment}
\BB E\left[  \tau_B^N \right] \leq   N!  \sum_{y_1,\dots,y_N \in B} \op{Gr}_B(x,y_1) \prod_{n=2}^N \op{Gr}_B(y_{n-1} , y_n ).
\eqe
\end{lem}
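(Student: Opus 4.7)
The plan is to write $\tau_B$ as a sum of indicators over time, expand $\tau_B^N$ as an $N$-fold sum, reduce to non-decreasing tuples by sorting (this is the source of the $N!$), and then apply the Markov property at each time $t_i$ to recognize the resulting expected value as a product of Green's functions.

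First, since $\tau_B = \sum_{t=0}^\infty \mathbf{1}\{t < \tau_B\}$, one has
\[
\tau_B^N = \sum_{t_1,\dots,t_N \ge 0} \mathbf{1}\{t_i < \tau_B \text{ for every } i\}.
\]
Each such ordered tuple sorts to a unique non-decreasing tuple, and every non-decreasing tuple arises in this way from at most $N!$ ordered tuples, so
\[
\tau_B^N \le N! \sum_{0 \le t_1 \le \cdots \le t_N < \tau_B} 1 = N! \sum_{y_1,\dots,y_N \in B}\ \sum_{0 \le t_1 \le \cdots \le t_N} \mathbf{1}\{X_{t_i} = y_i \text{ for all } i,\ t_N < \tau_B\},
\]
where I have partitioned the sum according to the positions $y_i := X_{t_i}$, which necessarily lie in $B$ on the event $\{t_N < \tau_B\}$.

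Next, I would take $\BB E_x$ and apply the Markov property at each of the deterministic times $t_1,\dots,t_{N-1}$. The key observation is that $\{t_N < \tau_B\}$ is the event that $X_s \in B$ for all $s \in [0,t_N]$, which factors as the conjunction over $i=1,\dots,N$ of the events ``$X$ stays in $B$ throughout $[t_{i-1}, t_i]$'' (with $t_0 := 0$). Conditional on $X_{t_{i-1}} = y_{i-1}$, the post-$t_{i-1}$ walk is an independent simple random walk from $y_{i-1}$, so iterated application of the Markov property gives
\[
\BB P_x\bigl[X_{t_1}{=}y_1,\dots,X_{t_N}{=}y_N,\ t_N < \tau_B\bigr] = \BB P_x\bigl[X_{t_1}{=}y_1,\ t_1 < \tau_B\bigr]\prod_{i=2}^N \BB P_{y_{i-1}}\bigl[X_{t_i - t_{i-1}}{=}y_i,\ t_i - t_{i-1} < \tau_B\bigr].
\]
Finally, the change of variables $s_1 = t_1$ and $s_i = t_i - t_{i-1}$ for $i \ge 2$ turns the sum over $0 \le t_1 \le \cdots \le t_N$ into a product of independent sums $\sum_{s_i \ge 0}$, each of which equals $\op{Gr}_B(y_{i-1}, y_i)$ (respectively $\op{Gr}_B(x,y_1)$ for $i=1$) by Definition~\ref{def-green}. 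Assembling these pieces yields~\eqref{eqn-second-moment}.

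The argument is essentially bookkeeping; the only step that requires real care is the Markov-property factorization, and this is precisely why I first pass to non-decreasing tuples: the resulting intervals $[t_{i-1}, t_i]$ partition $[0, t_N]$ and the constraint $\{t_N < \tau_B\}$ factors cleanly over them, with no inclusion-exclusion correction needed.
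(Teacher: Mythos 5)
Your proof is correct and follows essentially the same route as the paper: write $\tau_B$ as a sum of time-indicators, expand $\tau_B^N$, bound by $N!$ times the sum over non-decreasing time-tuples, factor via the Markov property at deterministic times, and sum out the time increments to produce the Green's functions. The only cosmetic difference is that you introduce the spatial variables $y_i$ after sorting the times while the paper introduces them at the outset.
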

\begin{proof}
We have
\eqbn
\tau_B =  \sum_{y \in B} \sum_{j=0}^\infty \BB 1_{(j  < \tau_B , \, X_j = y)}  .
\eqen
By expanding out the sum, we get
\allb \label{eqn-second-moment-expand}
\BB E\left[ \tau_B^N \right]
&= \sum_{y_1,\dots,y_N \in B}  \sum_{ j_1 , \dots , j_N \in \BB N_0}  \BB P\left[ j_N < \tau_B ,\, X_{j_1} = y_1 , \dots , X_{j_N} = y_N \right] \notag\\
&\leq  N! \sum_{y_1,\dots,y_N \in B}  \sum_{0 \leq j_1 \leq \dots \leq j_N <\infty}  \BB P\left[ j_N < \tau_B ,\, X_{j_1} = y_1 , \dots , X_{j_N} = y_N \right] .
\alle
For $y \in B$, let $X^y$ denote the simple random walk on $G$ started from $y$ and let $\tau_B^y$ be its exit time from $B$ (so that $X = X^x$ and $\tau_B = \tau_B^x$).
By $N$ applications of the strong Markov property, we obtain
\eqb \label{eqn-second-moment-markov}
 \BB P\left[ j_N < \tau_B ,\, X_{j_1} = y_1 , \dots , X_{j_N} = y_N \right]
= \BB P\left[ j_1 < \tau_B ,\,  X_{j_1} = y_1 \right] \prod_{n=2}^N \BB P\left[ j_n - j_{n-1} < \tau_B^{y_{n-1}} , X^{y_{n-1}}_{j_n-j_{n-1}} = y_j \right] .
\eqe
Note that we allow $j_n = j_{n-1}$ for some $n\in[1,N]_{\BB Z}$, in which case the $n$th factor in the product on the right side of~\eqref{eqn-second-moment-markov} is either zero or one according to whether $y_n = y_{n-1}$.
By~\eqref{eqn-second-moment-markov}, the right side of~\eqref{eqn-second-moment-expand} equals
\allb
N! \sum_{y_1,\dots,y_N \in B}  \sum_{0 \leq j_1 \leq \dots \leq j_N <\infty}   \BB P\left[ j_1 < \tau_B ,\,  X_{j_1} = y_1 \right] \prod_{n=2}^N \BB P\left[ j_n - j_{n-1} < \tau_B^{y_{n-1}} , X^{y_{n-1}}_{j_n-j_{n-1}} = y_n \right]  .
\alle
Summing over $j_N$, then over $j_{N-1}$, $\dots$, then over $j_1$ gives~\eqref{eqn-second-moment}.
\end{proof}

\begin{proof}[Proof of Proposition~\ref{prop-exit-moment}]
Let $\alpha$ and $C$ be as in Lemma~\ref{lem-gr-sum}.
That lemma tells us that with polynomially high probability as $\ep\rta 0$, it holds simultaneously for every Borel set $D\subset B_\rho$ with $\mu_h(D) \geq \ep^\alpha$ and every $x\in\mcl V\mcl G^\ep(B_\rho)$ that
\allb \label{eqn-use-gr-sum}
\sum_{y \in \mcl V\mcl G^\ep(D)} \op{Gr}_D^\ep(x , y)
&\leq C \ep^{-1} \int_{B_{\ep^\alpha}(D) } \log\left(\frac{1}{| \eta(x) - w|} \right) + 1 \, d\mu_h(w) + \ep^{-1+\alpha} \notag\\
&\leq (C+1) \ep^{-1} \int_{B_{\ep^\alpha}(D) }  \log\left(\frac{1}{| \eta(x) - w|} \right) + 1 \, d\mu_h(w) .
\alle
Note that we have used the hypothesis that $\mu_h(D)\geq \ep^\alpha$ to absorb the $\ep^{-1+\alpha}$ error into the first term.
By Lemma~\ref{lem-second-moment}, for each such $D$ and $x$,
\eqb \label{eqn-use-second-moment}
\ol{\BB E}_x^\ep \left[  (\tau_D^\ep)^N  \right]
\leq   N! \sum_{y_1,\dots,y_N \in \mcl V\mcl G^\ep(D) } \op{Gr}_D^\ep(x,y_1) \prod_{n=2}^N \op{Gr}_D^\ep(y_{n-1} , y_n ).
\eqe
The estimate~\eqref{eqn-exit-moment} (with $C+1$ in place of $C$) follows by applying~\eqref{eqn-use-gr-sum} to bound the sum over $y_N$, then the sum over $y_{N-1}$, $\dots$, then the sum over $y_1$ on the right side of~\eqref{eqn-use-second-moment}.
\end{proof}

\section{Tightness}
\label{sec-tight}

For $z\in \BB C$, let $X^{z,\ep} : \BB N_0 \rta \mcl V\mcl G^\ep $ denote the simple random walk on $ \mcl G^\ep$ started from the vertex $x_z^\ep \in\mcl V\mcl G^\ep$.
Let $\wh X^{z,\ep} : [0,\infty) \rta \BB C$ be the process $t \mapsto \eta(X_{t/\ep }^{z,\ep})$, extended from $\ep\BB N_0$ to $[0,\infty)$ by piecewise linear interpolation.

We want to establish a tightness result for the random functions
\eqb \label{eqn-tight-laws}
P^\ep : z\mapsto   \left\{\text{Conditional law of $t\mapsto \wh X^{z,\ep}_t$ given $(h,\eta)$} \right\}
\eqe
from $\BB C$ to the space of probability measures on continuous paths $[0,\infty)\rta \BB C$.
To state our tightness result we need to specify metrics on various spaces.
\begin{itemize}
\item The space $C([0,\infty) , \BB C)$ of continuous paths $X : [0,\infty)\rta \ol B_\rho$ is equipped with the local uniform metric
\eqb
\BB d_{\op{Unif}}(X,X') = \int_0^\infty e^{-T} \wedge\left\{\sup_{t\in [0,T]} |X_t - X_t'| \right\} \, dT,
\eqe
 so that $X^n \rta X$ if and only if $X^n\rta X$ uniformly on each compact subset of $[0,\infty)$.
\item The space $\op{Prob}\left( C([0,\infty) , \BB C ) \right)$ of probability measures on $C([0,\infty) , \BB C )$ is equipped with the Prokhorov metric $\BB d_{\op{Prok}}$ induced by the metric $\BB d_{\op{Unif}}$ on $C([0,\infty) , \BB C)$.
\item The space of functions $P : \BB C \rta  \op{Prob}\left( C([0,\infty) , \BB C) \right)$ is equipped with the $L^\infty$ metric
\eqb
\BB d_\infty(P , P') = \sup_{z\in \BB C} \BB d_{\op{Prok}} \left( P_z , P'_z \right) ,
\eqe
so that $P^n\rta P$ if and only if $P^n_z \rta P_z$ uniformly on each compact subset of $\BB C$ with respect to the above Prokhorov topology on $\op{Prob}\left( C([0,\infty) ,  \BB C ) \right)$.
\end{itemize}

The main result of this subsection is the following proposition.

\begin{prop} \label{prop-tight}
The laws of the random functions~\eqref{eqn-tight-laws} for $\ep \in (0,1)$ are tight with respect to the topology specified just above.
If $P$ is any subsequential limit in law of these functions, then a.s.\ $P$ is continuous and the following is true.
If $z\in\BB C$ and $\wh X^z$ is a path with the law $P_z$, then a.s.\ $\wh X^z$ does not stay at a single point for a positive interval of time.
\end{prop}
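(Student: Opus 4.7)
The proof splits into two ingredients, after which Arzel\`{a}--Ascoli applied to functions $\BB C \to \op{Prob}(C([0,\infty),\BB C))$ yields tightness in the stated topology: pointwise tightness of the random measures $P^\ep_z$ for each fixed $z$, and equi-continuity of $z \mapsto P^\ep_z$ uniformly on compact subsets of $\BB C$, both holding uniformly in $\ep$. Continuity of every subsequential limit $P$ is then an immediate consequence of the equi-continuity estimate.

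For pointwise tightness, the standard reduction says that tightness of the law of a random probability measure on a Polish space is equivalent to tightness of its expectation; hence it suffices to show that the annealed law of $\wh X^{z,\ep}$ is a tight family in $\op{Prob}(C([0,\infty),\BB C))$. This further splits into a non-escape bound and a modulus of continuity bound. For non-escape, Proposition~\ref{prop-exit-lower} gives $\ol{\BB E}_{x_z^\ep}^\ep[\tau_{B_R(z)}^\ep] \geq c\ep^{-1} \mu_h(B_{cR}(z))$ on a high-probability event, and combining this with the moment bounds of Proposition~\ref{prop-exit-moment} via an $L^N$ Paley--Zygmund inequality (with $N$ large when $\gamma$ is close to $2$) upgrades the first-moment estimate to $\ol{\BB P}_{x_z^\ep}^\ep[\ep\,\tau_{B_R(z)}^\ep > T] \to 1$ as $R \to \infty$, uniformly in $\ep$. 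For the modulus of continuity, I would apply the same Paley--Zygmund idea to exit times from small balls, combined with the strong Markov property and a chaining/Kolmogorov argument, to show that $\sup_{|s-t| \leq \Delta,\, s,t \in [0,T]} |\wh X^{z,\ep}_s - \wh X^{z,\ep}_t| \to 0$ in probability as $\Delta \to 0$, uniformly in $\ep$.

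The hard part is the equi-continuity of $z \mapsto P^\ep_z$. The strategy is to couple the walks started at $x_z^\ep$ and $x_w^\ep$ via Lemma~\ref{lem-disconnect-coupling}: there is a joint law under which the two walks coalesce at their first hitting of a macroscopic target set, provided that one walk disconnects the starting point of the other from the target before reaching it. Taking the target to be $\mcl V\mcl G^\ep(\bdy B_s(z))$ for some intermediate scale $s$ satisfying $|z-w| \ll s \ll 1$ (for instance $s = |z-w|^\sigma$ with $\sigma \in (0,1)$ chosen in terms of $\gamma$), Lemma~\ref{lem-walk-dc-uniform} gives a uniform lower bound on the disconnection probability. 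By the modulus of continuity from the previous step, the coalescence happens before either walk has moved more than a power of $|z-w|$ from its starting position, after which the two walks are identical. This yields $\BB d_{\op{Prok}}(P^\ep_z, P^\ep_w) \leq F(|z-w|)$ with $F(r) \to 0$ as $r \to 0$, uniformly in $\ep$ and in $z,w$ over any compact subset of $\BB C$.

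For the non-degeneracy of $\wh X^z$, suppose for contradiction that with positive probability there exist rationals $a < b$ and a point $y \in \BB C$ such that $\wh X^z_t = y$ for all $t \in [a,b]$. For any $\delta, \delta' > 0$, the event $\{X_t \in B_\delta(y) \text{ for all } t \in [a+\delta', b-\delta']\}$ is an open subset of $C([0,\infty),\BB C)$, so by the portmanteau theorem and the fact that $\wh X^{z,\ep}$ converges in law to $\wh X^z$ along the subsequence, we would have $\liminf_\ep \ol{\BB P}^\ep[\wh X^{z,\ep}_{[a+\delta',\,b-\delta']} \subset B_\delta(y)] > 0$. However, Corollary~\ref{cor-exit-moment-eucl} combined with the strong Markov property at time $a+\delta'$ and Markov's inequality gives $\ol{\BB P}^\ep[\ep\,\tau_{B_{2\delta}(\wh X^{z,\ep}_{a+\delta'})}^\ep \geq b-a-2\delta'] \leq C(2\delta)^{q(\gamma)}/(b-a-2\delta')$ on an event of probability tending to $1$ as $\ep \to 0$. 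Letting $\ep \to 0$ along the subsequence and then $\delta \to 0$ contradicts the preceding lower bound.
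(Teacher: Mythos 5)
Your high-level decomposition (pointwise tightness, equicontinuity of $z\mapsto P^\ep_z$, non-degeneracy) matches the paper's, as do the main tools you invoke: Paley--Zygmund on exit-time moments, the Wilson-algorithm coupling (Lemma~\ref{lem-disconnect-coupling}), and the diagonal/moment bounds of Section~\ref{sec-exit-upper}. The equicontinuity step and the portmanteau argument for non-degeneracy are essentially correct, though the latter appears in the paper in a cleaner quenched form (Proposition~\ref{prop-no-stuck}).

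However, there is a real gap in the pointwise-tightness / modulus-of-continuity step. You say you would apply ``the same Paley--Zygmund idea to exit times from small balls, combined with the strong Markov property and a chaining/Kolmogorov argument.'' The difficulty is that Paley--Zygmund applied to Propositions~\ref{prop-exit-lower} and~\ref{prop-exit-moment} only yields a probability of roughly $\delta^{\ell\gamma^2/2+o(1)}$ that the exit time from a ball of radius $\delta^\ell$ exceeds $\ep^{-1}\delta^{(2+\gamma)^2\ell/2}$ (Lemma~\ref{lem-min-exit-time-pos}) --- a polynomially \emph{small} lower bound, not a bound close to $1$. There is no usable two-sided moment bound on $|\wh X^{z,\ep}_s - \wh X^{z,\ep}_t|$ of Kolmogorov--Chentsov type to chain with. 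The missing ingredient is the observation that, by the already-established convergence of the walk to Brownian motion \emph{modulo time parametrization} (\cite[Theorem 3.4]{gms-tutte}) and the Brownian exit-count estimate in Lemma~\ref{lem-brownian-exit}, the walk must a.s.\ cross roughly $\delta^{-2(\ell-1)}$ annuli of scale $\delta^\ell$ before it first crosses an annulus of scale $\delta$ (Lemma~\ref{lem-walk-increment}). Multiplying the weak Paley--Zygmund bound over these many increments via the strong Markov property then gives a failure probability of order $\bigl(1-\delta^{\ell\gamma^2/2}\bigr)^{\delta^{-2(\ell-1)}}$, which is superpolynomially small once $\ell>4/(4-\gamma^2)$ so that $2(\ell-1)>\ell\gamma^2/2$. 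Without this quadratic-variation-type multiplication step (Lemma~\ref{lem-exit-lower-conc}), the modulus-of-continuity estimate (Proposition~\ref{prop-walk-cont}) does not follow, and your sketch does not close.

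A secondary gap: Propositions~\ref{prop-exit-lower}, \ref{prop-exit-moment}, and Corollary~\ref{cor-exit-moment-eucl} only hold for balls inside $B_\rho$ with $\rho<1$, because the circle-average embedding of the quantum cone is only comparable to a whole-plane GFF on $\BB D$. Your non-escape argument takes $R\to\infty$, which is outside the range where these estimates apply. The paper instead first proves the stopped version (Proposition~\ref{prop-tight-rho}) and then passes to the unstopped statement by the scale-invariance of the $\gamma$-quantum cone (Lemma~\ref{lem-cone-scale}, carried out in Section~\ref{sec-tight-proof}); some such scaling step is needed to complete your argument.
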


Fix $\rho \in (0,1)$.
Since most of our estimates only apply for cells of $\mcl G^\ep$ which intersect $B_\rho$, we will first prove a tightness statement when we stop our walks upon exiting $B_\rho$.
For $z\in\BB C$ and $\ep > 0$, let $\wh\tau^{z,\ep}$ be the first time that $\wh X^{z,\ep}$ hits a point $\eta(x) \in \eta(\ep\BB Z)$ whose corresponding cell $\eta([x-\ep,x])$ is disjoint from $B_\rho$.
We define
\eqb \label{eqn-tight-laws-rho}
P_{\rho,\cdot}^\ep  : z\mapsto   \left\{\text{Conditional law of $t\mapsto \wh X^{z,\ep}_{t\wedge \wh\tau^{z,\ep}}$ given $(h,\eta)$} \right\}  .
\eqe
Note that $P_{\rho,z}^\ep$ assigns probability 1 to the constant path at $x_z^\ep$ if $z$ is not contained in a cell of $\mcl G^\ep$ which intersects $B_\rho$.

\begin{prop} \label{prop-tight-rho}
The laws of the random functions~\eqref{eqn-tight-laws-rho} for $\ep \in (0,1)$ are tight with respect to the topology specified just above Proposition~\ref{prop-tight}.
If $P_{\rho,\cdot}$ is any subsequential limit in law of these functions, then a.s.\ $P$ is continuous and the following is true.
If $z\in\BB C$ and $\wh X^z$ is a path with the law $P_{\rho,z}$, then a.s.\ $\wh X^z$ does not stay at a single point for a positive interval of time before it exits $B_\rho$.
\end{prop}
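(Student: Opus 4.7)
I would prove Proposition~\ref{prop-tight-rho} in three stages: (i) a quantitative path modulus of continuity, giving pointwise tightness of $P^\ep_{\rho,z}$; (ii) equicontinuity of $z\mapsto P^\ep_{\rho,z}$ uniform in $\ep$, which promotes pointwise tightness to tightness of the random functions $P^\ep_{\rho,\cdot}$ in the $L^\infty$ metric; (iii) identification of the regularity and non-sticking properties of subsequential limits. The central inputs are the two-sided exit time bounds: the lower first-moment bound on $\ol{\BB E}_x^\ep[\tau^\ep_{B_r(\eta(x))}]$ from Proposition~\ref{prop-exit-lower} and the upper moment bounds from Corollary~\ref{cor-exit-moment-eucl}.

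For (i), the $L^N$ Paley--Zygmund inequality applied to $\tau^\ep_{B_\delta(z)}$, combined with both exit-time bounds, yields that with conditional probability arbitrarily close to $1$ given $(h,\eta)$, the exit time (in time rescaled by $\ep^{-1}$) of $\wh X^{z,\ep}$ from $B_\delta(z)$ is at least of order $\mu_h(B_{c\delta}(z))$. A union bound over a $\delta$-net in a compact subset of $B_\rho$, together with iterated applications of the strong Markov property at successive exit times from $\delta$-balls, then produces a uniform modulus of continuity: with high probability, no time interval of length $\delta^\alpha$ contains an oscillation of the path greater than $\delta$, for some $\alpha = \alpha(\gamma) > 0$. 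This gives tightness of $P^\ep_{\rho,z}$ in the local uniform topology on $C([0,\infty),\BB C)$, uniformly in $z$ varying over a compact subset of $\BB C$.

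For (ii), note that $x_z^\ep$, and hence the map $z\mapsto P^\ep_{\rho,z}$, is constant on each cell $H_x^\ep$; by Lemma~\ref{lem-cell-diam}, cells intersecting $B_\rho$ have Euclidean diameter at most a positive power of $\ep$ with polynomially high probability. It therefore suffices to control $\BB d_{\op{Prok}}(P^\ep_{\rho,z_1}, P^\ep_{\rho,z_2})$ for $z_1, z_2$ lying in adjacent cells of $\mcl G^\ep$. For this I would use the coupling from Lemma~\ref{lem-disconnect-coupling}, combined with the uniform positive-disconnection-probability estimate in Lemma~\ref{lem-walk-dc-uniform}: walks from two neighboring starting vertices can be coupled so that they coincide once one disconnects the other from the boundary of a small Euclidean annulus around the common starting region, with high quenched probability. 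The Stage (i) modulus of continuity then limits the spatial separation of the two paths up to the coupling time, so the two laws are Prokhorov-close uniformly in $\ep$.

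For (iii), equicontinuity from Stage (ii) passes directly to the limit, so any subsequential limit $P_{\rho,\cdot}$ is a.s.\ continuous in $z$. For the non-sticking property, the upper moment bound of Corollary~\ref{cor-exit-moment-eucl} combined with Markov's inequality yields that for every $z' \in B_\rho$ and every $\delta,\lambda > 0$ with $\delta^q \ll \lambda$, the quenched probability that $\wh X^{z,\ep}$ stays in $B_\delta(z')$ for a time interval of length $\lambda$ is vanishingly small; together with the pre-limit modulus of continuity and a union bound over a fine grid of candidate positions $z'$, this ruling-out persists in the subsequential limit. I expect Stage (ii) to be the main obstacle, since $z\mapsto P^\ep_{\rho,z}$ is piecewise constant with potentially large jumps across cell boundaries, and the equicontinuity must be extracted uniformly in $\ep$ via direct couplings of walks at neighboring vertices of $\mcl G^\ep$ despite the irregular cell geometry.
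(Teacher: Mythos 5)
Your overall architecture (modulus of continuity for paths, equicontinuity in $z$, identification of non-sticking, then tightness via a Prokhorov–Arzel\`a--Ascoli criterion) matches the paper's. But Stage~(i) contains a genuine gap that breaks the argument as written.

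You assert that the $L^N$ Paley--Zygmund inequality, combined with the two-sided exit time moment bounds, gives that ``with conditional probability arbitrarily close to $1$'' the exit time of $\wh X^{z,\ep}$ from $B_\delta(z)$ is at least of order $\mu_h(B_{c\delta}(z))$. This is not what Paley--Zygmund gives. Because the upper moment bound of Proposition~\ref{prop-exit-moment} involves a supremum of a logarithmic integral that is typically much larger than $\mu_h(B_{c\delta}(z))$ --- roughly by a factor $\delta^{-\gamma^2/2}$ at scale $\delta$, due to the multifractality of the LQG measure --- the resulting Paley--Zygmund lower bound on $\ol{\BB P}_x^\ep[\tau^\ep_{B_\delta(z)} \ge \text{(positive order)}]$ is only a small positive power of $\delta$, not something close to $1$. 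The paper flags this explicitly and works around it: Lemma~\ref{lem-min-exit-time-pos} only gives $\gtrsim \delta^{\ell\gamma^2/2 + o(1)}$ for the good probability, and the modulus of continuity then follows from a separate amplification step. That step is the heart of the proof: using convergence to Brownian motion modulo parametrization (from~\cite{gms-tutte}) together with a quadratic-variation count (Lemmas~\ref{lem-brownian-exit}--\ref{lem-walk-increment}), one shows the walk must make $\approx \delta^{-2(\ell-1)}$ independent increments of Euclidean size $\delta^\ell$ before leaving $B_\delta$; since $2(\ell-1) > \ell\gamma^2/2$ for $\ell$ large enough, the probability that \emph{all} of them are short is superpolynomially small, which yields Lemma~\ref{lem-exit-lower-conc} and hence Proposition~\ref{prop-walk-cont}. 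Your proposal's ``iterated strong Markov at exit times from $\delta$-balls plus a $\delta$-net union bound'' does not perform this amplification and cannot recover a near-$1$ probability bound from a polynomially small one; without it the modulus of continuity at scale $\delta$ fails.

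Two secondary remarks. For Stage~(ii), you couple walks at \emph{adjacent cells}, which only controls the Prokhorov distance at scale $\mathrm{diam}\, H_x^\ep \approx \ep^q$; chaining over $\sim \delta/\ep^q$ cells to reach scale $\delta$ would accumulate errors and is not uniform in $\ep$. The paper instead couples directly at scale $\delta$: Lemma~\ref{lem-exit-tv} uses the disconnection coupling inside the annulus $B_{\sqrt\delta}(z)\setminus B_\delta(z)$ for any two starting points within $\delta$, and then controls the initial stretch before the coupling time via Corollary~\ref{cor-exit-moment-eucl} and the (already-established) modulus of continuity. Also, your non-sticking argument at Stage~(iii) is essentially right but requires, and does not mention, a bound on the total number of $\delta$-increments before exiting $B_\rho$ (supplied in the paper by Lemma~\ref{lem-walk-increment'}) to make the union bound over candidate sticking points actually finite.
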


Most of this section is devoted to the proof of Proposition~\ref{prop-tight-rho}. We will deduce Proposition~\ref{prop-tight} from Proposition~\ref{prop-tight-rho} and a scaling argument in Section~\ref{sec-tight-proof}.

For proving tightness results, it is convenient to work with the space
\eqb \label{eqn-cont-space-def}
\mcl C := C\left[ \BB C , \op{Prob}\left( C([0,\infty) , \BB C) \right) \right]
\eqe
of \emph{continuous} functions $\BB C \rta  \op{Prob}\left( C([0,\infty) , \BB C) \right)$, rather than the space of \emph{all} functions $ \BB C \rta  \op{Prob}\left( C([0,\infty) , \BB C) \right)$ (e.g., so that we can use the Arz\'ela-Ascoli theorem).
So, we will approximate the function $P_{\rho,\cdot}^\ep$ of~\eqref{eqn-tight-laws-rho} by a continuous function $\wt P_{\rho,\cdot}^\ep \in \mcl C$ which satisfies $\wt P_{\rho,\eta(x)}^\ep = P_{\rho,\eta(x)}^\ep$ for each $x\in\ep\BB Z$. One way to do this is as follows.
For each $x\in \ep\BB Z$, choose $r_x^\ep > 0$ such that the Euclidean neighborhood of the cell $B_{r_x^\ep}(H_x^\ep)$ does not contain $\eta(y)$ for any $y\in \ep\BB Z\setminus \{x\}$.
Choose a partition of unity $\{\phi_x^\ep \}_{x\in\ep\BB Z}$ subordinate to the open cover $\{B_{r_x^\ep}(H_x^\ep)\}_{x\in\ep\BB Z}$ of $\BB C$.
Then set $\wt P_{\rho,z}^\ep = \sum_{x\in \ep\BB Z} P_{\rho,\eta(x)}^\ep \phi_x^\ep(z)$.

To prove Proposition~\ref{prop-tight-rho}, we need a compactness criterion, which follows easily from the Prokhorov theorem and the Arz\'ela-Ascoli theorem.

\begin{lem} \label{lem-compact}
Let $\mcl K   \subset \mcl C$.
Then $\mcl K$ is pre-compact (i.e., the closure of $\mcl K$ is compact) if the following conditions are satisfied.
\begin{enumerate}
\item (Equicontinuity of laws) For each $L >1$, there exists $\delta = \delta(L) > 0$ such that for each $P  = (P_z)_{z\in\BB C} \in \mcl K$ and each $z,w\in B_L(0)$ with $|z-w|\leq \delta$, the Prokhorov distance between the laws $P_z$ and $P_w$ is at most $1/L$. \label{item-compact-laws}
\item (Equicontinuity of paths) For each $L > 0$ and each $z\in \BB C$, there exists $\delta = \delta(z,L) > 0$ such that
for each $P   = (P_w)_{ w \in \BB C}  \in \mcl K$, the following is true.
If $X : [0,\infty) \rta \BB C$ is sampled from the law $P_z$, then with probability at least $1- 1/L$,  \label{item-compact-paths}
\eqbn
  |X_s - X_t| \leq 1/L , \quad\forall s,t\in [0,  L] \quad\text{with} \quad   |s-t| \leq  \delta   .
\eqen
\end{enumerate}
\end{lem}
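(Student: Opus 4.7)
The plan is to combine Prokhorov's theorem with the Arzela--Ascoli theorem applied at two nested levels. At the outer level, $\mcl K$ is a family of continuous maps from the locally compact space $\BB C$ into the Polish space $\op{Prob}(C([0,\infty),\BB C))$ equipped with the Prokhorov metric. The standard Arzela--Ascoli criterion says that pre-compactness in $\mcl C$ is equivalent to (a) equicontinuity on each compact subset of $\BB C$, together with (b) pointwise pre-compactness, i.e.\ $\{P_z : P \in \mcl K\}$ is pre-compact in $\op{Prob}(C([0,\infty),\BB C))$ for each fixed $z \in \BB C$. Hypothesis (\ref{item-compact-laws}) is already a verbatim uniform equicontinuity statement on each ball $B_L(0)$, so (a) is immediate; the work is to establish (b).

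At the inner level, by Prokhorov's theorem, pre-compactness of $\{P_z : P \in \mcl K\}$ is equivalent to tightness. Tightness of a family of laws on $C([0,\infty),\BB C)$ under the local uniform metric follows from the Arzela--Ascoli characterization of pre-compact subsets of $C([0,\infty),\BB C)$ together with the usual splitting: one needs (i) tightness of the one-dimensional marginals at each fixed time $t \geq 0$, and (ii) a uniform-in-$P$ modulus-of-continuity bound in probability on each compact time interval $[0,L]$. Ingredient (ii) is literally hypothesis (\ref{item-compact-paths}). For (i), one uses the convention (built into the setup in which the lemma is applied) that $X_0 = z$ under $P_z$, so the law of $X_0$ is a Dirac mass; applying hypothesis (\ref{item-compact-paths}) across the roughly $N := \lceil L/\delta(z,L)\rceil$ consecutive subintervals of $[0,L]$ of length $\delta(z,L)$ and telescoping yields $\sup_{t\in [0,L]} |X_t - z| \leq N / L$ with $P_z$-probability at least $1 - 1/L$, which gives tightness of every marginal on $[0,L]$.

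The proof is thus a two-level Arzela--Ascoli bookkeeping exercise. The only mildly delicate point is recognizing that hypothesis (\ref{item-compact-paths}) controls oscillations but not location, so one must combine it with the starting condition $X_0 = z$ to turn equicontinuity into tightness of the time marginals. Once this is in place, no further estimates are required: applying outer Arzela--Ascoli to $\mcl K$ with equicontinuity from (\ref{item-compact-laws}) and pointwise pre-compactness from the inner argument gives that $\mcl K$ is pre-compact in $\mcl C$.
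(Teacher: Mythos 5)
Your proof is correct and follows essentially the same two-level Arz\'ela--Ascoli plus Prokhorov strategy as the paper: outer Arz\'ela--Ascoli reduces pre-compactness of $\mcl K$ to equicontinuity of $z\mapsto P_z$ (which is hypothesis~\ref{item-compact-laws}) plus pointwise pre-compactness of $\{P_z\}_{P\in\mcl K}$, and the latter is handled by Prokhorov plus a path-space Arz\'ela--Ascoli argument. The only difference is one of packaging at the inner level: the paper explicitly builds a single pre-compact set $\mcl K'_\zeta\subset C([0,\infty),\BB C)$ by applying hypothesis~\ref{item-compact-paths} with $L = 2^k/\zeta$ for every $k\in\BB N$ and taking a union bound, whereas you invoke the standard Billingsley-style tightness criterion (marginal tightness at a fixed time plus a modulus-of-continuity bound in probability on each $[0,L]$) and verify its two ingredients, deriving marginal tightness by telescoping from $X_0=z$. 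Both routes are standard; the paper's is marginally more self-contained in that it does not appeal to an external tightness criterion, while yours makes the role of the fixed starting point more visible. Your observation that hypothesis~\ref{item-compact-paths} alone controls oscillation but not location, and must be combined with $X_0=z$, is exactly the point the paper's construction of $\mcl K'_\zeta$ (which includes the condition $X_0=z$) also relies on.
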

\begin{proof}
By the Arz\'ela-Ascoli theorem for continuous functions into the complete metric space
$\op{Prob}(C([0,\infty) , \BB C))$, $\mcl K$ is pre-compact provided condition~\ref{item-compact-laws} holds and for
each fixed $z\in \BB C$, the set $\{P_z\}_{P\in \mcl K}$ is a pre-compact subset of $ \op{Prob}(C([0,\infty) , \BB C))$.

By the Prokhorov theorem, $\{P_z\}_{P\in \mcl K}$  is pre-compact provided that for each
$\zeta \in (0,1)$, there is a pre-compact subset $\mcl K'_\zeta = \mcl K'_\zeta(z)$ of $C([0,\infty) , \BB C)$
such that for each $P\in\mcl K$, we have $P_z[ \mcl K'_\zeta] \geq 1-\zeta$.
By the Arz\'ela-Ascoli theorem, $\mcl K'_\zeta$ is pre-compact provided there is a uniform modulus of continuity for all of the functions in $\mcl K'_\zeta$.

We now assume condition~\ref{item-compact-paths} and check that $\{P_z\}_{P\in \mcl K }$ is pre-compact using the criterion of the preceding paragraph.
To this end, fix $\zeta \in (0,1)$ and for $k\in\BB N$ let $\delta_k$ be as in condition~\ref{item-compact-paths} for $L =   2^{ k} / \zeta $.
By a union bound over all $k\in\BB N$, we get that if $P\in\ \mcl K$ and $X$ is sampled from $P_z$ then it holds with probability at least $1-\zeta$ that
\eqb \label{eqn-compact-equicont}
|X_s -X_t| \leq  \zeta/2^k ,\quad \forall s,t\in [0,   2^k  / \zeta] \quad\text{with} \quad   |s-t| \leq  \delta_k  , \quad \forall k\in\BB N .
\eqe
The set $\mcl K'_\zeta$ of $X \in C([0,\infty),\BB C)$ for which~\eqref{eqn-compact-equicont} holds and $X_0 = z$ is a pre-compact subset of $ C([0,\infty),\BB C)$, which verifies the condition of the preceding paragraph.
\end{proof}

The proof of the tightness part of Proposition~\ref{prop-tight-rho} is divided into two steps, based on the two conditions of Lemma~\ref{lem-compact}.
First we prove equicontinuity of paths in Section~\ref{sec-equicont-path}, then we prove equicontinuity of laws in Section~\ref{sec-equicont-law}.
In Section~\ref{sec-no-stuck}, we prove an estimate for the random walk on $\mcl G^\ep$ which will help us verify the last part of Proposition~\ref{prop-tight} (i.e., that the subsequential limiting path does not stay at a point for a positive time interval).
In Section~\ref{sec-tight-proof} we deduce Proposition~\ref{prop-tight} from Proposition~\ref{prop-tight-rho} via a scaling argument.
Throughout this section, we will use the following terminology.

\begin{defn} \label{def-superpoly}
We say that $\psi : [0,\infty) \rta [0,\infty)$ is a \emph{superpolynomial function} if $\psi(0) = 0$ and $\lim_{\delta\rta 0} \delta^{-p} \psi(\delta) = 0$ for every $p > 0$.
\end{defn}

We do not require the superpolynomial functions appearing in different proposition / lemma statements to be the same.

\subsection{Equicontinuity of paths}
\label{sec-equicont-path}

The main result of this subsection is the following estimate.

\begin{prop}[Equicontinuity of paths] \label{prop-walk-cont}
Let $\chi >   \frac{2(2+\gamma)}{2-\gamma}$.
There is a superpolynomial function $\psi$, depending only on $\rho,\chi,\gamma$, such that probability tending to 1 as $\ep\rta 0$, it holds simultaneously for each $z \in B_\rho$ that the following is true.
With conditional probability at least $1-\psi(\delta)$ given $(h,\eta)$,
\eqb \label{eqn-walk-cont}
|\wh X^{z,\ep}_s - \wh X^{z,\ep}_t| \leq 2\delta,\quad \forall s,t\in [0,\wh\tau^{z,\ep}] \quad \text{with} \quad |s-t| \leq \delta^\chi .
\eqe
\end{prop}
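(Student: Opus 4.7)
The plan has two main ingredients: the $N$-th moment bound on exit times from Corollary~\ref{cor-exit-moment-eucl}, and the deterministic cell-diameter estimate from Lemma~\ref{lem-cell-diam}. The overall strategy is a discretize-and-union-bound argument.

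First, I reduce to a single-interval estimate. By the strong Markov property applied at dyadic times $k\delta^\chi$, together with a union bound over such $k$ in the range $[0,\wh\tau^{z,\ep}]$ (truncated to some macroscopic time horizon), it suffices to prove that for every $x\in\mcl V\mcl G^\ep(B_\rho)$,
\eqbn
\ol{\BB P}_x^\ep\left[ \tau_{B_\delta(\eta(x))}^\ep \leq \delta^\chi/\ep \right] \leq \wt\psi(\delta)
\eqen
for some superpolynomial $\wt\psi$. The factor of $2$ in the proposition accommodates the worst case where $s$ and $t$ lie in two adjacent dyadic intervals.

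Next I separate into scales. For \emph{very small} $\delta$, specifically $\delta^{\chi-1} \leq \ep^{1 - 2/(2+\gamma)^2 + \zeta}$, Lemma~\ref{lem-cell-diam} provides a deterministic bound: on the polynomially-high-probability event that every cell intersecting $B_\rho$ has diameter at most $\ep^{2/(2+\gamma)^2 - \zeta}$, a single walk step displaces the embedded walk by at most twice a cell diameter, so in $\delta^\chi/\ep$ steps the walk cannot leave $B_\delta(\eta(x))$. The assumption $\chi > 2(2+\gamma)/(2-\gamma)$ is chosen so that this deterministic regime extends up to some $\delta \leq \ep^a$ with $a < 1$, leaving only moderate $\delta$ to handle otherwise. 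For moderate $\delta$, I invoke Corollary~\ref{cor-exit-moment-eucl} with $q$ chosen just below $(2-\gamma)^2/2$ and $N = N(\delta)$ taken large, which yields $\ol{\BB E}_x^\ep\bigl[(\tau_{B_\delta(\eta(x))}^\ep)^N\bigr] \leq N! \, \ep^{-N}\delta^{Nq}$ uniformly over $x$.

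The main obstacle I expect is that Corollary~\ref{cor-exit-moment-eucl} controls the \emph{upper} tail of $\tau^\ep_{B_\delta(\eta(x))}^\ep$ (the walk exits quickly on average), whereas equicontinuity of paths demands control of the \emph{lower} tail (the walk does not exit atypically fast). Reconciling this direction mismatch is the crux of the proof. My plan is to combine the upper moment bound of Corollary~\ref{cor-exit-moment-eucl} with the lower bound on $\ol{\BB E}_x^\ep[\tau^\ep_{B_\delta(\eta(x))}]$ from Proposition~\ref{prop-exit-lower} via an iterated nesting argument at a smaller auxiliary scale $\delta' \ll \delta$: in order for the walk to leave $B_\delta(\eta(x))$ in fewer than $\delta^\chi/\ep$ steps, it must complete on the order of $\delta/\delta'$ successive exits of sub-balls of radius $\delta'$, each of which has $N$-th moment controlled by Corollary~\ref{cor-exit-moment-eucl}. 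An $L^N$-Paley--Zygmund-type concentration argument applied to the sum of these (conditionally near-independent) exit times then produces the superpolynomially small lower-tail estimate, with the optimization in $N$, $q$, and $\delta'$ responsible for the threshold $\chi > 2(2+\gamma)/(2-\gamma)$. Finally, uniformity over $z\in B_\rho$ follows from a union bound over the polynomially many vertices of $\mcl V\mcl G^\ep(B_\rho)$, whose individual failure probabilities decay superpolynomially in $\delta$, preserving the superpolynomial nature of $\psi$.
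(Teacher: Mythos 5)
Your high-level plan --- Paley--Zygmund combining the exit-time lower bound from Proposition~\ref{prop-exit-lower} with the moment upper bound from Proposition~\ref{prop-exit-moment}/Corollary~\ref{cor-exit-moment-eucl}, a nested-scale argument, and a union bound over starting vertices --- is the right one and matches the paper's proof structure. But there are two genuine gaps, both concentrated in what you call the ``crux,'' and the second is fatal for part of the parameter range.

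First, the count of sub-exits. You write that the walk ``must complete on the order of $\delta/\delta'$ successive exits of sub-balls of radius $\delta'$.'' This linear count is only the deterministic triangle-inequality minimum; what the paper actually needs, and proves via Lemmas~\ref{lem-brownian-exit} and~\ref{lem-walk-increment}, is that the walk typically makes on the order of $(\delta/\delta')^{2}$ such sub-exits before leaving $B_\delta$. The quadratic count comes from the fact that Brownian motion has linear quadratic variation, and the paper transfers it to the walk using the quenched convergence to Brownian motion \emph{modulo time parametrization} from~\cite[Theorem 3.4]{gms-tutte} --- an ingredient absent from your plan. The distinction is not cosmetic: with $\delta'=\delta^\ell$, Paley--Zygmund (Lemma~\ref{lem-min-exit-time-pos}) gives each sub-exit only probability $\gtrsim\delta^{\ell\gamma^2/2}$ of being slow, so you need the number of independent-ish trials to exceed $\delta^{-\ell\gamma^2/2}$ by more than any power of $\delta$. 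The quadratic count $\delta^{-2(\ell-1)}$ wins whenever $2(\ell-1) > \ell\gamma^2/2$, i.e.\ $\ell > 4/(4-\gamma^2)$, which is satisfiable for every $\gamma\in(0,2)$ and is precisely what produces the threshold $\chi > 2(2+\gamma)/(2-\gamma)$. With the linear count $\delta^{-(\ell-1)}$ the corresponding inequality $\ell(1-\gamma^2/2)>1$ has no solution once $\gamma\geq\sqrt2$, so your argument cannot close in the supercritical phase.

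Second, the concentration framing. You propose an ``$L^N$-Paley--Zygmund-type concentration argument applied to the \emph{sum}'' of sub-exit times. Paley--Zygmund on a sum can only produce a probability bounded away from $1$; here the ratio $\BB E[S]/\BB E[S^N]^{1/N}$ is polynomially small in $\delta'$ (since the worst-case $\mu_h$-mass of a ball of radius $c\delta'$ is $\approx(\delta')^{(2+\gamma)^2/2}$ while the moment bound only controls $(\delta')^{q}$ with $q<(2-\gamma)^2/2$), so the resulting lower-tail probability is $1-\Omega(\delta'^{\text{const}})$ at best, nowhere near superpolynomial. What the paper does instead is Paley--Zygmund on each \emph{individual} sub-exit time (Lemma~\ref{lem-exit-pos}) to get a constant-order (in the sense of a small power of $\delta$) success probability per trial, and then exploits the Markov product structure: the probability that \emph{none} of the $\approx\delta^{-2(\ell-1)}$ trials succeeds is $(1-\delta^{\ell\gamma^2/2})^{\delta^{-2(\ell-1)}}$, which is superpolynomially small under the exponent condition above. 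This ``at least one slow excursion'' argument is weaker and cleaner than concentration of the sum, and it is what makes the exponent bookkeeping work.

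A few smaller remarks. Your cell-diameter split into a ``very small $\delta$'' deterministic regime is harmless but unnecessary: the proposition fixes $\delta$ and sends $\ep\rta 0$, so for any fixed $\delta$ the regime $\delta^{\chi-1}\leq\ep^{1-2/(2+\gamma)^2+\zeta}$ is eventually vacuous, and the threshold on $\chi$ has nothing to do with cell diameters --- it comes entirely from the Paley--Zygmund/counting balance above. Also, your reduction via dyadic times $k\delta^\chi$ needs a quantitative truncation of the time horizon; the paper gets this from Lemma~\ref{lem-walk-increment'}, which bounds (again using the modulo-time-parametrization convergence) the number of $\delta$-scale excursions needed to leave $B_\rho$ by $\approx\delta^{-3}$ with superpolynomially high probability.
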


Once Theorem~\ref{thm-lbm-conv} is established, Proposition~\ref{prop-walk-cont} implies that a.s.\ Liouville Brownian motion is locally H\"older continuous with any exponent less than $\frac{2-\gamma}{2(2+\gamma)}$. This bound is not optimal: indeed, it follows from~\cite[Corollary 2.14]{grv-lbm} that Liouville Brownian motion is H\"older continuous with any exponent less than $\frac{1}{2(1+\gamma/2)^2}$.
Note in particular that this exponent does not degenerate as $\gamma$ approaches 2. The proof in \cite{grv-lbm}
relies on the study of negative moments for the exit time from a ball, which in turn relies on the \emph{a priori} knowledge of
the multifractal spectrum of Liouville Brownian motion. Such tools are not directly available here and we instead use a ``worst-case scenario" approach
which leads to a suboptimal (but still finite) exponent.


To prove Proposition~\ref{prop-walk-cont} we want to use our lower bound for the exit times of the walk on $\mcl G^\ep$ from Euclidean balls (Proposition~\ref{prop-exit-lower}) to say that $\wh X^{z,\ep}$ is very unlikely to travel a long Euclidean distance in a short amount of time.
The main difficulty is that Proposition~\ref{prop-walk-cont} only gives a lower bound for the conditional \emph{expectation} of the exit time given $(h,\eta)$.
This by itself is far from good enough to get a modulus of continuity estimate for $\wh X^{z,\ep}$, so we need to use additional arguments to get a better lower bound for exit times.

The first step of the proof, carried out in Lemma~\ref{lem-exit-pos}, is to use Proposition~\ref{prop-exit-lower} together with our upper bound for moments of exit times from Euclidean balls (Proposition~\ref{prop-exit-moment}) and the Payley-Zygmund inequality to say the following.
Let $c >0$ be the constant from Proposition~\ref{prop-exit-lower}.
If $\delta\in (0,1)$ is fixed and $\ep > 0$ is small, then with high probability it holds uniformly over all $x \in \mcl V\mcl G^\ep( B_\rho)$ and all  $r \in \left[\delta , \op{dist}\left( \eta(x) , \bdy B_\rho \right) \right]$ that
\eqb \label{eqn-exit-pos0}
\BB P\left[ \text{Exit time of $\wh X^{\eta(x) ,\ep} $ from $B_r(\eta(x))$} \geq \frac12  \mu_h\left( B_{c r} (\eta(x)) \right) \,|\,  h,\eta   \right]
\eqe
is bounded below by an explicit quantity depending on $\mu_h|_{B_r(\eta(x))}$.

Using basic ``worst-case" estimates for the LQG measure, the above bound can be made more explicit.
In particular, fix a large constant $\ell > 1$ to be chosen later. Then when $\ep$ and $\delta$ are small, it holds with high probability
over $(h, \eta)$ that
\eqb \label{eqn-min-exit-time-pos0}
\BB P\left[ \text{Exit time of $\wh X^{z,\ep} $ from $B_{ \delta^\ell }(z)$} \gtrsim \delta^{ (2+\gamma)^2 \ell/2 } \,|\, h,\eta   \right]
\gtrsim  \delta^{\ell   \gamma^2/2   }
\eqe
uniformly over all $x\in \mcl V\mcl G^\ep(B_{\rho-\delta^\ell})$, where here $\gtrsim$ denotes inequality up to a multiplicative error of order $\delta^\zeta$, where $\zeta > 0$ can be made as small as we like (see Lemma~\ref{lem-min-exit-time-pos}).

At first glance, the bound~\eqref{eqn-min-exit-time-pos0} still seems to be far from a modulus of continuity estimate for $\wh X^{z,\ep}$ since it only gives a lower bound for exit times which holds with probability at least $ \delta^{\ell   \gamma^2/2   }  $, not with probability close to 1.
To get around this difficulty, we exploit the (quenched) convergence of $\wh X^{z,\ep}$ to Brownian motion modulo time parametrization, which was proven in~\cite{gms-tutte}.
Indeed, using a basic Brownian motion estimate (Lemma~\ref{lem-brownian-exit}) and the aforementioned convergence, we can show that when $\ep$ and $\delta$ are small the following is true with extremely high conditional probability given $(h,\eta)$.
If $z\in B_\rho$, then the walk $\wh X^{z,\ep}$ has to travel Euclidean distance $\delta^\ell$ at least $N\approx \delta^{-2(\ell-1)}$ times before exiting the ball of radius $\delta$ centered at its starting point.
More precisely, if $T$ is the exit time of $\wh X^{z,\delta}$ from $B_\delta(z)$, then typically there are of order $N\approx \delta^{-2(\ell-1) }$ times $0  = \tau_0 < \tau_1 < \dots < \tau_N < T$ such that $\wh X^{z,\ep}_{\tau_n}  -\wh X^{z,\ep}_{\tau_{n-1}} \geq \delta^\ell$ for each $n=1,\dots,N$ (Lemma~\ref{lem-walk-increment}).
This is related to the fact that Brownian motion has linear quadratic variation.

The estimate~\eqref{eqn-min-exit-time-pos0} tells us that (ignoring small multiplicative errors) under the conditional law given $(h,\eta)$, each of the time intervals $[\tau_{n-1} , \tau_n]$ has probability at least $\delta^{\ell\gamma^2/2  }$ to have length at least $\delta^{ (2+\gamma)^2 \ell / 2  }$.
Since there are of order $\delta^{-2(\ell-1)}$ such intervals, and using the Markov property of the random walk on $\mcl G^\ep$, we get that the probability that \emph{at least one} of these intervals has length at least $\delta^{(2+\gamma)^2\ell/2  }$ is at least
\eqb \label{eqn-exit-conc0}
1 - \left( 1 - \delta^{\ell\gamma^2/2}  \right)^{\delta^{-2(\ell-1)}} .
\eqe

If we choose $\ell > 4/(4-\gamma^2)$, so that $2(\ell-1) > \ell \gamma^2/2$, then~\eqref{eqn-exit-conc0} is of the form $1-\psi(\delta)$ for a superpolynomial function $\psi$.
We therefore get that when $\ep> 0$ is small, it holds uniformly over all $x\in \mcl V\mcl G^\ep(B_{\rho-\delta})$ that
\eqb \label{eqn-exit-conc1}
\BB P\left[ \text{Exit time of $\wh X^{z,\ep}$ from $B_\delta(z)$} \gtrsim    \delta^{ (2+\gamma)^2 \ell/2  }  \,|\, h,\eta \right]
\geq  1-\psi(\delta) ;
\eqe
see Lemma~\ref{lem-exit-lower-conc}.
Once~\eqref{eqn-exit-conc1} is established, it is a simple matter to conclude the proof of Proposition~\ref{prop-walk-cont} using the Markov property and a union bound.
We remind the reader that this only holds with high probability over $(h, \eta)$ because the bound \eqref{eqn-min-exit-time-pos0} only holds with high probability.

We now proceed with the details of the above argument.
For convenience we will mostly phrase our estimates in terms of the random walk on $\mcl G^\ep$ instead of in terms of the embedded, linearly interpolated walk $\wh X^{z,\ep}$ as in the outline above.
We start with the Payley-Zygmund argument mentioned above.

\begin{lem} \label{lem-exit-pos}
Fix $p  > 1$ and let $\alpha=\alpha(\gamma)$ be as in Proposition~\ref{prop-exit-moment}.
There are constants $a = a(p,\rho,\gamma) > 0$ and $c = c(  \rho,\gamma) > 0$ such that for each $\delta \in (0,1)$, it holds with polynomially high probability as $\ep\rta 0$ that simultaneously for each $x\in\mcl V\mcl G^\ep(B_{\rho-\delta} )$ and each $r \in \left[\delta , \op{dist}\left( \eta(x) , \bdy B_\rho \right) \right]$,
\eqb \label{eqn-exit-pos}
\ol{\BB P}_x^\ep\left[  \tau_{B_r(\eta(x)) }^\ep \geq  \frac12 \ep^{-1} \mu_h\left( B_{c r} (\eta(x)) \right) \right]
\geq     a\left( \frac{   \mu_h(B_{c r}(\eta(x))  }{ \sup_{z \in B_r(\eta(x))} \int_{B_{r+\ep^\alpha}(\eta(x))} \log\left(\frac{1}{| z - w|} \right) + 1 \, d\mu_h(w) } \right)^p     .
\eqe
\end{lem}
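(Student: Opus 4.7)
The plan is to combine the first-moment lower bound on exit times from Proposition~\ref{prop-exit-lower} with the $N$th-moment upper bound from Proposition~\ref{prop-exit-moment}, and feed them into the $L^q$ version of the Paley--Zygmund inequality. Recall that for any nonnegative random variable $Y$, any $q > 1$, and any $\theta \in (0,1)$,
$$\BB P\left[Y \geq \theta\,\BB E[Y]\right] \geq (1-\theta)^{q/(q-1)}\left(\frac{\BB E[Y]}{(\BB E[Y^q])^{1/q}}\right)^{q/(q-1)},$$
which is obtained by writing $(1-\theta)\BB E[Y] \leq \BB E[Y\,\BB 1_{Y > \theta\BB E[Y]}]$ and applying H\"older's inequality. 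I will apply this with $Y := \tau_{B_r(\eta(x))}^\ep$ under the quenched law $\ol{\BB P}_x^\ep$, taking $\theta = 1/2$ and $q = N$ a large integer whose size depends only on $p$.

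I work on the intersection of the two polynomially-high-probability events on which both moment estimates hold uniformly in $(x,r)$. To apply Proposition~\ref{prop-exit-moment} to $D := B_r(\eta(x))$ one needs $\mu_h(D) \geq \ep^\alpha$; since $r \geq \delta$ and $\inf_{z \in B_\rho}\mu_h(B_\delta(z))$ is bounded below with polynomially high probability as $\ep \to 0$ by standard LQG measure estimates (Lemma~\ref{lem-ball-mass}), this holds once $\ep$ is small relative to $\delta$ and gets absorbed into the overall rate. Writing $\mu := \mu_h(B_{cr}(\eta(x)))$ with $c$ the constant of Proposition~\ref{prop-exit-lower}, and
$$M := \sup_{z \in B_r(\eta(x))}\int_{B_{r+\ep^\alpha}(\eta(x))}\left(\log\tfrac{1}{|z-w|}+1\right)\,d\mu_h(w),$$
the two propositions give $\ol{\BB E}_x^\ep[Y] \geq \ep^{-1}\mu$ and $\ol{\BB E}_x^\ep[(Y)^N] \leq N!\,C^N \ep^{-N} M^N$, with $C$ depending only on $\rho,\gamma$.

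Plugging these bounds into Paley--Zygmund with $\theta = 1/2$ and $q = N$, the $\ep^{-1}$ factors cancel and we obtain
$$\ol{\BB P}_x^\ep\!\left[Y \geq \tfrac{1}{2}\ep^{-1}\mu\right] \geq \ol{\BB P}_x^\ep\!\left[Y \geq \tfrac{1}{2}\ol{\BB E}_x^\ep[Y]\right] \geq a_N \left(\frac{\mu}{M}\right)^{N/(N-1)},$$
where $a_N := 2^{-N/(N-1)}(N!)^{-1/(N-1)}C^{-N/(N-1)}$ depends only on $N,\rho,\gamma$. Given $p > 1$, I choose $N := \lceil p/(p-1)\rceil$, so that $N/(N-1) \leq p$. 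By Jensen's inequality applied to the two moment bounds, $\mu/M \leq (N!)^{1/N}C =: K_N$, i.e.\ the ratio $\mu/M$ is bounded by a constant. Since the exponent $N/(N-1) - p$ is non-positive, this gives $(\mu/M)^{N/(N-1)} \geq K_N^{N/(N-1)-p}(\mu/M)^p$. Setting $a := a_N K_N^{N/(N-1)-p}$ and using the $c$ from Proposition~\ref{prop-exit-lower} yields~\eqref{eqn-exit-pos}.

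The argument is essentially mechanical once the two moment bounds of Sections~\ref{sec-exit-lower} and~\ref{sec-exit-upper} are available, so there is no real obstacle. The only bookkeeping points are verifying the hypothesis $\mu_h(D) \geq \ep^\alpha$ required by Proposition~\ref{prop-exit-moment}, and handling the (bounded) regime where $\mu/M$ may exceed $1$ so as to convert the natural Paley--Zygmund exponent $N/(N-1)$ into the prescribed exponent $p$; both are straightforward.
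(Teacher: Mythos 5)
Your proposal is correct and follows essentially the same route as the paper: combine the first-moment lower bound of Proposition~\ref{prop-exit-lower}, the $N$th-moment upper bound of Proposition~\ref{prop-exit-moment}, and the $L^q$ Paley--Zygmund inequality. The only (minor) difference is the bookkeeping step that produces the exponent $p$: the paper sets $q := p/(p-1)$ (the H\"older conjugate), interpolates from the $N$th moment to the $q$th moment via Jensen, and applies Paley--Zygmund with that fractional exponent so that $q/(q-1)=p$ falls out exactly; you instead apply Paley--Zygmund with the integer exponent $N=\lceil p/(p-1)\rceil$, obtain the exponent $N/(N-1)\le p$, and then use the Jensen-forced upper bound $\mu/M\le (N!)^{1/N}C$ to degrade $N/(N-1)$ to $p$. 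Both variants are correct and equally elementary, and they agree on everything else, including the observation that $\mu_h(B_r(\eta(x)))\ge\ep^\alpha$ needs to be checked before invoking Proposition~\ref{prop-exit-moment}.
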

\begin{proof}
Let $q := p/(p-1)$ be the H\"older conjugate exponent to $p$ and let $N := \lceil q \rceil$.
By Proposition~\ref{prop-exit-moment}, there exists $C = C(\rho,\gamma) > 0$ such that for each $\delta \in (0,1)$, it holds with with polynomially high probability as $\ep\rta 0$ that simultaneously for each $x$ and $r$ as in the lemma statement,
\eqb \label{eqn-use-exit-moment}
 \ol{\BB E}_x^\ep \left[ (\tau_{B_r(\eta(x))}^\ep)^N  \right] \leq  N! C^N \ep^{-N} \left( \sup_{z \in B_r(\eta(x))} \int_{B_{r+\ep^\alpha}(\eta(x))} \log\left(\frac{1}{| z - w|} \right) + 1 \, d\mu_h(w) \right)^N .
\eqe
Since $N > q$, the function $x\mapsto x^{N/q}$ is convex so Jensen's inequality together with~\eqref{eqn-use-exit-moment} gives
\allb \label{eqn-use-exit-moment-p}
 \ol{\BB E}_x^\ep \left[ (\tau_{B_r(\eta(x))}^\ep)^q  \right]
&\leq \ol{\BB E}_x^\ep \left[ (\tau_{B_r(\eta(x))}^\ep)^N  \right]^{q/N} \notag\\
&\leq  (N!)^{q/N} C^q \ep^{-q} \left( \sup_{z \in B_r(\eta(x))} \int_{B_{r+\ep^\alpha}(\eta(x))} \log\left(\frac{1}{| z - w|} \right) + 1 \, d\mu_h(w) \right)^q .
\alle
By Proposition~\ref{prop-exit-lower}, there exists $c = c(\rho,\gamma) > 0$ such that with polynomially high probability as $\ep\rta 0$ that for each $x$ and $r$ as in the lemma statement,
\eqb \label{eqn-use-exit-lower}
\ol{\BB E}_x^\ep\left[  \tau_{B_r(\eta(x)) }^\ep \right] \geq    \ep^{-1} \mu_h\left( B_{c r} (\eta(x)) \right)
\eqe
The bound~\eqref{eqn-exit-pos} for an appropriate choice of $a$ now follows from~\eqref{eqn-use-exit-moment-p}, \eqref{eqn-use-exit-moment}, and the $L^q$ variant of the Payley-Zygmund inequality.
\end{proof}

We now make the bound in Lemma~\ref{lem-exit-pos} more explicit (so that it is in terms of $\delta$ rather than in terms of $\mu_h$).
This is done using basic uniform estimates for the LQG measure from Appendix~\ref{sec-appendix}.

\begin{lem} \label{lem-min-exit-time-pos}
Let $\ell > 1$ and $\zeta  > 0$.
It holds with probability tending to 1 as $\ep\rta 0$ and then $\delta \rta 0$ that
\eqb \label{eqn-min-exit-time-pos}
\min_{x \in \mcl V\mcl G^\ep(B_{\rho-\delta^\ell})}
\ol{\BB P}_x^\ep\left[  \tau_{B_{\delta^\ell}(\eta(x)) }^\ep \geq    \ep^{-1} \delta^{ (2+\gamma)^2 \ell / 2    + \zeta }  \right]
\geq  \delta^{\ell   \gamma^2/2  +  o_\zeta(1)   }  ,
\eqe
where the $o_\zeta(1)$ is deterministic and depends only on $\zeta,\ell,\gamma$.
\end{lem}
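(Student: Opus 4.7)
The strategy is to apply Lemma~\ref{lem-exit-pos} with $r=\delta^\ell$ and $p=p(\zeta)>1$ chosen close to $1$, then lower bound the resulting ratio uniformly over all starting vertices $x\in\mcl V\mcl G^\ep(B_{\rho-\delta^\ell})$ by combining refined LQG measure estimates.

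First I would establish a uniform ``thin-point'' lower bound on the mass of small Euclidean balls: for any fixed $\zeta_0>0$, with probability tending to $1$ as $\delta\to 0$, one has $\mu_h(B_s(z))\geq s^{(2+\gamma)^2/2+\zeta_0}$ simultaneously for every $z\in B_\rho$ and every $s\in[\delta^\ell,1]$. This is obtained by a union bound over an appropriate net together with the Gaussian tail of the circle averages $h_s(z)$ and the description of $h$ in Definition~\ref{def-quantum-cone}. Applied with $s=c\delta^\ell$ and $\zeta_0$ of order $\zeta/\ell$, this yields $\mu_h(B_{c\delta^\ell}(\eta(x)))\geq 2\delta^{(2+\gamma)^2\ell/2+\zeta}$ uniformly in $x$. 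Consequently the event $\{\tau_{B_{\delta^\ell}(\eta(x))}^\ep\geq\tfrac12\ep^{-1}\mu_h(B_{c\delta^\ell}(\eta(x)))\}$, to which Lemma~\ref{lem-exit-pos} applies, is contained in the event $\{\tau_{B_{\delta^\ell}(\eta(x))}^\ep\geq\ep^{-1}\delta^{(2+\gamma)^2\ell/2+\zeta}\}$, so it suffices to lower bound the ratio in Lemma~\ref{lem-exit-pos} uniformly in $x$.

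The crux is then to show that, uniformly over $x$ and $z\in B_{\delta^\ell}(\eta(x))$,
\begin{equation*}
\int_{B_{\delta^\ell+\ep^\alpha}(\eta(x))}\bigl(\log|z-w|^{-1}+1\bigr)\,d\mu_h(w)\ \leq\ C\,\delta^{-\ell\cdot o_\zeta(1)}\,\log(\delta^{-\ell})\,\mu_h(B_{\delta^\ell+\ep^\alpha}(\eta(x))).
\end{equation*}
I would obtain this by a dyadic annular decomposition of the integral around $z$, combined with a \emph{refined local} thick-point bound
\begin{equation*}
\mu_h(B_s(z))\ \leq\ C\,(s/\delta^\ell)^{(2-\gamma)^2/2-o_\zeta(1)}\,\mu_h(B_{\delta^\ell+\ep^\alpha}(\eta(x))),\qquad s\leq\delta^\ell,\quad z\in B_{\delta^\ell}(\eta(x)).
\end{equation*}
This last estimate is the standard GFF thick-point bound applied to the \emph{centered} field $h-h_{\delta^\ell}(\eta(x))$ inside $B_{\delta^\ell}(\eta(x))$, and it is made uniform in $x$ by a union bound over a fine net of coarse-ball centers and by exploiting the Gaussian tail of $h_s(z)-h_{\delta^\ell}(\eta(x))$.

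Putting the two inputs together, the ratio appearing in Lemma~\ref{lem-exit-pos} is at least $\delta^{o_\zeta(1)}/(C\log\delta^{-\ell})$ uniformly in $x$. Raising to the $p$-th power with $p=1+o_\zeta(1)$ gives a probability lower bound which is subpolynomial in $\delta$ and hence, for each fixed $\zeta>0$ and all sufficiently small $\delta$, at least $\delta^{\ell\gamma^2/2+o_\zeta(1)}$. The main obstacle is the refined local thick-point bound with the multiplicative $\mu_h(B_{\delta^\ell+\ep^\alpha}(\eta(x)))$ factor: if one merely used the crude uniform bound $\mu_h(B_s(z))\leq s^{(2-\gamma)^2/2-o(1)}$ in the dyadic sum, the resulting lower bound on the ratio would be only $\delta^{4\gamma\ell+o(1)}$, and since $4\gamma>\gamma^2/2$ for every $\gamma\in(0,2)$ this is strictly weaker than the claimed $\delta^{\ell\gamma^2/2}$ for any admissible $p$. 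Exploiting the correlation between the coarse-scale mass at $\eta(x)$ and the sub-scale fluctuations of the field inside $B_{\delta^\ell}(\eta(x))$, uniformly in $x$, is therefore the key technical step.
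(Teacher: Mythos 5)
Your overall strategy matches the paper's: apply Lemma~\ref{lem-exit-pos} with $r=\delta^\ell$ and $p$ close to $1$, bound the log-integral in the denominator relative to a coarse-scale LQG mass, and use a uniform thin-point bound to convert $\mu_h(B_{c\delta^\ell}(\eta(x)))$ into a power of $\delta$. Your proposed ``refined local thick-point bound'' is a reasonable alternative route to the paper's Lemma~\ref{lem-log-int} for controlling the log-integral by $O(\log\delta^{-\ell})\cdot\mu_h\bigl(B_{\delta^\ell+\ep^\alpha}(\eta(x))\bigr)$, and you correctly identify that the crude, uncorrelated thick-/thin-point bounds would only give the weaker exponent $4\gamma\ell$.

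However, there is a genuine gap in the final step, and it is precisely the step where the exponent $\gamma^2/2$ is supposed to appear. You conclude that the ratio in Lemma~\ref{lem-exit-pos} is at least $\delta^{o_\zeta(1)}/(C\log\delta^{-\ell})$, i.e.\ subpolynomial in $\delta$. Given your log-integral bound, the ratio is
\begin{equation*}
\frac{\mu_h(B_{c\delta^\ell}(\eta(x)))}{C\,\delta^{-\ell\cdot o_\zeta(1)}\log(\delta^{-\ell})\,\mu_h\bigl(B_{\delta^\ell+\ep^\alpha}(\eta(x))\bigr)},
\end{equation*}
so your claim implicitly requires $\mu_h(B_{c\delta^\ell}(\eta(x)))\geq\delta^{o(1)}\mu_h\bigl(B_{\delta^\ell+\ep^\alpha}(\eta(x))\bigr)$ uniformly over $x$. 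This is false: for the worst-case center, the ratio of the mass of a ball of radius $\delta^\ell$ to a concentric (or slightly shifted) sub-ball of comparable radius can be as small as $\delta^{\ell\gamma^2/2+o(1)}$, not $\delta^{o(1)}$. This is exactly the content of Lemma~\ref{lem-max-ball-ratio}, whose exponent $-\gamma^2/2$ is sharp and is governed by the finite positive moment threshold $4/\gamma^2$ of the GMC. (Your ``refined local thick-point bound'' gives an \emph{upper} bound on $\mu_h(B_{c\delta^\ell}(z))$ relative to the coarse scale; it does not give the needed \emph{lower} bound, which has a genuinely different, thinner exponent than you assume.) With the correct ratio bound $\delta^{\ell\gamma^2/2+o(1)}$, raising to the power $p=1+\zeta$ gives $\delta^{\ell\gamma^2/2+o_\zeta(1)}$, which is the statement of the lemma; your version would erroneously give a subpolynomial probability, and the $\gamma^2/2$ in the lemma would be unexplained.
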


As discussed in the outline above, we eventually want to choose $\ell$ to be sufficiently large depending on $\gamma$.
The parameter $\zeta$ represents a small error in the exponent and will eventually be sent to zero.

\begin{proof}[Proof of Lemma~\ref{lem-min-exit-time-pos}]
By Lemma~\ref{lem-log-int} applied with $A =  \delta^\zeta / 100$ together with Lemma~\ref{lem-ball-mass} applied with $e^{-\op{const}/\delta^{\zeta/2}}$ in place of $\delta$, it holds except on an event of probability decaying faster than any positive power of $\delta$ that
\eqb \label{eqn-use-log-int}
 \sup_{z \in B_{\delta^\ell}(z)} \int_{B_{2\delta^\ell}(z)} \log\left(\frac{1}{| z - w|} \right) + 1 \, d\mu_h(w) \leq \delta^\zeta \mu_h\left( B_{2\delta^\ell}(z) \right) ,\quad\forall z\in B_\rho .
\eqe
Note that Lemma~\ref{lem-ball-mass} is applied to absorb the additive error term $e^{ -\op{const} /\delta^\zeta}$ from Lemma~\ref{lem-log-int} into the main term $ \mu_h\left( B_{2\delta^\ell}(z) \right)$.

Let $c > 0$ be the constant from Lemma~\ref{lem-exit-pos}. We emphasize that $c$ does not depend on $p$.
By Lemma~\ref{lem-max-ball-ratio} applied with $2\delta^\ell$ in place of $\delta$, it holds with probability tending to 1 as $\delta\rta0$ that
\eqb \label{eqn-use-max-ball-ratio}
\frac{\mu_h(B_{c\delta^\ell}(z))}{\mu_h(B_{2\delta^\ell}(z))} \geq \delta^{  \ell \gamma^2/2   + \zeta } ,\quad\forall z \in B_\rho .
\eqe

We now apply~\eqref{eqn-use-log-int} followed by~\eqref{eqn-use-max-ball-ratio} to lower-bound the right side of the inequality of Lemma~\ref{lem-exit-pos} with $r = \delta^\ell$ and with $p  = 1+\zeta$.
Note that $\delta^\ell + \ep^\alpha  \leq 2\delta^\ell$ for each small enough $\ep  >0$, so we can bound the integral over $B_{\delta^\ell + \ep^\alpha}(z)$ by the integral over $B_{2\delta^\ell}(z)$.
We find that with probability tending to 1 as $\ep\rta 0$ and then $\delta \rta 0$,
\eqb \label{eqn-exit-lower-small}
\min_{x \in \mcl V\mcl G^\ep(B_{\rho-\delta^\ell})}
\ol{\BB P}_x^\ep\left[  \tau_{B_{\delta^\ell}(\eta(x)) }^\ep \geq  \frac12 \ep^{-1} \mu_h(B_{c\delta^\ell}(\eta(x)) \right]
\geq   \delta^{\ell   \gamma^2/2  +  o_\zeta(1) } .
\eqe
By Lemma~\ref{lem-ball-mass}, it holds with probability tending to 1 as $\delta\rta 0$ that
\eqb \label{eqn-use-min-ball}
\inf_{z\in B_\rho} \mu_h\left(B_{c\delta^\ell}(z) \right) \geq 2 \delta^{ (2+\gamma)^2 \ell / 2 + \zeta} .
\eqe
Combining~\eqref{eqn-exit-lower-small} and~\eqref{eqn-use-min-ball} gives~\eqref{eqn-min-exit-time-pos}.
\end{proof}

Continuing with the outline above, we now want to use the convergence of random walk on $\mcl G^\ep$ to Brownian motion modulo time parametrization to argue that the random walk on $\mcl G^\ep$ has to have at least $\delta^{-2(\ell-1)}$ increments during which it travels Euclidean distance $\delta^\ell$ before it can travel Euclidean distance $\delta$.
We start with the desired Brownian motion estimate.

\begin{lem} \label{lem-brownian-exit}
Let $\mcl B$ be a standard planar Brownian motion started from 0.
For $r  \in (0,1)$, let $\sigma_0(r) := 0$ and inductively let $\sigma_k(r)$ for $k\in\BB N$ be the first time $t\geq \sigma_{k-1}(r)$ for which $|\mcl B_t - \mcl B_{\sigma_{k-1}(r)}|\geq r$.
For each $\zeta \in (0,1)$, there is a superpolynomial function $\psi$ such that
\eqb \label{eqn-brownian-exit-lower}
\BB P\left[  \text{$\mcl B$ first exits $B_1(0)$ before time $\sigma_{\lfloor r^{-(2-\zeta) } \rfloor} (r)$} \right]  \leq  \psi(r) \quad \text{and}
\eqe
\eqb \label{eqn-brownian-exit-upper}
\BB P\left[  \text{$\mcl B$ first exits $B_1(0)$ after time $\sigma_{\lfloor r^{-(2+\zeta) } \rfloor} (r)$} \right]   \leq \psi(r) .
\eqe
\end{lem}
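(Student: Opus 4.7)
The plan is to reduce both bounds to classical exit time estimates for planar Brownian motion combined with Chernoff-type concentration for sums of i.i.d.\ random variables. First I would observe that, by the strong Markov property applied at times $\sigma_{j-1}(r)$ together with Brownian scaling, the increments $\Delta_j := \sigma_j(r) - \sigma_{j-1}(r)$ are i.i.d.\ with the common distribution of $r^2 T$, where $T$ denotes the exit time of a standard planar Brownian motion from the unit ball $B_1(0)$. The random variable $T$ admits exponential moments $\BB E[e^{\lambda T}] < \infty$ for some $\lambda > 0$ (by comparison with the exit time of one-dimensional Brownian motion from an interval), and we let $\mu := \BB E[T] \in (0,\infty)$. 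I will also use the standard small-time estimate $\BB P[\tau \leq s] \leq C e^{-c/s}$ for $s \in (0,1]$ (where $\tau$ is the exit time of $\mcl B$ from $B_1(0)$), which follows from the reflection principle applied coordinate-wise together with the Gaussian tail bound.

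Both bounds then follow from the same two-step decomposition via an auxiliary deterministic threshold $S_r > 0$ that lies between the typical values of $\tau$ and $\sigma_k(r)$. For~\eqref{eqn-brownian-exit-lower}, with $k_- := \lfloor r^{-(2-\zeta)}\rfloor$, I would set $S_r^- := 2 k_- r^2 \mu$, so $S_r^- \asymp r^\zeta \to 0$, and write
\[
\BB P[\tau < \sigma_{k_-}(r)] \leq \BB P[\tau \leq S_r^-] + \BB P[\sigma_{k_-}(r) > S_r^-].
\]
The first term is at most $C e^{-c/S_r^-} = C\exp(-c' r^{-\zeta})$, which is superpolynomial in $r$. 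For the second, a standard Chernoff bound using the exponential moment of $T$ gives $\BB P\bigl[\sum_{j=1}^{k_-} \Delta_j > 2 k_- r^2 \mu\bigr] \leq e^{-c'' k_-}$, which is also superpolynomial in $r$ since $k_- \to \infty$ as $r \to 0$.

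For~\eqref{eqn-brownian-exit-upper}, with $k_+ := \lfloor r^{-(2+\zeta)}\rfloor$, I would set $S_r^+ := \tfrac{1}{2} k_+ r^2 \mu$, so $S_r^+ \asymp r^{-\zeta} \to \infty$, and write
\[
\BB P[\tau > \sigma_{k_+}(r)] \leq \BB P[\tau > S_r^+] + \BB P[\sigma_{k_+}(r) < S_r^+].
\]
The first term is at most $\BB E[e^{\lambda \tau}] \, e^{-\lambda S_r^+}$ by Markov's inequality, which decays like $\exp(-c r^{-\zeta})$ and is superpolynomial. For the second, a Chernoff bound for sums of i.i.d.\ nonnegative random variables with exponential moments again yields $\BB P\bigl[\sum_{j=1}^{k_+} \Delta_j < \tfrac{1}{2} k_+ r^2 \mu\bigr] \leq e^{-c''' k_+}$, superpolynomial in $r$.

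There is no real obstacle here: the argument is a routine combination of standard Brownian motion tail bounds with Chernoff-type concentration. The only substantive choice is to pick the intermediate threshold $S_r$ so that both terms in the decomposition simultaneously become superpolynomially small in $r$, which is precisely why the assumption $\zeta > 0$ (giving a gap between the typical number of scales $r^{-2}$ and the target $r^{-(2 \mp \zeta)}$) is used.
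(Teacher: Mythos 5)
Your proof is correct and follows essentially the same strategy as the paper's: use the strong Markov property and Brownian scaling to reduce the increments $\sigma_j(r)-\sigma_{j-1}(r)$ to i.i.d.\ copies of $r^2 T$ with $T$ the unit exit time, then union-bound via an intermediate deterministic threshold, invoking the small-time tail $\BB P[\tau\leq s]\leq Ce^{-c/s}$ on one side and concentration for the sum of i.i.d.\ increments on the other. The only deviation is a technical one in the choice of concentration tool: you apply Chernoff bounds built from the exponential moments of $T$ (which actually yields stretched-exponential rather than merely superpolynomial decay for the sum term), whereas the paper controls the upper deviation by bounding $\BB E[\sigma_K(r)^N]$ for arbitrary fixed $N$ and applying Chebyshev, then sending $N\to\infty$, and controls the lower deviation by stochastic domination by a binomial. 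Both routes are standard and the underlying decomposition is identical.
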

\begin{proof}
By Brownian scaling, the rescaled increments $r^{-2} \left( \sigma_k(r) - \sigma_{k-1}(r) \right)$ are i.i.d., and each has the same law as the exit time $\sigma_1(1)$ of $\mcl B$ from $B_1(0)$.
Using the strong Markov property of Brownian motion, it is easily seen that there are universal constants $c_0,c_1 , c_2 > 0$ such that or each $A > 1$, each $k\in\BB N$, and each $r>0$,
\eqb \label{eqn-brownian-inc-upper}
\BB P\left[ \sigma_k(r) - \sigma_{k-1}(r)  > A r^2 \right]
=\BB P\left[ \sigma_1(1) > A\right]
\leq c_0 e^{-c_1 A} \quad \text{and}
\eqe
\eqb \label{eqn-brownian-inc-lower}
\BB P\left[ \sigma_k(r) - \sigma_{k-1}(r)   \geq r^2 \right] \geq c_2 .
\eqe
\medskip

\noindent\textit{Proof of~\eqref{eqn-brownian-exit-lower}.}
By expanding $\sigma_K(r)^N$ as a sum over $K^N$ products of $N$ increments and applying~\eqref{eqn-brownian-inc-upper}, we get that for each $K \in \BB N$ and each $N\in\BB N$,
\eqb \label{eqn-brownian-exit-moment}
\BB E\left[ \sigma_K(r)^N \right] \preceq  r^{2 N} K^N ,
\eqe
with the implicit constant depending only on $N$.
By setting $K = \lfloor r^{-(2-\zeta )} \rfloor$ in~\eqref{eqn-brownian-exit-moment} and applying the Chebyshev inequality, we get
\eqb \label{eqn-brownian-exit-count}
\BB P\left[ \sigma_{\lfloor r^{-(2-\zeta) } \rfloor} (r)  > r^{ \zeta/2} \right] \preceq r^{\zeta N/2} , \quad\forall N\in\BB N.
\eqe
On the other hand, by a Gaussian tail bound, Brownian scaling, and the reflection principle,
\eqb  \label{eqn-brownian-exit-time}
\BB P\left[ \sigma_1(1)  <  r^{ \zeta/2 } \right] \preceq e^{-1/(2 r^{\zeta/2}) } ,
\eqe
with a universal implicit constant.
Combining~\eqref{eqn-brownian-exit-count} and~\eqref{eqn-brownian-exit-time} and sending $N\rta\infty$ gives~\eqref{eqn-brownian-exit-lower} for an appropriate choice of $\psi$.
\medskip

\noindent\textit{Proof of~\eqref{eqn-brownian-exit-upper}.}
By~\eqref{eqn-brownian-inc-lower}, $r^{-2} \sigma_k(r)$ stochastically dominates a binomial random variable with $k$ trials and success probability $c_2 $.
By a standard binomial concentration inequality, $\BB P[\sigma_{\lfloor r^{-(2-\zeta) } \rfloor} (r)  < r^{-\zeta/2}]$ decays faster than any positive power of $r$ as $r\rta 0$.
By the upper bound for $\sigma_1(1)$ in~\eqref{eqn-brownian-inc-upper} we also have that $\BB P[\sigma_1(1)  > r^{-\zeta/2}]$ decays faster than any positive power of $r$ as $r\rta 0$. Combining the two preceding sentences gives~\eqref{eqn-brownian-exit-upper}.
\end{proof}

The events considered in Lemma~\ref{lem-brownian-exit} do not depend on the time parametrization of the Brownian motion $\mcl B$, so we can transfer the conclusion of Lemma~\ref{lem-brownian-exit} to a statement for random walk on $\mcl G^\ep$ using the convergence modulo time parametrization established in~\cite{gms-tutte}.

\begin{lem} \label{lem-walk-increment}
Fix $\ell > 1$.
For $\delta \in (0,1)$ and a random walk $X^\ep$ on $\mcl G^\ep$ (with some choice of starting point), let $\tau_0^\ep(\delta^\ell) =0$ and for $k\in\BB N$ inductively let $\tau_k^\ep(\delta^\ell)$ be the first time $j$ after $\tau_{k-1}^\ep(\delta^\ell)$ for which $X_j^\ep \notin \mcl V\mcl G^\ep(B_{\delta^\ell}(  X^\ep_{\tau_{k-1}^\ep(\delta^\ell)}  )  )$.
For each $\zeta \in (0,1)$, there is a superpolynomial function $\psi$, depending only on $\zeta,\ell,\gamma$, such that with probability tending to 1 as $\ep\rta 0$ (at a rate which depends on $\delta$), it holds for each $x\in \mcl V\mcl G^\ep(B_{\rho-\delta})$
that
\eqb \label{eqn-walk-increment}
\ol{\BB P}_x^\ep\left[ \text{$X^\ep$ exits $\mcl V\mcl G^\ep\left( B_\delta(\eta(x))\right)$ before time $\tau_{\lfloor \delta^{-  (2-\zeta) (\ell-1)} \rfloor}^\ep(\delta^\ell)$} \right]
\leq \psi(\delta )  .
\eqe
\end{lem}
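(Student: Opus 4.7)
The plan is to transfer the Brownian estimate of Lemma~\ref{lem-brownian-exit} to the walk on $\mcl G^\ep$ using the convergence modulo time parametrization established in~\cite[Theorem 3.4]{gms-tutte}. The crucial observation is that the event in~\eqref{eqn-walk-increment} depends only on the \emph{trace} of the embedded walk (i.e., on the path modulo time parametrization): it only asks how many disjoint displacements of Euclidean size $\delta^\ell$ the walk makes before exiting a Euclidean ball of radius $\delta$. Thus such an event is well-defined for any continuous curve in $\BB C$ and is continuous (at any curve not exhibiting some degenerate tangency behavior which Brownian motion a.s.\ avoids) with respect to the topology on curves modulo time parametrization.

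First I would reduce to the Brownian estimate by rescaling. For a Brownian motion $\mcl B$ started from the origin, apply Lemma~\ref{lem-brownian-exit} with $r = \delta^{\ell-1}$ to conclude that with probability at least $1 - \psi_0(\delta)$ (for some superpolynomial $\psi_0$), $\mcl B$ requires at least $\lfloor \delta^{-(2-\zeta)(\ell-1)}\rfloor$ increments of size $\delta^{\ell-1}$ before exiting $B_1(0)$. By Brownian scaling (spatial scaling by $\delta$), the same statement holds for a Brownian motion started from $\eta(x)$ with ``size $\delta^\ell$ increments'' and ``exit from $B_\delta(\eta(x))$.'' Since this event is measurable with respect to the curve $\mcl B$ viewed modulo time parametrization, and since a.s.\ Brownian motion is a continuity point for this functional (one needs only rule out tangencies to the boundary of $B_\delta$ and to the intermediate circles, which happen with probability zero), the event has continuous indicator under the modulo-time-parametrization topology on Brownian paths.

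Next I would invoke~\cite[Theorem 3.4]{gms-tutte}: the conditional law of $\{\eta(X^{x,\ep}_{j})\}_{j\in\BB N_0}$ given $(h,\eta)$, interpolated piecewise linearly and viewed modulo time parametrization, converges in probability (as $\ep\to 0$) to the law of Brownian motion started from $\eta(x)$, with the convergence uniform over $x\in\mcl V\mcl G^\ep(K)$ for any fixed compact $K\subset\BB C$. Applied to $K = \ol{B_{\rho-\delta/2}}$ together with the preceding paragraph, we obtain that for each $\delta > 0$ fixed, with probability tending to 1 as $\ep\to 0$ the following holds simultaneously for every $x\in\mcl V\mcl G^\ep(B_{\rho-\delta})$: under $\ol{\BB P}_x^\ep$, the probability that the embedded, linearly interpolated walk requires fewer than $\lfloor \delta^{-(2-\zeta)(\ell-1)}\rfloor$ size-$\delta^\ell$ increments to exit $B_\delta(\eta(x))$ is at most $2\psi_0(\delta)$, say. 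Since each size-$\delta^\ell$ displacement of the piecewise linear interpolation between consecutive walk vertices corresponds (up to at most one additional step at each end) to a displacement of the discrete walk $X^\ep$ across Euclidean distance $\delta^\ell$, this bound passes to the discrete stopping times $\tau_k^\ep(\delta^\ell)$ defined in the lemma, possibly at the cost of adjusting $\psi_0$ to a larger superpolynomial function $\psi$ depending only on $\zeta,\ell,\gamma$.

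The main obstacle is handling \emph{uniformity} over the (random, $\ep$-dependent) set of starting points $\mcl V\mcl G^\ep(B_{\rho-\delta})$ together with the passage from the continuum event (quantified over continuous time) to the discrete event (quantified over jump times of the walk). For uniformity, the statement in~\cite[Theorem 3.4]{gms-tutte} is already uniform over compact subsets, so this is essentially free once we fix $\delta$. For the passage from continuum increments to the discrete stopping times $\tau_k^\ep(\delta^\ell)$, the subtlety is that a piecewise-linear interpolant can achieve a Euclidean displacement of $\delta^\ell$ in a way that does not correspond to a single discrete walk step; however, each such displacement forces a vertex-to-vertex displacement of at least $\delta^\ell - o(1)$, where the $o(1)$ is bounded by the maximum Euclidean diameter of the cells of $\mcl G^\ep$ meeting $B_\rho$. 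By Lemma~\ref{lem-cell-diam}, this maximum diameter is at most $\ep^{2/(2+\gamma)^2-\zeta'}$ for an arbitrary small $\zeta' > 0$ with polynomially high probability, which for small enough $\ep$ (depending on $\delta$) is much smaller than $\delta^\ell$. So no vertex-level displacements of size $\delta^\ell$ are lost in the translation, and the estimate~\eqref{eqn-walk-increment} follows.
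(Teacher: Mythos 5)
Your proposal is correct and takes essentially the same route as the paper: apply Lemma~\ref{lem-brownian-exit} with $r=\delta^{\ell-1}$ plus Brownian scaling to get the continuum estimate, then transfer via the quenched convergence modulo time parametrization from~\cite[Theorem 3.4]{gms-tutte}. The paper's proof is terse about the points you flag (that the event is a functional of the trace, continuity at Brownian paths, uniformity over starting points, and the continuum-to-discrete passage via cell-diameter control from Lemma~\ref{lem-cell-diam}); your version fills in those details correctly.
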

\begin{proof}
For $z\in\BB C$, let $\mcl B^z$ be a standard planar Brownian motion started from $z$.
Let $\sigma_0^z(\delta^\ell) = 0$ and inductively let $\sigma_k^z(\delta^\ell)$ for $k\in\BB N$ be the first time $t\geq \sigma_{k-1}^z(\delta^\ell)$ for which $|\mcl B^z_t - \mcl B^z_{\sigma_{k-1}^z(\delta^\ell)}| \geq \delta^\ell$.
By~\eqref{eqn-brownian-exit-lower} of Lemma~\ref{lem-brownian-exit} (applied with $r = \delta^{\ell-1}$)
and Brownian scaling, there exists a superpolynomial function $\psi$ as in the lemma statement such that for each $z\in\BB C$,
\eqb \label{eqn-use-brownian-exit}
\BB P\left[  \text{$\mcl B^z$ exits $B_\delta(z)$ before time $\sigma_{\lfloor \delta^{-  (2-\zeta)(\ell-1) } \rfloor}^z (\delta^\ell)$} \right]   \leq \psi(\delta)  .
\eqe

Now let $\wh X^{z,\ep}$ be the embedded, linearly interpolated random walk as described at the beginning of this section.
By~\cite[Theorem 3.4]{gms-tutte}, the supremum over all $z\in\BB D$ of the Prokhorov distance between the law of $\wh X^{z,\ep}$ and the law of $\mcl B^z$, each stopped at its exit time from $\BB D$, with respect to the topology on curves modulo time parametrization, tends to zero in probability as $\ep\rta 0$.
By combining this with~\eqref{eqn-use-brownian-exit} we obtain~\eqref{eqn-walk-increment} (with $2\psi$ in place of $\psi$, say).
\end{proof}

The estimate~\eqref{eqn-brownian-exit-upper} of Lemma~\ref{lem-walk-increment} also yields a useful estimate for the random walk on $\mcl G^\ep$.

\begin{lem} \label{lem-walk-increment'}
For $\delta \in (0,1)$ and a random walk $X^\ep$ on $\mcl G^\ep$ (with some choice of starting point), let $\sigma_0^\ep(\delta) =0$ and for $k\in\BB N$ inductively let $\sigma_k^\ep(\delta)$ be the first time $j$ after $\sigma_{k-1}^\ep(\delta)$ for which $X_j^\ep \notin \mcl V\mcl G^\ep(B_{\delta }(  X^\ep_{\sigma_{k-1}^\ep(\delta)}  )  )$.
For each $\zeta \in (0,1)$, there is a superpolynomial function $\psi$, depending only on $\zeta, \gamma$, such that with probability tending to 1 as $\ep\rta 0$ (at a rate which depends on $\delta$), it holds for each $x\in \mcl V\mcl G^\ep(B_\rho)$ that
\eqb \label{eqn-walk-increment'}
\ol{\BB P}_x^\ep\left[ \text{$X^\ep$ first exits $\mcl V\mcl G^\ep\left( B_\rho \right)$ before time $\sigma_{\lfloor \delta^{-  (2+\zeta) } \rfloor}^\ep(\delta)$} \right]
\leq \psi(\delta )  .
\eqe
\end{lem}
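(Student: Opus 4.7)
The strategy is identical in spirit to the proof of Lemma~\ref{lem-walk-increment}: I would transfer the Brownian bound~\eqref{eqn-brownian-exit-upper} to the random walk $X^\ep$ via the quenched convergence of $\wh X^{z,\ep}$ to Brownian motion modulo time parametrization established in~\cite[Theorem 3.4]{gms-tutte}. The point that makes such a transfer possible is that the event in~\eqref{eqn-walk-increment'} is a functional only of the trace of the walk (the count of $\delta$-excursions completed before the first exit from $B_\rho$), so it depends only on the curve modulo time parametrization.

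First I would apply~\eqref{eqn-brownian-exit-upper} after Brownian scaling, with $r = \delta/\rho$ in place of $r$, to obtain the following. Let $\mcl B^z$ be a standard planar Brownian motion started from $z$, and let $\varsigma_k^z(\delta)$ be the time of its $k$-th $\delta$-excursion (defined analogously to $\sigma_k^\ep(\delta)$). Then there is a superpolynomial function $\psi_0$, depending only on $\zeta,\rho,\gamma$, so that
\[
\BB P\bigl[\text{$\mcl B^z$ first exits $B_\rho$ before time $\varsigma_{\lfloor \delta^{-(2+\zeta)}\rfloor}^z(\delta)$}\bigr]\leq \psi_0(\delta),\quad\forall z\in\ol B_\rho.
\]
Uniformity in $z$ over $\ol B_\rho$ is immediate: after the first $\delta$-excursion the increments of $\varsigma_k^z(\delta)$ are i.i.d.\ and independent of $z$, and the first increment is only shortened by choosing $z$ near $\bdy B_\rho$, which can only decrease the exit time.

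Next I would invoke~\cite[Theorem 3.4]{gms-tutte}: the conditional law of $\wh X^{z,\ep}$ stopped on exiting $\BB D$, given $(h,\eta)$, converges in probability to the law of $\mcl B^z$ stopped on exiting $\BB D$, uniformly in $z\in\ol B_\rho$, with respect to the Prokhorov topology induced by the local uniform metric on curves modulo time parametrization. Since the event in the lemma depends only on the trace of the walk, I would argue that the functional ``$\wh X^{z,\ep}$ exits $B_\rho$ before its $\lfloor\delta^{-(2+\zeta)}\rfloor$-th $\delta$-excursion'' is continuous under this modulo-time-parametrization topology, at least on a $\mcl B^z$-a.s.\ set of paths. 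Combining the Brownian bound with the convergence then gives, for each fixed $z$ and each fixed $\delta$, the desired tail bound for $\wh X^{z,\ep}$ with an $o_\ep(1)$ error. A union bound over a finite $\ep^\alpha$-net of starting points in $B_\rho$, promoted to all $x\in\mcl V\mcl G^\ep(B_\rho)$ using the uniformity of the convergence and the cell diameter bound of Lemma~\ref{lem-cell-diam}, yields a single superpolynomial $\psi$ depending only on $\zeta,\gamma$ as required.

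The main obstacle is the continuity of the trace functional ``number of $\delta$-excursions before exit from $B_\rho$'' under the topology of convergence modulo time parametrization. To handle this, I would introduce a small tolerance $\delta' = \delta\pm \ep^{\alpha}$ and compare the excursion counts for $\wh X^{z,\ep}$ at scale $\delta$ to those for $\mcl B^z$ at scales $\delta\pm\ep^\alpha$: on the high-probability event that the two curves are within Hausdorff distance $\ep^\alpha$ as sets and that the Brownian motion's $\delta$-excursion endpoints are at distance at least $2\ep^\alpha$ from $\bdy B_\rho$, both functionals agree up to an error of at most a few excursions, which is negligible compared to $\lfloor\delta^{-(2+\zeta)}\rfloor$. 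The required second input --- that $\mcl B^z$'s excursion endpoints avoid $\bdy B_\rho$ at distance $\ep^\alpha$ with probability $1-o_\ep(1)$ --- is a standard harmonic-measure estimate. Choosing $\ep$ small relative to $\delta$ (which is permitted because the rate in the statement is allowed to depend on $\delta$) then closes the argument.
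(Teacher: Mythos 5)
Your proposal is correct and follows essentially the same route as the paper, whose proof of this lemma is a one-line reference to \eqref{eqn-brownian-exit-upper} of Lemma~\ref{lem-brownian-exit} combined with the quenched convergence modulo time parametrization from~\cite[Theorem 3.4]{gms-tutte}, ``via exactly the same argument used in the proof of Lemma~\ref{lem-walk-increment}''; your extra care about the continuity of the excursion-counting functional only makes explicit what the paper leaves implicit. Note only that \eqref{eqn-brownian-exit-upper} bounds the probability of exiting \emph{after} the $\lfloor r^{-(2+\zeta)}\rfloor$-th excursion, which is the direction actually needed (and the one used in Propositions~\ref{prop-walk-cont} and~\ref{prop-no-stuck}); the word ``before'' in the statement appears to be a typo that your write-up inherits, and your uniformity-in-$z$ reasoning is in fact the one appropriate for the ``after'' direction.
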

\begin{proof}
This follows from~\eqref{eqn-brownian-exit-upper} of Lemma~\ref{lem-walk-increment} and~\cite[Theorem 3.4]{gms-tutte} via exactly the same argument used in the proof of Lemma~\ref{lem-walk-increment}.
\end{proof}

We now get a lower bound for exit times which holds with extremely high probability, which is the main input in the proof of Proposition~\ref{prop-tight}.

\begin{lem} \label{lem-exit-lower-conc}
Let $\chi > \frac{2(2+\gamma)}{2-\gamma}$. There is a superpolynomial function $\psi : (0,\infty) \rta (0,\infty)$ such that with probability tending to 1 as $\ep\rta 0$ and then $\delta \rta 0$,
\eqb \label{eqn-exit-lower-conc}
\max_{x \in \mcl V\mcl G^\ep(B_{\rho-\delta})} \ol{\BB P}_x^\ep\left[  \tau_{B_\delta(\eta(x)) }^\ep  <  \ep^{-1} \delta^{\chi }   \right]  \leq   \psi(\delta)    .
\eqe
\end{lem}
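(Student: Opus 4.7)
The plan is to amplify the (relatively weak) single-increment lower bound of Lemma~\ref{lem-min-exit-time-pos} by iterating it over the many successive $\delta^\ell$-increments of the walk guaranteed by Lemma~\ref{lem-walk-increment}, using the strong Markov property. In fact the argument will yield the stronger statement that $\max_x \ol{\BB P}_x^\ep[\tau_{B_\delta(\eta(x))}^\ep < \ep^{-1}\delta^\chi]$ is itself at most a superpolynomial function of $\delta$, from which the displayed bound follows.

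Using the hypothesis $\chi > 2(2+\gamma)/(2-\gamma)$, I first fix an auxiliary exponent $\ell$ in the open interval $\bigl(\tfrac{4}{4-\gamma^2},\,\tfrac{2\chi}{(2+\gamma)^2}\bigr)$; this interval is nonempty precisely because of the hypothesis on $\chi$. I then choose $\zeta > 0$ small enough that both $(2+\gamma)^2\ell/2 + \zeta < \chi$ and $(2-\zeta)(\ell-1) > \ell\gamma^2/2$. With $\ell,\zeta$ fixed, set $T := \ep^{-1}\delta^{(2+\gamma)^2\ell/2+\zeta}$ and $N := \lfloor\delta^{-(2-\zeta)(\ell-1)}\rfloor$. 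A minor technicality---handled by replacing $\rho$ by some $\rho' \in (\rho,1)$ in the applications of the earlier lemmas---is to ensure that every intermediate position visited by the walk before exiting $B_\delta(\eta(x))$ still lies in the domain of applicability of Lemma~\ref{lem-min-exit-time-pos}; I will not dwell on this.

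The core of the proof will combine two estimates that hold simultaneously on an $(h,\eta)$-event of probability tending to $1$ as $\ep \to 0$ and then $\delta \to 0$. First, Lemma~\ref{lem-walk-increment} provides a superpolynomial function $\psi_1$ such that, uniformly in $x \in \mcl V\mcl G^\ep(B_{\rho-\delta})$,
\[
\ol{\BB P}_x^\ep\bigl[\tau_{B_\delta(\eta(x))}^\ep < \tau_N^\ep(\delta^\ell)\bigr] \leq \psi_1(\delta),
\]
so the walk typically completes at least $N$ increments of Euclidean size $\delta^\ell$ before exiting $B_\delta(\eta(x))$. Second, Lemma~\ref{lem-min-exit-time-pos} applied at each stopping time $\tau_{n-1}^\ep(\delta^\ell)$ together with the strong Markov property shows that, conditional on $X^\ep|_{[0,\tau_{n-1}^\ep(\delta^\ell)]}$, the increment duration $\Delta_n := \tau_n^\ep(\delta^\ell) - \tau_{n-1}^\ep(\delta^\ell)$ satisfies
\[
\ol{\BB P}_x^\ep\bigl[\Delta_n \geq T \,\big|\, X^\ep|_{[0,\tau_{n-1}^\ep(\delta^\ell)]}\bigr] \geq \delta^{\ell\gamma^2/2 + o_\zeta(1)}.
\]
Chaining these conditional lower bounds for $n=1,\dots,N$ yields
\[
\ol{\BB P}_x^\ep\bigl[\Delta_n < T \ \text{for all}\ n\leq N\bigr] \leq \bigl(1 - \delta^{\ell\gamma^2/2 + o_\zeta(1)}\bigr)^N \leq \exp\!\bigl(-\delta^{\,\ell\gamma^2/2 - (2-\zeta)(\ell-1) + o_\zeta(1)}\bigr).
\]
Because $(2-\zeta)(\ell-1) > \ell\gamma^2/2$, the exponent of $\delta$ in the last display is strictly negative, making the right-hand side a superpolynomial function $\psi_2(\delta)$. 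On the intersection of the events ``$\tau_{B_\delta(\eta(x))}^\ep \geq \tau_N^\ep(\delta^\ell)$'' and ``some $\Delta_n \geq T$ with $n\leq N$'', one has $\tau_{B_\delta(\eta(x))}^\ep \geq T \geq \ep^{-1}\delta^\chi$, thanks to $(2+\gamma)^2\ell/2 + \zeta < \chi$. A union bound will conclude the proof with $\psi := \psi_1 + \psi_2$.

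The main obstacle is the tension between the two constraints on $\ell$: $\ell > 4/(4-\gamma^2)$ is needed so that $N$ beats the reciprocal of the per-increment success probability $\delta^{-\ell\gamma^2/2}$ by a polynomial factor (making $e^{-Np}$ superpolynomially small), while $\ell < 2\chi/(2+\gamma)^2$ is needed so that the resulting time threshold $T$ still dominates $\ep^{-1}\delta^\chi$. The hypothesis $\chi > 2(2+\gamma)/(2-\gamma)$ is precisely the statement that these two requirements on $\ell$ are simultaneously satisfiable.
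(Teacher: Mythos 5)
Your argument is correct and follows essentially the same route as the paper: the paper also uses Lemma~\ref{lem-walk-increment} to guarantee $\lfloor\delta^{-(2-\zeta)(\ell-1)}\rfloor$ $\delta^\ell$-increments before the $B_\delta$-exit, chains the per-increment lower bound from Lemma~\ref{lem-min-exit-time-pos} via the strong Markov property, and multiplies the complementary probabilities. Your only organizational difference is to fix $\ell\in\bigl(\tfrac{4}{4-\gamma^2},\,\tfrac{2\chi}{(2+\gamma)^2}\bigr)$ up front rather than sending $\ell\downarrow 4/(4-\gamma^2)$ and $\zeta\downarrow 0$ at the end, and you correctly note that the argument in fact yields $\max_x \ol{\BB P}_x^\ep[\,\cdot\,]\leq\psi(\delta)$, which is the bound actually established and used later (the ``$\leq 1-\psi(\delta)$'' in~\eqref{eqn-exit-lower-conc} appears to be a typo).
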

\begin{proof}
Fix $\ell > 1$ and $\zeta >0$.
Define the stopping times $\{\tau_k^\ep(\delta^\ell)\}_{k\in\BB N_0} $ for $X$ as in Lemma~\ref{lem-walk-increment}.
By Lemma~\ref{lem-walk-increment}, there exists a superpolynomial function $\psi_0$, depending only on $\zeta,\ell,\gamma$, such that for each $\delta \in (0,1)$, it holds with probability tending to 1 as $\ep \rta 0$ that
\eqb \label{eqn-use-walk-increment}
\max_{ x\in \mcl V\mcl G^\ep(B_{\rho-\delta}) }
\ol{\BB P}_x^\ep\left[  \tau_{B_\delta(\eta(x))}^\ep < \tau_{\lfloor \delta^{-(2-\zeta)(\ell-1)} \rfloor}^\ep(\delta^\ell) \right]
\leq \psi_0(\delta ) .
\eqe
By the strong Markov property of the walk $X^\ep$ under $\ol{\BB P}_x^\ep$, the $\ol{\BB P}_x^\ep$-conditional law of
$X^\ep$ given $X^\ep|_{[0,\tau_{k-1}^\ep(\delta^\ell)]}$ is that of a simple random walk on $\mcl G^\ep$ started from
$X^\ep_{\tau_{k-1}^\ep(\delta^\ell)}$.
The time $\tau_k^\ep(\delta^\ell) - \tau_{k-1}^\ep(\delta^\ell)$ is the exit time of this walk from $\mcl V\mcl G^\ep( B_{\delta^\ell}(\eta( X_{\tau_{k-1}^\ep(\delta^\ell)} )))$.
By Lemma~\ref{lem-min-exit-time-pos}, we infer that with probability tending to 1 as $\ep\rta 0$ and then $\delta \rta 0$,
\eqb \label{eqn-walk-increment-pos}
\max_{ x\in \mcl V\mcl G^\ep(B_{\rho-\delta}) }
\max_{k\in\BB N}
\ol{\BB P}_x^\ep\left[ \tau_k^\ep(\delta^\ell) < \tau_{B_\delta(x)}^\ep ,\, \tau_k^\ep(\delta^\ell) - \tau_{k-1}^\ep(\delta^\ell) <    \ep^{-1} \delta^{ (2+\gamma)^2 \ell / 2 + \zeta }    \,\big|\, X|_{[0,\tau_{k-1}^\ep(\delta^\ell)]} \right]
\leq 1 - \delta^{\ell \gamma^2/2 + o_\zeta(1) } .
\eqe

By multiplying~\eqref{eqn-walk-increment-pos} over all $k\in \BB N$ and using~\eqref{eqn-use-walk-increment}, we get that with probability tending to 1 as $\ep\rta 0$ and then $\delta \rta 0$,
\allb \label{eqn-walk-increment-mult}
&\max_{ x\in \mcl V\mcl G^\ep(B_{\rho-\delta}) }
\ol{\BB P}_x^\ep\left[   \tau_k^\ep(\delta^\ell) - \tau_{k-1}^\ep(\delta^\ell)    < \ep^{-1} \delta^{(2+\gamma)^2 \ell / 2 + \zeta }    ,\,
\text{$\forall k\in\BB N$ with $\tau_k^\ep(\delta^\ell) < \tau_{B_\delta(x)}^\ep$} \right] \notag\\
&\qquad \leq \psi_0(\delta) +  \left( 1 -  \delta^{\ell  \gamma^2/2 + o_\zeta(1) } \right)^{\delta^{-(2-\zeta)(\ell-1)}} .
\alle
If we choose $\ell > 4/(4-\gamma^2)$, then for a small enough choice of $\zeta \in (0,1)$, the term $ \left( 1 -  \delta^{\ell p\gamma^2/2 + o_\zeta(1) } \right)^{\delta^{-(2-\zeta)(\ell-1)}} $ in~\eqref{eqn-walk-increment-mult} decays faster than any positive power of $\delta$ as $\delta \rta 0$.
From this, we infer that with probability tending to 1 as $\ep\rta 0$ and then $\delta \rta 0$,
\eqb \label{eqn-walk-increment-mult'}
\max_{ x\in \mcl V\mcl G^\ep(B_{\rho-\delta}) }
\ol{\BB P}_x^\ep\left[    \tau_{B_\delta(x)}^\ep <    \ep^{-1} \delta^{ (2+\gamma)^2 \ell / 2  + \zeta }      \right]
\leq \psi(\delta) ,
\eqe
for some superpolynomial function $\psi$ depending only on $\zeta,\ell,\gamma$.
If we choose $\ell$ sufficiently close to $4/(4-\gamma^2)$ and $\zeta$ sufficiently close to zero, then we can make $(2+\gamma)^2 \ell / 2  + \zeta$ as close as we like to $ \frac{2(2+\gamma)}{2-\gamma}$. This gives~\eqref{eqn-exit-lower-conc}.
\end{proof}


\begin{proof}[Proof of Proposition~\ref{prop-walk-cont}]
For $\delta > 0$, let $\wh\sigma_0^{z,\ep}(\delta) = 0$ and for $k\in\BB N$ inductively let $\wh\sigma_k^{z,\ep}(\delta)$ be the first time $t\geq \wh\sigma_{k-1}^{z,\ep}(\delta)$ for which $\wh X^{z,\ep}_t = \eta(x)$ for some $x\in \mcl V\mcl G^\ep \setminus \mcl V\mcl G^\ep( B_\delta( \wh X^{z,\ep}_{ \wh\sigma_{k-1}^{z,\ep}(\delta)}   ) )$ (basically, $\wh \sigma_k^{z,\ep}(\delta)$ is a slight modification of the exit time of $\wh X^{z,\ep}$ from $B_\delta( \wh X^{z,\ep}_{ \wh\sigma_{k-1}^{z,\ep}(\delta)}   )$).
Recall that $\wh \tau^{z,\ep}$ denotes the exit time of $\wh X^{z,\ep}$ from $B_\rho$.
By Lemma~\ref{lem-walk-increment'}, there is a superpolynomial function $\psi_0$ depending only on $\rho,\gamma$ such that with probability tending to 1 as $\ep\rta 0$,
\eqb \label{eqn-increment-count-upper}
\sup_{z\in B_\rho} \BB P\left[ \wh\tau^{z,\ep} \leq \wh\sigma_{\lfloor \delta^{-3} \rfloor}^{z,\ep}(\delta) \,|\, h,\eta \right] \geq 1-\psi_0(\delta)  .
\eqe

Each $\wh \sigma_k^{z,\ep}(\delta)$ is a stopping time for the conditional law of $\wh X^{z,\ep}$ given $(h,\eta)$ satisfying $\wh X^{z,\ep}_{\wh \sigma_k^{z,\ep}(\delta)} \in \eta(\ep\BB Z)$.
By the strong Markov property of the conditional law of $\wh X^{z,\ep}$ given $(h,\eta)$, the conditional law of $\wh\sigma_k^{z,\ep}(\delta) - \wh\sigma_{k-1}^{z,\ep}(\delta)$ given $(h,\eta)$ and $\wh X^{z,\ep}|_{[0,\wh\sigma_{k-1}^{z,\ep}(\delta)]}$ is the same as the conditional law given $(h,\eta)$ of (a slight modification of) the exit time of $\wh X^{w,\ep}$ from $B_\delta(w)$ for $w = \wh X^{z,\ep}_{\wh\sigma_{k-1}^{z,\ep}(\delta)}$.

Let $\psi_1$ be the superpolynomial function as in Lemma~\ref{lem-exit-lower-conc}.
By applying Lemma~\ref{lem-exit-lower-conc} to bound each of the exit times in the preceding paragraph, taking a union bound over all $k\in [1,\delta^{-3}]_{\BB Z}$, and recalling~\eqref{eqn-increment-count-upper}, we find that with probability tending to 1 as $\ep \rta 0$ and then $\delta\rta 0$, the following is true.
Simultaneously for each $z\in B_\rho$, it holds with conditional probability at least $1 - \delta^{-3}\psi_1(\delta) - \psi_0(\delta)$ given $(h,\eta)$ that
\eqb \label{eqn-tight-increment}
\wh\sigma_k^{z,\ep}(\delta) - \wh\sigma_{k-1}^{z,\ep}(\delta) \geq   \delta^\chi ,\quad \text{$\forall k\in\BB N$ such that $\wh\sigma_{k-1}^{z,\ep}(\delta) \leq \wh\tau^{z,\ep}$}.
\eqe
The bound~\eqref{eqn-tight-increment} implies that $\wh X^{z,\ep} $ cannot travel Euclidean distance more than $2\delta$ in less than $\delta^\chi$ units of time (recall that $\wh X^{z,\ep}$ is constant after time $\wh\tau^{z,\ep}$).
We therefore obtain~\eqref{eqn-walk-cont} with $\psi(\delta) = \psi_0(\delta)  +\delta^{-3} \psi_1(\delta)$.
\end{proof}

\subsection{Equicontinuity of laws}
\label{sec-equicont-law}

In this section we establish the other part of the compactness criterion of Lemma~\ref{lem-compact} for the function $P_{\rho,\cdot}^\ep$ of~\eqref{eqn-tight-laws-rho}.

\begin{prop}[Equicontinuity of laws] \label{prop-walk-law-cont}
There exists $\xi = \xi(\rho ,  \gamma) > 0$, and $A = A(\rho,\gamma) > 0$ such that the following is true with probability tending to 1 as $\ep\rta 0$ and then $\delta\rta 0$.
For each $z,w \in  B_\rho $ with $|z-w| \leq \delta $, the Prokhorov distance (w.r.t.\ the local Skorokhod metric) between the conditional laws given $(h,\eta)$ of the stopped processes $t\mapsto \wh X^{z,\ep}_{t\wedge \wh\tau^{z,\ep} } $ and $t\mapsto \wh X^{w,\ep}_{t \wedge \wh\tau^{w,\ep} }$ is at most $A\delta^\xi$.
\end{prop}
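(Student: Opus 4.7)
The plan is to construct, for $z, w \in B_\rho$ with $|z - w| \leq \delta$, an explicit coupling of $\wh X^{z, \ep}$ and $\wh X^{w, \ep}$ under which the two walks agree after a short initial window, and to derive the Prokhorov bound from this coupling. Fix a scale $s = \delta^\alpha$ with $\alpha \in (0,1)$ to be chosen. By Lemma~\ref{lem-cell-diam}, with polynomially high probability as $\ep \rta 0$ all cells meeting $B_\rho$ have Euclidean diameter much smaller than $s$, so both $x_z^\ep$ and $x_w^\ep$ lie in $\mcl V\mcl G^\ep(B_{s/10}(z))$.

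The core step is iterative Wilson's-algorithm disconnection coupling. At iteration $k$, given current walk positions $u_k, v_k$ with $|\eta(u_k) - \eta(v_k)|$ at most a constant multiple of $s$ and both inside $B_{3s}(c_k) \subset B_\rho$ for some center $c_k$, set $A_k := \mcl V\mcl G^\ep \setminus \mcl V\mcl G^\ep(B_{10s}(c_k))$. By Lemma~\ref{lem-walk-dc-uniform} (applied with a center chosen so that $u_k$ sits on an appropriate circle $\bdy B_{s'}(\cdot)$ enclosing $v_k$), with probability at least $p = p(\rho, \gamma) > 0$ the walk from $u_k$ disconnects $v_k$ from $A_k$ before hitting it; by Lemma~\ref{lem-disconnect-coupling} we can then couple the two walks so that they hit $A_k$ at the same vertex with conditional probability at least $p$. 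On success, the walks have reached a common vertex; on failure they exit $\mcl V\mcl G^\ep(B_{10s}(c_k))$ at points within distance $O(s)$, and we iterate with a fresh center formed from the new positions. After $K = \lceil C(\gamma) \log \delta^{-1} \rceil$ attempts, the strong Markov property makes the iterations conditionally independent (given the running vertices) and the failure probability is at most $(1-p)^K \leq \delta^{\xi'}$ for some $\xi' > 0$. During these attempts both walks remain within Euclidean distance $O(sK) = O(\delta^{\alpha - o(1)})$ of $z$.

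The key subtlety is that the Wilson coupling identifies the walks at \emph{different} random times. Let $\tau^z \leq \tau^w$ denote the successful common-hitting times of the two walks, and set $\Delta := \tau^w - \tau^z \geq 0$. Using the strong Markov property for $\wh X^{w, \ep}$ given $(h, \eta)$, we redefine $\wh X^{w, \ep}_t := \wh X^{z, \ep}_{t - \Delta}$ for $t \geq \tau^w$; this preserves the correct marginal law. The post-coupling discrepancy equals $|\wh X^{z, \ep}_{t - \Delta} - \wh X^{z, \ep}_t|$, which by the path-equicontinuity of Proposition~\ref{prop-walk-cont} is at most $O(\Delta^{1/\chi})$ with high conditional probability. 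The time shift $\Delta$ is bounded by the sum across iterations of the walk's exit time from $B_{10s}(c_k)$. By the high-moment bound of Proposition~\ref{prop-exit-moment}, each such exit time has $L^q$-moments of order $s^{2}$ (uniformly in $c_k$), so the total shift is $O(K s^2) = O(\delta^{2\alpha - o(1)})$ with high probability. Hence the post-coupling spatial discrepancy is $O(\delta^{2\alpha/\chi - o(1)})$, and the pre-coupling discrepancy (during the $K$ iterations, where both walks stay in a common disk) is $O(\delta^{\alpha - o(1)})$.

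Combining, on an event of conditional probability at least $1 - O(\delta^{\xi''})$ given $(h, \eta)$, under the coupling $\sup_t |\wh X^{z, \ep}_{t \wedge \wh\tau^{z, \ep}} - \wh X^{w, \ep}_{t \wedge \wh\tau^{w, \ep}}| = O(\delta^{\min(\alpha, 2\alpha/\chi) - o(1)})$. A direct computation of the local uniform metric $\BB d_{\op{Unif}}$ from such a coupling (using the bound $\int_0^\infty e^{-T}\wedge M\,dT \preceq M\log(1/M)$) yields Prokhorov distance at most $A\delta^\xi$ for some $\xi = \xi(\rho, \gamma) > 0$, uniformly in $z, w$. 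The boundary case where $z$ lies within $\delta^{\alpha - o(1)}$ of $\bdy B_\rho$ is handled separately: Proposition~\ref{prop-walk-cont} bounds both stopped paths' Euclidean diameters by $O(\delta^{\alpha-o(1)})$, so the trivial coupling already delivers the desired Prokhorov bound. The main obstacle is the time-synchronization step, requiring upper-tail control on $\Delta$ accumulated across the $K$ iterations; this is precisely what Proposition~\ref{prop-exit-moment} supplies, and is why we invested in bounds for all moments rather than just the expectation.
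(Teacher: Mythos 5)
Your overall strategy is the same as the paper's: couple $\wh X^{z,\ep}$ and $\wh X^{w,\ep}$ via the Wilson's-algorithm disconnection coupling (Lemma~\ref{lem-disconnect-coupling}), iterated over a logarithmic number of attempts to drive the failure probability down to $\delta^{\xi'}$; then absorb the resulting time shift $\Delta$ between the two walks using Proposition~\ref{prop-walk-cont} (path equicontinuity) together with exit-time moment bounds, and treat starting points near $\bdy B_\rho$ separately. The paper packages the iterated disconnection into Lemma~\ref{lem-exit-tv}, using nested dyadic annuli from radius $\delta$ out to $\sqrt\delta$ around a fixed center, rather than your re-centering scheme; both yield $1-(1-p)^{\Theta(\log(1/\delta))}=1-\delta^{\xi'}$, so this is a stylistic difference. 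However, there are two problems.

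First, a scaling slip: you assert the exit time from a Euclidean ball of radius $s$ has $L^q$-moments of order $s^2$. Proposition~\ref{prop-exit-moment} (together with Lemma~\ref{lem-ball-mass}, or equivalently Corollary~\ref{cor-exit-moment-eucl}) gives moments of order $\ep^{-N}s^{Nq}$ with $q<(2-\gamma)^2/2$, which is strictly less than $2$ and degenerates as $\gamma\to 2$. The walk on $\mcl G^\ep$ inherits a quantum clock, not the Euclidean $s^2$ scaling of ordinary Brownian motion. This is recoverable -- the argument only requires a positive power of $\delta$ -- but the stated exponent is wrong.

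Second, and more substantively, the boundary case is not handled correctly. You claim Proposition~\ref{prop-walk-cont} bounds the Euclidean diameters of the stopped paths for $z$ within $\delta^{\alpha-o(1)}$ of $\bdy B_\rho$. Proposition~\ref{prop-walk-cont} is a modulus-of-continuity estimate; it gives a diameter bound only if one already controls the exit time $\wh\tau^{z,\ep}$, and a point close to $\bdy B_\rho$ has no a priori reason to have a small exit time from $B_\rho$ (the walk could wander inward before hitting the boundary). What is actually needed is a disconnection argument: if the walk from a near-boundary vertex $x$ disconnects a small circle around $\eta(x)$ from a larger one before leaving the larger circle, then the disconnecting loop must cross $\bdy B_\rho$, so the walk has already left $\mcl G^\ep(B_\rho)$. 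This is precisely the last assertion of the paper's Lemma~\ref{lem-exit-tv}, proved via the same nested-annulus disconnection that powers Step~1. Citing Proposition~\ref{prop-walk-cont} here leaves a genuine hole.
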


Unlike for the exponent $\chi$ of Proposition~\ref{prop-walk-cont}, our proof of Proposition~\ref{prop-walk-law-cont}
does not give an explicit expression for the exponent $\xi$. We note that a similar continuity result is obtained for Liouville Brownian motion in \cite[Proposition 2.19]{grv-lbm} which allows
the authors to a.s.\ start Liouville Brownian motion from \emph{any} given point.

The basic idea of the proof of Proposition~\ref{prop-walk-cont} (which differs somewhat from \cite{grv-lbm}) is that if $|z-w| < \delta$ then the embedded, linearly interpolated walk $\wh X^{z,\ep}$ is likely to disconnect $w$ from $\bdy B_\rho$ before leaving $B_{\sqrt\delta}(z)$ (similar estimates are proven in Section~\ref{sec-harnack}).
We can then use Lemma~\ref{lem-disconnect-coupling} to couple $\wh X^{z,\ep}$ and $\wh X^{w,\ep}$ in such a way that they agree after their exit times from $B_{\sqrt \delta}(z)$ with high probability (see Lemma~\ref{lem-exit-tv}).
Both of these exit times are likely to be small (Corollary~\ref{cor-exit-moment-eucl}), so the uniform modulus of continuity estimate of Proposition~\ref{prop-walk-cont} tells us that under such a coupling, the uniform distance between $t\mapsto \wh X^{z,\ep}_{t\wedge \wh\tau^{z,\ep} } $ and $t\mapsto \wh X^{w,\ep}_{t \wedge \wh\tau^{w,\ep} }$ is small.

\begin{lem} \label{lem-exit-tv}
There exists $\xi = \xi(\rho ,  \gamma) > 0$, and $A = A(\rho,\gamma) > 0$ such that for each $\delta\in (0,1)$, it holds with polynomially high probability as $\ep\rta 0$ that the following is true.
For each $z\in B_\rho$ and each $x,y\in \mcl V\mcl G^\ep(B_\delta(z)) $, the total variation distance between the conditional laws given $(h,\eta)$ of the following two random variables is at most $A \delta^\xi$:
\begin{itemize}
\item The first vertex of $\mcl G^\ep( \bdy B_{\sqrt\delta}(z) )$ hit by a random walk on $\mcl G^\ep$ started from $x$.
\item The first vertex of $\mcl G^\ep( \bdy B_{\sqrt\delta}(z) )$ hit by a random walk on $\mcl G^\ep$ started from $y$.
\end{itemize}
Furthermore, if $x \in \mcl V\mcl G^\ep(B_\rho \setminus B_{\rho-\delta})$, then it holds with probability at least
$1-A\delta^\xi$ that the random walk on $\mcl G^\ep$ started from $x$ exits $\mcl G^\ep(B_\rho)$ before hitting $\mcl V\mcl G^\ep(\bdy B_{\sqrt\delta}(\eta(x) ))$.
\end{lem}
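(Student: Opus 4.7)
My plan is to prove the two assertions separately. The first, a TV bound, will be proved via a multi-scale disconnection-coupling: I iterate Lemma~\ref{lem-walk-dc-uniform} over geometric scales between $\delta$ and $\sqrt{\delta}$ and then apply Lemma~\ref{lem-disconnect-coupling}. The second, a boundary exit statement, will be proved by transferring a half-plane Brownian motion calculation using the quenched convergence modulo time parametrization from~\cite[Theorem~3.4]{gms-tutte}.

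For the first assertion, I apply Lemma~\ref{lem-disconnect-coupling} to $G = \mcl G^\ep$ with target set $\mcl V\mcl G^\ep \setminus \mcl V\mcl G^\ep(B_{\sqrt{\delta}}(z))$, so that the stopping times $\tau^x$ and $\tau^y$ of that lemma become the first exit times of $X^x$ and $X^y$ from $B_{\sqrt{\delta}}(z)$. The lemma then reduces the TV bound to showing that the conditional probability (given $(h,\eta)$) that $X^x$ disconnects $y$ from $\mcl V\mcl G^\ep \setminus \mcl V\mcl G^\ep(B_{\sqrt{\delta}}(z))$ before $\tau^x$ is at least $1 - A\delta^\xi$. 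To establish this, I consider the geometric scales $s_k := 3^k \delta$ for $k = 1, 2, \ldots, K$ with $K = \lfloor \tfrac{1}{2}\log_3(1/\delta) \rfloor$, chosen so that $s_{K+1} \leq \sqrt{\delta}$. I apply Lemma~\ref{lem-walk-dc-uniform} with $\rho$ replaced by $\rho' := (1+\rho)/2$, ensuring $s_k \in [\ep^\beta, \tfrac{1}{3}\op{dist}(z,\bdy B_{\rho'})]$ for all $z\in B_\rho$ once $\delta$ is sufficiently small and $\ep$ is taken much smaller than $\delta$. By the strong Markov property applied at the first hitting time of $\mcl V\mcl G^\ep(\bdy B_{s_k}(z))$, conditionally on this time being finite the walk makes a loop in $\BB A_{s_k, s_{k+1}}(z)$ before reaching $\mcl V\mcl G^\ep(\bdy B_{s_{k+1}}(z))$ with probability at least $p$, the constant from Lemma~\ref{lem-walk-dc-uniform}. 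Any such loop disconnects $y \in B_\delta(z) \subset B_{s_k}(z)$ from $\mcl V\mcl G^\ep \setminus \mcl V\mcl G^\ep(B_{\sqrt{\delta}}(z))$. Since failure at scale $k$ forces the walk to reach $\bdy B_{s_{k+1}}(z)$, the failure events are nested, and the strong Markov property gives total failure probability at most $(1-p)^K \leq A\delta^\xi$ with $\xi := \log(1/(1-p))/(2\log 3) > 0$.

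For the second assertion, the event that $X^x$ exits $\mcl G^\ep(B_\rho)$ before hitting $\mcl V\mcl G^\ep(\bdy B_{\sqrt{\delta}}(\eta(x)))$ depends only on the walk's trace modulo time parametrization, so by~\cite[Theorem~3.4]{gms-tutte} it suffices to prove the analogous statement for a planar Brownian motion $\mcl B^z$ starting from $z := \eta(x)$. For $z$ with $d := \op{dist}(z, \bdy B_\rho) \leq \delta$, I project $\mcl B^z$ onto the inward unit normal to $\bdy B_\rho$ at the nearest boundary point, which is a 1D Brownian motion started at $d$. The event that $\mcl B^z$ hits $\bdy B_\rho$ before exiting $B_{\sqrt{\delta}}(z)$ contains the 1D event that this projection reaches $0$ before exceeding $\sqrt{\delta}$, which by gambler's ruin has probability $1 - d/\sqrt{\delta} \geq 1 - \sqrt{\delta}$; the curvature of $\bdy B_\rho$ contributes only an $O(\delta)$ correction when $\sqrt{\delta} \ll 1-\rho$. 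To obtain the polynomially-high-probability statement uniformly over $z \in B_\rho$ and pairs $x,y$, I discretize $z$ over a mesh in $B_\rho$ of spacing $\ep^{2\beta}$ and take a union bound, using the robustness of both events under small perturbations of $z$ exactly as in the proof of Lemma~\ref{lem-walk-dc-uniform}. The main obstacle throughout is the case where $z$ is close to $\bdy B_\rho$, which breaks the scale constraint in Lemma~\ref{lem-walk-dc-uniform}; this is the reason for working with $\rho'$ in place of $\rho$ in the multi-scale step.
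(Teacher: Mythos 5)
Your first assertion follows essentially the same route as the paper: iterate a disconnection estimate over geometrically spaced scales between $\delta$ and $\sqrt\delta$, then invoke the Wilson-algorithm coupling of Lemma~\ref{lem-disconnect-coupling}. The paper cites~\cite[Lemma~3.11]{gms-harmonic} rather than Lemma~\ref{lem-walk-dc-uniform}, which is a cleaner choice: Lemma~\ref{lem-walk-dc-uniform} requires disconnection from $\mcl V\mcl G^\ep(\bdy B_{3s}(z))\cup\{x_z^\ep\}$, so the failure event at scale $k$ allows the walk to hit the central vertex $x_z^\ep$ rather than $\bdy B_{s_{k+1}}(z)$, which breaks your claim that ``failure at scale $k$ forces the walk to reach $\bdy B_{s_{k+1}}(z)$.'' This is repairable (iterate at each subsequent upcrossing of $\bdy B_{s_k}(z)$, or use the cleaner variant without the center point), but as written the nesting argument has a gap.

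For the second assertion your approach differs genuinely from the paper's, and has a more serious gap. The paper obtains the boundary-exit statement for free from the \emph{same} disconnection estimate used in the first assertion, via a purely topological observation: when $\eta(x)$ is within distance $\delta$ of $\bdy B_\rho$, the circle $\bdy B_{\sqrt\delta}(\eta(x))$ crosses $\bdy B_\rho$, so any walk trace that disconnects $\bdy B_\delta(\eta(x))$ from $\bdy B_{\sqrt\delta}(\eta(x))$ must itself leave $B_\rho$ (there are vertices of the annulus outside $B_\rho$ that would otherwise connect the two circles). This gives the second assertion on exactly the same high-probability event as the first, with no extra input. Your route via~\cite[Theorem~3.4]{gms-tutte} has two problems. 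First, the claimed containment is false: the event that the projection onto the inward normal hits $0$ before exceeding $\sqrt\delta$ does \emph{not} imply that the Brownian motion hits $\bdy B_\rho$ before exiting $B_{\sqrt\delta}(z)$, because the tangential component of the motion can push the path out of $B_{\sqrt\delta}(z)$ while the projection remains below $\sqrt\delta$. The correct estimate here is the harmonic-measure (Beurling-type) bound that a Brownian motion at distance $d$ from a half-plane boundary exits the ball of radius $r$ through the circular arc with probability $O(d/r)$; the one-dimensional gambler's ruin on the normal projection does not deliver this. Second, the convergence modulo time parametrization from~\cite[Theorem~3.4]{gms-tutte} only holds in probability as $\ep\rta 0$ with no quantitative rate, so it cannot produce the \emph{polynomially} high probability (at fixed $\delta$) demanded by the lemma. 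The paper's topological route sidesteps both issues.
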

\begin{proof}
Let $\rho' = (1+\rho)/2 \in (\rho,1)$.
By~\cite[Lemma 3.11]{gms-harmonic} (which is a variant of Lemma~\ref{lem-walk-dc-uniform}), there exists $\beta =\beta(\gamma ) >0$ and $p = p(\rho,\gamma) \in (0,1)$ such that with polynomially high probability as $\ep \rta 0$, the following is true.
For each $z \in B_{\rho'}$ and each $r\in [\ep^\beta,\rho]$ such that $B_{2r}(z)\subset B_\rho$, we have
\allb \label{eqn-pos-disconnect}
&\min_{x \in \mcl V\mcl G^\ep(B_r(z))}
\ol{\BB P}_x^\ep\big[ \text{$X^\ep$ disconnects $\mcl V\mcl G^\ep(\bdy B_{r/2}(z))$ from $\mcl V\mcl G^\ep(\bdy B_{2r}(z))$} \notag \\
& \qquad\qquad\qquad\qquad\qquad\qquad \text{before hitting $\mcl V\mcl G^\ep(\bdy B_{2r}(z))$} \big]
\geq p .
\alle
Henceforth assume that this is the case and that $\ep$ is small enough (depending on $\delta$) so that $\ep^\beta < \delta /100$.
We also assume that $\delta$ is small enough that $\rho + 2\sqrt\delta < \rho'$, so that $B_{2\sqrt\delta}(z)\subset B_{\rho'}$ for each $z\in B_\rho$ (larger values of $\delta$ can be dealt with by increasing $A$).
We will show that the condition in the lemma statement is satisfied.

We now consider $z\in B_\rho$ and a random walk $X^\ep$ on $\mcl G^\ep$ started from $x\in \mcl V\mcl G^\ep(B_\rho)$.
We apply the strong Markov property of the conditional law of $X^\ep$ given $\mcl G^\ep$ at the exit time of $X^\ep$ from $B_{2^{-k}}(\eta(x))$ for each $k\in\BB N$ such that $2^{-k} \in [2\delta  , \sqrt\delta/2]$. This together with our above constraints on $\ep$ and $\delta$ allows us to apply estimate~\eqref{eqn-pos-disconnect} with $z  =\eta(x)$ and $r = 2^{-k}$, then multiply over all $k$ for which $2^{-k} \in [2\delta  , \sqrt\delta/2]$, to get
\allb \label{eqn-pos-disconnect'}
&\min_{x \in \mcl V\mcl G^\ep(B_{2\delta }(z)}
\ol{\BB P}_x^\ep\big[ \text{$X^\ep$ disconnects $\mcl V\mcl G^\ep(\bdy B_{\delta }(z))$ from $\mcl V\mcl G^\ep(\bdy B_{\sqrt\delta}(z))$
before hitting $\mcl V\mcl G^\ep(\bdy B_{\sqrt\delta}(z ))$} \big] \notag\\
&\qquad \qquad \qquad \qquad \qquad \geq 1 - (1-p)^{\lfloor \log_2 \delta^{-1} \rfloor - 2} \geq 1 - A \delta^\xi
\alle
for constants $\xi ,A > 0$ as in the lemma statement.

If $x,y\in \mcl V\mcl G^\ep(B_\delta(z))$, then by~\eqref{eqn-pos-disconnect'}, it holds with $\ol{\BB P}_x^\ep$-probability at least $1-A\delta^\xi$ that $X^\ep$ disconnects $y$ from $\mcl V\mcl G^\ep(\bdy B_{\sqrt\delta}(z))$ before hitting $\mcl V\mcl G^\ep(\bdy B_{\sqrt \delta}(z ))$. By Lemma~\ref{lem-disconnect-coupling}, this implies the total variation condition in the lemma statement.

To get the last statement, we observe that if $x \in \mcl V\mcl G^\ep(B_\rho \setminus B_{\rho-\delta})$ and the random walk started from $x$ disconnects $\mcl V\mcl G^\ep(\bdy B_{\delta }(\eta(x) ))$ from $\mcl V\mcl G^\ep(\bdy B_{\sqrt\delta}(\eta(x)))$ before hitting $\mcl V\mcl G^\ep(\bdy B_{\sqrt\delta}(\eta(x)))$, then this walk must exit $\mcl G^\ep(B_\rho)$ before hitting $\mcl V\mcl G^\ep(\bdy B_{\sqrt\delta}(\eta(x) ))$. The last statement of the lemma therefore follows from~\eqref{eqn-pos-disconnect'}.
\end{proof}

\begin{proof}[Proof of Proposition~\ref{prop-walk-law-cont}]
\noindent \textit{Step 1: coupling walks so that they agree after exiting a ball of radius $\sqrt\delta$.}
By Lemma~\ref{lem-exit-tv}, there exists $\xi_0 , A_0 > 0$ depending only on $\rho,\gamma$ such that with polynomially high probability as $\ep\rta 0$, the following is true.
For each $z,w\in B_\rho$ with $|z-w| \leq \delta$, the total variation distance between the conditional laws given $(h,\eta)$ of the first vertex of $\mcl V\mcl G^\ep(\bdy B_{\sqrt \delta}(z))$ hit by $\wh X^{z,\ep}$ and the first vertex of $\mcl V\mcl G^\ep(\bdy B_{\sqrt\delta}(z))$ hit by $\wh X^{w,\ep}$ is at most $A_0 \delta^{\xi_0}$.
Due to the strong Markov property of the conditional law of random walk on $\mcl G^\ep$ given $(h,\eta)$, it follows that on this event we can couple the conditional laws given $(h,\eta)$ of $\wh X^{w,\ep}$ and $\wh X^{z,\ep}$ in such a way that the following is true.
If we let $T^z = T^{z,\ep}_\delta$ (resp.\ $T^w = T^{w,z,\ep}_\delta$) be the exit time of $\wh X^{z,\ep}$  (resp.\ $\wh X^{w,\ep}$) from\footnote{We use $B_{2\sqrt\delta}(z)$ instead of $B_{\sqrt \delta}(z)$ here since the points $\eta(x)$ for $x\in\mcl V\mcl G^\ep(\bdy B_{\sqrt\delta}(z))$ are not necessarily contained in $B_{\sqrt\delta}(z)$. However, the cell $\eta([x-\ep,x])$ for each such $x$ must intersect $B_{\sqrt\delta}(z)$ so by Lemma~\ref{lem-cell-diam} it holds with polynomially high probability as $\ep\rta0$ that $\eta(x)$ for each such $x$ is contained in $B_{2\sqrt\delta}(z)$.}
$B_{2\sqrt\delta}(z)$, then
\eqb \label{eqn-walk-coupling0}
\BB P\left[ \wh X^{z,\ep}_{t + T^z} = \wh X^{w,\ep}_{t+T^w},\:\forall t \geq 0 \,\big| h,\eta \right] \geq 1 - A_0 \delta^{\xi_0} .
\eqe
If $z,w \in B_{\rho-2\sqrt \delta}$, then necessarily $T^z \leq \wh\tau^{z,\ep}$ and $T^w\leq \wh\tau^{w,\ep}$ so it follows from~\eqref{eqn-walk-coupling0} that
\eqb \label{eqn-walk-coupling}
\BB P\left[ \wh X^{z,\ep}_{(t + T^z) \wedge \wh\tau^{z,\ep}  } = \wh X^{w,\ep}_{(t+T^w)\wedge \wh\tau^{z,\ep} },\quad\forall t \geq 0 \,\big| h,\eta \right] \geq 1 - A_0 \delta^{\xi_0} .
\eqe
\medskip

\noindent \textit{Step 2: the effect of the initial time increment.}
For most of the rest of the proof we assume that $z,w \in B_{\rho-2\sqrt \delta}$, hence we can couple $\wh X^{z,\ep}$ and $\wh X^{w,\ep}$ so that~\eqref{eqn-walk-coupling} holds. We will treat the case when either $z$ or $w$ belongs to $B_\rho \setminus B_{\rho-2\sqrt\delta}$ at the very end of the proof.

To deduce the proposition statement from~\eqref{eqn-walk-coupling} we need to show that the behavior of $\wh X^{z,\ep}$ and $\wh X^{w,\ep}$ before time $T^z$ and $T^w$, respectively, has a negligible effect on the uniform distance between $t\mapsto \wh X^{z,\ep}_{t\wedge \wh\tau^{z,\ep} } $ and $t\mapsto \wh X^{w,\ep}_{t \wedge \wh\tau^{w,\ep} }$ with high probability when $\delta$ is small.
We first show that the times $T^z$ and $T^w$ are typically small.
To this end, let $0 < q < q' < (2-\gamma)^2/2$.
By Corollary~\ref{cor-exit-moment-eucl} and Markov's inequality, it holds with probability tending to 1 as $\ep\rta 0$ and then $\delta \rta 0$ that the following is true.
For each $z,w\in B_\rho$ with $|z-w| \leq \delta$,
\eqb \label{eqn-walk-law-exit}
\BB P\left[ T^z \leq  \delta^{q/4} \,|\, (h,\eta) \right] \geq 1 -  \delta^{q/4} \quad\text{and} \quad
\BB P\left[ T^w \leq   \delta^{q/4} \,|\, (h,\eta) \right] \geq 1 -  \delta^{q/4} .
\eqe

We next use Proposition~\ref{prop-walk-cont} to bound the effect on the walk from shifting time by at most $\delta^{q/4}$.
Let $\chi > {\frac{2(2+\gamma)}{2-\gamma}}$, as in Proposition~\ref{prop-walk-cont}.
By Proposition~\ref{prop-walk-cont} (applied with $\delta^{q/(4\chi)}$ in place of $\delta$), there is a superpolynomial function $\psi$ depending only on $\rho,\chi$ such that the following is true with probability tending to 1 as $\ep\rta 0$ and then $\delta\rta 0$.
For each $z\in B_\rho$,
\eqb \label{eqn-walk-law-equicont}
\BB P\left[  |\wh X^{z,\ep}_{t} - \wh X^{z,\ep}_{s}| \leq 2\delta^{q/(4\chi)} ,\: \forall s,t\in [0,\wh\tau^{z,\ep}] \: \text{with} \: |s-t| \leq \delta^{q/4} \,|\,  h,\eta \right]
\geq 1 - \psi(\delta) .
\eqe

Let us now condition on $(h,\eta)$ and assume that~\eqref{eqn-walk-coupling} and~\eqref{eqn-walk-law-exit} hold for every $z,w\in B_\rho$ with $|z-w|\leq \delta$ and~\eqref{eqn-walk-law-equicont} holds for every $z\in B_\rho$, which happens with probability tending to 1 as $\ep\rta 0$ and then $\delta \rta 0$.
Fix $z,w\in B_{\rho-2\sqrt\delta}$ with $|z-w|\leq \delta$ and suppose we have coupled $\wh X^{z,\ep}$ and $\wh X^{w,\ep}$ as in~\eqref{eqn-walk-coupling}.
Except on an event whose conditional probability given $(h,\eta)$ is bounded above by $A\delta^\xi$ for $A,\xi$ as in the lemma statement, the events in~\eqref{eqn-walk-coupling} and~\eqref{eqn-walk-law-exit} occur and also the event in~\eqref{eqn-walk-law-equicont} occurs for each of $\wh X^{z,\ep}$ and $\wh X^{w,\ep}$.
If this is the case, then we can bound the uniform distance between $t\mapsto \wh X^{z,\ep}_{t\wedge \wh\tau^{z,\ep} } $ and $t\mapsto \wh X^{w,\ep}_{t \wedge \wh\tau^{w,\ep} }$ as follows.

If $s \in [0,T^z\vee T^w]$, then by the event in~\eqref{eqn-walk-law-exit} we have $s  \leq \delta^{q/4}$ so by~\eqref{eqn-walk-law-equicont} (applied once to each of $z$ and $w$),
\eqb \label{eqn-walk-close-small}
|\wh X^{z,\ep}_s - \wh X^{w,\ep}_s| \leq 4\delta^{q/(4\chi)} + |\wh X^{z,\ep}_0 - \wh X^{w,\ep}_0 | \leq 4\delta^{q/(4\chi)} + \delta \leq 5\delta^{q/(4\chi)} .
\eqe
Alternatively, if $s   >  T^z \vee T^w  $, then we can find $t^z,t^w  \geq 0$ such that $s  = t^z + T^z = t^w + T^w$.
By the event in~\eqref{eqn-walk-law-exit}, $|t^z - t^w| = |T^z - T^w| \leq   \delta^{q/4}$.
By the events in~\eqref{eqn-walk-coupling} and~\eqref{eqn-walk-law-equicont}, we therefore have
\eqb \label{eqn-walk-close-large}
|\wh X^{z,\ep}_{s \wedge \wh\tau^{z,\ep} } - \wh X^{w,\ep}_{s \wedge \wh\tau^{w,\ep} }|
\leq |\wh X^{z,\ep}_{(t^z  + T^z) \wedge \wh\tau^{z,\ep} } - \wh X^{w,\ep}_{(t^z + T^w) \wedge \wh\tau^{w,\ep}} |  +  |\wh X^{w,\ep}_{(t^z + T^w) \wedge \wh\tau^{w,\ep}} - \wh X^{w,\ep}_{(t^w + T^w) \wedge \wh\tau^{w,\ep}} |
\leq 2\delta^{q/(4\chi)} .
\eqe
By~\eqref{eqn-walk-close-small} and~\eqref{eqn-walk-close-large}, the uniform distance between $\wh X^{z,\ep}$ and $\wh X^{w,\ep} $ is at most $5\delta^{q/(4\chi)}$.
By possibly increasing $A$ and/or decreasing $\xi$, we now obtain the proposition statement in the case when $z,w\in B_{\rho-2\sqrt\delta}$.
\medskip

\noindent\textit{Step 3: points close to the boundary.}
If $|z-w|\leq \delta$ and either $z$ or $w$ belongs to $B_\rho\setminus B_{\rho-2\sqrt\delta}$, then $z,w\in B_\rho \setminus B_{\rho - 3\sqrt\delta}$.
By the last assertion of Lemma~\ref{lem-exit-tv} applied with $3\sqrt\delta$ in place of $\delta$, if $z\in B_{\rho-3\sqrt\delta}$ then holds with polynomially high probability as $\ep\rta 0$ that
\eqb \label{eqn-close-bdy-exit}
\BB P\left[ \text{$\wh X^{z,\ep}$ stays in $B_{\sqrt 3 \delta^{1/4}}(z)$ until time $\wh\tau^{z,\ep}$} \right] \geq 1 - 3^\xi A_0 \delta^{\xi_0/2} .
\eqe
If the event in~\eqref{eqn-close-bdy-exit} occurs then $|\wh X^{z,\ep}_{t \wedge\wh\tau^{z,\ep}} - z| \leq \sqrt 3\delta^{1/4}$ for all $t\geq 0$.
Hence we obtain the proposition statement in the case when either $z$ or $w$ belongs to $B_\rho\setminus B_{\rho-3\sqrt\delta}$ after possibly increasing $A$ and/or decreasing $\xi$.
\end{proof}

\subsection{The walk does not get stuck}
\label{sec-no-stuck}

In this subsection we will prove a proposition which will eventually imply the last statement of Proposition~\ref{prop-tight-rho}, which asserts that $\wh X^z$ does not remain at a single point for a positive interval of times.
We will then conclude the proof of Proposition~\ref{prop-tight-rho}.

\begin{prop} \label{prop-no-stuck}
Fix $q \in \left( 0 , (2-\gamma)^2/2\right)$.
There is a superpolynomial function $\psi$ depending only on $\zeta,\gamma$ such that with probability tending to 1 as $\ep\rta 0$ and then $\delta \rta 0$, it holds for each $x\in \mcl V\mcl G^\ep(B_\rho)$ that
\eqb \label{eqn-no-stuck}
\ol{\BB P}_x^\ep \left[ \min_{j\in [0,\tau_{B_\rho}^\ep]_{\BB Z}} \max_{i_1,i_2 \in [j , j + 2\ep^{-1} \delta^q ]_{\BB Z}} |\eta(X_{i_1}^\ep) - \eta(X_{i_2}^\ep)| \geq  \delta \right] \geq 1 - \psi(\delta) .
\eqe
\end{prop}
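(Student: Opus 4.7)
The strategy is to combine the $N$-th moment bound on exit times from small Euclidean balls (Corollary~\ref{cor-exit-moment-eucl}) with Markov's inequality and a union bound over a suitable discretization of $[0,\tau^\ep_{B_\rho}]$. Fix $q'\in(q,(2-\gamma)^2/2)$ and $\rho'=(1+\rho)/2$, and write $T:=2\ep^{-1}\delta^q$. The point of enlarging the radius parameter from $q$ to $q'$ is to create polynomial slack that absorbs errors from Markov's inequality and the union bound.

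First I apply Corollary~\ref{cor-exit-moment-eucl} with $q'$ in place of $q$, $\rho'$ in place of $\rho$, and with radius $2\delta$. With probability tending to $1$ as $\ep\to0$ and then $\delta\to0$ this gives the uniform quenched moment bound $\ol{\BB E}_y^\ep[(\tau^\ep_{B_{2\delta}(\eta(y))})^N]\leq N!\,\ep^{-N}(2\delta)^{Nq'}$ for every $N\in\BB N$ and every $y\in\mcl V\mcl G^\ep(B_{\rho'})$. Markov's inequality then yields the pointwise estimate
\[\ol{\BB P}_y^\ep\!\left[\tau^\ep_{B_{2\delta}(\eta(y))}>T/2\right]\leq N!\,2^{Nq'}\delta^{N(q'-q)},\]
which becomes any prescribed positive power of $\delta$ once $N$ is large. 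In parallel, Proposition~\ref{prop-exit-moment} with $D=B_{\rho'}$ (together with the standard appendix bound on $\sup_z\int_{B_{\rho'}}\log|z-w|^{-1}\,d\mu_h(w)$) gives $\ol{\BB E}_x^\ep[(\tau^\ep_{B_{\rho'}})^N]\leq N!\,\tilde C^N\ep^{-N}$, whence by Markov $\ol{\BB P}_x^\ep[\tau^\ep_{B_{\rho'}}>\delta^{-\nu}\ep^{-1}]\leq N!\,\tilde C^N\delta^{N\nu}$ for any fixed $\nu>0$.

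Now I discretize: set $J:=\{k\lfloor T/4\rfloor:k\in\BB N_0\}\cap[0,\delta^{-\nu}\ep^{-1}]$, which has $|J|\leq 4\delta^{-\nu-q}$ elements. By the strong Markov property applied at each $j_k\in J$ and the pointwise bound above,
\[\ol{\BB P}_x^\ep\!\left[\exists\,j_k\in J\cap[0,\tau^\ep_{B_{\rho'}}]:\;X^\ep_{[j_k,j_k+T/2]}\subset\mcl V\mcl G^\ep(B_{2\delta}(\eta(X^\ep_{j_k})))\right]\leq|J|\,N!\,2^{Nq'}\delta^{N(q'-q)},\]
and combining this with the bound on $\tau^\ep_{B_{\rho'}}$ gives a total failure probability of order $\delta^{N(q'-q)-\nu-q}$. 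Choosing $N$ large enough depending on the desired polynomial order produces a superpolynomial function $\psi(\delta)$ as required. On the complementary good event the interpolation is as follows. Given $j'\in[0,\tau^\ep_{B_\rho}]_{\BB Z}$, pick $j_k\in J$ with $j_k\in[j',j'+T/4]$, which exists since $J$ has spacing $\lfloor T/4\rfloor$. Then $[j_k,j_k+T/2]\subset[j',j'+T]$, and by the good event the walk exits $\mcl V\mcl G^\ep(B_{2\delta}(\eta(X^\ep_{j_k})))$ at some $i\in[j_k,j_k+T/2]$; since $\eta(X^\ep_i)$ lies in a cell disjoint from $B_{2\delta}(\eta(X^\ep_{j_k}))$, we get $|\eta(X^\ep_i)-\eta(X^\ep_{j_k})|\geq 2\delta$, so the diameter of $\eta(X^\ep_{[j',j'+T]})$ is at least $2\delta\geq\delta$.

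The main obstacle will be ensuring $j_k\leq\tau^\ep_{B_{\rho'}}$ for $j'$ close to $\tau^\ep_{B_\rho}$, since $j_k\leq\tau^\ep_{B_\rho}+T/4$ and we need $X^\ep_{j_k}\in\mcl V\mcl G^\ep(B_{\rho'})$ to invoke the quenched small-ball moment bound at time $j_k$. This reduces to the probabilistic lower bound $\tau^\ep_{B_{\rho'}}-\tau^\ep_{B_\rho}\geq T/4=\ep^{-1}\delta^q/2$, which follows from Proposition~\ref{prop-exit-lower} (or its Payley--Zygmund refinement Lemma~\ref{lem-exit-pos}) applied to the walk restarted at time $\tau^\ep_{B_\rho}$: since the expected time to exit $B_{\rho'}$ from any point near $\bdy B_\rho$ is of order $\ep^{-1}$, which vastly exceeds $T/4$, a standard moment-plus-Payley--Zygmund argument upgrades this to a polynomial-probability lower bound, giving a negligible additional error.
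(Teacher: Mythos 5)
Your strategy (deterministic time grid plus strong Markov plus union bound) is genuinely different from the paper's, which instead uses the random stopping times $\sigma_k^\ep(\delta)$ given by successive exits from $\delta$-balls, as in Lemma~\ref{lem-walk-increment'}, and then a union bound over the $\lfloor\delta^{-3}\rfloor$ values of $k$ that can occur before exiting $B_\rho$. Both routes rest on the same quenched moment estimate (Corollary~\ref{cor-exit-moment-eucl}), and your use of Proposition~\ref{prop-exit-moment} to bound $\tau^\ep_{B_{\rho'}}$ via moments and Markov is a legitimate substitute for the paper's appeal to Lemma~\ref{lem-walk-increment'} (which relies on convergence to Brownian motion modulo time parametrization). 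The structural advantage of the paper's stopping-time decomposition, however, is that the good event is indexed only by $k$ with $\sigma^\ep_{k-1}(\delta)\le\tau^\ep_{B_\rho}$, so the walk is automatically in $\mcl V\mcl G^\ep(B_\rho)$ whenever the moment bound is invoked, and the boundary issue you flag in your last paragraph simply never arises.

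That boundary paragraph is where your argument has a genuine gap. You correctly identify that you need $j_k\le\tau^\ep_{B_{\rho'}}$ for the grid point selected in the interpolation step, and you reduce this to the lower bound $\tau^\ep_{B_{\rho'}}-\tau^\ep_{B_\rho}\ge T/4$, but the tools you cite do not deliver it with high probability. Proposition~\ref{prop-exit-lower} bounds only the \emph{conditional expectation} of the exit time, and Lemma~\ref{lem-exit-pos} (the Paley--Zygmund refinement) produces a \emph{lower bound on the probability} that the exit time is large, of size roughly $\delta^{\ell\gamma^2/2+o(1)}$ as in Lemma~\ref{lem-min-exit-time-pos}. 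That is a small positive probability, not a $1-o(1)$ bound; the complementary failure probability is of order $1$, not negligible. A moment bound from above and a Paley--Zygmund bound from below do not combine to give concentration. To actually prove a high-probability lower bound on the time to cross an annulus one needs something stronger, and in this paper the relevant tool is either Proposition~\ref{prop-walk-cont} (which says the walk cannot move macroscopic distance in time $o(1)$) or the worst-case lower bound Lemma~\ref{lem-exit-lower-conc}, both of which require the much heavier $\delta$-increment machinery of Section~\ref{sec-equicont-path}. As written, your proof does not close, and the cleaner fix is to abandon the deterministic grid near $\tau^\ep_{B_\rho}$ and pass, as the paper does, to the adapted stopping times $\sigma_k^\ep(\delta)$, whose definition tracks the walk and hence never steps outside $B_\rho$ while the good event is being applied.
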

\begin{proof}
Fix $q' \in \left(q , (2-\gamma)^2/2\right)$.
By Corollary~\ref{cor-exit-moment-eucl} applied with $q'$ in place of $q$, it holds with probability tending to 1 as $\ep\rta 0$ and then $\delta \rta 0$ that for every $N\in\BB N$,
\eqb \label{eqn-use-exit-moment-eucl}
 \max_{x\in\mcl V\mcl G^\ep(B_\rho)} \ol{\BB E}_x^\ep \left[ (\tau_{B_\delta(\eta(x) )}^\ep)^N  \right]
\leq N! \ep^{-N} \delta^{N  q' } .
\eqe
Basically, the proposition follows by applying~\eqref{eqn-use-exit-moment-eucl}, the Chebyshev inequality, and a union bound in the appropriate manner.

To be precise, define the times $\sigma_k^\ep(\delta)$ for $k\in\BB N_0$ as in Lemma~\ref{lem-walk-increment'}: that is, let $\sigma_0^\ep(\delta) = 0$ and for $k\in \BB N$ inductively let $\sigma_k^\ep(\delta)$ be the first time $j \geq \sigma_{k-1}^\ep(\delta)$ for which $X_j^\ep \notin \mcl V\mcl G^\ep(B_\delta(\eta( X_{\sigma_{k-1}^\ep(\delta)} ) ) )$.
By the Markov property of $X^\ep$ under $\ol{\BB P}_x^\ep$, the $\ol{\BB P}_x^\ep$-conditional law of $\sigma_k^\ep(\delta) - \sigma_{k-1}^\ep(\delta)$ given $X^\ep|_{[0,\sigma_{k-1}^\ep(\delta)]}$  is the same as the $\ol{\BB P}_{X_{\sigma_{k-1}^\ep(\delta)}}$-law of the exit time of $X^\ep$ from $\mcl V\mcl G^\ep(B_\delta( \eta( X_{\sigma_{k-1}^\ep(\delta)}) ) )$.

Using~\eqref{eqn-use-exit-moment-eucl}, the Chebyshev inequality, and a union bound over all $k\in [0,\delta^{-3}]_{\BB Z}$, we find that for any fixed $\zeta \in (0,1)$ and $N\in\BB N$,
\eqb \label{eqn-no-stuck-inc}
 \min_{x\in\mcl V\mcl G^\ep(B_\rho)} \ol{\BB P}_x^\ep\left[   \sigma_k^\ep(\delta) - \sigma_{k-1}^\ep(\delta)  \leq \ep^{-1} \delta^q , \: \forall k \in [0,\delta^{-3}]_{\BB Z} \: \text{with} \: \sigma_{k-1}^\ep(\delta) \leq \tau_{B_\rho}^\ep \right]
\geq 1 - N! \delta^{N(q'-q)-3} .
\eqe
On the other hand, it follows from Lemma~\ref{lem-walk-increment'} (applied with $\zeta  = 1$) that there is a superpolynomial function $\psi_0$ such that with probability tending to 1 as $\ep\rta 0$ and then $\delta \rta 0$,
\eqb \label{eqn-no-stuck-count}
 \min_{x\in\mcl V\mcl G^\ep(B_\rho)} \ol{\BB P}_x^\ep\left[  \sigma_{B_\rho}^\ep \leq  \sigma_{\lfloor \delta^{-3} \rfloor}^\ep (\delta)  \right]
\geq 1 - \psi_0(\delta)  .
\eqe

By sending $N\rta\infty$ in~\eqref{eqn-no-stuck-inc} and combining with~\eqref{eqn-no-stuck-count}, we find that there is a superpolynomial function $\psi$ such that with probability tending to 1 as $\ep\rta 0$ and then $\delta\rta 0$,
\eqb \label{eqn-no-stuck-inc'}
 \min_{x\in\mcl V\mcl G^\ep(B_\rho)} \ol{\BB P}_x^\ep\left[   \sigma_k^\ep(\delta) - \sigma_{k-1}^\ep(\delta)  \leq \ep^{-1} \delta^q , \: \forall k \in \BB N \: \text{with} \: \sigma_{k-1}^\ep(\delta) \leq \tau_{B_\rho}^\ep \right]
\geq 1 - \psi(\delta) .
\eqe
If the event in~\eqref{eqn-no-stuck-inc'} occurs for some $x\in \mcl V\mcl G^\ep(B_\rho)$, then  every interval of times of the form $[a,b] \subset [0,\tau_{B_\rho}^\ep]$ with $b-a \geq 2\ep^{-1} \delta^q$ must contain $[\sigma_{k-1}^\ep(\delta) , \sigma_k^\ep(\delta)]$ for some $k\in \BB N$ with $\sigma_{k-1}^\ep(\delta) \leq \tau_{B_\rho}^\ep$.
By definition, $\eta(X^\ep)$ travels Euclidean distance at least $\delta$ during each such time interval, so we get~\eqref{eqn-no-stuck}.
\end{proof}

\begin{proof}[Proof of Proposition~\ref{prop-tight-rho}]
Define $P_{\rho,\cdot}^\ep : \BB C\rta \op{Prob}(C([0,\infty),\BB C))$ as in~\eqref{eqn-tight-laws-rho} and let $\wt P_{\rho,\cdot}^\ep$ be a continuous approximation of $P_{\rho,\cdot}^\ep$ as in the discussion just after~\eqref{eqn-cont-space-def}.
Since $P_{\rho,\eta(x)}^\ep =  \wt P_{\rho,\eta(x) }^\ep$ for each $x\in\ep\BB Z$ it is easily seen that if $\ep_n\rta 0$ is a sequence along which $\wt P_{\rho,\cdot}^{\ep_n}$ converges in law, then also $P_{\rho,\cdot}^{\ep_n}$ converges in law to the same limit.
Therefore, to prove tightness for $P_{\rho,\cdot}^\ep $ we only need to prove tightness for $\wt P_{\rho,\cdot}^\ep$.
The subsequential limits will be continuous since continuity is preserved under uniform convergence.

By the Prokhorov theorem, we need to show that for each $\zeta > 0$, there is a compact
subset $\mcl K_\zeta$ of the space $\mcl C$ of continuous functions $\BB C \rta \op{Prob}(C([0,\infty) , \BB C))$ such that for each small enough $\ep > 0$, we have $\BB P[\wt P_{\rho,\cdot}^\ep \in \mcl K_\zeta] \geq 1-\zeta$.
As a consequence of Lemma~\ref{lem-compact},
 it suffices to establish the following.
\begin{enumerate}
\item (Equicontinuity of laws) For each $L   >1$, there exists $\delta > 0$ such that for each small enough $\ep > 0$, it holds with (annealed) probability at least $1-1/L$ that for each $z,w\in B_\rho$ with $|z-w|\leq \delta$, the Prokhorov distance between the laws $P_{\rho,z}^\ep$ and $P_{\rho,w}^\ep$ is at most $1/L$. \label{item-tight-laws}
\item (Equicontinuity of paths) For each $z \in B_\rho$ and each $L > 1$, there exists $\delta = \delta(z,L) > 0$ such that for each small enough $\ep > 0$, the following is true. With (annealed) probability at least $1-1/L$, \label{item-tight-paths}
\eqbn
  |\wh X^{z,\ep}_s -\wh X^{z,\ep}_s| \leq 1/L , \quad\forall s,t\in [0,\wh\tau^{z,\ep}] \quad\text{with} \quad   |s-t| \leq \delta .
\eqen
\end{enumerate}
While this seems very similar to the statement of Lemma~\ref{lem-compact},
there is in fact a difference: the compactness criterion of that lemma require us to check the
equicontinuity of laws and paths conditions with \emph{quenched} probability at least $1-\zeta$ (i.e.,
conditioned on $h$ and $\eta$), whereas we claim here it suffices to check these conditions with \emph{annealed}
(i.e., unconditional) probability. That this is sufficient follows from a dyadic argument
similar to the one in the proof of Lemma~\ref{lem-compact}: in particular, we apply the conditions with $L = 2^k/\zeta$ for $k\in\BB N$, then take a union bound over $k$.

Recall that for $z\notin B_\rho$, $P_{\rho,z}^\ep$ is simply the point mass at the constant $t\mapsto x_z^\ep$, which is why we only need to consider $z\in B_\rho$.
Conditions~\ref{item-tight-laws} and~\ref{item-tight-paths} follow from Propositions~\ref{prop-walk-law-cont} and~\ref{prop-walk-cont}, respectively.
Finally, the last assertion of the proposition (that $\wh X^z$ does not stay at a single point for a positive interval of time until after exiting $B_\rho$) is immediate from Proposition~\ref{prop-no-stuck}.
\end{proof}

\subsection{Proof of Proposition~\ref{prop-tight}}
\label{sec-tight-proof}

We now deduce Proposition~\ref{prop-tight} from Proposition~\ref{prop-tight-rho} and the scale invariance property of the $\gamma$-quantum cone.
For $b > 0$, let $R_b := \sup\left\{ r > 0  : h_r(0) + Q\log r =  \frac{1}{\gamma} \log b \right\} $ and $h^b := h(R_b\cdot) + Q \log R_b -\frac{1}{\gamma} \log b$, as in Lemma~\ref{lem-cone-scale}.
Also let $\eta^b := R_b^{-1} \eta(b\cdot)$.
By Lemma~\ref{lem-cone-scale} and the scale invariance of the law of $\eta$, we have $(h^b,\eta^b) \eqD (h,\eta)$.
Consequently,
\eqb \label{eqn-scale-cells}
\{\eta([x-\ep,x]) : x\in \ep\BB Z \} \eqD \{\eta^b([ x- \ep,  x]) \}_{x\in \ep\BB Z} = \{R_b^{-1} \eta([y-b\ep ,y]) \}_{y \in b\ep\BB Z} .
\eqe
For $u\in\BB C$ and $\ep > 0$, we define the linearly interpolated walk
\eqb \label{eqn-scale-field}
\wh X^{b,u,\ep}_t : = R_b^{-1} \wh X^{R_b u , \ep}_{t/b} ,
\eqe
which is related to $(h^b,\eta^b)$ in the same manner that $\wh X^{u,\ep}$ is related to $(h,\eta)$.

For $u\in B_\rho$, let $\wh\tau^{b,u,\ep}$ be the first time that the walk $\wh X^{b,u,\ep}_t$ hits a point of the form $\eta(y)$ for $y\in \mcl V\mcl G^\ep \setminus \mcl V\mcl G^\ep( B_{ \rho} )$.
Equivalently, for $z\in B_{R_b\rho} $, we have that $b \wh\tau^{b, z / R_b ,\ep}$ is the first time that the walk $ \wh X^{z,\ep} $ hits a point of the form $\eta(x)$ for $x\in \mcl V\mcl G^\ep \setminus \mcl V\mcl G^\ep( B_{ R_b \rho} )$.

Since $(h^b,\eta^b)$ and $(h,\eta)$ generate the same $\sigma$-algebra, it follows from~\eqref{eqn-scale-cells} that the function
\eqb \label{eqn-scale-law}
P_{\rho,\cdot}^{b,\ep} : u \mapsto \left\{\text{Conditional law of $t\mapsto \wh X^{b,u,\ep}_{t\wedge \wh\tau^{b,u,\ep}}$ given $(h,\eta)$} \right\}
\eqe
has the same law as $P_{\rho,\cdot}^\ep$.
Therefore, Proposition~\ref{prop-tight-rho} implies that the laws of the functions~\eqref{eqn-scale-law} are tight and if $P_{\rho,\cdot}^{(b)}$ is a subsequential limit in law of these functions, then a.s.\ $P_{\rho,\cdot}^{(b)}$ is continuous and the following is true.
If $z\in\BB C$ and $\wh X^z$ is a path with the law $P_{\rho,z}^{(b)}$, then a.s.\ $\wh X^z$ does not stay at a single point for a positive interval of time before it exits $B_\rho$.

Unpacking the definitions, we see that for any $b >0$ the laws of the functions
\eqb
z \mapsto \left\{\text{Conditional law given $(h,\eta)$ of $\wh X^{z,\ep}$ stopped upon exiting $B_{\rho R_b}$} \right\}
\eqe
are tight. Moreover, if $P^{(b)}$ is any subsequential limit in law of these functions, then a.s.\ $P$ is continuous and the following is true.
If $z\in\BB C$ and $\wh X^z$ is a path with the law $P^{(b)}_z$, then a.s.\ $\wh X^z$ does not stay at a single point for a positive interval of time before it exits $B_{\rho R_b}$. Since $R_b\rta \infty$ a.s.\ as $b\rta\infty$, the conclusion of Proposition~\ref{prop-tight} now follows.
\qed

\section{Identifying the subsequential limit}
\label{sec-characterization}

We equip the space of distributions on $\BB C$ with the usual weak topology (whereby convergence of distributions is equivalent to convergence of the integrals against all smooth compactly supported text functions).
We also equip the space of continuous curves $\BB R\rta\BB C$ with the local uniform topology.
We know from Proposition~\ref{prop-tight} that the laws of the functions $P^\ep : \BB C\rta \op{Prob}(C([0,\infty),\BB C))$ of~\eqref{eqn-tight-laws} are tight.
By the Prokhorov theorem also the joint laws of $(h,\eta,P^\ep)$ as $\ep$ varies are tight.

Before we explain the main ideas behind the identification of $(h, \eta, P)$, we point out that \emph{a priori} we do not even know that $P$ is determined by $(h, \eta)$. To explain what is the issue, note first that if we used the known tightness of $P^\ep$ \emph{given} $(h, \eta)$, then any subsequential limit $P$ of the \emph{family of laws} $P^\ep$ could only be random insofar as $(h, \eta)$ is random: i.e., $P$ would necessarily be determined by $(h, \eta)$. However, this approach would force us to deal with possibly \emph{random} subsequences $\ep$ along which the convergence to $P$ occurs and would add a serious layer of complication to our arguments. Instead, we prefer to work with the law of the triplet $(h, \eta, P^\ep)$ which is also tight and consider a subsequential limit $(h, \eta, P)$.  The trade-off, however, is that we do not know \emph{a priori} that $P$, viewed as a random variable in the space $\mcl C$ of continuous functions from $\BB C$ to $\op{Prob}( C([0, \infty) , \BB C))$, is measurable with respect to $(h, \eta)$: indeed, it is straightforward to construct examples of random variables $(X, Y_n)$ converging to $(X,Y)$ in distribution with $Y_n$ measurable with respect to $X$, but $Y$ not measurable with respect to $X$. This lack of \emph{a priori} measurability will require specific arguments in various places (especially in Section~\ref{sec-lbm-conv}).

We want to show that if $(h,\eta,P)$ is a subsequential limit of these joint laws, then $P_z$ is the law of the $\gamma$-Liouville Brownian motion associated with $h$ started from $z$ for each $z\in\BB C$, up to a linear time-change. To do this, we first establish some basic properties (such as the Markov property) of any subsequential limit in Section~\ref{sec-ssl-properties}. These properties are easy consequences of basic properties of random walk on random planar maps.
We then prove a general uniqueness criterion for Markovian time-changed Brownian motions in Section~\ref{sec-bm-unique} (using general theory of Markov processes).
This criterion will show that $P_z$ for $z\in \BB C$ is the law as that of Liouville Brownian motion pre-composed with the linear time change $t\mapsto c t$ for some random $c>0$ which does not depend on $c$, but which may depend on $(h,\eta,P)$. In Section~\ref{sec-lbm-conv} we will argue that $c$ is a.s.\ equal to a deterministic constant.
Then, we will change the time scaling in order to eliminate the dependence of this constant on the choice of subsequence, as discussed at the end of Section~\ref{sec-outline}.
This will complete the proof of Theorem~\ref{thm-lbm-conv}.
Finally, in Section~\ref{sec-plane-to-disk} we will explain how to deduce the disk version of the theorem statement, Theorem~\ref{thm-lbm-conv_disk}, from Theorem~\ref{thm-lbm-conv}.

We now record some definitions which will play a fundamental role in this section.
In each definition, we let $P : \BB C \rta \op{Prob}\left(C([0,\infty),\BB C) \right) \in \mcl{C}$ be a function from $\BB C$ to the set of probability measures on continuous paths $[0,\infty)\rta\BB C$ such that $P_z$ for each $z\in\BB C$ is supported on paths started from $z$.
We recall that the topology on the space of such functions $P$ is defined in Section~\ref{sec-tight}.

\begin{defn} \label{def-markovian}
We say that $P$ is \emph{Markovian} if the following is true.
Let $z\in \BB C$ and let $X^z$ be sampled from $P_z$. For each $s\geq 0$, the conditional law of $X_{s+\cdot}^z$ given $X^z|_{[0,s]}$ is a.s.\ equal to $P_{X_s}$.
\end{defn}

\begin{defn} \label{def-time-change}
We say that $P$ defines a \emph{time-changed Brownian motion} if the following is true.
Let $z\in \BB C$ and let $X^z$ be sampled from $P_z$.
Then there is a standard planar Brownian motion $\mcl B$ started from $z$ and a continuous, strictly increasing process $u \mapsto \tau_u$ with $\tau_0 = 0$ such that a.s.\ $X^z_u  = \mcl B_{\tau_u}$ for each $u\geq 0$.
We say that $P$ defines a \emph{Markovian time changed Brownian motion} if also $P$ is Markovian.
\end{defn}

\begin{defn} \label{def-invariant}
We say that a measure $\mu$ on $\BB C$ is \emph{invariant} for $P$ if for every Borel set $A\subset\BB C$,
\eqb \label{eqn-invariant}
\int_{\BB C} P_z\left[ X_t \in A    \right]  d\mu (z)
=    \mu(A)
\eqe
We say that $\mu$ is \emph{reversible} for $P$ if for every pair of Borel sets $A,B\subset\BB C$,
\eqb\label{eqn-reversible}
\int_B P_z\left[ X_t \in A    \right]  d\mu (z)  = \int_A P_w\left[ X_t  \in B    \right]  d\mu (w) .
\eqe
\end{defn}

We note that if $\mu$ is reversible for $P$, then $\mu$ is stationary for $P$: just take $B = \BB C$ in~\eqref{eqn-reversible}.

\subsection{Properties of subsequential limits}
\label{sec-ssl-properties}

Let $(h,\eta,P)$ be distributed according to a subsequential scaling limit of the joint law of $(h,\eta,P^\ep)$ as $\ep\rta 0$.
Note that $P $ is a \emph{continuous} function $\BB C \rta \op{Prob}(C([0,\infty) , \BB C))$.
In this section we will establish some properties for $P$ which are known to also hold for Liouville Brownian motion due to results in~\cite{grv-lbm}. The key behind these properties is the a priori continuity in the starting point $z$ of the laws $P_z$. More precisely, we show that $P$ defines a Markovian time-changed Brownian motion and the LQG measure $\mu_h$ is reversible for $P$. The proof of the first statement uses only the Markov property for random walk on $\mcl G^\ep$ and the fact that $P^\ep \rta P$ in law w.r.t.\ the local \emph{uniform (in $z$)} topology. The proof of the second statement uses the fact that the counting measure on vertices of $\mcl G^\ep$, weighted by their degree, is reversible for the random walk on $\mcl G^\ep$; the fact that this measure converges weakly to $\mu_h$ under the SLE/LQG embedding (Corollary~\ref{cor-deg-conv}); and the aforementioned convergence $P^\ep\rta P$.

\begin{lem} \label{lem-ssl-markov}
Almost surely, the random function $P$ defines a Markovian time-changed Brownian motion in the sense of Definition~\ref{def-time-change}.
\end{lem}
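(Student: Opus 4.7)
The lemma asserts two properties of the subsequential limit $P$: that each sample $X^z \sim P_z$ is a time-changed standard Brownian motion with strictly increasing time change, and that $P$ satisfies the Markov property of Definition~\ref{def-markovian}. I will verify these separately, with the first being essentially a consequence of known results and the second requiring a careful lift of the discrete Markov property through the weak limit.

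For the time-change structure, I will invoke~\cite[Theorem 3.4]{gms-tutte}: the conditional law given $(h,\eta)$ of $\wh X^{z,\ep}$ converges in probability to the law of a standard planar Brownian motion $\mcl B^z$ started at $z$ in the Prokhorov topology on curves modulo time parametrization, uniformly in $z$ on compacts. Since uniform convergence of paths is strictly finer than convergence modulo time parametrization, any joint subsequential limit of $(h,\eta,P^\ep)$ must have $P_z$ supported on time-changes of $\mcl B^z$. Via Skorokhod representation I may thus write $X^z_u = \mcl B^z_{\tau^z_u}$ for some continuous nondecreasing surjection $\tau^z : [0,\infty)\rta[0,\infty)$; strict monotonicity of $\tau^z$, which is the same as saying $X^z$ has no constant sub-intervals, is exactly the last assertion of Proposition~\ref{prop-tight}.

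For the Markov property, fix $z\in\BB C$, $s\geq 0$, and bounded continuous functionals $F:C([0,\infty),\BB C)\rta\BB R$ and $G:C([0,s],\BB C)\rta\BB R$. I need the identity
\eqb \label{eqn-markov-test}
\BB E_{P_z}\bigl[F(X_{s+\cdot})\,G(X|_{[0,s]})\bigr] = \BB E_{P_z}\Bigl[G(X|_{[0,s]})\,\Psi(X_s)\Bigr], \qquad \Psi(y) := \BB E_{P_y}[F].
\eqe
The strong Markov property for simple random walk on $\mcl G^\ep$ gives the analog of~\eqref{eqn-markov-test} with $P$ replaced by $P^\ep$ exactly when $s\in\ep\BB N_0$, since in that case $\wh X^{z,\ep}_s$ sits at a vertex of $\mcl G^\ep$. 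For general $s$ I round to $s_\ep = \ep\lceil s/\ep\rceil$; the modulus estimate of Proposition~\ref{prop-walk-cont} ensures $|\wh X^{z,\ep}_{s_\ep} - \wh X^{z,\ep}_s|\rta 0$ in quenched probability, and continuity of $F$ and $G$ turns this into a vanishing error in the discrete identity. The convergence $P^\ep \rta P$ in the $L^\infty$ metric on $\mcl C$ gives uniform convergence $\Psi^\ep\rta\Psi$, where $\Psi^\ep(y) := \BB E_{P^\ep_y}[F]$, and $\Psi$ is continuous since $P\in\mcl C$ by Proposition~\ref{prop-tight}; combined with Skorokhod representation of the joint weak convergence, this lets me pass to the limit on both sides of the discrete analog of~\eqref{eqn-markov-test}.

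The main obstacle I anticipate is exactly this discrete-to-continuous time reduction: since the embedded walk is piecewise linear interpolation of a discrete chain, the discrete Markov property only applies cleanly at times in $\ep\BB N_0$, and controlling the rounding error requires simultaneous use of the quenched path modulus (Proposition~\ref{prop-walk-cont}) and the continuity of $\Psi$ inherited from Proposition~\ref{prop-tight}. Once this is handled, the rest is a standard weak convergence argument.
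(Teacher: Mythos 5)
Your proof is correct and follows essentially the same approach as the paper: the time-change structure is deduced from \cite[Theorem 3.4]{gms-tutte} together with the last part of Proposition~\ref{prop-tight}, and the Markov property is lifted through the weak limit by rounding $s$ to $\ep\BB N_0$ and exploiting the \emph{locally uniform} convergence $P^\ep\rta P$ (the paper packages the latter step via the abstract conditional-law-convergence criterion of Lemma~\ref{lem-cond-law-conv}, whereas you unfold it into explicit test functions $F,G$; the two are equivalent). One small remark: invoking Proposition~\ref{prop-walk-cont} for the rounding error is sufficient but not necessary, since tightness of $P_z^\ep$ combined with $s_\ep\rta s$ already controls the discrepancy on a compact modulus of equicontinuity.
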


To check the Markov property for $P$ in proof of Lemma~\ref{lem-ssl-markov}, we will use the following criterion for the convergence of conditional laws (see, e.g.,~\cite[Lemma 4.3]{gp-sle-bubbles}).

\begin{lem} \label{lem-cond-law-conv}
Let $(X_n,Y_n)$ be a sequence of pairs of random variables taking values in a product of separable metric spaces $\Omega_X\times\Omega_Y$ and let $(X,Y)$ be another such pair of random variables such that $(X_n,Y_n) \rta (X,Y)$ in law. Suppose further that there is a family of probability measures $\{P_y : y\in \Omega_Y\}$ on $\Omega_X$, indexed by $\Omega_Y$, such that for each bounded continuous function $f : \Omega_X\rta\BB R $, $y\mapsto \BB E_{P_y}(f)$ is a measurable function of $y\in\Omega_Y$ and
\eqb \label{eqn-cond-law-hyp}
\left( \BB E\left[f(X_n) \, |\, Y_n  \right] , Y_n \right)  \rta \left( \BB E_{P_Y}(f) , Y \right)  \quad \text{in law}.
\eqe
Then $P_Y$ is a regular conditional law of $X$ given $Y$.
\end{lem}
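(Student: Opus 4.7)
The plan is to verify directly that $\BB E_{P_Y}(f)$ serves as a version of the conditional expectation $\BB E[f(X)\mid Y]$ for every bounded continuous $f : \Omega_X \rta \BB R$. Since bounded continuous functions are a measure-determining class on a separable metric space, this identifies $P_Y$ as a regular conditional law of $X$ given $Y$. Throughout, note that by the measurability hypothesis the map $y\mapsto \BB E_{P_y}(f)$ is a Borel function, so the right-hand side of~\eqref{eqn-cond-law-hyp} and the conditional expectations below are well defined.

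Fix bounded continuous $f : \Omega_X \rta \BB R$ and $g : \Omega_Y \rta \BB R$. The first step is to apply the tower property to write
\eqbn
\BB E\left[ f(X_n) g(Y_n) \right] = \BB E\left[ \BB E[f(X_n)\mid Y_n]\, g(Y_n) \right] ,\qquad\forall n\in\BB N.
\eqen
The left-hand side converges to $\BB E[f(X) g(Y)]$, since $(X_n , Y_n) \rta (X,Y)$ in law and $(x,y)\mapsto f(x) g(y)$ is a bounded continuous function on $\Omega_X \times \Omega_Y$. For the right-hand side, I will use the hypothesis~\eqref{eqn-cond-law-hyp}: the map $(u,y)\mapsto u\, g(y)$ is continuous on $\BB R \times \Omega_Y$ and is bounded on the compact set $[-\|f\|_\infty , \|f\|_\infty]\times \Omega_Y$, which contains the support of the laws of $(\BB E[f(X_n)\mid Y_n] , Y_n)$. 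Consequently, \eqref{eqn-cond-law-hyp} together with the portmanteau theorem applied to this bounded continuous test function gives
\eqbn
\BB E\left[ \BB E[f(X_n)\mid Y_n]\, g(Y_n) \right] \rta \BB E\left[ \BB E_{P_Y}(f)\, g(Y) \right].
\eqen
Combining the two preceding displays yields
\eqb \label{eqn-cond-law-conclude}
\BB E\left[ f(X) g(Y) \right] = \BB E\left[ \BB E_{P_Y}(f)\, g(Y) \right] .
\eqe

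Since~\eqref{eqn-cond-law-conclude} holds for every bounded continuous $g$, a standard monotone class argument (bounded continuous functions on the separable metric space $\Omega_Y$ are dense in $L^1$ of the law of $Y$ and are measure-determining on the Borel $\sigma$-algebra) upgrades it to the same identity for every bounded Borel $g$. This is the defining property of conditional expectation, so $\BB E[f(X)\mid Y] = \BB E_{P_Y}(f)$ almost surely for each bounded continuous $f$. Taking a countable measure-determining family of such $f$ and using separability of $\Omega_X$, we conclude that $A\mapsto P_Y(A)$ is a regular conditional distribution of $X$ given $Y$, as claimed.

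The only real subtlety is the passage to the limit in $\BB E\bigl[\BB E[f(X_n)\mid Y_n] g(Y_n)\bigr]$: the first coordinate is not itself a continuous function of $(X_n,Y_n)$, so one cannot directly apply weak convergence to $(X_n,Y_n)$. The hypothesis~\eqref{eqn-cond-law-hyp} is precisely tailored to bypass this obstruction by packaging the needed convergence as joint convergence in law of the pair $(\BB E[f(X_n)\mid Y_n], Y_n)$, after which boundedness makes the passage routine.
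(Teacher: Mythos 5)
Your proof is correct and is essentially the argument that the cited reference supplies; the paper itself does not write out the argument but merely points to \cite[Lemma 4.3]{gp-sle-bubbles} and to the uniqueness of conditional distributions as probability kernels. Your reconstruction via the tower property and the test functions $(x,y)\mapsto f(x)g(y)$ and $(u,y)\mapsto u\,g(y)$, together with the truncation remark and the final monotone-class upgrade, is the standard route and matches what the reference does. One small point worth spelling out in the last paragraph: the monotone-class argument is needed twice — once to pass from bounded continuous $g$ to bounded Borel $g$ (which you do), and once more to pass from bounded continuous $f$ to indicators $f=\mathbf 1_B$ so that $P_Y(B)=\BB P[X\in B\mid Y]$ a.s.\ for each Borel $B$; here one uses that $P_y$ is a probability kernel (which follows from the measurability hypothesis via another monotone-class argument) so that $\BB E_{P_Y}(\cdot)$ is a genuine integral closed under bounded pointwise limits. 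Your appeal to a countable measure-determining family then gives the a.s.\ identity of measures, completing the identification of $P_Y$ as a regular conditional law.
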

\begin{proof}
  This elementary lemma (stated and proved in~\cite[Lemma 4.3]{gp-sle-bubbles}) is a straightforward consequence of the uniqueness of the conditional distribution as a probability kernel (see \cite[Theorem 6.5]{kallenberg} and preceding definitions) since the assumption that $y \mapsto \BB E_{P_y} (f)$ is measurable guarantees that $(y, B) \mapsto P_y(B)$ for $y \in \Omega_Y$ and $B$ a Borel set in $\Omega_X$ is a probability kernel.
\end{proof}

The proof of Lemma~\ref{lem-ssl-markov} also involves the topology on curves viewed modulo time parametrization, which we now briefly recall.
If $\beta_1 : [0,T_{\beta_1}] \rta \BB C$ and $\beta_2 : [0,T_{\beta_2}] \rta \BB C$ are continuous curves defined on possibly different time intervals, we set
\eqb \label{eqn-cmp-metric}
\BB d^{\op{CMP}} \left( \beta_1,\beta_2 \right) := \inf_{\phi } \sup_{t\in [0,T_{\beta_1} ]} \left| \beta_1(t) - \beta_2(\phi(t)) \right|
\eqe
where the infimum is over all increasing homeomorphisms $\phi : [0,T_{\beta_1}]  \rta [0,T_{\beta_2}]$ (the CMP stands for ``curves modulo parameterization"). It is shown in~\cite[Lemma~2.1]{ab-random-curves} that $\BB d^{\op{CMP}}$ induces a complete metric on the set of curves viewed modulo time parameterization.

Since we will be working with curves defined for infinite time, we will use a local variant of the metric $\BB d^{\op{CMP}}$. Suppose $\beta_1 : [0,\infty) \rta \BB C$ and $\beta_2 : [0,\infty) \rta \BB C$ are two curves. For $r > 0$, let $T_{1,r}$ (resp.\ $T_{2,r}$) be the first exit time of $\beta_1$ (resp.\ $\beta_2$) from the ball $B_r(0)$ (or 0 if the curve starts outside $B_r(0)$).
We define
\eqb \label{eqn-cmp-metric-loc}
\BB d^{\op{CMP}}_{\op{loc}} \left( \beta_1,\beta_2 \right) := \int_1^\infty e^{-r} \left( 1 \wedge \BB d^{\op{CMP}}\left(\beta_1|_{[0,T_{1,r}]} , \beta_2|_{[0,T_{2,r}]} \right) \right) \, dr ,
\eqe
so that $\BB d^{\op{CMP}}_{\op{loc}} (\beta^n , \beta) \rta 0$ if and only if for Lebesgue a.e.\ $r > 0$, $\beta^n$ stopped at its first exit time from $B_r(0)$ converges to $\beta$ stopped at its first exit time from $B_r(0)$ with respect to the metric~\eqref{eqn-cmp-metric}.
We note that the definition~\eqref{eqn-cmp-metric} of $\BB d^{\op{CMP}}\left(\beta_1|_{[0,T_{1,r}]} , \beta_2|_{[0,T_{2,r}]} \right)$ makes sense even if one or both of $T_{1,r}$ or $T_{2,r}$ is infinite, provided we allow $\BB d^{\op{CMP}}\left(\beta_1|_{[0,T_{1,r}]} , \beta_2|_{[0,T_{2,r}]} \right) = \infty$ (this doesn't pose a problem due to the $1\wedge$ in~\eqref{eqn-cmp-metric-loc}).

\begin{proof}[Proof of Lemma~\ref{lem-ssl-markov}]
For $z\in\BB C$, let $\wh X^z$ denote a random path whose conditional law given $(h,\eta,P)$ is $P_z$.
Let $\mcl E$ be a sequence of $\ep$'s tending to zero along which $(h,\eta,P^\ep) \rta (h,\eta,P)$ in law as $\mcl E\ni \ep \rta 0$.
By~\cite[Theorem 3.4]{gms-tutte}, we know that for each fixed $z\in \BB C$, the conditional law given $(h,\eta)$ of the process $\wh X^{z,\ep}$ converges in distribution to Brownian motion w.r.t.\ the local topology on curves viewed modulo time parametrization, as defined in~\eqref{eqn-cmp-metric-loc}.
The conditional law given $(h,\eta)$ of $\wh X^{z,\ep}$, viewed modulo time parametrization, is a continuous functional of the conditional law $ P_z^\ep$ of $\wh X^{z,\ep}$ given $(h,\eta)$.
Since $P_z^\ep \rta P_z$ in law along $\mcl E$, we infer that $\wh X^z$ is a continuous time change of Brownian motion.
The last part of Proposition~\ref{prop-tight} implies that the time change function is strictly increasing.
Therefore, $P$ defines a time-changed Brownian motion in the sense of Definition~\ref{def-time-change}.

Now we need to check the Markov property for $\wh X^z$. In general, a limit of a Markov process need not be Markov, as is easily checked. Ultimately, we will obtain the Markov property as a result of
Lemma~\ref{lem-cond-law-conv} and the \emph{uniformity} (in $z$) of the assumed convergence of $P^\ep_z$ to $P_z$. We explain the details more precisely now.
By the Markov property of random walk, for each $\ep > 0$ the conditional law\footnote{We look at time $\ep \lceil t/\ep \rceil$ instead of time $t$ since by the definition of $\wh X^{z,\ep}$, $\wh X_{\ep \lceil t / \ep \rceil}^{z,\ep}$ is of the form $\eta(x)$ for some $x \in \mcl V \mcl G^\ep = \ep \BB Z$. Otherwise, since $\wh X$ is defined by linear interpolation, the Markov property is not easily expressed.}
of $s\mapsto \wh X_{s + \ep \lceil t / \ep \rceil}^{z,\ep}$ given $(h,\eta,P^\ep, \wh X^{z,\ep}|_{[0, {\ep \lceil t / \ep \rceil} ]})$ is $P^\ep_{ \wh X_{\ep \lceil t / \ep \rceil}^{z,\ep}  }$.
Along $\mcl E$, we have that the law of $(h,\eta,P^\ep , \wh X^{z,\ep})$ converges to that of  $(h,\eta , P , \wh X^z)$.

Hence, since the topology for $P$ is the local \emph{uniform} topology, we also have that
this implies the convergence in law
\eqbn
 \left( h,\eta, P^\ep, \wh X^{z,\ep}|_{[0,{\ep \lceil t / \ep \rceil} ]} , \wh X^{z,\ep}_{\cdot + \ep \lceil t / \ep \rceil}  ,  P^\ep_{ \wh X_{\ep \lceil t / \ep \rceil}^{z,\ep} }  \right)
 \rta \left( h,\eta, P, \wh X^z|_{[0,t]} ,  \wh X^z_{\cdot + t}   ,   P_{\wh X_t^z} \right) .
\eqen
We now apply Lemma~\ref{lem-cond-law-conv} with $X_n = \wh X^{z,\ep}_{\cdot + \ep \lceil t / \ep \rceil}$, $Y_n = (h,\eta, P^\ep ,\wh X^{z,\ep}|_{[0, \ep \lceil t / \ep \rceil ]})$, $X =  \wh X^z_{\cdot + t}$, $Y = (h,\eta, P ,\wh X^z|_{[0,t]})$, and $P_Y = P_{\wh X_t^z}$.
This shows that $P_{\wh X_t^z}$ is the conditional law of $\wh X^z_{\cdot + t}$ given $(h,\eta,P, \wh X^z|_{[0,t]})$, as required.
\end{proof}

\begin{lem} \label{lem-ssl-reversible}
Almost surely, the $\gamma$-LQG measure $\mu_h$ is reversible for $P$ (Definition~\ref{def-invariant}).
\end{lem}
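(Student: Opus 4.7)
The plan is to transfer the discrete reversibility of the degree-weighted vertex measure on $\mcl G^\ep$ to the continuum along the convergent subsequence $\mcl E$. For any finite graph, the simple random walk is reversible with respect to the measure assigning mass $\op{deg}(x)$ to each vertex $x$; applied to $\mcl G^\ep$ under the SLE/LQG embedding, this means that the measure $\nu^\ep := \ep \sum_{x\in\mcl V\mcl G^\ep} \op{deg}^\ep(x)\, \delta_{\eta(x)}$ on $\BB C$ satisfies, for any two bounded continuous $\phi,\psi:\BB C\to\BB R$ and any $j\in\BB N_0$,
\eqbn
\int_{\BB C} \phi(z)\, \ol{\BB E}_{x_z^\ep}^\ep[\psi(\eta(X_j^\ep))]\, d\nu^\ep(z) = \int_{\BB C}\psi(z)\, \ol{\BB E}_{x_z^\ep}^\ep[\phi(\eta(X_j^\ep))]\, d\nu^\ep(z).
\eqen
Taking $j = \lceil t/\ep\rceil$, the inner expectations become $G_\psi^\ep(z,t):=\int \psi(X_t)\, dP_z^\ep(X)$ and $G_\phi^\ep(z,t)$ up to an error coming from the linear interpolation between discrete times, which is controlled (uniformly in $z$ with high probability) by the path equicontinuity estimate of Proposition~\ref{prop-walk-cont}. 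Thus the displayed identity rewrites as
\eqbn
\int_{\BB C} \phi(z)\, G_\psi^\ep(z,t)\, d\nu^\ep(z) \;=\; \int_{\BB C} \psi(z)\, G_\phi^\ep(z,t)\, d\nu^\ep(z) + o_\ep(1).
\eqen

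Next I would pass to the limit along $\mcl E$. Two ingredients are needed. First, by Corollary~\ref{cor-deg-conv}, the measures $\nu^\ep$ converge weakly in probability, jointly with $(h,\eta)$, to a (deterministic) constant $C>0$ times $\mu_h$. Second, the assumed convergence $P^\ep \to P$ is in the \emph{uniform-in-$z$} topology on $\mcl C$, so for each fixed $t\geq 0$ and each bounded continuous $\psi$, the bounded continuous functional $z\mapsto G_\psi^\ep(z,t)$ converges locally uniformly to $z\mapsto G_\psi(z,t):=\int \psi(X_t)\, dP_z(X)$. Applying Skorokhod's representation theorem to the joint law $(h,\eta, P^\ep,\nu^\ep)$ along $\mcl E$, we may assume all these convergences hold almost surely; weak convergence of the finite measures $\nu^\ep|_K$ on compacts $K\subset\BB C$ against locally uniformly convergent bounded continuous integrands then yields
\eqbn
C\int_{\BB C}\phi(z)\, G_\psi(z,t)\, d\mu_h(z) \;=\; C\int_{\BB C}\psi(z)\, G_\phi(z,t)\, d\mu_h(z).
\eqen
Dividing by $C>0$ and approximating indicator functions of Borel sets by bounded continuous functions (using the Portmanteau theorem and that $\mu_h$ is a locally finite Radon measure) gives exactly~\eqref{eqn-reversible} for all Borel $A,B\subset \BB C$ and all $t\geq 0$, which is the required reversibility.

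The main obstacle I anticipate is ensuring that the passage to the limit is actually legitimate: one needs \emph{joint} weak convergence of the four-tuple $(h,\eta, P^\ep, \nu^\ep)$ and not merely marginal convergence. This is not an issue because $\nu^\ep$ is a measurable function of $(h,\eta)$, so joint convergence of $(h,\eta,P^\ep,\nu^\ep)$ along $\mcl E$ follows from the joint convergence of $(h,\eta,P^\ep)$ together with Corollary~\ref{cor-deg-conv}. A minor technical point is truncating to compact subsets of $\BB C$ to deal with the fact that $\nu^\ep$ and $\mu_h$ are infinite measures, but since $\phi,\psi$ can be taken to have compact support in the test-function formulation, this poses no real difficulty.
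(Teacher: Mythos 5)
Your proof is correct, and it takes a genuinely different route from the paper's argument to the same destination. Both start from the discrete reversibility of the degree-weighted vertex measure for the walk on $\mcl G^\ep$, both pass to the limit along the subsequence $\mcl E$ via a Skorokhod coupling, and both rely on Corollary~\ref{cor-deg-conv} and the uniform-in-$z$ Prokhorov convergence $P^\ep \to P$. Where you diverge is in the choice of observable: the paper tests reversibility against indicator functions of open and closed sets, which forces it through a chain of semicontinuity arguments — a one-sided inequality with $\ol U, \ol V$ on one side (Lemma~\ref{lem-ssl-reversible0}), a supplementary lemma asserting $\mu_h$-negligibility of nice boundaries (Lemma~\ref{lem-zero-mass}), and then a monotone class argument to pass to arbitrary Borel sets. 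You instead test against bounded continuous compactly supported functions, which makes the limit passage an equality immediately (no semicontinuity, no boundary-mass lemma) at the cost of a final extension from $C_c$-test functions to Borel sets. That final step is standard Radon-measure bookkeeping (two Radon measures agreeing on $C_c(\BB C)$ coincide; then fix one argument, vary the other, and exhaust unbounded sets by monotone convergence), but you should spell it out rather than gesture at "Portmanteau": the actual mechanism is Riesz uniqueness applied twice, once in each variable, with an exhaustion for unbounded $B$. The net technical burden is comparable, but your route is arguably cleaner conceptually since it localizes all the awkwardness to a single, well-understood approximation step at the end.

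Two small points to tidy up. First, Corollary~\ref{cor-deg-conv} is stated for the measure $\mu_h^\ep$ that spreads $\op{deg}^\ep(x)\,\mu_h|_{H_x^\ep}$ over the cell, not for your atomic $\nu^\ep = \ep\sum_x\op{deg}^\ep(x)\,\delta_{\eta(x)}$; you should note that the two differ by a displacement of at most $\max_x\op{diam}(H_x^\ep)$, which tends to zero by Lemma~\ref{lem-cell-diam}, so the atomic measure also converges to $6\mu_h$. Second, when you replace $j=\lceil t/\ep\rceil$ by continuous time $t$, you actually only need the a.s.\ continuity of paths under the limit law $P_z$ (together with the convergence of laws), not the quantitative modulus estimate of Proposition~\ref{prop-walk-cont}; invoking the latter is harmless but overkill. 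Neither point affects correctness.
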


The idea of the proof of Lemma~\ref{lem-ssl-reversible} is as follows. The measure on $\mcl G^\ep$ which assigns mass $\op{deg}^\ep(x)$ to each vertex $x$ is reversible for the simple random walk on $\mcl G^\ep$, and under the embedding $x\mapsto \eta(x)$ this measure converges in law to $\mu_h$ (see Corollary~\ref{cor-deg-conv}).
We want to pass this through to the limit to get Lemma~\ref{lem-ssl-reversible}.
The next lemma is the main technical input in the proof of Lemma~\ref{lem-ssl-reversible}. It gives us the convergence statements necessary to pass the reversibility of random walk on $\mcl G^\ep$ through to the limit.

For the statement of the lemma, we will use the following notation.
For a set $A\subset\BB C$ and $\ep > 0$, let
\eqb \label{eqn-cell-union}
A^\ep := \bigcup_{x\in \mcl V\mcl G^\ep(A)} H_x^\ep
\eqe
be the union of the space-filling SLE cells which intersect $A$.

\begin{lem} \label{lem-sum-conv}
Let $\mcl E$ be a sequence of $\ep$-values tending to zero along which $(h,\eta,P^\ep) \rta (h,\eta,P)$ in law.
There is a coupling of $(h,\eta)$ with a sequence of field / curve pairs $(h^\ep,\eta^\ep) \eqD (h,\eta)$ for $\ep\in\mcl E$ such that the following is true. Define $\mcl G^\ep$ and $\ol{\BB P}_x^\ep$ with $(h^\ep,\eta^\ep)$ in place of $(h,\eta)$.
Let $t>0$, let $U,V\subset\BB C$ be bounded open sets, and define $V^\ep$ as in~\eqref{eqn-cell-union}.
For each $\delta>0$, it holds with probability tending to 1 as $\ep\rta 0$ that
\eqb \label{eqn-sum-conv-upper}
\sum_{x\in   \mcl V\mcl G^\ep(U)  } \ol{\BB P}_x^\ep\left[ \eta^\ep( X_{ \lfloor t / \ep \rfloor}^\ep) \in V^\ep \right] \op{deg}^\ep(x)
\leq  6\int_{\ol U} P_z\left[ \wh X_t  \in \ol V   \right]  d\mu_h(z)   + \delta
\eqe
and
\eqb \label{eqn-sum-conv-lower}
 \sum_{x\in   \mcl V\mcl G^\ep(U)  } \ol{\BB P}_x^\ep\left[ \eta^\ep( X_{ \lfloor t / \ep \rfloor}^\ep) \in V^\ep \right] \op{deg}^\ep(x)
\geq 6\int_U P_z\left[ \wh X_t  \in V  \right]  d\mu_h(z) - \delta .
\eqe
\end{lem}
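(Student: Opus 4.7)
The plan is to (i) promote the distributional convergence along $\mcl E$ to a.s.\ convergence by Skorokhod's theorem, (ii) rewrite the cell-sum on the left as an integral against the Liouville measure via Lemma~\ref{lem-cell-sum-deg}, and (iii) identify the limit of the integrand by a sandwich argument that handles the $\ep$-dependent event $\{Y_t\in V^\ep\}$.

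\textbf{Coupling.} The joint law of $(h,\eta,P^\ep)$ lives on a Polish product (distributional topology for $h$, local uniform for $\eta$, uniform Prokhorov on $\mcl C$ from~\eqref{eqn-cont-space-def} for $P^\ep$), and along $\mcl E$ it converges to the joint law of $(h,\eta,P)$ by assumption. Skorokhod's representation theorem then yields a coupling with $(h^\ep,\eta^\ep,P^\ep)\eqD(h,\eta,P^\ep)$ on a common probability space such that $(h^\ep,\eta^\ep,P^\ep)\to(h,\eta,P)$ a.s.; in particular $\mu_{h^\ep}\to\mu_h$ weakly a.s., and with probability tending to one the cell-diameter bounds of Lemma~\ref{lem-cell-diam} and the degree-sum comparison of Lemma~\ref{lem-cell-sum-deg} hold for this coupling. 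It suffices to prove~\eqref{eqn-sum-conv-upper}--\eqref{eqn-sum-conv-lower} for this coupled sequence.

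\textbf{Sum-to-integral and portmanteau.} Set $f^\ep(z):=P^\ep_z\bigl(\{Y: Y_{\ep\lfloor t/\ep\rfloor}\in V^\ep\}\bigr)$. The definitions of $\wh X^{z,\ep}$ and $P^\ep$ give $\ol{\BB P}_x^\ep[\eta^\ep(X^\ep_{\lfloor t/\ep\rfloor})\in V^\ep]=f^\ep(\eta^\ep(x))$, so the left-hand side of the target inequalities equals $\sum_{x\in\mcl V\mcl G^\ep(U)}f^\ep(\eta^\ep(x))\op{deg}^\ep(x)$. Since $0\le f^\ep\le 1$, Lemma~\ref{lem-cell-sum-deg} (applied exactly as at~\eqref{eqn-gr-sum-typical}) sandwiches this sum, up to a negligible polynomial error, between $6\int_{U\setminus B_{\ep^\alpha}(\bdy U)} f^\ep\,d\mu_h$ and $6\int_{B_{\ep^\alpha}(\ol U)} f^\ep\,d\mu_h$, reducing matters to identifying the $\mu_h$-integral limit of $f^\ep$. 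Fix $\eta>0$ and let $V^{+\eta}:=\{w:\op{dist}(w,\ol V)<\eta\}$. Because the cells cover $\BB C$, $V\subset V^\ep$; by Lemma~\ref{lem-cell-diam}, $V^\ep\subset V^{+\eta}$ for all sufficiently small $\ep$; and the path-time rounding $\ep\lfloor t/\ep\rfloor\to t$ contributes an error tending to zero uniformly in $z$ via Proposition~\ref{prop-walk-cont}. Therefore
\[
P^\ep_z(Y_t\in V)-o_\ep(1)\;\le\;f^\ep(z)\;\le\;P^\ep_z(Y_t\in \ol{V^{+\eta}})+o_\ep(1).
\]
The a.s.\ uniform convergence $P^\ep_z\to P_z$ in Prokhorov and the portmanteau theorem applied to the time-$t$ marginal laws then give, for every $\eta$ outside a countable exceptional set (so that $P_z(\wh X_t\in\bdy V^{+\eta})=0$ for $\mu_h$-a.e.\ $z$),
\[
P_z(\wh X_t\in V)\;\le\;\liminf_\ep f^\ep(z)\;\le\;\limsup_\ep f^\ep(z)\;\le\;P_z(\wh X_t\in \ol{V^{+\eta}}),
\]
uniformly in $z$ on compacts. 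Integrating against $\mu_h$ by bounded convergence and letting $\eta\to 0$ (using $\mu_h(\bdy V)=0$ together with the nonstuck property of $\wh X^z$ from Proposition~\ref{prop-tight}) yields~\eqref{eqn-sum-conv-upper}--\eqref{eqn-sum-conv-lower}.

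\textbf{Main obstacle.} The chief difficulty is the $\ep$-dependence of the event $\{Y_t\in V^\ep\}$, which prevents a naive application of the portmanteau theorem to $(P^\ep_z,P_z)$; this is resolved by the sandwich $V\subset V^\ep\subset V^{+\eta}$ coming from Lemma~\ref{lem-cell-diam}, and by choosing $\eta$ so that $\mu_h$ places no mass on the relevant topological boundary. A secondary technical point is that the convergence $P^\ep\to P$ is only in distribution as elements of $\mcl C$, which is why the Skorokhod upgrade must precede the pointwise estimates for $f^\ep$ before these are combined with the deterministic cell-sum bound of Lemma~\ref{lem-cell-sum-deg}.
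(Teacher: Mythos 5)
Your proposal follows the same overall skeleton as the paper (Skorokhod upgrade, rewrite the degree-weighted cell sum, exploit uniform Prokhorov convergence of $P^\ep$ to replace the $\ep$-dependent integrand, and finish with a portmanteau/semicontinuity argument), but there is a genuine gap in the step that converts the cell sum to an integral against $\mu_h$. You apply Lemma~\ref{lem-cell-sum-deg} to the function $f^\ep(z):=P^\ep_z\bigl(Y_{\ep\lfloor t/\ep\rfloor}\in V^\ep\bigr)$, but this function is piecewise \emph{constant} on the cells $H^\ep_x$ (it depends on $z$ only through the cell that $z$ lands in), so it has jump discontinuities at cell boundaries and is not $\ep^{-\beta}$-Lipschitz; indeed it has no finite Lipschitz constant at all. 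Lemma~\ref{lem-cell-sum-deg} needs the Lipschitz hypothesis in an essential way: its proof compares $f(w^\ep_x)$ and $f(u^{n_\ep\ep}_y)$ across a coarse cell of diameter $\approx \ep^{q/2}$, which fails for a function that can jump by $1$ across cell boundaries at scale $\approx \ep^q$. The situation is also genuinely different from the application at~\eqref{eqn-gr-sum-typical} that you cite, where the relevant function $f_x$ is an explicit (smooth, hence Lipschitz) logarithmic kernel.

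The paper sidesteps this by observing that, precisely \emph{because} $f^\ep$ is constant on cells, the degree-weighted cell sum equals \emph{exactly} (no approximation and no Lipschitz hypothesis) the integral of $z\mapsto P^\ep_z[\,\cdot\,]$ against the cell-wise degree-weighted measure $\mu^\ep_h$ of Corollary~\ref{cor-deg-conv}; the Lipschitz-based Lemma~\ref{lem-cell-sum-deg} enters only indirectly, through the proof that $\mu^\ep_h \to 6\mu_h$ weakly (where the test functions are genuinely continuous). One then replaces the $\ep$-dependent integrand by $P_z[\wh X_t\in B_\zeta(V)]$ via the uniform Prokhorov bound, and only afterwards uses the weak convergence $\mu^\ep_h\to 6\mu_h$ together with the upper semicontinuity of $z\mapsto P_z[\wh X_t\in\ol{B_\zeta(V)}]$. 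A secondary point: your invocation of $\mu_h(\bdy V)=0$ is neither justified for an arbitrary bounded open set $V$ nor needed; the asymmetric appearance of $\ol U,\ol V$ in~\eqref{eqn-sum-conv-upper} and $U,V$ in~\eqref{eqn-sum-conv-lower} is deliberate precisely so that no boundary-mass assumption is required at this stage (boundary-mass considerations only enter later, in Lemma~\ref{lem-zero-mass}, for closed null sets).
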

\begin{proof}
Let $\mu_h^\ep$ be the measure on $\BB C$ whose restriction to each cell $\eta([x-\ep,x])$ for $x\in\ep\BB Z$ is equal to $\op{deg}^\ep(x)$ times $\mu_h|_{\eta([x-\ep,x])}$.
Then by the definition of $\wh X^{z,\ep}$,
\eqb \label{eqn-walk-sum-to-int}
\sum_{x\in   \mcl V\mcl G^\ep(U)  } \ol{\BB P}_x^\ep\left[ \eta( X_{ \lfloor t / \ep \rfloor}^\ep) \in V^\ep \right] \op{deg}^\ep(x)
=    \int_{U^\ep} P_z^\ep\left[ \wh X^{z,\ep}_{\ep\lfloor t / \ep \rfloor} \in V^\ep  \right] d\mu_h^\ep(w)   , \quad\forall t \in \ep\BB N_0 .
\eqe
Here we note that $z\mapsto P_z^\ep$ is constant on the interior of each cell $H_x^\ep$ for $x\in\ep\BB Z$. Basically,~\eqref{eqn-sum-conv-upper} and~\eqref{eqn-sum-conv-lower} follow from the fact that the joint law of $(\mu_h^\ep , P^\ep)$ converges to the joint law of $(6\mu_h, P)$ w.r.t.\ the local weak topology on the first coordinate and the topology of Section~\ref{sec-tight} on the second coordinate.

Let us now make this more precise.
By Corollary~\ref{cor-deg-conv}, as $\ep\rta 0$ the measure $\mu_h^\ep$ converges in probability to $6\mu_h$ w.r.t.\ the local Prokhorov topology. (The factor 6 in front of $\mu_h$ is related to some fairly general topological properties of parabolic unimodular surfaces, see e.g. \cite{angel-hyperbolic}.)
Since $(h,\eta,P^\ep) \rta (h,\eta,P)$ in law and $\mu_h$ is a measurable function of $h$, it follows that $(h,\eta,\mu_h^\ep , P^\ep) \rta (h,\eta,6\mu_h,P)$ in law.
By the Skorokhod theorem, we can find a coupling of a sequence of field / curve pairs $(h^\ep,\eta^\ep) \eqD (h,\eta)$ for $\ep\in\mcl E$ with $(h,\eta)$ such that the following is true. If we construct $\mu_{h^\ep}^\ep$ and $P^\ep$ with $(h^\ep,\eta^\ep)$ in place of $(h,\eta)$, then a.s.\
\eqb
\mu_{h^\ep}^\ep \rta \mu_h \quad \text{and} \quad P^\ep \rta P ,\quad \text{as $\mcl E \ni \ep \rta 0$}.
\eqe
Henceforth fix such a coupling and let all objects with a superscript $\ep$ be defined with $(h^\ep ,\eta^\ep)$ in place of $(h,\eta)$.

By the definition of the topology for $P$ from Section~\ref{sec-tight}, the Prokhorov distance between $P_z^\ep$ and $P_z$ converges to zero uniformly on a neighborhood of $\ol U$. Furthermore, by Lemma~\ref{lem-cell-diam} the {maximal distance from any point in $V^\ep$ to $V$} tends to zero in probability as $\mcl E \ni \ep\rta 0$. Consequently, for each $\zeta  > 0$ it holds with probability tending to 1 as $\mcl E \ni \ep \rta 0$ that
\eqb \label{eqn-prokhorov-uniform}
P_z^\ep\left[ \wh X^{z,\ep}_{\ep\lfloor t / \ep \rfloor} \in V^\ep  \right]
\leq P_z \left[ \wh X_{\ep\lfloor t / \ep \rfloor} \in B_\zeta(V) \right] + \zeta , \quad\forall z \in U^\ep .
\eqe
By the continuity of $z\mapsto P_z$ and $t\mapsto \wh X_t$, the Prokhorov distance between the $P_z$-laws of $\wh X_{\ep \lfloor t/\ep \rfloor}$ and $\wh X_t$ converges in probability to zero as $\mcl E \ni \ep\rta 0$, uniformly over all $z$ in a neigborhood of $\ol U$. Hence,~\eqref{eqn-prokhorov-uniform} holds with $\wh X_t$ in place of $\wh X_{\ep\lfloor t/\ep \rfloor}$.

By integrating this last variant of~\eqref{eqn-prokhorov-uniform} over $U^\ep$ w.r.t.\ $\mu_{h^\ep}^\ep$ and using~\eqref{eqn-walk-sum-to-int}, we get that with probability tending to 1 as $\ep\rta 0$,
\eqb \label{eqn-prokhorov-int}
\sum_{x\in   \mcl V\mcl G^\ep(U)  } \ol{\BB P}_x^\ep\left[ \eta( X_{ \lfloor t / \ep \rfloor}^\ep) \in V^\ep \right] \op{deg}^\ep(x)
 \leq  \int_{U^\ep} P_z \left[ \wh X_t \in B_\zeta(V)  \right] d\mu_{h^\ep}^\ep(z)  + \zeta \mu_{h^\ep}^\ep(U^\ep)  .
\eqe
We know that $\mu_{h^\ep}^\ep \rta \mu_h$ a.s.\ weakly as $\ep\rta 0$ along $\mcl E$ and $z\mapsto P_z \left[ \wh X_t \in \ol{B_\zeta(V)} \right]$ is upper semicontinuous (due to the Prokhorov continuity of $z\mapsto P_z$).
It follows (using a consequence of the portmanteau theorem, see \cite[Exercice 2.6, Chapter I]{Billingsley}) that with probability tending to 1 as $\mcl E \ni \ep\rta 0$, the right side of~\eqref{eqn-prokhorov-int} is at most
\eqb \label{eqn-prokhorov-int'}
\int_{\ol U} P_z \left[ \wh X_t \in \ol{B_\zeta(V)}  \right] d\mu_h(z)  + \zeta \mu_h(\ol V) + \zeta.
\eqe
Recalling~\eqref{eqn-prokhorov-int} and choosing $\zeta$ to be sufficiently small, depending on $\delta$, gives~\eqref{eqn-sum-conv-upper}.

To obtain~\eqref{eqn-sum-conv-lower}, we first use the Prokhorov convergence $P_z^\ep \rta P_z$ as in~\eqref{eqn-prokhorov-uniform} to get that for each $\zeta  > 0$ it holds with probability tending to 1 as $\mcl E \ni \ep \rta 0$ that
\eqb \label{eqn-prokhorov-uniform''}
P_z^\ep\left[ \wh X^{z,\ep}_{\ep\lfloor t / \ep \rfloor} \in V^\ep  \right]
\geq P_z \left[ \wh X_{\ep\lfloor t / \ep \rfloor} \in V\setminus B_\zeta(\bdy V) \right] - \zeta , \quad\forall z \in U^\ep .
\eqe
We then argue exactly as in the proof of~\eqref{eqn-sum-conv-upper} but with the inequality signs reversed.
\end{proof}

We can now prove the following weaker version of Lemma~\ref{lem-ssl-reversible}.

\begin{lem} \label{lem-ssl-reversible0}
Let $U,V\subset\BB C$ be deterministic bounded open sets and let $t > 0$. Almost surely,
\eqb \label{eqn-ssl-reversible0}
   \int_{ U} P_z\left[  \wh X_t  \in  V   \right]  d\mu_h(z) \leq     \int_{\ol V} P_z\left[ \wh X_t  \in \ol U   \right]  d\mu_h(z)    .
\eqe
\end{lem}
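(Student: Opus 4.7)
The plan is to transfer the reversibility of simple random walk on $\mcl G^\ep$ with respect to the degree measure to the limit, using Lemma~\ref{lem-sum-conv} to compare the discrete sums with integrals against $\mu_h$. I work in the Skorokhod coupling of Lemma~\ref{lem-sum-conv} and fix $\zeta > 0$ and a sequence $\mcl E \ni \ep \rta 0$. Setting $N = \lfloor t / \ep \rfloor$, the lower bound of Lemma~\ref{lem-sum-conv} gives that with probability tending to $1$,
\eqbn
S^\ep := \sum_{x \in \mcl V \mcl G^\ep(U)} \ol{\BB P}_x^\ep\left[\eta^\ep(X^\ep_N) \in V^\ep\right] \op{deg}^\ep(x) \;\geq\; 6 \int_U P_z[\wh X_t \in V] \, d\mu_h(z) - \zeta.
\eqen

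The next step is to introduce the endpoint vertex set $\wh V^\ep := \{y \in \mcl V \mcl G^\ep : \eta^\ep(y) \in V^\ep\}$, so that $S^\ep = \sum_{x \in \mcl V\mcl G^\ep(U)} \ol{\BB P}_x^\ep[X_N^\ep \in \wh V^\ep] \op{deg}^\ep(x)$, and apply the reversibility of simple random walk with respect to the degree measure on the multigraph $\mcl G^\ep$ to rewrite
\eqbn
S^\ep = \sum_{y \in \wh V^\ep} \ol{\BB P}_y^\ep\!\left[X_N^\ep \in \mcl V\mcl G^\ep(U)\right] \op{deg}^\ep(y).
\eqen
Now fix any bounded open $V' \supset \ol V$. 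By Lemma~\ref{lem-cell-diam} (and the definition of $\wh V^\ep$, since $y \in \wh V^\ep$ forces $\eta^\ep(y)$ to lie in a cell of Euclidean diameter $o(1)$ which meets $V$, so the cell $H_y^\ep$ is adjacent to that cell and lies within $o(1)$ of $V$), we have $\wh V^\ep \subset \mcl V\mcl G^\ep(V')$ with probability tending to $1$. Moreover, $X_N^\ep \in \mcl V\mcl G^\ep(U)$ directly implies $\eta^\ep(X_N^\ep) \in U^\ep$. Therefore $S^\ep \leq \sum_{y \in \mcl V\mcl G^\ep(V')} \ol{\BB P}_y^\ep[\eta^\ep(X_N^\ep) \in U^\ep] \op{deg}^\ep(y)$, and now I apply the upper bound of Lemma~\ref{lem-sum-conv} with $V'$ and $U$ in place of $U$ and $V$ to obtain $S^\ep \leq 6 \int_{\ol{V'}} P_z[\wh X_t \in \ol U] \, d\mu_h(z) + \zeta$ with probability tending to $1$.

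Combining the upper and lower bounds for $S^\ep$, dividing by $6$, and passing to an a.s.\ subsequential limit in $\ep$, I get
\eqbn
\int_U P_z[\wh X_t \in V] \, d\mu_h(z) \;\leq\; \int_{\ol{V'}} P_z[\wh X_t \in \ol U] \, d\mu_h(z) + \tfrac{1}{3}\zeta
\eqen
almost surely, for each fixed bounded open $V' \supset \ol V$ and each $\zeta > 0$. Taking a countable decreasing sequence $V_k' \downarrow \ol V$ and using dominated convergence (the integrand is bounded by $1$ and $\mu_h(V_k' \setminus \ol V) \rta 0$ since $\mu_h$ is a Radon measure giving zero mass to $\bdy \ol V$ for generic $V$; for the atypical case where $\mu_h(\bdy \ol V) > 0$, replace $V$ by $V_\delta = \{z \in V : \op{dist}(z, \bdy V) > \delta\} \uparrow V$ and take $\delta \rta 0$), followed by $\zeta \rta 0$, gives the claim. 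Since both sides are measurable functions of $(h, \eta, P)$ and this triple has the same joint law inside and outside the coupling, the almost sure inequality holds in the original probability space. The main technical nuisance throughout is the boundary mismatch between $\wh V^\ep$ and $\mcl V\mcl G^\ep(V)$, resolved by the harmless enlargement $V \rightsquigarrow V'$; once this is absorbed, the rest of the argument is just discrete reversibility composed with the two halves of Lemma~\ref{lem-sum-conv}.
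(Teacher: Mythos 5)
Your proof is correct and follows the same basic strategy as the paper: combine the reversibility of simple random walk on $\mcl G^\ep$ with respect to the degree measure with the two halves of Lemma~\ref{lem-sum-conv}. The difference is one of economy. The paper sums the pointwise reversibility identity $\ol{\BB P}_x^\ep[X_j^\ep=y]\op{deg}^\ep(x)=\ol{\BB P}_y^\ep[X_j^\ep=x]\op{deg}^\ep(y)$ directly over $x\in\mcl V\mcl G^\ep(U)$ and $y\in\mcl V\mcl G^\ep(V)$, which immediately produces the symmetric identity
\[
\sum_{x\in\mcl V\mcl G^\ep(U)}\ol{\BB P}_x^\ep\bigl[\eta(X_j^\ep)\in V^\ep\bigr]\op{deg}^\ep(x)=\sum_{y\in\mcl V\mcl G^\ep(V)}\ol{\BB P}_y^\ep\bigl[\eta(X_j^\ep)\in U^\ep\bigr]\op{deg}^\ep(y),
\]
and then applies the lower bound of Lemma~\ref{lem-sum-conv} to the left side and the upper bound (with $U$ and $V$ swapped) to the right side; the asymmetric closures $\ol U,\ol V$ in the conclusion~\eqref{eqn-ssl-reversible0} already absorb the boundary slop that Lemma~\ref{lem-sum-conv} introduces, so no further enlargement is needed. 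By contrast, you first apply the lower bound of Lemma~\ref{lem-sum-conv}, then reformulate $S^\ep$ via the vertex set $\wh V^\ep=\{y:\eta^\ep(y)\in V^\ep\}$ before invoking reversibility, and since $\wh V^\ep$ is only approximately $\mcl V\mcl G^\ep(V)$, you are forced into the enlargement $V\rightsquigarrow V'$ and an extra limiting argument $V'\downarrow\ol V$ (together with the distracting and unnecessary aside about $\mu_h(\bdy\ol V)$ --- since $\ol{V_k'}\downarrow\ol V$ under a sensible choice such as $V_k'=B_{1/k}(\ol V)$, dominated convergence hands you exactly $\int_{\ol V}$ regardless). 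Your concern about the mismatch between $\{X_N^\ep\in\mcl V\mcl G^\ep(V)\}$ and $\{\eta^\ep(X_N^\ep)\in V^\ep\}$ is a legitimate subtlety that the paper elides notationally, so the extra care is not wrong --- but performing the summed reversibility first, exactly as the paper does, lets both sides land simultaneously in the form Lemma~\ref{lem-sum-conv} expects and avoids the detour entirely.
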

\begin{proof}
We know that the uniform measure on vertices $x$ of $\mcl G^\ep$ weighted by $\op{deg}^\ep(x)$ is reversible for the random walk on $\mcl G^\ep$. That is, for any $x\in\mcl G^\ep$ and any $j\in\BB N$,
\eqb \label{eqn-walk-reversible0}
\ol{\BB P}_x^\ep\left[ X_j^\ep =  y \right] \op{deg}^\ep(x)   = \ol{\BB P}_y^\ep\left[ X_j^\ep =  x \right] \op{deg}^\ep(y) .
\eqe
Summing~\eqref{eqn-walk-reversible0} over all $x\in \mcl G^\ep(U)$ and $y\in \mcl G^\ep(V)$ shows that for each $j\in\BB N$,
\eqb \label{eqn-walk-reversible}
\sum_{x\in   \mcl V\mcl G^\ep(U)  } \ol{\BB P}_x^\ep\left[ \eta( X_j^\ep) \in V^\ep \right] \op{deg}^\ep(x)
= \sum_{y \in \mcl V\mcl G^\ep(V)} \ol{\BB P}_y^\ep\left[ \eta(X_j^\ep) \in U^\ep \right] \op{deg}^\ep(y) .
\eqe
We now take $j = \ep \lfloor t /\ep \rfloor$ and apply Lemma~\ref{lem-sum-conv} to get~\eqref{eqn-ssl-reversible0}.
\end{proof}

The inequality of Lemma~\ref{lem-ssl-reversible0} could be converted into an equality if we knew that $\mu_h(\bdy V ) = 0$ and $\int_{\ol V} P_z\left[ \wh X_t  \in \bdy U \right]  d\mu_h(z) = 0$. This is achieved by means of the following lemma.

\begin{lem} \label{lem-zero-mass}
Let $A ,  U \subset \BB C$ be deterministic bounded sets such that $A$ is closed with zero Lebesgue measure and $U$ is open.
For each $t>0$, a.s.\
\eqb \label{eqn-zero-mass}
\mu_h(A) = 0 \quad \text{and} \quad \int_U P_z\left[\wh X_t \in A \right] \,d\mu_h(z) = 0 .
\eqe
\end{lem}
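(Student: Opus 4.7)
The plan is to prove the two assertions separately, with the first being standard and the second following by approximating $A$ from above by open neighborhoods and applying Lemma~\ref{lem-ssl-reversible0}.

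For the first assertion $\mu_h(A) = 0$, I would appeal to the standard property of the $\gamma$-LQG area measure that it does not charge deterministic sets of zero Lebesgue measure. Indeed, since the $\gamma$-quantum cone field $h$ is locally absolutely continuous with respect to a whole-plane GFF away from the origin, and $\mu_h$ is defined via circle averages as in Section~\ref{sec-lqg-prelim}, the expectation $\BB E[\mu_h(A \cap B_R \setminus B_{1/R})]$ can be written as an integral of a locally bounded density against Lebesgue measure on $A \cap B_R \setminus B_{1/R}$ using standard GMC computations (see, e.g.,~\cite{berestycki-gmt-elementary}). Since $A$ is bounded with zero Lebesgue measure, this expectation vanishes for every $R > 1$, and taking $R \to \infty$ together with $\mu_h(\{0\}) = 0$ yields $\mu_h(A) = 0$ almost surely.

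For the second assertion, set $V_n := \{z \in \BB C : \op{dist}(z, A) < 1/n\}$. Each $V_n$ is a bounded open set (bounded because $A$ is bounded), and since $A$ is closed we have $\bigcap_n \ol{V_n} = A$. Applying Lemma~\ref{lem-ssl-reversible0} with $V = V_n$ for each $n \in \BB N$ and taking the intersection of the resulting probability-$1$ events, it holds almost surely that for every $n$,
\begin{equation*}
\int_U P_z[\wh X_t \in A] \, d\mu_h(z) \leq \int_U P_z[\wh X_t \in V_n] \, d\mu_h(z) \leq \int_{\ol{V_n}} P_w[\wh X_t \in \ol U] \, d\mu_h(w) \leq \mu_h(\ol{V_n}).
\end{equation*}
On this event, since $\ol{V_n}$ decreases to $A$ and $\mu_h(\ol{V_1}) < \infty$ (because $\ol{V_1}$ is bounded), continuity of $\mu_h$ from above gives $\mu_h(\ol{V_n}) \to \mu_h(A)$, which equals zero by the first assertion. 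Letting $n \to \infty$ in the displayed inequality yields $\int_U P_z[\wh X_t \in A] \, d\mu_h(z) = 0$, as desired.

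I do not anticipate a serious obstacle: the first assertion is a standard GMC fact and the second is a clean monotone-approximation argument. The only point requiring a little care is that Lemma~\ref{lem-ssl-reversible0} produces an almost-sure event depending on the choice of open set $V$, so one must invoke it along the countable family $\{V_n\}$ and intersect the resulting events; this causes no problem because the family is countable and $U, t$ are fixed throughout.
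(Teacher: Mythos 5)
Your proposal is correct and follows essentially the same route as the paper: both establish $\mu_h(A)=0$ by the standard GMC fact that LQG area measure does not charge deterministic Lebesgue-null sets, and both handle the second assertion by applying Lemma~\ref{lem-ssl-reversible0} with $V$ equal to the open $\delta$-neighborhood of $A$ (the paper writes $B_\delta(A)$, you write $V_n$ with $\delta = 1/n$) and then shrinking the neighborhood. Your remark about intersecting the countably many almost-sure events is a minor precision that the paper leaves implicit, but it is the same argument.
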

\begin{proof}
{It is immediate from the basic properties of the LQG area measure that $\mu_h(A) = 0$ a.s. Indeed, the analogous statement with $h$ replaced by a zero-boundary GFF an any open domain in $\BB C$ follows from, e.g.,~\cite[Proposition 1.2]{shef-kpz} or \cite[Theorem 1.1]{berestycki-gmt-elementary}. The statement that $\mu_h(A) = 0$ a.s.\ follows from this and straightforward absolute continuity considerations. }

To get the second assertion in~\eqref{eqn-zero-mass}, fix $\delta > 0$ (which we will eventually send to zero). By Lemma~\ref{lem-ssl-reversible0} applied with $V = B_\delta(A)$, a.s.\
\allb \label{eqn-use-reversible0}
\int_U P_z\left[\wh X_t \in A \right] \,d\mu_h(z)
&\leq \int_{ U} P_z\left[  \wh X_t  \in  B_\delta(A)   \right]  d\mu_h(z) \notag\\
&\leq     \int_{\ol{B_\delta(A)}} P_z\left[ \wh X_t  \in \ol U   \right]  d\mu_h(z) \notag\\
&\leq    \mu_h(\ol{B_\delta(A)})  .
\alle
Since $\mu_h(A) = 0$ and $\ol{B_\delta(A)}$ decreases to $A$ as $\delta \rta 0$ (since $A$ is closed) we can send $\delta \rta 0$ in~\eqref{eqn-use-reversible0} to conclude the proof.
\end{proof}

\begin{proof}[Proof of Lemma~\ref{lem-ssl-reversible}]
Fix $t > 0$. We need to show that a.s.\ for any two Borel sets $A,B\subset\BB C$,
\eqb \label{eqn-ssl-reversible}
\int_{ A} P_z\left[  \wh X_t  \in  B  \right]  d\mu_h(z) =    \int_{B} P_z\left[ \wh X_t  \in  A  \right]  d\mu_h(z)  .
\eqe
By Lemmas~\ref{lem-ssl-reversible0} and~\ref{lem-zero-mass}, \eqref{eqn-ssl-reversible} holds a.s.\ if $A,B$ are fixed open sets whose boundaries have zero Lebesgue measure.
In fact, these lemmas (applied with $U$ and $V$ equal to the interiors of $A$ and $B$) imply that~\eqref{eqn-ssl-reversible} holds a.s.\ if $A,B$ are fixed Borel sets such that $\bdy A$ and $\bdy B$ have zero Lebesgue measure, $\ol A$ is equal to closure of the interior of $A$, and $\ol B$ is equal to closure of the interior of $B$.

We now fix $R >0$ and show that a.s.~\eqref{eqn-ssl-reversible} holds for all Borel sets $A,B\subset B_R(0)$.
We do this using the monotone class theorem.
Let $\mcl A$ be the collection of subsets of $B_R(0)$ which can be expressed in terms of finite unions and intersections of sets of the form $B_r(z)$ or $B_R(0) \setminus B_r(z)$ for $r\in \BB Q\cap (0,\infty)$ and $z\in\BB Q^2$.
Then $\mcl A$ has the following properties. It is countable; it is closed under finite unions and intersections; it generates the Borel $\sigma$-algebra of $B_R(0)$; we have $B_R(0) \setminus A \in \mcl A$ for each $A \in \mcl A$; the boundary of each $A\in\mcl A$ has zero Lebesgue measure; and for each $A\in\mcl A$ the closure $\ol A$ is also equal to the closure of the interior of $A$.
From this, we infer that a.s.~\eqref{eqn-ssl-reversible} holds simultaneously for each $A,B\in \mcl A$.

Due to the monotone convergence theorem, for each fixed $A$ the set of Borel sets $B$ for which~\eqref{eqn-ssl-reversible} holds is closed under increasing unions and intersections.
By two applications of the monotone class theorem (one for each of $A$ and $B$), we get that a.s.~\eqref{eqn-ssl-reversible} holds for all Borel sets $A,B\subset B_R(0)$. Sending $R\rta\infty$ now concludes the proof.
\end{proof}

\subsection{Time-changed Brownian motions are determined by invariant measures}
\label{sec-bm-unique}

We now have two random functions $\BB C\rta \op{Prob}\left(C([0,\infty),\BB C) \right)$ which each define a Markovian time-changed Brownian motion, namely the subsequential limit $P$ above and the function which takes $z\in\BB C$ to the law of Liouville Brownian motion w.r.t.\ $h$ started from $z$. These functions have a common reversible measure, namely the $\gamma$-LQG measure.
We want to show that these two functions are a.s.\ the same up to a global linear time change.

\begin{prop} \label{prop-bm-unique}
Let $P^1 , P^2$ be two continuous mappings $\BB C \rta \op{Prob}\left(C([0,\infty),\BB C) \right)$ which each define a Markovian time changed Brownian motion.
Assume that there is a non-zero measure $\mu$ which is invariant for both $P^1$ and $P^2$.
There is a determinsitic constant $c>0$ such that the following is true.
For each $z\in\BB C$, a process with the law $P_z^1$ can be obtained by composing a process with the law $P_z^2$ by the linear time change $t\mapsto c t$.
\end{prop}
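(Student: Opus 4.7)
The plan is to identify each $P^i$ with a positive continuous additive functional (PCAF) of planar Brownian motion via its inverse time change, and then to use the one-to-one correspondence between PCAFs and their Revuz measures, together with the fact that the Revuz measure is (up to a multiplicative constant) the unique invariant measure of the time-changed process.

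\textbf{Step 1: Encoding each $P^i$ as a PCAF.} Fix $z \in \BB C$ and let $X^{i,z}$ have law $P^i_z$. By Definition~\ref{def-time-change}, we can write $X^{i,z}_t = \mcl B^z_{(\tau^i)^{-1}(t)}$ where $\mcl B^z$ is a standard planar Brownian motion started from $z$ and $\tau^i$ is a continuous, strictly increasing process with $\tau^i_0 = 0$. Setting $A^i_s := \tau^i(s)$, one obtains an additive functional of $\mcl B^z$: indeed, the Markov property of $P^i$ (Definition~\ref{def-markovian}), combined with the fact that the Brownian motion and the time change are built from the same process, forces $A^i$ to satisfy $A^i_{s+u} = A^i_s + A^i_u \circ \theta_s$ for the shift $\theta_s$ on Brownian paths. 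Standard regularization (e.g.\ as in Fukushima--Oshima--Takeda or Blumenthal--Getoor) shows $A^i$ is a PCAF of planar Brownian motion, defined simultaneously for all starting points.

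\textbf{Step 2: Revuz measures and invariance.} To each PCAF $A^i$ is associated its Revuz measure $\mu^i$, characterized (up to the null sets of Brownian motion, which are empty in dimension $2$ for open sets) by
\[
\int_{\BB C} f(z)\, d\mu^i(z) = \lim_{t \downarrow 0} \frac{1}{t} \BB E^{\mathrm{Leb}}\!\left[\int_0^t f(\mcl B_s)\, dA^i_s\right]
\]
for non-negative Borel $f$, where $\BB E^{\mathrm{Leb}}$ denotes expectation for Brownian motion with initial distribution Lebesgue measure. A classical theorem in the theory of time changes of Markov processes (see, e.g., Fukushima--Oshima--Takeda, Theorem 6.2.1, or Blumenthal--Getoor, Chapter V) asserts that the Revuz measure $\mu^i$ is, up to a multiplicative constant, the unique $\sigma$-finite measure which is invariant for the time-changed process $X^i$. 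Since by assumption $\mu$ is a non-zero invariant measure for both $P^1$ and $P^2$, we conclude that there exist constants $c_1, c_2 > 0$ with $\mu^1 = c_1 \mu$ and $\mu^2 = c_2 \mu$.

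\textbf{Step 3: Uniqueness of the PCAF from its Revuz measure and conclusion.} Another classical result (the Revuz correspondence; same references) states that the map $A \mapsto \mu$ from PCAFs to (smooth) Radon measures is a bijection. Combining this with Step 2, $\mu^1 = (c_1/c_2) \mu^2$ forces $A^1 = (c_1/c_2) A^2$ identically as PCAFs of planar Brownian motion. Setting $c := c_2/c_1$, we then have $(\tau^1)^{-1}(t) = (\tau^2)^{-1}(ct)$, so the process $t \mapsto X^{2,z}_{ct} = \mcl B^z_{(\tau^2)^{-1}(ct)}$ has law $P^1_z$, which is precisely the conclusion. Since $c_1, c_2$ depend only on the (deterministic) functions $P^1, P^2$ and $\mu$, the constant $c$ is deterministic and independent of the starting point $z$.

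\textbf{Expected main obstacle.} Everything above is essentially a careful bookkeeping exercise once one has the Revuz correspondence. The one subtle point is verifying that the time-change $A^i$ satisfies the additivity and adaptedness needed to be a genuine PCAF of planar Brownian motion (not merely of the pair (Brownian motion, time change)); this requires using that $P^i$ is Markovian to replace the a priori time change by a version which is adapted to the Brownian filtration. A second mildly delicate point is ensuring that the common invariant measure $\mu$ is $\sigma$-finite and charges every non-empty open set, so that the uniqueness statement for invariant measures of the time-changed process applies; both of these will need to be checked (or imposed) before invoking the classical theory.
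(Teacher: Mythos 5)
Your proposal follows essentially the same route as the paper: encode the time change as a PCAF of planar Brownian motion, pass through the Revuz correspondence, and use uniqueness (up to constant multiple) of the invariant measure for a time-changed Brownian motion. Your algebra in Step~3 is correct (your $\tau^i$ plays the role of the paper's $\mcl A$, and the relation $(\tau^1)^{-1}(t) = (\tau^2)^{-1}(ct)$ with $c = c_2/c_1$ gives exactly the claimed linear time change). You also correctly flag the first real obstacle, namely showing that the time change is adapted to the Brownian filtration alone (rather than to the pair $(\mcl B, \tau)$); this is the content of Lemma~\ref{lem-time-change-pcaf}, whose proof uses the strong Markov property together with the observation that the conditional law of $\mcl A$ given $\mcl B$ has conditionally independent increments, hence is Gaussian, hence degenerate since $\mcl A \geq 0$.

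The second ``mildly delicate point'' you flag, however, is not quite the right one. The issue is not $\sigma$-finiteness of $\mu$; rather, the statement that the Revuz measure is the \emph{unique} invariant measure (up to constant multiple) for the time-changed process is not an off-the-shelf consequence of~\cite[Theorem~6.2.1]{fot-dirichlet-forms} (which only gives that the Revuz measure is reversible/invariant). To get uniqueness, the paper invokes the Harris recurrence / ergodic machinery of~\cite[Chapter~X]{revuz-yor}, and a key hypothesis there is the \emph{strong Feller property} of the time-changed process. This is not automatic: the paper proves it in Lemma~\ref{lem-resolvent} via a coupling argument exploiting total variation continuity of Brownian harmonic measure in the starting point together with the Prokhorov continuity of $z \mapsto P_z$. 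Your sketch treats the uniqueness of the invariant measure as a black box and does not identify this hypothesis or explain how to establish it; as written, that step does not go through without it.
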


We note that Proposition~\ref{prop-bm-unique}, applied under the conditional law given $h$, immediately implies Theorem~\ref{T:LBMchar_intro} since the conditional law of LBM given $h$ is a Markovian time-changed Brownian motion which leaves the LQG measure $\mu_h$ invariant~\cite{grv-lbm}.

Proposition~\ref{prop-bm-unique} is a consequence of results in the general theory of positive continuous additive functionals of Brownian motion.
To explain how the proposition is proven, we will need to review some of this theory.

In what follows, for $z\in\BB C$ we let $\BB P_z$ be the law of standard planar Brownian motion $\mcl B $ started from $z$.

\begin{defn} \label{def-pcaf}
A \emph{positive continuous additive functional (PCAF)} of $\mcl B$ is a process $t\mapsto \mcl A_t$, defined for all possible choices of starting points for $\mcl B$, which is adapted to the filtration generated by $\mcl B$ (completed by null events), such that the following is true.
Almost surely, $\mcl A_0 = 0$ and $\mcl A_t$ is continuous and non-decreasing in $t$.
Furthermore, for each $s,t\geq 0$, a.s.\
\eqb
\mcl A_{s+t} = \mcl A_t + \mcl A_s\circ\theta_t
\eqe
where $\theta_t$ is the shift operator on the state space for $\mcl B$ defined by $\theta_t(\mcl B) = \mcl B_{t+\cdot}$.
\end{defn}

Definition~\ref{def-pcaf} is related to Proposition~\ref{prop-bm-unique} by the following lemma.

\begin{lem} \label{lem-time-change-pcaf}
Suppose that $P : \BB C\rta \op{Prob}(C([0,\infty),\BB C))$ is continuous and defines a Markovian time changed Brownian motion as in Definition~\ref{def-markovian}.
Write $\wh{\mcl B}$ for a process with the law $P_z$ for some starting point $z \in \BB C$.
Let $u\mapsto\tau_u$ be the time change used to get from standard Brownian motion to $\wh{\mcl B}$, as in Definition~\ref{def-time-change}, and for $t\geq 0$ let $\mcl A_t = \tau^{-1}(t) := \inf\left\{u \geq 0 : \tau_u = t \right\}$.
Then $\mcl A$ is a PCAF for the Brownian motion $\mcl B_t = \wh{\mcl B}_{\mcl A_t}$.
\end{lem}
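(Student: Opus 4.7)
The plan is to verify directly the four conditions of Definition~\ref{def-pcaf}: the initial value $\mcl A_0 = 0$, continuity and monotonicity of $t \mapsto \mcl A_t$, adaptedness to a filtration making $\mcl B$ a Brownian motion, and the additivity identity $\mcl A_{s+t} = \mcl A_t + \mcl A_s \circ \theta_t$. The first three conditions are essentially encoded in Definition~\ref{def-time-change}; only additivity uses the hypothesized Markov property of $P$ in a nontrivial way.

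For the elementary conditions: by Definition~\ref{def-time-change}, $\tau$ is continuous, strictly increasing, and satisfies $\tau_0 = 0$, so $\tau$ is a homeomorphism of $[0,\infty)$ onto itself. Thus $\mcl A = \tau^{-1}$ inherits $\mcl A_0 = 0$, continuity, and strict monotonicity. For adaptedness, I would work with the filtration $\mcl F_t := \sigma(\wh{\mcl B}_u : u \leq \mcl A_t)$ (augmented by null sets). Since $\mcl B|_{[0,t]}$ is a reparametrization of $\wh{\mcl B}|_{[0,\mcl A_t]}$ via the continuous, strictly increasing map $s \mapsto \mcl A_s$, this filtration contains $\sigma(\mcl B_s : s \leq t)$, and $\mcl A_t$ itself is $\mcl F_t$-measurable by construction. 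Moreover, $\mcl B = \wh{\mcl B}_{\mcl A_\cdot}$ remains a standard planar Brownian motion with respect to $\mcl F_t$ by the definition of the time change in Definition~\ref{def-time-change}.

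For the additivity identity, the key input is the Markov property of $P$ (Definition~\ref{def-markovian}). Conditionally on $\wh{\mcl B}|_{[0,\mcl A_t]}$, the shifted process $v \mapsto \wh{\mcl B}_{\mcl A_t + v}$ has law $P_{\wh{\mcl B}_{\mcl A_t}} = P_{\mcl B_t}$, so it is itself a Markovian time-changed Brownian motion started from $\mcl B_t$. Since $\wh{\mcl B}_{\mcl A_t + v} = \mcl B_{\tau_{\mcl A_t + v}}$ and $\tau_{\mcl A_t} = t$, the underlying standard Brownian motion of this shifted process must equal $\mcl B_{t+\cdot}$. Solving $\tau_{\mcl A_t + v} = t+u$ for $v$ then gives $v = \mcl A_{t+u} - \mcl A_t$, which identifies the inverse time change of the shifted process at time $u$ as $\mcl A_{t+u} - \mcl A_t$. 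On the other hand, by the definition of $\theta_t$ and the Markov property, this inverse time change applied to $\mcl B_{t+\cdot}$ is precisely $\mcl A_s \circ \theta_t$ evaluated at $s = u$, and equating the two yields $\mcl A_{s+t} = \mcl A_t + \mcl A_s \circ \theta_t$.

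The main (and rather mild) obstacle in turning this sketch into a careful proof is the adaptedness step, where one needs to set up the filtration precisely so that the Markov property of $\wh{\mcl B}$ at the random time $\mcl A_t$ can be cleanly invoked while still preserving the Brownian property of $\mcl B$ in the enlargement. Once the filtration is fixed, everything else reduces to unwinding Definitions~\ref{def-time-change} and~\ref{def-markovian}; in particular, the lemma has very little intrinsic content and essentially serves as a dictionary translating the time-changed Brownian motion language into the PCAF language needed in the sequel.
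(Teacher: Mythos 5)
Your proposal correctly identifies the elementary conditions and the role of the strong Markov property, but it misses the genuinely non-trivial step of the lemma, which is precisely the adaptedness claim you describe as a ``rather mild'' obstacle. Definition~\ref{def-pcaf} requires $\mcl A$ to be adapted to the filtration generated by \emph{$\mcl B$ itself} (completed by null sets), not merely to a filtration in which $\mcl B$ is a Brownian motion. Your proposed filtration $\mcl F_t = \sigma(\wh{\mcl B}_u : u \leq \mcl A_t)$ is potentially strictly larger than $\sigma(\mcl B_s : s\leq t)$; showing the inclusion $\sigma(\mcl B_s : s\leq t)\subset \mcl F_t$, as you do, is the \emph{wrong direction} for what the definition requires. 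What must actually be shown is that $\mcl A_t$ is (a.s.) a measurable function of $\mcl B|_{[0,t]}$, and this is false in general for time-changed Brownian motions --- it is a genuine consequence of the Markov property that needs an argument. The paper's proof handles this with a short but essential step: it first uses the sigma-algebra identity $\sigma(\wh{\mcl B}|_{[0,\mcl A_t]}) = \sigma(\mcl B|_{[0,t]}, \mcl A|_{[0,t]})$ and the strong Markov property at the stopping time $\mcl A_t$ to deduce that, conditionally on $\mcl B$, the process $\mcl A$ has independent increments; being continuous and (after centering) a martingale with deterministic quadratic variation, it is conditionally Gaussian; but it is non-negative, so the conditional law must be degenerate, i.e., $\mcl A$ is $\mcl B$-measurable. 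Without this you cannot even define what $\mcl A_s\circ\theta_t$ means, since $\theta_t$ is a shift on the $\mcl B$-path space.

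Relatedly, your additivity argument implicitly assumes that the functional that extracts $\mcl A_{t+s} - \mcl A_t$ from the shifted path $\mcl B_{t+\cdot}$ is the \emph{same} functional $F_{0,s}$ that extracts $\mcl A_s$ from $\mcl B$, i.e., that $F_{t,s}$ does not depend on $t$. This is not automatic from the Markov property alone, and the paper spends a paragraph arguing it via the observation that the map from $\wh{\mcl B}_{\mcl A_t+\cdot}$ to $\mcl B_{t+\cdot}$ (parametrization by quadratic variation) is $t$-independent, hence its inverse can be taken $t$-independent, hence so is the resulting functional for the increments. Your sentence ``this inverse time change applied to $\mcl B_{t+\cdot}$ is precisely $\mcl A_s\circ\theta_t$'' packages exactly this claim without justification. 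Both gaps need to be filled before the sketch becomes a proof.
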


For the proof of Lemma~\ref{lem-time-change-pcaf}, we need the following elementary fact.

\begin{lem} \label{lem-strong-markov}
Suppose that $P : \BB C\rta \op{Prob}(C([0,\infty),\BB C))$ is continuous and defines a Markovian time changed Brownian motion.
Then $P$ is strongly Markovian, i.e., if $\wh{\mcl B}^z$ is sampled from $P_z$ then for each stopping time $\sigma$ for $\wh{\mcl B}^z$, the conditional law of $\wh{\mcl B}_{\sigma +\cdot}^z$ given $\wh{\mcl B}^z|_{[0,\sigma]}$ is $P_{\wh{\mcl B}^z_\sigma}$.
\end{lem}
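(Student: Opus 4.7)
The plan is to prove the strong Markov property by a standard discretization argument, approximating the stopping time $\sigma$ from above by dyadic stopping times and passing to the limit using the continuity of $z \mapsto P_z$ (in the Prokhorov topology on $\op{Prob}(C([0,\infty),\BB C))$) and the continuity of the sample paths of $\wh{\mcl B}^z$. The Markov property (Definition~\ref{def-markovian}) immediately yields the conclusion at each deterministic time, so the argument reduces to showing that both sides of the desired identity behave well under the approximation.

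More precisely, I would argue as follows. On $\{\sigma < \infty\}$, set $\sigma_n := 2^{-n} \lceil 2^n \sigma \rceil$, so that each $\sigma_n$ is a stopping time taking values in the countable set $2^{-n}\BB N_0$, and $\sigma_n \downarrow \sigma$. Fix a bounded continuous function $f : C([0,\infty),\BB C) \rta \BB R$ (with respect to the local uniform topology). On the event $\{\sigma_n = k 2^{-n}\}$, which lies in $\sigma(\wh{\mcl B}^z|_{[0,k 2^{-n}]})$, the (simple) Markov property at time $k 2^{-n}$ gives
\[
\BB E\!\left[ f(\wh{\mcl B}^z_{\sigma_n+\cdot}) \mid \wh{\mcl B}^z|_{[0,\sigma_n]} \right]
= \BB E_{P_{\wh{\mcl B}^z_{\sigma_n}}}[f] \qquad \text{a.s.}
\]
Since $\sigma \leq \sigma_n$, the $\sigma$-algebra $\sigma(\wh{\mcl B}^z|_{[0,\sigma]})$ is contained in $\sigma(\wh{\mcl B}^z|_{[0,\sigma_n]})$, and the tower property of conditional expectation then yields
\[
\BB E\!\left[ f(\wh{\mcl B}^z_{\sigma_n+\cdot}) \mid \wh{\mcl B}^z|_{[0,\sigma]} \right]
= \BB E\!\left[ \BB E_{P_{\wh{\mcl B}^z_{\sigma_n}}}[f] \,\Big|\, \wh{\mcl B}^z|_{[0,\sigma]} \right].
\]

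The main step is to pass to the limit $n\rta \infty$ on both sides. On the left-hand side, path continuity of $\wh{\mcl B}^z$ gives $\wh{\mcl B}^z_{\sigma_n+\cdot} \rta \wh{\mcl B}^z_{\sigma+\cdot}$ locally uniformly a.s., so by the continuity of $f$ and the (conditional) bounded convergence theorem the left-hand side converges to $\BB E[f(\wh{\mcl B}^z_{\sigma+\cdot}) \mid \wh{\mcl B}^z|_{[0,\sigma]}]$. On the right-hand side, $\wh{\mcl B}^z_{\sigma_n} \rta \wh{\mcl B}^z_\sigma$ by path continuity, and then the assumed continuity of $w \mapsto P_w$ with respect to the Prokhorov topology, together with the continuity and boundedness of $f$, yields $\BB E_{P_{\wh{\mcl B}^z_{\sigma_n}}}[f] \rta \BB E_{P_{\wh{\mcl B}^z_\sigma}}[f]$. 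A second application of conditional bounded convergence, combined with the fact that $\wh{\mcl B}^z_\sigma$ is $\sigma(\wh{\mcl B}^z|_{[0,\sigma]})$-measurable, gives
\[
\lim_{n\rta\infty}\BB E\!\left[ \BB E_{P_{\wh{\mcl B}^z_{\sigma_n}}}[f] \,\Big|\, \wh{\mcl B}^z|_{[0,\sigma]} \right]
= \BB E_{P_{\wh{\mcl B}^z_\sigma}}[f] .
\]
Equating the two limits shows $\BB E[f(\wh{\mcl B}^z_{\sigma+\cdot}) \mid \wh{\mcl B}^z|_{[0,\sigma]}] = \BB E_{P_{\wh{\mcl B}^z_\sigma}}[f]$ a.s.\ on $\{\sigma < \infty\}$; since bounded continuous $f$ separate probability measures on $C([0,\infty),\BB C)$, this identifies the conditional law as $P_{\wh{\mcl B}^z_\sigma}$, which is exactly the strong Markov property.

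The only real subtlety, and hence the point I would be most careful about, is justifying the interchange of limit and conditional expectation on the right-hand side: it requires only boundedness of $\BB E_{P_{\wh{\mcl B}^z_{\sigma_n}}}[f]$ (automatic since $|f|$ is bounded) together with the a.s.\ convergence produced by the Prokhorov continuity of $P$. Everything else is a direct application of the assumed (simple) Markov property and standard continuity arguments; there is no need here to use the specific structure of $P$ as a time-changed Brownian motion.
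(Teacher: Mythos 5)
Your proof is correct and takes essentially the same approach as the paper: the paper's proof is a two-sentence sketch (approximate $\sigma$ from above by countably-valued stopping times, apply the simple Markov property, and pass to the limit using path continuity and Prokhorov continuity of $z\mapsto P_z$), and your argument is a careful expansion of exactly that sketch.
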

\begin{proof}
If $\sigma$ takes on only countably many possible values, the Markov property at time $\sigma$ is immediate from the case of a deterministic time.
Due to the continuity of $z\mapsto P_z$ and the a.s.\ continuity of a process with the law $P$, we can approximate a general stopping time by a sequence of stopping times which take on only countably many possible values to get the result in general.
\end{proof}

\begin{proof}[Proof of Lemma~\ref{lem-time-change-pcaf}]
Since $u\mapsto \tau_u$ is continuous, strictly increasing, and satisfies $\tau_0 = 0$, the same is also true for $\mcl A$, and furthermore $\tau_{\mcl A_t} = \mcl A_{\tau_t} = t$, whence $\mcl B_t = \wh{\mcl B}_{\mcl A_t}$ for any $t \ge 0$.
The time $\mcl A_t $ is the smallest $u \geq 0$ for which the quadratic variation of $\wh{\mcl B}|_{[0,u]}$ is $t$.
This implies in particular that $\mcl A_t$ is a stopping time for $\wh{\mcl B}$.

Before arguing that $\mcl A$ is adapted, we will first check that for $0\leq a < b \leq\infty$,
\eqb \label{eqn-time-change-sigma}
\sigma\left(  \{ \wh{\mcl B}_{\mcl A_a + u} \}_{u\in [0 ,  \mcl A_b - \mcl A_a]} \right)
= \sigma\left( \mcl B|_{[a,b]} , \{\mcl A_t -\mcl A_a\}_{t\in [a,b]} \right)  .
\eqe
Indeed, for any $t\in [a,b]$ we have that $\mcl A_t - \mcl A_a$ is the first time that the quadratic variation of the process $\wh{\mcl B}_{\mcl A_a + \cdot} $ reaches $t -a$, and $\mcl B_t = \wh{\mcl B}_{\mcl A_t}$.
This shows that the left $\sigma$-algebra in~\eqref{eqn-time-change-sigma} contains the right $\sigma$-algebra.
For the reverse inclusion, we now observe that for each $u \in [0,\mcl A_b - \mcl A_a]$ we have $\wh{\mcl B}_{\mcl A_a + u} = \mcl B_{\tau_{\mcl A_a + u}}$ and $\tau_{\mcl A_a + u} = \inf\{t \geq a : \mcl A_t - \mcl A_a = u\}$.

By~\eqref{eqn-time-change-sigma} (applied with $(a,b) = (0,t)$ and with $(a,b) =(t,\infty)$) and Lemma~\ref{lem-strong-markov} (applied with $\sigma = \mcl A_t$), we get that the pairs $(\mcl B_{t+\cdot} , \mcl A_{t+\cdot} - \mcl A_t)$ and $(\mcl B|_{[0,t]} ,  \mcl A|_{[0,t]})$ are conditionally independent given $\wh{\mcl B}_{\mcl A_t}$, equivalently, given $\mcl B_t$.

We will now argue that $\mcl A$ is a.s.\ determined by $\mcl B$.
Indeed, it follows from the above that if we condition on $\mcl B$ then the process $\mcl A$ is continuous with conditionally independent increments.
It follows that the conditional law of $\mcl A$ given $\mcl B$ is that of a Gaussian process: indeed, it is easy to see from the independence of the increments that the centered process is a continuous martingale with deterministic quadratic variation, conditionally on $\mcl B$.
But, $\mcl A$ is non-negative, so the conditional law of $\mcl A$ given $\mcl B$ must be a point mass at some $\mcl B$-measurable continuous increasing function.
That is, $\mcl A$ is a.s.\ determined by $\mcl B$  (which implies that also $\tau = \mcl A^{-1}$ and $\wh{\mcl B}_\cdot = \mcl B_{\tau(\cdot)}$ are a.s.\ determined by $\mcl B$) .

Since $(\mcl B_{t+\cdot} , \mcl A_{t+\cdot} - \mcl A_t)$ and $(\mcl B|_{[0,t]} ,  \mcl A|_{[0,t]})$ are conditionally independent given $\mcl B_t$, it follows that $\mcl A|_{[0,t]}$ is a.s.\ determined by $\mcl B|_{[0,t]}$ and that $\mcl A_{t+\cdot} - \mcl A_t$ is a.s.\ determined by $\mcl B|_{[t,\infty)}$. In particular, $\mcl A$ is adapted to the natural filtration of $\mcl B$ completed by null events and $\mcl A_{t+\cdot} -\mcl A_t$ is a.s.\ given by a functional of the shifted process $\mcl B_{t+\cdot}$.
Hence for each $t,s \geq 0$ there is a measurable function $F_{t,s}$ from continuous paths in $\BB C$ to $[0,\infty)$ such that a.s.\ $F_{t,s}(\mcl B_{t+\cdot}) = \mcl A_{t+s} - \mcl A_t$, i.e.,
\eqb \label{eqn-additive}
\mcl A_{t+s} = \mcl A_t + F_{t,s} \circ \theta_t .
\eqe

We need to check that the functional $F_{t,s}$ does not depend on $t$.
The process $\mcl B_{t+\cdot}$ is obtained by parameterizing $\wh{\mcl B}_{\mcl A_t+\cdot}$ by its quadratic variation, so the functional which takes in $\wh{\mcl B}_{\mcl A_t+\cdot}$ and outputs $\mcl B_{t+\cdot}$ does not depend on $t$.
Since $\mcl B_{t+\cdot}$ a.s.\ determines $\mcl A_{t+\cdot} - \mcl A_t$, it follows that $\mcl B_{t+\cdot}$ a.s.\ determines $\wh{\mcl B}_{\mcl A_t+\cdot}$, i.e., there is a measurable functional which takes in $\mcl B_{t+\cdot}$ and a.s.\ outputs $\wh{\mcl B}_{\mcl A_t+\cdot}$.
Since the functional which takes in $\wh{\mcl B}_{\mcl A_t+\cdot}$ and outputs $\mcl B_{t+\cdot}$ does not depend on $t$, it follows that the functional which goes in the opposite direction can be taken to not depend on $t$ either.
Since $\wh{\mcl B}_{\mcl A_t+\cdot}$ determines $\mcl A_{t+\cdot} - \mcl A_t$ in a manner which does not depend on $t$, we get that $\mcl B_{t+\cdot}$ a.s.\ determines $\mcl A_{t+\cdot} - \mcl A_t$ in a manner which does not depend on $t$, i.e., $F_{t,s}$ does not depend on $t$. In particular, $F_{t,s} = F_{0,s}$ is the functional which a.s.\ takes $\mcl B$ to $\mcl A_s$. By~\eqref{eqn-additive}, $\mcl A$ is a PCAF.
\end{proof}


PCAFs are in bijective correspondence with so-called Revuz measures, defined as follows.

\begin{defn} \label{def-revuz}
A \emph{Revuz measure} is a measure $\mu$ on $\BB C$ such that $\mu(A) = 0$ for each set $A$ which is polar for the Brownian motion $\mcl B$ (i.e., it is a.s.\ never hit by $\mcl B$).
\end{defn}

From~\cite[Theorem 5.1.3]{fot-dirichlet-forms} (here the Hunt process in the theorem statement is planar Brownian motion), we know that there is a bijective correspondence between PCAFs and Revuz measures.
For each PCAF $\mcl A$ there exists a unique Revuz measure $\mu$ such that for any two bounded continuous functions $f,g : \BB C\rta [0,\infty)$,
\eqb \label{eqn-revuz}
\int_{\BB C} \BB E_z\left[ \int_0^t f(B_s)\,d\mcl A_s  \right]  g(z) \,dz
= \int_0^t \int_{\BB C} \int_{\BB C}  p_s(z,w) f(w) g(z) \,d\mu(w)\,dz \,ds
\eqe
and for every Revuz measure $\mu$ there exists a unique PCAF $\mcl A$ for which~\eqref{eqn-revuz} holds.
Here, $d\mcl A_s$ is interpreted as a Lebesgue-Stieljes measure.
We call $\mu$ the \emph{Revuz measure of $\mcl A$}.

For a PCAF $\mcl A$, we can define the time-changed process
\eqb \label{eqn-time-change}
\wh{\mcl B}_u := \mcl B_{\tau_u} ,\quad \text{where} \quad \tau_u := \inf\left\{ t \geq 0 : \mcl A_t = u \right\} .
\eqe
Then the law of $\wh{\mcl B}$ for varying choices of starting points defines a Markovian time changed Brownian motion in the sense of Definition~\ref{def-markovian}.
By~\cite[Theorem 6.2.1(i)]{fot-dirichlet-forms}, the Revuz measure of $\mcl A$ is reversible (hence invariant) for $\wh{\mcl B}$.
In fact, the Revuz measure of $\mcl A$ is the \emph{only} invariant measure for $\wh{\mcl B}$, up to multiplication by a deterministic constant, as shown by the following lemma.

\begin{lem} \label{lem-measure-unique}
Let $P$ define a Markovian time changed Brownian motion and assume that $z\mapsto P_z$ is continuous.
Let $\mcl A$ be the corresponding PCAF as in Lemma~\ref{lem-time-change-pcaf} and let $\mu$ be the Revuz measure of $\mcl A$.
Then any other invariant measure for $P$ is a deterministic constant multiple of $\mu$.
\end{lem}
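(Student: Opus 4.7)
The plan is to reduce the uniqueness of invariant measures for the time-changed process $\wh{\mcl B}$ to the well-known uniqueness (up to a multiplicative constant) of invariant measures for planar Brownian motion itself, then conclude via the bijective correspondence between PCAFs and Revuz measures.

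First, I would apply Lemma~\ref{lem-time-change-pcaf} to write each process under $P_z$ as $\wh{\mcl B}^z_u = \mcl B^z_{\tau_u}$, where $\tau = \mcl A^{-1}$ is the inverse of the PCAF $\mcl A$ whose Revuz measure is $\mu$. The key observation I want to exploit is that $\wh{\mcl B}$ is merely a speed-change of $\mcl B$ along the same trajectories: for each fixed starting point $z$, the range $\wh{\mcl B}([0,\infty))$ equals the range $\mcl B([0,\infty))$ almost surely, and the hitting times of any Borel set are related by the monotone time change $\tau$. In particular, a set $A\subset\BB C$ is polar for $\wh{\mcl B}$ if and only if it is polar for $\mcl B$, and $\wh{\mcl B}$ is \emph{topologically recurrent} (it visits every nonempty open set almost surely), inheriting this property from planar Brownian motion.

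Next, I would argue that any invariant measure $\mu'$ for $P$ cannot charge polar sets. Indeed, if $A$ is polar for $\mcl B$ (equivalently for $\wh{\mcl B}$) then $P_z[\wh{\mcl B}_t \in A]=0$ for every $z$ and every $t>0$, so the invariance identity~\eqref{eqn-invariant} forces $\mu'(A)=0$. Hence $\mu'$ is a Revuz-type measure, and by the bijective correspondence between Revuz measures and PCAFs of $\mcl B$ (cited from~\cite[Theorem~5.1.3]{fot-dirichlet-forms} above), there is a unique PCAF $\mcl A'$ of $\mcl B$ with Revuz measure $\mu'$. By~\cite[Theorem~6.2.1]{fot-dirichlet-forms}, $\mu'$ is reversible for the process $\mcl B_{(\mcl A')^{-1}(\cdot)}$.

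Finally, I would invoke the classical fact that for planar Brownian motion—and more generally for any topologically recurrent Hunt process—the invariant (indeed, symmetric) measures form a one-dimensional cone: that is, two nonzero invariant measures which do not charge polar sets must agree up to a deterministic multiplicative constant. This is the continuous-state analogue of the uniqueness of stationary measures for irreducible recurrent Markov chains, and in the symmetric Dirichlet-form setting it appears in Fukushima--Oshima--Takeda. Applying this uniqueness to the two Revuz measures $\mu$ and $\mu'$, one concludes $\mu' = c\mu$ for a deterministic $c>0$, as desired.

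The main obstacle will be locating and applying a uniqueness-of-invariant-measure theorem in precisely the right generality: planar Brownian motion is neighborhood-recurrent but not point-recurrent (points are polar), so one must use the topological version of Harris recurrence. The correspondence between PCAFs and Revuz measures handles the technicalities, but care is needed to verify that $\mu'$ is $\sigma$-finite and does not charge polar sets before quoting the uniqueness result.
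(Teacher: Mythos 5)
Your overall strategy — reducing the claim to a uniqueness theorem for invariant measures of a recurrent Markov process — matches the paper's, and your intermediate observations (polar sets for $\wh{\mcl B}$ coincide with those of $\mcl B$; any invariant measure for $P$ must vanish on polar sets; $\wh{\mcl B}$ is topologically recurrent) are correct. However, the final invocation contains a genuine gap: the asserted ``classical fact'' that the invariant measures of a \emph{topologically recurrent} Hunt process form a one-dimensional cone is not a theorem. Topological recurrence — recurrence of every nonempty open set — does not by itself yield uniqueness of the invariant measure. What is needed is \emph{Harris recurrence} (recurrence of every Borel set of positive invariant measure) together with enough regularity of the resolvent to show that any two invariant measures are mutually absolutely continuous. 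The route the paper takes, via~\cite[Ch.~X, Prop.~3.9]{revuz-yor}, passes from topological to Harris recurrence using the \emph{strong Feller property} of $P$. This is a substantive hypothesis for a time-changed Brownian motion, and the paper devotes Lemma~\ref{lem-resolvent} to establishing it by a coupling argument that exploits the Prokhorov-continuity of $z\mapsto P_z$. Your proposal never addresses strong Feller, so the concluding citation does not apply as stated.

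A secondary issue: the detour through a second PCAF $\mcl A'$ built from $\mu'$, and the observation that $\mu'$ is reversible for $\mcl B_{(\mcl A')^{-1}}$, is a dead end — this is a \emph{different} time-changed process, never related back to $\wh{\mcl B}$. What you actually want is to compare $\mu$ and $\mu'$ as two invariant measures of the \emph{single} process $\wh{\mcl B}$. Neither $\mu$ nor $\mu'$ is invariant for Brownian motion itself (whose invariant measure is Lebesgue, and $\mu,\mu'$ are generically singular with respect to Lebesgue), so there is no ``uniqueness for planar Brownian motion'' to apply to the two Revuz measures directly.
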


To prove Lemma~\ref{lem-measure-unique}, we will use a general criterion for the uniqueness of invariant measures for Markov processes; see, e.g.,~\cite[Chapter X, Section 3]{revuz-yor}. One of the hypotheses of this criterion involves the \emph{resolvent} associated with $P$, which is the operator $R^\lambda : L^\infty(\BB C) \rta L^\infty(\BB C)$ defined by
\eqb \label{eqn-resolvent}
\mcl R^\lambda f(z) = \int_0^\infty e^{-\lambda t} E_z\left[ f(\wh{\mcl B}_t) \right] \,dt  ,
\eqe
where $E_z$ denotes expectation w.r.t.\ $P_z$.

\begin{lem} \label{lem-resolvent}
Let $P$ define a Markovian time changed Brownian motion and assume that $z\mapsto P_z$ is continuous. If $f \in L^\infty(\BB C)$, then $\mcl R^\lambda f$ is continuous.
That is, $P$ is \emph{strongly Feller}.
\end{lem}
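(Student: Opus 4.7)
The plan is to approximate $\mcl R^\lambda f$ uniformly on compacts by continuous functions obtained via Brownian convolution. By Lemma~\ref{lem-time-change-pcaf}, we may write $P_z$ as the law of $\wh{\mcl B}^z_u = \mcl B^z_{\tau^z_u}$, where $\mcl A^z$ is a PCAF of standard planar Brownian motion $\mcl B^z$ and $\tau^z = (\mcl A^z)^{-1}$. For each $\delta > 0$, the random variable $\mcl A_\delta$ is a stopping time for $\wh{\mcl B}$ (being the first time the quadratic variation of $\wh{\mcl B}$ reaches $\delta$), and $\wh{\mcl B}_{\mcl A_\delta} = \mcl B_\delta$. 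Applying the strong Markov property of $\wh{\mcl B}$ at $\mcl A_\delta$ (justified by Lemma~\ref{lem-strong-markov}) yields
\[
\mcl R^\lambda f(z) = E_z\Big[\int_0^{\mcl A_\delta} e^{-\lambda u} f(\wh{\mcl B}_u)\,du\Big] + E_z\big[e^{-\lambda \mcl A_\delta} (\mcl R^\lambda f)(\mcl B_\delta)\big].
\]

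Using $\|\mcl R^\lambda f\|_\infty \leq \|f\|_\infty/\lambda$ together with the elementary bounds $\big|\int_0^{\mcl A_\delta} e^{-\lambda u} f(\wh{\mcl B}_u)\,du\big| \leq \|f\|_\infty \mcl A_\delta$ and $|1 - e^{-\lambda \mcl A_\delta}| \leq \lambda \mcl A_\delta$, I will deduce
\[
\Big|\mcl R^\lambda f(z) - \int_{\BB C} p_\delta(z,w)(\mcl R^\lambda f)(w)\,dw\Big| \leq 2\|f\|_\infty E_z[\mcl A_\delta],
\]
where $p_\delta$ is the planar heat kernel. The convolution on the left is continuous in $z$ for any bounded measurable $\mcl R^\lambda f$, by the classical strong Feller property of standard Brownian motion (the map $z\mapsto p_\delta(z,\cdot)\,dw$ is continuous in total variation for each $\delta > 0$). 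Consequently, if I can show that $E_z[\mcl A_\delta]\to 0$ as $\delta\to 0$ uniformly over $z$ in any compact subset of $\BB C$, then $\mcl R^\lambda f$ is a local uniform limit of continuous functions, and hence continuous.

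The main obstacle is this last uniform convergence, which I will handle via Dini's theorem. Applying the Revuz formula~\eqref{eqn-revuz} with $f\equiv 1$ (and unpacking the test function $g$) gives
\[
E_z[\mcl A_\delta] = \int_0^\delta \int_{\BB C} p_s(z,w)\,d\mu(w)\,ds,
\]
initially for almost every $z$ and then for all $z$ by joint measurability and right-continuity in $z$ on each side. For fixed $\delta$, the right-hand side is continuous in $z$ by dominated convergence, using the joint continuity and Gaussian decay of $p_s(\cdot,\cdot)$; and it decreases monotonically to $0$ as $\delta\to 0$ by monotone convergence, provided $\int p_s(z,w)\,d\mu(w)<\infty$ for each $s>0$. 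This last integrability follows from the standing hypothesis that $\wh{\mcl B}$ is an a.s.\ continuous non-exploding process (whence $\mcl A_\delta<\infty$ a.s., forcing local finiteness of $\mu$). Dini's theorem, applied to the monotone family of continuous functions $z\mapsto E_z[\mcl A_\delta]$ on any compact $K\subset\BB C$, then yields uniform convergence and completes the proof.
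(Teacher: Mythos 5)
Your approach is a genuinely different route from the paper's: the paper constructs an explicit coupling of $\wh{\mcl B}^z$ and $\wh{\mcl B}^w$ that agrees after both leave a small ball (using the total-variation continuity of Brownian harmonic measure in the starting point), whereas you decompose the resolvent at the stopping time $\mcl A_\delta$ and push continuity through the Brownian heat kernel. Your decomposition at $\mcl A_\delta$ and the resulting inequality $\bigl|\mcl R^\lambda f(z) - \int p_\delta(z,w)\,\mcl R^\lambda f(w)\,dw\bigr| \leq 2\|f\|_\infty E_z[\mcl A_\delta]$ are correct. But there is a genuine gap in the final step, where you claim $z \mapsto E_z[\mcl A_\delta]$ is finite, continuous, and tends to zero uniformly on compacts.

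The problem is twofold. First, $\mcl A_\delta < \infty$ a.s.\ does not give $E_z[\mcl A_\delta] < \infty$, and the observation that it "forces local finiteness of $\mu$" does not help: the truncated Green kernel $\int_0^\delta p_s(z,w)\,ds$ has a logarithmic singularity at $w = z$, and a locally finite $\mu$ can charge small balls around $z$ heavily enough that $\int_0^\delta\int p_s(z,w)\,d\mu(w)\,ds = \infty$. In the generality of the lemma (arbitrary Markovian time-changed Brownian motion with $P_\cdot$ Prokhorov-continuous) I see no way to exclude this. Second, even granting finiteness, the continuity in $z$ of $\int_0^\delta\int p_s(z,w)\,d\mu(w)\,ds$ is not a dominated-convergence argument, because the singular locus of the integrand moves with $z$; this quantity is a (truncated) potential of $\mu$, and potentials of measures are lower semicontinuous (excessive) but in general not continuous. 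Proving continuity here would be a regularity statement about $\mu$ that is of comparable difficulty to the strong Feller property itself. Relatedly, the upgrade of the Revuz identity from a.e.\ $z$ to all $z$ should invoke excessivity/fine continuity, not "right-continuity in $z$," which has no meaning for a two-dimensional parameter. The paper's coupling argument sidesteps all of this by only ever using probability bounds $\BB P[T_z \vee T_w > 2\ep] \leq C\ep$ on exit times against a bounded integrand $\|f\|_\infty$, so no moment of $\mcl A_\delta$ or $T_z$ is needed. Your structure would become correct if you replaced the expectation bound with a truncated version, e.g.\ splitting on the event $\{\mcl A_\delta \leq M\}$ and using $P_z$-continuity to control $P_z[\mcl A_\delta > M]$ uniformly — but at that point you would be reproducing the paper's estimate in a slightly different guise.
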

\begin{proof}
The proof is based on the fact that hitting distributions of Brownian motion (i.e., harmonic measure) depend continuously on the starting point in the total variation sense. This allows us to show that if $|z-w|$ is small then we can couple processes sampled from $P_z$ and $P_w$ so that they agree after leaving a small ball centered at $z$. When the processes are coupled together in this way, it is not hard to check that for a fixed choice of $f \in L^\infty(\BB C)$, it holds that $|\mcl R^\lambda f(z) - \mcl R^\lambda f(w)|$ is small. A similar argument is used to prove the strong Feller property for Liouville Brownian motion in~\cite[Theorem 2.4]{grv-heat-kernel}.
\medskip

\noindent\textit{Step 1: defining the coupling.}
Fix $z \in \BB C$ and $\ep > 0$.
A path sampled from the law $P_z$ a.s.\ does not stay constant for any non-trivial interval of time, so we can find $\delta_0 \in (0,1)$ such that
\eqb
P_z\left[ \text{$\wh{\mcl B}$ exits $B_{\delta_0}(z)$ before time $\ep$} \right] \geq 1 - \ep .
\eqe
Since $P_w$ depends continuously on $w$ in the Prokhorov distance, there exists $\delta_1 \in (0,\delta_0]$ such that
\eqb \label{eqn-resolvent-exit}
P_w\left[ \text{$\wh{\mcl B}$ exits $B_{\delta_0}(z)$ before time $2\ep$} \right] \geq 1 - 2\ep ,\quad\forall w \in B_{\delta_1}(z) .
\eqe

The harmonic measure on $\bdy B_{\delta_0}(z)$ as viewed from $w \in B_{\delta_0}(z)$ (which is the same as that of Brownian motion and is therefore known explicitly) depends continuously on $w$ in the total variation sense.
Therefore, we can find $\delta_2 \in (0,\delta_0]$ such that for any $w \in B_{\delta_2}(z)$, we can find a coupling of $\wh{\mcl B}^z \sim P_z$ and $\wh{\mcl B}^w\sim P_w$ such that with probability at least $1-\ep$, $\wh{\mcl B}^z$ and $\wh{\mcl B}^w$ exit $B_{\delta_0}(z)$ at the same point.
Let $T_z$ and $T_w$ denote the exit times of $\wh{\mcl B}^z $ and $\wh{\mcl B}^w $ from $B_{\delta_0}(z)$, respectively.
Then our coupling is such that $\BB P[\wh{\mcl B}^z_{T_z} = \wh{\mcl B}^w_{T_w}] \geq 1-\ep$. By the strong Markov property (Lemma~\ref{lem-strong-markov}) we can re-couple so that
\eqb \label{eqn-resolvent-coupling}
\BB P\left[ \wh{\mcl B}^z_{T_z+s} = \wh{\mcl B}^w_{T_w+s} ,\:\forall s\geq 0 \right] \geq 1-\ep .
\eqe
Henceforth fix such a coupling and let
\eqb
E := \left\{ \wh{\mcl B}^z_{T_z+s} = \wh{\mcl B}^w_{T_w+s} ,\:\forall s\geq 0 \right\} \cap \left\{ T_z \vee T_w \leq 2\ep \right\},
\eqe
so that by~\eqref{eqn-resolvent-exit} and~\eqref{eqn-resolvent-coupling} we have $\BB P[E] \geq 1- 5\ep$.
\medskip

\noindent\textit{Step 2: bounding the resolvent.}
Now fix $f\in L^\infty(\BB C)$.
By considering the positive and negative parts of $f$ separately, we can assume without loss of generality that $f\geq 0$.
We have
\alb
\mcl R^\lambda f(z)
&= \BB E\left[   \int_0^\infty e^{-\lambda t} f(\wh{\mcl B}^z_t) \,dt  \right] \notag\\
&=\BB E\left[  \BB 1_E \int_0^\infty e^{-\lambda t} f(\wh{\mcl B}^z_t) \,dt \right] + \BB E\left[ \BB 1_{E^c} \int_0^\infty e^{-\lambda t} f(\wh{\mcl B}^z_t) \,dt   \right] \notag\\
&= \BB E\left[\BB 1_E \int_0^{T_z} e^{-\lambda t} f(\wh{\mcl B}^z_t) \, dt \right] + \BB E\left[\BB 1_E \int_{0}^\infty  e^{-\lambda (s + T_z) } f(\wh{\mcl B}^z_{s + T_z} ) \, ds \right]  +  \BB E\left[ \BB 1_{E^c} \int_0^\infty e^{-\lambda t} f(\wh{\mcl B}^z_t) \,dt   \right]
\ale
and similarly with $w$ in place of $z$.
By subtracting the formulas for $z$ and $w$ and using the triangle inequality, we get
\allb \label{eqn-resolvent-diff}
| \mcl R^\lambda f(z)  - \mcl R^\lambda f(w) |
&\leq \BB E\left[ \BB 1_E \left( \int_0^{T_z} e^{-\lambda t} f(\wh{\mcl B}^z_t) \, dt  +  \int_0^{T_w} e^{-\lambda t} f(\wh{\mcl B}^w_t) \, dt \right) \right] \notag\\
&\qquad\qquad + \BB E\left[ \BB 1_E \int_0^\infty \left|    e^{-\lambda (s + T_z) } f(\wh{\mcl B}^z_{s + T_z}) -   e^{-\lambda (s + T_w) }  f(\wh{\mcl B}^w_{s + T_w} )    \right| \,ds \right] \notag\\
&\qquad\qquad+ \BB E\left[ \BB 1_{E^c} \int_0^\infty e^{-\lambda t} \left( f(\wh{\mcl B}^z_t) + f(\wh{\mcl B}^w_t)\right) \,dt  \right]
\alle

On $E$, we have $T_z \vee T_w \leq 2\ep$, which implies that the first expectation on the right side of~\eqref{eqn-resolvent-diff} is at most $4\ep \|f\|_\infty$.
Furthermore, on $E$ we have $f(\wh{\mcl B}^z_{s+T_z}) =  f(\wh{\mcl B}^w_{s + T_w} )$ for each $s\geq 0$, so the second expectation on the right side of~\eqref{eqn-resolvent-diff} is equal to
\eqb
 \BB E\left[ \BB 1_E \left|    e^{-\lambda  T_z  }  -   e^{-\lambda   T_w  }  \right|  \int_0^\infty e^{-\lambda s} f(\wh{\mcl B}^z_{s + T_z} )   \,ds \right] ,
\eqe
which in turn is at most $(1-e^{-2 \lambda \ep}) \lambda^{-1} \|f\|_\infty$ since $T_z\vee T_w\leq 2\ep$ on $E$.
Finally, since $\BB P[E^c] \leq 4\ep$ the last term on the right side of~\eqref{eqn-resolvent-diff} is at most $8\ep \lambda^{-1} \|f\|_\infty$.
Plugging the above estimates into~\eqref{eqn-resolvent-diff} gives
\eqb
| \mcl R^\lambda f(z)  - \mcl R^\lambda f(w) | \leq o_\ep(1) \|f\|_\infty
\eqe
where the $o_\ep(1)$ tends to zero as $\ep\rta 0$ at a rate depending only on $\lambda$. Since $\ep > 0$ was arbitrary we infer that $\mcl R^\lambda f$ is continuous at $z$.
\end{proof}

\begin{proof}[Proof of Lemma~\ref{lem-measure-unique}]
As discussed just after~\eqref{eqn-time-change}, the measure $\mu$ is reversible, hence invariant, for $P$.
If $\wh{\mcl B}$ is sampled from $P_z$ for some $z\in\BB C$, then $\wh{\mcl B}$ is a time change of Brownian motion, so every open subset of $\BB C$ is recurrent for $\wh{\mcl B}$. Moreover, Lemma~\ref{lem-resolvent} says that $P$ is strongly Feller.
By~\cite[Chapter X, Proposition 3.9]{revuz-yor}, the process $\wh{\mcl B}$ is Harris recurrent w.r.t.\ the measure $\mu$, i.e., each open set $A\subset\BB C$ with $\mu(A) >0$ is recurrent for $\wh{\mcl B}$.
From this and the ergodic theorem for additive functionals of Brownian motion~\cite[Chapter X, Theorem 3.12]{revuz-yor} (see also~\cite[Chapter X, Exercise 3.14]{revuz-yor}) one obtains the uniqueness of the invariant measure for $P$ up to multiplication by a deterministic constant factor.
\end{proof}

\begin{proof}[Proof of Proposition~\ref{prop-bm-unique}]
Let $\mcl A^1$ and $\mcl A^2$ be the PCAFs associated with $P^1$ and $P^2$, as in Lemma~\ref{lem-time-change-pcaf}.
Let $\mu^1$ and $\mu^2$ be the Revuz measures associated with these PCAFs.
By the discussion just after~\eqref{eqn-time-change}, we know that $\mu^1$ (resp.\ $\mu^2$) is invariant for $P^1$ (resp.\ $P^2$).
By Lemma~\ref{lem-measure-unique}, we know that each of $P^1$ and $P^2$ has a unique invariant measure, up to multiplication by a deterministic constant factor.
Therefore, there are deterministic constants $c_1,c_2 >0$ such that a.s.\ $\mu^1 = c_1 \mu$ and $\mu^2 = c_2 \mu$.
Since the Revuz measure uniquely determines the PCAF, we get that $\mcl A^1 = (c_1/c_2) \mcl A^2$.
By the definitions of $\mcl A^1$ and $\mcl A^2$, this implies Proposition~\ref{prop-bm-unique}.
\end{proof}

\subsection{Proof of Theorem~\ref{thm-lbm-conv}}
\label{sec-lbm-conv}

As above, let $(h,\eta,P)$ be a subsequential limit of the joint laws of $(h,\eta,P^\ep)$ as $\ep\rta 0$.
Let us first record what we get from Proposition~\ref{prop-bm-unique}.

\begin{lem} \label{lem-random-constant}
There is a random variable $c > 0$, which is measurable with respect to the $\sigma$-algebra generated by $(h,\eta,P)$ (completed by null sets), such that a.s.\ for each $z\in \BB C$ the law $P_z$ is the same as the conditional law given $h$ of the Liouville Brownian motion started from $z$ pre-composed with the time change $t\mapsto c t$.
\end{lem}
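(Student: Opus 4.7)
The plan is to apply Proposition~\ref{prop-bm-unique} under the conditional law given $\mcl F := \sigma(h,\eta,P)$ (completed by null events). Let $P^{\op{LBM}} : \BB C \rta \op{Prob}(C([0,\infty),\BB C))$ denote the random function sending $z$ to the conditional law given $h$ of the $\gamma$-Liouville Brownian motion started from $z$. Then $P^{\op{LBM}}$ is a measurable function of $h$, and in particular $\mcl F$-measurable; and by the construction of $P$ as a subsequential limit, $P$ is also $\mcl F$-measurable. Consequently, under the regular conditional probability given $\mcl F$, both $P$ and $P^{\op{LBM}}$ become deterministic elements of $\mcl C$.

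First I will check that both $P$ and $P^{\op{LBM}}$ satisfy the hypotheses of Proposition~\ref{prop-bm-unique} a.s.\ Continuity of $P$ holds by Proposition~\ref{prop-tight}, and continuity of $z\mapsto P^{\op{LBM}}_z$ follows from the strong Feller property of Liouville Brownian motion established in \cite{grv-lbm,grv-heat-kernel}. That $P$ defines a Markovian time-changed Brownian motion is Lemma~\ref{lem-ssl-markov}, while the analogous statement for $P^{\op{LBM}}$ is standard from \cite{grv-lbm,berestycki-lbm}. Finally, $\mu_h$ is reversible (and hence invariant) for $P$ by Lemma~\ref{lem-ssl-reversible}, and for $P^{\op{LBM}}$ by \cite{grv-lbm}. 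Since $\mu_h$ is $\mcl F$-measurable and a.s.\ non-zero, we may apply Proposition~\ref{prop-bm-unique} conditional on $\mcl F$: this gives, for a.e.\ realization of $\mcl F$, a positive constant $c$ (a priori depending on $\mcl F$) such that for every $z\in\BB C$, the law $P_z$ coincides with $P^{\op{LBM}}_z$ composed with the time change $t \mapsto ct$.

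The main obstacle is measurability of $c$ as a function on the underlying probability space. To handle this concretely, I will define $c$ explicitly rather than rely on the (non-uniqueness of the) regular conditional probability. Fix any $z_0 \in \BB C$; then Proposition~\ref{prop-bm-unique} applied conditionally forces $P_{z_0}$ to be the law of a process obtained from a process with law $P^{\op{LBM}}_{z_0}$ by the deterministic time change $t\mapsto ct$, so $c$ is determined by these two laws. For example, one can take $c$ to be the ratio of the medians of the exit time from $B_{1/2}(z_0)$ under $P_{z_0}$ and under $P^{\op{LBM}}_{z_0}$; both medians depend measurably on the pair of laws $(P_{z_0}, P^{\op{LBM}}_{z_0})$, and the continuity of the processes together with the strict positivity of the time change (Proposition~\ref{prop-tight}, final assertion) ensures both medians are finite and positive. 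This defines $c$ as an $\mcl F$-measurable random variable, and the value is independent of the choice of $z_0$ by Proposition~\ref{prop-bm-unique}.
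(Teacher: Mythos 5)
Your proposal is correct and takes essentially the same approach as the paper, namely applying Proposition~\ref{prop-bm-unique} under the conditional law given $(h,\eta,P)$, with the properties of $P$ coming from Lemmas~\ref{lem-ssl-markov} and~\ref{lem-ssl-reversible} and the analogous properties of $P^{\op{LBM}}$ from~\cite{grv-lbm}. Your explicit construction of $c$ as a ratio of exit-time quantiles (note: the ratio should be the median under $P^{\op{LBM}}_{z_0}$ divided by that under $P_{z_0}$, i.e.\ the inverse of what you wrote) is a sensible way to make the $\mcl F$-measurability of $c$ concrete, a point the paper's terse proof leaves implicit.
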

\begin{proof}
By Lemma~\ref{lem-ssl-markov}, $P$ defines a Markovian time-changed Brownian motion and by Lemma~\ref{lem-ssl-reversible} the $\gamma$-LQG measure $\mu_h$ is reversible, hence invariant, for $P$.
It is shown in~\cite{grv-lbm} that the same properties are a.s.\ true for the function $P^{\op{LBM}} : \BB C\rta \op{Prob}(C([0,\infty), \BB C))$ which maps $z$ to the conditional law of LBM started from $z$ given $h$.
The lemma therefore follows from Proposition~\ref{prop-bm-unique} applied under the conditional law given $(h,\eta,P)$.
\end{proof}

We claim the following.

\begin{lem} \label{lem-deterministic-constant}
The random variable $c >0$ from Lemma~\ref{lem-random-constant} is a.s.\ equal to a deterministic constant.
\end{lem}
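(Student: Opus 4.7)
The plan is to first argue that $c$ is measurable with respect to $(h,\eta)$ (not merely with respect to $(h,\eta,P)$), and then to use the scale invariance of the $\gamma$-quantum cone together with an ergodicity argument to conclude that this $(h,\eta)$-measurable function must be a.s.\ constant.

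\smallskip

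\textit{Step 1: $c$ is $(h,\eta)$-measurable.} By Proposition~\ref{prop-bm-unique}, $c$ is characterized as the unique positive number such that $P_z$ equals LBM for $h$ precomposed with $t\mapsto ct$. Equivalently, $c^{-1}$ is the ratio of the quenched mean exit time of $\wh X^0$ (sampled from $P_0$) from $B_{1/2}$ to the quenched mean exit time of LBM from $B_{1/2}$ given $h$. For each $\ep$, the quantity $\ep\,\ol{\BB E}^\ep_{x_0^\ep}[\tau^\ep_{B_{1/2}}]$ is manifestly $(h,\eta)$-measurable, and the Green's function and exit-time estimates of Section~\ref{sec-exit-upper} (in particular Lemma~\ref{lem-gr-bound} and Proposition~\ref{prop-exit-moment}) yield uniform $L^p$ bounds in the quenched law, for any $p<\infty$. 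Passing to a further subsequence along which $\bigl(h,\eta,P^\ep,\ep\,\ol{\BB E}^\ep_{x_0^\ep}[\tau^\ep_{B_{1/2}}]\bigr)$ converges jointly in distribution, and using Skorokhod representation to upgrade to an almost-sure coupling, the limit of the last coordinate must equal the quenched mean exit time of $\wh X^0$ from $B_{1/2}$. Hence $c^{-1}$ is expressed as a ratio of two $(h,\eta)$-measurable limits, so $c$ itself is $(h,\eta)$-measurable.

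\smallskip

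\textit{Step 2: scale invariance of $c$.} By Lemma~\ref{lem-cone-scale}, $(h^b,\eta^b)\eqD(h,\eta)$ for every $b>0$. Brownian scaling of the encoding pair $(L,R)$ identifies the mated-CRT map at scale $\ep$ built from $(h^b,\eta^b)$ with the map at scale $b\ep$ built from $(h,\eta)$, while the embedding rule $\eta^b=R_b^{-1}\eta(b\,\cdot)$ gives a spatial rescaling by $R_b^{-1}$ and a temporal rescaling by $b$ for the corresponding embedded walks. Combined with the LQG coordinate change rule for LBM (and the fact that the normalization $m_0$ in \eqref{D:LBMmed} is taken w.r.t.\ the circle-average embedding regardless of which field is being used), these scalings cancel out and I expect to derive the identity $c(h^b,\eta^b)=c(h,\eta)$ a.s., so that $c$, viewed as a measurable function of $(h,\eta)$, is invariant under the one-parameter semigroup $\{(h,\eta)\mapsto(h^b,\eta^b)\}_{b>0}$.

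\smallskip

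\textit{Step 3: ergodicity.} The scaling action corresponds to a time shift of the two-sided process $(A_t)_{t\in\BB R}$ from Definition~\ref{def-quantum-cone}: rescaling by $R_b$ amounts to translating $A$ by $\log R_b$ together with a deterministic shift. Together with the analogous shift-invariance of the ``lateral part'' $h-h_{|\cdot|}(0)$, one obtains ergodicity of the scaling action by Kolmogorov's zero-one law for the tail $\sigma$-algebra of the underlying Brownian motions. Hence any scale-invariant measurable function of $(h,\eta)$ is a.s.\ constant, completing the proof.

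\smallskip

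The main obstacle is Step 1. Although $P^\ep$ is a measurable function of $(h,\eta)$ for each fixed $\ep$, this property is typically destroyed by distributional limits, which is precisely why the outline in Section~\ref{sec-outline} flags this step as a technicality. The route through almost-sure Skorokhod coupling only works because the estimates from Sections~\ref{sec-harnack}--\ref{sec-exit-upper} provide enough uniform integrability to identify the limit of $\ep\,\ol{\BB E}^\ep_{x_0^\ep}[\tau^\ep_{B_{1/2}}]$ with the quenched mean exit time of the limiting process $\wh X^0$, rather than just with some annealed average. Step 2 also requires careful bookkeeping involving the random multiplicative factor $R_b$, but this is a direct verification once the transformation rules are set up.
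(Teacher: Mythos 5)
Your Step~1 contains a genuine gap, and it is precisely the difficulty the paper is designed to sidestep. You want to conclude that the quenched mean exit time of $\wh X^0$ from $B_{1/2}$ (and hence $c$) is $(h,\eta)$-measurable, by noting that for each $\ep$ the quantity $\ep\,\ol{\BB E}^\ep_{x_0^\ep}[\tau^\ep_{B_{1/2}}]$ is $(h,\eta)$-measurable and passing to a subsequential limit with a Skorokhod coupling. But convergence in distribution, even upgraded to a.s.\ convergence by Skorokhod, does \emph{not} preserve measurability with respect to the first coordinate: if $(X,Y_n)\to(X,Y)$ in distribution and each $Y_n$ is $\sigma(X)$-measurable, the limit $Y$ may fail to be $\sigma(X)$-measurable. (A standard example: $X$ uniform on $[0,1]$ and $Y_n=\sin(2\pi n X)$; the limit law is $(X,Y)$ with $Y$ independent of $X$.) The uniform integrability from Proposition~\ref{prop-exit-moment} does ensure that the last coordinate converges and that its limit is the quenched mean exit time under $P_0$ --- but that quantity is a functional of $(h,\eta,P)$, and nothing in your argument shows it is a function of $(h,\eta)$ alone. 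Your closing paragraph identifies the right concern but misdiagnoses it: the danger is not that the limit is an annealed rather than quenched average, it is that the quenched average may not be $(h,\eta)$-measurable. Once Step~1 fails, Steps~2 and~3 (scale invariance of $c$ as a function of $(h,\eta)$, then ergodicity) have nothing to act on.

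The paper takes a different route that avoids establishing $(h,\eta)$-measurability of $c$ at all. It shows that $c$ is a.s.\ determined by the curve-decorated quantum surface data $(h^{\BB D},\eta^{\BB D},P_0^{\BB D})$ associated to a time interval $[a,b]$ of $\eta$ (via Lemmas~\ref{lem-lbm-coord} and~\ref{lem-lbm-mut-sing}), and likewise by the analogous data for a disjoint interval $[\wt a,\wt b]$. Lemma~\ref{lem-ssl-ind} then shows these two triples are \emph{independent}: for each $\ep$ the discrete analogues are genuinely independent (being functions of the disjoint pieces of the mating-of-trees data), and --- unlike measurability --- independence \emph{is} preserved under convergence of joint laws. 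Hence $c$ is independent of itself and must be deterministic. This is the precise workaround for the non-measurability issue you flag, whereas your proposal tries to resolve the issue head-on with a tool (Skorokhod coupling) that cannot do the job.
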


We will prove Lemma~\ref{lem-deterministic-constant} just below. We first assume the lemma and conclude the proof of Theorem~\ref{thm-lbm-conv}.

\begin{proof}[Proof of Theorem~\ref{thm-lbm-conv}, assuming Lemma~\ref{lem-deterministic-constant}]
Let $\mcl E$ be a sequence of $\ep$-values tending to zero along which $(h,\eta,P^\ep) \rta (h,\eta,P)$ in law.
Lemmas~\ref{lem-random-constant} and~\ref{lem-deterministic-constant} together imply that there is a deterministic constant $c > 0$ such that a.s.\ $P_z$ for each $z\in\BB C$ is the law of Liouville Brownian motion started from $z$ pre-composed with the time change $t\mapsto c t$.
The constant $c$ may depend on the choice of subsequence $\mcl E$ (equivalently, $c$ may depend on the law of $(h,\eta,P)$).
We do not know how to rule this out so we get around the issue as follows.

Recall that, in the notation used in Section~\ref{sec-tight} and in the theorem statement, $P^\ep$ is the conditional law of $(\wh X^{z,\ep}_t)_{t\geq 0} =( \eta(X^{z,\ep}_{\cdot /\ep}) )_{t\geq 0}$ given $(h,\eta)$.
On the other hand, the theorem asserts that the conditional law of $(\eta(X^{z,\ep}_{ m_\ep t}))_{t\geq 0}$ given $(h,\eta)$ converges to the conditional law of LBM given $h$, where $m_\ep$ is as in~\eqref{eqn-median-def}.
Equivalently, $\ep m_\ep  $ is the annealed median exit time of $\wh X^{0,\ep}$ from $B_{1/2}$.
Since $\wh X^{0,\ep}$ converges in law as $\ep\rta 0$ along $\mcl E$, it follows that $\ep m_\ep$ converges to a constant $\frk m > 0$ as $\ep\rta 0$ along $\mcl E$.

Let $\wh P^\ep$ be the conditional law of $ ( \eta(X^{z,\ep}_{m_\ep t}))_{t\geq 0}  =(\wh X^{z,\ep}_{\ep m_\ep t })_{t\geq 0}$ given $(h,\eta)$.
From the convergence $\ep m_\ep \rta \frk m$ along $\mcl E$, we infer that along $\mcl E$ the joint law of $(h,\eta,\wh P^\ep)$ converges to the joint law of $(h,\eta,\wh P)$, where $\wh P$ is defined as follows.
For $z\in\BB C$, $\wh P_z$ is the law of the path $t\mapsto \wh X^z_{\frk m t}$ where $\wh X^z$ is sampled from $P_z$.
Equivalently, $\wh P_z$ is the law of Liouville Brownian motion started from $z$ pre-composed with the time change $t\mapsto c \frk m t$.

By the definition of $m_\ep$, the annealed median exit time of $\wh X^{0,\ep}_{\ep m_\ep t}$ from $B_{1/2}$ is 1, hence the annealed median exit time of a path with the law $\wh P_0$ from $B_{1/2}$ is 1.
This fixes the value of the constant $c\frk m$ independently of the choice of subsequence.
We therefore get that $(h,\eta,\wh P^\ep)$ converges in law to $(h,\eta,P^{\op{LBM}})$ as $\ep\rta 0$, not just subsequentially, where here $P^{\op{LBM}}_z$ for $z\in\BB C$ is the conditional law given $h$ of \eqref{D:LBMmed}, i.e., LBM with respect to\ $h$ started from $z$ pre-composed with the time-change $ t \mapsto m_0 t$, where $m_0$ is the median of the exit time of the $B_{1/2}$ of this LBM.
Since $P^{\op{LBM}}$ is a.s.\ determined by $h$, it follows from an elementary fact in probability theory (analogous to the fact that convergence in distribution to a constant implies convergence in probability, see, e.g.,~\cite[Lemma 4.5]{ss-contour}) that in fact $P^\ep \rta P^{\op{LBM}}$ in probability.

The bounds~\eqref{eqn-scaling-constants} for $m_\ep$ follow since we have show that for every sequence of $\ep$'s tending to zero, there is a further subsequence along which $\ep m_\ep$ converges to a positive number.
\end{proof}

It remains to prove Lemma~\ref{lem-deterministic-constant}.
Our argument will use the following minor generalization of Definition~\ref{def-lqg-surface}.

\begin{defn} \label{def-curve-decorated}
	A \emph{curve-decorated quantum surface} is an equivalence class of triples $(D,h,\eta)$ where $D\subset\BB C$ is open, $h$ is a distribution on $D$, and $\eta$ is a curve in $\ol D$, under the equivalence relation whereby two such triples $(D,h,\eta)$ and $(\wt D, \wt h , \wt\eta)$ are equivalent if there is a conformal map $\phi : \wt D \rta \wt h$ which extends to a homeomorphism $\wt D\cup \bdy \wt D \rta D\cup \bdy D$ {(with $\bdy \wt D$ and $\bdy D$ viewed as collections of prime ends)} such that $h$ and $\wt h$ are related as in~\eqref{eqn-lqg-coord} and $\phi\circ\wt\eta = \eta$.
	We similarly define a curve-decorated quantum surface with more than one curve.
\end{defn}

The idea of the proof of Lemma~\ref{lem-deterministic-constant} is as follows.
Let $a < u < b$ and let $\wh X$ be a random curve sampled from the law $P_{\eta(u)}$ and stopped upon exiting $\eta([a,b])$. It is easily seen that the constant $c$ is a.s.\ determined by the triple $( \eta([a,b]) , h|_{\eta([a,b])}  , \wh X)$.
Indeed, since the time parametrization of LBM is locally determined by the GFF, from this data we can see how we need to scale the time parametrization of $\wh X$ in order to get LBM w.r.t.\ $h$  (see Lemma~\ref{lem-lbm-mut-sing}).
In fact, $c$ is a.s.\ determined by $( \eta([a,b]) , h|_{\eta([a,b])}  , \wh X)$ viewed as a curve-decorated quantum surface since both LBM and the adjacency graph of cells obey the LQG coordinate change formula (by~\cite[Theorem 1.4]{berestycki-lbm}).

If $a < b < \wt a < \wt b$, then~\cite[Theorem 1.9]{wedges} tells us that the curve-decorated quantum surfaces corresponding to $( \eta([a,b]) , h|_{\eta([a,b])}  )$ and $( \eta([\wt a,\wt b]) , h|_{\eta([\wt a,\wt b])}  )$ are independent.
If we knew that $P$ were in some sense locally determined by $(h,\eta)$, then we would be done: indeed, if we knew this then the preceding paragraph would tell us that $c$ is a.s.\ determined by each of two different independent random variables, which means that $c$ is independent from itself and hence a.s.\ constant.
However, we do \emph{not} know that $P$ is locally determined or even determined at all by $(h,\eta)$, even though $P^\ep$ is a.s.\ determined by $(h,\eta)$.
This is because the property of one random variable being a.s.\ determined by another is not in general preserved under convergence in law.
However, we know that $P^\ep$ is a.s.\ determined by $(h,\eta)$ and moreover independence \emph{is} preserved under convergence in law.
So, we can make the above argument precise by appealing to the subsequential convergence in law $(h,\eta,P^\ep) \rta (h,\eta,P)$.

\begin{figure}
\begin{center}
\includegraphics[scale=1]{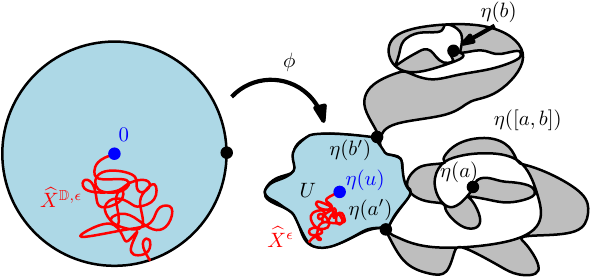}
\caption{\label{fig-ssl-ind}
Illustration of the setup used to prove Lemma~\ref{lem-deterministic-constant}.
The figure shows the case when $\gamma \in (\sqrt 2 ,2)$. For $\gamma \in (0,\sqrt 2)$, the interior of $\eta([a,b])$ is connected and is equal to $U$.
}
\end{center}
\end{figure}

We now describe the framework in which we will prove the independence property needed to get Lemma~\ref{lem-deterministic-constant}.
See Figure~\ref{fig-ssl-ind} for an illustration.
Fix finite times $a < u < b$.
Let $U$ be the connected component of the interior of $\eta([a,b])$ which contains $\eta(u)$ (such a component exists a.s.\ since a.s.\ $\eta(u) \notin \bdy\eta([a,b])$).
Due to the difference in topology for space-filling SLE$_\kappa$ in the phases $\kappa \geq 8$ and  $\kappa \in (4,8)$,
when $\gamma \in (0,\sqrt 2]$  the set $U$ is the entire interior of $\eta([a,b])$ but for $\gamma \in (\sqrt 2, 2)$ a.s.\ $U$ is a proper subset of the interior of $\eta([a,b])$.
The domain $U$ is homeomorphic to the unit disk and there is a unique time interval $[a' ,b']$ for which $\eta([a',b']) = \ol U$.
Again, for $\gamma \in (0,\sqrt 2]$ we have $[a,b] = [a' ,b']$ but this is a.s.\ not the case for $\gamma \in (\sqrt 2 , 2)$.
Also let $\wh X$ be sampled from the law $P_{\eta(u)}$ and let $T$ be the first exit time of $\wh X$ from $U$.

We are interested in the curve(s)-decorated quantum surfaces $(U , h , \eta|_{[a',b']} , \wh X|_{[0,T]})$. We could just work directly with this curve-decorated quantum surface, but we find it more clear to instead specify a particular parametrization which depends only on the curve-decorated quantum surface (not on the original choice of equivalence class representative).
Let $\phi : \BB D \rta U$ be the unique conformal map such that $\phi(0) = \eta(u)$ and $\phi(a') = 1$.
Recalling~\eqref{eqn-lqg-coord}, we define
\eqbn
h^{\BB D} := h\circ \phi + Q\log |\phi'|  , \quad  \eta^{\BB D} := \phi^{-1} \circ \eta |_{[a',b']} , \quad \text{and} \quad \wh X^{\BB D} := \phi^{-1} \circ \wh X |_{[0,T]} .
\eqen
Then $(\BB D , h^{\BB D} , \eta^{\BB D}  , \wh X^{\BB D}) = (U , h , \eta|_{[a',b']} , \wh X|_{[0,T]})$ as curve-decorated quantum surfaces.
We define
\eqb
P^{\BB D}_0 := \left( \text{$  P_{\eta(u)}$-law of $\wh X^{\BB D}$} \right) ,
\eqe
so $P^{\BB D}_0$ is a random probability measure on curves in $\BB D$ started from 0 and stopped upon hitting $\bdy\BB D$.
Our proof of Lemma~\ref{lem-deterministic-constant} is based on the following three lemmas.

\begin{lem} \label{lem-lbm-coord}
Let $c$ be the random variable from Lemma~\ref{lem-random-constant}.
In the setting described just above, a.s.\ $P^{\BB D}_0$ is the law of LBM with respect to $h$ started from 0 and stopped upon hitting $\bdy\BB D$, pre-composed with the time change $t\mapsto c t$.
\end{lem}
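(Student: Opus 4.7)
The plan is to combine the identification of $P_{\eta(u)}$ from Lemma~\ref{lem-random-constant} with the conformal covariance of Liouville Brownian motion under the LQG coordinate change, namely~\cite[Theorem 1.4]{berestycki-lbm}, which says that if $h$ and $h^{\BB D}$ are related by~\eqref{eqn-lqg-coord} via a conformal map $\phi$, then the pullback of the $\gamma$-LBM associated with $h$ under $\phi$ is the $\gamma$-LBM associated with $h^{\BB D}$ (once each is stopped at the exit time of the relevant domain).

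First I would condition on $(h,\eta,P)$. In this conditional world, $c$ is deterministic by Lemma~\ref{lem-random-constant}, while the domain $U$, the interval $[a',b']$, the stopping time $T$ (of a path under $P_{\eta(u)}$), and the conformal map $\phi$ are deterministic because they are measurable with respect to $(h,\eta)$. Lemma~\ref{lem-random-constant} then says that the conditional law $P_{\eta(u)}$ is the law of $t\mapsto \wh Y_{ct}$, where $\wh Y$ is a $\gamma$-LBM associated with $h$ started from $\eta(u)$. Consequently, stopping the path at its exit time $T$ from $U$ and applying $\phi^{-1}$ on the spatial side, we see that $\wh X^{\BB D}$ is conditionally distributed as $\phi^{-1}\circ \wh Y$, stopped at the first exit of $\wh Y$ from $U$, precomposed with the time change $t\mapsto ct$.

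The second step is to apply the LBM coordinate change formula~\cite[Theorem 1.4]{berestycki-lbm} to the conformal map $\phi :\BB D\to U$ and the fields $h|_U, h^{\BB D}$. That result yields that the process $t\mapsto\phi^{-1}(\wh Y_t)$ stopped when $\wh Y$ first exits $U$ has the same law as $\gamma$-LBM with respect to $h^{\BB D}$ started from $0$ and stopped upon first hitting $\bdy\BB D$. Combining this with the previous step gives that $P^{\BB D}_0$ is the law of such an LBM precomposed with the linear time change $t\mapsto ct$, which is exactly the conclusion of the lemma (understood, as everywhere in the paper, in the sense that the LBM associated with the quantum surface $(U,h|_U)$ and the one associated with its parametrization $(\BB D,h^{\BB D})$ agree via the coordinate map $\phi$).

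The only subtle point is to verify that everything passes through the conditioning correctly: that $c$ and $\phi$ may be treated as deterministic once we condition on $(h,\eta,P)$, and that the (a.s.) coordinate change formula applies pathwise in this quenched setup. This is routine: conditionally on $h$, the process $\wh Y$ still has the usual LBM law, and further conditioning on $P$ adds no new information about $\wh Y$ beyond the deterministic constant $c$; the coordinate change formula then applies directly to the stopped process, so the proof is essentially immediate from the two cited results.
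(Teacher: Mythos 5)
Your proof is correct and takes essentially the same route as the paper: apply Lemma~\ref{lem-random-constant} to identify $P_{\eta(u)}$ as LBM w.r.t.\ $h$ (time-rescaled by $c$), then push forward by $\phi^{-1}$ and invoke the LBM coordinate change formula from~\cite[Theorem 1.4]{berestycki-lbm}. The paper's one-line proof additionally mentions the coordinate change for the adjacency graph of cells, but that is background context for why $c$ is intrinsic to the curve-decorated quantum surface and is not actually needed once Lemma~\ref{lem-random-constant} is in hand; your argument correctly bypasses it.
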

\begin{proof}
This is immediate from the $\gamma$-LQG coordinate change formula for LBM~\cite[Theorem 1.3]{berestycki-lbm} and for the adjacency graph of cells of $\mcl G^\ep$ (which follows from the coordinate change formula for the LQG area measure plus the conformal invariance of space-filling SLE).
\end{proof}

\begin{lem} \label{lem-ssl-ind}
Let $a < u < b$ and $\wt a < \wt u < \wt b$ be two triples of times and assume that $b  <  \wt a$.
Define $h^{\BB D} , \eta^{\BB D} ,$ and $P_0^{\BB D}$ as above for $a,u,b$ and define $\wt h^{\BB D} , \wt\eta^{\BB D},$ and $\wt P^{\BB D}_0$ as above with $\wt a , \wt u , \wt b$ in place of $a,u,b$.
Then the triples $(h^{\BB D}  , \eta^{\BB D} , P_0^{\BB D})$ and $(\wt h^{\BB D} , \wt\eta^{\BB D} , \wt P_0^{\BB D})$ are independent.
\end{lem}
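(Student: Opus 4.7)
The plan is to pass the independence through from the discrete level $\ep > 0$, where $P^\ep$ is by construction a.s.\ determined by $(h,\eta)$, and invoke the mating-of-trees correspondence. For each $\ep > 0$ I would first define a discrete analog $P^{\ep,\BB D}_0$: let $T^\ep$ be the first time at which $\wh X^{\eta(u),\ep}$ hits a point $\eta(x)$ with $H^\ep_x \not\subset \ol U$, and let $P^{\ep,\BB D}_0$ be the conditional law given $(h,\eta)$ of $\phi^{-1}\circ \wh X^{\eta(u),\ep}|_{[0,T^\ep]}$ (extended to be constant after $T^\ep$); define $\wt P^{\ep,\BB D}_0$ analogously for the tilde triple. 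Using the joint subsequential convergence $(h,\eta,P^\ep) \to (h,\eta,P)$ along $\mcl E$, together with the continuity of $\phi^{-1}$ on $\ol U$ and the modulus-of-continuity plus no-stuck-point properties in Proposition~\ref{prop-tight}, I would show that along the same subsequence
\eqbn
(h^{\BB D},\eta^{\BB D}, P^{\ep,\BB D}_0,\wt h^{\BB D},\wt\eta^{\BB D},\wt P^{\ep,\BB D}_0) \;\longrightarrow\; (h^{\BB D},\eta^{\BB D}, P^{\BB D}_0,\wt h^{\BB D},\wt\eta^{\BB D},\wt P^{\BB D}_0)
\eqen
in law, the key being that the exit-from-$U$ functional is continuous on paths that do not linger on $\bdy U$.

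Second, I would argue that for each fixed $\ep > 0$, the triple $(h^{\BB D},\eta^{\BB D}, P^{\ep,\BB D}_0)$ is a deterministic function of the curve-decorated quantum surface $\mcl S := (U, h|_U, \eta|_{[a', b']})$ alone. Indeed, $(h^{\BB D},\eta^{\BB D})$ is by construction $\mcl S$ conformally mapped to $\BB D$ using the intrinsic marked points $\eta(u)$ and $\eta(a')$, and the walk on $\mcl G^\ep$ started from $x^\ep_{\eta(u)}$ and stopped upon exiting $U$ only explores cells $H^\ep_x$ with $x \in \ep\BB Z \cap [a', b']$ together with their internal adjacencies; by the adjacency condition~\eqref{edgemap}, these adjacencies are determined by $\eta|_{[a',b']}$ (equivalently, by $(L,R)|_{[a',b']}$), so the conditional law of the walk is determined by $\mcl S$.

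Third, I would invoke the mating-of-trees theorem~\cite[Theorem 1.9]{wedges} to conclude that the two decorated surfaces $\mcl S$ and $\wt{\mcl S} := (\wt U, h|_{\wt U}, \eta|_{[\wt a', \wt b']})$ are independent. The point is that under the whole-plane mating-of-trees correspondence, $(h,\eta)$ is a measurable function of the two-sided correlated Brownian motion $(L,R)$, and $\mcl S$ (resp.\ $\wt{\mcl S}$) is a measurable function of the shifted increments $(L_{a' + s} - L_{a'}, R_{a' + s} - R_{a'})_{s\in[0,b'-a']}$ (resp.\ of the analogous increments on $[\wt a', \wt b']$). Since $b < \wt a$ forces $b' \leq b < \wt a \leq \wt a'$ (using that $[a',b']\subset [a,b]$ up to a Lebesgue null set, and similarly for the tildes), these two Brownian increment processes are supported on disjoint time intervals, hence independent by the independent increments property of $(L,R)$.

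Combining the three steps, the triples $(h^{\BB D}, \eta^{\BB D}, P^{\ep,\BB D}_0)$ and $(\wt h^{\BB D}, \wt \eta^{\BB D}, \wt P^{\ep,\BB D}_0)$ are independent for each $\ep \in \mcl E$, and since independence is preserved by weak convergence, the limiting triples are also independent, which is the statement of the lemma. The main obstacle will be step one, namely carefully verifying that the exit time from $U$ and the post-composition with $\phi^{-1}$ are almost surely continuous as functionals of the path in the local uniform topology; this requires combining the no-stuck-point conclusion of Proposition~\ref{prop-tight} with the fact that $\bdy U$ has zero Lebesgue measure and is crossed ``transversally'' by a time-changed Brownian motion with full topological support, so that small perturbations of the path cannot dramatically alter the exit time.
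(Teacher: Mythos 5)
Your proposal follows the same high-level template as the paper: define a discrete analogue of $P_0^{\BB D}$ at scale $\ep$, show that this discrete object (together with $h^{\BB D},\eta^{\BB D}$) is determined by local data, deduce independence at the discrete level, and then pass the independence through the joint weak convergence along $\mcl E$. The discretization in your step one is essentially the paper's $\mcl T^\ep$-walk, and the pass-to-the-limit step is handled at roughly the same level of detail in both.

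The substantive difference is in how the independence is obtained, and there is a soft gap in your step three. You assert that $\mcl S$ is a measurable function of the Brownian increments $(L_{a'+s}-L_{a'},R_{a'+s}-R_{a'})_{s\in[0,b'-a']}$ and that $\wt{\mcl S}$ is a measurable function of the analogous increments on $[\wt a',\wt b']$, and you deduce independence from the independent-increments property of $(L,R)$. This requires a \emph{local inverting} of the mating-of-trees correspondence (the quantum surface parametrized by a bounded time interval is determined by the Brownian increments on that interval), which is not what \cite[Theorem 1.9]{wedges} gives in the form you cite it; the direction stated there and used in the paper is that the Brownian increments are determined by the quantum surface, together with past/future independence. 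Deducing the local inverting from the global ``$(L,R)$ determines $(h,\eta)$'' statement is possible but requires a further conditioning argument (the future wedge at $a'$ is not a quantum cone, so one must either invoke a wedge version of inverting or argue via absolute continuity), and you do not supply it. The paper sidesteps the issue entirely: it fixes $\BB s\in(b,\wt a)$, shows that $\mcl T^\ep$, $\bdy\mcl T^\ep$, and hence $P_0^{\BB D,\ep}$, together with $(h^{\BB D},\eta^{\BB D})$, are measurable with respect to the past curve-decorated quantum surface $(\eta((-\infty,\BB s]),h|_{\eta((-\infty,\BB s])},\eta|_{(-\infty,\BB s]})$ (using only the easy direction of Theorem~1.9), does the same with the future surface for the tilde triple, and concludes by past/future independence. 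Your argument would be cleaner and its citation accurate if you restructured step three this way: show the triple is a function of the past (respectively, future) curve-decorated quantum surface at some $\BB s\in(b,\wt a)$, rather than factoring through $\mcl S$ and $\wt{\mcl S}$ and then trying to show \emph{those} are independent.

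Two smaller points. First, your parenthetical ``up to a Lebesgue null set'' is not needed: $[a',b']\subset[a,b]$ holds exactly by construction of $U$ as a connected component of the interior of $\eta([a,b])$. Second, your claim that the adjacencies used by the stopped walk are determined by $(L,R)|_{[a',b']}$ is off by $O(\ep)$ at the endpoints (deciding whether a boundary cell of $\mcl T^\ep$ has a neighbor just outside $[a',b']$ requires looking at $(L,R)$ on a small neighborhood of $[a',b']$). This does not affect the final independence since $b<\wt a$ leaves room, but it does mean you should not say the walk is determined by $(L,R)|_{[a',b']}$ alone; the safer statement, as in the paper, is that it is determined by the past quantum surface at a fixed time $\BB s\in(b,\wt a)$.
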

\begin{proof}
Fix a time $\BB s \in (b ,\wt a)$.
By~\cite[Theorem 1.9]{wedges}, the past / future curve-decorated quantum surfaces
\eqb \label{eqn-no-bm-surfaces}
\left( \eta((-\infty,\BB s]) , h|_{\eta((-\infty, \BB s ])} ,\eta|_{(-\infty, \BB s ]}  \right)
\quad \text{and} \quad
\left( \eta([\BB s,\infty) ) , h|_{\eta([\BB s,\infty))} ,\eta|_{[\BB s,\infty)} \right)
\eqe
are independent.
By definition, $(h^{\BB D}  ,\eta^{\BB D})$ (resp.\ $(\wt h^{\BB D} , \wt\eta^{\BB D})$ is determined by the first (resp.\ second) quantum surface in~\eqref{eqn-no-bm-surfaces}.
Hence $(h^{\BB D} , \eta^{\BB D})$ and $(\wt h^{\BB D} , \wt\eta^{\BB D})$ are independent.
We need to show that the independence continues to hold if we also include the random laws $P_0^{\BB D}$ and $\wt P_0^{\BB D}$.

The basic idea of the proof is that for each $\ep > 0$, the discrete analogs of the laws $P_0^{\BB D}$ and $\wt P_0^{\BB D}$ in the $\ep$-mated-CRT map setting are determined by the first and second curve-decorated quantum surfaces in~\eqref{eqn-no-bm-surfaces}, respectively, so one has the desired independence statement with these discrete laws used in place of $P_0^{\BB D}$ and $\wt P_0^{\BB D}$.
This will allow us to later use the fact that independence (unlike the condition that one random variable determines another) is preserved under taking limits in law.

Let us now define the discrete analogs of the laws $P_0^{\BB D}$ and $\wt P_0^{\BB D}$ which we will work with. Recalling the times $a',b' \in [a,b]$ defined above, for $\ep >0$ let $\mcl T^\ep$ be the subgraph of $\mcl G^\ep$ induced by the vertex set $[a' ,b'] \cap (\ep\BB Z)$.
Let $\bdy\mcl T^\ep$ be the set of vertices $x\in  \mcl V\mcl T^\ep$ which are joined by an edge of $\mcl G^\ep$ to a vertex which is not in $\mcl T^\ep$.
By the Brownian motion definition of the mated-CRT map~\eqref{edgemap}, for small enough $\ep > 0$ both $\mcl T^\ep$ and $\bdy\mcl T^\ep$ are determined by the left/right boundary length process $(L - L_{\BB s } ,R - R_{\BB s } ) |_{(-\infty,\BB s]} $, which in turn is a.s.\ determined by the first curve-decorated quantum surface~\eqref{eqn-no-bm-surfaces}.

Let $u^\ep := \ep \lceil u / \ep \rceil$, so that $u^\ep \in \ep\BB Z$.
Conditional on $(h,\eta)$, let $X^\ep$ be a simple random walk on $\mcl T^\ep$ stopped at the first time $J^\ep$ that it hits $\bdy \mcl T^\ep$.
Recalling the conformal map $\phi : \BB D\rta U$ above, for $t\in [0,\ep J^\ep] \cap (\ep\BB Z)$, let $\wh X^{\BB D,\ep}_t := \phi^{-1}(\eta(t/\ep))$.
We extend the definition of $\wh X^{\BB D,\ep}$ to all of $[0,\ep J^\ep]$ by piecewise linear interpolation at constant speed.
Let $P_0^{\BB D,\ep}$ be the conditional law of $\wh X^{\BB D,\ep}_t$ given $(h,\eta)$.
By the preceding paragraph and the definition of $\phi$, the law $P_0^{\BB D,\ep}$ is a.s.\ determined by the first quantum surface in~\eqref{eqn-no-bm-surfaces}.

If we define $\wt P_0^{\BB D,\ep}$ similarly but with $\wt a , \wt u , \wt b$ in place of $a,u,b$, then $\wt P_0^{\BB D,\ep}$ is a.s.\ determined by the second quantum surface in~\eqref{eqn-no-bm-surfaces}.
As a consequence, the triples $(h^{\BB D}  , \eta^{\BB D} , P_0^{\BB D,\ep})$ and $(\wt h^{\BB D} , \wt\eta^{\BB D} , \wt P_0^{\BB D,\ep})$ are independent.

We now argue that if $\mcl E$ is a subsequence along which $(h,\eta,P^\ep) \rta (h,\eta,P)$ in law, then we also have
\eqb \label{eqn-ssl-ind-conv}
 (h^{\BB D}  , \eta^{\BB D} , P_0^{\BB D,\ep}  , \wt h^{\BB D}  , \wt \eta^{\BB D} , \wt P_0^{\BB D,\ep})
 \rta (h^{\BB D}  , \eta^{\BB D} , P_0^{\BB D} , \wt h^{\BB D}  , \wt \eta^{\BB D} , \wt P_0^{\BB D})
\eqe
in law as $\mcl E\ni\ep \rta 0$, which concludes the proof since indepenendence is preserved under convergence of joint laws.

To get~\eqref{eqn-ssl-ind-conv}, we note that a process $\wh X^{\BB D,\ep}$ sampled from $P_0^{\BB D , \ep}$ is almost the same as the process $\phi^{-1}(\wh X^\ep)$, where $\wh X^\ep$ is sampled from $P_{\eta(u^\ep)}^\ep$ and stopped at the first time it hits a point in $\eta(\ep\BB Z\setminus [a',b'])$, except for the following minor difference.
The process $\wh X^{\BB D,\ep}$ travels along straight line segments between the times when it hits points of $\phi^{-1}(\eta(\ep\BB Z))$, whereas the path $\phi^{-1}(\wh X^\ep)$ instead travels along paths whose images under $\phi$ are straight line segments.

It is easily seen that the conformal map $\phi$ can be extended to a homeomorphism $\ol U \rta \ol{\BB D}$: indeed, by the flow line construction of space-filling SLE~\cite[Section 1.2.3]{ig4}, $\bdy U$ is a finite union of SLE$_\kappa$-type curves which intersect only at their endpoints.
In particular, $\bdy U$ is a Jordan curve so the desired extension of $\phi$ follows from~\cite[Theorem 2.6]{pom-book}.
By the above continuity and the preceding paragraph (note that $\phi$ does not depend on $\ep$), the Prokhorov distance between the law $P_0^{\BB D,\ep}$ and the $P_{\eta(h^\ep)}^\ep$-law of  $\phi^{-1}(\wh X^\ep)$ tends to zero in probability as $\ep\rta 0$.
On the other hand, it is easily seen from the convergence $(h,\eta,P^\ep) \rta (h,\eta,P)$ in law that~\eqref{eqn-ssl-ind-conv} holds if we replace $P_0^{\BB D,\ep}$ by the $P_{\eta(u^\ep)}^\ep$-law of $\phi^{-1}(\wh X^\ep)$, and we make the analogous replacement made for $\wt P^{\BB D,\ep}_0$. Thus~\eqref{eqn-ssl-ind-conv} holds.
\end{proof}

\begin{lem} \label{lem-lbm-mut-sing}
Let $h$ be a random distribution on a domain $D\subset\BB C$ whose law is locally absolutely continuous w.r.t.\ the law of the GFF on $D$.
Let $z\in D$ and let $X$ be the Liouville Brownian motion w.r.t.\ $h$, stopped at the first time that it exists $D$.
For any $c_1,c_2 >0$ with $c_1\not=c_2$, the laws of $(h , (X_{c_1 t})_{t \geq 0})$ and $(h , (X_{c_2 t})_{t \geq 0})$ are mutually singular.
\end{lem}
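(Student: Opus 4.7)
The plan is to construct a measurable functional $F$ on pairs $(h,Y)$ of a distribution and a continuous path such that $F(h, (X_{ct})_{t\geq 0}) = c$ almost surely. Once this is done, the laws of $(h,(X_{c_1 t})_{t\geq 0})$ and $(h,(X_{c_2 t})_{t\geq 0})$ are supported on the disjoint Borel sets $\{F = c_1\}$ and $\{F = c_2\}$, giving mutual singularity immediately.

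The functional $F$ is built in three steps. First, from a continuous path $Y : [0,T] \to D$ which is a time-change of Brownian motion (as guaranteed by Definition~\ref{def-time-change}) one recovers the underlying standard Brownian motion $\mcl B^Y$ by parametrizing $Y$ by its quadratic variation; this is a deterministic, measurable operation that depends only on $Y$. Second, from the pair $(h, \mcl B^Y)$ one computes the Liouville clock along $\mcl B^Y$,
\[
\phi_h(t) := \lim_{\ep \to 0} \int_0^{t \wedge \sigma} \ep^{\gamma^2/2} e^{\gamma h_\ep(\mcl B^Y_s)} \, ds ,
\]
where $\sigma$ is the first exit time of $\mcl B^Y$ from $D$; by the a.s.\ convergence results of~\cite{grv-lbm,berestycki-lbm} (applicable since the law of $h$ is locally absolutely continuous w.r.t.\ the GFF), this limit exists and defines a measurable function of $(h,\mcl B^Y)$. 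Third, from $Y$ alone one computes the inverse time change $\mcl A^Y_t := \inf\{u \geq 0 : \mcl B^Y_{\tau^Y_u} \neq Y_u\}^{-1}$, or more concretely $\mcl A^Y_t = (\tau^Y)^{-1}(t)$ where $\tau^Y$ is the strictly increasing continuous function with $Y_u = \mcl B^Y_{\tau^Y_u}$. One then defines $F(h,Y) := \phi_h(t_0)/\mcl A^Y_{t_0}$ at any fixed small $t_0 > 0$ (say, the first value in a chosen enumeration where both quantities are strictly positive and finite; a.s.\ any sufficiently small $t_0$ works).

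To verify that $F(h,(X_{ct})_{t\geq 0}) = c$ a.s., write $Y_t = X_{ct} = \mcl B_{\phi_h^{-1}(ct)}$ where $\mcl B$ is the Brownian motion underlying LBM. The quadratic variation reparametrization of $Y$ is exactly $\mcl B$ (it is the same curve with the same unit-speed parametrization), so $\mcl B^Y = \mcl B$ and the Liouville clock computed along $\mcl B^Y$ is precisely $\phi_h$. On the other hand $\tau^Y_u = \phi_h^{-1}(cu)$, so $\mcl A^Y_t = \phi_h(t)/c$. Hence $F(h,Y) = \phi_h(t_0)/(\phi_h(t_0)/c) = c$ almost surely, as desired.

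The only delicate point is the measurability of $\phi_h$ as a function of the pair $(h,\mcl B)$: we need the a.s.\ limit defining the Liouville clock to exist jointly and produce a measurable functional. This is essentially the content of the construction of LBM in~\cite{berestycki-lbm,grv-lbm}, whose arguments (based on Kahane's convergence theorem and the occupation measure of $\mcl B$) carry over to any $h$ locally absolutely continuous w.r.t.\ the GFF, and in particular to the $\gamma$-quantum cone or quantum disk (as already noted in the footnote preceding~\eqref{D:LBMmed}). Given this input, the rest of the argument is purely formal and requires no additional estimates.
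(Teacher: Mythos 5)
Your proposal is correct and takes essentially the same approach as the paper: in both cases one recovers the underlying Brownian motion $B$ from the path by quadratic-variation reparametrization, observes that the Liouville clock is a $\sigma(h,B)$-measurable functional, and concludes that the speed constant $c$ can be read off from $(h,(X_{ct})_{t\geq 0})$, giving mutual singularity. Your write-up is in fact slightly more explicit than the paper's at the step where one passes from singularity of the laws of $((h,B),f(h,B))$ and $((h,B),g(h,B))$ to singularity of the laws with $B$ dropped, which the paper leaves implicit.
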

\begin{proof}
Let $B$ be a standard planar Brownian motion started from $z$ and stopped when it exits $D$, sampled independently from $h$.
By the definition of Liouville Brownian motion, there is a random $\sigma(h,B)$-measurable function $\phi : [0,\infty) \rta [0,\infty)$ such that $(h , (X_t)_{t\geq 0}) \eqD (h , B_{\phi^{-1}(t)})_{t\geq 0}$.
We have the following trivial elementary probability fact: if $Y$ is a random variable and $f$ and $g$ are measurable functions such that $\BB P[f(Y) \not= g(Y)]  = 1$, then the laws of $(Y , f(Y))$ and $(Y, g(Y))$ are mutually singular.
To get the lemma, we apply this fact with $Y = (h , B)$ and $f$ and $g$ given by the functionals which take $(h,B)$ to $ B_{\phi^{-1}(c_1 t)}$ and $ B_{\phi^{-1}(c_2 t)}$, respectively.
\end{proof}

\begin{remark}
The proof of Lemma~\ref{lem-lbm-mut-sing} relies crucially on the fact that we are looking at the \emph{joint} law of $h$ and the Liouville Brownian motion.
At least for some choices of $h$, the \emph{marginal} laws of $(X_{c_1 t})_{t \geq 0} $ and $(X_{c_2 t})_{t \geq 0}$ are mutually absolutely continuous.
Indeed, if $h$ is the restriction to $D$ of a zero-boundary GFF on a larger domain $D'$ with $\ol D\subset D'$, then the laws of $h+\log c_1$ and $h+ \log c_2$, hence also the laws of  $(h + \log c_1 , (X_{c_1 t})_{t \geq 0})$ and $(h + \log c_2 , (X_{c_2 t})_{t \geq 0})$ are mutually absolutely continuous (see, e.g.,~\cite[Proposition 3.4]{ig1}).
\end{remark}

\begin{proof}[Proof of Lemma~\ref{lem-deterministic-constant}]
Let $c> 0$ be as in Lemma~\ref{lem-random-constant} and assume the setup of Lemma~\ref{lem-ssl-ind}. Due to Lemma~\ref{lem-lbm-coord} (applied to each of the triples $a,u,b$ and $\wt a , \wt u ,\wt b$) followed by Lemma~\ref{lem-lbm-mut-sing}, we know that $c$ can a.s.\ be expressed as a measurable function of either of the two triples $(h^{\BB D}  , \eta^{\BB D} , P_0^{\BB D})$ or $(\wt h^{\BB D} , \wt\eta^{\BB D} , \wt P_0^{\BB D})$.
However, by Lemma~\ref{lem-ssl-ind} these two triples are independent from each other.
Hence $c$ is independent from itself so $c$ must be a.s.\ equal to a deterministic constant.
\end{proof}

\subsection{From whole plane to disk}
\label{sec-plane-to-disk}

In this short section we explain why Theorem \ref{thm-lbm-conv} (convergence to Liouville Brownian motion in the whole plane case with the LQG embedding) implies Theorem \ref{thm-lbm-conv_disk} (convergence to Liouville Brownian motion in the disk case, under the Tutte embedding). The arguments in this section are quite close to (but more detailed than) \cite[Section 3.3]{gms-tutte}, see in particular \cite[Lemma 3.6]{gms-tutte}.

As recalled in Section~\ref{sec-sle-lqg}, let $(\BB{C}, h, 0, \infty)$ denote a $\gamma$-quantum cone (corresponding to a weight $W = 4- \gamma^2$, in the terminology of \cite{wedges}), and let $\eta$ be an independent space-filling SLE$_\kappa$, with $\kappa = 16/\gamma^2$, reparametrised by $\gamma$-quantum area, i.e., $\mu_h (\eta([s,t])) = t-s$ for any $s<t$. Then by \cite[Theorem 1.9]{wedges},
the quantum surface $\mathcal{W}$ obtained by restricting $h$ to {the interior $D$ of} $\eta([0, \infty))$, that is, $\mathcal{W} = (D,h|_D, 0, \infty)$ has the distribution of a quantum wedge with weight $W = 2- \gamma^2/2$. Note that this corresponds to an $\alpha$-quantum wedge where $\alpha = 3\gamma/2$ (see Table 1.1 in \cite{wedges}), where $\alpha$ denotes the strength of the logarithmic singularity near 0 in a half-plane embedding. In particular, since $\alpha$-quantum wedges are thick (have the topology of a half-plane) for $\alpha \le Q$, and are thin (given by a Poissonian chain of so-called ``beads") if $\alpha \in (Q, Q+ \gamma/2)$, this means that $\mathcal{W}$ is thick for $\gamma \le \sqrt{2}$ and thin if $\gamma \in (\sqrt{2}, 2)$. Furthermore, given $D$, in the thick case $\eta$ is simply an independent chordal space-filling SLE$_\kappa$ in $D$ from 0 to $\infty$, reparametrised by its area, while in the thin case it is a concatenation of space-filling chordal SLE$_\kappa$ in each bead of $\mathcal{W}$.
Basically, the idea of the proof is to use the absolute continuity of the law of the quantum disk w.r.t.\ the law of $\mcl W$ (or a single bead of $\mcl W$ if $\gamma  \in (\sqrt 2 , 2)$) away from their respective marked points.

Consider first the case $\gamma \le \sqrt{2}$ so that $\mathcal{W}$ is a thick quantum wedge.
Fix a large constant $C>1$.
We apply a conformal map $D\rta\BB H$ to get a new parametrization $(\BB H , h^{\mcl W}_C  , -C,\infty)$ of the quantum surface $\mcl W$, chosen so that the two marked points are $-C$ and $\infty$ and the $\nu_{h^{\mcl W}_C}$-length of $[-C,0]$ is 1 (this last choice of normalization is somewhat arbitrary --- 1 can be replaced by any number larger than $1/2$).
Let $\eta^{\mcl W}_C$ be the image of $\eta|_{[0,\infty)}$ under this conformal map, so that $\eta^{\mcl W}_C$ is a chordal SLE$_\kappa$ from $-C$ to $\infty$ in $\BB H$ parameterized by $\gamma$-LQG mass w.r.t.\ $h^{\mcl W}_C$.
Recall that Liouville Brownian motion satisfies the quantum surface change of coordinates \eqref{eqn-lqg-coord} by Theorem 1.4 in \cite{berestycki-lbm}.
From this and Theorem \ref{thm-lbm-conv}, we deduce the analog of Theorem~\ref{thm-lbm-conv} for $(h^{\mcl W}_C, \eta^{\mcl W}_C)$: if we condition on $(h^{\mcl W}_C , \eta^{\mcl W}_C)$, then the conditional law of the random walk on the adjacency graph of cells $\eta^{\mcl W}_C([x-\ep,x])$ for $x\in \ep \BB N$, stopped at the first time it hits a cell which intersects $\bdy\BB H$, linearly interpolated and time changed appropriately, converges as $\ep\rta 0$ to the law of Liouville Brownian motion w.r.t\ $h_C^{\mcl W}$ stopped upon hitting $\bdy\BB H$. Moreover, the convergence is uniform over the choice of starting point in any compact subset of $\ol{\BB H}$.

We now use an argument of absolute continuity between the above $\frac{3\gamma}{2}$-quantum wedge and a singly marked quantum disk $\mcl D$ with unit area and unit boundary length.
Let us choose a parametrization $(\BB H  , h^{\mcl D} , \infty)$ of $\mcl D$ so that the marked point is at $\infty$ and the $\nu_h$-lengths of $(-\infty,0]$ and $[0,\infty)$ are each equal to $1/2$.
Using the definitions of the quantum wedge and the quantum disk from~\cite[Sections 4.2 and 4.5]{wedges}, it is easily verified that the law of the restriction of $h^{\mcl D}$ to any fixed ball $B = \BB{H} \cap B(0, R)$ centered at 0 with $R<C $ say, is absolutely continuous with respect to the law of the field $h^{\mcl W}_C$ (here it is important that we are considering the fields away from the marked points, since they have different logarithmic singularities at their respective marked points).
Note, however, that the reverse absolute continuity statement is not true since, e.g., $\mu_{h_C^{\mcl W}}(B)$ can be greater than 1.

Let $\eta^{\mcl D}$ be a space-filling SLE$_\kappa$ loop in $\BB D$ based at $\infty$, parametrized by $\mu_{h^{\mcl D}}$-mass.
Let $\preceq_C^{\mcl W}$ (resp.\ $\preceq^{\mcl D}$) be the total ordering on $\BB Q^2\cap B$ induced by $\eta_C^{\mcl W}$ (resp.\ $\eta^{\mcl D}$).
By Proposition~\ref{prop-sle-loop}, we know that $\preceq_C^{\mcl W} \rta \preceq^{\mcl D}$ in the total variation sense as $C\rta \infty$.
Note that $\preceq_C^{\mcl W}$ determines each segment of $\eta_C^{\mcl W}$ which is contained in $B$ viewed modulo time parameterization, as well as the order in which $\eta_C^{\mcl W}$ traces these segments; a similar statement holds for $\eta^{\mcl D}$.

Let $\mcl F_C^{\mcl W,\ep}$ be the $\sigma$-algebra generated by the space-filling SLE cells $\eta([x-\ep,x])$ for $x\in\ep\BB N$ which intersect $B$. Note that $\mcl F_C^{\mcl W,\ep} \subset \sigma(h_C^{\mcl W} ,\eta_C^{\mcl W})$.
We now combine the total variation convergence in the preceding paragraph with the absolute continuity in the paragraph before that, each applied with $B$ replaced by a slightly larger ball $B'\supset B$ (which contains all of the cells which intersect $B$ with probability tending to 1 as $\ep\rta 0$).
We infer that there is an event $E_C^\ep$ such that $\BB P[E_C^\ep] \rta 1$ as $\ep\rta 0$ and then $C\rta \infty$, such that the following is true.
For any $p \in (0,1)$, there exists $q = q(p) \in (0,1)$ such that if $F \in \mcl F_C^{\mcl W,\ep}$ has probability at most $q$ and $\wt F$ denotes the corresponding event defined with $(h^{\mcl D} , \eta^{\mcl D})$ in place of $(h_C^{\mcl W}, \eta_C^{\mcl W})$, then $\BB P[\wt F\cap E_C^\ep] \leq p$.

Liouville Brownian motion started from $z \in \BB{H}$ leaves the upper half plane almost surely in finite time and its law depends continuously on its starting point.
From this, we infer that for any fixed compact set $K\subset \ol{\BB H}$ we can choose the ball $B$ to be sufficiently large so that the infimum over all $z\in K$ of the probability that Liouville Brownian motion w.r.t.\ $h^{\mcl D}$ started from $z$ hits $\bdy\BB H$ before leaving $\ol B$ is as close to 1 as we like.
Due to this fact and the preceding paragraph, we see that the analog of Theorem~\ref{thm-lbm-conv} for $(h_C^{\mcl W} , \eta_C^{\mcl W})$ discussed above implies the corresponding analog of Theorem~\ref{thm-lbm-conv} for $(h^{\mcl D} , \eta^{\mcl D})$.

We now change coordinates back to $\BB D$. By a slight abuse of notation we do not change our notation for fields / curves, so that now $h^{\mcl D}$ is a field on $\BB D$ and $\eta^{\mcl D} : [0,1]\rta \ol{\BB D}$. From the preceding paragraph and the invariance of LBM under LQG coordinate change~\cite[Theorem 1.4]{berestycki-lbm}, we infer that the statement of Theorem \ref{thm-lbm-conv_disk} holds (at least in the case $\gamma \le \sqrt{2}$) but with the SLE/LQG embedding $x\mapsto \eta^{\mcl D}(x)$ instead of the Tutte embedding. As we recalled in \eqref{E:TutteLQG}, the Tutte embedding $\psi_\ep$ of the mated-CRT map associated with $(h^{\mcl D} , \eta^{\mcl D})$ satisfies
$$
\max_{x \in \mcl{VG}^\eps} | \psi_\eps(x) - \eta^{\mcl D}(x)| \to 0
$$
in distribution; see (3.3) in Section 3.4 of \cite{gms-tutte}. We thus obtain Theorem~\ref{thm-lbm-conv_disk}.

We now discuss the thin case where $\gamma \in (\sqrt{2}, 2)$. The idea is to apply a similar argument except we restrict ourselves to a given bead of $\mathcal{W}$. One issue with this is that it is not obvious how to describe the law of the bead containing a given point $z$. However, since we assume the statement of Theorem \ref{thm-lbm-conv} (which holds uniformly over an arbitrary compact set $K$ of the wedge embedded into $\BB{H}$), the idea is to consider the first time that some bead of $\mathcal{W}$ has area greater than one, say. We then have a quenched invariance principle uniformly over points of the intersection of $K$ with this bead, and by choosing $K$ sufficiently large, we can ensure that with probability as close to one as desired, this intersection contains some given arbitrary compact set of the bead.
Moreover, since the beads of $\mathcal{W}$ have a Poissonian structure, the first bead with area greater than one has a law which is mutually absolutely continuous w.r.t.\ the law of a doubly marked quantum disk conditioned to have area at least one away from their respective marked points~\cite[Section 4.4]{wedges} (for $\gamma=\sqrt{8/3}$, the beads are exactly quantum disks but for other values of $\gamma$ they have different log singularities at the marked points).
The proof in the case when $\gamma \in (\sqrt 2 , 2)$ then follows from the same local absolute continuity argument as in the case when $\gamma \leq \sqrt 2$, except that we use this single bead instead of the whole of $\mcl W$.

\appendix

\section{Appendix}
\label{sec-appendix}

In this appendix we prove some basic facts about the $\gamma$-LQG measure.
The proofs of these facts are standard and do not use any of the other results described in the paper, so are collected here to avoid distracting from the main argument.

\subsection{Basic estimates for the LQG area measure}

Let $\gamma\in(0,2)$ and let $h$ be the circle average embedding of a $\gamma$-quantum cone, as in Definition~\ref{def-quantum-cone}.
In this subsection we record some basic estimates for the $\gamma$-LQG measure $\mu_h$ associated with $h$.

\begin{lem}[LQG mass of Euclidean balls] \label{lem-ball-mass}
Fix a small parameter $\zeta\in (0,1)$.
With polynomially high probability as $\delta \rta 0$,
\eqb \label{eqn-ball-mass}
\delta^{(2+\gamma)^2/2 +\zeta} \leq \mu_h(B_\delta(z)) \leq \delta^{(2-\gamma)^2/2 - \zeta} , \quad\forall z \in B_{1-\delta}  .
\eqe
\end{lem}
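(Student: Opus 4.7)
My plan is to exploit the explicit representation from Definition~\ref{def-quantum-cone}, which shows that $h|_{\BB D}$ has the same law as $\wt h - \gamma \log |\cdot|$, where $\wt h$ is the restriction to $\BB D$ of a whole-plane GFF normalized to have zero circle average on $\bdy \BB D$. Under this decomposition, for any Borel $A \subset \BB D$ we have $d\mu_h(w) = |w|^{-\gamma^2}\, d\mu_{\wt h}(w)$, so the lemma reduces to a uniform estimate for $\mu_{\wt h}(B_\delta(z))$ together with careful treatment of the factor $|w|^{-\gamma^2}$ near the origin.

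First I would establish the corresponding GFF statement: with polynomially high probability as $\delta \rta 0$,
\[ \delta^{(2+\gamma)^2/2 + \zeta/2} \;\leq\; \mu_{\wt h}(B_\delta(z)) \;\leq\; \delta^{(2-\gamma)^2/2 - \zeta/2}, \quad \forall z \in B_{1-\delta}. \]
The upper bound follows from the standard GMC moment estimate $\BB E[\mu_{\wt h}(B_\delta(z))^q] \leq C_q \delta^{\xi(q)}$ with $\xi(q) = (2+\gamma^2/2)q - \gamma^2 q^2/2$ for $q \in (0, 4/\gamma^2)$ (see, e.g., \cite{rhodes-vargas-review,berestycki-gmt-elementary}), combined with Markov's inequality applied at a $q$ for which $\xi(q) - q((2-\gamma)^2/2 - \zeta/2) > 2$, and a union bound over a $\delta$-net of $B_{1-\delta}$ of cardinality $O(\delta^{-2})$ (upgraded to all of $B_{1-\delta}$ via the trivial monotonicity $\mu_{\wt h}(B_\delta(z')) \leq \mu_{\wt h}(B_{2\delta}(z))$ whenever $|z-z'| \leq \delta$). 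The lower bound is obtained analogously from the Paley--Zygmund inequality applied with the second-moment estimate.

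To pass from $\mu_{\wt h}$ to $\mu_h$, I would split $B_{1-\delta}$ into the bulk region $\{|z| \geq \delta^{\zeta/(4\gamma^2)}\}$ and the near-origin region $\{|z| < \delta^{\zeta/(4\gamma^2)}\}$. In the bulk, $\sup_{w \in B_\delta(z)} |w|^{-\gamma^2} \leq C \delta^{-\zeta/4}$ while $\inf_{w \in B_\delta(z)} |w|^{-\gamma^2} \geq 1$, so both halves of the lemma follow directly from the GFF estimate after absorbing $\delta^{-\zeta/4}$ into an extra $\zeta/2$ of slack in the exponent. In the near-origin region the lower bound is immediate (since $|w|^{-\gamma^2} \geq 1$ on $\BB D$), while for the upper bound I would use the inclusion $B_\delta(z) \subset B_{3\delta^{\zeta/(4\gamma^2)}}(0)$ together with the dyadic decomposition into annuli $A_k = \{2^{-k-1} < |w| \leq 2^{-k}\}$, bounding
\[ \int_{A_k} |w|^{-\gamma^2}\, d\mu_{\wt h}(w) \leq 2^{(k+1)\gamma^2}\, \mu_{\wt h}(B_{2^{-k}}(0)) \]
annulus-by-annulus via the uniform GFF estimate already obtained; the resulting geometric sum is dominated by its smallest-scale term, which yields the target exponent $(2-\gamma)^2/2 - \zeta$.

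The main obstacle will be the bookkeeping of exponents in this dyadic sum: one must verify that the thickness parameter $t$ controlling the high-probability bound $\sup_{r \leq 1}|\wt h_r(0)|/\log r^{-1} \leq t$ can be chosen strictly less than $2/\gamma - \gamma/2$ (so that the exponent $-2 + \gamma^2/2 + \gamma t$ on $2^k$ is negative and the geometric series is summable) while remaining large enough that the leading term $\delta^{2 - \gamma^2/2 - \gamma t}$ is at most $\delta^{(2-\gamma)^2/2 - \zeta}$. An elementary computation (the discriminant of the resulting quadratic inequality in $\gamma$ is negative for small $\zeta$) shows this is possible for every $\gamma \in (0,2)$, and once both regions are handled the proof assembles routinely.
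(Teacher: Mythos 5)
Your decomposition $h|_{\BB D} \eqD \wt h - \gamma\log|\cdot|$ and the reduction to a uniform GFF estimate via GMC moment bounds is the same starting point as the paper, and your bulk analysis (using $\sup_{w\in B_\delta(z)}|w|^{-\gamma^2}\leq C\delta^{-\zeta/4}$ for $|z|\geq\delta^{\zeta/(4\gamma^2)}$) is sound. However, the near-origin upper bound has a genuine gap, and it is precisely where the paper has to work harder.

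The problem is quantitative: by replacing $B_\delta(z)$ with the enclosing ball $B_\rho(0)$, $\rho=3\delta^{\zeta/(4\gamma^2)}$, you can only ever prove a bound of the form $\mu_h(B_\delta(z))\leq\mu_h(B_\rho(0))$, and the right side is far too large. Working through your own dyadic sum: each term is bounded by a constant times $2^{k(\gamma^2/2-2+\gamma t)}$, the sum runs over $k\geq k_{\min}\approx\log_2\rho^{-1}$, and for convergence you correctly need the exponent $\gamma^2/2-2+\gamma t$ to be negative (i.e.\ $t<Q-\gamma$). But a geometric series with negative rate is dominated by its \emph{smallest} index $k_{\min}$, i.e.\ the \emph{largest} scale $2^{-k}\approx\rho$, not by the smallest scale as you claim. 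The sum therefore evaluates to order $\rho^{2-\gamma^2/2-\gamma t}=\delta^{(\zeta/(4\gamma^2))(2-\gamma^2/2-\gamma t)}$, whose exponent is $O(\zeta)$, while the target exponent $(2-\gamma)^2/2-\zeta$ is bounded away from zero. Your stated "leading term $\delta^{2-\gamma^2/2-\gamma t}$" would only arise if the exponent were positive (sum dominated by the smallest scale $2^{-k}\approx\delta$), which contradicts your own summability requirement $t<Q-\gamma$; this internal sign error is what makes the exponent bookkeeping look more favorable than it is. No choice of $t$ rescues this, because the loss is built into replacing a radius-$\delta$ ball by a radius-$\delta^{\zeta/(4\gamma^2)}$ ball.

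The paper avoids this dilemma by not using a single bulk/near-origin split. Instead it decomposes $\BB D$ into dyadic annuli $A_k=B_{e^{-k}}\setminus B_{e^{-k-1}}$ over all $k$ up to $(1-\zeta)\log\delta^{-1}$, rescales the GFF estimate to each annulus via the field $\wt h_k=\wt h(e^{-k}\cdot)-\wt h_{e^{-k}}(0)$, and uses the LQG coordinate change together with the fact that $h^0_{e^{-t}}(0)$ is a Brownian motion to show the \emph{same} bound $\delta^{(2-\gamma)^2/2-\zeta}$ holds uniformly across scales (the sign of $\gamma(2-\gamma)>0$, i.e.\ $(2-\gamma)^2/2<\gamma(Q-\gamma)$, is what makes the rescaled exponents work in the right direction). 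Only the very small residual ball $B_{\delta^{1-\zeta}}(0)$ is dealt with by a crude inclusion, and the radius $\delta^{1-\zeta}$ is nearly $\delta$, so the resulting bound $\delta^{(1-\zeta)\gamma(Q-\gamma)-\zeta}$ is genuinely smaller than $\delta^{(2-\gamma)^2/2-\zeta}$. If you want to salvage a split-and-enclose argument, you would need your near-origin threshold to be $\delta^{1-\zeta}$ (or similar), which then requires a separate mechanism — such as the paper's rescaling over annuli — to handle the remaining region $\delta^{1-\zeta}\leq|z|\leq\delta^{\zeta/(4\gamma^2)}$ where the crude bound $|w|^{-\gamma^2}\leq\delta^{-\zeta/4}$ no longer holds.
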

\begin{proof}
Let $h^0$ be a whole-plane GFF with the additive constant chosen so that $h^0_1(0) = 0$. It is shown in various places in the literature, e.g.,~\cite[Lemma 3.8]{dg-lqg-dim}, that with polynomially high probability as $\delta\rta 0$, one has
\eqb \label{eqn-ball-mass0}
\delta^{(2+\gamma)^2/2 +\zeta} \leq \mu_{h^0}(B_\delta(z)) \leq \delta^{(2-\gamma)^2/2 - \zeta} , \quad\forall z \in \BB D .
\eqe

Now let $\wt h := h^0 -\gamma\log|\cdot|$.
Then $\wt h|_{\BB D} \eqD h|_{\BB D}$. Hence it suffices to prove that with polynomially high probability as $\delta \rta 0$, the bound~\eqref{eqn-ball-mass0} holds with $\wt h$ in place of $h$.
The lower bound for $\mu_{h^0}(B_\delta(z))$ in~\eqref{eqn-ball-mass0} with $\wt h$ in place of $h$ is immediate from~\eqref{eqn-ball-mass0} since $(\wt h - h^0)|_{\BB D}$ is a non-negative function.
So, we only need to prove the upper bound.
We do this using~\eqref{eqn-ball-mass0} and a union bound over exponential scales. The argument is entirely standard but we give the details for the sake of completeness.

For $k\in\BB N_0$, let
\eqb
\wt h_k := \wt h(e^{-k}\cdot) - \wt h_{e^{-k}}(0) \quad \text{and} \quad A_k  := B_{e^{-k}  }(0) \setminus B_{e^{-k-1} }(0) .
\eqe
By the scale invariance of the law of $h^0$ modulo additive constant, $\wt h_k \eqD \wt h$.
The restriction of $\wt h - h^0 = - \gamma \log|\cdot|$ to the annulus $A_0 $ is bounded above by a finite $\gamma$-dependent constant.
We can therefore apply~\eqref{eqn-ball-mass0} with $e^k\delta$ in place of $\delta$, followed by a union bound over all $k\in [0, (1-\zeta) \log\delta^{-1}]_{\BB Z}$, to get that with polynomially high probability as $\delta\rta 0$,
\eqb \label{eqn-ball-mass-k}
 \mu_{\wt h_k}(B_{e^k\delta}(z)) \leq (e^k \delta)^{(2-\gamma)^2/2 - \zeta} , \quad\forall z \in A_0  ,\quad\forall k\in [0, (1-\zeta) \log\delta^{-1}]_{\BB Z} .
\eqe

By the coordinate change formula for the LQG area measure~\cite[Proposition 2.1]{shef-kpz} and the definition of $\wt h_k$, a.s.\
\eqbn
 \mu_{\wt h}(B_\delta(z))
 = e^{  \gamma \wt h_{e^{-k}}(0)} e^{- \gamma Q k}  \mu_{\wt h_k}(B_{e^k\delta}(e^k z))
 = e^{ \gamma h^0_{e^{-k}}(0)} e^{ - \gamma (Q-\gamma) k} \mu_{\wt h_k}(B_{e^k\delta}(e^k z))  ,\quad\forall \delta >0, \quad\forall k\in\BB N.
\eqen
Therefore,~\eqref{eqn-ball-mass-k} (note that $z\in A_k$ if and only if $e^k z \in A_0$)  implies that
\eqb \label{eqn-ball-mass-scaled}
 \mu_{\wt h}(B_\delta(z))
 \leq  e^{ \gamma h^0_{e^{-k}}(0) +  [  (2-\gamma)^2/2 - \zeta - \gamma(Q-\gamma) ] k} \delta^{(2-\gamma)^2/2 - \zeta} ,
 \quad\forall z\in A_k ,
 \quad\forall k\in [0, (1-\zeta) \log\delta^{-1}]_{\BB Z} .
\eqe

Since $\gamma \in (0,2)$ we have $(2-\gamma)^2/2   - \gamma(Q-\gamma)  = \gamma(2-\gamma) < 0$, so the exponential term in~\eqref{eqn-ball-mass-scaled} is at most $e^{\gamma h^0_{e^{-k}}(0)}$ for a small enough choice of $\zeta \in (0,1)$.
Furthermore, since $t\mapsto h^0_{e^{-t}}(0)$ is a standard linear Brownian motion it holds with polynomially high probability as $\delta \rta 0$
that for any fixed $\zeta>0$, $h^0_{e^{-k}}(0) \leq (\zeta/\gamma)\log\delta^{-1}$ for each $k\in [0, (1-\zeta) \log\delta^{-1}]_{\BB Z}$.
From this and~\eqref{eqn-ball-mass-scaled}, we get that with polynomially high probability as $\delta\rta 0$,
\eqb \label{eqn-ball-mass-final}
 \mu_{\wt h}(B_\delta(z))
 \leq   \delta^{(2-\gamma)^2/2 - 2 \zeta} ,
 \quad\forall z\in \BB D\setminus B_{\delta^{1-\zeta}}(0) .
\eqe

It is easily seen from a scaling argument that with polynomially high probability as $\delta \rta 0$, one has $\mu_h(B_{\delta^{1-\zeta}}(0)) \leq \delta^{(1-\zeta) \gamma(Q-\gamma) - \zeta}$, which is smaller than $\delta^{(2-\gamma)^2/2 - \zeta}$ for a small enough choice of $\zeta \in (0,1)$.
By combining this with~\eqref{eqn-ball-mass-final}, we get the upper bound in~\eqref{eqn-ball-mass0} with $\wt h$ in place of $h^0$ and with $2\zeta$ in place of $\zeta$.
This is sufficient since $\zeta\in (0,1)$ is arbitrary.
\end{proof}

\begin{lem} \label{lem-max-ball-ratio}
Fix $c\in (0,1)$.
For each $\zeta > 0$, it holds with polynomially high probability as $\delta \rta 0$ that
\eqb \label{eqn-max-ball-ratio}
\sup_{z\in B_{1-\delta}} \sup_{w\in B_\delta(z)} \frac{\mu_h(B_\delta(z))}{\mu_h(B_{c\delta}(w))} \leq \delta^{- \gamma^2/2 -\zeta }.
\eqe
\end{lem}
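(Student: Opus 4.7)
The plan is to reduce to a ratio of concentric balls and then exploit the LQG coordinate change formula so that the ratio has a law controlled independently of $\delta$. First, since $w\in B_\delta(z)$ implies $B_\delta(z)\subset B_{2\delta}(w)$, we have $\mu_h(B_\delta(z))\leq \mu_h(B_{2\delta}(w))$, so it suffices to prove
\eqbn
\sup_{w\in B_{1-\delta}} \frac{\mu_h(B_{2\delta}(w))}{\mu_h(B_{c\delta}(w))}\leq \delta^{-\gamma^2/2-\zeta}.
\eqen
Applying the LQG coordinate change formula~\eqref{eqn-lqg-coord} to the affine map $\phi_w(v)=w+2\delta v$, and absorbing an additive constant via $\mu_{h+a} = e^{\gamma a}\mu_h$, one obtains the key identity
\eqbn
\frac{\mu_h(B_{2\delta}(w))}{\mu_h(B_{c\delta}(w))} = \frac{\mu_{\tilde h^{w,\delta}}(\BB D)}{\mu_{\tilde h^{w,\delta}}(B_{c/2}(0))},\quad\text{where}\quad \tilde h^{w,\delta}(v) := h(w+2\delta v) - h_{2\delta}(w).
\eqen
The crucial point is that the $\delta$-dependence of the scale has been entirely absorbed: the ratio on the right depends on $h$ only through the (appropriately normalized) behavior of $h$ on the fixed unit disk $\BB D$ under the scaling $\phi_w$.

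Next I would establish a uniform moment bound: for every $p>0$ there is $C_p=C_p(\gamma,c)$ such that
\eqbn
\sup_{w\in B_{1-\delta}}\BB E\left[\left(\frac{\mu_{\tilde h^{w,\delta}}(\BB D)}{\mu_{\tilde h^{w,\delta}}(B_{c/2}(0))}\right)^p\right]\leq C_p
\eqen
uniformly in small $\delta$. For $|w|\geq 2\delta$, one uses $h|_{\BB D}\eqD h^0-\gamma\log|\cdot|$ (where $h^0$ is an appropriately normalized whole-plane GFF) together with the translation/scale invariance of the whole-plane GFF modulo additive constant: this shows that $\tilde h^{w,\delta}|_{\BB D}$ differs from a $\delta$-independent GFF by the explicit bounded function $-\gamma\log|1+2\delta v/w|+\text{const}$. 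Standard GMC moment bounds (positive moments finite for $p<4/\gamma^2$, all negative moments finite) then give the estimate. For $|w|<2\delta$ (the near-singularity case), I would apply the scale invariance of the $\gamma$-quantum cone (Lemma~\ref{lem-cone-scale}), choosing $b$ so that $R_b$ is comparable to $\delta$; this holds with polynomially high probability by the Brownian-motion-like behavior of $t\mapsto h_{e^{-t}}(0)+Qt$ in Definition~\ref{def-quantum-cone}. After this rescaling, the ratio has the law of an analogous quotient on the cone at unit scale, whose moments are finite.

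Finally, take a grid $\mcl W_\delta\subset B_{1-\delta}$ of spacing $\delta^K$ for $K=K(\zeta,\gamma)$ large. By Markov's inequality,
\eqbn
\BB P\left[\mu_h(B_{2\delta}(w))/\mu_h(B_{c\delta}(w))>\delta^{-\gamma^2/2-\zeta/2}\right]\leq C_p\,\delta^{p(\gamma^2/2+\zeta/2)}
\eqen
for each $w\in\mcl W_\delta$; choosing $p$ sufficiently large (relative to $K$) and taking a union bound over the polynomially many grid points yields the bound at every $w\in\mcl W_\delta$ with polynomially high probability. A regularity step (comparing $\mu_h(B_r(w))$ and $\mu_h(B_r(w'))$ for nearby grid points via inclusions $B_r(w)\subset B_{r+|w-w'|}(w')$ and the ball-mass bounds of Lemma~\ref{lem-ball-mass}) extends the bound from $\mcl W_\delta$ to all of $B_{1-\delta}$, up to an absorbable $\delta^{o(1)}$ factor. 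The main obstacle is verifying the uniform moment bound in the near-origin regime: the rescaling via Lemma~\ref{lem-cone-scale} involves the random scale $R_b$ and care is needed to quantify the event $\{R_b\asymp\delta\}$ and patch different dyadic scales together, but this is essentially the same kind of Brownian-motion argument already used in the proof of Lemma~\ref{lem-ball-mass}.
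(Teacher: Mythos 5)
Your reduction to concentric balls and the coordinate-change identity normalizing to the unit scale are both correct, and the overall plan of a moment bound followed by a union bound is sound in spirit. However, there is a genuine gap in the central step: the claim that the ratio $\mu_{\tilde h^{w,\delta}}(\BB D)/\mu_{\tilde h^{w,\delta}}(B_{c/2}(0))$ has uniformly bounded moments of \emph{every} positive order $p$ is false. The GMC mass $\mu(\BB D)$ for a $\gamma$-log-correlated field has finite positive moments only up to order $4/\gamma^2$, and the ratio inherits essentially the same constraint: roughly speaking, one can lower-bound the ratio by $\mu(\BB D\setminus B_{3/4})/\mu(B_{c/2})$, whose two factors are approximately conditionally independent given a common radial scale factor that cancels in the quotient; since the numerator has only finitely many positive moments, so does the ratio. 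This is not a technicality one can argue around, and it is precisely the reason the lemma's exponent is $\gamma^2/2+\zeta$ rather than something smaller: the achievable tail decay is $\delta^{(4/\gamma^2)(\gamma^2/2+\zeta/2)+o(1)}=\delta^{2+2\zeta/\gamma^2+o(1)}$, which barely beats the $\delta^{-2}$ needed for a union bound over a grid at scale $\delta$. Your plan to use a grid of spacing $\delta^K$ with $K$ large, paying for it with large $p$, cannot work: with moments only up to $4/\gamma^2$, one must keep $K$ essentially equal to $1$.

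The paper's proof sidesteps the issue of moments of the ratio entirely by bounding the numerator and denominator separately: with superpolynomially high probability $\inf_{w\in\BB D}\mu_{h^0}(B_c(w))\geq\delta^{\zeta/2}$ (using that the denominator has all negative moments), while $\BB P[\mu_{h^0}(\BB D)>\delta^{-\gamma^2/2-\zeta/2}]\leq\delta^{2+2\zeta/\gamma^2+o(1)}$ (using positive moments up to $4/\gamma^2$). Combining gives the tail bound for the ratio directly, and translation/scale invariance of the whole-plane GFF modulo additive constant (rather than the coordinate-change formula) lets this be applied at every scale $\delta$ and at every point, with a union bound over $O(\delta^{-2})$ grid points. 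The transfer from the whole-plane GFF $h^0$ to the quantum cone $h$ (handling the $-\gamma\log|\cdot|$ singularity at the origin) is done once at the end by the same dyadic-scaling argument as in Lemma~\ref{lem-ball-mass}, which is cleaner than treating $|w|<2\delta$ as a special case throughout.
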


See also \cite[Theorem A.1]{grv-kpz} for a related (and in some ways more precise) statement.

\begin{proof}[Proof of Lemma~\ref{lem-max-ball-ratio}]
We first prove the lemma with $h$ replaced by a whole-plane GFF $h^0$ with the additive constant chosen so that $h^0_1(0) =0$ and with $B_{1-\delta}$ replaced by $\BB D$.
For each $w\in \BB D$, the random variable $\mu_{h^0}(B_c(w))$ has negative moments of all orders which are bounded above by constants which depend on $c$, but not on $w$~\cite[Theorem 2.12]{rhodes-vargas-review}. A union bound therefore shows that
\eqb
\BB P\left[ \inf_{z\in\BB D} \mu_h(B_c(w)) \leq \delta^{\zeta/2} \right] = O_\delta(\delta^p) ,\quad\forall p > 0.
\eqe
Since $\mu_{h^0}(\BB D)$ has positive moments up to order $4/\gamma^2$~\cite[Theorem 2.11]{rhodes-vargas-review},
\eqb
\BB P\left[ \mu_{h^0}(\BB D) > \delta^{- \gamma^2 / 2 -\zeta/2} \right] \leq \delta^{2 + 2\zeta/\gamma^2 + o_\delta(1) } .
\eqe
Therefore,
\eqb \label{eqn-sup-ratio0}
\BB P\left[  \sup_{w\in \BB D}  \frac{\mu_{h^0}(\BB D)}{\mu_{h^0}(B_{c}(w))}   > \delta^{- \gamma^2 / 2 -\zeta } \right] \leq \delta^{2 + 2\zeta/\gamma^2 + o_\delta(1) } .
\eqe

Due to the translation and scale invariance of the law of $h^0$, modulo additive constant, the law of $\sup_{w\in B_\delta(z)} \frac{\mu_{h^0}(B_\delta(z))}{\mu_{h^0}(B_{c\delta}(w))}$ does not depend on $z$ or $\delta$. We may therefore apply~\eqref{eqn-sup-ratio0} and take a union bound over $O_\delta(\delta^{-2})$ points in $\BB D$ to obtain that with polynomially high probability as $\delta \rta 0$, the bound~\eqref{eqn-max-ball-ratio} with $h^0$ in place of $h$ and $\BB D$ in place of $B_{1-\delta}$.
The lemma statement statement for $h$ follows from the preceding estimate for $h^0$ via a multiscale argument as in the proof of Lemma~\ref{lem-ball-mass}.
\end{proof}

\begin{lem} \label{lem-log-int}
Fix $\rho \in (0,1)$.
There are constants $c_0 ,c_1 >0$ depending only on $\rho$ and $\gamma$ such that for each $A>1$, it holds with probability at least $1-c_0 e^{-c_1 A}$ that
\eqb \label{eqn-log-int}
  \sup_{z\in S} \int_{S} \log\left(\frac{1}{|z-w|} \right) \,d\mu_h(w) \leq A \left( \mu_h(S) +   e^{-c_1 A} \right) ,
  \quad \text{$\forall$ Borel sets $S\subset B_\rho$}.
\eqe
\end{lem}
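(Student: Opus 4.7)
The plan is to split the integral at the scale $e^{-A}$. Fix $z\in S \subset B_\rho$ and write
\[
\int_S \log\frac{1}{|z-w|}\, d\mu_h(w)
= \int_{S \setminus B_{e^{-A}}(z)} \log\frac{1}{|z-w|}\, d\mu_h(w)
+ \int_{S \cap B_{e^{-A}}(z)} \log\frac{1}{|z-w|}\, d\mu_h(w).
\]
The ``far'' integral is trivially bounded by $A\,\mu_h(S)$ since the integrand is at most $A$ there; this yields the $A\,\mu_h(S)$ term in the target estimate. The substance of the argument is to show that the ``near'' integral is at most $A\, e^{-c_1 A}$, uniformly over $z \in B_\rho$, on an event of probability at least $1 - c_0 e^{-c_1 A}$.

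To bound the near integral, decompose $B_{e^{-A}}(z) \setminus \{z\}$ into dyadic (base-$e$) annuli $\mathbb{A}_k(z) := B_{e^{-A-k}}(z) \setminus B_{e^{-A-k-1}}(z)$ for $k \geq 0$. On $\mathbb{A}_k(z)$ the integrand is at most $A+k+1$, so
\[
\int_{B_{e^{-A}}(z)} \log\frac{1}{|z-w|}\, d\mu_h(w)
\;\leq\; \sum_{k=0}^{\infty} (A+k+1)\, \mu_h\!\left( B_{e^{-A-k}}(z) \right).
\]
It therefore suffices to establish a uniform-in-$z$ upper bound of the form $\mu_h(B_\delta(z)) \leq \delta^\beta$, for all $z \in B_\rho$ and all $\delta \leq e^{-A}$, on an event of probability at least $1 - c_0 e^{-c_1 A}$, where $\beta = \beta(\gamma) > 0$ is some fixed exponent strictly smaller than $(2-\gamma)^2/2$. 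Granted such a bound, the sum on the right-hand side is at most $\sum_{k\geq 0}(A+k+1) e^{-\beta(A+k)}$, which for $A$ large is $O(A\, e^{-\beta A})$, and in particular is at most $A\, e^{-c_1 A}$ for any $0 < c_1 < \beta$ after absorbing a multiplicative constant by slightly shrinking $c_1$. Small values of $A$ can be absorbed by enlarging $c_0$, so that the probability bound $c_0 e^{-c_1 A} \geq 1$ makes the statement vacuous.

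The uniform tail bound on $\mu_h(B_\delta(z))$ is essentially contained in Lemma~\ref{lem-ball-mass}. That lemma, applied with any $\zeta < (2-\gamma)^2/2$ and with $\delta$ replaced by $\delta_k := e^{-A-k}$, asserts that simultaneously for all $z \in B_{1-\delta_k}$ (hence, for $A$ large, for all $z\in B_\rho$) one has $\mu_h(B_{\delta_k}(z)) \leq \delta_k^{\beta}$ on an event of probability at least $1 - O(\delta_k^{p}) = 1 - O(e^{-p(A+k)})$ for some $p = p(\zeta,\gamma) > 0$. Importantly, Lemma~\ref{lem-ball-mass} already provides a uniform statement in $z$, so no further net argument over $B_\rho$ is needed. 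Taking a union bound over $k \geq 0$ yields a single good event of probability at least $1 - \sum_{k \geq 0} O(e^{-p(A+k)}) = 1 - O(e^{-pA})$ on which the required dyadic control holds at every scale $\delta_k$ simultaneously.

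The main (minor) obstacle is the need to upgrade Lemma~\ref{lem-ball-mass}, whose probability bound is of the form ``polynomially high in $\delta$'', into an estimate whose failure probability is exponentially small in $A$. This upgrade is essentially automatic: one simply applies the lemma at the scales $\delta = e^{-A-k}$ and observes that polynomial-in-$\delta$ decay at scale $\delta = e^{-A}$ translates into exponential-in-$A$ decay, which is then summable over the dyadic scales. Combining the far and near estimates completes the proof, with $c_1$ taken strictly smaller than $\min(\beta, p)$ and $c_0$ chosen large enough to cover both the geometric-series constant from the union bound and the trivial case of small $A$.
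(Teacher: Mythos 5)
Your proposal is correct and follows essentially the same route as the paper's proof: both decompose the integral into exponential-scale annuli, invoke Lemma~\ref{lem-ball-mass} with a union bound over scales $\delta\le e^{-A}$ to get a uniform-in-$z$ tail bound $\mu_h(B_\delta(z))\le \delta^\beta$ with exponentially small failure probability in $A$, and then split the contributions at scale $e^{-A}$ so that the coarse scales give $A\,\mu_h(S)$ and the fine scales sum to $O(A\,e^{-cA})$. The only cosmetic difference is the order of operations (you split the integral before dyadically decomposing the near part; the paper decomposes first and then splits the sum at $k\approx A$), but these are the same estimate.
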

\begin{proof}
For a Borel set $S\subset B_\rho$,
\eqb \label{eqn-log-int-decomp}
\int_{S} \log\left(\frac{1}{|z-w|} \right) \,d\mu_h(w)
\leq \sum_{k=0}^\infty k \mu_h\left(\left( B_{e^{-k+1}}(z)  \setminus B_{e^{-k}}(z)  \right) \cap S \right).
\eqe
By Lemma~\ref{lem-ball-mass} and a union bound over dyadic values of the radius $\delta$, we can find constants $c_0,c_1 > 0$ depending only on $\rho$ and $\gamma$ such that with probability at least $1 - c_0 e^{-c_1 A}$,
\eqb \label{eqn-use-max-ball-mass}
  \sup_{z\in B_\rho} \mu_h(B_\delta(z))  \leq \delta^{c_1} ,\quad \forall \delta \in (0,e^{-A} ]  .
\eqe
If~\eqref{eqn-use-max-ball-mass} holds, then simultaneously for each $S\subset B_\rho$ and each $z\in S$, the right side of~\eqref{eqn-log-int-decomp} is bounded above by a constant (depending only on $\rho, \gamma$) times
\eqb  \label{eqn-sum-to-mass}
A \mu_h(S) + \sum_{k= \lfloor A \rfloor}^\infty k e^{-c_1 k}
\preceq A\left(  \mu_h(S) + e^{- c_1 A} \right)  ,
\eqe
where here the implicit constant in $\preceq$ depends only on $\rho,\gamma$.
Plugging this into~\eqref{eqn-log-int-decomp} gives~\eqref{eqn-log-int}.
\end{proof}

\subsection{Comparing sums over cells to integrals against LQG measure}
\label{sec-cell-sum}

Assume that we are in the setting of Section~\ref{sec-mated-crt-setup}, so that $h$ is the circle average embedding of a $\gamma$-quantum cone, $\eta$ is a space-filling SLE$_{16/\gamma^2}$ sampled independently from $h$ and then parametrized by $\mu_h$-mass, $\mcl G^\ep$ is the associated $\ep$-mated CRT map, and $H_x^\ep := \eta([x-\ep,x])$.
We also recall the constant $\rho \in (0,1)$.
In this subsection we will give a quantitative version of the intuitively obvious statement that the counting measure on cells of $\mcl G^\ep$ approximates the $\gamma$-LQG measure when $\ep$ is small.

\begin{lem} \label{lem-cell-sum}
There exists $\alpha  =\alpha(\gamma ) > 0$ and $\beta = \beta( \gamma) > 0$ such that with polynomially high probability as $\ep\rta 0$, the following is true. Let $D\subset B_\rho$ be a Borel set and let $f : B_{\ep^\alpha}(D)  \rta [0,\infty)$ be a non-negative function which is $ \ep^{-\beta}$-Lipschitz continuous and satisfies $\|f\|_\infty\leq   \ep^{-\beta}$.
For $x\in \mcl V\mcl G^\ep(D)$, let $w_x^\ep$ be an arbitrary point of $H_x^\ep \cap \ol D$ (e.g., we could take $w_x^\ep = \eta(x)$ or take $w_x^\ep$ to be the point where $f$ attains its maximum or minimum value on $H_x^\ep$).
If we let $\mu_h$ be the $\gamma$-LQG area measure induced by $h$, then
\eqb \label{eqn-cell-sum}
 \ep^{-1} \int_D f( z)  \,d\mu_h (  z) - \ep^{-1+\alpha}
 \leq \sum_{  x\in \mcl V\mcl G^\ep(D) } f(w_x^\ep)
 \leq  \ep^{-1} \int_{B_{\ep^\alpha}(D)} f( z)  \,d\mu_h (  z)  + \ep^{-1+\alpha}
\eqe
simultaneously for every choice of $D$ and $f$ as above.
\end{lem}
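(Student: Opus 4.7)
The main idea is that each cell $H_x^\ep$ has $\mu_h$-mass exactly $\ep$, so the sum $\sum_{x \in \mcl V\mcl G^\ep(D)} f(w_x^\ep)$ can be rewritten as
\eqbn
\sum_{x \in \mcl V\mcl G^\ep(D)} f(w_x^\ep) \;=\; \ep^{-1} \sum_{x \in \mcl V\mcl G^\ep(D)} \int_{H_x^\ep} f(w_x^\ep) \, d\mu_h(z) .
\eqen
Once this is done, the task is to replace $f(w_x^\ep)$ under the integral by $f(z)$, using the Lipschitz continuity of $f$ to control the error, and then to compare the resulting domain of integration $\bigcup_{x \in \mcl V\mcl G^\ep(D)} H_x^\ep$ to $D$ (for the lower bound) or to $B_{\ep^\alpha}(D)$ (for the upper bound).

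I would set up the truncation event first. Fix $\zeta>0$ small to be chosen in terms of $\gamma$ and let $\alpha_0 := \tfrac{2}{(2+\gamma)^2} - \zeta$. By Lemma~\ref{lem-cell-diam}, it holds with polynomially high probability as $\ep\rta 0$ that
\eqbn
\op{diam}(H_x^\ep) \leq \ep^{\alpha_0} \quad \forall x \in \mcl V\mcl G^\ep(B_\rho) .
\eqen
In particular, if $w_x^\ep \in H_x^\ep \cap \ol D$, then $H_x^\ep \subset B_{\ep^{\alpha_0}}(D)$ for each $x \in \mcl V\mcl G^\ep(D)$. By Lemma~\ref{lem-ball-mass} (and some elementary considerations that expand slightly beyond $B_\rho$, say to $B_{(1+\rho)/2}$), we may also assume $\mu_h(B_{\ep^{\alpha_0}}(D)) \leq \mu_h(B_{(1+\rho)/2}) \leq M$ for some random but a.s.\ finite $M = M(\rho,\gamma)$, and hence with polynomially high probability this is bounded by a deterministic constant $M_0$.

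Once these events hold, the proof is deterministic. Given $x \in \mcl V\mcl G^\ep(D)$ and $z \in H_x^\ep$, Lipschitz continuity of $f$ yields $|f(w_x^\ep) - f(z)| \leq \ep^{-\beta} \op{diam}(H_x^\ep) \leq \ep^{\alpha_0 - \beta}$. Integrating over $H_x^\ep$ against $\mu_h$ (which has mass $\ep$ on $H_x^\ep$) and summing over $x$ gives
\eqbn
\left| \sum_{x \in \mcl V\mcl G^\ep(D)} f(w_x^\ep) \;-\; \ep^{-1} \int_{\bigcup_{x \in \mcl V\mcl G^\ep(D)} H_x^\ep} f(z) \, d\mu_h(z) \right|
\;\leq\; \ep^{-1} \cdot \ep^{\alpha_0 - \beta} \cdot \mu_h\Big( \bigcup_{x \in \mcl V\mcl G^\ep(D)} H_x^\ep \Big)
\;\leq\; M_0 \, \ep^{-1 + \alpha_0 - \beta} .
\eqen
Finally, $D \subset \bigcup_{x \in \mcl V\mcl G^\ep(D)} H_x^\ep \subset B_{\ep^{\alpha_0}}(D)$, so bounding the integration domain above by $B_{\ep^{\alpha_0}}(D)$ (using $f \geq 0$) gives the upper bound in~\eqref{eqn-cell-sum}, while bounding it below by $D$ gives the lower bound. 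Choosing $\beta < \alpha_0/2$ and $\alpha := \alpha_0 - \beta > 0$ (equivalently, choosing $\zeta$ and $\beta$ small compared to $\tfrac{2}{(2+\gamma)^2}$) absorbs the error $M_0 \ep^{-1+\alpha_0 - \beta}$ into $\ep^{-1+\alpha}$.

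The proof is therefore essentially bookkeeping; the only subtle point, and the one I would expect to be the main technical care, is the \emph{uniformity over $D$ and $f$}. Crucially, all of the ingredients above --- the cell diameter bound, the bound on $\mu_h(B_{(1+\rho)/2})$, the disjointness $\mu_h(H_x^\ep) = \ep$, and the Lipschitz estimate $|f(w_x^\ep) - f(z)|\leq \ep^{-\beta}\op{diam}(H_x^\ep)$ --- are pointwise/cellwise statements that do not depend on the particular choice of $D$ or of $f$, only on the a priori bounds on $f$'s Lipschitz constant and sup-norm together with the global event in the first paragraph. Hence a single high-probability event suffices to make the estimate hold simultaneously for all admissible $(D,f)$.
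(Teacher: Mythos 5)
Your overall approach matches the paper's: exploit $\mu_h(H_x^\ep)=\ep$ to rewrite the sum as an integral, control $|f(w_x^\ep)-f(z)|$ for $z\in H_x^\ep$ via the Lipschitz constant and the cell-diameter bound of Lemma~\ref{lem-cell-diam}, and compare $\bigcup_{x\in\mcl V\mcl G^\ep(D)} H_x^\ep$ to $D$ and $B_{\ep^\alpha}(D)$. You also correctly observe that the high-probability truncation event does not depend on $(D,f)$, which is what makes the estimate uniform.

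There is, however, one genuine gap: your claim that ``with polynomially high probability $\mu_h(B_{(1+\rho)/2})$ is bounded by a deterministic constant $M_0$.'' The random variable $\mu_h(B_{(1+\rho)/2})$ does not depend on $\ep$, so for any fixed $M_0$ the event $\{\mu_h(B_{(1+\rho)/2})\leq M_0\}$ has a fixed probability strictly less than $1$; it cannot have polynomially high probability as $\ep\rta 0$, and truncating on it does not deliver the polynomially-high-probability conclusion the lemma asserts. (Lemma~\ref{lem-ball-mass} also does not yield such a bound --- it controls the mass of small balls, and a covering of $B_{(1+\rho)/2}$ by balls of radius $\delta$ gives a total that blows up like a negative power of $\delta$.) The paper handles this by letting the mass bound degrade polynomially: a tail estimate for $\mu_h$ together with Chebyshev gives $\mu_h(B_{\rho+\ep^q})\leq \ep^{-q/2}$ with polynomially high probability, and the resulting error term $\ep^{q-\beta}\cdot\ep^{-1-q/2}=\ep^{-1+q/2-\beta}$ is still $O(\ep^{-1+\alpha})$ once $\beta<q/2$. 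Your argument is repaired the same way: replace $M_0$ by $\ep^{-q/2}$ (or any small negative power of $\ep$) and shrink $\alpha$ accordingly to absorb the extra power.
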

\begin{proof}
Fix $q\in \left( 0 , \tfrac{2}{(2+\gamma)^2} \right)$ chosen in a manner depending only on $\gamma$ and let $\beta \in (0,q/2)$.
By Lemma~\ref{lem-cell-diam}, it holds with polynomially high probability as $\ep\rta 0$ that
\eqb \label{eqn-cell-sum-diam}
\op{diam}\left( H_x^\ep \right) \leq \ep^q ,\quad \forall x\in \mcl V\mcl G^\ep(B_\rho) .
\eqe
By a standard tail estimate for the $\gamma$-LQG measure (e.g.,~\cite[Lemma A.3]{ghs-dist-exponent} together with the Chebyshev inequality), it holds with polynomially high probability as $\ep\rta 0$ that
\eqb \label{eqn-cell-sum-mass}
\mu_h\left( B_{\rho + \ep^q} \right) \leq \ep^{-q/2} .
\eqe
Henceforth assume that~\eqref{eqn-cell-sum-diam} and~\eqref{eqn-cell-sum-mass} hold.
Let $\beta \in (0, q/2)$. We will show that~\eqref{eqn-cell-sum} holds simultaneously for any choice of $D$ and $f$ as in the lemma statement.

Since $f$ is $ \ep^{-\beta}$-Lipschitz continuous and $\mu_h(H_x^\ep ) = \ep$ for each $x\in \mcl V\mcl G^\ep $, the bound~\eqref{eqn-cell-sum-diam} implies that for each $x\in \mcl V\mcl G^\ep(D) $,
\allb \label{eqn-cell-approx-upper}
f(w_x^\ep)
 = \ep^{-1} \int_{H_x^\ep } f(w_x^\ep) \, d\mu_h ( z)
&\leq \ep^{-1} \int_{H_x^\ep } f(  z) \, d\mu_h ( z) +     \ep^{ -\beta} \op{diam}(H_x^\ep) \notag \\
&\leq \ep^{-1} \int_{H_x^\ep } f( z) \, d\mu_h ( z) +    \ep^{q-\beta}  \quad \text{by~\eqref{eqn-cell-sum-diam}} .
\alle
Similarly, we also have
\eqb  \label{eqn-cell-approx-lower}
f(w_x^\ep)  \geq  \ep^{-1} \int_{H_x^\ep } f( z) \, d\mu_h ( z) -   \ep^{q-\beta}  .
\eqe

By~\eqref{eqn-cell-sum-diam}, for each $x\in \mcl V\mcl G^\ep(D)$ we have $H_x^\ep \subset B_{\ep^q}(D)$ and the intersection of any two of these cells has zero $\mu_h$-mass.
We can therefore sum~\eqref{eqn-cell-approx-upper} over all $x\in \mcl V\mcl G^\ep(D) $ to get
\allb \label{eqn-cell-sum-interior}
\sum_{x \in \mcl V\mcl G^\ep(D) } f(w_x^\ep)
&\leq \ep^{-1} \int_{B_{\ep^q}(D) } f(z) \, d\mu_h ( z) +    \ep^{q-\beta} \# \mcl V\mcl G^\ep(D)   \notag \\
&\leq \ep^{-1} \int_{B_{\ep^q}(D) } f(z) \, d\mu_h ( z) +   \ep^{ - 1  + q/2 -\beta}  ,
\alle
where in the last line we used~\eqref{eqn-cell-sum-mass} and the fact that each cell $H_x^\ep$ for $x\in \mcl V\mcl G^\ep(D)$ is contained in $B_\rho$ and has $\mu_h$-mass $\ep$.
This gives the upper bound in~\eqref{eqn-cell-sum} with $\alpha = \min\{q,  q/2 -\beta\}$.

Since $D$ is contained in the union of the cells $H_x^\ep$ for $x\in \mcl V\mcl G^\ep(D)$, we can similarly obtain the lower bound in~\eqref{eqn-cell-sum-interior} by summing~\eqref{eqn-cell-approx-lower} of all $x\in \mcl V\mcl G^\ep(D) $.
\end{proof}

We will also need a variant of Lemma~\ref{lem-cell-sum} where we weight each cell by its degree, whose proof is somewhat more involved.
The extra factor of 6 in this case comes from the fact that $\mcl G^\ep$ is a planar triangulation, so it has average degree 6 (see e.g. e.g. \cite{angel-hyperbolic} where the same factor 6 occurs for similar reasons).

\begin{lem} \label{lem-cell-sum-deg}
There exists $\alpha  =\alpha(\gamma ) > 0$ and $\beta = \beta( \gamma) > 0$ such that with polynomially high probability as $\ep\rta 0$, the following is true.
Let $D \subset B_\rho$, $f : B_{\ep^\alpha}(D) \rta \BB R$, and $w_x^\ep$ for $x\in \mcl G^\ep(D)$ be as in Lemma~\ref{lem-cell-sum}.
Then
\eqb \label{eqn-cell-sum-deg}
 6 \ep^{-1} \int_{D\setminus B_{\ep^\alpha}(\bdy D)} f( z)  \,d\mu_h (  z) -  \ep^{-1+\alpha}
\leq \sum_{  x\in \mcl V\mcl G^\ep(D) } f^\ep( w_x^\ep) \op{deg}^\ep(x)
  \leq  6 \ep^{-1} \int_{B_{\ep^\alpha}(D)} f( z)  \,d\mu_h (  z) +  \ep^{-1+\alpha}    ,
\eqe
simultaneously for every choice of $D$ and $f$ as above.
\end{lem}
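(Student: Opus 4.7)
The plan is to follow the template of the proof of Lemma~\ref{lem-cell-sum}, with one extra combinatorial ingredient: the identity $\sum_{x}\op{deg}^\ep(x) \approx 6|V|$ for large vertex sets $V$, which holds because $\mcl G^\ep$ is a planar triangulation (see Section~\ref{sec-setup}). The constant 6 in~\eqref{eqn-cell-sum-deg} is exactly the average degree in an infinite planar triangulation, by Euler's formula. So the task reduces to (a) a Lipschitz-approximation step essentially identical to Lemma~\ref{lem-cell-sum}, and (b) a quantitative version of the degree-measure convergence $\nu^\ep \to 6\mu_h$.

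First I would apply Lemma~\ref{lem-cell-diam} to get $\op{diam}(H_x^\ep)\leq \ep^q$ for every $x\in\mcl V\mcl G^\ep(B_\rho)$ with some $q < 2/(2+\gamma)^2$, on an event of polynomially high probability, and combine this with the Lipschitz hypothesis on $f$ to get $|f(w_x^\ep) - \ep^{-1}\int_{H_x^\ep} f\,d\mu_h|\leq \ep^{q-\beta}$. Multiplying by $\op{deg}^\ep(x)$ and summing, the target sum equals $\ep^{-1}\int f\,d\nu^\ep$ up to an error of $\ep^{q-\beta}\sum_{x\in V_D}\op{deg}^\ep(x)$, where $V_D := \mcl V\mcl G^\ep(D)$ and $\nu^\ep := \sum_{x\in V_D}\op{deg}^\ep(x)\,\mu_h|_{H_x^\ep}$. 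Using the triangulation bound $|\mcl E(\mcl G^\ep[V_D])| \leq 3|V_D|$ we obtain $\sum_{x\in V_D}\op{deg}^\ep(x) = O(|V_D|) = O(\ep^{-1})$, so this error is $O(\ep^{q-\beta-1})\leq \tfrac12\ep^{-1+\alpha}$ for $\beta$ small compared to $q$.

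The main step is the quantitative measure comparison $\nu^\ep \approx 6\,\mu_h|_{U_D}$ against the test function $f$, where $U_D := \bigcup_{x\in V_D} H_x^\ep$. The cumulative version is the Euler-formula identity $\sum_{x\in V}(\op{deg}^\ep(x) - 6) = O(\#\bdy V)$, valid for any finite $V\subset\mcl V\mcl G^\ep$ in the planar triangulation $\mcl G^\ep$ (where $\bdy V$ is as in~\eqref{eqn-graph-bdy}). I would promote this to a function-weighted statement by a discrete summation-by-parts: write $\int f\,d\nu^\ep - 6\int_{U_D}f\,d\mu_h = \ep\sum_{x\in V_D} f(w_x^\ep)(\op{deg}^\ep(x)-6) + O(\ep^{q-\beta})$ via the Lipschitz reduction above, then use a layer-cake / level-set decomposition of $f$ together with the cumulative Euler identity applied to each level set. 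The resulting bound is of order $\|f\|_\infty\cdot\max_x\op{deg}^\ep(x)\cdot\ep\cdot\#\bdy V_D + \ep^{q-\beta}\nu^\ep(U_D)$, both of which are $O(\ep^{-1+\alpha'})$ for some $\alpha'>0$ using the max-degree bound $\max_x\op{deg}^\ep(x)\leq\ep^{-\zeta}$ from~\cite[Lemma 2.6]{gms-harmonic} and the triangulation bound $\nu^\ep(U_D) = O(\mu_h(U_D)) = O(1)$.

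Finally, I would compare $\int_{U_D}f\,d\mu_h$ to $\int_D f\,d\mu_h$ and $\int_{B_{\ep^\alpha}(D)}f\,d\mu_h$, which differ only on sets contained in $B_{\ep^q}(\bdy D)$; this is exactly why the lemma uses the $\ep^\alpha$-buffers $D\setminus B_{\ep^\alpha}(\bdy D)$ and $B_{\ep^\alpha}(D)$, with $\alpha$ chosen slightly smaller than $q$. The main technical obstacle is this boundary bookkeeping: for an arbitrary Borel set $D$ the boundary $\bdy D$ can be large, so the only way to obtain a uniform estimate is to absorb all cells within Euclidean distance $\ep^\alpha$ of $\bdy D$ into the error term. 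To handle this I would combine Lemma~\ref{lem-cell-diam} (to argue boundary cells lie in $B_{\ep^q}(\bdy D)$), the max-degree bound (to control their total degree contribution), and the LQG mass bound of Lemma~\ref{lem-ball-mass} (to bound $\mu_h(B_{\ep^q}(\bdy D))$ in terms of $\ep^q$), which together ensure the total contribution of boundary cells is bounded by $\ep^{-1+\alpha}$.
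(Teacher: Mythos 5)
Your high-level plan is right (Euler's formula plus Lipschitz-approximation), but there is a genuine gap in the combinatorial step that the paper handles differently, and without the fix the argument does not close.

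The problem is with applying the cumulative Euler identity $\sum_{x\in V}(\op{deg}^\ep(x)-6)=O(\#\bdy V)$ directly to $V_D:=\mcl V\mcl G^\ep(D)$ for an arbitrary Borel $D$. Nothing bounds $\#\bdy V_D$ uniformly in $D$: if $\bdy D$ is sufficiently wild (a fat Cantor set, say), then $B_{\ep^q}(\bdy D)$ can cover essentially all of $B_\rho$, so $\#\bdy V_D$ can be of order $\#V_D\asymp\ep^{-1}$. Your attempt to absorb this into the error term via the $\ep^\alpha$-buffer does not rescue the bound either: the quantity you would need to control is $\mu_h(B_{\ep^q}(\bdy D))$, and Lemma~\ref{lem-ball-mass} only controls the $\mu_h$-mass of a single Euclidean ball — for pathological $D$ this neighbourhood has mass of order $1$, so the resulting error after multiplying by $\ep^{-1}$ is $O(\ep^{-1})$, not $O(\ep^{-1+\alpha})$. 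The $\ep^\alpha$-buffer in the statement is only there to handle the cells that straddle $\bdy D$; it does nothing to cap $\#\bdy V_D$.

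The paper sidesteps this by never applying Euler's formula to a set that depends on $D$. Instead it tiles $\mcl G^\ep$ by fixed-size chunks $\mcl T_y^{n_\ep,\ep}$ at the intermediate scale $n_\ep=\lfloor\ep^{-1/2}\rfloor$ (each chunk is the set of $\ep$-cells contained in a single $\ep^{1/2}$-cell $H_y^{n_\ep\ep}$). It then applies the Euler count (Lemma~\ref{lem-edge-count}) to each chunk individually. The crucial new ingredient you are missing is Lemma~\ref{lem-cell-bdy-count}, a Brownian-motion estimate coming from the $(L,R)$-representation of the mated-CRT map, which shows $\op{Perim}(\mcl T_y^\ep)\preceq(\log\ep^{-1})^2\ep^{-1/4}$ with superpolynomially high probability, uniformly over all chunks. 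Since each chunk has exactly $n_\ep\approx\ep^{-1/2}$ vertices, the perimeter correction is a lower-order term. The Lipschitz hypothesis on $f$ is then used to replace $f(w_x^\ep)$ by a single value $f(u_y^{n_\ep\ep})$ across each chunk, and Lemma~\ref{lem-cell-sum} at scale $n_\ep\ep$ converts the sum of those representative values into an integral. This way the degree-averaging and the measure-approximation are carried out at two different scales, and the $D$-dependent boundary count never appears. Without the chunking and the perimeter estimate, your layer-cake/summation-by-parts route cannot produce a bound that is uniform over Borel $D$.
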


We note that Lemma~\ref{lem-cell-sum-deg} implies the following corollary.

\begin{cor} \label{cor-deg-conv}
For $\ep>0$, let $\mu_h^\ep$ be the measure whose restriction to each cell $H_x^\ep$ for $x\in\ep\BB Z$ is equal to $ \op{deg}^\ep(x) \mu_h|_{H_x^\ep}$.
Then as $\ep\rta 0$, we have $\mu_h^\ep \rta 6 \mu_h$ in probability w.r.t.\ the vague topology.
\end{cor}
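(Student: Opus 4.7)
The plan is to reduce the statement to applying Lemma~\ref{lem-cell-sum-deg} to a single Lipschitz test function. By definition of the vague topology and a standard density / diagonal-subsequence argument, convergence in probability of $\mu_h^\ep$ to $6\mu_h$ in the vague topology is equivalent to the following: for every Lipschitz compactly supported $f : \BB C \to \BB R$, the integral $\int f\, d\mu_h^\ep$ converges to $6\int f\, d\mu_h$ in probability as $\ep \to 0$. Using the scaling property of the $\gamma$-quantum cone (Lemma~\ref{lem-cone-scale}), exactly as in the reduction of Proposition~\ref{prop-tight} to Proposition~\ref{prop-tight-rho} in Section~\ref{sec-tight-proof}, it is enough to treat test functions $f$ supported in $B_{\rho'}$ for some fixed $\rho' \in (0,\rho)$.

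So fix such an $f$ with Lipschitz constant $L$ and sup norm $M$. For all sufficiently small $\ep$ we have $L,\,M \leq \ep^{-\beta}$, where $\beta$ is the constant from Lemma~\ref{lem-cell-sum-deg}, so the hypotheses of that lemma are satisfied with $D = B_{\rho'}$. Unpacking the definition of $\mu_h^\ep$,
\begin{equation*}
\int f\, d\mu_h^\ep = \sum_{x \in \mcl V\mcl G^\ep(B_{\rho'})} \op{deg}^\ep(x) \int_{H_x^\ep} f\, d\mu_h.
\end{equation*}
Choosing an arbitrary $w_x^\ep \in H_x^\ep \cap \ol{B_{\rho'}}$, the Lipschitz property together with $\mu_h(H_x^\ep) = \ep$ gives
\begin{equation*}
\Bigl| \int_{H_x^\ep} f\, d\mu_h - \ep f(w_x^\ep) \Bigr| \leq L\ep \cdot \op{diam}(H_x^\ep),
\end{equation*}
which by Lemma~\ref{lem-cell-diam} is at most $L\ep^{1+q}$ for some $q = q(\gamma) > 0$, uniformly over cells intersecting $B_\rho$, on a polynomially-high-probability event. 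Summing, $\int f\, d\mu_h^\ep$ equals $\ep \sum_{x} \op{deg}^\ep(x) f(w_x^\ep)$ up to an error bounded by $L \ep^q \cdot \ep \sum_x \op{deg}^\ep(x)$, where the sums are over $x \in \mcl V\mcl G^\ep(B_{\rho'})$. Applying Lemma~\ref{lem-cell-sum-deg} to a fixed Lipschitz cutoff function that is $1$ on a neighborhood of $B_{\rho'}$ shows that $\ep \sum_x \op{deg}^\ep(x)$ is bounded above by $O(\mu_h(B_\rho)) + o_\ep(1)$ with polynomially high probability, hence is tight, so this error is $o_\ep(1)$ in probability.

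The main convergence then follows by direct application of Lemma~\ref{lem-cell-sum-deg} to $f$ itself, which sandwiches $\ep \sum_x \op{deg}^\ep(x) f(w_x^\ep)$ between $6 \int_{B_{\rho'} \setminus B_{\ep^\alpha}(\bdy B_{\rho'})} f\, d\mu_h - \ep^\alpha$ and $6 \int_{B_{\ep^\alpha}(B_{\rho'})} f\, d\mu_h + \ep^\alpha$. Since $f$ is supported strictly inside $B_{\rho'}$ and $\mu_h$ is almost surely nonatomic and finite on compact sets, both of these bounds converge almost surely to $6\int f\, d\mu_h$ as $\ep \to 0$. The main obstacle is not really an analytic one but rather keeping track of the approximation: one must be careful that the test function's Lipschitz constant and sup norm are deterministic so that the condition $L,M \leq \ep^{-\beta}$ is automatic for small $\ep$, and that the cutoff-function argument used to control the degree sum is applied first (so that the subsequent bootstrap to arbitrary Lipschitz $f$ has a tight quantity to work with). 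The remaining step of promoting per-$f$ convergence in probability to convergence in probability in the vague topology is standard.
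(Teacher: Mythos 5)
Your proof is correct and takes essentially the same route as the paper: the paper's proof is a one-liner ("It is immediate from Lemma~\ref{lem-cell-sum-deg} that $\mu_h^\ep|_{B_\rho}\to 6\mu_h|_{B_\rho}$ weakly; the corollary follows from this and the scale invariance Lemma~\ref{lem-cone-scale}"), and you are supplying the details that the authors treat as immediate — the reduction to Lipschitz test functions, the bridge from $\sum_x \op{deg}^\ep(x)\int_{H_x^\ep}f\,d\mu_h$ to $\ep\sum_x \op{deg}^\ep(x) f(w_x^\ep)$ via the Lipschitz bound and Lemma~\ref{lem-cell-diam}, and the tightness of $\ep\sum_x\op{deg}^\ep(x)$ from a cutoff application of Lemma~\ref{lem-cell-sum-deg}. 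These are all needed because Lemma~\ref{lem-cell-sum-deg} controls the sum of point evaluations $f(w_x^\ep)\op{deg}^\ep(x)$ rather than the cell integrals appearing in $\int f\,d\mu_h^\ep$, so your bridging step is the right (and, in the paper, unstated) ingredient.
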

\begin{proof}
It is immediate from Lemma~\ref{lem-cell-sum-deg} that $\mu_h^\ep|_{B_\rho} \rta 6\mu_h|_{B_\rho}$ weakly.
The corollary follows from this and the scale invariance property of the $\gamma$-quantum cone (Lemma~\ref{lem-cone-scale}).
\end{proof}

The idea of the proof of Lemma~\ref{lem-cell-sum-deg} is as follows.
Recall that a \emph{triangulation with boundary} is a planar map $\mcl T$ whose faces all have three edges, except for one special face called the \emph{external face}.
The \emph{perimeter} $\op{Perim}(\mcl T)$ is the degree of the external face.
Due to the Euler characteristic formula, if $\mcl T$ is a triangulation with boundary whose perimeter is much smaller than its total number of vertices $\#\mcl V(\mcl T)$, then $\#\mcl E(\mcl T)$ is close to $3\#\mcl E(\mcl T)$ (Lemma~\ref{lem-edge-count}), which implies that the sum of the degrees of the vertices of $\mcl T$ is close to $6\#\mcl V(\mcl T)$.

Now assume for simplicity that $\ep^{-1/2}$ is an integer.
We consider for each $y\in \ep^{1/2}\BB Z$ the subgraph $\mcl T_y^\ep$ of $\mcl G^\ep$ induced by the set of vertices $x \in (y-\ep^{1/2}, y] \cap (\ep\BB Z)$. Equivalently, this is the subgraph of $\mcl G^\ep$ induces by the vertices of $\mcl G^\ep$ whose corresponding cells are contained in the cell $H_y^{\ep^{1/2}}$ of $\mcl G^{\ep^{1/2}}$).
Then $\mcl T_y^\ep$ is a triangulation with boundary, having $\ep^{-1/2}$ vertices in total (not just on its boundary).
Furthermore, a basic Brownian motion estimates shows that the perimeter of $\mcl T_y^\ep$ is extremely unlikely to be much larger than $\ep^{-1/4}$ (Lemma~\ref{lem-cell-bdy-count}).
Consequently, the sum of the degrees of the vetices of $\mcl T_y^\ep$ is typically of order $6\ep^{-1/2}$.
Combining this with Lemma~\ref{lem-cell-sum}, applied with $\ep^{1/2}$ in place of $\ep$, then leads to Lemma~\ref{lem-cell-sum-deg}.

\begin{lem} \label{lem-edge-count}
Let $\mcl T$ be a triangulation with boundary, so that the degree of each face of $\mcl T$ except for the external face is equal to 3.
Let $\op{Perim}(\mcl T)$ be the degree of the external face.
Then
\eqb  \label{eqn-edge-count}
\# \mcl E(\mcl T) = 3 \# \mcl V(\mcl T) + 3 - \op{Perim}(\mcl T) .
\eqe
\end{lem}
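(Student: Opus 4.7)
The plan is to prove the statement by a direct application of Euler's formula combined with the face-degree handshake relation, which are the two standard ingredients for counting in planar maps. Let $V = \#\mcl V(\mcl T)$, $E = \#\mcl E(\mcl T)$, and $F = \#\mcl F(\mcl T)$, where $F$ counts \emph{all} faces including the external one.

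First I would invoke Euler's formula for the connected planar map $\mcl T$, namely
\[
V - E + F = 2.
\]
Next I would apply the face-degree handshake lemma: each edge lies on the boundary of exactly two face-sides (counted with multiplicity, so that an edge incident to the same face on both sides contributes $2$ to that face's degree), which yields
\[
\sum_{f \in \mcl F(\mcl T)} \deg(f) = 2E.
\]
By assumption every face other than the external one has degree $3$, while the external face contributes $\op{Perim}(\mcl T)$, so
\[
3(F-1) + \op{Perim}(\mcl T) = 2E.
\]
Solving this for $F$ and substituting into Euler's formula gives a single linear equation in $V$ and $E$, from which~\eqref{eqn-edge-count} follows by rearrangement.

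There is essentially no obstacle: the only point that requires any care is the convention by which $\op{Perim}(\mcl T)$ counts boundary edges. Since $\op{Perim}(\mcl T)$ is defined as the \emph{degree} of the external face, any edge which borders the external face on both sides (a pendant/bridge configuration, which can arise in the mated-CRT setting due to multi-edges) is automatically counted with the correct multiplicity $2$, so the handshake identity above is valid as stated, and the counting argument goes through without modification for any triangulation with boundary in the sense of the lemma.
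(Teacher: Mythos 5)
Your approach is exactly the one the paper uses: Euler's formula combined with the face-degree handshake. However, you assert without computation that ``rearrangement'' yields~\eqref{eqn-edge-count}, and if you actually carry out the algebra it does not. From $V - E + F = 2$ and $3(F-1) + \op{Perim}(\mcl T) = 2E$, eliminating $F$ gives
\[
E = V + F - 2 = V + \Bigl( \tfrac{2}{3}E - \tfrac{1}{3}\op{Perim}(\mcl T) + 1 \Bigr) - 2 = V + \tfrac{2}{3}E - \tfrac{1}{3}\op{Perim}(\mcl T) - 1,
\]
which rearranges to $E = 3V - 3 - \op{Perim}(\mcl T)$, with a minus sign rather than the plus sign appearing in~\eqref{eqn-edge-count}. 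The lemma as stated is therefore incorrect, and a quick sanity check confirms this: a single triangle has $V = E = \op{Perim} = 3$, for which $3V - 3 - \op{Perim} = 3$ is right while $3V + 3 - \op{Perim} = 9$ is not. The slip occurs in the paper's own proof in the line preceding~\eqref{eqn-edge-count0}, where the $-2$ from Euler's formula is dropped (the displayed equation has $+1$ where it should have $-1$). This does not affect the downstream use in Lemma~\ref{lem-cell-sum-deg}, since there the resulting additive constant $\pm 6$ is absorbed into the $(\log\ep^{-1})^2\ep^{-1/4}$ error term; but you should be alert to the fact that uncritically reproducing the claimed conclusion, rather than completing the computation, would propagate the sign error.
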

\begin{proof}
Let $\mcl F(\mcl T)$ be the set of faces of $\mcl T$, including the external face.
By the Euler characteristic formula,
\eqb \label{eqn-map-euler}
\#\mcl E(\mcl T) = \#\mcl V(\mcl T) + \#\mcl F(\mcl T) - 2.
\eqe
Each edge of $\mcl T$ either lies on the boundary of exactly two faces in $\mcl F(\mcl T) $ or lies on the boundary of a single face but has multiplicity 2 there.
Therefore,
\eqb \label{eqn-map-face-count}
\#\mcl E(\mcl T) = \frac12 \sum_{f\in \mcl F(\mcl T)  } \op{deg}(f) = \frac{3}{2} \left( \#\mcl F(\mcl T) - 1 \right)  + \frac12 \op{Perim}(\mcl T) .
\eqe
Re-arranging~\eqref{eqn-map-face-count} shows that
\eqb
\#\mcl F(\mcl T) =  \frac{2}{3} \#\mcl E(\mcl T) + 1 - \frac{1}{3} \op{Perim}(\mcl T) .
\eqe
Plugging this into~\eqref{eqn-map-euler} gives
\eqb \label{eqn-edge-count0}
\#\mcl E(\mcl T) = \#\mcl V(\mcl T) + \frac{2}{3} \#\mcl E(\mcl T) + 1 - \frac{1}{3} \op{Perim}(\mcl T)  ,
\eqe
which gives~\eqref{eqn-edge-count} upon re-arranging.
\end{proof}

We now define the triangulations with boundary to which we will apply Lemma~\ref{lem-edge-count}.
For $\ep > 0$, $n\in\BB N$, and $y\in n\ep\BB Z $, we let $\mcl T_y^{n,\ep}$ be the subgraph of $\mcl G^\ep$ induced by $(y -n\ep , y] \cap (\ep\BB Z)$.
Then $x\in \mcl V\mcl T_y^{n,\ep}$ if and only if the cell $H_x^\ep$ of $\mcl G^\ep$ is contained in the cell $H_y^{n\ep}$ of $\mcl G^{n\ep}$.
We also define
\eqb \label{eqn-two-cell-bdy}
\bdy \mcl T_y^{n,\ep}  := \left\{x\in \mcl V \mcl T_y^{n,\ep}  : \text{$x\sim x'$ in $\mcl G^\ep$ for some $x' \notin \mcl V\mcl T_y^{n,\ep}$}   \right\} .
\eqe
Then $ \mcl T_y^{n,\ep}$ is a planar triangulation with boundary.

\begin{lem} \label{lem-cell-bdy-count}
There are universal constants $c_1,c_2 > 0$ such that the following is true.
Let $\ep > 0$, let $n\in\BB N$, and define $\bdy  \mcl T_y^{n,\ep}$ for $y\in n\ep\BB Z$ as in~\eqref{eqn-two-cell-bdy}.
Then
\eqb \label{eqn-cell-bdy-count}
\BB P\left[ \# \bdy  \mcl T_y^{n,\ep}   > m \right] \leq c_1 e^{-c_2 m / n^{1/2}} , \quad\forall m \in \BB N .
\eqe
\end{lem}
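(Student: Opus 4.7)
The plan is to reduce the problem to a concrete Brownian motion estimate via a combinatorial characterization of the boundary vertices, then invoke a standard tail bound.

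First I would use Brownian scaling: since the law of the graph $\mcl G^\ep$ as a planar map does not depend on $\ep$, we may assume $\ep = 1$, and by translation invariance of the pair $(L,R)$ we may assume $y = n$. Next, every $x \in \bdy \mcl T_n^{n,1}$ is connected via either an $L$-edge or an $R$-edge to a vertex in either $(-\infty, 0]_{\BB Z}$ or $[n+1,\infty)_{\BB Z}$; these four classes of boundary vertices are identically distributed by the symmetries $t \mapsto n - t$ of $(L,R)$ and $L \leftrightarrow R$. So by a union bound, it suffices to bound the cardinality of $\partial^{L,+}$, the set of $k \in \{1,\ldots,n\}$ which are $L$-adjacent to some $k' \geq n+1$.

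Using the adjacency condition \eqref{edgemap} I would show that $k \in \partial^{L,+}$ if and only if $\inf_{[k-1,k]} L \leq \inf_{[k,n]} L$. The ``only if'' direction is a direct computation with \eqref{edgemap}. For ``if'', let $v := \inf_{[k-1,k]} L$; since $L_t \to -\infty$ almost surely, $t^* := \inf\{t > n: L_t \leq v\}$ is almost surely finite, and taking $k' := \lceil t^* \rceil$ one verifies that \eqref{edgemap} holds. Then I apply the time-reversal $\tilde L_s := L_n - L_{n-s}$, which is a standard two-sided Brownian motion: the condition $\inf_{[k-1,k]} L \leq \inf_{[k,n]} L$ translates, with $r := n-k+1$, to $\sup_{[r-1,r]} \tilde L \geq \sup_{[0,r-1]} \tilde L$, i.e., to $M_r > M_{r-1}$ where $M_r := \sup_{[0,r]} \tilde L$. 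So $\#\partial^{L,+}$ equals (a.s.) the count $N := \#\{r \in \{1,\ldots,n\}: M_r > M_{r-1}\}$ of integer intervals on which the running maximum of standard Brownian motion strictly increases.

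The remaining and main task is therefore to prove $\BB P[N > m] \leq c_1 e^{-c_2 m/\sqrt n}$. By L\'evy's theorem, $(M_r - \tilde L_r, M_r)_{r\geq 0}$ has the law of $(|W_r|, L^0_r)_{r\geq 0}$ where $W$ is a standard Brownian motion and $L^0$ is its local time at $0$; the event $\{M_r > M_{r-1}\}$ corresponds to $\{L^0_r > L^0_{r-1}\}$, equivalently to the event that $W$ has a zero in $[r-1,r]$. Thus $N$ has the same law as the number of integer intervals in $[0,n]$ containing a zero of $W$. Conditional on the Gaussian skeleton $(W_k)_{k=0}^n$, these events are mutually independent Bernoullis with parameters determined by the Brownian bridge formula: $p_k = 1 - e^{-2 W_{k-1} W_k}$ when $W_{k-1} W_k > 0$ and $p_k = 1$ otherwise. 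Since $p_k \leq 2\bar\Phi(|W_{k-1}|)$ outside of sign changes, $\sum_k p_k$ is $O(\sqrt n)$ in expectation (this matches the known fact $\BB E[N] \asymp \sqrt n$), and a straightforward Chernoff-type argument applied conditionally on the skeleton, followed by Gaussian integration against the skeleton, gives the desired exponential tail at scale $\sqrt n$.

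The main obstacle is the final tail estimate in the fourth step: although the scale $\sqrt n$ is the natural one given $\BB E[N] \asymp \sqrt n$, some care is required to control $\sum_k p_k$ uniformly and to obtain genuine concentration rather than just a mean bound. Alternatively, one could bypass this by a more direct martingale argument using that $\BB P[M_k > M_{k-1} \mid \mathcal{F}_{k-1}] = 2\bar\Phi(D_{k-1})$ with $D_{k-1} := M_{k-1} - \tilde L_{k-1}$, and exploiting that the Markov chain $D_{k-1}$ has the law of $|\mcl N(0,k-1)|$ at each fixed step.
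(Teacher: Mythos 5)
Your reduction steps are correct and match the paper's: you reduce to $\ep=1$, $y=n$ by Brownian scaling and translation invariance, appeal to the symmetries $t\mapsto n-t$ and $L\leftrightarrow R$ to isolate one of the four boundary classes, characterize $\partial^{L,+}$ via $\inf_{[k-1,k]}L\le\inf_{[k,n]}L$, and time-reverse to convert this into the event that the integer interval $[r-1,r]$ contains a running-extremum time. Up to replacing running maxima by running minima, this is exactly the quantity $\mcl K_n$ that the paper isolates in Lemma~\ref{lem-bm-time}.

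Where you diverge is the tail estimate itself, and there your argument has a genuine gap. The paper's proof of Lemma~\ref{lem-bm-time} is a short renewal argument: set $\tau_0=0$ and let $\tau_m$ be the first time after $\tau_{m-1}+1$ at which $\mcl B$ achieves a running minimum. By the strong Markov property (at each $\tau_{m-1}$, the walk is at its running minimum, so the post-$\tau_{m-1}$ running minima are exactly those of a fresh Brownian motion), the increments $\tau_m-\tau_{m-1}$ are i.i.d., each $\geq 1$, with $\BB P[\tau_1>t]\asymp t^{-1/2}$. Then $\{\tau_m<n\}\subset\{\max_j(\tau_j-\tau_{j-1})<n\}$, so $\BB P[\tau_m<n]\le(1-c_0 n^{-1/2})^m\le c_1 e^{-c_2 m/\sqrt n}$, and $\mcl K_n$ is bounded by a constant times the number of $\tau_j$'s before time $n$. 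The i.i.d. heavy-tailed structure converts the $\sqrt n$ scale into an exponential tail in one line.

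Your proposed route via L\'evy's theorem and conditioning on the skeleton is plausible, but the concentration step you flag is a real gap, not just a technicality. After Chernoff you need $\BB P[\sum_k p_k > m/2]\le c_1 e^{-c_2 m/\sqrt n}$, but $\sum_k p_k = \BB E[N\mid\text{skeleton}]$, so this is very close to the statement you are trying to prove. A bound does exist — $\sum_k p_k$ decomposes into the number of sign changes of the Gaussian walk plus $\sum_{\text{same sign}}e^{-2W_{k-1}W_k}$, both of which have the right tail — but nailing this down is comparable in difficulty to Lemma~\ref{lem-bm-time} itself and you do not supply it. The same issue infects your alternative via $D_{k-1}$: the identity $\BB P[M_k>M_{k-1}\mid\mcl F_{k-1}]=2\ol\Phi(D_{k-1})$ is correct, but these are dependent Bernoullis whose conditional parameters form a process, so a Chernoff argument again requires control of $\sum_k\ol\Phi(D_{k-1})$ — knowing only the marginal law $|\mcl N(0,k-1)|$ of $D_{k-1}$ is not enough. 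The renewal construction in the paper is the device that makes the estimate clean, and it is the piece your proposal is missing.
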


We will prove Lemma~\ref{lem-cell-bdy-count} using the Brownian motion representation of the mated-CRT map.
For this purpose we need the following lemma.

\begin{lem} \label{lem-bm-time}
Let $\mcl B$ be a standard linear Brownian motion.
For $n \in \BB N$, let $\mcl K_n$ be the number of intervals $[k-1,k]$ for $k\in [1,n]_{\BB Z}$ which contain a running minimum of $\mcl B$ (relative to time 0).
Then for $m\in\BB N$,
\eqb \label{eqn-bm-time}
\BB P\left[ \mcl K_n > m    \right] \leq c_1 e^{-c_2 m / n^{ 1/2}}
\eqe
for constants $c_1,c_2 >0$.
\end{lem}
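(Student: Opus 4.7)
The plan is to prove the lemma via It\^o excursion theory applied to the reflected process $h_t := \mcl B_t - M_t \geq 0$, where $M_t = \inf_{s\leq t} \mcl B_s$. By Brownian scaling ($\mcl B_t \mapsto n^{-1/2}\mcl B_{nt}$), $\mcl K_n$ has the same law as the number of length-$1/n$ intervals $I_k := [(k-1)/n, k/n]$, $k\in[1,n]_{\BB Z}$, meeting the zero set of a reflected Brownian motion $\wt h$ on $[0,1]$. By L\'evy's theorem, $\wt h \eqD |\wt W|$ for another standard Brownian motion, so this zero set has Lebesgue measure zero almost surely and its complement decomposes into the open excursion intervals of $\wt h$.

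The key deterministic observation is that $I_k$ misses the zero set exactly when it is fully contained in a single excursion interval $(g,d)$, which requires $k$ to be an integer in $(ng+1, nd)$. Each excursion of length $d-g > 2/n$ therefore contains at least $n(d-g)-2$ such $I_k$'s, while shorter excursions contain none. Using $\sum_{(g,d)}(d-g) = 1$ a.s., this yields the pathwise bound
\eqbn
\mcl K_n \leq 2\mcl N_{2/n} + n\, |\mcl E_{<2/n}|,
\eqen
where $\mcl N_{2/n}$ counts the excursions of length exceeding $2/n$ and $|\mcl E_{<2/n}|$ is the total Lebesgue measure of the shorter ones.

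Next, conditionally on the local time $L := L_1$ of $\wt h$ at $0$, It\^o's excursion theorem says the excursions form a Poisson point process on $[0,1]$ with intensity $L\cdot\mu$, where $\mu(d\ell) = (2\pi)^{-1/2}\ell^{-3/2}d\ell$. Since $\mcl N_{2/n}$ and $|\mcl E_{<2/n}|$ depend on disjoint length-parts of this process, they are conditionally independent; taking the Laplace transforms of both compound-Poisson quantities at parameter $\lambda = 1/\sqrt n$, using $e^x - 1 \leq xe^x$ together with the change of variables $u = n\ell$, one obtains
\eqbn
\BB E\!\left[\exp\!\left(\tfrac{1}{\sqrt n}\bigl(2\mcl N_{2/n} + n\,|\mcl E_{<2/n}|\bigr)\right)\,\Big|\,L\right] \leq e^{CL}
\eqen
for some absolute constant $C$. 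Markov's inequality then yields $\BB P[\mcl K_n > m \,|\, L] \leq e^{CL - m/\sqrt n}$. Finally, by L\'evy's theorem $L \eqD |N|$ for a standard normal $N$, so $\BB E[e^{CL}] < \infty$; integrating out $L$ gives $\BB P[\mcl K_n > m] \leq c_1 e^{-c_2 m/\sqrt n}$, as required.

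The main technical obstacle will be the short-excursion Laplace transform estimate in the third paragraph: the It\^o measure $\mu$ is non-integrable at $0$, so infinitely many short excursions of $\wt h$ accumulate in $[0,1]$, and the $n$-weighting of their total Lebesgue measure must be carefully balanced against the $\ell^{-3/2}$ singularity of $\mu$ via the scaling $u = n\ell$ in order that the resulting integral yields a bound of order $e^{CL}$ rather than one growing with $n$.
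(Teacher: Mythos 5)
Your approach via It\^o excursion theory is genuinely different from the paper's. The paper uses a regeneration argument: it introduces stopping times $\tau_0 = 0 < \tau_1 < \cdots$ where $\tau_m$ is the first time after $\tau_{m-1}+1$ at which $\mcl B$ attains a new running minimum, observes that the increments $\tau_m - \tau_{m-1}$ are i.i.d.\ by the strong Markov property with $\BB P[\tau_m - \tau_{m-1} > t] \asymp t^{-1/2}$, and then bounds $\mcl K_n \leq 2\max\{m : \tau_m < n\}$, so that $\BB P[\mcl K_n > 2m] \leq \BB P[\tau_m < n] \leq (1 - c_0 n^{-1/2})^m$. Your excursion-theoretic route is conceptually appealing — the scaling picture (counting mesh cells hit by the zero set of a reflected Brownian motion) and the pathwise bound $\mcl K_n \leq 2\mcl N_{2/n} + n|\mcl E_{<2/n}|$ are both correct, and the Laplace transform computation you flag as the main obstacle actually works out: the substitution $u = n\ell$ together with $e^{u/\sqrt n} - 1 \leq (u/\sqrt n) e^2$ for $u \leq 2$ does control the $\ell^{-3/2}$ singularity uniformly in $n$.

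The genuine gap is in the conditioning step. It is not true that, conditionally on $L_1 = L$, the excursions of $\wt h$ on $[0,1]$ form a Poisson point process with intensity $L\mu$. It\^o's theorem describes the excursion process as a PPP with intensity $d\ell \otimes \nu$ when indexed by \emph{local time} on $[0,\infty)$; conditioning on the value of $L_1$ and restricting to the fixed real-time window $[0,1]$ forces the total excursion length to equal $1$, which yields a \emph{conditioned} PPP, not a free one. The Campbell-formula identity you use for the Laplace transform does not apply directly to a conditioned PPP. This is repairable — the conditioning event $\{\sum_i \ell_i \leq 1\}$ is decreasing and the functional $2\mcl N_{2/n} + n|\mcl E_{<2/n}|$ is increasing in the point configuration, so FKG-type stochastic domination gives the desired one-sided inequality; alternatively one can split on a fixed local-time threshold $\{L_1 \leq L_0\}$ and control $\BB P[L_1 > L_0]$ separately — but as written the proposal asserts a false claim about the conditional law and never addresses the constraint imposed by working on the finite time horizon $[0,1]$. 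You have correctly identified a subtlety in the short-excursion estimate, but that one resolves cleanly; the conditioning is where the actual work lies, and the paper's regeneration argument avoids it entirely.
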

\begin{proof}
Let $\tau_0 = 0$ and inductively let $\tau_m$ for $m\in\BB N$ be the first time after $\tau_{m-1}  +1$ at which $\mcl B$ attains a running minimum.
We observe that
\eqb \label{eqn-bm-time-compare}
\mcl K_n \leq 2 \max\left\{m\in \BB N : \tau_m < n \right\} .
\eqe
By the strong Markov property, the increments $\tau_m - \tau_{m-1}$ are i.i.d.
By a standard Brownian motion estimate,
\eqb
\BB P\left[ \tau_m  -\tau_{m-1} > t \right] \asymp t^{-1/2} , \quad\forall t > 1.
\eqe
Therefore, for $m , n \in\BB N$,
\eqb
\BB P\left[ \tau_m < n \right]
\leq \BB P\left[ \sup_{j \in [1,m]_{\BB Z}} (\tau_j - \tau_{j-1}) < n \right]
\leq \left(1 - c_0 n^{-1/2} \right)^m
\leq c_1 e^{- c_2 m /n^{1/2}} ,
\eqe
for universal constants $c_0,c_1,c_2$. Combining this with~\eqref{eqn-bm-time-compare} gives~\eqref{eqn-bm-time} (with $c_2/2$ in place of $c_2$).
\end{proof}

\begin{proof}[Proof of Lemma~\ref{lem-cell-bdy-count}]
Recall the Brownian motion $  (L,R)$ used to define $\mcl G^\ep$ in~\eqref{edgemap}.
From~\eqref{edgemap}, we get that $\bdy  \mcl T_y^{n,\ep}$ is the same as the set of $x\in (y-n\ep , y] \cap (\ep\BB Z)$ such that either
\eqb  \label{eqn-bdy-adjacency}
\inf_{t\in [x-\ep ,x]} L_t = \inf_{t\in [y-n\ep ,x]} L_t \quad \text{or} \quad \inf_{t\in [x-\ep , x]} L_t = \inf_{t\in [x-\ep ,y]} L_t
\eqe
or the same is true with $R$ in place of $L$.

By Brownian scaling and the translation invariance of the law of $(L,R)$, it suffices to prove~\eqref{eqn-cell-bdy-count} in the case when $\ep = 1$ and $y = n \ep$.
By~\eqref{eqn-bdy-adjacency} and the symmetry of the law of $(L,R)$ under reversing time and/or swapping $L$ and $R$, it suffices to prove that
\eqb
\BB P\left[ \#\left\{x\in [1,n]_{\BB Z} :  \inf_{t\in [x -1 ,x]} L_t = \inf_{t\in [0,x]} L_t \right\} > m \right] \leq c_1 e^{-c_2 m / n^{1/2}}
\eqe
for universal constants $c_1,c_2 > 0$. This is precisely the content of Lemma~\ref{lem-bm-time}.
\end{proof}

\begin{proof}[Proof of Lemma~\ref{lem-cell-sum-deg}]
\noindent\textit{Step 1: regularity event.}
Let $n_\ep := \lfloor \ep^{-1/2} \rfloor$, so that $n_\ep\in\BB N$ and $n_\ep \ep \approx \ep^{1/2}$.
For $y\in n_\ep \ep\BB Z$, we define the triangulation with boundary $ \mcl T_y^\ep := \mcl T^{n_\ep,\ep}$ as in the discussion above Lemma~\ref{lem-cell-bdy-count}.

For $D$ and $f$ as in the lemma statement and $y\in \mcl V\mcl G^{n_\ep\ep}(D)$, let $u_y^{n_\ep \ep} \in H_y^{n_\ep\ep} \cap \ol D$ be chosen in an arbitrary manner (as in Lemma~\ref{lem-cell-sum} with $n_\ep\ep$ in place of $\ep$).
By Lemma~\ref{lem-cell-sum} applied with $n_\ep \ep$ in place of $\ep$, if $\beta > 0$ is chosen to be sufficiently small (depending only on $\gamma$) then there exists $\alpha_0 = \alpha_0(\gamma ) > 0$ such that with polynomially high probability as $\ep\rta 0$, it holds simultaneously for every choice of $D$ and $f$ as in the lemma statement that
\allb \label{eqn-cell-sum-scaled}
 (n_\ep \ep)^{-1} \int_D f( z)  \,d\mu_h (  z) +  (n_\ep \ep)^{-1+\alpha_0}
&\leq \sum_{  y\in \mcl V\mcl G^{n_\ep \ep}(D) } f(u_y^{n_\ep\ep})  \notag\\
&\qquad  \leq  (n_\ep \ep)^{-1} \int_{B_{(n_\ep\ep)^{\alpha_0}}(D)} f( z)  \,d\mu_h (  z) +  (n_\ep \ep)^{-1+\alpha_0}    .
\alle

Fix $q\in \left( 0 , \tfrac{2}{(2+\gamma)^2} \right)$ chosen in a manner depending only on $\gamma$.
By Lemma~\ref{lem-cell-diam} (applied with $\ep^{1/2}$ in place of $\ep$) and our choice of $n_\ep$, it holds with polynomially high probability as $\ep\rta 0$ that
\eqb \label{eqn-cell-sum-diam-scaled}
\op{diam}\left( H_y^{n_\ep \ep} \right) \leq \ep^{q/2} ,\quad \forall y\in \mcl V\mcl G^{n_\ep \ep}(B_\rho) .
\eqe
By Lemma~\ref{lem-cell-bdy-count} (applied with $m = \lfloor  (\log\ep^{-1})^2 \ep^{-1/4} \rfloor$, say) and a union bound, it holds with superpolynomially high probability as $\ep\rta 0$ that, in the notation of that lemma,
\eqb \label{eqn-cell-sum-max-bdy}
\op{Perim}\left( \mcl T_y^\ep \right)  \leq  (\log\ep^{-1})^2 \ep^{-1/4} ,\quad\forall y \in \mcl V\mcl G^{n_\ep\ep}(B_\rho) .
\eqe
By~\cite[Lemma 2.6]{gms-harmonic}, it holds with superpolynomially high probability as $\ep\rta 0$ that
\eqb \label{eqn-deg-sum-max}
\op{deg}^\ep(x) \leq (\log\ep^{-1})^2 ,\quad\forall x\in \mcl V\mcl G^\ep(B_\rho) .
\eqe

Fix $\zeta \in (0,1)$ to be chosen later, in a manner depending only on $q,\gamma$.
By Lemma~\ref{lem-cell-diam} (applied with $n_\ep\ep$ in place of $\ep$), the fact that the cells of $\mcl G^{n_\ep \ep}$ have LQG mass $n_\ep \ep \approx \ep^{1/2}$, and a standard estimate for the $\gamma$-LQG area measure (e.g.,~\cite[Lemma A.3]{ghs-dist-exponent} together with the Chebyshev inequality) it holds with polynomially high probability as $\ep\rta 0$ that
\eqb \label{eqn-cell-count-scaled}
\#\mcl V\mcl G^{n_\ep \ep}(B_{\rho + \ep^{q/2}} ) \leq \ep^{-1/2 - \zeta}
\eqe
Henceforth assume that~\eqref{eqn-cell-sum-scaled} through~\eqref{eqn-cell-count-scaled} hold, which happens with polynomially high probability as $\ep\rta 0$.
We will check that~\eqref{eqn-cell-sum} holds simultaneously for every choice of $D,f$ as in the lemma statement provided $\zeta$ and $\beta$ are chosen to be sufficiently small (depending only on $q,\gamma$).
\medskip

\noindent\textit{Step 2: summing the degrees of vertices of $ \mcl T_y^\ep$.}
For a vertex $x  \in   \mcl T_y^\ep \setminus \bdy\mcl T_y^\ep$, the degree $\op{deg}(x ; \mcl T_y^\ep)$ of $x$ as a vertex of $\mcl T_y^\ep$ is the same as $\op{deg}^\ep(x)$. If $x\in \bdy \mcl T_y^\ep$ we still have $\op{deg}(x ; \mcl T_y^\ep) \leq \deg^\ep(x)$ but the inequality might be strict since $x$ can have edges to vertices in $\mcl V\mcl G^\ep \setminus (y-n_\ep \ep , y]$.
Therefore,
\allb \label{eqn-total-deg0}
\sum_{x \in \mcl V \mcl T_y^\ep  } \op{deg}^\ep(x)
&=   \sum_{x \in \mcl V \mcl T_y^\ep \setminus \mcl V( \bdy \mcl T_y^\ep )  } \op{deg}^\ep(x)
+  \sum_{x \in \mcl V ( \bdy \mcl T_y^\ep )  } \op{deg}^\ep(x) \notag\\
&\in \left[ \sum_{x \in \mcl V \mcl T_y^\ep } \op{deg}(x ; \mcl T_y^\ep) ,
\sum_{x \in \mcl V \mcl T_y^\ep  } \op{deg}(x ; \mcl T_y^\ep) + \sum_{x \in \mcl V( \bdy \mcl T_y^\ep ) } \op{deg}^\ep(x)\right] .
\alle

The sum of the degrees $\op{deg}(x ; \mcl T_y^\ep)$ over all $x\in \mcl V \mcl T_y^\ep $ is precisely twice the number of edges of $\mcl T_y^\ep $. Recall that $ \mcl T_y^\ep$ has $n_\ep$ vertices and is a triangulation with boundary.
By Lemma~\ref{lem-edge-count} followed by~\eqref{eqn-cell-sum-max-bdy},
\eqb \label{eqn-rel-deg-sum}
\sum_{x \in \mcl V \mcl T_y^\ep  } \op{deg}(x ; \mcl T_y^\ep)
= 6 n_\ep  + 6 - 2 \op{Perim}(\mcl T_y^\ep)
\in \left[ 6 n_\ep - 2(\log\ep^{-1})^2 \ep^{-1/4} , 6 n_\ep \right] .
\eqe
By~\eqref{eqn-cell-sum-max-bdy} and~\eqref{eqn-deg-sum-max}, we also have
\eqb \label{eqn-rel-deg-error}
\sum_{x \in \mcl V( \bdy \mcl T_y^\ep ) } \op{deg}^\ep(x)
\leq  (\log\ep^{-1})^4 \ep^{-1/4}  .
\eqe
By plugging~\eqref{eqn-rel-deg-sum} and~\eqref{eqn-rel-deg-error} into~\eqref{eqn-total-deg0}, we get
\eqb \label{eqn-total-deg}
\sum_{x \in \mcl V \mcl T_y^\ep  } \op{deg}^\ep(x)
\in \left[ 6 n_\ep - 2(\log\ep^{-1})^2 \ep^{-1/4} , 6 n_\ep + (\log\ep^{-1})^4 \ep^{-1/4} \right] .
\eqe
\medskip

\noindent\textit{Step 3: summing $f(w_x^\ep)$ over vertices of $\mcl T_y^\ep$.}
If $x  \in \mcl V\mcl T_y^\ep  $, then both $w_x^\ep$ and $u_y^{n_\ep\ep}$ lie in $H_y^{n_\ep\ep}$ so by~\eqref{eqn-cell-sum-diam-scaled} and the $\ep^{-\beta}$-Lipschitz continuity of $f$,
\eqb \label{eqn-big-cell-compare}
| f(w_x^\ep) - f(u_y^{n_\ep\ep}) |  \leq   \ep^{q/2-\beta} .
\eqe
By~\eqref{eqn-big-cell-compare}, then~\eqref{eqn-total-deg} and the fact that $\# \mcl V \mcl T_y^\ep = n_\ep \approx \ep^{-1/2}$,
\allb \label{eqn-deg-sum-chunk}
&\sum_{x \in \mcl V \mcl T_y^\ep  } f(w_x^\ep)  \op{deg}^\ep(x) \notag\\
&\qquad \in \left[  \left( f(u_y^{n_\ep\ep}) - \ep^{q/2-\beta} \right) \sum_{x \in \mcl V \mcl T_y^\ep  } \op{deg}^\ep(x)    ,  \left( f(u_y^{n_\ep\ep}) + \ep^{q/2-\beta} \right) \sum_{x \in \mcl V \mcl T_y^\ep  } \op{deg}^\ep(x)  \right] \notag\\
&\qquad \subset \left[ 6 n_\ep f(u_y^{n_\ep\ep}) - 2 \ep^{q/2 -1/2 -\beta} , 6 n_\ep f(u_y^{n_\ep\ep})  +  2 \ep^{q/2 -1/2 - \beta} \right]  ,
\alle
where in the last line we used that $\|f\|_\infty \leq \ep^{-\beta}$ and we absorbed the polylogarithmic factors from the error terms in~\eqref{eqn-total-deg} into powers of $\ep$ (which we can do since $q < 1/2$).
\medskip

\noindent\textit{Step 4: conclusion.}
Each $x \in \mcl V\mcl G^\ep(D)$ is contained in $\mcl V \mcl T_y^\ep$ for a unique choice of $y\in \mcl V\mcl G^{n_\ep \ep}(D)$.
By summing the upper bound in~\eqref{eqn-deg-sum-chunk} over all $y\in \mcl V\mcl G^{n_\ep \ep}(D)$, we get
\eqb \label{eqn-cell-sum-deg-rescale}
 \sum_{  x\in \mcl V\mcl G^\ep(D) } f(w_x^\ep)  \op{deg}^\ep(x)
 \leq 6 n_\ep \sum_{  y\in \mcl V\mcl G^\ep(D) } f(u_y^{n_\ep\ep})    +    2 \ep^{q/2 -1/2 - \beta} \#\mcl V\mcl G^{n_\ep \ep}(D).
\eqe
From~\eqref{eqn-cell-count-scaled} we get $ \#\mcl V\mcl G^{n_\ep \ep}(D) \leq \ep^{-1/2-\zeta}$.
By applying this and the upper bound in~\eqref{eqn-cell-sum-scaled} to estimate the right side of~\eqref{eqn-cell-sum-deg-rescale}, we obtain
\alb
 \sum_{  x\in \mcl V\mcl G^\ep(D) } f(w_x^\ep)  \op{deg}^\ep(x)
&\leq 6 \ep^{-1} \int_D f( z)  \,d\mu_h (  z)  +   n_\ep^{\alpha_0} \ep^{-1+\alpha_0}  +  O_\ep\left( \ep^{q/2 -1 - \beta - \zeta} \right)     \notag \\
&\leq 6 \ep^{-1} \int_D f( z)  \,d\mu_h (  z)  + O_\ep\left(  \ep^{-1+\alpha}  \right)    \notag \\
\ale
for any choice of $\alpha  < \min\{\alpha_0  ,\, q/2 - \beta - \zeta \}$.
If we choose $\zeta$ and $\beta$ sufficiently small, in a manner depending only on $q,\gamma$, then we can arrange that this minimum is positive.
Thus, after possibly slightly shrinking $\alpha$ we get that the upper bound in~\eqref{eqn-cell-sum-deg} holds for small enough $\ep  >0$.

We similarly obtain the lower bound in~\eqref{eqn-cell-sum-deg} using the lower bounds in each of~\eqref{eqn-cell-sum-scaled} and~\eqref{eqn-total-deg}.
\end{proof}

\subsection{Space-filling SLE loops}
\label{sec-space-filling-sle}

Let $\kappa > 4$, let $D\subset\BB C$ be a simply connected domain, and let $a \in\bdy D$ be a prime end. The purpose of this section is to explain one possible definition of the counterclockwise space-filling SLE loop $\eta$ in $D$ based at $a$, and to show that this space-filling SLE loop can be obtained as the limit (for a rather strong notion of convergence) of space-filling SLE from $a$ to $b$ in $D$ as $b\rta a$ from the clockwise direction.
The proofs in this section are based on the theory of imaginary geometry from~\cite{ig1,ig2,ig3,ig4}.
We will mostly focus on the case when $(D,a) = (\BB H , \infty)$; the space-filling SLE loop for other choices of $(D,a)$ will be defined via conformal mapping.

Fix $\kappa > 4$. As in~\cite{ig1,ig2,ig3,ig4}, we define the constant
\eqb
\lambda' := \frac{\pi}{\sqrt\kappa} .
\eqe
Note that our $\kappa$ is called $\kappa'$ in~\cite{ig1,ig2,ig3,ig4}.

Let $h$ be a zero-boundary GFF on $\BB H$ plus the constant $-\lambda'$. For $z\in\BB Q^2\cap\BB H$, let $\eta^{z,L}$ (resp.\ $\eta^{z,R}$) be the flow line of $h$ started from $z$ with angle $\pi/2$ (resp.\ $-\pi/2$) and stopped at the first time it hits $\BB R$, as defined in~\cite[Theorem 1.1]{ig4}. The law of $\eta^{z,L}$ is locally absolutely continuous w.r.t.\ that of a SLE$_{16/\kappa}(16/\kappa-2)$ curve in any interval of time before it hits $\BB R$. By~\cite[Theorem 1.9]{ig4}, for distinct $z,w\in \BB Q^2 \cap \BB H$, a.s.\ the curves $\eta^{z,L}$ and $\eta^{w,L}$ (resp.\ $\eta^{z,L}$ and $\eta^{w,R}$) a.s.\ merge into each other after some finite time.
We define a total order $\preceq$ on $\BB Q^2\cap \BB H$ by declaring that $z\preceq w$ if and only if $z$ lies in a connected component of $\BB H\setminus (\eta^{w,L} \cup \eta^{w,R})$ whose boundary is traced by the left side of $\eta^{w,L}$ and the right side of $\eta^{w,R}$.

For $a  < 0$, let $\eta_a$ be a chordal space-filling SLE$_\kappa$ from $a$ to $\infty$.
Let $\preceq_a$ be the total order on $\BB Q^2 \cap \BB H$ induced by $\eta_a$, i.e., $z\preceq_a w$ if and only if $\eta_a$ hits $z$ before $w$.
Due to the construction of space-filling SLE$_\kappa$ in~\cite[Section 1.2.3]{ig4}, the ordering $\preceq_a$ admits the following equivalent description.
Let $h_a$ be a GFF on $\BB H$ with boundary data $ \lambda'$ on $(-\infty,a]$ and $-\lambda'$ on $[a,\infty)$.
Then we can realize $\eta_a$ as the space-filling SLE$_\kappa$ counterflow line of $h_a$ from $a$ to $\infty$.
Furthermore, if we define the flow lines $\eta_a^{z,L}$ and $\eta_a^{z,R}$ for $z\in\BB Q^2\cap\BB H$ in the same manner as $\eta^{z,L}$ and $\eta^{z,R}$ above but with $h_a$ in place of $h$, then for $z\in \BB Q^2\cap \BB H$ the curves $\eta_a^{z,L}$ and $\eta_a^{z,R}$ are the left and right outer boundaries of $\eta_a$ stopped at the first time it hits $z$. We note that $\eta_a^{z,L}$ (resp.\ $\eta_a^{z,R}$) stops upon hitting $(-\infty,a]$ (resp.\ $[a,\infty)$).
Consequently, the ordering $\preceq_a$ can be defined in the same manner as the ordering $\preceq$ above but with $h_a$ in place of $h$.

\begin{prop} \label{prop-sle-loop}
There is a unique continuous curve $\eta : \BB R\rta \ol{\BB H}$ from $\infty$ to $\infty$ which satisfies $\eta(0) = 0$ and $\op{area} \eta([s,t]) =  t-s$ for each $s<t$, hits the points of $\BB Q^2\cap\BB H$ in the order specified by $\preceq$, and fills in all of $\BB H$.
The law of $\eta$ is scale and translation invariant in the sense that for $C >0$ and $b\in\BB R$, we have $C \eta  +b \eqD \eta$ viewed as curves modulo time parametrization.
Moreover, for each fixed $r>0$ it holds that $\preceq_a|_{B_r(0) \cap \BB Q^2\cap \BB H}$ converges to $\preceq|_{B_r(0) \cap \BB Q^2\cap \BB H}$ in the total variation sense as $a \rta - \infty$.
\end{prop}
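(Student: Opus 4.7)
The plan is to construct $\eta$ directly from the imaginary geometry of $h$, following the template of the whole-plane space-filling SLE$_\kappa$ construction in~\cite[Sections 1.2.3 and 4.3]{ig4} adapted to the half-plane with constant boundary data $-\lambda'$. For each $z \in \BB Q^2 \cap \BB H$, the pair $(\eta^{z,L}, \eta^{z,R})$ a.s.\ disconnects $\ol{\BB H}$ into two simply connected components, the ``past'' and ``future'' of $z$, and the merging relations from~\cite[Theorem 1.9]{ig4} ensure that $\preceq$ is a.s.\ a strict total order on $\BB Q^2 \cap \BB H$ with consistent past/future decompositions. Arguing as in~\cite[Section 4.3]{ig4}, this structural information uniquely determines a continuous space-filling curve $\eta : \BB R \to \ol{\BB H}$ up to reparametrization, and we pin down the parametrization by $\eta(0) = 0$ and $\op{area}\eta([s,t]) = t-s$. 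Scale and translation invariance modulo time parametrization follow from the invariance of $h$ under the maps $z \mapsto Cz + b$ (which preserve both $\BB H$ and the constant boundary value $-\lambda'$), together with the fact that such maps rescale Lebesgue area by $C^2$, which is absorbed into the time reparametrization.

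The heart of the proposition is the total variation convergence of orderings. One computes that $h_a - h = f_a$, where $f_a$ is the harmonic function on $\BB H$ with boundary data $2\lambda' \BB 1_{(-\infty, a]}$; explicitly,
\eqbn
f_a(z) = \frac{2\lambda'}{\pi}\left( \frac{\pi}{2} - \arctan\frac{\re z - a}{\im z}\right),
\eqen
which tends to $0$ in $C^\infty$ on compact subsets of $\ol{\BB H}$ bounded away from $-\infty$ as $a \to -\infty$. Enumerate the finitely many points of $B_r(0) \cap \BB Q^2 \cap \BB H$ as $z_1,\ldots,z_n$; the ordering $\preceq|_{\{z_1,\ldots,z_n\}}$ is a measurable function of the flow lines $\eta^{z_j,L}, \eta^{z_j,R}$ for $j = 1,\ldots,n$. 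Fix $\delta > 0$; for $R_0 > r$ sufficiently large, with probability at least $1-\delta$ all these flow lines of $h$ stay inside $B_{R_0}(0) \cap \ol{\BB H}$ until they merge or hit $\BB R$. Couple $h$ and $h_a$ pathwise via $h_a := h + f_a$. Each flow line $\eta^{z_j,L/R}_a$, viewed in half-plane coordinates via a conformal map sending $z_j$ to a boundary point, is (by~\cite[Theorem 1.1]{ig1} and the interior flow line construction) an SLE$_{\ul\kappa}(\underline\rho)$ process whose force points and weights are determined by the boundary data $-\lambda' + f_a$. As $a \to -\infty$, the extra force point of $h_a$ (located at $a$) migrates to $-\infty$ and its drift contribution $\rho/(W_t - V^a_t)$ in the Loewner driving SDE vanishes uniformly on compact time intervals, so the driving functions of the flow lines of $h_a$ converge in probability (on the coupled good event above) to those of $h$. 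The flow lines themselves then converge in the Hausdorff metric on paths, and the induced orderings agree with probability tending to $1$. Since the ordering takes values in the finite set of strict total orders on $\{z_1,\ldots,z_n\}$, this is precisely total variation convergence.

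The main obstacle will be making the stability statement for flow lines rigorous: flow lines are not a continuous function of pointwise field values, so one must work at the level of the Loewner driving function and exploit the explicit SDE description, while being careful about starting points in the bulk (which requires a conformal map to $\BB H$) and the coalescence of flow lines from distinct starting points. I expect the cleanest route to be the SLE$_{\ul\kappa}(\underline\rho)$ analysis sketched above, combined with standard continuity of Loewner driving functions in the force-point configuration. An alternative, arguably more streamlined, approach is a Girsanov-type comparison: write the joint law of $h_a$ and the finite collection of flow lines as the joint law of $h$ and the corresponding flow lines times a Radon--Nikodym derivative depending only on the Cameron--Martin shift by $f_a$; since $\|f_a\|_{H^{-1}(B_{R_0})} \to 0$ as $a \to -\infty$, this derivative tends to $1$ in $L^1$, which again yields the required total variation convergence of the orderings.
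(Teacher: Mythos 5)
Your Girsanov/Cameron--Martin route (last paragraph) is morally the paper's argument, via the Radon--Nikodym estimate from the proof of \cite[Proposition~3.4]{ig1}, and the direct ig4-style construction of $\eta$ is precisely the route the paper explicitly declines, preferring to deduce existence of $\eta$ from the existence of the $\eta_a$ together with the TV convergence and a ``swallowing'' estimate (Lemma~\ref{lem-curve-swallow}). So at the level of ideas you are close. But there is a concrete error running through both of your TV arguments: $B_r(0)\cap\BB Q^2\cap\BB H$ is countably infinite, not finite, so ``enumerate the finitely many points as $z_1,\ldots,z_n$,'' the finite union bound on flow-line localization, and the claim that the orderings take values in a ``finite set of strict total orders'' are all false as written. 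The fix is to argue with the field restriction directly rather than point-by-point: one needs a locality statement of the form ``$\preceq_a$ restricted to $K_a^\rho$ is a.s.\ a measurable function of $h_a|_{B_\rho(0)\cap\BB H}$'' (Lemma~\ref{lem-sle-segment}) and an estimate that $B_r(0)\cap\BB H\subset K^\rho$ with probability close to $1$ for $\rho$ large (Lemma~\ref{lem-curve-swallow}); then TV convergence of the restricted fields passes through that measurable map to give TV convergence of the restricted orderings, with no finite enumeration anywhere. A small further point: the Cameron--Martin norm that must vanish is the Dirichlet $H^1$ seminorm of $f_a$ on $B_\rho(0)\cap\BB H$, not $H^{-1}$.

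You also gloss over a real subtlety in the scale/translation invariance: $\preceq$ is defined only on $\BB Q^2\cap\BB H$, which is preserved by rational but not irrational maps $z\mapsto Cz+b$, so the argument as stated gives $C\eta+b\eqD\eta$ only for rational $C$ and $b$. The paper closes this by rerunning the entire construction with $C\BB Q^2$ in place of $\BB Q^2$ and observing that the approximating curves $\eta_a$, and hence the induced orderings, do not depend on the choice of dense set; you would need to add this step.
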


\begin{defn} \label{def-sle-loop}
The curve $\eta$ of Proposition~\ref{prop-sle-loop} is defined to be the \emph{counterclockwise space-filling SLE$_\kappa$ loop based at $\infty$ in $\BB H$}.
The \emph{clockwise space-filling SLE$_\kappa$ loop based at $\infty$ in $\BB H$} is the curve $\wt\eta$ which is the image of $\eta$ under the anticonformal map $x+iy \mapsto -x + i y$.
For a domain $D$ and a prime end $b\in\bdy D$, we define the \emph{counterclockwise (resp.\ clockwise) space-filling SLE$_\kappa$ loop based at $b$ in $D$} to be the image of $\eta$ (resp.\ $\wt\eta$) under a conformal map $\BB H\rta D$ which takes $\infty$ to $b$, viewed as a curve modulo time parametrization (the law of this curve does not depend on the choice of conformal map due to the scale / translation invariance property of $\eta$).
\end{defn}

It is possible to establish the existence of the curve $\eta$ of Proposition~\ref{prop-sle-loop} by repeating essentially the same arguments used in~\cite{ig4} to construct space-filling SLE with distinct starting and ending points. However, in order to make our proof easier to follow and to get the total variation convergence statement, we will instead deduce Proposition~\ref{prop-sle-loop} from existing results in the literature. The key idea of the proof is that when $a$ is very negative, the restrictions of $h$ and $h_a$ to a fixed neighborhood of 0 in $\BB H$ are close in the total variation sense, and moreover the orderings $\preceq$ and $\preceq_a$ are in a certain sense locally determined by $h$ and $h_a$, respectively.

Let us now specify a precise version of the locality property of $\preceq_a$.
For a radius $\rho >0$, let $T_a^\rho$ (resp.\ $S_a^\rho$) be the first time that $\eta_a$ enters $B_\rho(0)\cap \BB H$ before hitting $0$ (resp.\ the last time that $\eta_a$ enters $B_\rho(0)\cap\BB H$ after hitting 0).
We define
\eqb
 \eta_a^\rho := \eta_a |_{[ T_a^\rho , S_a^\rho]} \quad \text{and} \quad K_a^\rho := \eta_a([T_a^\rho ,S_a^\rho]) .
\eqe
We now describe $K_a^\rho$ in terms of $\preceq_a$.

\begin{lem} \label{lem-sle-segment}
Let $\eta_a^{0,L}$ be the flow line of $h_a$ started from 0 with angle $\pi/2$.
If $z\in B_\rho(0)\cap \BB Q^2\cap \BB H$, then $z\in K_a^\rho$ if and only if $\eta_a^{z,L}$ merges into $\eta_a^{0,L}$ before leaving $B_\rho(0)$ and $\eta_a^{z,R}$ hits $[a,\infty)$ before leaving $B_\rho(0)$.
\end{lem}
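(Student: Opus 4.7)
\smallskip

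\noindent\textbf{Proof plan for Lemma~\ref{lem-sle-segment}.}
The plan is to work entirely inside the imaginary geometry framework of~\cite{ig4}, using the fact that for every $w \in \BB Q^2 \cap \BB H$ the pair of flow lines $(\eta_a^{w,L}, \eta_a^{w,R})$ forms the outer boundary of the filled region $\eta_a([a,\tau_w])$, where $\tau_w$ denotes the first time $\eta_a$ hits $w$. In particular, $w \mapsto \eta_a([a,\tau_w])$ is monotone in the ordering $\preceq_a$, and flow lines of the same angle a.s.\ merge upon meeting and then coincide until they leave $\BB H$. Under this correspondence, membership $z\in K_a^\rho$ is equivalent to the temporal condition $\tau_z \in [T_a^\rho,S_a^\rho]$, so everything reduces to rewriting this two-sided inequality in terms of the shapes of the four flow lines $\eta_a^{z,L},\eta_a^{z,R},\eta_a^{0,L},\eta_a^{0,R}$ relative to $B_\rho(0)$.

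The \emph{only if} direction would go as follows. Assume $z \in K_a^\rho$. From the definition of $T_a^\rho$ and $S_a^\rho$ as times adjacent to $\tau_0$ at which $\eta_a$ enters/leaves $B_\rho(0)\cap\BB H$, the inequality $T_a^\rho \leq \tau_z \leq S_a^\rho$ implies that $\eta_a([a,\tau_z])$ is sandwiched between two filled regions that each straddle $B_\rho(0)$, and in particular it contains the point $\eta_a(T_a^\rho) \in \bdy(B_\rho(0)\cap\BB H)$ that begins the excursion through 0. Because $\eta_a([a,\tau_0]) \supset \eta_a([a,T_a^\rho])$ and the left boundary of a larger filled region a.s.\ merges with the left boundary of a smaller one (by the flow line merging theorem in~\cite[Theorem 1.1]{ig4}), $\eta_a^{z,L}$ merges into $\eta_a^{0,L}$; the merge point is constrained to $B_\rho(0)$ because the portions of these two left boundaries that stick out of $B_\rho(0)$ are identified with pieces of the space-filling curve that lie outside $B_\rho(0)$ and thus belong to the common part of the two pasts (giving condition~(1)). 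The analogous argument applied to right boundaries, using that $\eta_a^{z,R}$ must terminate inside the excursion at a point of $[a,\infty)$ reached before escaping the ball, yields condition~(2).

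For the \emph{if} direction, assume conditions (1) and~(2). Condition~(1) forces the left boundary of the filled region $\eta_a([a,\tau_z])$ to coincide with $\eta_a^{0,L}$ past a point inside $B_\rho(0)$, so by the ordering / monotonicity of pasts this filled region shares the outer left boundary of $\eta_a([a,\tau_0])$ outside a neighborhood of $B_\rho(0)$, which allows us to compare $\tau_z$ to $\tau_0$ only up to the (small) excursion through $B_\rho(0)$. Condition~(2) similarly pins down the right boundary of $\eta_a([a,\tau_z])$ to exit the real line inside $B_\rho(0)\cap[a,\infty)$. Combining the two constraints shows that $\eta_a([a,\tau_z])\triangle \eta_a([a,\tau_0])$ is contained in $\ol{B_\rho(0)}$, and reading off the definitions of $T_a^\rho,S_a^\rho$ as the enclosing entry/exit times then forces $\tau_z \in [T_a^\rho,S_a^\rho]$, i.e.\ $z\in K_a^\rho$.

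The step I expect to be the main obstacle is the precise justification of the ``merging inside $B_\rho(0)$'' correspondence: one must rule out the scenarios in which $\eta_a^{z,L}$ and $\eta_a^{0,L}$ merge only after a long excursion outside $B_\rho(0)$ even when $z\in K_a^\rho$, and conversely that a local merge can accidentally occur without $z$ lying in the correct excursion. Both require invoking the deterministic merging/monotonicity statements for flow lines (\cite[Theorems 1.1 and 1.5]{ig4}) together with the fact that the left/right boundaries of $\eta_a([a,\tau_w])$ separate $\BB H$ into exactly three connected components whose ordering under $\preceq_a$ is determined by $\eta_a^{w,L}$ and $\eta_a^{w,R}$; I would verify these case distinctions one at a time.
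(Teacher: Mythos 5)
Your plan follows essentially the same route as the paper's own one-sentence proof: both reduce the claim to the fact that $\eta_a^{z,L},\eta_a^{z,R}$ are the left/right outer boundaries of the past $\eta_a([a,\tau_z])$ and $\eta_a^{0,L}$ is the outer boundary of $\eta_a([a,\tau_0])$, and then compare the two pasts outside $B_\rho(0)$. The paper states this identification as the entire proof (with a small typo, writing ``hits $a$'' where ``hits $z$'' is meant) and treats the remaining bookkeeping as immediate; your version fills in precisely that bookkeeping, and your closing remark---that one must invoke the merging/monotonicity theorems of \cite{ig4} to rule out merges occurring only after an excursion outside $B_\rho(0)$, and to check both the $\tau_z<\tau_0$ and $\tau_z>\tau_0$ cases---correctly identifies the only non-trivial details being elided.
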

\begin{proof}
This follows since $\eta_a^{z,L}$ and $\eta_a^{z,R}$ are the left/right outer boundaries of $\eta_a$ at the (a.s.\ unique) time when it hits $a$ and $\eta_a^{0,L}$ is the outer boundary of $\eta_a$ at the (a.s.\ unique) time when it hits 0.
\end{proof}

As a consequence of Lemma~\ref{lem-sle-segment} and the fact that the flow lines $\eta_a^{z,L}$ and $\eta_a^{z,R}$ are locally determined by $h$ (c.f.\ the proof of~\cite[Lemma 2.4]{gms-harmonic}), it follows that $\preceq_a|_{K_a^\rho\cap \BB Q^2}$ is a.s.\ determined by $h_a|_{B_\rho(0)\cap\BB H}$.

Let $\eta^{0,L}$ be the flow line of $h$ started from 0 with angle $\pi/2$.
Let $K^\rho$ be the closure of the set of $z\in B_\rho(0)\cap \BB Q^2\cap \BB H$ such that  $z\in K_a^\rho$ if and only if $\eta^{z,L}$ merges into $\eta^{0,L}$ before exiting $B_\rho(0)$ and $\eta^{z,R}$ hits $\BB R$ before leaving $B_\rho(0)$.
That is, $K^\rho$ is defined in the same way as $K_a^\rho$ but with $h$ in place of $h_a$.
By definition, $K^\rho$ is a.s.\ determined by $h|_{B_\rho(0)\cap\BB H}$.

A basic Radon-Nikodym estimate for the GFF (see, e.g., the proof of~\cite[Proposition 3.4]{ig1} shows that the following is true for each $\rho>0$.
\begin{itemize}
\item If $a  < -\rho$, then the laws of $h_a|_{B_\rho(0)\cap \BB H}$ and $h |_{B_\rho(0)\cap \BB H}$ are mutually absolutely continuous.
\item The total variation distance between the laws of $h_a|_{B_\rho(0)\cap \BB H}$ and $h |_{B_\rho(0)\cap \BB H}$ tends to zero as $a\rta - \infty$.
\end{itemize}
In particular,
\eqb \label{eqn-total-variation}
\left(K_a^\rho , \preceq_a|_{K_a^\rho\cap \BB Q^2} \right) \rta \left(K^\rho , \preceq|_{K^\rho\cap \BB Q^2}\right) \quad \text{in total variation as $a\rta-\infty$}.
\eqe

\begin{lem} \label{lem-curve-swallow}
Let $r > 0$ and let $p\in (0,1)$. There exists $\rho = \rho(r,p)  >r$ such that
\eqb \label{eqn-end-curve-contain}
\BB P\left[ B_r(0)\cap \BB H\subset K^\rho \right] \geq p \quad \text{and}
\eqe
\eqb \label{eqn-curve-swallow}
\liminf_{a\rta -\infty} \BB P\left[ B_r(0)\cap \BB H\subset K_a^\rho \right] \geq p.
\eqe
\end{lem}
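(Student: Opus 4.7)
The plan is to prove~\eqref{eqn-end-curve-contain} first, and then deduce~\eqref{eqn-curve-swallow} from it via the total variation convergence discussed just above~\eqref{eqn-total-variation}. For the reduction, note that the event $\{B_r(0) \cap \BB H \subset K^\rho\}$ is measurable with respect to $h|_{B_\rho(0) \cap \BB H}$: indeed, $K^\rho$ is defined in terms of the flow lines $\eta^{z,L}, \eta^{z,R}$ for rational $z \in B_\rho(0) \cap \BB H$ stopped upon leaving $B_\rho(0)$, and the restrictions of these flow lines to $B_\rho(0)$ are a.s.\ determined by $h|_{B_\rho(0) \cap \BB H}$ by the local determinacy of flow lines (c.f.\ the proof of~\cite[Lemma 2.4]{gms-harmonic}). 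The analogous statement for $K_a^\rho$ holds with respect to $h_a|_{B_\rho(0) \cap \BB H}$; moreover, once $a < -\rho$ the condition ``$\eta_a^{z,R}$ hits $[a,\infty)$'' inside $B_\rho(0)$ coincides with ``hits $\BB R$'' inside $B_\rho(0)$, since $[a,\infty) \cap B_\rho(0) = \BB R \cap B_\rho(0)$. Hence~\eqref{eqn-curve-swallow} will follow from~\eqref{eqn-end-curve-contain} and the total variation convergence of $h_a|_{B_\rho(0) \cap \BB H}$ to $h|_{B_\rho(0) \cap \BB H}$ as $a \rta -\infty$.

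For~\eqref{eqn-end-curve-contain}, the plan is to exploit the fact that flow lines of the same angle do not cross~\cite[Theorem 1.7]{ig4}, together with the merging theorem~\cite[Theorem 1.9]{ig4}. The idea is to construct an ``envelope'' of flow lines that surrounds $B_r(0) \cap \BB H$. Fix $\delta > 0$ small, and pick finitely many rational envelope points $w_1,\dots,w_N \in (B_{r+\delta}(0) \setminus B_r(0)) \cap \BB H$, spaced densely enough that the flow lines $\eta^{w_i,L}, \eta^{w_i,R}$ together with $\eta^{0,L}$ partition $\BB H$ into regions whose intersection with $B_r(0)$ has small Euclidean diameter. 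By the merging theorem, each $\eta^{w_i,L}$ a.s.\ merges with $\eta^{0,L}$ in finite time, and each $\eta^{w_i,R}$ a.s.\ hits $\BB R$ in finite time; therefore, for $\rho$ sufficiently large (depending on $r, \delta, p$, and the choice of the $w_i$'s), all of these finitely many merging/hitting events take place inside $B_\rho(0)$ with probability at least $p$. On this event, for any rational $z \in B_r(0) \cap \BB H$, the flow line $\eta^{z,L}$ cannot cross any $\eta^{w_i,L}$ or $\eta^{0,L}$, so it is confined to one cell of the envelope partition and must eventually merge with one of these flow lines (possibly after several intermediate mergings), with the combined path staying inside $B_\rho(0)$. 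The same argument applies to $\eta^{z,R}$. Hence every rational $z \in B_r(0) \cap \BB H$ satisfies the defining condition of $K^\rho$; since $K^\rho$ is closed by definition and these rationals are dense in $B_r(0) \cap \BB H$, this yields $B_r(0) \cap \BB H \subset K^\rho$.

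The main obstacle is ensuring that the envelope cells intersecting $B_r(0)$ are all contained in $B_\rho(0)$; a priori, if such a cell is large, an interior flow line could wander far before merging. This is addressed by taking the envelope points dense enough so that each cell meeting $B_r(0)$ has small diameter, using the continuity of flow lines in the starting point (implicit in the imaginary geometry construction of~\cite{ig1,ig4}). A mild technical point is that some envelope rationals may need to be placed close to $\BB R$ to control the right flow lines $\eta^{z,R}$, but the argument there is analogous. Translating the topological ``trapping'' statement into a clean probabilistic bound is the step that will require the most care.
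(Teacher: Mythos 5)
Your reduction of~\eqref{eqn-curve-swallow} to~\eqref{eqn-end-curve-contain} via the total variation convergence of $h_a|_{B_\rho(0)\cap\BB H}$ to $h|_{B_\rho(0)\cap\BB H}$ is correct and is exactly what the paper does. The problem is the envelope argument for~\eqref{eqn-end-curve-contain}, which has two related gaps.

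First, the trapping claim is not sound. You argue that $\eta^{z,L}$ ``cannot cross any $\eta^{w_i,L}$ or $\eta^{0,L}$, so it is confined to one cell of the envelope partition.'' But the cells of your partition are also bounded by the $R$-flow lines $\eta^{w_i,R}$ (and by arcs of $\BB R$), and $\eta^{z,L}$, having angle $\pi/2$, is \emph{not} prevented from crossing or traversing angle-$(-\pi/2)$ segments by the non-crossing rules of~\cite[Theorem 1.7]{ig4}. So ``confined to a cell'' does not follow from the same-angle non-crossing fact alone. More fundamentally, the $L/R$ flow lines organize $\BB H$ into a nested pocket (tree) structure, not a net that surrounds $B_r(0)$: for a given realization it is entirely possible that every envelope point $w_i$ satisfies $w_i \preceq z$ (i.e., the pocket of $z$ contains all the pockets of the $w_i$'s), in which case the envelope flow lines all lie \emph{inside} the region bounded by $\eta^{z,L}\cup\eta^{z,R}$ and trap nothing. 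Your topological picture of the envelope as a fence is the wrong model for this geometry.

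Second, the remedy you offer --- ``take the envelope points dense enough so that each cell meeting $B_r(0)$ has small diameter, using the continuity of flow lines in the starting point'' --- is not something you can invoke off the shelf. Flow lines from nearby starting points are not close in a sup-norm sense; they merge, but the merge point is a random location that is not controlled by the distance between starting points, so density of the $w_i$'s does not give small cell diameter. This is exactly the sort of thing that would require its own non-trivial argument.

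The paper's route to~\eqref{eqn-end-curve-contain} avoids all of this. Fix $a_0\in\BB N$ with $a_0<-1$. On $B_1(0)\cap\BB H$ the fields $h_{a_0}$ and $h$ are mutually absolutely continuous, hence so are $K_{a_0}^1$ and $K^1$. But $\eta_{a_0}$ is an honest chordal space-filling SLE$_\kappa$, so a.s.\ $K_{a_0}^1$ contains a neighborhood of $0$ in $\BB H$; by absolute continuity, so does $K^1$ with probability as close to $1$ as you like, say $\BB P[B_\ep(0)\cap\BB H\subset K^1]\geq p$ for some $\ep>0$. Then use the exact scaling relation $\rho K^1 \eqD K^\rho$ (from scale invariance of the law of $h$) and take $\rho$ rational slightly larger than $r/\ep$. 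No envelope, no continuity-in-the-starting-point, no topological trapping is needed.
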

\begin{proof}
Due to the total variation convergence $K_a^\rho\rta K^\rho$, it suffices to show that there exists $\rho = \rho(r,p) > r$ such that~\eqref{eqn-end-curve-contain} holds.
To prove this, we will first show that $K^1$ contains some neighborhood of 0 with probability at least $p$ then use a scaling argument.
Choose $a_0 \in \BB N$ with $a_0  < -1$. Since the laws of $h_{a_0}|_{B_\rho(0)\cap\BB H}$ and $h |_{B_\rho(0)\cap\BB H}$ are mutually absolutely continuous, also the laws of $K_{a_0}^1$ and $K^1$ are mutually absolutely continuous.
Since $\eta_{a_0}$ is a chordal space-filling SLE$_\kappa$, a.s.\ $K_{a_0}^1$ contains some neighborhood of 0 in $\BB H$.
Hence for any $q\in (0,1)$ there exists $\ep > 0$ such that $\BB P[B_\ep(0)\cap\BB H\subset \eta_{a_0}^1 ] \geq q$.
By taking $q$ to be sufficiently close to 1 (depending on $p$) we infer that there exists $\ep > 0$ such that
\eqb \label{eqn-end-curve-contain-ep}
 \BB P[B_\ep(0)\cap\BB H\subset K^1 ] \geq  p .
\eqe
Due to the scale invariance of the law of $h $, we have $\rho K^1 \eqD K^\rho$ for each rational $\rho >0$.
Therefore~\eqref{eqn-end-curve-contain} for $\rho $ a rational number slightly larger than $ r/\ep$ follows from~\eqref{eqn-end-curve-contain-ep}.
\end{proof}

\begin{proof}[Proof of Proposition~\ref{prop-sle-loop}]
Each $\eta_a$ is a continuous curve which satisfies the conditions of the proposition statement with $h_a$ in place of $h$.
Due to the total variation convergence~\eqref{eqn-total-variation}, we infer that for each $\rho>0$, there is a unique curve $\ol\eta^\rho$ mapping some time interval $[T^\rho , S^\rho]$ into $K^\rho$ which satisfies $\eta(0) = 0$ and $\op{area} \eta([s,t]) = t-s$ for each $s<t$ and such that $\ol\eta^\rho$ hits the points of $ K^\rho \cap \BB Q^2$ in the order specified by $\preceq$.
By~\eqref{eqn-end-curve-contain} of Lemma~\ref{lem-curve-swallow} and since $r$ can be made arbitrarily large, it follows that the sets $K^\rho$ a.s.\ increase to all of $\BB C$ as $\rho\rta\infty$. In particular, a.s.\ $S^\rho \rta -\infty$ and $T^\rho \rta \infty$ as $\rho\rta \infty$.
We therefore get a random continuous curve $\eta$ from $-\infty$ to $\infty$ which satisfies the conditions in the first assertion of the lemma.

Furthermore, by the scale and translation invariance of the law of $h$ we get that the law of the ordering $\preceq$ is invariant under scaling by rational scaling factors and translation by rational elements of $\BB R$ (we need to restrict to rationals at this point since we defined our ordering using $\BB Q^2$).
This shows that the law of $\eta$, viewed modulo time parametrization, is invariant under rational scalings and translations.

By~\eqref{eqn-curve-swallow} of Lemma~\ref{lem-curve-swallow} and~\eqref{eqn-total-variation}, we get that $\preceq_a|_{B_r(0) \cap \BB Q^2\cap \BB H}$ converges to $\preceq|_{B_r(0) \cap \BB Q^2\cap \BB H}$ in the total variation sense as $a \rta - \infty$.

It remains to show that the law of $\eta$, viewed modulo time parametrization, is in fact invariant under \emph{all} scalings and translations, not just rational ones. To prove this, we observe that if $C > 0$, then the argument above works equally well with $C \BB Q^2 $ in place of $\BB Q^2$. Let $\eta^C$ be the curve obtained in the same way as $\eta$ but with $C\BB Q^2 $ in place of $\BB Q^2$.
Then the law of $\eta^C$, viewed modulo time parametrization, is invariant under scalings and translations by rational multiples of $C$.
On the other hand, replacing $\BB Q^2$ by $C\BB Q^2  $ does not change the curves $\eta_a$. From the total variation convergence $\preceq_a^C|_{  C\BB Q^2 \cap B_r(0) \cap \BB H} \rta \preceq^C|_{  C\BB Q^2 \cap B_r(0) \cap \BB H}$ (in obvious notation) for each $r>0$ we therefore get that $\eta^C \eqD \eta$. Since $C$ is arbitrary we now conclude the proof.
\end{proof}

\bibliography{cibib,addbib}

\newcommand{\etalchar}[1]{$^{#1}$}
\def\cprime{$'$}
\begin{thebibliography}{KMSW19}

\bibitem[AAJ{\etalchar{+}}99]{Ambjorn_diffusion}
J.~Ambj{\o}rn, K.~N. Anagnostopoulos, L.~Jensen, T.~Ichihara, and Y.~Watabiki.
\newblock Quantum geometry and diffusion.
\newblock {\em Journal of High Energy Physics}, 1998(11):022, 1999.

\bibitem[AB99]{ab-random-curves}
M.~Aizenman and A.~Burchard.
\newblock H\"older regularity and dimension bounds for random curves.
\newblock {\em Duke Math. J.}, 99(3):419--453, 1999, \arxiv{math/9801027}.
  \MR{1712629}

\bibitem[AG21]{ag-disk}
M.~Ang and E.~Gwynne.
\newblock Liouville quantum gravity surfaces with boundary as matings of trees.
\newblock {\em Ann. Inst. Henri Poincar\'{e} Probab. Stat.}, 57(1):1--53, 2021,
  \arxiv{1903.09120}. \MR{4255166}

\bibitem[AHNR16]{angel-hyperbolic}
O.~Angel, T.~Hutchcroft, A.~Nachmias, and G.~Ray.
\newblock Unimodular hyperbolic triangulations: Circle packing and random walk.
\newblock {\em Inventiones mathematicae}, 206(1):229--268, 2016,
  \arxiv{1501.04677}.

\bibitem[AK16]{andres-heat-kernel}
S.~Andres and N.~Kajino.
\newblock Continuity and estimates of the {L}iouville heat kernel with
  applications to spectral dimensions.
\newblock {\em Probab. Theory Related Fields}, 166(3-4):713--752, 2016,
  \arxiv{1407.3240}. \MR{3568038}

\bibitem[ANR{\etalchar{+}}98]{Ambjorn_spectraldimension}
J.~Ambj{\o}rn, J.~L. Nielsen, J.~Rolf, D.~Boulatov, and Y.~Watabiki.
\newblock The spectral dimension of 2d quantum gravity.
\newblock {\em Journal of High Energy Physics}, 1998(02):010, 1998.

\bibitem[AS03]{angel-schramm-uipt}
O.~Angel and O.~Schramm.
\newblock Uniform infinite planar triangulations.
\newblock {\em Comm. Math. Phys.}, 241(2-3):191--213, 2003. \MR{2013797
  (2005b:60021)}

\bibitem[BA{\v{C}}07]{BC07}
G.~Ben~Arous and J.~{\v{C}}ern{\`y}.
\newblock Scaling limit for trap models on $\mathbb{Z}^d$.
\newblock {\em The Annals of Probability}, 35(6):2356--2384, 2007.

\bibitem[BC13]{benjamini-curien-uipq-walk}
I.~Benjamini and N.~Curien.
\newblock Simple random walk on the uniform infinite planar quadrangulation:
  subdiffusivity via pioneer points.
\newblock {\em Geom. Funct. Anal.}, 23(2):501--531, 2013, \arxiv{1202.5454}.
  \MR{3053754}

\bibitem[Ber07a]{bernardi-dfs-bijection}
O.~Bernardi.
\newblock Bijective counting of {K}reweras walks and loopless triangulations.
\newblock {\em J. Combin. Theory Ser. A}, 114(5):931--956, 2007.

\bibitem[Ber07b]{bernardi-maps}
O.~Bernardi.
\newblock Bijective counting of tree-rooted maps and shuffles of parenthesis
  systems.
\newblock {\em Electron. J. Combin.}, 14(1):Research Paper 9, 36 pp.
  (electronic), 2007, \arxiv{math/0601684}. \MR{2285813 (2007m:05125)}

\bibitem[Ber15]{berestycki-lbm}
N.~Berestycki.
\newblock Diffusion in planar {L}iouville quantum gravity.
\newblock {\em Ann. Inst. Henri Poincar\'e Probab. Stat.}, 51(3):947--964,
  2015, \arxiv{1301.3356}. \MR{3365969}

\bibitem[Ber17]{berestycki-gmt-elementary}
N.~Berestycki.
\newblock An elementary approach to {G}aussian multiplicative chaos.
\newblock {\em Electron. Commun. Probab.}, 22:Paper No. 27, 12, 2017,
  \arxiv{1506.09113}. \MR{3652040}

\bibitem[BGRV16]{grv-kpz}
N.~Berestycki, C.~Garban, R.~Rhodes, and V.~Vargas.
\newblock K{PZ} formula derived from {L}iouville heat kernel.
\newblock {\em J. Lond. Math. Soc. (2)}, 94(1):186--208, 2016,
  \arxiv{1406.7280}. \MR{3532169}

\bibitem[BHS18]{bhs-site-perc}
O.~{Bernardi}, N.~{Holden}, and X.~{Sun}.
\newblock {Percolation on triangulations: a bijective path to Liouville quantum
  gravity}.
\newblock {\em ArXiv e-prints}, July 2018, \arxiv{1807.01684}.

\bibitem[Bil13]{Billingsley}
P.~Billingsley.
\newblock {\em Convergence of probability measures}.
\newblock John Wiley \& Sons, 2013.

\bibitem[BP]{bp-lqg-notes}
N.~{Berestycki} and E.~{Powell}.
\newblock {G}aussian {f}ree {f}ield, {L}iouville {q}uantum {g}ravity, and
  {G}aussian multiplicative chaos.
\newblock {A}vailable at
  \url{https://homepage.univie.ac.at/nathanael.berestycki/Articles/master.pdf}.

\bibitem[BS01]{benjamini-schramm-topology}
I.~Benjamini and O.~Schramm.
\newblock Recurrence of distributional limits of finite planar graphs.
\newblock {\em Electron. J. Probab.}, 6:no. 23, 13 pp. (electronic), 2001,
  \arxiv{0011019}. \MR{1873300 (2002m:82025)}

\bibitem[CHK17]{chk-time-changes}
D.~Croydon, B.~Hambly, and T.~Kumagai.
\newblock Time-changes of stochastic processes associated with resistance
  forms.
\newblock {\em Electron. J. Probab.}, 22:Paper No. 82, 41, 2017,
  \arxiv{1609.02120}. \MR{3718710}

\bibitem[Cur16]{curien-peeling-notes}
N.~Curien.
\newblock Peeling random planar maps. {N}otes du cours {P}eccot.
\newblock Available at
  \url{https://www.math.u-psud.fr/~curien/cours/peccot.pdf}, 2016.

\bibitem[Dav88]{david-conformal-gauge}
F.~David.
\newblock Conformal field theories coupled to {2-D} gravity in the conformal
  gauge.
\newblock {\em {M}od. {P}hys. {L}ett. {A}}, 3(17), 1988.

\bibitem[DG18]{dg-lqg-dim}
J.~{Ding} and E.~{Gwynne}.
\newblock {The fractal dimension of {L}iouville quantum gravity: universality,
  monotonicity, and bounds}.
\newblock {\em {C}ommunications in {M}athematical {P}hysics}, 374:1877--1934,
  2018, \arxiv{1807.01072}.

\bibitem[DK89]{dk-qg}
J.~Distler and H.~Kawai.
\newblock Conformal field theory and {2D} quantum gravity.
\newblock {\em {N}ucl.{P}hys. {B}}, 321(2), 1989.

\bibitem[DMS21]{wedges}
B.~Duplantier, J.~Miller, and S.~Sheffield.
\newblock Liouville quantum gravity as a mating of trees.
\newblock {\em Ast\'{e}risque}, (427):viii+257, 2021, \arxiv{1409.7055}.
  \MR{4340069}

\bibitem[DS11]{shef-kpz}
B.~Duplantier and S.~Sheffield.
\newblock Liouville quantum gravity and {KPZ}.
\newblock {\em Invent. Math.}, 185(2):333--393, 2011, \arxiv{1206.0212}.
  \MR{2819163 (2012f:81251)}

\bibitem[DZZ19]{dzz-heat-kernel}
J.~Ding, O.~Zeitouni, and F.~Zhang.
\newblock Heat kernel for {L}iouville {B}rownian motion and {L}iouville graph
  distance.
\newblock {\em Comm. Math. Phys.}, 371(2):561--618, 2019, \arxiv{1807.00422}.
  \MR{4019914}

\bibitem[FIN02]{FIN}
L.~R.~G. Fontes, M.~Isopi, and C.~M. Newman.
\newblock Random walks with strongly inhomogeneous rates and singular
  diffusions: convergence, localization and aging in one dimension.
\newblock {\em The Annals of Probability}, 30(2):579--604, 2002.

\bibitem[FOT11]{fot-dirichlet-forms}
M.~Fukushima, Y.~Oshima, and M.~Takeda.
\newblock {\em Dirichlet forms and symmetric {M}arkov processes}, volume~19 of
  {\em De Gruyter Studies in Mathematics}.
\newblock Walter de Gruyter \& Co., Berlin, extended edition, 2011.
  \MR{2778606}

\bibitem[GGN13]{gn-recurrence}
O.~Gurel-Gurevich and A.~Nachmias.
\newblock Recurrence of planar graph limits.
\newblock {\em Ann. of Math. (2)}, 177(2):761--781, 2013, \arxiv{1206.0707}.
  \MR{3010812}

\bibitem[GH20]{gh-displacement}
E.~Gwynne and T.~Hutchcroft.
\newblock Anomalous diffusion of random walk on random planar maps.
\newblock {\em Probab. Theory Related Fields}, 178(1-2):567--611, 2020,
  \arxiv{1807.01512}. \MR{4146545}

\bibitem[GHM20]{ghm-kpz}
E.~Gwynne, N.~Holden, and J.~Miller.
\newblock An almost sure {KPZ} relation for {SLE} and {B}rownian motion.
\newblock {\em Ann. Probab.}, 48(2):527--573, 2020, \arxiv{1512.01223}.
  \MR{4089487}

\bibitem[GHS19a]{ghs-dist-exponent}
E.~{Gwynne}, N.~{Holden}, and X.~{Sun}.
\newblock {A distance exponent for Liouville quantum gravity}.
\newblock {\em {Probability Theory and Related Fields}}, 173(3):931--997, 2019,
  \arxiv{1606.01214}.

\bibitem[GHS19b]{ghs-mating-survey}
E.~{Gwynne}, N.~{Holden}, and X.~{Sun}.
\newblock {Mating of trees for random planar maps and Liouville quantum
  gravity: a survey}.
\newblock {\em {Panoramas et Syntheses}}, to appear, 2019, \arxiv{1910.04713}.

\bibitem[GHS20]{ghs-map-dist}
E.~Gwynne, N.~Holden, and X.~Sun.
\newblock A mating-of-trees approach for graph distances in random planar maps.
\newblock {\em Probab. Theory Related Fields}, 177(3-4):1043--1102, 2020,
  \arxiv{1711.00723}. \MR{4126936}

\bibitem[GM21]{gm-spec-dim}
E.~Gwynne and J.~Miller.
\newblock Random walk on random planar maps: Spectral dimension, resistance and
  displacement.
\newblock {\em Ann. Probab.}, 49(3):1097--1128, 2021, \arxiv{1711.00836}.

\bibitem[GMS18]{gms-random-walk}
E.~{Gwynne}, J.~{Miller}, and S.~{Sheffield}.
\newblock {An invariance principle for ergodic scale-free random environments}.
\newblock {\em Acta Mathematica}, to appear, 2018, \arxiv{1807.07515}.

\bibitem[GMS19]{gms-harmonic}
E.~{Gwynne}, J.~{Miller}, and S.~{Sheffield}.
\newblock {Harmonic functions on mated-{CRT} maps}.
\newblock {\em Electron. J. Probab.}, 24:no. 58, 55, 2019, \arxiv{1807.07511}.

\bibitem[GMS21]{gms-tutte}
E.~Gwynne, J.~Miller, and S.~Sheffield.
\newblock The {T}utte embedding of the mated-crt map converges to {L}iouville
  quantum gravity.
\newblock {\em Ann. Probab.}, 49(4):1677--1717, 2021, \arxiv{1705.11161}.

\bibitem[GP20]{gp-sle-bubbles}
E.~Gwynne and J.~Pfeffer.
\newblock Connectivity properties of the adjacency graph of
  $\mathrm{SLE}_{\kappa}$ bubbles for $\kappa\in(4,8)$.
\newblock {\em Ann. Probab.}, 48(3):1495--1519, 2020, \arxiv{1803.04923}.

\bibitem[Gri99]{grigoryan-background}
A.~Grigor{'}yan.
\newblock Analytic and geometric background of recurrence and non-explosion of
  the {B}rownian motion on {R}iemannian manifolds.
\newblock {\em Bull. Amer. Math. Soc. (N.S.)}, 36(2):135--249, 1999.
  \MR{1659871}

\bibitem[GRV14]{grv-heat-kernel}
C.~Garban, R.~Rhodes, and V.~Vargas.
\newblock On the heat kernel and the {D}irichlet form of {L}iouville {B}rownian
  motion.
\newblock {\em Electron. J. Probab.}, 19:no. 96, 25, 2014, \arxiv{1302.6050}.
  \MR{3272329}

\bibitem[GRV16]{grv-lbm}
C.~Garban, R.~Rhodes, and V.~Vargas.
\newblock Liouville {B}rownian motion.
\newblock {\em Ann. Probab.}, 44(4):3076--3110, 2016, \arxiv{1301.2876}.
  \MR{3531686}

\bibitem[Jac18]{jackson-lbm-thick-pts}
H.~Jackson.
\newblock Liouville {B}rownian motion and thick points of the {G}aussian free
  field.
\newblock {\em Ann. Inst. Henri Poincar\'{e} Probab. Stat.}, 54(1):249--279,
  2018, \arxiv{1412.1705}. \MR{3765889}

\bibitem[Kah85]{kahane}
J.-P. Kahane.
\newblock Sur le chaos multiplicatif.
\newblock {\em Ann. Sci. Math. Qu\'ebec}, 9(2):105--150, 1985. \MR{829798
  (88h:60099a)}

\bibitem[Kal02]{kallenberg}
O.~Kallenberg.
\newblock {\em Foundations of modern probability}.
\newblock Probability and its Applications (New York). Springer-Verlag, New
  York, second edition, 2002. \MR{1876169}

\bibitem[KMSW19]{kmsw-bipolar}
R.~Kenyon, J.~Miller, S.~Sheffield, and D.~B. Wilson.
\newblock Bipolar orientations on planar maps and {${\mathrm SLE}_{12}$}.
\newblock {\em Ann. Probab.}, 47(3):1240--1269, 2019, \arxiv{1511.04068}.
  \MR{3945746}

\bibitem[Law05]{lawler-book}
G.~F. Lawler.
\newblock {\em Conformally invariant processes in the plane}, volume 114 of
  {\em Mathematical Surveys and Monographs}.
\newblock American Mathematical Society, Providence, RI, 2005. \MR{2129588
  (2006i:60003)}

\bibitem[{Le }13]{legall-uniqueness}
J.-F. {Le Gall}.
\newblock Uniqueness and universality of the {B}rownian map.
\newblock {\em Ann. Probab.}, 41(4):2880--2960, 2013, \arxiv{1105.4842}.
  \MR{3112934}

\bibitem[LG14]{legall-sphere-survey}
J.-F. Le~Gall.
\newblock Random geometry on the sphere.
\newblock In {\em Proceedings of the {I}nternational {C}ongress of
  {M}athematicians---{S}eoul 2014. {V}ol. 1}, pages 421--442. Kyung Moon Sa,
  Seoul, 2014, \arxiv{1403.7943}. \MR{3728478}

\bibitem[LP16]{lyons-peres}
R.~Lyons and Y.~Peres.
\newblock {\em Probability on Trees and Networks}, volume~42 of {\em Cambridge
  Series in Statistical and Probabilistic Mathematics}.
\newblock Cambridge University Press, New York, 2016.
\newblock Available at \url{http://pages.iu.edu/~rdlyons/}. \MR{3616205}

\bibitem[LPW09]{markov-mixing}
D.~A. Levin, Y.~Peres, and E.~L. Wilmer.
\newblock {\em Markov chains and mixing times}.
\newblock American Mathematical Society, Providence, RI, 2009.
\newblock With a chapter by James G. Propp and David B. Wilson. \MR{2466937
  (2010c:60209)}

\bibitem[Mie13]{miermont-brownian-map}
G.~Miermont.
\newblock The {B}rownian map is the scaling limit of uniform random plane
  quadrangulations.
\newblock {\em Acta Math.}, 210(2):319--401, 2013, \arxiv{1104.1606}.
  \MR{3070569}

\bibitem[MRVZ16]{mrvz-heat-kernel}
P.~Maillard, R.~Rhodes, V.~Vargas, and O.~Zeitouni.
\newblock Liouville heat kernel: regularity and bounds.
\newblock {\em Ann. Inst. Henri Poincar\'e Probab. Stat.}, 52(3):1281--1320,
  2016, \arxiv{1406.0491}. \MR{3531710}

\bibitem[MS16a]{ig1}
J.~Miller and S.~Sheffield.
\newblock Imaginary geometry {I}: interacting {SLE}s.
\newblock {\em Probab. Theory Related Fields}, 164(3-4):553--705, 2016,
  \arxiv{1201.1496}. \MR{3477777}

\bibitem[MS16b]{ig2}
J.~Miller and S.~Sheffield.
\newblock Imaginary geometry {II}: {R}eversibility of
  {$\operatorname{SLE}_\kappa(\rho_1;\rho_2)$} for {$\kappa\in(0,4)$}.
\newblock {\em Ann. Probab.}, 44(3):1647--1722, 2016, \arxiv{1201.1497}.
  \MR{3502592}

\bibitem[MS16c]{ig3}
J.~Miller and S.~Sheffield.
\newblock Imaginary geometry {III}: reversibility of {$\mathrm{SLE}_\kappa$}
  for {$\kappa\in(4,8)$}.
\newblock {\em Ann. of Math. (2)}, 184(2):455--486, 2016, \arxiv{1201.1498}.
  \MR{3548530}

\bibitem[MS17]{ig4}
J.~Miller and S.~Sheffield.
\newblock Imaginary geometry {IV}: interior rays, whole-plane reversibility,
  and space-filling trees.
\newblock {\em Probab. Theory Related Fields}, 169(3-4):729--869, 2017,
  \arxiv{1302.4738}. \MR{3719057}

\bibitem[MS19]{sphere-constructions}
J.~Miller and S.~Sheffield.
\newblock Liouville quantum gravity spheres as matings of finite-diameter
  trees.
\newblock {\em Ann. Inst. Henri Poincar\'{e} Probab. Stat.}, 55(3):1712--1750,
  2019, \arxiv{1506.03804}. \MR{4010949}

\bibitem[Mul67]{mullin-maps}
R.~C. Mullin.
\newblock On the enumeration of tree-rooted maps.
\newblock {\em Canad. J. Math.}, 19:174--183, 1967. \MR{0205882 (34 \#5708)}

\bibitem[Nac20]{Nachmias}
A.~Nachmias.
\newblock {\em Planar Maps, Random Walks and Circle Packing: {\'E}cole
  D'{\'E}t{\'e} de Probabilit{\'e}s de Saint-Flour XLVIII-2018}.
\newblock Springer Nature, 2020.

\bibitem[Pom92]{pom-book}
C.~Pommerenke.
\newblock {\em Boundary behaviour of conformal maps}, volume 299 of {\em
  Grundlehren der Mathematischen Wissenschaften [Fundamental Principles of
  Mathematical Sciences]}.
\newblock Springer-Verlag, Berlin, 1992. \MR{1217706 (95b:30008)}

\bibitem[RS05]{schramm-sle}
S.~Rohde and O.~Schramm.
\newblock Basic properties of {SLE}.
\newblock {\em Ann. of Math. (2)}, 161(2):883--924, 2005, \arxiv{math/0106036}.
  \MR{2153402 (2006f:60093)}

\bibitem[RV11]{rhodes-vargas-log-kpz}
R.~Rhodes and V.~Vargas.
\newblock K{PZ} formula for log-infinitely divisible multifractal random
  measures.
\newblock {\em ESAIM Probab. Stat.}, 15:358--371, 2011, \arxiv{0807.1036}.
  \MR{2870520}

\bibitem[RV14a]{rhodes-vargas-review}
R.~Rhodes and V.~Vargas.
\newblock Gaussian multiplicative chaos and applications: {A} review.
\newblock {\em Probab. Surv.}, 11:315--392, 2014, \arxiv{1305.6221}.
  \MR{3274356}

\bibitem[RV14b]{rhodes-vargas-spec-dim}
R.~Rhodes and V.~Vargas.
\newblock Spectral {D}imension of {L}iouville {Q}uantum {G}ravity.
\newblock {\em Ann. Henri Poincar\'e}, 15(12):2281--2298, 2014,
  \arxiv{1305.0154}. \MR{3272822}

\bibitem[RY99]{revuz-yor}
D.~Revuz and M.~Yor.
\newblock {\em Continuous martingales and {B}rownian motion}, volume 293 of
  {\em Grundlehren der Mathematischen Wissenschaften [Fundamental Principles of
  Mathematical Sciences]}.
\newblock Springer-Verlag, Berlin, third edition, 1999. \MR{1725357
  (2000h:60050)}

\bibitem[Sch00]{schramm0}
O.~Schramm.
\newblock Scaling limits of loop-erased random walks and uniform spanning
  trees.
\newblock {\em Israel J. Math.}, 118:221--288, 2000, \arxiv{math/9904022}.
  \MR{1776084 (2001m:60227)}

\bibitem[She16]{shef-zipper}
S.~Sheffield.
\newblock Conformal weldings of random surfaces: {SLE} and the quantum gravity
  zipper.
\newblock {\em Ann. Probab.}, 44(5):3474--3545, 2016, \arxiv{1012.4797}.
  \MR{3551203}

\bibitem[Shi85]{shimura-cone}
M.~Shimura.
\newblock Excursions in a cone for two-dimensional {B}rownian motion.
\newblock {\em J. Math. Kyoto Univ.}, 25(3):433--443, 1985. \MR{807490
  (87a:60095)}

\bibitem[SS13]{ss-contour}
O.~Schramm and S.~Sheffield.
\newblock A contour line of the continuum {G}aussian free field.
\newblock {\em Probab. Theory Related Fields}, 157(1-2):47--80, 2013,
  \arxiv{1008.2447}. \MR{3101840}

\bibitem[SW16]{shef-wang-lqg-coord}
S.~{Sheffield} and M.~{Wang}.
\newblock {Field-measure correspondence in Liouville quantum gravity almost
  surely commutes with all conformal maps simultaneously}.
\newblock {\em ArXiv e-prints}, May 2016, \arxiv{1605.06171}.

\bibitem[Tut63]{tutte-embedding}
W.~T. Tutte.
\newblock How to draw a graph.
\newblock {\em Proc. London Math. Soc. (3)}, 13:743--767, 1963. \MR{0158387}

\bibitem[Wer04]{werner-notes}
W.~Werner.
\newblock Random planar curves and {S}chramm-{L}oewner evolutions.
\newblock In {\em Lectures on probability theory and statistics}, volume 1840
  of {\em Lecture Notes in Math.}, pages 107--195. Springer, Berlin, 2004,
  \arxiv{math/030335}. \MR{2079672 (2005m:60020)}

\bibitem[Wil96]{wilson-algorithm}
D.~B. Wilson.
\newblock Generating random spanning trees more quickly than the cover time.
\newblock In {\em Proceedings of the {T}wenty-eighth {A}nnual {ACM} {S}ymposium
  on the {T}heory of {C}omputing ({P}hiladelphia, {PA}, 1996)}, pages 296--303,
  New York, 1996. ACM. \MR{1427525}

\end{thebibliography}
\bibliographystyle{hmralphaabbrv}

\end{document}